\newcommand{\mynewtheorem}[2]{\newtheorem{#1}[equation]{#2}}
\theoremstyle{plain}
\theoremstyle{definition}
\theoremstyle{remark}
\newcommand{\lb}{\ensuremath{\langle}}
\newcommand{\rb}{\ensuremath{\rangle}}
\newcommand{\gen}[1]{\ensuremath{\lb #1 \rb}}
\newcommand{\suchthat}{\ensuremath{\thinspace{}|\thinspace{}}}
\newcommand{\isomorphic}{\ensuremath{\cong}}
\newcommand{\isom}{\isomorphic}
\newcommand{\overbar}[1]{\overline{#1}}
\newcommand{\tensor}{\ensuremath{\otimes}}
\newcommand{\basept}{\ast}
\newcommand{\abs}[1]{\lvert{#1}\rvert}
\DeclareMathOperator*{\hocolim}{hocolim}
\DeclareMathOperator*{\colim}{colim}
\DeclareMathOperator*{\holim}{holim}
\DeclareMathOperator{\Map}{Map}
\DeclareMathOperator{\Hom}{Hom}
\newcommand{\cat}[1]{\mathcal{#1}}
\DeclareMathOperator{\Obj}{Obj}
\DeclareMathOperator{\Skel}{Skel}
\DeclareMathOperator{\diag}{diag}
\DeclareMathOperator*{\hofib}{hofib}
\DeclareMathOperator*{\fib}{fib}
\DeclareMathOperator{\Surj}{Surj}
\DeclareMathOperator{\Perp}{\bot}
\newcommand{\LoopInfty}{\Omega^{\infty}}
\newcommand{\SigmaInfty}{\Sigma^{\infty}}
\newcommand{\Set}[1]{\{{#1}\}}
\newcommand{\strictrealization}[1]{\abs{#1}}
\newcommand{\realization}[1]{{\lvert\lvert{#1}\rvert\rvert}}
\newcommand{\Power}{\mathcal{P}}
\newcommand{\cube}[1]{{\mathcal{#1}}}
\newcommand{\res}{\rvert}
\newcommand{\Ord}[1]{[{#1}]}
\newcommand{\Pnd}{P_n^{d}}
\theoremstyle{definition}
\theoremstyle{plain}
\DeclareMathOperator{\Image}{Im}       
\DeclareMathOperator{\coker}{coker}    
\theoremstyle{definition}              
\newcommand{\PerpStrict}{\Perp^{\text{strict}}} 
\newcommand{\undercat}{\mathbin{\setminus}}
\newcommand{\overcat}{\mathbin{/}}
\newcommand{\BoldPerp}{\boldsymbol{\Perp}}
\DeclareMathOperator*{\cofib}{cofib}
\DeclareMathOperator{\Sing}{Sing}
\DeclareMathOperator{\Free}{Free}
\DeclareMathOperator{\Tot}{Tot}
\begin{document}


%
%
\title{Goodwillie Calculi}
\author{Andrew John Mauer-Oats}
\department{Mathematics}
\schools{B.A., Williams College, 1993 \\
         M.S., University of Illinois, 1996}
\phdthesis
\degreeyear{2002}

\maketitle

\frontmatter


%
%
\leavevmode\vfill
\begin{center}
For 309, 917, and 5825.
\end{center}
\vfill
\clearpage
%
%
\chapter*{Acknowledgements}

There are many people come and gone, at the University of Illinois and
in the world at large, that have made it possible for me to write this
thesis and stay sane.

First, a celebration of the newest: Olivia has brightened my life in
ways I could never have imagined when I started this journey. 

Second: to my companion of ten years now, Jennifer, a hearty laugh for
what we have gotten ourselves into. May the years ahead be ever more
fun. 

Next comes my advisor, Randy McCarthy, without whose positive outlook
this project would have been abandoned long before its time, and who
should know that he would be at the top of this list, but there are
things more important in life than a Ph.D.

Also, a thanks to my thesis committee, and Brenda Johnson, for the
otherwise thankless task of reading this document. A special thanks to
Dan Grayson for the time and care he put into his reading and
subsequent suggestions.

In the jumble of the rest, let me cite thanks to Kristine Baxter, Ben
Richert, and a sun-filled corner office for motivation to consider
deeply questions as the story of topology began to unfold for me.
Further thanks to Vahagn Minasian for always having something
interesting to talk about when I was tired of my own work.
Even more, a heartfelt thanks to my whole family for their support,
especially my parents, and Carl, Dan, and Jean, all of whose energy and
interests have sustained me.

\cleardoublepage


\tableofcontents






\mainmatter

%
%
\chapter{Introduction}

The Goodwillie tower $\Set{P_n F}$ of a homotopy functor $F$ 
gives information about $F(X)$ only for $X$ within the ``radius of
convergence'', where $F(X) = \holim P_n F(X)$.  On the boundary of the
radius of convergence (where the connectivity of $X$ is one below that
required to guarantee convergence of the tower), one generally finds
difficult problems. For instance, the Goodwillie tower of the identity
functor from spaces to spaces (and other functors such as $Q(X)$ and $A(X)$)
converges for simply connected spaces; 
questions on the `edge of the radius of convergence' involve spaces
whose first homotopy group is nontrivial. In general, if $X$ is
a nilpotent space, one may prove the same theorems as if $\pi_1 X
=0$, but little can be said if $X$ is not nilpotent. This
indicates that the maximum possible set of convergence for
these functors should be ``nilpotent spaces'', not 
``simply connected spaces'', the answer produced by ordinary
Goodwillie calculus. 

The Goodwillie tower is based on the idea of approximating a functor
$F$ by a series of functors $P_n F$ satisfying the very strong
property of ``$n$-excision''. One might hope that by weakening this
condition, one might obtain a larger radius of convergence. We begin
our study by reviewing the weaker property of ``$n$-additive'' and
showing that for some functors, the weaker approximations give a
larger radius of convergence.

These new constructions feature the left Kan extension in a prominent
way. Briefly, given a full subcategory $\cat{C}$ of the category of
spaces $\cat{T}$, the left Kan extension $L_{\cat{C}}$ gives a way of
constructing the adjoint to the restriction map for functors from $\cat{T}$. 
That is, there is an adjoint isomorphism:
$$
\Hom_{\cat{C}} ( F, G \res_{\cat{C}} ) 
\cong 
\Hom_{\cat{T}} ( L_{\cat{C}} F, G)
.
$$
The importance to us of the left Kan extension is that it defines a
new functor $L_{\cat{C}} F(X)$ using only the behavior of $F$ on
$\cat{C}$ and something about the relationship of $\cat{C}$ to $X$.
Specifically, $L_{\cat{C}} F(X)$ depends only on $F(\cat{C})$ 
and $\Map(C,X)$ for objects $C$ of $\cat{C}$. In particular,
no objects resembling $F(\Map(C,X))$ appear.

It turns out that the $n$-additive approximation of $F$ can be
expressed as Goodwillie's $n$-excisive approximation applied to an
associated functor $L^0 F_X$, which is the left Kan extension (from
finite sets = coproducts of $S^0$) of the functor $F_X(-) = F(X \wedge -)$. 
This suggests that the first thing one should investigate is the left
Kan extensions of functors. We begin by investigating left Kan
extensions of functors from spaces to spectra, since that case is
generally much simpler to understand than the case of functors from
spaces to spaces. In this case, the left Kan
extensions along the full subcategory $\cat{C}_n$ generated by
$\Set{\bigvee^k S^0 \suchthat k\le n }$
turns out to classify all degree $n$ functors from spaces to spectra,
so we obtain a complete understanding of all functors of finite degree
from spaces to spectra in this way.

Functors from spaces to spaces are much more complicated, 
but we can
use our results on functors from spaces to spectra to understand the
Goodwillie derivatives $D_n F$ of any functor, since these functors
factor through the category of spectra as 
$D_n F(X) = \LoopInfty \left( \mathbf{C_n} \wedge X^{\wedge n} \right)$, 
for $\mathbf{C}_n$ a spectrum. In particular,
we show that if $\mathbf{C_n}$ is connective, then $D_n F$ commutes
with realizations. Using that result, we give a sufficient condition for
an analytic functor to commute with realizations. 

Once we understand functors from spaces to spectra as left Kan
extensions, we can ask to what extent left Kan extensions of functors
from spaces to spaces are interesting. The additive Goodwillie tower
arises from $P_n (L^0 F_X)$, and the (ordinary) excisive Goodwillie
tower arises from applying $P_n$ to the left Kan extension  $L^{\infty}$
over all
finite coproducts of spheres of the same dimension
$$\Set{\bigvee^k S^m \suchthat 0 \le m\le \infty \text{ and } k \ge 0}.$$
Between $L^{0} F$ and $L^{\infty} F$ lies an
infinite sequence of Kan extensions $L^a F$, arising from using
coproducts of spheres $S^m$, with $m\le a$,
equipped with natural
transformations $L^{a} F \rightarrow L^{a+1}F$.
This sequence
could give rise to an entire family of ``theories'' $P_n^{(a)}$
between the additive and excisive approximations. 
The first step toward showing that this tower is interesting is to show
that the approximations $P_n^{(a)} F$ can be distinct.
We produce a family of examples, one for each $a$, such that
$P_n^{(a)} F \not\simeq P_n^{(a+1)} F$. We then go on to show that if
$F$ is an analytic functor, the tower 
$$
P_n (L^0 F)  \rightarrow \cdots \rightarrow P_n(L^a F) \rightarrow
\cdots \rightarrow P_n (L^{\infty} F) 
$$
stabilizes at a finite stage, so that Goodwillie's $P_n F$ can
actually be computed by examining a left Kan extension of finite
dimension. This is interesting because the left Kan extension $L^{a}
F$ requires ``less'' information than $F$ to compute, since it depends
only on the subcategory $\cat{C}$ and maps from objects $C \in \cat{C}$
to $X$, and not arbitrarily high suspensions of $X$ as $P_n F(X)$
requires. 
Fundamentally, even $P_n^{(0)}$ is an interesting functor, and
understanding it is a necessary prerequisite to understanding the
filtration of theories $P_n^{(a)}$. The $n^{\text{th}}$ cross effect
functor of $F$ at $X$, denoted $\Perp_n F(X)$, measures how 
much $F(\bigvee^n X)$ fails to be determined by the value of $F$ on
smaller coproducts of $X$. Since this is essentially exactly the
``information'' available to compute $P_n^{(0)} F(X)$, there should be
a very close relationship between the two.
One of the main results in this thesis is
Theorem~\ref{thm:main-theorem}, which establishes that if $F$ is
reasonably good, there is a fibration sequence:
$$ \realization{\Perp_{n+1}^{*+1} F(X)}
 \rightarrow F(X) \rightarrow P_n^{(0)} F(X).$$
As a consequence of this theorem, we derive a spectral sequence with
$E^1_{p,q} = \pi_p \Perp_{n+1}^q F(X)$ converging to $\pi_{p+q}
P_n^{(0)} F(X)$.
Also, this theorem gives us a way of relating the Goodwillie
tower of the identity functor of (simplicial) groups to a derived
functor of the lower central series.

This thesis is organized as follows. In Chapter~\ref{chap:categories},
we review the basic categories and constructions used throughout. In
Chapter~\ref{chap:goodwillie-calculus}, we give a brief exposition of
some of Goodwillie's calculus of functors, including $n$-cubes, the
Blakers-Massey theorem, and some basic examples. In
Chapter~\ref{chap:left-kan}, we explain the left Kan extension and its
homotopy invariant counterpart. In Chapter~\ref{chap:finite-degree},
we show that all degree $n$ functors from spaces to spectra are
left Kan extensions over the full subcategory of spaces containing
$\bigvee^k S^0$, for $k=0,\ldots,n$. In
Chapter~\ref{chap:realization}, we show several results.
Section~\ref{sec:analytic-functors-connective} shows that analytic
functors have connective coefficient spectra.
Section~\ref{sec:loopinfty-commutes-with-certain-realizations} shows
that $\LoopInfty$ commutes with realizations of simplicial connective
(\emph{i.e.}, bounded at $\pi_0$) spectra.
Section~\ref{sec:analytic-realizations} combines these results to show
that (reduced) analytic functors from spaces to spaces commute with
realizations of simplicial $k$-connected spaces, where $k$ is the
larger of the radius of convergence or $-c$ for the universal
analyticity constant $c$ (see \S\ref{sec:analytic-functors}).
Chapter~\ref{chap:cotriples} gives background on cotriples.
Chapter~\ref{chap:properties-of-Pnd} establishes basic properties of
the $P_n^{(0)}$ and $\Perp_n$ constructions.
In Chapter~\ref{chap:main-theorem}, we prove the main theorem
(\ref{thm:main-theorem}), which
establishes the relationship between 
the functor $P_n^{(0)}F$ and the $(n+1)^{\text{st}}$ cross effect.
Chapter~\ref{chap:main-consequences} elucidates some of the
consequences of the main theorem, including the existence of a
spectral sequence to calculate $P_n^{(0)} F(X)$ and the relationship
with the work of Curtis on the lower central series of a simplicial group.
Chapter~\ref{chap:different-theories} shows that there is a whole
family of different theories interpolating between additive and
excisive calculus, and all are distinct. 


%
%
\chapter{Categories And Homotopy Invariance}
\label{chap:categories}
There are two main categories we will study: pointed spaces and
spectra. We will also be interested in simplicial objects in both of
these categories. In this section, we explain exactly what we mean by
these categories, and give a brief synopsis of the properties that we
use.


\section{Spaces}
\label{sec:categories-spaces}

By ``spaces'' or ``topological spaces'', we mean the 
topological category of 
compactly generated Hausdorff topological spaces with nondegenerate basepoint.
In this category $\Hom$ is itself a topological space using the
(compactly generated) compact-open topology. When we want to emphasize
its nature as a space, we will write $\Map$.
Many convenient properties (such the continuity of the evaluation map
from $X \times \Map(X,Y)$ to $Y$) always hold in this category.
The formation of various categorical constructions, such as product,
requires a ``compactification'' of the topology on the product for
arbitrary spaces. This is to be done implicitly wherever necessary.
See \cite{Steenrod:a-convenient-category-of-topological-spaces} for
more information about this. Henceforth, unless otherwise stated, the
term ``space'' or ``topological space'' will mean an object of this
category.

The pointed category has an object that is both initial and final;
we use both $\basept{}$ and $0$ to denote this object, depending on
context. 

\section{Spectra}

Spectra are the ``stable category'' associated to spaces. The
so-called na\"{\i}ve spectra will be sufficient for our purposes.
References for this material include Adams
\cite{Adams:stable-homotopy-and-generalised-homology}
and Kochman
\cite[Chapter~3.3]{Kochman:bordism-stable-homotopy-and-adams-spectral-sequences},
who also follows Adams' treatment. 
In this
category, a spectrum $\mathbf{X}$ consists of a sequence of
topological spaces $\Set{X_i \suchthat i\ge 0}$ and 
structure maps $\Sigma X_{i} \rightarrow X_{i+1}$. 
By adjunction, the structure maps may also be
specified by a map $X_{i} \rightarrow \Omega X_{i+1}$; this is
sometimes more convenient --- see the example of $\mathbf{H}G$ below.
A morphism $f:\mathbf{X}\rightarrow\mathbf{Y}$ in this category is a
(cofinal) sequence of maps $f_i: X_i \rightarrow Y_i$ that commute with the
structure maps. (Cofinal means that the maps need not be defined for
all $i$; just on a cofinal subset of indices.)
The homotopy groups here
are $\pi_n \mathbf{X} = \colim_i \pi_{n+i} X_i$; a spectrum may have
negative homotopy groups.

A nontrivial example of a spectrum is the Eilenberg-MacLane spectrum
$\mathbf{H}G$. The spectrum $\mathbf{H}G$ has $\mathbf{H}G_n =
K(G,n)$, and the structure map is given by the canonical equivalence
$K(G,n) \rightarrow \Omega K(G,n+1)$. Its only nonzero homotopy group
is $\pi_0 = G$. This is an example of an ``omega spectrum'':
$\mathbf{X}$ is called an omega spectrum if the adjoint structure map
$X_n \rightarrow \Omega X_{n+1}$ is an equivalence. Every spectrum is
equivalent to an omega spectrum.
Another example is the sphere spectrum, frequently
denoted $\mathbf{S}$ or $\mathbf{S^0}$, given by $(\mathbf{S})_n = S^n$,
the $n$-sphere, and structure maps 
$\Sigma S^n \xrightarrow{\cong} S^{n+1}$.
Its homotopy is stable homotopy, $\pi_n \mathbf{S} = \pi_n^S S^0$.
This is a ``suspension spectrum''--- one in which the structure map
$\Sigma X_n \rightarrow X_{n+1}$ is an isomorphism.

The homotopy category of spectra is a triangulated category,
much like the homotopy category of chain complexes. 
Just as in the case of chain complexes, it
is sometimes desirable to distinguish between arbitrary spectra and
``bounded below'' spectra, whose homotopy $\pi_n$ vanishes for all
$n\le N$. The main important trait of bounded below spectra is that
suspension increases their connectivity. We use the word
``\emph{connective}'' to mean a spectrum that has no negative homotopy
groups. In the literature, the word connective sometimes means bounded below. 

The categories of spectra and spaces are related by a pair of adjoint
functors, $\LoopInfty$ and $\SigmaInfty$. The functor $\SigmaInfty:
\text{Spaces} \rightarrow \text{Spectra}$ creates a spectrum from a
space $X$ by putting $(\SigmaInfty X)_n = \Sigma^n X$, with the
structure maps $\Sigma (\SigmaInfty X)_n \xrightarrow{=} (\SigmaInfty
X)_{n+1}$. The functor $\LoopInfty: \text{Spectra} \rightarrow
\text{Spaces}$ sends $\mathbf{X}$ to $\colim_n \Omega^n X_n$. Note
that $\pi_n (\LoopInfty \mathbf{X}) = \pi_n \mathbf{X}$ for all $n\ge
0$. In particular, $\pi_0$ and $\pi_1$ of an ``infinite loop space''
are abelian groups. 
The adjunction that arises from the familiar
suspension-loop adjunction is
\begin{equation*}
\Hom_{\text{Spectra}} (\SigmaInfty X, \mathbf{Y}) 
\cong
\Hom_{\text{Spaces}} ( X, \LoopInfty \mathbf{Y})  .
\end{equation*}
When working with unbased spaces, the appropriate functor to use is
$\Sigma^{\infty}_{+}(X)$, which is the suspension spectrum of $X$
taken after a disjoint basepoint is added.

The category of spectra has the very useful property that fibration
sequences and cofibration sequences are equivalent.
The proof of this
uses the Blakers-Massey theorem (\ref{thm:blakers-massey}), so it
appears later, as Corollary~\ref{cor:spectra-cofibration=fibration} on
page~\pageref{cor:spectra-cofibration=fibration}. The equivalence of
fibration and cofibration sequences implies that the fiber of a
map is naturally equivalent to the loop spectrum of the cofiber of the map. 


When we have groups acting on spectra, we will always be in a
situation where it is appropriate to use na\"{\i}ve $G$-spectra. These
are spectra $\mathbf{X}$ in which $G$ acts on each $X_n$, and the
structure maps $S^1 \wedge \mathbf{X}_n \rightarrow \mathbf{X}_{n+1}$
are $G$-equivariant, with $G$ acting trivially on the suspension
coordinate. The simplest $G$-spectra are those which are suspension
spectra of spaces with a free $G$-action.


\section{Functors}
\label{sec:categories-functors}

The category of topological spaces (or spectra) is enriched over
topological spaces (respectively, spectra), meaning that the $\Hom$
sets can be given the structure of a topological space (respectively,
spectrum). We require that our functors respect this additional
structure. 


Let $\cat{C}$ and $\cat{D}$ be topological categories.  Let $F:
\cat{C} \rightarrow\cat{D}$ be a functor. In the standard terminology, 
$F$ is called \emph{continuous} if the map $f \mapsto F(f)$ induces a
continuous map  $\Hom_{\cat{C}} (A,B) \rightarrow \Hom_{\cat{D}} (FA, FB)$.


We require that all functors be continuous.


\section{Simplicial Objects}

We will also be interested in the simplicial objects in the 
categories of spaces and spectra: simplicial spaces and simplicial spectra. 
The standard
reference for all facts about simplicial objects is May's book
\cite{May:simplicial-objects-in-algebraic-topology}, but Curtis's
award-winning exposition \cite{Curtis:simplicial-homotopy-theory} is a more accessible
place to start. Weibel~\cite[Chapter 8]{Weibel:homological-algebra} is
a concise but valuable reference.  The recent publication by
Goerss and Jardine \cite{Goerss-Jardine:simplicial-homotopy-theory} is
another resource for facts about simplicial homotopy theory. The
reader completely unfamiliar with the subject is advised to
consult one of these references; this is just a very brief review of
some relevant facts.

Before delving into the definitions (which are notoriously opaque),
let us consider why simplicial objects are so important. In general,
simplicial objects add another ``dimension'' to a category; for
instance, simplicial abelian groups are equivalent to chain complexes
(bounded $\geq 0$) of abelian groups (this is known as the Dold-Kan
correspondence). Adding this dimension provides a setting for
homological algebra by providing a category in which projective
resolutions of abelian groups can live. In this case, there is no way
to ``reduce'' a projective resolution back down to an ordinary abelian
group without losing the information it provides.
In the case of
simplicial spaces, however, the base category (spaces) already has
enough structure that it is possible to reduce
a simplicial space back down the an ordinary space without losing
information.\footnote{This
  actually might be said to occur because spaces are equivalent to
  simplicial sets, so they already ``contain one simplicial
  dimension'', and the Eilenberg-Zilber theorem
  (\ref{thm:eilenberg-zilber}) shows that nothing more is gained by
  adding more simplicial dimensions.}
This process is called ``realization'', and plays a
central role in the work in this thesis.

Let $\Delta$ denote the category of ordered finite sets
whose objects are $\Set{[n] \suchthat n\ge 0}$, with $[n] = \Set{0 <
  \cdots < n}$, and whose morphisms are nondecreasing set maps. A
simplicial object in any given category is a functor from the opposite
category of $\Delta$, denoted
$\Delta^{\text{op}}$, to the given category. The behavior of a functor
$\Delta^{\text{op}} \rightarrow \cat{C}$ is determined by its values
on the objects and on certain morphisms called ``face'' and
``degeneracy'' maps. In $\Delta$, there are $n+1$ face maps $\delta_i:
[n]\rightarrow{} [n-1]$:
\begin{equation*}
\delta_i(j) = 
\begin{cases}
j & \text{if $j\le i$} \\
j-1 & \text{if $j > i$}
\end{cases},
\end{equation*}
and $n+1$ degeneracy maps $\sigma_i: [n]\rightarrow{} [n+1]$:
\begin{equation*}
\sigma_i(j) = 
\begin{cases}
j & \text{if $j< i$} \\
j+1 & \text{if $j\ge i$}
\end{cases}
.
\end{equation*}
Generally a simplicial object is denoted by $X_{\cdot}$, and its value
on objects is $X_n = X([n])$. The image of the face maps are the $d_i =
X(\delta_i)$, and the image of the degeneracy maps are the $s_j = X(\sigma_j)$.

Let $\Delta^n$ denote the
standard $n$-simplex.
The realization $\realization{X_\cdot}$ of a simplicial space
$X_{\cdot}$ is taken to 
be the colimit of the following process. 
Let $R_0 = X_0$, and proceed by
induction to let $R_n$ be the pushout of the following diagram:
$$
\xymatrix{
R_{n-1}
&
X_n \times \partial \Delta^n
\ar[r]
\ar[l]
&
X_n \times \Delta^n
}
,
$$
where the left map is given by $(x,p) \mapsto (d_i x)$ when $p$ is
an element of the $i$-th face of $\Delta^n$.  
The definition of
realization given here is called the ``fat'' realization; when we need
to refer to the usual definition (which uses the quotient of $X_n$ by
the degeneracies where we have used $X_n$), we will say ``strict
realization'' and denote it $\strictrealization{X_\cdot}$. 
As discussed in Section~\ref{sec:homotopy-functors} below, if the
degeneracy maps $s_j: X_{n-1} \rightarrow X_n$ are not cofibrations,
the quotient may not be a ``homotopy invariant''. However, 
in the case of simplicial sets, all injections are cofibrations, so
there is never an issue when working with simplicial sets.

One very important fact about realization is that it is homotopy
invariant in the following sense.
\begin{lemma}[Realization Lemma]
\label{lem:realization-lemma}
(\cite[Proposition~A.1, p.~308]{Segal:categories-and-cohomology-theories})
Let $X_{\cdot}$ and $Y_{\cdot}$ be simplicial spaces, and suppose
$f_{\cdot}:  X_{\cdot} \rightarrow  Y_{\cdot}$ is a simplicial map
with $f_n$ a weak equivalence for all $n$. Then
$\realization{f_{\cdot}}$ is a weak equivalence. $\qed$
\end{lemma}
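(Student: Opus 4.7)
The plan is to prove the Realization Lemma by induction on the skeletal filtration that defines the fat realization, leveraging the fact that because no quotient by degeneracies is taken, all the relevant attaching maps are cofibrations.

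First, I would fix the notation $R_n^X$ for the $n$-th stage in the construction of $\realization{X_\cdot}$ and similarly $R_n^Y$. The simplicial map $f_\cdot$ induces compatible maps $R_n^X \to R_n^Y$ for all $n$, and $\realization{f_\cdot} = \colim_n (R_n^X \to R_n^Y)$. The base case $R_0^X = X_0 \to Y_0 = R_0^Y$ is a weak equivalence by hypothesis.

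For the inductive step, assume $R_{n-1}^X \to R_{n-1}^Y$ is a weak equivalence. Then $R_n^X$ and $R_n^Y$ are pushouts of the diagrams
\[
R_{n-1}^X \longleftarrow X_n \times \partial\Delta^n \longrightarrow X_n \times \Delta^n,
\]
\[
R_{n-1}^Y \longleftarrow Y_n \times \partial\Delta^n \longrightarrow Y_n \times \Delta^n,
\]
and $f_\cdot$ gives a map between these spans. Crucially, the inclusions $X_n \times \partial\Delta^n \hookrightarrow X_n \times \Delta^n$ and $Y_n \times \partial\Delta^n \hookrightarrow Y_n \times \Delta^n$ are cofibrations (this uses only the standard CW structure on $\Delta^n$ and the fact that we work in compactly generated spaces, as set up in Section~\ref{sec:categories-spaces}). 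The two outer vertical maps, $f_n \times \operatorname{id}$ on $X_n \times \Delta^n$ and on $X_n \times \partial\Delta^n$, are weak equivalences because $f_n$ is, and the leftmost vertical map is a weak equivalence by the inductive hypothesis. The gluing lemma for pushouts along cofibrations then yields that $R_n^X \to R_n^Y$ is a weak equivalence.

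Finally, since the successive inclusions $R_{n-1}^X \hookrightarrow R_n^X$ are cofibrations (they are pushouts of the cofibrations $X_n \times \partial\Delta^n \hookrightarrow X_n \times \Delta^n$), and similarly for $Y$, I can pass to the sequential colimit: a filtered colimit of weak equivalences of spaces indexed along cofibrations is again a weak equivalence. This gives $\realization{f_\cdot}: \realization{X_\cdot} \to \realization{Y_\cdot}$ as a weak equivalence.

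The main obstacle, and the reason one uses the fat rather than strict realization, is exactly the cofibrancy at each step. In the strict realization $\strictrealization{X_\cdot}$, the degeneracies would be collapsed and one would need the degeneracy maps $s_j\colon X_{n-1}\to X_n$ to be cofibrations (a ``good'' or ``proper'' simplicial space) to run the same induction. For the fat realization we never quotient by degeneracies, so this hypothesis is unnecessary and the gluing lemma applies unconditionally at every stage—this is precisely what makes the lemma hold in the stated generality.
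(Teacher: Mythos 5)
Your proof is correct: the skeletal induction using the gluing lemma for pushouts along the (Hurewicz) cofibrations $X_n\times\partial\Delta^n\hookrightarrow X_n\times\Delta^n$, followed by passage to the sequential colimit along cofibrations, is exactly the standard argument for this fact, and your closing remark correctly identifies why the fat realization needs no goodness hypothesis. The paper itself offers no proof, citing Segal's Proposition~A.1, whose proof proceeds by the same skeletal induction you give.
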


We will sometimes want to use the strict realization and know that it
is a homotopy invariant. This happens if the simplicial space being
realized is ``good''.
\begin{definition}[Good simplicial space]
\label{def:good-simplicial-space}
A simplicial space is called \emph{good} if all of the degeneracy maps
$s_j: X_n \rightarrow X_{n+1}$ are closed cofibrations.
\end{definition}

If a  simplicial space is good, then both the ``fat'' and ``strict''
realizations are equivalent. 

\begin{theorem}
\label{thm:good-fat-strict-realization}
(\cite[Proposition~A.2, p.~308]{Segal:categories-and-cohomology-theories})
If $X$ is a good simplicial space, then the natural map 
$\realization{X} \rightarrow \strictrealization{X}$ is a (weak
homotopy) equivalence.
$\qed$
\end{theorem}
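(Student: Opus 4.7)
My plan is to induct on the skeletal filtration of the two realizations. Let $F_n \realization{X_\cdot}$ and $F_n \strictrealization{X_\cdot}$ denote the $n$-th skeleta, i.e., the realizations of the $n$-truncated simplicial spaces. Both realizations are $\colim_n F_n$, and the natural quotient map is compatible with the filtrations, so it suffices to show that $F_n \realization{X_\cdot} \to F_n \strictrealization{X_\cdot}$ is a weak equivalence for every $n$. The base case $n=0$ is the identity on $X_0$.

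For the inductive step, I would write both skeleta as pushouts attaching the $n$-cells. For the fat realization one has the standard pushout
$$
\xymatrix{
X_n \times \partial \Delta^n \ar[r]\ar[d] & X_n \times \Delta^n \ar[d] \\
F_{n-1} \realization{X_\cdot} \ar[r] & F_n \realization{X_\cdot}
}
$$
while for the strict realization the attaching domain is enlarged to $X_n \times \partial \Delta^n \cup_{L_n X \times \partial \Delta^n} L_n X \times \Delta^n$, where $L_n X = \bigcup_i s_i(X_{n-1})$ is the latching object, because all degenerate $n$-simplices are collapsed to the $(n-1)$-skeleton. The map of pushouts induced by the quotient is the identity on the top right corner and, by induction, a weak equivalence on the bottom left corner.

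The goodness hypothesis enters to ensure that $L_n X \hookrightarrow X_n$ is a closed cofibration (this is the standard consequence of the degeneracies being cofibrations; one uses that pushouts and finite unions of closed cofibrations along closed cofibrations are again cofibrations). Consequently, the map of attaching objects
$$
X_n \times \partial \Delta^n \longrightarrow X_n \times \partial \Delta^n \cup_{L_n X \times \partial \Delta^n} L_n X \times \Delta^n
$$
is the pushout of the cofibration $L_n X \times \partial \Delta^n \hookrightarrow L_n X \times \Delta^n$ along a cofibration, hence is itself a cofibration, and moreover it is a weak equivalence since it is the inclusion of one piece into a mapping-cylinder-like pushout with the other end contractible along the $\Delta^n$-direction. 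More concretely, the quotient collapses the contractible subspace $L_n X \times \Delta^n$ to $L_n X \hookrightarrow F_{n-1} \strictrealization{X_\cdot}$, and that collapse is a weak equivalence because the inclusion is a cofibration with contractible cofiber factor.

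With all the relevant maps being cofibrations and the vertical maps in the two pushout squares weak equivalences, the gluing lemma for pushouts along cofibrations then gives that $F_n \realization{X_\cdot} \to F_n \strictrealization{X_\cdot}$ is a weak equivalence, completing the induction. Passing to the sequential colimit (and using that the skeletal inclusions are cofibrations, again thanks to goodness) yields the result. The main technical obstacle is the careful bookkeeping of the pushout squares and the verification that the goodness of $X_\cdot$ really does promote all the relevant inclusions (especially $L_n X \hookrightarrow X_n$ and the combined attaching inclusion above) to closed cofibrations, so that the gluing lemma is legitimately applicable and the quotients compute homotopy pushouts.
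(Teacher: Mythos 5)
The paper offers no proof of this statement --- it simply cites Segal's Proposition~A.2 --- so your argument must stand on its own, and unfortunately its overall strategy fails. It is not true that the skeletal comparison maps $F_n\realization{X_\cdot}\rightarrow F_n\strictrealization{X_\cdot}$ are weak equivalences: take $X_\cdot$ to be the constant simplicial space on a point. Then $F_1\strictrealization{X_\cdot}=\ast$, while
$$F_1\realization{X_\cdot}=\ast\cup_{\partial\Delta^1}\Delta^1\cong S^1,$$
and in general the finite skeleta of $\realization{\ast}$ alternate between contractible spaces and spheres; the two realizations only become equivalent in the colimit. The step of your argument where this surfaces is the claim that the map of attaching objects
$$X_n\times\partial\Delta^n\longrightarrow X_n\times\partial\Delta^n\cup_{L_nX\times\partial\Delta^n}L_nX\times\Delta^n$$
is a weak equivalence. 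It is a cofibration, being the cobase change of $L_nX\times\partial\Delta^n\hookrightarrow L_nX\times\Delta^n$, but that map is $L_nX\times(S^{n-1}\hookrightarrow D^n)$ and is very far from an equivalence; the target does not deformation retract onto $X_n\times\partial\Delta^n$, since $\Delta^n$ retracts onto a point, not onto its boundary. (Already for $n=1$ and $X_\cdot=\ast$ this map is $S^0\rightarrow\Delta^1$.) With the middle vertical map of your two spans failing to be an equivalence, the gluing lemma does not apply --- consistently with the fact that the conclusion it would give is false.

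The part of your argument that is correct and genuinely needed is that goodness promotes the latching inclusion $L_nX\hookrightarrow X_n$ to a closed cofibration (via Lillig's union theorem and the simplicial identities); but this has to be fed into a global comparison rather than a skeleton-by-skeleton one. Segal's own proof factors the quotient $\realization{X}\rightarrow\strictrealization{X}$ through a countable sequence of intermediate realizations, each collapsing the identifications coming from one more layer of degeneracies across the \emph{entire} (infinite-dimensional) space, and uses the NDR structure supplied by goodness to show each such collapse is a homotopy equivalence. An alternative correct route identifies $\realization{X}$ with the strict realization of a levelwise-equivalent ``fattened'' simplicial space and then applies the realization lemma (Lemma~\ref{lem:realization-lemma}); yet another shows both sides compute $\hocolim_{\Delta^{\text{op}}}X$, using Reedy cofibrancy for the strict side and homotopy cofinality of the subcategory of injections for the fat side. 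Any of these would repair the proof; the skeletal induction as proposed cannot.
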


\begin{corollary}
\label{cor:good-multisimplicial}
Let $X_{\cdot\cdot}$ be a bisimplicial space. If each simplicial space
$[j] \mapsto X_{i,j}$ is good and $[i] \mapsto X_{i,j}$ is 
good, then the natural map 
$\realization{X} \rightarrow \strictrealization{X}$
between the realizations in one direction and another is a weak
homotopy equivalence. For
this reason, we call such bisimplicial spaces ``good'' as well.
\end{corollary}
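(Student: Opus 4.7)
The plan is to reduce the bisimplicial statement to the simplicial one by iterating Theorem~\ref{thm:good-fat-strict-realization}. First I would fix a direction, say the $j$-direction, and form the simplicial space
$$Y_i := \realization{[j] \mapsto X_{i,j}}, \qquad Y_i^{\text{strict}} := \strictrealization{[j] \mapsto X_{i,j}},$$
so that $\realization{X}$ and $\strictrealization{X}$ (taken in this order) are the realizations of $Y$ and $Y^{\text{strict}}$ respectively. Since each row $[j] \mapsto X_{i,j}$ is good by hypothesis, Theorem~\ref{thm:good-fat-strict-realization} gives a levelwise weak equivalence $Y_i \to Y_i^{\text{strict}}$; the Realization Lemma (\ref{lem:realization-lemma}) then promotes this to a weak equivalence after realizing in the remaining $i$-direction. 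So it suffices to show that $Y$ is itself a good simplicial space, since then a second application of Theorem~\ref{thm:good-fat-strict-realization} takes care of the outer realization, and the same chain of equivalences with the roles of $i$ and $j$ reversed handles ``the other direction.''

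The technical heart is therefore the claim that fat realization preserves closed cofibrations. Concretely, I need to show that if $s_k: X_{n,\cdot} \to X_{n+1,\cdot}$ is a levelwise closed cofibration (which is exactly the assumption that the columns $[i]\mapsto X_{i,j}$ are good), then the induced degeneracy $\realization{s_k}: Y_n \to Y_{n+1}$ is a closed cofibration of spaces. Recall $\realization{X_{i,\cdot}}$ is built as the sequential colimit of pushouts
$$R_m = R_{m-1} \cup_{X_{i,m}\times\partial\Delta^m} \bigl(X_{i,m}\times\Delta^m\bigr).$$
Because $\partial\Delta^m \hookrightarrow \Delta^m$ is a closed cofibration and closed cofibrations are preserved under products with a fixed space, pushouts along closed cofibrations, and sequential colimits along closed cofibrations (Str\o m's theorem in the compactly generated setting), an induction on $m$ shows that the map of colimit stages $R_m^{(n)} \to R_m^{(n+1)}$ is a closed cofibration, and hence so is the colimit $\realization{s_k}$. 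Thus $Y$ is good.

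Having established goodness of $Y$, Theorem~\ref{thm:good-fat-strict-realization} gives $\realization{Y}\to\strictrealization{Y}$ a weak equivalence, and combined with the levelwise weak equivalence $Y \to Y^{\text{strict}}$ (plus Lemma~\ref{lem:realization-lemma}) this yields the desired $\realization{X} \to \strictrealization{X}$. The same argument with the roles of $i$ and $j$ swapped produces the corresponding equivalence via the other iterated realization, so both orders agree up to weak equivalence with $\strictrealization{X}$.

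The main obstacle is precisely the cofibration-preservation step: one must be careful that the iterated pushout colimit defining $\realization{-}$ stays within closed cofibrations when applied to a levelwise closed cofibration of simplicial spaces. Once that technical check is in hand, the rest is bookkeeping with Lemma~\ref{lem:realization-lemma} and Theorem~\ref{thm:good-fat-strict-realization}.
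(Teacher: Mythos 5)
Your proposal is correct and follows essentially the same route as the paper: realize one direction at a time, observe that realization carries the levelwise (closed) cofibrations coming from the degeneracies in the other direction to closed cofibrations, and then apply Theorem~\ref{thm:good-fat-strict-realization} together with the Realization Lemma in each direction. The only difference is one of detail: the paper simply asserts that ``the realization of levelwise cofibrations is a cofibration,'' whereas you sketch the skeletal-pushout induction proving it.
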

\begin{proof}
Since the realization of levelwise cofibrations is a
cofibration, it suffices to show that the degeneracy maps in 
each simplicial direction of a multi-simplicial space are
cofibrations. (Our spaces are all Hausdorff by hypothesis, so
cofibration implies closed.) 
Taking the realizations in one direction at a time, this follows from
Theorem~\ref{thm:good-fat-strict-realization}.
\end{proof}

\begin{lemma}
\label{lem:preserve-goodness}
Many operations preserve ``goodness''. Let $X$ be a good
simplicial space (where each space has a nondegenerate
basepoint),  
and let $Z$ be a space (with a nondegenerate basepoint). 
Then the following simplicial spaces are good:
\begin{enumerate}
\item $( X_\cdot )_+$ (even if $X_\cdot$ does not have a nondegenerate
basepoint) 
\item $ Z \vee X_\cdot$
\item $ Z \times X_\cdot$
\item $ Z \wedge X_\cdot$
\item $\Map(C,X_\cdot)$, for any compact cofibrant $C$
\end{enumerate}
If each $X(k)_\cdot$ is a good simplicial space, then:
\begin{itemize}
\item[6.] $diag \left( \bigvee_{i=1}^k X(i)_\cdot \right)$ is good
\end{itemize}
And finally, if $[i,j] \mapsto X(i)_j$ is a bisimplicial space with
each $X(i)$ good, then:
\begin{itemize}
\item[7.] the 
realization $\realization{[i] \mapsto
  X(i)_\cdot}$ is a good simplicial space
\end{itemize}
\end{lemma}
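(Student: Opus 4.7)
The plan is to observe that, in each case, the degeneracy maps of the constructed simplicial space arise by applying a certain operation $\Phi$ to the degeneracy maps of $X_\cdot$ (or of the $X(i)_\cdot$), and then to check that $\Phi$ preserves closed cofibrations among well-pointed spaces. Since the hypothesis guarantees that the $s_j$ of $X_\cdot$ (or of each $X(i)_\cdot$) are closed cofibrations, the conclusion in each case follows immediately from the corresponding preservation statement for $\Phi$.

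For items 1--4, the relevant operations are $(-)_+$, $Z \vee (-)$, $Z \times (-)$, and $Z \wedge (-)$, each applied levelwise. The functor $(-)_+$ takes $f$ to $f \sqcup \mathrm{id}_{\ast}$, which is a closed cofibration whenever $f$ is. The remaining three preserve closed cofibrations because $Z$ has a nondegenerate basepoint; this is a standard consequence of the NDR-pair characterization of closed cofibrations in compactly generated pointed spaces, since products, wedges, and smashes of NDR pairs are again NDR pairs. For item 5, $\Map(C,-)$ preserves closed cofibrations because $C$ is compact and cofibrant: compactness guarantees that $\Map(C,-)$ commutes with the relevant colimits used to detect cofibrancy, and cofibrancy of $C$ yields the usual pushout--product conclusion. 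For item 6, the $n$-th degeneracy of $\diag\bigl(\bigvee_{i=1}^k X(i)_\cdot\bigr)$ is $\bigvee_{i=1}^k s_j^{X(i)}$, a finite wedge of closed cofibrations between well-pointed spaces, and so is itself a closed cofibration by the same wedge argument as in item 2.

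The main obstacle is item 7. The $n$-th space of the realized simplicial space is $\realization{[i] \mapsto X(i)_n}$, and the $j$-th degeneracy from level $n$ to level $n+1$ is the realization of the simplicial map $[i] \mapsto s_j^{X(i)}$ whose components are the degeneracies of the individual $X(i)_\cdot$, each a closed cofibration by hypothesis. The key fact required is therefore that fat realization takes a levelwise closed cofibration of simplicial spaces to a closed cofibration. This follows from the inductive construction of $\realization{-}$ via the pushout diagrams
$$
R_{n-1} \longleftarrow X_n \times \partial \Delta^n \longrightarrow X_n \times \Delta^n,
$$
because each inclusion $\partial\Delta^n \hookrightarrow \Delta^n$ is a closed cofibration of well-pointed spaces, product with a fixed well-pointed space preserves closed cofibrations by item~3 applied at the level of spaces, and closed cofibrations are preserved by pushout and by sequential colimit in the compactly generated category. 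A straightforward induction on $n$ then shows that any levelwise closed cofibration $X_\cdot \to Y_\cdot$ induces a closed cofibration $\realization{X_\cdot} \to \realization{Y_\cdot}$; applying this to the simplicial maps $[i] \mapsto s_j^{X(i)}$ gives the required cofibration condition for the realized simplicial space, completing the verification.
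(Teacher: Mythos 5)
Your overall strategy coincides with the paper's: identify each degeneracy of the new simplicial space as the image of a degeneracy of $X_\cdot$ under an operation, and check that the operation preserves closed cofibrations of well-pointed spaces. Items 1--4 and 6 are handled essentially as in the paper (the paper realizes the smash case as an induced map of pushouts of $\basept \leftarrow Z \vee X_n \rightarrow Z \times X_n$ rather than quoting the NDR stability of smash products, but these are interchangeable), and item 7 rests in both treatments on the fact that fat realization carries a levelwise closed cofibration to a closed cofibration; you sketch why via the skeletal pushouts, the paper simply asserts it. One caution there: the inductive step is not literally ``pushouts preserve cofibrations'' (an induced map on pushouts of three cofibrations need not be a cofibration); what is actually needed is the pushout--product (union) lemma applied to $(X_n\to Y_n)$ against $(\partial\Delta^n\hookrightarrow\Delta^n)$. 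Since the fact itself is standard and the paper also takes it for granted, this is a gloss rather than a gap.

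The one step whose justification would not survive scrutiny is item 5. You argue that $\Map(C,-)$ preserves closed cofibrations because ``compactness guarantees that $\Map(C,-)$ commutes with the relevant colimits used to detect cofibrancy, and cofibrancy of $C$ yields the usual pushout--product conclusion.'' Neither half of this works: $\Map(C,-)$ is a right adjoint and does not commute with the pushouts and quotients by which cofibrations are built (compactness only buys commutation with filtered colimits), and the pushout--product/exponential adjunction in this direction converts a cofibration in the \emph{source} variable into a \emph{fibration} of mapping spaces, which is not what is needed. The correct mechanism, and the one the paper uses, is the NDR-pair characterization transported through $\Map(C,-)$: if $(\phi_X, H)$ exhibits $(X_{n+1}, X_n)$ as an NDR pair, then $\phi(f) = \sup_{c\in C}\phi_X(f(c))$ together with the induced homotopy $H_*(f,t)(c)=H(f(c),t)$ exhibits $(\Map(C,X_{n+1}),\Map(C,X_n))$ as an NDR pair; compactness of $C$ is exactly what makes $\phi$ continuous and ensures $\phi^{-1}(0)=\Map(C,X_n)$. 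Your conclusion is true, but you should replace the stated reason with this argument.
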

\begin{proof}
We only need to make these arguments in the category of spaces, so
when it is convenient, we can use a characterization of cofibrations
that is specific to that category.

Item $1$: obvious.
Item $2$: coproduct (colimit) of cofibrations is a cofibration.
Item $3$: follows from characterization of cofibrations via
neighborhood deformation retracts (as in 
\cite[Theorem~VII.1.5, p.~431]{Bredon:topology-and-geometry}).
Item $4$: the map is question is the pushout (colimit) of vertical
cofibrations in the following diagram:
$$\xymatrix{
\basept 
\ar@{>->}[d]
&
\ar@{>->}[d]
Z \vee X_n
\ar[l]
\ar[r]
&
\ar@{>->}[d]
Z \times X_n
\\
\basept &
Z \vee X_{n+1}
\ar[l]
\ar[r]
&
Z \times X_{n+1}
}$$
Item $5$: the neighborhood retraction for $X_n$ in $X_{n+1}$ induces a
neighborhood retraction of $\Map(C,X_n)$ in $\Map(C,X_{n+1})$ using
the height function $\phi(f) = \sup_{c\in C} \phi_{X}(f(c))$ 
derived from the height function $\phi_X$ for $s_n$.
Item $6$: For a finite coproduct, this follows from (2) along with the
fact that the composition of cofibrations is a cofibration (the
diagonal degeneracies come from composing the degeneracies in each
individual direction).
Item $7$: Each map $s_{i,j} : X(i)_j \rightarrow X(i)_{j+1}$ is a
cofibration, so the realization in the $i$ direction preserves the
cofibrations, producing a cofibration $\realization{X(\cdot)_j} \rightarrow
\realization{X(\cdot)_{j+1}}$. 
\end{proof}


In this paper we work with the category of
topological spaces because homotopy inverse limit constructions are
very important, and these require fibrant objects to be well-behaved.
When working with simplicial sets, it is more effort to maintain
fibrancy. However, some standard results that we use are proven for
bisimplicial spaces, so we need to establish that they also hold for
topological spaces. 

To this end, we recall some facts about simplicial and bisimplicial
sets. Given a simplicial space $X$, the singular set functor,
$\Sing(X)$ produces a simplicial set whose $k$-simplices are the set
(not space) of continuous maps of the standard topological $k$-simplex
into $X$; 
that is, $\Hom(\Delta^k,X)$. The functor $\Sing$ is right adjoint to
the strict realization functor, and the map
$\strictrealization{\Sing(X)}\rightarrow X$ is always a weak equivalence. 
These facts and more can be found in
\cite[Chapter~1]{Goerss-Jardine:simplicial-homotopy-theory}.
Given a bisimplicial set $X$, there is a
functor ``Tot'' that produces a simplicial set. Let $\Delta[m]$ be the
standard simplicial $n$-simplex, $\Hom_\Delta(-,[m])$, let $\Delta^m$
be standard $m$-simplex that is the strict realization of $\Delta[m]$,
 and let
$X_{m,*}$ denote the simplicial set $[k] \mapsto X_{m,k}$.
This functor ``Tot'' can be
described as the coequalizer of the diagram:
$$
\xymatrix{
{\displaystyle
\bigsqcup_{\alpha: [m]\rightarrow{} [n]} X_{m,*} \times \Delta[n]
}
\ar@<1.0ex>[r]
\ar@<0.0ex>[r]
&
{\displaystyle
\bigsqcup_{[m]} X_{m,*} \times \Delta[m]
}
},$$
where the first coproduct is taken over all morphisms in $\Delta$, and
the second is taken over all objects in $\Delta$. The first morphism
sends $(x,y)$ to $(\alpha^* x,y)$ and the second morphism sends
$(x,y)$ to $(x,\alpha_* y)$.

Applying the \emph{strict} geometric realization functor (which
commutes with coproducts and finite products) to this
diagram produces a diagram
$$
\xymatrix{
{\displaystyle
\bigsqcup_{\alpha: [m]\rightarrow{} [n]} 
\strictrealization{X_{m,\cdot}} 
\times 
\Delta^n
}
\ar@<1.0ex>[r]
\ar@<0.0ex>[r]
&
{\displaystyle
\bigsqcup_{[m]} 
\strictrealization{X_{m,\cdot}} 
\times 
\Delta^m
}
}.$$
The coequalizer of this
diagram is the realization $\strictrealization{[i] \mapsto
  \strictrealization{[j]\mapsto X_{i,j}}}$. But strict realization is a left
adjoint, and hence preserves coequalizers, so we have established:
\begin{lemma}
\label{lem:realization-tot}
  Let $X$ be a bisimplicial set. Then using strict realizations,
  $\strictrealization{\Tot(X_{\cdot\cdot})}$ is isomorphic (homeomorphic) to 
  $\strictrealization{[i] \mapsto \strictrealization{[j]\mapsto X_{i,j}}}$. 
  $\qed$
\end{lemma}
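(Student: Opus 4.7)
The plan is to leverage two standard facts: strict realization $\strictrealization{-}$ is a left adjoint (to $\Sing$) and therefore preserves all colimits; and strict realization preserves finite products when we work in the compactly generated category, so $\strictrealization{A \times B} \cong \strictrealization{A} \times \strictrealization{B}$ and in particular $\strictrealization{Y_\cdot \times \Delta[n]} \cong \strictrealization{Y_\cdot} \times \Delta^n$ for any simplicial set $Y_\cdot$.

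With these tools, the proof essentially writes itself from the discussion just above the lemma. First, I would take as given the coequalizer presentation
$$
\xymatrix{
\displaystyle\bigsqcup_{\alpha: [m] \to [n]} X_{m,*} \times \Delta[n]
\ar@<1.0ex>[r]
\ar@<0.0ex>[r]
&
\displaystyle\bigsqcup_{[m]} X_{m,*} \times \Delta[m]
}
$$
of $\Tot(X_{\cdot\cdot})$ in simplicial sets. Applying $\strictrealization{-}$ to this diagram, and using that $\strictrealization{-}$ commutes with coproducts, finite products, and coequalizers, the image of this diagram is exactly the coequalizer diagram in spaces displayed above the lemma, and its coequalizer is $\strictrealization{\Tot(X_{\cdot\cdot})}$.

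Second, I would recognize the resulting space-level coequalizer as the standard coequalizer formula for the strict realization of a simplicial space. Specifically, for any simplicial space $Y_\cdot$, one has
$$
\strictrealization{Y_\cdot} \cong \mathrm{coeq}\!\left( \bigsqcup_{\alpha: [m] \to [n]} Y_m \times \Delta^n \rightrightarrows \bigsqcup_{[m]} Y_m \times \Delta^m \right).
$$
Taking $Y_m = \strictrealization{X_{m,\cdot}}$ (the inner strict realization in the $j$-direction), this coequalizer is $\strictrealization{[i] \mapsto \strictrealization{[j] \mapsto X_{i,j}}}$, so the two coequalizers agree.

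The only real subtlety, and the one I would take most care with, is that the realization of a product of simplicial sets equals the product of realizations. This is the classical result that requires the compactly generated topology on the product (as mentioned in Section~\ref{sec:categories-spaces}); without it, the identification $\strictrealization{X_{m,*} \times \Delta[n]} \cong \strictrealization{X_{m,*}} \times \Delta^n$ fails and the argument breaks. Everything else is a direct application of the adjoint functor theorem (left adjoints preserve colimits) and the standard coequalizer description of realization.
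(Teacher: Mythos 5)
Your proof is correct and follows essentially the same route as the paper: present $\Tot$ as a coequalizer, apply strict realization (using that it preserves coproducts, finite products, and, being a left adjoint, coequalizers), and identify the resulting space-level coequalizer with the iterated strict realization. Your explicit flagging of the compactly generated product issue is a fair point of care; the paper handles it by the standing convention of Section~\ref{sec:categories-spaces}.
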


\begin{corollary}
\label{cor:tot-realization}
  Let $X_\cdot$ be a simplicial space and $\Sing_\cdot X_\cdot$ be the
  bisimplicial set formed by applying the singularization functor to
  each $X_i$. Then we have:
  $$\realization{\Tot(\Sing_\cdot X_\cdot)} \simeq \realization{X_\cdot}.$$
\end{corollary}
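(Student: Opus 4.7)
The plan is to build a chain of equivalences that ferries $\realization{\Tot(\Sing_\cdot X_\cdot)}$ to $\realization{X_\cdot}$ via strict realizations, using Lemma~\ref{lem:realization-tot} as the bridge between $\Tot$ and the iterated strict realization. First, I apply Lemma~\ref{lem:realization-tot} to the bisimplicial set $\Sing_\cdot X_\cdot$ (with $(i,j)$-simplices $\Sing_j(X_i)$) to obtain a homeomorphism
$$\strictrealization{\Tot(\Sing_\cdot X_\cdot)} \cong \strictrealization{[i] \mapsto \strictrealization{\Sing X_i}}.$$
Next, since $\Tot(\Sing_\cdot X_\cdot)$ is a simplicial set --- viewed as a simplicial space by giving each level the discrete topology --- all its degeneracies are injections of discrete spaces and thus closed cofibrations. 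So it is good, and Theorem~\ref{thm:good-fat-strict-realization} supplies a weak equivalence $\realization{\Tot(\Sing_\cdot X_\cdot)} \simeq \strictrealization{\Tot(\Sing_\cdot X_\cdot)}$.

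Write $Y_\cdot$ for the simplicial space $[i] \mapsto \strictrealization{\Sing X_i}$. The key intermediate step is to show $Y_\cdot$ is good. The degeneracy $s_j: X_i \to X_{i+1}$ has the face $d_j$ as a one-sided inverse, so $s_j$ is a split monomorphism of spaces, hence injective. Applying $\Sing$ yields a monomorphism of simplicial sets $\Sing(s_j): \Sing X_i \to \Sing X_{i+1}$, and strict geometric realization sends such monomorphisms to relative CW inclusions, which are closed cofibrations of spaces. Therefore $Y_\cdot$ is good, and Theorem~\ref{thm:good-fat-strict-realization} gives $\realization{Y_\cdot} \simeq \strictrealization{Y_\cdot}$.

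Finally, for each $i$ the counit map $\strictrealization{\Sing X_i} \to X_i$ is a weak equivalence, so the simplicial map $Y_\cdot \to X_\cdot$ is a levelwise weak equivalence, and the Realization Lemma~\ref{lem:realization-lemma} delivers $\realization{Y_\cdot} \simeq \realization{X_\cdot}$. Concatenating all four steps,
$$\realization{\Tot(\Sing_\cdot X_\cdot)} \simeq \strictrealization{\Tot(\Sing_\cdot X_\cdot)} \cong \strictrealization{Y_\cdot} \simeq \realization{Y_\cdot} \simeq \realization{X_\cdot},$$
which is the desired equivalence. The main obstacle is verifying goodness of $Y_\cdot$, since one must track how the split-monic property of the degeneracies of $X_\cdot$ survives the composite $\strictrealization{\Sing(-)}$; the rest of the argument is essentially bookkeeping on top of Lemma~\ref{lem:realization-tot} and the two realization comparison theorems.
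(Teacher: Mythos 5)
Your proof is correct and follows essentially the same route as the paper's: apply Lemma~\ref{lem:realization-tot} to pass from $\Tot$ to the iterated strict realization, use goodness (of the simplicial set $\Tot(\Sing_\cdot X_\cdot)$ and of $[i]\mapsto\strictrealization{\Sing X_i}$) to interchange fat and strict realizations, and finish with the counit equivalence $\strictrealization{\Sing X_i}\simeq X_i$ together with the Realization Lemma. Your verification that the degeneracies survive $\strictrealization{\Sing(-)}$ as closed cofibrations is just a more explicit version of the paper's remark that $\Sing$ takes inclusions to cofibrations.
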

\begin{proof}
  Let $Y$ be the simplicial set $Y_{i,j} = \Sing_j X_i$. 
  Since $\Tot(Y)$ is a simplicial set, the (fat) realization and
  strict realizations are equivalent, so we can work with the strict
  realization. Lemma~\ref{lem:realization-tot} then gives: 
  $$\strictrealization{\Tot(Y_{\cdot\cdot})}
  \cong 
  \strictrealization{[i] \mapsto \strictrealization{[j]\mapsto Y_{i,j}}}.$$ The
  inner realization on the right is $\strictrealization{[j]\mapsto Y_{i,j}}
  = \strictrealization{\Sing(X_i)} \simeq X_i$. The functor $\Sing$ takes
  inclusions to cofibrations, so the simplicial space
  $\strictrealization{\Sing(X_i)}$ is good; hence the strict realization
  that appears here is equivalent to the fat realization. We can then
  use the fact that the fat realization is a homotopy functor, so the
  weak equivalences   $\strictrealization{\Sing(X_i)}\simeq X_i$ induce an
  equivalence of (fat) realizations: 
  $$\realization{[i]\mapsto \strictrealization{\Sing(X_i)}}
  \xrightarrow{\simeq}
  \realization{[i] \mapsto X_i }
  $$
  Chaining the equivalences together produces the desired result.
\end{proof}

The degeneracy maps encode ``redundant'' information that is necessary
for the proper homotopical behavior of the object. One important
consequence of the existence of the degeneracy maps is the
Eilenberg-Zilber theorem. 
The Eilenberg-Zilber theorem for bisimplicial sets 
relates the $\Tot$ of a bisimplicial space to its
diagonal. The diagonal of a bisimplicial object is
$\diag(X_{\cdot\cdot})_n = X_{n,n}$.

\begin{theorem}[Eilenberg-Zilber] 
(\cite[Proposition~B.1, p.~119]{Bousfield-Friedlander:Gamma-Spaces})
\label{thm:eilenberg-zilber}
Let $X$ be a bisimplicial set. There is a natural
isomorphism of simplicial sets $\Tot(X) \xrightarrow{\cong} \diag(X)$.
$\qed$
\end{theorem}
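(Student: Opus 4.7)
The plan is to exhibit a natural bijection on simplices and verify it respects the simplicial structure. The main tool is the explicit coequalizer description of $\Tot$, which makes the comparison essentially combinatorial.

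First I would describe $\Tot(X)_n$ explicitly. Unwinding the coequalizer in simplicial degree $n$, an $n$-simplex is a pair $(x, \phi)$ with $x \in X_{m,n}$ and $\phi \in \Delta[m]_n = \Hom_\Delta([n],[m])$, modulo the relation $(\alpha^*_{(1)} x, \phi) \sim (x, \alpha \circ \phi)$ for every $\alpha : [m] \to [p]$ in $\Delta$, where $\alpha^*_{(1)}$ denotes the action on the first simplicial coordinate of $X$. Here I will use $\alpha^*_{(1)}$ and $\beta^*_{(2)}$ throughout to distinguish the two simplicial coordinates of $X$, writing $d_i^{(1)}, s_i^{(1)}, d_i^{(2)}, s_i^{(2)}$ when needed.

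Second, I would define the map $\Tot(X) \to \diag(X)$ that sends the class of $(x,\phi)$ to $\phi^*_{(1)} x \in X_{n,n} = \diag(X)_n$. Taking $\alpha = \phi$ in the coequalizer relation gives $(x,\phi) \sim (\phi^*_{(1)} x, \id_{[n]})$, so every class has a representative of the form $(y, \id_{[n]})$ with $y \in X_{n,n}$; thus the map is surjective. For injectivity I would check that if $(y, \id_{[n]})$ and $(y', \id_{[n]})$ are identified by a single generating relation, then $y = y'$ (any $\alpha : [n] \to [p]$ with $\alpha \circ \id = \id$ forces $\alpha = \id$); the transitive closure is then handled by the standard argument that the normal form is unique.

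Third, I would verify that this bijection commutes with face and degeneracy operators. For $\beta : [n'] \to [n]$ the induced map on $\Tot(X)$ sends the class of $(y, \id_{[n]})$ with $y \in X_{n,n}$ to the class of $(\beta^*_{(2)} y, \beta)$ in bidegree $(n, n')$. Applying the coequalizer relation with $\alpha = \beta$ yields the normal-form representative $(\beta^*_{(1)} \beta^*_{(2)} y, \id_{[n']}) \in X_{n',n'}$, and $\beta^*_{(1)} \beta^*_{(2)}$ is precisely the $\beta$-action on $\diag(X)$. Naturality in $X$ is immediate from the functoriality of both constructions.

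The only real obstacle is keeping the two simplicial indices straight and making the uniqueness of the normal form rigorous, since a priori one must allow zigzags of the generating relation in the coequalizer. I would handle this by showing directly that the assignment $(x,\phi) \mapsto \phi^*_{(1)} x$ descends from the free coproduct $\bigsqcup_{[m]} X_{m,*} \times \Delta[m]$ to the coequalizer: both composites $(x,\phi) \mapsto \phi^*_{(1)} x$ and $(\alpha^*_{(1)} x, \phi) \mapsto \phi^*_{(1)} \alpha^*_{(1)} x = (\alpha\phi)^*_{(1)} x$ agree, so the map to $\diag(X)$ is well-defined, and combined with the surjectivity argument above this gives the bijection without needing to analyze zigzags explicitly.
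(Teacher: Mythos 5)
Your proof is correct and complete. Note, though, that the paper does not actually prove this statement: it is quoted from Bousfield--Friedlander (Proposition~B.1) and marked as established, so there is no internal argument to compare against; what you have written is the standard direct verification that the coend $\Tot(X)=\int^{[m]}X_{m,*}\times\Delta[m]$ is computed by the diagonal. The key points are all present: the assignment $(x,\phi)\mapsto\phi^{*}_{(1)}x$ visibly coequalizes the two maps (this is exactly the identity $\phi^{*}_{(1)}\alpha^{*}_{(1)}=(\alpha\phi)^{*}_{(1)}$), hence descends to $\overline{q}\colon\Tot(X)_{n}\to X_{n,n}$, while $y\mapsto[(y,\mathrm{id}_{[n]})]$ is a section whose composite the other way is the identity by the generating relation with $\alpha=\phi$. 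Packaging injectivity as a retraction in your final paragraph is the right move, since it sidesteps any analysis of zigzags in the coequalizer; it also makes the sentence in your second paragraph about ``uniqueness of the normal form'' redundant, and I would drop that sentence rather than try to make it rigorous on its own. The verification that the bijection is simplicial, via the identification of the $\beta$-action on $\diag(X)$ with $\beta^{*}_{(1)}\beta^{*}_{(2)}$, is also correct. One notational caution: the paper's displayed coequalizer labels the summand for $\alpha\colon[m]\to[n]$ as $X_{m,*}\times\Delta[n]$, and with the variances as written this does not quite typecheck; your convention (contravariant reindexing on the first coordinate of $X$, covariant on $\Delta[-]$) is the intended one and is the one you should keep.
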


We actually want to use the following statement for bisimplicial
spaces:
\begin{corollary}
\label{cor:useful-eilenberg-zilber}
Let $X_{\cdot\cdot}$ be a good  bisimplicial space. 
The realization in 
one direction and then the other,
$\lvert{X_{\cdot\cdot}}\rvert$, is naturally homotopy equivalent to
$\realization{\diag{X_{\cdot\cdot}}}$. Realization is a homotopy
colimit, and homotopy colimits commute up to
natural isomorphism, so the order in which the
realizations are taken does not matter.
\end{corollary}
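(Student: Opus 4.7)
The strategy is to imitate the proof of Corollary~\ref{cor:tot-realization}, using the $\Sing$ functor to reduce the claim about bisimplicial spaces to the Eilenberg--Zilber theorem for bisimplicial sets (Theorem~\ref{thm:eilenberg-zilber}).

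First, I would apply $\Sing$ levelwise to each space $X_{i,j}$ to produce a trisimplicial set $W_{i,j,k} = (\Sing X_{i,j})_k$. Because $\strictrealization{\Sing Z}$ is always a good simplicial space (as noted in the proof of Corollary~\ref{cor:tot-realization}), and because $X_{\cdot\cdot}$ is a good bisimplicial space by hypothesis, Lemma~\ref{lem:preserve-goodness} together with Corollary~\ref{cor:good-multisimplicial} guarantees that the bisimplicial space $[i,j] \mapsto \strictrealization{W_{i,j,\cdot}}$ is good and levelwise weakly equivalent to $X_{\cdot\cdot}$. By the Realization Lemma~\ref{lem:realization-lemma}, we may therefore replace $X_{\cdot\cdot}$ throughout by $\strictrealization{W}$, reducing the entire question to a statement about (multi)simplicial sets.

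Next, using Theorem~\ref{thm:good-fat-strict-realization} to swap between fat and strict realizations wherever goodness permits, I would rewrite the iterated realization $\realization{[i]\mapsto \realization{[j]\mapsto X_{i,j}}}$, its counterpart with the order of $i$ and $j$ reversed, and $\realization{\diag X_{\cdot\cdot}}$ all as strict realizations of multisimplicial sets built from $W$. Lemma~\ref{lem:realization-tot} converts the iterated strict realizations into strict realizations of $\Tot$ constructions, and then Theorem~\ref{thm:eilenberg-zilber} replaces the $\Tot$ over the two original bisimplicial directions by the corresponding $\diag$. Chaining the resulting isomorphisms and weak equivalences yields natural weak equivalences identifying both iterated realizations of $X_{\cdot\cdot}$ with $\realization{\diag X_{\cdot\cdot}}$.

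The final sentence of the corollary (commutativity of the two orders of realization) is then immediate: both iterated realizations are weakly equivalent to $\realization{\diag X_{\cdot\cdot}}$, which depends symmetrically on the two simplicial directions. The principal obstacle throughout is the bookkeeping of simplicial indices and the repeated appeals to goodness needed to move freely between fat and strict realizations; this is essentially a higher-dimensional reprise of the proof of Corollary~\ref{cor:tot-realization}, with no genuinely new homotopical content beyond what Eilenberg--Zilber already supplies for bisimplicial sets.
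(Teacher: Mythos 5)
Your argument is correct, but it takes a different route from the paper's. The paper proves this corollary by \emph{directly transporting} the coequalizer construction of $\Tot$ and the Eilenberg--Zilber maps from bisimplicial sets to bisimplicial spaces: since $X$ is good, all fat realizations may be replaced by strict ones, the combinatorial formulas defining $\Tot$ and the isomorphism $\Tot(X)\cong\diag(X)$ make sense verbatim for spaces, and the only additional check is that $\diag(X)$ is itself good (its degeneracies are composites of horizontal and vertical degeneracies, hence cofibrations). You instead resolve by $\Sing$, replacing $X_{i,j}$ by $\strictrealization{\Sing X_{i,j}}$ and reducing to honest multisimplicial sets, exactly as the paper does for Corollary~\ref{cor:tot-realization} and for the Bousfield--Friedlander theorem. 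This buys you the ability to quote Theorem~\ref{thm:eilenberg-zilber} and Lemma~\ref{lem:realization-tot} exactly as stated, at the cost of an extra simplicial direction and more index bookkeeping: note that after the $\Sing$ replacement the two original directions still index \emph{spaces}, so to stay within the bisimplicial-set versions of the cited results you must work with the trisimplicial set $W$ and collapse one pair of directions at a time (e.g.\ first identify $\strictrealization{[j]\mapsto\strictrealization{[k]\mapsto W_{i,j,k}}}$ with $\strictrealization{[m]\mapsto W_{i,m,m}}$ naturally in $i$, then repeat), landing in all cases on $\strictrealization{[n]\mapsto W_{n,n,n}}$. With that iteration spelled out, and with the goodness checks you indicate (which are exactly those the paper makes in Corollary~\ref{cor:tot-realization} and Lemma~\ref{lem:preserve-goodness}), your proof is complete; the paper's version is shorter but leans on the unproved assertion that the Eilenberg--Zilber isomorphism ``translates directly'' to bisimplicial spaces, which your approach avoids.
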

\begin{proof}
Since $X$ is good, the (fat)
realization $\realization{X_{\cdot\cdot}}$ in the statement of the
theorem  is equivalent to 
the strict realization. Using strict realizations, we have a homeomorphism:
$$\strictrealization{\diag(X)} 
\xrightarrow{\cong} 
\strictrealization{\Tot(X)}\cong
\strictrealization{[i]\mapsto \strictrealization{[j]\mapsto X_{i,j}}} .$$
The construction of $\Tot$ for bisimplicial sets prior to
Lemma~\ref{lem:realization-tot}, and the maps in the Eilenberg-Zilber
theorem, have direct translations to bisimplicial spaces once we use
the strict realization. This translation gives the equivalence above;
it remains to check that $\diag(X)$ is a good space, so that its
strict realization agrees with the (fat) realization. The degeneracies
in $\diag(X)$ are compositions of ``horizontal'' and ``vertical''
degeneracies, both of which are cofibrations by our hypothesis that
$X$ is a good bisimplicial space, so $\diag(X)$ is a good simplicial
space. 
\end{proof}


The realization of a levelwise fibration of simplicial spaces need not
be a fibration, but with some conditions it is. The following lemma is
stated in \cite{Waldhausen:K-theory-of-free-products-1} for bisimplicial
sets; we will not argue that it is also true for simplicial spaces,
since it is also an easy corollary of
Theorem~\ref{thm:Bousfield-Friedlander}, below. 
\begin{lemma}
\label{lem:realization-of-levelwise-fibration}
\label{lem:Waldhausen-fibration-lemma}
(\cite[Lemma~5.2, p.~165]{Waldhausen:K-theory-of-free-products-1})
Let $X_\cdot \rightarrow Y_\cdot \rightarrow Z_{\cdot}$ be map of
simplicial spaces such that each $X_n \rightarrow Y_n \rightarrow Z_n$
is a fibration up to homotopy. If each $Z_n$ is connected, then
$\realization{X_{\cdot}} \rightarrow \realization{Y_{\cdot}}
\rightarrow \realization{Z_{\cdot}}$ is a fibration up to homotopy.
$\qed$ 
\end{lemma}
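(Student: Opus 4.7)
The plan is to reduce the statement to the bisimplicial-set version already established by Waldhausen, by pushing the problem through the $\Sing \dashv \strictrealization{-}$ adjunction using Corollary~\ref{cor:tot-realization} and Corollary~\ref{cor:useful-eilenberg-zilber}. The author all but tells us this is how it will go (noting the lemma is also a corollary of Theorem~\ref{thm:Bousfield-Friedlander}), and indeed once the translation machinery is set up the rest is bookkeeping.

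First, I would apply $\Sing$ level-wise to obtain a sequence of bisimplicial sets $\Sing_\cdot X_\cdot \to \Sing_\cdot Y_\cdot \to \Sing_\cdot Z_\cdot$. After replacing each $X_n \to Y_n \to Z_n$ by an equivalent Serre fibration using the mapping path construction (which is functorial, hence preserves the simplicial structure), $\Sing$ carries each level to a Kan fibration of simplicial sets, and connectedness of $Z_n$ passes to $\Sing(Z_n)$. Waldhausen's result in its original bisimplicial-set form then yields a homotopy fibration
$$\realization{\diag(\Sing_\cdot X_\cdot)} \to \realization{\diag(\Sing_\cdot Y_\cdot)} \to \realization{\diag(\Sing_\cdot Z_\cdot)}.$$

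Second, I would transport the conclusion back across the equivalence $\realization{X_\cdot} \simeq \realization{\diag(\Sing_\cdot X_\cdot)}$. Corollary~\ref{cor:tot-realization} identifies $\realization{X_\cdot}$ with $\realization{\Tot(\Sing_\cdot X_\cdot)}$, and Corollary~\ref{cor:useful-eilenberg-zilber} identifies the latter with $\realization{\diag(\Sing_\cdot X_\cdot)}$ (the bisimplicial set in question is automatically good, and the $\Tot$-to-$\diag$ comparison then comes from Eilenberg-Zilber). These equivalences are natural, so they assemble into a commuting ladder between the two three-term sequences; a three-term sequence level-wise weakly equivalent to a homotopy fibration is itself a homotopy fibration.

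The main obstacle is the first step: producing a simplicially natural Serre-fibration replacement of the level-wise homotopy fibration so that $\Sing$ can be applied without losing fibrancy. The standard mapping-path factorization works and is manifestly functorial, so this is a minor but necessary chore. A secondary point --- and the reason the author imposes no goodness assumption on $X_\cdot, Y_\cdot, Z_\cdot$ --- is that the fat realization is a homotopy functor in each level by the Realization Lemma, so one does not need to control cofibrancy of the degeneracies in the original simplicial spaces to make the above equivalences go through.
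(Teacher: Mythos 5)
Your proof is correct, but it takes a different route from the one the paper intends. The paper explicitly declines to re-prove Waldhausen's bisimplicial-set lemma for simplicial spaces and instead derives the statement as an immediate corollary of the Bousfield--Friedlander theorem (Theorem~\ref{thm:Bousfield-Friedlander}): one applies that theorem to the levelwise homotopy pullback square with $X_\cdot$ over $Y_\cdot \rightarrow Z_\cdot$ and the point in the remaining corner, observing that each $Z_n$ connected means $Z_\cdot$ (and trivially the point) satisfies the $\pi_*$-Kan condition, and that $\pi_0 Z_\cdot$ is the constant one-point simplicial set so the map on $\pi_0$ is automatically a fibration of simplicial sets. Your approach instead goes back to Waldhausen's original statement for bisimplicial sets and transports it across the $\Sing$--realization comparison via Corollary~\ref{cor:tot-realization} and Eilenberg--Zilber, after a functorial Serre-fibration replacement. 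This works, and it is worth noting that the transport machinery you assemble is precisely what the paper uses to upgrade Bousfield--Friedlander itself from bisimplicial sets to simplicial spaces (the discussion following Theorem~\ref{thm:Bousfield-Friedlander}); so the two arguments are close cousins, with the paper's version doing the $\Sing$-transport once at the level of the more general theorem and then specializing, while yours does it directly for the lemma at hand. The one point you should make explicit in your ``bookkeeping'' is the comparison between the given $X_\cdot$ and the levelwise strict fibers of the replacement: the hypothesis supplies a simplicial map $X_\cdot \rightarrow \hofib(Y_\cdot \rightarrow Z_\cdot)$ that is a levelwise weak equivalence, and the Realization Lemma (\ref{lem:realization-lemma}) then identifies the fat realizations, which is why no goodness hypotheses are needed.
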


We use a generalization of this result to $2$-cubes, due to Bousfield
and Friedlander, heavily later in this work. They define a fibrancy
condition called the $\pi_*$-Kan condition.
\begin{definition}
\label{def:pi-star-Kan}
  A simplicial space $X_{\cdot}$ is said to satisfy the $\pi_*$-Kan
  condition if:
  \begin{itemize}
  \item for any $m\ge 1$ and any $t\ge 1$, and for any point $a\in
    X_m$, any coherent collection  (in the sense of the usual fibrancy
    condition: $\partial_i x_j = \partial_{j-1} x_i$ for $i>j$ with $i,j\not=k$) 
    of elements  $x_i \in \pi_t(X_{m-1},\partial_i a)$ (for
    $0\le i \le m$, and $i\not= k$), there exists a $y\in \pi_t(X_m,a)$
    with $\partial_i y = x_i$; 
    and
  \item the simplicial set $\pi_0(X_{\cdot})$ is fibrant.
  \end{itemize}
\end{definition}
For instance, a simplicial space $X_{\cdot}$ certainly satisfies the
$\pi_*$-Kan condition if each $X_i$ is connected. Also, simplicial
spaces arising from bisimplicial groups satisfy the $\pi_*$-Kan
condition. 
\begin{theorem}[Bousfield-Friedlander]
\label{thm:Bousfield-Friedlander}
(\cite[Theorem~B.4, p.~121]{Bousfield-Friedlander:Gamma-Spaces})
Let 
$$\xymatrix{
V_\cdot
\ar[r]
\ar[d]
&
X_\cdot
\ar[d]
\\
W_\cdot
\ar[r]
&
Y_\cdot
}$$
be a commutative square of simplicial spaces such that for each $n$,
the square consisting of $V_n$, $W_n$, $X_n$, and $Y_n$ is a homotopy
pullback square. If $X$ and $Y$ satisfy the $\pi_*$-Kan condition and
if $\pi_0 X_\cdot \rightarrow \pi_0 Y_\cdot$ is a fibration of
simplicial sets, then after realization we have a homotopy
pullback square:
$$\xymatrix{
\realization{V_\cdot}
\ar[r]
\ar[d]
&
\realization{X_\cdot}
\ar[d]
\\
\realization{W_\cdot}
\ar[r]
&
\realization{Y_\cdot}
}$$
\end{theorem}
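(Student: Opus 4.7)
The plan is to reduce the claim for simplicial spaces to the already-established version for bisimplicial sets (Bousfield--Friedlander's Theorem~B.4 as cited) by passing through the singularization functor $\Sing$ and then returning via Corollary~\ref{cor:tot-realization}. Concretely, I would apply $\Sing$ levelwise to each vertex of the square to obtain a commutative square of bisimplicial sets
\[
\xymatrix{
\Sing_\cdot V_\cdot \ar[r]\ar[d] & \Sing_\cdot X_\cdot \ar[d] \\
\Sing_\cdot W_\cdot \ar[r] & \Sing_\cdot Y_\cdot
}
\]
and argue that this square satisfies the hypotheses of the bisimplicial set version, apply that version, then translate the conclusion back to simplicial spaces.

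The verification of hypotheses is the content of the first step. Because $\Sing$ preserves weak equivalences and its strict realization adjoint is a weak equivalence, the levelwise homotopy pullback hypothesis transfers: each square $\Sing_\cdot V_n, \Sing_\cdot W_n, \Sing_\cdot X_n, \Sing_\cdot Y_n$ is a homotopy pullback of simplicial sets because its strict realization is the original levelwise square (up to equivalence). The $\pi_*$-Kan condition transfers since $\pi_t(X_n,a) = \pi_t(\Sing X_n,a)$ for all $t\geq 0$ and since the condition is formulated purely in terms of homotopy groups and their face maps. The fibrancy of $\pi_0 X_\cdot \to \pi_0 Y_\cdot$ is likewise preserved because $\pi_0 X_n \cong \pi_0 \Sing X_n$.

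Next, invoke Bousfield--Friedlander B.4 for bisimplicial sets to conclude that the induced square of diagonals (or, equivalently, of $\Tot$'s) is a homotopy pullback of simplicial sets. Now I realize this square into spaces. By Corollary~\ref{cor:tot-realization}, applied at each corner, there is a natural equivalence
\[
\realization{\Tot(\Sing_\cdot X_\cdot)} \simeq \realization{X_\cdot},
\]
and similarly for $V, W, Y$. Since strict realization of simplicial sets preserves finite limits and takes Kan fibrations to Serre fibrations, it carries the bisimplicial homotopy pullback square to a homotopy pullback square of topological spaces, which under the above equivalences is exactly the desired square.

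The main obstacle is checking that the hypotheses translate cleanly under $\Sing$ --- in particular, that ``homotopy pullback of spaces'' at each level corresponds to ``homotopy pullback of simplicial sets'' after applying $\Sing$. This requires fibrant replacement arguments: one may replace each $X_n \to Y_n$ by a Serre fibration via factorization and observe that $\Sing$ converts such a fibration into a Kan fibration of simplicial sets, so that strict pullbacks on either side compute the homotopy pullback and are exchanged by $\Sing$. Once this translation is secure, the rest is formal.
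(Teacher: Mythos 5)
Your proposal is correct and follows essentially the same route the paper takes: apply $\Sing$ levelwise, verify that the levelwise homotopy pullback, $\pi_*$-Kan, and $\pi_0$-fibration hypotheses transfer to bisimplicial sets, invoke Bousfield--Friedlander's Theorem~B.4, and translate back via $\diag\cong\Tot$ and Corollary~\ref{cor:tot-realization}. Your fibrant-replacement justification for why $\Sing$ preserves levelwise homotopy pullbacks is a slightly more explicit version of the paper's remark that $\Sing$, being a right adjoint preserving inverse limits and fibrations, preserves homotopy pullback squares.
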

Bousfield and Friedlander actually prove their theorem for
bisimplicial sets, so some comments are in order to apply it to
simplicial spaces. The functor $\Sing$ is a right adjoint, so it
preserves inverse limits; in particular, if $\cube{X}$ is a homotopy
pullback cube of spaces, then $\Sing \cube{X}$ is a homotopy pullback
cube of simplicial sets. The bisimplicial set $\Sing Y_\cdot$
satisfies the $\pi_*$-Kan condition if and only if the simplicial
space $Y_\cdot$  does, since it is a condition on homotopy groups, and
the homotopy groups of $\Sing Y$ and $Y$ are isomorphic for any space
$Y$. Now starting with a commutative square of simplicial spaces
satisfying the hypotheses stated above, we apply $\Sing$ to produce
a commutative square of bisimplicial sets satisfying the analogous
hypotheses used by Bousfield and Friedlander. Their result is then
that the square of simplicial sets formed by taking the diagonal is
Cartesian. Then $\diag (\Sing_\cdot X_\cdot) \cong \Tot(\Sing_\cdot
X_\cdot)$ and $\realization{\Tot(\Sing_\cdot
X_\cdot)} \simeq \realization{X_\cdot}$
(Corollary~\ref{cor:tot-realization}), producing the result as we 
state it.
$\qed$

The realization of a simplicial spectrum requires that we define both
the spaces in the realization and structure maps.
Begin with a simplicial spectrum $[m] \mapsto \mathbf{X}_m$ with the
spectrum $\mathbf{X}_m$ consisting of spaces $X_{m,n}$ and structure
maps $S^1 \wedge X_{m,n} \rightarrow X_{m,n+1}$.
Define the realization of this simplicial spectrum
$\realization{[m]\mapsto \mathbf{X}_m}$ to have $n^{\text{th}}$ space 
$\realization{[m] \mapsto X_{m,n}}$.  Recall that the
suspension of $X$ is homeomorphic to $\hocolim\left( \basept
  \leftarrow X \rightarrow \basept \right)$ in the category of pointed
spaces. The structure maps are
 given by commuting the realization (which is a homotopy colimit) with
 the suspension (which is also a homotopy colimit) and using the
 structure map of each $\mathbf{X}_m$ in the following manner:
\begin{align*}
  S^1 \wedge \realization{[m]\mapsto \mathbf{X}_m}_n
&=
  S^1 \wedge \realization{[m]\mapsto {X}_{m,n}}
\\
&\simeq
\realization{[m] \mapsto S^1 \wedge X_{m,n}}
\\
&\rightarrow
\realization{[m] \mapsto X_{m,n+1}}
\\
&=
\realization{[m] \mapsto \mathbf{X}_m}_{n+1}
\end{align*}


\section{The Nerve Of A Category}

A category $\cat{C}$ determines a simplicial set called the nerve
of $\cat{C}$, denoted $N_{\cdot} \cat{C}$. The $n$-simplices of this
object consist of $n$ composable morphisms in the category; for $n=0$,
we define $N_0\cat{C} = \Obj(\cat{C})$ (or alternatively, consider
only the identity morphisms). The face maps are given by composing two
adjacent morphisms, or deleting them at the extrema, and the
degeneracy maps are given by inserting identity morphisms. Explicitly,
let $\alpha\in N_n \cat{C}$ be a sequence of $n$ composable morphisms:
$$\alpha = ( C_n \xrightarrow{\alpha_{n-1}} \cdots
  \xrightarrow{\alpha_0} C_0 ) .$$
Then the faces of $\alpha$ are given by:
$$
d_i \alpha = 
\begin{cases}
  C_{n} \xrightarrow{\alpha_{n-1}} \cdots
  \xrightarrow{\alpha_1} C_1
&
\text{if $i=0$}
\\
  C_{n} \xrightarrow{\alpha_{n-1}} \cdots
  C_{i+1} \xrightarrow{\alpha_{i-1}\alpha_{i}} C_{i-1} \rightarrow
  \cdots
  \xrightarrow{\alpha_0} C_0
&
\text{if $0<i<n$}
\\
  C_{n-1} \xrightarrow{\alpha_{n-2}} \cdots \xrightarrow{\alpha_0} C_0
& 
\text{if $i=n$}
\end{cases}
$$
The degeneracies of $\alpha$ are given by:
$$
s_j \alpha = 
\begin{cases}
C_n \rightarrow \cdots \rightarrow C_j \xrightarrow{=}
C_j \rightarrow \cdots \rightarrow C_0 
&
\text{for $0\le i \le n$}
\end{cases}
$$
For example, the category 
$$ C_2 \xrightarrow{\alpha_1} C_1 \xrightarrow{\alpha_0} C_0$$
has as its
nerve the following simplicial object:
\begin{itemize}
\item in dimension zero:  three nondegenerate simplices, $C_0$,
$C_1$, and $C_2$;
\item in dimension one: three nondegenerate simplices:
\begin{gather*}
C_1 \xrightarrow{\alpha_0} C_0
\\
C_2 \xrightarrow{\alpha_1}C_1 
\\
C_2 \xrightarrow{\alpha_0\alpha_1} C_0,
\end{gather*}
 plus three more (degenerate)
simplices that correspond to the identity maps of $C_0$, $C_1$, and
$C_2$;
\item  in dimension two: one nondegenerate simplex: $C_2
\xrightarrow{\alpha_1} C_1 \xrightarrow{\alpha_0} C_0$, and six degenerate
simplices: $C_2 \xrightarrow{\alpha_1} C_1 \xrightarrow{=} C_1$, \emph{etc}.
\item in higher dimensions: degenerate simplices only.
\end{itemize}
To illustrate the action of the face maps, consider their action  on
the 2-simplex $C_2\rightarrow C_1 \rightarrow C_0$:
\begin{align*}
 d_0 ( C_2 \xrightarrow{\alpha_1} C_1 \xrightarrow{\alpha_0} C_0 ) 
 &= C_2 \xrightarrow{\alpha_1} C_1 \\
 d_1 ( C_2 \xrightarrow{\alpha_1} C_1 \xrightarrow{\alpha_0} C_0 )
 &= C_2 \xrightarrow{\alpha_0\alpha_1} C_0 \\
 d_2 ( C_2 \xrightarrow{\alpha_1} C_1 \xrightarrow{\alpha_0} C_0 ) 
 &= C_1 \xrightarrow{\alpha_0} C_0 
\end{align*}


\section{Equivalences And Connectivity}

In any of these categories, a map is $k$-connected if it is an
isomorphism on $\pi_j$ for $j<k$ and surjective on $\pi_k$. An object
is $k$-connected if the map \emph{from} the initial object is
$k$-connected. Note that this means that $S^n$ an $(n-1)$-connected
space. A spectrum is called connective if all of its negative homotopy
groups are zero.

A map of spectra is an equivalence if it is an isomorphism on $\pi_*$.
A map of spaces is an equivalence if it induces a bijection on $\pi_0$
and an isomorphism on
$\pi_*$ for all compatible choices of basepoint (not just the
basepoint with which all pointed spaces are equipped).


\section{Homotopy Invariance}
\label{sec:homotopy-functors}

The basic object of study of diverse variations of Goodwillie calculus
is a homotopy functor. A homotopy functor is a functor that preserves
equivalences.  In our setting, we will mainly consider functors from
pointed spaces to pointed spaces. It turns out that the study of these
functors is intimately tied up with the study of functors from pointed
spaces to spectra, so we will also be interested in those. For various
examples, it is more convenient to consider algebraic settings, such
as functors from spaces to chain complexes of abelian groups.
Generally these embed into the category of spectra or spaces in some
manner that should be clear upon reflection. For instance, integral
homology $H_*(X;\mathbb{Z})$ is generally regarded as the homology of
a chain complex 
of abelian groups, but is also $\pi_*(\mathbf{H}\mathbb{Z}\wedge X)$ or $\pi_*
\LoopInfty(\mathbf{H}\mathbb{Z} \wedge X)$, which provides a sensible
way of considering homology as the homotopy of a spectrum or space. 


Although homotopy functors, such as $\pi_*$ itself, homology, and
loops on a space, are abundant, there are many familiar functors that
are \emph{not} homotopy functors. For example, the pushout is not a
homotopy functor because the diagram 
$$
\xymatrix{
\ast{}
&
S^0 
\ar[r]
\ar[l]
&
\ast{}
}
$$
has as its pushout one point, but there is an equivalence of diagrams
(an honest map of diagrams that is an equivalence on each vertex)
between this one and  
$$ 
\xymatrix{
D^1
&
S^0 
\ar[r]
\ar[l]
&
D^1
},
$$
whose pushout is $S^1$.  This is just the beginning of trouble;
there are simplicial spaces whose strict realization is not equivalent
to the ``fat'' realization used in this paper.

Since we are interested in studying only homotopy functors, and we do
not want to be constantly concerned whether various constructions are
homotopy invariant, we make the blanket assertion that \emph{all}
constructions will be made in a homotopy invariant way. In particular,
all colimits will be homotopy colimits, and all inverse limits will be
homotopy inverse limits. In order to remind the reader, we will use
the symbols $\hocolim$ and $\holim$ for these constructions. We will
point out explicitly other places where homotopy-invariant
constructions are necessary as they arise. Three situations are worth
mentioning in particular: 
\begin{itemize}
\label{page:homotopy-invariant-constructions}
\item If $X$ and $Y$ have nondegenerate basepoints, then the standard
coproduct $X \vee Y$ is a homotopy invariant, so there is no need to
think of a special coproduct occurring.
\item If $X$ and $Y$ are spaces, then $X\times Y$ is a homotopy
invariant. 
\item If $X$ and $Y$ are spaces with nondegenerate basepoints, then
  $X\wedge Y$ is homotopy invariant. In this case the inclusion
  $X \vee Y \rightarrow X \times Y$ is a cofibration, so the strict
  cofiber ($=X \wedge Y$) is a homotopy invariant.
\item If $X$ is a CW complex and $Y$ is any space, then $\Map(X,Y)$
is a homotopy invariant construction, so there is no need to take a
special $\Map$ as long as the source is a CW complex.
\end{itemize}

In his book \emph{Homotopical Algebra}
\cite{Quillen:homotopical-algebra}, Quillen developed a general
framework for understanding problems of homotopy invariance, called
``model categories''. Quillen's work codifies the general
understanding that one should make sure that colimit constructions
involve cofibrations (``cofibrant objects'') and inverse limit
constructions involve fibrations (``fibrant objects''). Dwyer and
Spalinksi \cite{Dwyer-Spalinski} have written an excellent
introduction to Quillen's work, full of examples familiar to the
working topologist or algebraist.  In the category of topological
spaces with the model structure where weak equivalences are
$\pi_*$-isomorphisms and fibrations are Serre fibrations, all objects
are fibrant, and CW complexes are cofibrant. 
In the category of simplicial sets with $\pi_*$-isomorphisms for weak
equivalences and Kan fibrations for fibrations, all objects
are cofibrant, but only Kan complexes are fibrant.




%
%
\chapter{Goodwillie Calculus}
\label{chap:goodwillie-calculus}


\section{$n$-cubes}


We must first lay out the notation and vocabulary we will use. Given a
set $T$, define the category $\Power(T)$ to have objects all subsets
of $T$ and morphisms the inclusions of subsets.
A $T$-cube $\cube{X}$ is a functor defined on $\Power(T)$.
In general, the functor $\cube{X}$ will take values in the category of
spaces or spectra. An $n$-cube is a $T$-cube with $\abs{T}=n$. When there
is only one $n$-cube being discussed, we may let $\mathbf{n}$
denote the set $\Set{1,\ldots,n}$, and  speak simply of an $\mathbf{n}$-cube.
A $\mathbf{2}$-cube $\cube{X}$ is a diagram like this:
$$\xymatrix{
\cube{X}(\emptyset)
\ar[r]
\ar[d]
&
\cube{X}(\Set{1})
\ar[d]
\\
\cube{X}(\Set{2})
\ar[r]
&
\cube{X}(\Set{1,2}) 
}$$
The ``initial'' object in the cube is
$\cube{X}(\emptyset)$ and the ``terminal'' object is $\cube{X}(\Set{1,2})
= \cube{X}(\mathbf{2})$. We will frequently refer to
those particular two objects in any cube. When we want to consider the
relationship between the initial object and the rest of the cube, we
will use the category $\Power_0(\mathbf{n}) =
\Power(\mathbf{n}) - \Set{\emptyset}$, and use the inverse limit over
this category to assemble the information about all of the objects
except $\cube{X}(\emptyset)$. Similarly, if we want to consider the
relationship between the final object and the rest of the cube, we
will use the category  $\Power_1(\mathbf{n}) = \Power(\mathbf{n}) -
\Set{\mathbf{n}}$.

\begin{definition}[Cartesian]
An $\mathbf{n}$-cube
$\cube{X}$ is ``Cartesian'' if the map $$\cube{X}(\emptyset)
\rightarrow \holim_{U\in\Power_0(\mathbf{n})} \cube{X}(U)$$ is an
equivalence.  
\end{definition}
Alternatively, $\cube{X}$ is Cartesian if it is 
a homotopy pullback cube; that is, $\cube{X}(\emptyset)$
is equivalent to the homotopy inverse limit of the rest
of the cube. 
A 2-cube that is a pullback cube is guaranteed to be a
homotopy pullback if one of the maps to the terminal object is a
fibration. 

\begin{definition}[co-Cartesian]
An $\mathbf{n}$-cube $\cube{X}$ is ``co-Cartesian'' if the map
$$\hocolim_{U\in\Power_1(\mathbf{n})} \cube{X}(U) \rightarrow
  \cube{X}(\mathbf{n})$$ is an equivalence. 
\end{definition}
Alternatively, the cube is
  co-Cartesian if it is a homotopy pushout.
$\cube{X}$ is said to be strongly co-Cartesian if every two
dimensional sub-cube is co-Cartesian. A 2-cube that is a pushout cube
is guaranteed to be a homotopy pushout if one of the maps from the
initial object is a cofibration.
 
A cube $\cube{X}$ is strongly co-Cartesian if every 2-cube contained
in $\cube{X}$ is co-Cartesian. 
A cube is said to be
$k$-Cartesian if the map $\cube{X}(\emptyset)
\rightarrow \holim_{U\in\Power_0(\mathbf{n})} \cube{X}(U)$ is
$k$-connected.

The following cube, which forms the suspension of $X$, is an example
of a co-Cartesian $2$-cube.
$$\xymatrix{
X 
\ar[r]
\ar[d]
&
CX
\ar[d]
\\
CX
\ar[r]
&
\Sigma X
}$$
By the Freudenthal suspension theorem, we see that when
$X=S^n$, this cube is also $(2n-1)$-Cartesian.
\begin{theorem}[Freudenthal]
\label{thm:freudenthal}
For $n\ge 1$, the map $S^n \rightarrow \Omega S^{n+1}$ is $(2n-1)$-connected.
$\qed$
\end{theorem}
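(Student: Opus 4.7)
The natural approach, given the material developed in the paper so far, is to deduce Freudenthal directly from the Blakers-Massey theorem applied to the suspension $2$-cube displayed immediately above the statement. First I would observe that the cube
$$
\xymatrix{
S^n \ar[r] \ar[d] & CS^n \ar[d] \\
CS^n \ar[r] & \Sigma S^n
}
$$
is strongly co-Cartesian: it is a $2$-cube, so its only $2$-dimensional sub-cube is itself, and it is co-Cartesian by construction of $\Sigma S^n$ as a pushout. Next I would bound the connectivities of the two maps out of the initial vertex. Both are the inclusion $S^n \hookrightarrow CS^n$; since $CS^n$ is contractible and $S^n$ is $(n-1)$-connected, each of these maps is $n$-connected.

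With this in hand, the plan is to invoke Blakers-Massey (\ref{thm:blakers-massey}): a strongly co-Cartesian $m$-cube whose initial edges are $k_i$-connected is $\bigl(1 - m + \sum_i k_i\bigr)$-Cartesian. Applied with $m = 2$ and $k_1 = k_2 = n$, this says that our cube is $(2n - 1)$-Cartesian, which by definition means that the comparison map
$$
S^n \longrightarrow \holim\bigl( CS^n \to \Sigma S^n \leftarrow CS^n \bigr)
$$
is $(2n - 1)$-connected.

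It remains to identify the homotopy pullback on the right with $\Omega S^{n+1}$. Because both cones are contractible, the homotopy pullback of two maps out of contractible spaces into $\Sigma S^n$ is naturally equivalent to the loop space $\Omega \Sigma S^n = \Omega S^{n+1}$; and under this equivalence the canonical map from $S^n$ is precisely the unit $S^n \to \Omega \Sigma S^n$ of the suspension-loop adjunction. The main (and essentially only) obstacle is formal: verifying that this identification matches the map named in the theorem statement rather than some twisted variant. This is a routine check using the path-space description of the homotopy pullback, and once it is in place the theorem follows. The hypothesis $n \geq 1$ enters only to keep the connectivity estimates $k_i = n \geq -1$ inside the range guaranteed by Blakers-Massey and to ensure the resulting bound $2n-1$ is non-negative.
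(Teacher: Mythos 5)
Your proposal is correct and is essentially the argument the paper itself gives: immediately after stating the Blakers--Massey theorem (\ref{thm:blakers-massey}), the paper remarks that Freudenthal follows because the maps $S^n \rightarrow CS^n$ are $n$-connected, so the suspension $2$-cube is $\left((1-2)+(n+n)\right) = (2n-1)$-Cartesian, and the punctured homotopy limit is $\Omega\Sigma S^n$. Your write-up just fills in the same details more carefully.
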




\section{The Blakers-Massey Theorem And Its Consequences}

The Blakers-Massey theorem is closely related to the Freudenthal
suspension theorem and the Eilenberg-Zilber theorem for bisimplicial
sets. It gives a way to understand the homotopy of a pushout in a
range, in terms of the spaces used to construct it. The most often-used
statement of the theorem, as proven by Ellis and Steiner, follows.
\begin{theorem}[Ellis-Steiner]
\cite{Blakers-Massey:announce,Ellis-Steiner}
\nocite{Blakers-Massey:I,Blakers-Massey:II,Blakers-Massey:III}
\label{thm:blakers-massey}
Let $\cube{X}$ be a strongly co-Cartesian $\mathbf{n}$-cube of spaces
($n\ge 1$), with each map
$\cube{X}(\emptyset)\rightarrow\cube{X}(\Set{i})$ being
$k_i$-connected. Then $\cube{X}$ is $\left((1-n)+\sum k_i\right)$
Cartesian.
$\qed$
\end{theorem}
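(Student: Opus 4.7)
My plan is to argue by induction on $n$. The base case $n=1$ is tautological, since a $1$-cube is simply a map and being $k_1$-Cartesian coincides with being $k_1$-connected. The base case $n=2$ is the classical Blakers-Massey theorem for a homotopy pushout square and is the essential content. For it I would replace the two arrows out of $\cube{X}(\emptyset)$ by cofibrations so that the pushout becomes a strict double mapping cylinder, then compare $\cube{X}(\emptyset)$ with the strict pullback of the other three corners. A relative-CW analysis attaching cells of dimensions $\ge k_1+1$ and $\ge k_2+1$ along $\cube{X}(\{1\})$ and $\cube{X}(\{2\})$ respectively, combined with the long exact sequences of a pair and of a fibration, produces the connectivity estimate $k_1 + k_2 - 1$.

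For $n \ge 3$ I would decompose $\cube{X}$ as a natural transformation of $(n-1)$-cubes $\cube{Y} \to \cube{Z}$ by singling out the $n$-th coordinate direction: $\cube{Y}(V) = \cube{X}(V)$ and $\cube{Z}(V) = \cube{X}(V \cup \{n\})$ for $V \subseteq \mathbf{n}\setminus\{n\}$. Both are themselves strongly co-Cartesian $(n-1)$-cubes ($\cube{Z}$ because a pushout of a co-Cartesian $2$-cube is co-Cartesian), and the connectivity hypothesis $k_i$ propagates to the initial edges of each (for $\cube{Z}$ via the $n=2$ case applied to the $2$-face $\{\emptyset, \{i\}, \{n\}, \{i,n\}\}$). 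The total homotopy fiber of $\cube{X}$ is identified with $\hofib\bigl(\mathrm{tfib}(\cube{Y}) \to \mathrm{tfib}(\cube{Z})\bigr)$, so the inductive hypothesis bounds the connectivity of both $(n-1)$-fold total fibers. The remaining essential ingredient is that every vertex map $\cube{Y}(V) \to \cube{Z}(V)$ is at least $k_n$-connected --- again by the $n=2$ case, applied in the $n$-th direction --- so the induced map on total fibers is sufficiently connected. Combining these pieces (which amounts to a further 2-cube Blakers-Massey at the level of total fibers) yields the bound $(1-n) + \sum_i k_i$.

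The main obstacle is the $n=2$ case itself, together with the careful bookkeeping required in the inductive step. The inductive hypothesis alone is insufficient: bounds on $\mathrm{tfib}(\cube{Y})$ and $\mathrm{tfib}(\cube{Z})$ do not by themselves control the connectivity of the homotopy fiber of the map between them. The extra $k_n$ must be brought in separately through another application of the 2-cube Blakers-Massey at the level of total fibers. In effect, the general theorem is an iterated application of the classical two-dimensional case, which is why the $n=2$ case carries the genuine geometric content.
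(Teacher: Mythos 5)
The paper does not prove this theorem; it states it with a citation to Blakers--Massey and Ellis--Steiner and uses it as a black box, so the only question is whether your outline would itself constitute a proof. It would not, for two reasons. First, the base case $n=2$ is the entire theorem in disguise, and ``the long exact sequences of a pair and of a fibration'' cannot produce it: those tools yield excision for homology, whereas the whole point of Blakers--Massey is to measure the failure of excision for homotopy groups. Every known proof of the $2$-dimensional case requires a genuinely geometric input (a general-position argument on the preimages of the attached cells, as in Goodwillie's Calculus~II, \S 2) or an algebraic substitute of comparable depth (crossed squares, as in Ellis--Steiner). Your sketch of the base case is therefore a restatement of the statement, not a proof of it.

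Second, and more seriously, the inductive step does not close. Writing $\cube{X}$ as a map of $(n-1)$-cubes $\cube{Y}\rightarrow\cube{Z}$, the three facts you assemble --- connectivity bounds on $\hofib\cube{Y}$ and $\hofib\cube{Z}$ from the inductive hypothesis, and $k_n$-connectivity of each vertex map $\cube{Y}(V)\rightarrow\cube{Z}(V)$ --- are formally insufficient. A $k_n$-connected map of $(n-1)$-cubes only induces a roughly $(k_n-(n-1))$-connected map $\hofib\cube{Y}\rightarrow\hofib\cube{Z}$, and combining this with the connectivities of the two total fibers gives a bound far below $(1-n)+\sum k_i$. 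The ``further $2$-cube Blakers--Massey at the level of total fibers'' that you invoke is not available: the $(n-1)$-cube of vertical homotopy fibers $V\mapsto\hofib\bigl(\cube{Y}(V)\rightarrow\cube{Z}(V)\bigr)$, whose total fiber computes $\hofib\cube{X}$, is not co-Cartesian (taking fibers destroys co-Cartesianness), so the first Blakers--Massey theorem cannot be applied to it. What is actually needed at that point is a quantitative estimate of how co-Cartesian that cube of fibers is --- i.e., the dual (``second'') Blakers--Massey theorem --- which is of the same depth as the statement being proved; in Goodwillie's treatment the generalized first and second theorems are established by a joint induction whose engine remains an $n$-directional general-position argument, not an iteration of the square case. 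The sharp constant $(1-n)+\sum k_i$ reflects a simultaneous dimension count over all $n$ directions and is not recoverable from pairwise applications of the $2$-dimensional theorem.
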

In particular, this immediately implies the Freudenthal theorem since
the map from $S^n$ to the cone on $S^n$ is $n$-connected, so the
$2$-cube computing $\Sigma S^n$ is $(1-2)+(n+n)=2n-1$ connected.



In at least one delicate calculation, we will have occasion to use the
full strength of the theorem that Goodwillie proves. 
\begin{theorem}[Goodwillie]
\label{thm:blakers-massey-goodwillie}
Let $\cube{X}$ be an $S$-cube, with $\abs{S}\ge 1$. Suppose that
\begin{enumerate}
\item for each nonempty $T\subset S$, the sub-$T$-cube of $\cube{X}$
induced by the inclusion of $T$ into $S$
is $k(T)$-co-Cartesian, and
\item $k(U) \le k(T)$ whenever $U\subset T$.
\end{enumerate}
Then $\cube{X}$ is $k$-Cartesian, where $k$ is the minimum of
$(1-\abs{S})+\sum_\alpha k(T_\alpha)$ over all partitions $\Set{T_\alpha}$
of $S$ by nonempty sets.
$\qed$
\end{theorem}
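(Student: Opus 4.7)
The plan is to proceed by induction on $n=\abs{S}$, using the classical Ellis--Steiner Blakers--Massey theorem (\ref{thm:blakers-massey}) as the base engine. When $\abs{S}=1$, a $1$-cube is just a single map; being $k$-Cartesian and being $k$-co-Cartesian both amount to the assertion that this map is $k$-connected, and the only partition of $S$ yields the exponent $(1-1)+k(S)=k(S)$, so there is nothing to show.

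For the inductive step I would single out an element $s_0\in S$, set $S'=S\setminus\Set{s_0}$, and regard $\cube{X}$ as a morphism of $S'$-cubes $f:\cube{A}\to\cube{B}$, where $\cube{A}(U)=\cube{X}(U)$ and $\cube{B}(U)=\cube{X}(U\cup\Set{s_0})$ for $U\subseteq S'$. The Cartesianness of $\cube{X}$ unwinds into two pieces of data: the Cartesianness of $\cube{A}$, and the connectivity of the induced map
$$\hofib\bigl(\cube{A}(\emptyset)\to\holim_{\Power_0(S')}\cube{A}\bigr)\longrightarrow\hofib\bigl(\cube{B}(\emptyset)\to\holim_{\Power_0(S')}\cube{B}\bigr).$$
The inductive hypothesis applied to $\cube{A}$ yields a Cartesianness bound indexed by partitions of $S'$. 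Applying it (in a relative form) to the morphism $f$, together with the co-Cartesian hypotheses on those sub-cubes of $\cube{X}$ that contain $s_0$, bounds the connectivity of the comparison above. Partitions of $S$ are in bijection with the data of a partition of $S'$ together with a choice of how to attach $s_0$ (as its own block, or joined to an existing block), and summing connectivities along these two cases reproduces the formula $(1-\abs{S})+\sum_\alpha k(T_\alpha)$; minimising over all such splittings gives the claimed bound.

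A cleaner alternative I would try in parallel is to reduce to the strongly co-Cartesian case directly: build, by iteratively forming homotopy pushouts along the edges $\cube{X}(\emptyset)\to\cube{X}(\Set{s})$, an auxiliary strongly co-Cartesian cube $\cube{Z}$ with the same initial vertex as $\cube{X}$ and a natural comparison map $\cube{Z}\to\cube{X}$. Theorem~\ref{thm:blakers-massey} delivers the Cartesianness bound for $\cube{Z}$, and the vertex-by-vertex discrepancy between $\cube{Z}(U)$ and $\cube{X}(U)$ is controlled by the $k(T)$'s. The minimum-over-partitions structure in the conclusion then corresponds to the different levels at which $\cube{Z}\to\cube{X}$ can fail to be an equivalence.

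The hard part is bookkeeping rather than any single conceptual leap. In particular, when the inductive estimate for $\cube{B}$ is applied, it produces terms involving sub-cube connectivities $k(T)$ for $T\subseteq S'$, whereas the bound we actually have is for $k(T\cup\Set{s_0})$; the monotonicity hypothesis $k(U)\le k(T)$ whenever $U\subset T$ is precisely what allows us to substitute one for the other without losing any estimate. A secondary obstacle is keeping the total-fiber/total-cofiber arithmetic consistent across the dimension shifts at each inductive step, which is where carelessness with off-by-one errors in the exponent $(1-\abs{S})$ would most easily creep in.
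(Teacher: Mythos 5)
First, a point of reference: the paper does not prove this theorem at all --- it is quoted from Goodwillie's \emph{Calculus II} (where it is Theorem~2.5) with a citation and a \qed{} --- so there is no in-paper argument to compare yours against. Judged on its own, your sketch has a genuine gap at the heart of the induction. Writing $\cube{X}$ as a map $f:\cube{A}\to\cube{B}$ of $S'$-cubes, the $k$-Cartesianness of $\cube{X}$ is (up to the usual $\pi_0$ caveats) \emph{equivalent} to the $k$-connectivity of the map $\hofib(\cube{A}(\emptyset)\to\holim_{\Power_0(S')}\cube{A})\to\hofib(\cube{B}(\emptyset)\to\holim_{\Power_0(S')}\cube{B})$; it is not an extra datum sitting alongside the Cartesianness of $\cube{A}$. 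So when you propose to bound that connectivity by ``applying the inductive hypothesis in a relative form to the morphism $f$,'' you are invoking precisely the statement to be proved: a relative Cartesianness estimate for a map of $S'$-cubes \emph{is} a Cartesianness estimate for an $S$-cube, and the induction never bottoms out in anything you actually know. Symptomatically, your base case $\abs{S}=1$ has no geometric content, yet the $\abs{S}=2$ instance of the theorem already contains the classical triad connectivity theorem (take $k(\Set{1,2})=\infty$); an induction whose only non-formal input is the $1$-cube case cannot produce it. You must say exactly where Theorem~\ref{thm:blakers-massey} (or the full Ellis--Steiner computation) is applied, and your first sketch never does.

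Your second route --- comparing $\cube{X}$ with the strongly co-Cartesian cube $\cube{Z}$ generated by its initial maps --- is much closer in spirit to the actual proof, but the bookkeeping as described does not yield the stated answer. A vertex-by-vertex comparison of $\cube{Z}(U)\to\cube{X}(U)$ gives at best an estimate of the form ``Cartesianness of $\cube{Z}$, truncated by the worst vertex discrepancy minus a dimension shift,'' which is strictly weaker than the minimum of $(1-\abs{S})+\sum_\alpha k(T_\alpha)$ over partitions. In Goodwillie's argument the partition formula arises from factoring the comparison through a chain of intermediate cubes (co-Cartesianizing one sub-cube at a time) and estimating each stage using the \emph{dual} theorem (Cartesian implies co-Cartesian estimates) for lower-dimensional cubes, the two statements being proved by a joint induction; the partitions of $S$ index the stages of that factorization. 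The dual statement and the joint induction are the missing ideas in both of your sketches. Your closing remark about where the monotonicity hypothesis $k(U)\le k(T)$ is needed is correct, but it is the least of the difficulties.
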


Using the Blakers-Massey theorem, we can now prove some important
properties of spectra. We will begin by proving the basic property of
spectra that $\Omega$ and $\Sigma$ are inverse operations on the
homotopy category. This is what is meant by spectra being a ``stable"
category. 

Let $\mathbf{X}$ be a spectrum. Since $\mathbf{X}$ is equivalent to an omega
spectrum, and we are only interested to behavior up to homotopy, we
may assume that $\mathbf{X}$ is an omega spectrum.  

First suppose $\mathbf{X}$ is a bounded below omega spectrum (so $\pi_j$ is
zero for all $j\ll 0$). $\mathbf{X}$ is an omega
spectrum, so $\pi_j X_n = \pi_{j+1} X_{n+1} = \cdots = \pi_{(j-n)} \mathbf{X}$.
Since $\mathbf{X}$ is bounded below, $\pi_{(j-n)} \mathbf{X} = 0$ for $j-n \ll 0$, which
shows that there is a (not necessarily positive) constant $c$ such
that $\pi_j X_n = 0$ for $j < n + c$. That is, $X_n$ is roughly
$(n+c)$-connected.

We will use this fact and the Blakers-Massey theorem to prove that $\mathbf{X}
\rightarrow \Omega\Sigma \mathbf{X}$ is an equivalence by showing that the map
is at least $m$-connected for arbitrary $m$. Let $N(m)$ be large
enough that $X_n$ is $m$-connected for all $n\ge N(m)$. By
Theorem~\ref{thm:blakers-massey}, the map $X_n \rightarrow
\Omega\Sigma X_n$ is $(2m-1)$-connected, which is certainly more than
$m$-connected. This holds for all $n\ge N(m)$, and the homotopy type
of a spectrum only depends on a cofinal subset of the spaces that
compose it, so this shows that $\mathbf{X}\rightarrow \Omega\Sigma \mathbf{X}$ is at
least $m$-connected, with $m$ arbitrary. Therefore, the map must be an
equivalence. 
If $\mathbf{X}$ is not bounded below, then write $\mathbf{X}$ as the homotopy colimit of
bounded below spectra $\mathbf{X}\gen{m}$ created by taking the $(m+n)$-th
connective cover $X_n \gen{m+n}$ of each $X_n$. (The spectrum
$\mathbf{X}\gen{m}$ has $\pi_j = 0$ for $j<m$.) Since the $m$-th 
connective cover of $\mathbf{X}$
comes equipped with a map to $\mathbf{X}$, 
this gives us a sequence $\mathbf{X} =
\hocolim_{m\rightarrow -\infty} \mathbf{X}\gen{m}$.  From the  bounded
below case, we have an equivalence on each object $\mathbf{X}\gen{m}
\rightarrow \Omega\Sigma (\mathbf{X}\gen{m})$. The homotopy colimit of
a map of diagrams that is a weak equivalence on each object is itself
a weak equivalence; this provides the required equivalence $\mathbf{X}
\simeq \hocolim_{m\rightarrow -\infty} \mathbf{X}\gen{m} \simeq
\hocolim_{m\rightarrow -\infty} \Omega\Sigma(\mathbf{X}\gen{m}) \simeq
\Omega\Sigma \mathbf{X}$.


A similar argument using the dual Blakers-Massey theorem shows that
$\Sigma\Omega \mathbf{X} \rightarrow \mathbf{X}$ is an equivalence as well. This
establishes:
\begin{lemma}
\label{lem:omega-sigma-inverses}
In the homotopy category of spectra, $\Omega$ and $\Sigma$ are inverse
operations. 
\end{lemma}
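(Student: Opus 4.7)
My plan is to prove each direction $\mathbf{X}\to\Omega\Sigma\mathbf{X}$ and $\Sigma\Omega\mathbf{X}\to\mathbf{X}$ separately. Since every spectrum is weakly equivalent to an omega spectrum, I may assume $\mathbf{X}$ is an omega spectrum throughout, so that $\pi_j X_n = \pi_{j-n}\mathbf{X}$ at every level. The strategy is first to handle spectra that are bounded below using the Blakers-Massey theorem levelwise, then to bootstrap to arbitrary spectra via connective covers.

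For $\mathbf{X}\to\Omega\Sigma\mathbf{X}$ with $\mathbf{X}$ bounded below, I would argue as follows. Since $\pi_j\mathbf{X}=0$ for $j\ll 0$, the identity $\pi_j X_n = \pi_{j-n}\mathbf{X}$ produces a constant $c$ with $X_n$ being $(n+c-1)$-connected for every $n$. The suspension square
$$\xymatrix{
X_n \ar[r]\ar[d] & CX_n \ar[d] \\
CX_n \ar[r] & \Sigma X_n
}$$
is strongly co-Cartesian and each map from $X_n$ is $(n+c)$-connected, so Theorem~\ref{thm:blakers-massey} makes the square $(2(n+c)-1)$-Cartesian. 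This is exactly the statement that $X_n\to\Omega\Sigma X_n$ is $(2(n+c)-1)$-connected, and the connectivity tends to $\infty$ with $n$. Since a map of spectra is an equivalence as soon as it is $m$-connected for arbitrary $m$ on a cofinal subsequence of levels, the bounded-below case follows.

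To remove the bounded-below hypothesis, I would write $\mathbf{X}\simeq\hocolim_{m\to-\infty}\mathbf{X}\gen{m}$ as a filtered homotopy colimit of its $m$-connective covers $\mathbf{X}\gen{m}$, each of which is bounded below. Applying the bounded-below case to each $\mathbf{X}\gen{m}$ and then commuting $\Omega\Sigma$ past the hocolim (levelwise: $\Sigma$ is itself a hocolim, and $\Omega$ commutes with filtered hocolims of spaces of increasing connectivity) gives the full result. The opposite direction $\Sigma\Omega\mathbf{X}\to\mathbf{X}$ is then handled by the same two-step strategy using the dual Blakers-Massey theorem on the corresponding pullback square, or by dualising after one has the cofibration/fibration equivalence in spectra.

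The main obstacle I anticipate is not the Blakers-Massey step itself, which is immediate once the connectivity bookkeeping is in place, but rather justifying that $\Omega\Sigma$ commutes with the filtered hocolim over connective covers in the unbounded case. One has to check this at the level of spaces, using that the maps $X_n\gen{m+n}\to X_n$ are highly connected as $m\to-\infty$ in a controlled way, so that the loop functor preserves the colimit in each range of homotopy. Once this routine but delicate commutation is verified, the lemma follows cleanly from the bounded-below calculation.
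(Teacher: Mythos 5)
Your proposal matches the paper's argument essentially step for step: reduce to an omega spectrum, get levelwise connectivity $\approx n+c$ from bounded-belowness, apply Blakers--Massey to the suspension square to see $X_n\rightarrow\Omega\Sigma X_n$ is highly connected on a cofinal set of levels, and then pass to general spectra via the homotopy colimit of connective covers (with the dual argument for $\Sigma\Omega\mathbf{X}\rightarrow\mathbf{X}$). The only difference is that you flag the commutation of $\Omega\Sigma$ with the filtered hocolim as a point needing verification, whereas the paper simply asserts the chain of equivalences; your extra care there is reasonable but does not change the route.
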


A very similar argument to the one in
Lemma~\ref{lem:omega-sigma-inverses} shows the
following:
\begin{lemma}
\label{lem:spectra-co-cartesian=cartesian}
If $\cube{X}$ is a co-Cartesian cube of spectra, then $\cube{X}$ is
also a Cartesian cube.
\end{lemma}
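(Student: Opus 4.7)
The plan is induction on $n$, the dimension of the cube. The base case $n=1$ is immediate: a 1-cube is a single map $\mathbf{A}\to\mathbf{B}$, and both ``co-Cartesian'' and ``Cartesian'' reduce to this map being an equivalence.

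For the inductive step, I would view an $n$-cube $\cube{X}$ as a map of $(n-1)$-cubes $\cube{Y}\to\cube{Z}$, where $\cube{Y}(U)=\cube{X}(U)$ and $\cube{Z}(U)=\cube{X}(U\cup\Set{n})$ for $U\subseteq\Set{1,\ldots,n-1}$. From this map I form two $(n-1)$-cubes: the levelwise cofiber $\cube{C}(U)=\hocofib(\cube{Y}(U)\to\cube{Z}(U))$ and the levelwise fiber $\cube{F}(U)=\hofib(\cube{Y}(U)\to\cube{Z}(U))$. The standard identification of the total (co)fiber of an $n$-cube with the iterated (co)fiber along a single direction then yields that $\cube{X}$ is co-Cartesian iff $\cube{C}$ is co-Cartesian, and (dually) $\cube{X}$ is Cartesian iff $\cube{F}$ is Cartesian.

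By Lemma~\ref{lem:omega-sigma-inverses}, $\Omega$ and $\Sigma$ are inverse equivalences on the homotopy category of spectra, so for any map of spectra $\hofib(f)\simeq\Omega\hocofib(f)$; applying this levelwise yields $\cube{F}\simeq\Omega\cube{C}$. Since $\Omega$ is an equivalence that commutes with $\holim$, $\cube{C}$ is Cartesian iff $\cube{F}$ is Cartesian. The inductive hypothesis applied to the $(n-1)$-cube $\cube{C}$ gives that $\cube{C}$ is co-Cartesian iff it is Cartesian. Chaining the equivalences shows that $\cube{X}$ is co-Cartesian iff $\cube{X}$ is Cartesian, which in particular gives the direction stated in the lemma.

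The main obstacle will be carefully verifying the identification of the total (co)fiber of the $n$-cube with the iterated construction along the $n^{\text{th}}$ direction, i.e., showing that $\hocolim_{U\in\Power_1(\mathbf{n})}\cube{X}(U)\to\cube{X}(\mathbf{n})$ being an equivalence is equivalent to the analogous statement for the $(n-1)$-cube $\cube{C}$, and symmetrically for $\cube{F}$. This is a standard cubical bookkeeping argument that follows from commuting $\hocolim$ with $\hocofib$ (both are left adjoints) and from the dual statement for $\holim$ and $\hofib$, but it deserves care. It also relies on the fact that $\holim$ and $\hocolim$ of spectra are computed levelwise and interact well with $\Omega$, which is part of the standard setup for na\"{\i}ve spectra.
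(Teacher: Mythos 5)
Your overall strategy---induct on the dimension of the cube by viewing an $n$-cube as a map of $(n-1)$-cubes and passing to levelwise fibers and cofibers---is a legitimate structural reduction, and the bookkeeping you flag (total cofiber $=$ iterated cofiber, and dually) is indeed standard. But the proof has a genuine gap at its pivot: the claim that for any map of spectra $\hofib(f)\simeq\Omega\hocofib(f)$. You justify this by citing Lemma~\ref{lem:omega-sigma-inverses} ($\Omega$ and $\Sigma$ are inverse on the homotopy category), but that lemma does not formally imply the identification of fibers with looped cofibers; the implication ``$\Omega$ invertible $\Rightarrow$ fiber sequences agree with cofiber sequences'' is a real theorem about stable categories whose proof is essentially the content you are trying to establish. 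Worse, within this paper's logical order the statement $\hofib(f)\simeq\Omega\hocofib(f)$ is exactly Corollary~\ref{cor:spectra-cofibration=fibration}, which is \emph{deduced from} the present lemma (it is the $2$-dimensional case: the square $X\rightarrow Y$, $X\rightarrow \ast$, $\ast\rightarrow \hocofib(f)$ is co-Cartesian by construction, and asserting it is Cartesian is precisely asserting $\hofib(f)\simeq\Omega\hocofib(f)$). So when your induction passes from $n=1$ to $n=2$, step (3) of your argument assumes the $n=2$ case of the lemma. The induction therefore carries no content until you independently prove the two-dimensional case.

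The paper avoids this by arguing levelwise with connectivity estimates: a co-Cartesian cube of spectra may be replaced by an omega/bounded-below one, the spaces $X_n$ at spectrum level $n$ become arbitrarily highly connected, the (generalized) Blakers-Massey theorem (Theorems~\ref{thm:blakers-massey} and~\ref{thm:blakers-massey-goodwillie}) shows the levelwise cubes are $k$-Cartesian with $k$ growing roughly like twice the connectivity, and the general case follows by writing an arbitrary spectrum as a homotopy colimit of its connective covers, exactly as in the proof of Lemma~\ref{lem:omega-sigma-inverses}. To repair your argument you would need to run precisely this Blakers-Massey/Freudenthal estimate to establish $\hofib(f)\simeq\Omega\hocofib(f)$ as your true base case; at that point your induction does give a clean alternative route for $n\ge 3$, but the hard analytic input is the same as the paper's and cannot be replaced by the appeal to $\Omega\Sigma\simeq 1$.
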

In particular, this implies:
\begin{corollary}
\label{cor:spectra-cofibration=fibration}
If $X\rightarrow Y\rightarrow Z$ is a cofibration sequence of spectra,
then it is equivalent to a fibration sequence.
\end{corollary}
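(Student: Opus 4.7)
The plan is to deduce the corollary directly from Lemma~\ref{lem:spectra-co-cartesian=cartesian} by interpreting both cofibration and fibration sequences as $\mathbf{2}$-cubes of spectra.

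First, I would unpack the hypothesis: saying that $X \to Y \to Z$ is a cofibration sequence of spectra means (up to equivalence) that $Z$ is equivalent to the homotopy cofiber of $X \to Y$, i.e.\ that the $\mathbf{2}$-cube
$$
\xymatrix{
X \ar[r] \ar[d] & Y \ar[d] \\
\basept \ar[r] & Z
}
$$
is co-Cartesian (a homotopy pushout). Dually, the statement ``$X \to Y \to Z$ is equivalent to a fibration sequence'' says exactly that the same $\mathbf{2}$-cube is Cartesian, i.e.\ $X$ is equivalent to the homotopy fiber of $Y \to Z$.

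Second, I would simply invoke Lemma~\ref{lem:spectra-co-cartesian=cartesian}: any co-Cartesian cube of spectra is also Cartesian. Applied to the cube above (which is a $\mathbf{2}$-cube of spectra because $\basept$ is a spectrum), this upgrades ``co-Cartesian'' to ``Cartesian'', and hence produces the desired equivalence of $X$ with the homotopy fiber of $Y \to Z$. That gives the equivalence of the cofibration sequence with a fibration sequence.

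There is really no hard step here; the content is entirely in Lemma~\ref{lem:spectra-co-cartesian=cartesian}, and the only subtlety is being careful that the cofibration sequence is arranged as the $\mathbf{2}$-cube whose $\basept$ vertex is the corner opposite $Y$, so that the co-Cartesian condition computes $Z$ as the cofiber and the Cartesian condition computes $X$ as the fiber. One might also note, for completeness, that by Lemma~\ref{lem:omega-sigma-inverses} the fiber of $Y \to Z$ is equivalent to $\Omega$ of the cofiber, so we recover the usual identification of fiber and loop-cofiber in spectra as a by-product.
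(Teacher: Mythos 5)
Your proof is correct and follows essentially the same route as the paper: the paper also realizes the cofibration sequence as a co-Cartesian $\mathbf{2}$-cube (with $CX\simeq\basept$ in the corner opposite $Y$) and then applies Lemma~\ref{lem:spectra-co-cartesian=cartesian} to conclude the cube is Cartesian, identifying $X$ with the homotopy fiber of $Y\rightarrow Z$.
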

\begin{proof}
A cofibration sequence is (up to homotopy), a pushout cube 
$$\xymatrix{
X 
\ar[r]
\ar[d]
&
Y
\ar[d]
\\
CX
\ar[r]
&
Z
}, $$
so the previous lemma applies.
\end{proof}

As a functional corollary of the fact that $\Omega$ and $\Sigma$ are
inverse operations, we find that $\Sigma$ commutes with homotopy
inverse limits, and $\Omega$ commutes with homotopy colimits.

\begin{corollary}
In the homotopy category of spectra, $\Omega$ commutes with
$\hocolim$, and $\Sigma$ commutes with $\holim$.
\end{corollary}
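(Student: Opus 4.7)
The plan is to leverage Lemma~\ref{lem:omega-sigma-inverses} (that $\Sigma$ and $\Omega$ are mutually inverse on the homotopy category of spectra) to transport commutation properties that one of $\Sigma, \Omega$ enjoys trivially over to the other. First I would note the ``easy'' halves, which hold for purely formal reasons on any pointed model category: since $\Sigma X$ is itself a homotopy pushout $\basept \leftarrow X \rightarrow \basept$, and homotopy colimits commute with one another, $\Sigma$ commutes with arbitrary $\hocolim$; dually, $\Omega X$ is the homotopy pullback $\basept \rightarrow X \leftarrow \basept$, so $\Omega$ commutes with $\holim$.

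Next I would use the fact that $\Omega$ and $\Sigma$ are inverse to upgrade these to the claimed statements. For a diagram $F$ of spectra, start from the formal commutation
\begin{equation*}
  \Sigma\bigl(\hocolim \Omega F \bigr) \simeq \hocolim \Sigma \Omega F \simeq \hocolim F,
\end{equation*}
and apply $\Omega$ to both sides, using $\Omega \Sigma \simeq \text{id}$, to obtain
\begin{equation*}
  \hocolim \Omega F \simeq \Omega \hocolim F.
\end{equation*}
Symmetrically, from $\Omega \holim \Sigma F \simeq \holim \Omega \Sigma F \simeq \holim F$, apply $\Sigma$ and use $\Sigma \Omega \simeq \text{id}$ to get $\holim \Sigma F \simeq \Sigma \holim F$.

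No step here looks delicate; the only genuine input is Lemma~\ref{lem:omega-sigma-inverses}, together with the observation that the natural maps involved are honest natural transformations (so applying an inverse functor really does give back the original diagram up to natural equivalence). The ``main obstacle'', if one wants to be careful, is just being precise about naturality of the equivalence $\Omega\Sigma \simeq \text{id}$, since we need it on the whole diagram $\hocolim \Omega F$ rather than at a single object; this is automatic from the construction in Lemma~\ref{lem:omega-sigma-inverses}, where the unit $X \rightarrow \Omega \Sigma X$ is the natural Freudenthal map and was shown to be a levelwise equivalence.
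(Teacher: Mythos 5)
Your argument is correct and is essentially the paper's own proof: both use the formal fact that $\Omega$ (resp.\ $\Sigma$), being itself a homotopy inverse limit (resp.\ colimit), commutes with $\holim$ (resp.\ $\hocolim$), and then apply Lemma~\ref{lem:omega-sigma-inverses} to cancel $\Omega\Sigma$ and $\Sigma\Omega$ and transport the commutation to the other functor. Your remark about naturality of the unit map is a fair point of care, and is indeed supplied by the construction in that lemma.
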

\begin{proof}
Let $X$ be a functor from an unspecified diagram category to spectra.
We have $\Omega \holim \Sigma X \simeq \holim \Omega \Sigma X$ since
homotopy inverse limits commute. Applying $\Sigma$ to both sides and
using the fact that $\Sigma\Omega$ and $\Omega\Sigma$ are the identity
up to homotopy in spectra, we have $\holim \Sigma X \simeq \Sigma
\holim \Omega\Sigma X \simeq \Sigma \holim X$. A similar proof shows
$\Omega$ commutes with $\hocolim$.
\end{proof}


\section{Excisive Functors}
\label{sec:excisive-functors}

One of the central notions in Goodwillie calculus is that of
``excision''. Generally speaking, an excisive functor takes
co-Cartesian cubes to Cartesian cubes. The most common example of an
excisive functor is a generalized homology theory. Unfortunately,
beginning from the usual axiom for excision, establishing this
involves a few details, so we delay it until
Section~\ref{sec:homology-1-excisive}. Excisive functors in Goodwillie
calculus are roughly analogous to polynomial functions; one weakness
of this analogy will be noted in
Section~\ref{sec:analytic-functors}. 

Technically speaking, an $n$-excisive functor takes strongly
co-Cartesian $(n+1)$-cubes to Cartesian cubes. Many functors occurring
in nature are not excisive; for example, in the category of spaces,
neither the identity functor nor $\Omega^k$ is $n$-excisive for any
$n$. However, all good functors satisfy a property known as ``stable
excision''. The condition of stable
  excision is a way of codifying to what extent a generalization of
  the Blakers-Massey theorem holds, so its definition is strongly
  reminiscent of that theorem.
Given an $(\mathbf{n+1})$-cube $\cube{X}$ in which
the map $\cube{X}(\emptyset) \rightarrow \cube{X}(\Set{i})$ is
$k_i$-connected, a functor $F$ is said to be stably $n$-excisive if
the cube $F\cube{X}$ is $(\sum k_i - c)$-Cartesian, for a
constant $c$ independent of $\cube{X}$.
The stable excision condition with constant $c$ is known as
``$E_n(c)$''. On occasion, one can only guarantee stable excision if
the maps are of sufficient connectivity (all $k_i \ge \kappa$); that is also
good enough  for the machinery of calculus to work, and this
condition is known as ``$E_n(c,\kappa)$''. 

In order to create an $n$-excisive functor out of a stably $n$-excisive
functor, Goodwillie introduces the functor $T_n$ defined on the
category of functors, which takes a functor $F$ satisfying $E_n(c)$
and produces a new functor $T_n F$ satisfying $E_n(c-1)$. This functor
is equipped with a natural transformation $F\rightarrow T_n F$, so one
can take the colimit to produce an excisive functor (which is what
$E_n(-\infty)$ means). 

Let us now define $T_n F$.  Let $[n]$ denote the set
$\Set{0,1,\ldots,n}$ with basepoint $0$
(this has the same cardinality as the set with the same notation used
in the simplicial category),
and let $A * B$ denote the topological join of two spaces (considered
as \emph{unpointed} spaces). Note that
$X * \emptyset = X$, and $X*[0] = CX$, and in general 
$X * [n]$ is equivalent to $\bigvee^{n} S X$ --- $n$ copies of the
unreduced suspension of $X$.
It is easy to see
that the $[n]$-cube $U \mapsto U$ is strongly co-Cartesian. Similarly,
for a fixed space $X$, the $[n]$-cube
\begin{equation}
\label{eq:Tn-cube}
\cube{X}(U) = X * U
\end{equation}
is strongly co-Cartesian. Since $X*\emptyset = X$, the initial object
of this cube is $X$. 
This cube without the initial object gives
rise to the functor $T_n F$:
\begin{equation}
\label{eq:def-Tn}
 T_n F(X) = \holim_{U \in \Power_0([n])} F(X * U) 
\end{equation}
The map from the initial object ($=F(X)$) to the homotopy inverse
limit of the punctured cube ($=T_n F(X)$) gives the natural
transformation $F \rightarrow T_n F$. A bound on connectivity of this
map can be deduced; if $X$ is an $r$-connected space, then the map $X
\mapsto X*[0]=CX\simeq \basept$ is $(r+1)$-connected. 
If $F$ satisfies $E_n(c)$, then
the cube is at least $((n+1) (r+1) - c)$-Cartesian. Furthermore, all
of the vertices in the cube used to define $T_n F$ have connectivity
at least $r+1$, so iterating the $T_n$ construction produces a map
$T_n F\rightarrow T_n T_n F$ that is even more highly connected. The
limit $P_n F(X) = \colim_{k} T_n^k F(X)$ is the universal $n$-excisive
approximation to $F$, and the map $F(X)\rightarrow P_n F(X)$ is
$((n+1)(r+1)-c)$-connected.  

Goodwillie~\cite{Cal3}
shows that if $F$ is $m$-excisive or analytic (see
Section~\ref{sec:analytic-functors}), then iterating $T_n F$ produces
an $n$-excisive functor. Actually, as he later established, the
functor $P_n F$ is always $n$-excisive. 

\begin{theorem} (\cite{Cal3})
If $F$ is $m$-excisive for some $m$, then $P_n F$ is the universal
$n$-excisive approximation to $F$.
\end{theorem}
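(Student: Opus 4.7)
My plan is to establish two things: (i) that $P_n F$ is $n$-excisive, and (ii) that $P_n F$ is universal among natural transformations from $F$ to $n$-excisive functors. For (ii), the key observation is that any $n$-excisive functor $G$ already satisfies $G \simeq T_n G$: the cube (\ref{eq:Tn-cube}) is strongly co-Cartesian as a cube of input spaces, so $G$ applied to it is Cartesian, which makes the canonical map $G \to T_n G$ an equivalence. Iterating yields $G \simeq P_n G$. Thus given any natural transformation $F \to G$ to an $n$-excisive $G$, functoriality of $P_n$ produces a factorization $F \to P_n F \to P_n G \simeq G$, and uniqueness in the homotopy category follows by tracking the factorization through each stage $T_n^k F$ of the colimit defining $P_n F$. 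Consequently, the substantive work lies in (i).

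For (i), my plan is to first verify that $F$ satisfies the stable excision condition $E_n(c,\kappa)$ for some finite constants $c,\kappa$. When $n \geq m$, this is essentially free: an $m$-excisive functor is $n$-excisive, since the target cube on a strongly co-Cartesian $(n+1)$-cube may be assembled from the Cartesian cubes obtained by applying $F$ to its strongly co-Cartesian $(m+1)$-sub-cubes. When $n < m$, the argument is subtler and proceeds by applying Goodwillie's refined Blakers-Massey estimate (Theorem~\ref{thm:blakers-massey-goodwillie}) to bound how far $F$ can depart from $n$-excisiveness on a strongly co-Cartesian $(n+1)$-cube, in terms of the connectivities of its maps; the $m$-excisive hypothesis provides a uniform bound that yields finite $c$ and $\kappa$.

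The next step is to prove that if $F$ satisfies $E_n(c,\kappa)$, then $T_n F$ satisfies $E_n(c-1,\kappa)$ (or a strict uniform improvement that compounds). Given a strongly co-Cartesian $(n+1)$-cube $\cube{X}$ whose initial-to-vertex maps have connectivities $k_i \geq \kappa$, the cube $T_n F \cube{X}$ is a homotopy inverse limit of cubes $F(\cube{X} * U)$ for $U \in \Power_0([n])$; since joining with a nonempty $U$ strictly increases connectivity, applying $E_n(c,\kappa)$ to each of these inner cubes and then assembling via Blakers-Massey for the outer cube gives a Cartesian-ness estimate with constant $c-1$. Iterating, $T_n^k F$ satisfies $E_n(c-k,\kappa)$, so in the colimit $P_n F = \colim_k T_n^k F$ the image of any strongly co-Cartesian $(n+1)$-cube of sufficiently connected spaces is $\infty$-Cartesian, hence Cartesian. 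A connectivity-shifting argument (applying the result after enough suspensions and pulling back along natural maps) then promotes $n$-excision from $\kappa$-connected inputs to arbitrary strongly co-Cartesian $(n+1)$-cubes. The hard part will be the bookkeeping in the Blakers-Massey estimate of the last step: one must verify that the stability constant genuinely improves by a fixed positive amount under each application of $T_n$, uniformly in $\cube{X}$, so that the colimit over $k$ produces honest Cartesian cubes rather than merely arbitrarily highly-Cartesian approximations.
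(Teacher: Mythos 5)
A preliminary caveat: the thesis does not prove this statement---it quotes it from Goodwillie's Calculus III---so I am judging your proposal against Goodwillie's argument and on its own terms. Your universality step is fine, as is the observation that for $n \ge m$ the functor $F$ is already $n$-excisive, so $P_nF \simeq F$. The step ``$E_n(c,\kappa)$ implies $T_nF$ satisfies $E_n(c-1,\kappa)$'' is also essentially right, except that the outer homotopy inverse limit is assembled with the interchange estimate of Proposition~\ref{prop:calc-2-prop-1-22} (which the thesis uses for exactly this purpose in Corollary~\ref{cor:F-Enc-TnF-Enc}), not with Blakers--Massey. And the final promotion from $\kappa$-connected cubes to arbitrary ones does not work by ``suspending and pulling back''---functors from spaces to spaces do not desuspend---but rather because $T_n^NF$ evaluates $F$ only on $N$-fold joins, whose structure maps are at least $(k_i+N)$-connected, so after finitely many iterations of $T_n$ every strongly co-Cartesian cube lands in the stable range and the Cartesian-ness estimate then grows without bound.

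The genuine gap is the step that starts the whole argument when $n<m$: producing finite constants $c,\kappa$ such that $F$ satisfies $E_n(c,\kappa)$ from the bare hypothesis that $F$ is $m$-excisive. Theorem~\ref{thm:blakers-massey-goodwillie} cannot be ``applied to $F\cube{X}$'' as you propose: it converts co-Cartesian-ness estimates on the sub-cubes of a cube of \emph{spaces} into a Cartesian-ness estimate, and $m$-excisiveness supplies no co-Cartesian-ness information about the sub-cubes of $F\cube{X}$ (in spaces, unlike spectra, Cartesian and co-Cartesian are far apart). So the ``uniform bound'' you invoke has no source, and this is precisely the hard content of the theorem in the $m$-excisive case. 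It is worth knowing that Goodwillie's eventual proof sidesteps stable excision entirely: for a strongly co-Cartesian $S$-cube with $\abs{S}=n+1$ there are natural maps $\cube{X}(U) \rightarrow \cube{X}(\emptyset) * U$ under $\cube{X}(\emptyset)$, compatible with the cube structure, which exhibit the obstruction to $F\cube{X}$ being Cartesian as something killed by a single application of $T_n$; this shows $P_nF$ is $n$-excisive for \emph{every} homotopy functor, the stronger statement the thesis alludes to immediately after the quoted theorem. If you want to keep the stable-excision route, you must either give an honest proof that $m$-excisive implies $E_n(c,\kappa)$ for finite constants, or restrict to the analytic case, where those constants are part of the hypothesis.
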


The functor $T_n$ commutes with $\holim$ since homotopy inverse limits
commute. The functor $P_n$ commutes with finite homotopy inverse
limits, since filtered $\hocolim$ and finite $\holim$ commute (up to
equivalence).

The most important theorem about the structure of the Taylor tower
gives information about the fiber $D_n F$ of the map $P_n F\rightarrow
P_{n-1}F$: the space $D_n F(X)$ turns out to be an infinite loop space.

\begin{theorem} (\cite{Cal3})
If $F$ is an analytic functor from spaces to spaces that commutes up
to equivalence with filtered colimits of finite complexes
(\emph{i.e.}, satisfies the limit axiom (\ref{def:limit-axiom})),  then
the functor $D_n F$ is an $n$-homogeneous functor given by 
$$ D_n F(X) = \LoopInfty( C_n \wedge_{h \Sigma_n} X^{\wedge n} ),$$
where $C_n$ is some spectrum with a $\Sigma_n$ action, and the smashing
over $h \Sigma_n$ denotes taking homotopy orbits.
\end{theorem}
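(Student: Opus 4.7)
The plan is to identify $D_n F$ as an $n$-homogeneous functor and then classify such functors in terms of spectra with $\Sigma_n$-action. First, I would establish the easy part: by construction $D_n F = \hofib(P_n F \to P_{n-1} F)$ is the fiber of a map between two $n$-excisive functors, so it is itself $n$-excisive (since $P_n$ commutes with finite homotopy inverse limits). Moreover, the natural map $P_{n-1}(D_n F) \to \hofib(P_{n-1} P_n F \to P_{n-1} P_{n-1} F)$ is an equivalence, and both targets are equivalent to $P_{n-1} F$, so $P_{n-1}(D_n F) \simeq *$. This shows $D_n F$ is \emph{$n$-homogeneous} in Goodwillie's sense.

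The next — and hardest — step is to show that any $n$-homogeneous functor $H$ from spaces to spaces factors through $\LoopInfty$. The key idea is to study the $n$-th cross-effect $\mathrm{cr}_n H(X_1,\ldots,X_n)$, which by a standard computation (using $P_{n-1} H \simeq *$) becomes multilinear, i.e., $1$-excisive in each variable separately. Multilinearity in each variable means that $\mathrm{cr}_n H$ takes co-Cartesian squares in each slot to Cartesian squares, and together with reducedness this forces the natural maps $\mathrm{cr}_n H(X_1,\ldots,X_n) \to \Omega \,\mathrm{cr}_n H(X_1,\ldots, \Sigma X_i, \ldots, X_n)$ to be equivalences; iterating and stabilizing in all $n$ variables gives an infinite deloop structure, so there exists a multilinear functor $\mathbf{L}$ into spectra with
\[
\mathrm{cr}_n H(X_1,\ldots,X_n) \simeq \LoopInfty \mathbf{L}(X_1,\ldots,X_n).
\]
This is where the Blakers-Massey theorem (\ref{thm:blakers-massey}) together with the analyticity of $F$ (controlling uniform connectivity estimates) does the real work, and the limit axiom ensures compatibility with the colimits used to form the stabilization.

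Having linearized each variable, I would classify multilinear functors into spectra. A multilinear functor to spectra is in particular $1$-excisive in each slot, and by a stability argument (iterating the fact that $\Sigma$ and $\Omega$ are mutually inverse on spectra, Lemma~\ref{lem:omega-sigma-inverses}) it is determined up to equivalence by its value on $(S^0,\ldots,S^0)$. Setting $C_n = \mathbf{L}(S^0, \ldots, S^0)$, which inherits a $\Sigma_n$-action from the symmetry of the cross-effect construction, one obtains
\[
\mathbf{L}(X_1,\ldots,X_n) \simeq C_n \wedge X_1 \wedge \cdots \wedge X_n
\]
naturally and $\Sigma_n$-equivariantly.

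Finally, I would recover $H$ from its cross-effect. For an $n$-homogeneous functor, the diagonal construction gives $H(X) \simeq (\mathrm{cr}_n H(X,\ldots,X))_{h\Sigma_n}$: intuitively, $H(X)$ is the symmetric-group coinvariant piece of $\mathrm{cr}_n H(X,\ldots,X)$, analogous to how a degree-$n$ homogeneous polynomial arises from a symmetric $n$-linear form. Combining the three ingredients and applying this to $H = D_n F$ yields
\[
D_n F(X) \;\simeq\; \bigl(\LoopInfty \mathbf{L}(X,\ldots,X)\bigr)_{h\Sigma_n} \;\simeq\; \LoopInfty\bigl(C_n \wedge_{h\Sigma_n} X^{\wedge n}\bigr),
\]
where the last step uses that $\LoopInfty$ commutes with the homotopy colimit $(-)_{h\Sigma_n}$ up to equivalence in the connective range forced by analyticity. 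The main obstacle throughout will be this last compatibility and the multivariable stabilization in paragraph two: one must track connectivity carefully to justify passing $\Omega^\infty$ past homotopy orbits and filtered colimits, which is precisely what analyticity plus the limit axiom is designed to provide.
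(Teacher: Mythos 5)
First, note that the thesis does not prove this statement: it is quoted verbatim from Goodwillie's Calculus~III \cite{Cal3} as background, so there is no in-paper proof to compare against. Your outline does track Goodwillie's actual strategy (homogeneity of $D_nF$, multilinearization of the $n$-th cross effect, classification of multilinear spectrum-valued functors via the limit axiom, reassembly by homotopy orbits), and steps one through three are essentially right.

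The genuine gap is in the final reassembly step. You assert $H(X) \simeq \bigl(cr_n H(X,\ldots,X)\bigr)_{h\Sigma_n}$ with the homotopy orbits taken at the \emph{space} level, and then pass $\LoopInfty$ through the orbits, claiming this is valid "in the connective range." But $\LoopInfty$ does not commute with homotopy orbits, connective or not: $\bigl(\LoopInfty \mathbf{Y}\bigr)_{h\Sigma_n}$ is the Borel construction on an infinite loop space, whereas $\LoopInfty\bigl(\mathbf{Y}_{h\Sigma_n}\bigr)$ has the homotopy of the spectrum-level orbits, and these differ already for $\mathbf{Y}=\mathbf{S}$ with trivial action. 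What is true — and what both Goodwillie and this thesis (Lemma~\ref{lem:perp-homotopy-orbits-spectrum} together with Theorem~\ref{thm:loop-infty-commutes-with-connective-spectra}) actually use — is that $\LoopInfty$ commutes with geometric realization of simplicial \emph{connective spectra}; so one must first deloop, form the homotopy orbits as the realization of a bar construction in spectra, and only then apply $\LoopInfty$. Relatedly, your argument only deloops the cross effect, not $H$ itself; Goodwillie's proof requires the separate (and substantial) fact that every $n$-homogeneous functor to spaces is $\LoopInfty$ of an $n$-homogeneous functor to spectra, and it is at the spectrum level that the formula $\mathbf{H}(X)\simeq \mathbf{L}(X,\ldots,X)_{h\Sigma_n}$ is established. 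Without that delooping step your chain of equivalences does not close up.
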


For any functor $F$, the $D_n F$ are called the ``layers'' of the
Taylor tower, and the associated $C_n$ are called the ``coefficient
spectra''. 

As part of working out a theory of Postnikov invariants for his Taylor
tower, Goodwillie shows that there is actually a functorial delooping
of the derivatives so the usual fibration sequence $D_n \rightarrow
P_n \rightarrow P_{n-1}$ can be delooped once:
\begin{theorem}
\label{thm:delooping-Dn}
(\cite{Cal3})  
If $F$ is a reduced, analytic functor from spaces to spaces, 
then the map $P_n F \rightarrow P_{n-1} F$ is part of a fibration sequence 
$$ P_n F \rightarrow P_{n-1} F \rightarrow \Omega^{-1} D_n F ,$$
where $\Omega^{-1} D_n F$ is a homogeneous $n$-excisive functor whose
loopspace is necessarily $D_n F$.
\end{theorem}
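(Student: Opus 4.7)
The plan is to leverage the fact that analyticity, via the preceding theorem, makes $D_n F$ factor through spectra as an infinite loop space, which then admits a functorial delooping. First, obtain the coefficient spectrum $C_n$ with $\Sigma_n$-action, so that $D_n F(X) \simeq \LoopInfty(C_n \wedge_{h\Sigma_n} X^{\wedge n})$. Define
$$\Omega^{-1} D_n F(X) := \LoopInfty\bigl(\Sigma C_n \wedge_{h\Sigma_n} X^{\wedge n}\bigr).$$
Because $\LoopInfty$ of any spectrum is grouplike (its $\pi_0$ is already an abelian group), the natural comparison $\Omega(\Omega^{-1} D_n F(X)) \to D_n F(X)$ is an equivalence, so this genuinely deloops $D_n F$.

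Next, I would verify the formal properties. The underlying spectrum-valued functor $X \mapsto \Sigma C_n \wedge_{h\Sigma_n} X^{\wedge n}$ is $n$-homogeneous at the level of spectra. Since $\LoopInfty$ preserves homotopy inverse limits, applying it preserves $n$-excision, so $\Omega^{-1} D_n F$ is $n$-excisive. For homogeneity, one checks that $P_{n-1}(\Omega^{-1} D_n F) \simeq \basept$: the spectrum-level functor is manifestly $n$-homogeneous, and $P_{n-1}$ commutes with $\LoopInfty$ well enough (through the finite homotopy inverse limits appearing in $T_n$) for the vanishing to descend.

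The essential step is constructing the classifying map $P_{n-1} F \rightarrow \Omega^{-1} D_n F$ whose homotopy fiber is $P_n F$. The key observation is that $D_n F$, viewed as a functor to grouplike infinite loop spaces, acts on $P_n F$ over $P_{n-1} F$, making the fibration principal. Given such a principal $E_\infty$-group fibration, the bar construction on the action yields a classifying map into the delooping $B D_n F = \Omega^{-1} D_n F$, producing the desired extension of the fiber sequence one step to the right.

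The main obstacle is step three: exhibiting the fibration as genuinely principal (not merely having a loop-space fiber) and doing so naturally in both $F$ and $X$. This requires working with a specific model---for instance, Goodwillie's iterative $T_n^k$ construction---and tracing the infinite-loop-space structure of $D_n F$ through the homotopy inverse limit defining each $T_n$. An alternative, more abstract route would be to observe that the reduced $n$-homogeneous functors to connective spectra form a stable $\infty$-category in which delooping is automatic, then post-compose with $\LoopInfty$; this gives a functorial Postnikov-style tower whose layers carry a once-delooped companion. Either way, the real work lies in upgrading the fiber $D_n F$ from being merely a loop space to being the structure group of a principal fibration.
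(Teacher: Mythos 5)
The paper does not actually prove this statement; it quotes it from Goodwillie's \emph{Calculus III}, so your attempt has to be judged against Goodwillie's argument rather than anything in the text. Your steps 1 and 2 are fine as far as they go: once you have the coefficient spectrum $\mathbf{C}_n$ (which in this paper's formulation requires the limit axiom in addition to analyticity, a hypothesis you should carry along), the functor $X \mapsto \LoopInfty\bigl(\Sigma \mathbf{C}_n \wedge_{h\Sigma_n} X^{\wedge n}\bigr)$ is indeed a homogeneous $n$-excisive functor whose objectwise loop space is $D_n F$, and the commutation of $P_{n-1}$ with $\LoopInfty$ through finite homotopy inverse limits and filtered homotopy colimits is unproblematic.

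The gap is your step 3, which is the entire content of the theorem. You assert that $D_n F$ acts on $P_n F$ over $P_{n-1} F$ so that the fibration is principal, and you correctly flag producing this structure as ``the real work'' --- but you never produce it, and there is no visible source for such an action: a fibration whose fiber happens to be a grouplike infinite loop space is not thereby principal, and nothing in the $T_n^k$ construction hands you an action of the fiber on the total space, let alone one natural in $F$ and $X$. Goodwillie's argument sidesteps principality entirely. In outline: form the objectwise homotopy cofiber $C = \hocofib(P_n F \to P_{n-1} F)$ and set $\Omega^{-1} D_n F := P_n C$; the square with vertices $P_n F$, $P_{n-1} F$, $\basept$, $C$ is cocartesian by construction, and the point of the proof is to show that after applying $P_n$ it becomes homotopy cartesian, so that $P_n F$ is the homotopy fiber of $P_{n-1} F \to P_n C$. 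The identification $\Omega P_n C \simeq D_n F$ then comes from the classification of homogeneous functors (your steps 1--2), not the other way around. Without either carrying out the principality/bar-construction program in detail or switching to the cofiber argument, the proposal constructs a candidate delooping but never exhibits the classifying map $P_{n-1} F \to \Omega^{-1} D_n F$, so it does not prove the theorem.
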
 

\section{Example: $P_1 F(X)$}
\label{sec:P1F}

When $F$ is a reduced functor, the construction for $P_1 F(X)$ is the
stabilization of $F(X)$. First, let us compute $T_1 F(X)$ and the map
$F(X) \rightarrow T_1 F(X)$. The diagram to consider to construct $T_1
F(X)$ is:
$$\xymatrix{ F(X) 
\ar[r]
\ar[d]
&
F(X*[0])
\ar[d]
\\
F(X*[0])
\ar[r]
&
F(X*[1])
}$$
The homotopy inverse limit of the punctured cube is easy to understand
once we recall $X*[0] = CX \simeq \basept$ and $X*[1] \simeq \Sigma X$.
\begin{align*}
\holim ( F(X*[0]) \leftarrow F(X*[1]) \rightarrow F(X*[0]) )
&\simeq
\holim ( \basept \leftarrow F(\Sigma X) \rightarrow \basept )
\\
&\simeq \Omega F(\Sigma X)
\end{align*}
The map is then the stabilization map $F(X) \rightarrow \Omega F
(\Sigma X)$. 
Taking the limit as this process is iterated produces $P_1 F(X) \simeq
\colim \Omega^n F(\Sigma^n X)$.


\section{Example: Homology Theories Are $1$-excisive}
\label{sec:homology-1-excisive}

In this section we explain in detail how the excision axiom for
homology corresponds to $1$-excisiveness. One statement of the
excision axiom for homology theories is:
\begin{quote}
Given the pair $(X,A)$ and an open set $U \subset X$ such that
$\overbar{U} \subset \text{int}(A)$, the inclusion $(X-U, A-U)
\hookrightarrow (X,A)$ induces an isomorphism $H_*(X-U, A-U)
\cong H_*(X,A)$. \cite[IV.6, p.~183]{Bredon:topology-and-geometry}
\end{quote}
The data given here corresponds to the existence of a certain
(strongly) co-Cartesian cube:
$$\xymatrix{
A-U
\ar[r]
\ar[d]
&
A
\ar[d]
\\
X-U
\ar[r]
&
X
}$$
Applying the singular chains functor $C_*$, we have a cube of chain
complexes 
(whose homology may be thought of as the homotopy groups of the
functor $Y \mapsto \mathbf{HZ}\wedge Y$, as 
noted in Section~\ref{sec:homotopy-functors}). The relative
homology groups are the mapping cones of the maps $H_*(A-U)\rightarrow
H_*(X-U)$ and $H_*(A)\rightarrow H_*(X)$.  The assertion that they are
isomorphic is equivalent to asserting that the cube is co-Cartesian
(because a cube is co-Cartesian if and only if the iterated homotopy
cofiber (=mapping cone) of the cube is contractible). By
Lemma~\ref{lem:spectra-co-cartesian=cartesian}, a cube of spectra is 
co-Cartesian if and only if it is Cartesian, so the resulting cube is
Cartesian as well. Hence $H_*$ takes this co-Cartesian cube to a
Cartesian cube of spectra.
It remains to show that $H_*$ takes all (strongly) co-Cartesian
2-cubes to Cartesian cubes. By replacing an arbitrary co-Cartesian
cube with a weakly equivalent CW-cube (all spaces CW complexes, all
maps CW inclusions), we obtain nice inclusion maps. Let $\cube{X}$
denote our CW 2-cube. Putting $X=\cube{X}(\Set{1,2})$,
$A=\cube{X}(\Set{1})$, and $U = A - \cube{X}(\emptyset)$, it is easy
to check that $\cube{X}(\Set{2}) = X - U$. At this point, we must
recall that despite the statement of the axiom given above, it is
sufficient to require that the pair $(A, A-U)$ be an NDR pair, or that $A-U
\rightarrow A$ be a cofibration. Since all CW inclusions are
cofibrations, our cube satisfies this hypothesis, reducing the general
case to the one we have worked out previously.



\section{Analytic Functors}
\label{sec:analytic-functors}

If the excisive functors of Goodwillie calculus are analogous to
polynomial functions in ordinary calculus, analytic functors are
analogous to functions with whose Taylor series converge in some disk
about the origin.

An informal statement of analyticity is this: a functor is
$r$-analytic if the coefficient spectra $C_n$ that compose its layers
have a connectivity that tends to $-\infty$ with slope roughly $\ge
-rn$ for some $c$ independent of $n$. In some sense, this means that
for spaces of connectivity $\ge r$, the individual ``layers'' $C_n$
can be distinguished. We will give a rigorous definition of
analyticity and later, in
Section~\ref{sec:analytic-functors-connective} prove that it has the
properties of this informal definition.

\begin{definition}[analytic]
Formally, a functor $F$ is $r$-analytic if there exists a constant $c$
depending only on $F$ such that $F$ satisfies $E_n(rn-c)$ for each
$n$.\footnote{Goodwillie only requires the weaker condition
  $E_n(rn-c,r+1)$ be satisfied.}
\end{definition}

The immediate consequence of this definition is that, according
to the computation following \eqref{eq:def-Tn}, if $X$ is
$r$-connected, the map $F(X) \rightarrow P_n F(X)$ is
$(n+r+c+1)$-connected.  The critical trait is that in this case, the
connectivity of the map $F(X)\rightarrow P_n F(X)$ increases with $n$.
In this case, the Taylor tower of $F$ is said to converge, since in
the inverse limit, the map $F \rightarrow \holim P_n F$ is an
equivalence ($\infty$-connected).

Arguments about analytic functors frequently make use of asymptotic
estimates such as the preceding one. To make the essence of these
estimates clearer, we will omit the irrelevant constants and use the
phrase ``approximately $n$-connected'' to mean ``there exists a
constant $c$ independent of the variables appearing, such that the map
is at least $(n+c)$-connected''. The following lemma is a good example
of this.

\begin{lemma}
\label{lem:connectivity-F-PnF}
If $F$ is $r$-analytic and $X$ is $(m-1)$-connected (for example
$X=S^m$), then the map $F(X) \rightarrow P_n F(X)$ is
approximately $n(m-r)$-connected.
\end{lemma}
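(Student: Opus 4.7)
The plan is to unpack the definition of $r$-analyticity and then plug directly into the connectivity estimate for $F(X) \to P_n F(X)$ that was already derived from the definition of $T_n$ in the paragraph surrounding equation~\eqref{eq:def-Tn}.

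First I would recall that $r$-analyticity means $F$ satisfies $E_n(rn-c)$ for every $n$, with the constant $c$ depending only on $F$. Next I would reproduce the key cube calculation: the $[n]$-cube $U \mapsto X * U$ used to define $T_n F(X)$ is strongly co-Cartesian, and if $X$ is $(m-1)$-connected then each of the $n+1$ maps from the initial vertex $X$ to $X * \{i\} \simeq CX$ is $m$-connected (the connectivity estimate for $X \to X*[0]$ recalled just after \eqref{eq:def-Tn}). Applying $F$ and invoking $E_n(rn-c)$, the resulting cube of spaces is $\bigl((n+1)m - (rn - c)\bigr)$-Cartesian, so the map $F(X) \to T_n F(X)$ is at least
\[
(n+1)m - rn + c \;=\; n(m-r) + (m+c)
\]
-connected.

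To pass from $T_n F$ to $P_n F = \colim_k T_n^k F$, I would iterate: each vertex $X * U$ with $U \neq \emptyset$ has connectivity at least $m$, and the $T_n$ construction takes $(m-1)$-connected inputs to outputs whose connectivity is controlled by the same estimate applied to more highly connected spaces. Consequently each map $T_n^k F(X) \to T_n^{k+1} F(X)$ in the telescope is strictly more highly connected than the previous one (the gain grows linearly in $k$, as in Goodwillie's $T_n \to P_n$ argument), and so the colimit map $F(X) \to P_n F(X)$ has the same connectivity $n(m-r) + (m+c)$ up to an irrelevant shift. Since $m + c$ is a constant independent of $n$ (and also independent of $X$ once $m$ is fixed), this is exactly ``approximately $n(m-r)$-connected'' in the sense introduced just before the statement.

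The only genuinely delicate point is the last step — verifying that the telescope $T_n^k F$ does not erode the connectivity bound. This is, however, a standard feature of the construction of $P_n$ from $T_n$ in the analytic range (where each iteration strictly improves the bound), so I would cite it rather than reprove it. Everything else is bookkeeping: substitute $c' = rn - c$ into the inequality $((n+1)(r'+1) - c')$ for the connectivity of $F(X) \to P_n F(X)$ (taking $r' = m-1$), and read off the asymptotic estimate.
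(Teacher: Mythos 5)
Your proposal is correct and follows essentially the same route as the paper: unpack $E_n(rn-c)$, apply it to the strongly co-Cartesian cube $U \mapsto X*U$ whose initial maps are $m$-connected, and read off that the cube is $((n+1)m - rn + c) = (n(m-r)+m+c)$-Cartesian, hence approximately $n(m-r)$. The paper's proof stops at this single cube computation and leaves the passage from $T_n$ to $P_n$ implicit (relying on the earlier discussion that iterating $T_n$ only improves connectivity), whereas you spell that step out; this is a harmless elaboration, not a different argument.
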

\begin{proof}
This is a direct computation. By hypothesis, $F$ satisfies $E_n(rn)$
--- note that we have omitted the $c$ --- and hence takes strongly
co-Cartesians squares of the form used in constructing $T_n$
\eqref{eq:Tn-cube} to $k$-Cartesian squares, where $k= -rn + m(n+1) =
nm -rn +m = n(m-r) +m \approx n(m-r)$. If it were possible for $m$ to
be negative, we would want to be more careful about ignoring the $+m$
to get a lower bound.
\end{proof}

In this work, the statement ``$F$ is analytic'' means ``$F$ is
$r$-analytic for some $r$''. This is a different usage from that of
Goodwillie's first paper on calculus \cite{Cal1}, where
``analytic'' means ``$1$-analytic'', but consistent with later usage
\cite{Cal3}.


%
%
\section{Technical Lemmas}
In this section, we record some technical lemmas about Cartesian cubes
and excisive functors that we will need later.


The first lemma is that given a Cartesian cube of cubes, the cube
resulting from taking fibers of the inner cubes is still Cartesian.
We begin by recalling Proposition~0.2 from \cite{Cal2}, which we will
use to perform our decomposition of the homotopy inverse limit.
\begin{proposition}[{\cite[Proposition~0.2]{Cal2}}]
\label{prop:Goodwillie-decompose-holim}
If $\cube{A}$ is covered by $\cube{A}_1$ and $\cube{A}_2$ in the sense
that the nerve of $\cube{A}$ is the union of the nerves of
$\cube{A}_1$ and $\cube{A}_2$, then for any functor $F$ from
$\cube{A}$ to unbased spaces, the diagram of fibrations
$$\xymatrix{
\holim (F)
\ar[r]
\ar[d]
&
\holim ( F \rvert_{\cube{A}_1} )
\ar[d]
\\
\holim ( F \rvert_{\cube{A}_2} )
\ar[r]
&
\holim ( F \rvert_{( \cube{A}_1 \cap \cube{A}_2 )} )
}$$
is a pullback square. $\qed$
\end{proposition}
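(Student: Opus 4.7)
The plan is to prove the proposition by exhibiting each $\holim$ as a totalization of a cosimplicial space and showing that the square arises as the totalization of a levelwise pullback. Recall the Bousfield--Kan model: for a small category $\cat{C}$ and a functor $G\colon \cat{C} \to \text{Spaces}$ taking values in fibrant objects, $\holim_\cat{C} G$ is computed by $\Tot$ of a Reedy fibrant cosimplicial space whose space in codegree $n$ is the product $\prod_{\sigma} G(\text{top}(\sigma))$ indexed over the $n$-simplices $\sigma$ of the nerve $N\cat{C}$.

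The key observation is that the covering hypothesis translates into a set-level pushout decomposition of simplices in each degree: every $n$-simplex of $N\cube{A}$ lies in $N\cube{A}_1$ or $N\cube{A}_2$, and lies in both precisely when it lies in $N(\cube{A}_1 \cap \cube{A}_2)$. Indexed products turn set-level pushouts into pullbacks of spaces, so at each codegree $n$ one obtains a strict pullback square between the Bousfield--Kan resolutions of $F$, $F\res_{\cube{A}_1}$, $F\res_{\cube{A}_2}$, and $F\res_{\cube{A}_1 \cap \cube{A}_2}$. The horizontal and vertical maps in this square are projections that forget factors indexed by simplices outside the relevant subcategory; they are fibrations between products of fibrant spaces, so each square is a homotopy pullback as well.

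Next I would invoke the fact that $\Tot$ is itself a homotopy inverse limit over $\Delta$ and therefore commutes with homotopy pullbacks. Applied to the levelwise homotopy pullback of Reedy fibrant cosimplicial spaces constructed above, this yields a homotopy pullback of totalizations, which is exactly the square of the proposition.

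The main technical obstacle is bookkeeping around fibrancy: one must verify that each Bousfield--Kan cosimplicial space in play is Reedy fibrant and that the levelwise pullback is again Reedy fibrant, so that $\Tot$ genuinely computes the homotopy limit and preserves pullback squares. This is routine, since Bousfield--Kan resolutions of diagrams in fibrant spaces are Reedy fibrant by construction and pullbacks along levelwise fibrations of Reedy fibrant cosimplicial spaces inherit Reedy fibrancy. Everything else is formal manipulation exploiting the fact that limits commute with limits.
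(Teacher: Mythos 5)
The paper does not actually prove this proposition; it is quoted verbatim from Goodwillie's \emph{Calculus II} with a citation, so there is no in-text argument to compare against. Your proof is correct and is essentially the standard argument for this fact: the only step you leave implicit, namely that a simplex of $N\cube{A}_1\cap N\cube{A}_2$ is the same thing as a simplex of $N(\cube{A}_1\cap\cube{A}_2)$, is immediate from the definition of the nerve of a subcategory, and the rest (indexed products convert the degreewise union of simplex sets into a strict pullback along projections, which are fibrations, and $\Tot$ of Reedy fibrant cosimplicial spaces preserves such pullbacks) goes through as you describe.
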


\begin{lemma}
\label{lem:big-cube-Cartesian-fiber-cube-Cartesian}
  Let $\cube{X}$ be a $(S \amalg T)$-cube, regarded as an $S$-cube
  $\cube{Y}$ of $T$-cubes $\cube{Y}(U)$, for $U\subset S$. Let
  $\widetilde{\cube{Y}}$ be the $S$-cube of total fibers of the
  $T$-cubes $\cube{Y}(U)$.
  If $\cube{X}$ is Cartesian, then $\widetilde{\cube{Y}}$ is
  Cartesian.
\end{lemma}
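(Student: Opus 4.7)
The plan is to prove the ``Fubini'' identity $\mathrm{tfib}(\widetilde{\cube{Y}}) \weakequiv \mathrm{tfib}(\cube{X})$, where $\mathrm{tfib}$ denotes the total fiber of a cube (the homotopy fiber of the natural map from the initial vertex to the $\holim$ of the punctured cube). Since a cube is Cartesian precisely when its total fiber is contractible, this immediately yields the lemma: if $\cube{X}$ is Cartesian then $\mathrm{tfib}(\cube{X}) \weakequiv \basept$, so $\mathrm{tfib}(\widetilde{\cube{Y}}) \weakequiv \basept$, so $\widetilde{\cube{Y}}$ is Cartesian.

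To establish the Fubini identity, I unwind the definition of $\widetilde{\cube{Y}}$: for $U \subset S$,
\[
\widetilde{\cube{Y}}(U) \weakequiv \hofib\bigl( \cube{X}(U) \to \holim_{V \in \Power_0(T)} \cube{X}(U \amalg V) \bigr).
\]
Since $\hofib$ is a homotopy limit, it commutes with $\holim_{U \in \Power_0(S)}$, giving
\[
\holim_{U \in \Power_0(S)} \widetilde{\cube{Y}}(U) \weakequiv \hofib\!\left( \holim_{U\in\Power_0(S)} \cube{X}(U) \to \holim_{(U,V)\in\Power_0(S)\times\Power_0(T)} \cube{X}(U\amalg V)\right).
\]
Taking this together with the formula for $\widetilde{\cube{Y}}(\emptyset)$, the total fiber $\mathrm{tfib}(\widetilde{\cube{Y}})$ is the iterated homotopy fiber of the square
\[
\xymatrix{
\cube{X}(\emptyset) \ar[r] \ar[d] & \holim_{V\in\Power_0(T)} \cube{X}(V) \ar[d] \\
\holim_{U\in\Power_0(S)} \cube{X}(U) \ar[r] & \holim_{(U,V)\in\Power_0(S)\times\Power_0(T)} \cube{X}(U\amalg V),
}
\]
equivalently, the $\hofib$ from $\cube{X}(\emptyset)$ to the homotopy pullback of the lower-right L.

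The key remaining step is to identify that homotopy pullback with $\holim_{W \in \Power_0(S\amalg T)} \cube{X}(W)$; granting this, the $\hofib$ above is by definition $\mathrm{tfib}(\cube{X})$. For this I apply Proposition~\ref{prop:Goodwillie-decompose-holim} with $\cat{A} = \Power_0(S\amalg T)$, $\cat{A}_1 = \Power_0(S)\times\Power(T)$, $\cat{A}_2 = \Power(S)\times \Power_0(T)$. Then $\cat{A}_1 \cup \cat{A}_2 = \cat{A}$ and $\cat{A}_1 \cap \cat{A}_2 = \Power_0(S)\times \Power_0(T)$; and since $\Power(T)$ and $\Power(S)$ have initial objects, the $\holim$s over $\cat{A}_1$ and $\cat{A}_2$ collapse (by initial-object cofinality) to $\holim_{\Power_0(S)} \cube{X}$ and $\holim_{\Power_0(T)} \cube{X}$ respectively. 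This produces exactly the pullback square above, identifying its pullback with $\holim_{\Power_0(S\amalg T)} \cube{X}$.

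The main obstacle is organizational: managing the iterated homotopy limits and verifying the identification of the lower-right corner via Proposition~\ref{prop:Goodwillie-decompose-holim}. Once that bookkeeping is in place, the argument is formal, and the lemma follows from the basic fact that Cartesian-ness equals contractibility of the total fiber.
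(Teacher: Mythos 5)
Your setup is exactly the paper's: the same commuting square with corners $\cube{X}(\emptyset)$, $\holim_{\Power_0(T)}$, $\holim_{\Power_0(S)}$, and $\holim_{\Power_0(S)\times\Power_0(T)}$, and the same use of Proposition~\ref{prop:Goodwillie-decompose-holim} with the cover $\cat{A}_1 = \Power_0(S)\times\Power(T)$, $\cat{A}_2 = \Power(S)\times\Power_0(T)$ to identify the homotopy pullback of the lower-right L with $\holim_{\Power_0(S\amalg T)}\cube{X}$. That part is correct and is the real content of the proof.

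The gap is in your concluding inference. You route everything through total fibers and then invoke ``a cube is Cartesian precisely when its total fiber is contractible.'' The forward direction is fine, but the converse is false for cubes of pointed spaces: contractibility of the homotopy fiber over the basepoint does not detect failure of surjectivity on $\pi_0$ of the map $\widetilde{\cube{Y}}(\emptyset)\rightarrow\holim_{\Power_0(S)}\widetilde{\cube{Y}}$. This is precisely the subtlety the paper spends Section~\ref{sec:fiber-contractible-Cartesian} managing under extra hypotheses (connected or group values), so it cannot be assumed here. Fortunately your own computation already contains the fix: you have identified $\widetilde{\cube{Y}}(\emptyset)$ and $\holim_{\Power_0(S)}\widetilde{\cube{Y}}$ as the vertical homotopy fibers of the two columns of your square, and Cartesian-ness of $\cube{X}$ says exactly that this square is a homotopy pullback. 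The induced map on vertical homotopy fibers of a homotopy pullback square is an equivalence (not merely an equivalence of total fibers), which gives \eqref{eq:ytilde-cartesian} directly. Replace the final ``tfib contractible $\Rightarrow$ Cartesian'' step with this observation and your argument coincides with the paper's.
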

\begin{proof}
  Recall that one may compute $\widetilde{\cube{Y}}(U)$ as the fiber
  of the map 
  $$ \cube{Y}(U)(\emptyset) \rightarrow \holim_{V \in \Power_0(T)}
  \cube{Y}(U)(V). $$
  The cube $\widetilde{\cube{Y}}$ is Cartesian if the map
  \begin{equation}
    \label{eq:ytilde-cartesian}
    \widetilde{\cube{Y}}(\emptyset)
    \rightarrow 
    \holim_{U\in \Power_0(S)} \widetilde{\cube{Y}}(U)
  \end{equation}
  is an equivalence.
  Consider the cube:
  \begin{equation}
    \label{eq:local-cube-1}
  \xymatrix{
    \cube{Y}(\emptyset)(\emptyset)
    \ar[d]
    \ar[r]
    &
    {\displaystyle \holim_{U\in\Power_0(S)} \cube{Y}(U)(\emptyset)}
    \ar[d]
    \\
    {\displaystyle \holim_{V\in\Power_0(T)} \cube{Y}(\emptyset)(V)}
    \ar[r]
    &
    {\displaystyle \holim_{
        \substack{
          U \in \Power_0(S) 
          \\
          V\in\Power_0(T)}}
      \cube{Y}(U)(V)}
    }
  \end{equation}
  The fibers of this cube in the vertical direction are the functors
  of $\widetilde{\cube{Y}}$ that we are interested in. We will show
  that if $\cube{X}$ is Cartesian, then this cube is Cartesian, and
  hence that \eqref{eq:ytilde-cartesian} is an equivalence.

    Let
    $\cube{A}_1$ be the full subcategory of $\Power_0(S \amalg T)$
    generated by $\Set{ (U \times V) \suchthat U \in \Power_0(S), V
      \subset T}$, 
    and similarly let $\cube{A}_2$ be generated by
    $\Set{ (U \times V) \suchthat U \subset S, V \in \Power_0(T)}$. 
    Using Proposition~\ref{prop:Goodwillie-decompose-holim}, the
    following cube is a homotopy pullback:
    $$\xymatrix{
      {\displaystyle
      \holim_{\Power_0(S \amalg T)} \cube{Y}
      }
      \ar[r]
      \ar[d]
      &
      {\displaystyle
      \holim_{\cube{A}_1} \cube{Y}
      }
      \ar[d]
      \\
      {\displaystyle
      \holim_{\cube{A}_2} \cube{Y}
      }
      \ar[r]
      &
      {\displaystyle
      \holim_{\cube{A}_1 \cap \cube{A}_2} \cube{Y}
      }
    }$$
    We can then recognize
    \begin{align*}
    \holim_{U\in\Power_0(S)} \cube{Y}(U)(\emptyset)      
    &\simeq
    \holim_{U\in\Power_0(S)} \holim_{V \in \Power(T)} \cube{Y}(U)(T)
    \\
    &\simeq
    \holim_{
      \substack{
        (U\times V)\in\Power_0(S\amalg T) 
        \\ 
        U \in \Power_0(S), V \subset T}
    } \cube{Y}(U)(T)
    \\
    &\simeq
    \holim_{\cube{A}_1}  \cube{Y},
    \end{align*}
    to identify the upper right hand corner of
    \eqref{eq:local-cube-1}. Similarly, the lower left corner is
    $\holim_{\cube{A}_2} \cube{Y}$ and the lower right corner is 
    $\holim_{\cube{A}_1\cap \cube{A}_2} \cube{Y}$.
    Hence $\holim_{\Power_0(S\amalg T)} \cube{Y}$ is actually equivalent
    to the homotopy pullback of the lower right hand corner of the
    cube in \eqref{eq:local-cube-1}. 

    If $\cube{X}$ is Cartesian,
    that is exactly the assertion that  the map
    $$\cube{Y}(\emptyset)(\emptyset) 
    \rightarrow
    \holim_{\Power_0(S \amalg T)} \cube{Y}$$
    is an equivalence, and $\holim_{\Power_0(S \amalg T)} \cube{Y}$ is
    the homotopy pullback of the lower right of
    \eqref{eq:local-cube-1}, so \eqref{eq:local-cube-1} is actually
    Cartesian, as desired.
\end{proof}


The next lemma is that given mild conditions, $n$-excisiveness is
preserved by extensions along fibrations.

\begin{lemma}
\label{lem:excisive-up-fibration}
Let $A \rightarrow B \rightarrow C$ be a fibration of homotopy
functors from spaces to spaces. If $A$ and $C$ are $n$-excisive, and
furthermore either:
\begin{enumerate}
\item $B$ takes connected values; or
\item $\pi_0 B$ and $\pi_0 C$ lift to functors to groups, and the
natural transformation $\pi_0 B\rightarrow \pi_0 C$ is a
\emph{surjective} group homomorphism, 
\end{enumerate}
then $B$ is $n$-excisive.
\end{lemma}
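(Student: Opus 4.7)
The plan is to take an arbitrary strongly co-Cartesian $(n+1)$-cube $\cube{X}$ and show $B\cube{X}$ is Cartesian. Applying the pointwise fibration sequence $A \to B \to C$ to $\cube{X}$ yields three cubes related by a vertex-wise fibration sequence, and because $\holim$ over $\Power_0(\mathbf{n+1})$ preserves fibration sequences of pointed spaces, this descends to a fibration sequence of the inverse limits. The natural maps from the initial vertex produce a commutative ladder
$$\xymatrix{
A\cube{X}(\emptyset) \ar[r] \ar[d]^-{\simeq} & B\cube{X}(\emptyset) \ar[r] \ar[d] & C\cube{X}(\emptyset) \ar[d]^-{\simeq} \\
\holim A\cube{X} \ar[r] & \holim B\cube{X} \ar[r] & \holim C\cube{X}
}$$
whose outer vertical maps are weak equivalences by the $n$-excisiveness of $A$ and $C$. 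The goal is to conclude the middle vertical is also an equivalence.

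A five lemma argument on the induced ladder of long exact sequences of homotopy groups immediately gives isomorphisms on $\pi_k$ for $k \geq 1$ (at the distinguished basepoints, and then at all basepoints by the fibration structure), and the pointed-set version gives injectivity on $\pi_0$. The hypotheses are precisely what is required to upgrade this injectivity to surjectivity. In Case~(2), the surjective group homomorphism $\pi_0 B \twoheadrightarrow \pi_0 C$ extends each long exact sequence by an additional $\to 0$ on the right (with the fiber terms $\pi_0 A$ and $\pi_0 \holim A\cube{X}$ acquiring compatible group structure as kernels of surjective group homomorphisms). The five lemma for groups applied to this extended sequence then delivers the required isomorphism on $\pi_0$.

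In Case~(1), the task reduces to showing $\pi_0 \holim B\cube{X} = 0$. Since $B$ is connected-valued, the map $B \to C$ factors through the basepoint component $C^0 \subset C$, giving a fibration $A \to B \to C^0$; after verifying that $C^0$ inherits $n$-excisiveness on the cube $\cube{X}$, this places us in the setting of Case~(2) with $\pi_0 B$ and $\pi_0 C^0$ both the trivial group, so the trivial map between them is vacuously a surjective group homomorphism.

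The main obstacle is the $\pi_0$ surjectivity, since everything else follows from the usual five lemma: Case~(2) handles it through the purely algebraic extension of the long exact sequence of groups, while Case~(1) reduces to Case~(2) at the cost of the bookkeeping step of showing that the basepoint component $C^0$ inherits $n$-excisiveness from $C$ on the cube in question.
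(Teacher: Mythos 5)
Your setup, the ladder of long exact sequences, the five-lemma argument for $\pi_k$ with $k\ge 1$, and your treatment of Case~(2) all track the paper's proof, which likewise isolates $\pi_0$ and handles it by a diagram chase using the surjectivity of $\pi_0 B\rightarrow \pi_0 C$. The genuine gap is in Case~(1): the step you defer as bookkeeping --- that the basepoint component $C^0$ inherits $n$-excisiveness from $C$ on the cube in question --- is false in general, and it is exactly where the content of Case~(1) lives. The space $\holim_{U\in\Power_0}C^0\cube{X}(U)$ is the union of those path components of $\holim_{U\in\Power_0}C\cube{X}(U)\simeq C\cube{X}(\emptyset)$ all of whose coordinates lie in basepoint components, i.e.\ the preimage of the basepoint under $\pi_0 C\cube{X}(\emptyset)\rightarrow\prod_U\pi_0 C\cube{X}(U)$; nothing forces this to be a single component. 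Concretely, take $n=1$, $C(X)=Q(X)=\LoopInfty\SigmaInfty X$, and the strongly co-Cartesian square with $\cube{X}(\emptyset)=S^0$, $\cube{X}(\Set{1})=\cube{X}(\Set{2})=CS^0\simeq\basept$, and $\cube{X}(\Set{1,2})=S^1$. Then $\holim_{\Power_0}C^0\cube{X}\simeq\Omega Q(S^1)\simeq QS^0$ has $\pi_0\cong\Z$, while $C^0(S^0)$ is connected, so $C^0$ fails to be $1$-excisive on this cube and Case~(2) cannot be invoked for the sequence $A\rightarrow B\rightarrow C^0$.

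So Case~(1) cannot be routed through $C^0$; the connectivity of $B$'s values has to be fed directly into the $\pi_0$ diagram chase in the original ladder (which is what the paper does, tersely): one must argue directly that $\pi_0\holim_{\Power_0}B\cube{X}$ is a single point, using the exact sequence $\pi_0\holim A\cube{X}\rightarrow\pi_0\holim B\cube{X}\rightarrow\pi_0\holim C\cube{X}$ together with the identifications $\pi_0\holim A\cube{X}\cong\pi_0 A\cube{X}(\emptyset)$ and $\pi_0\holim C\cube{X}\cong\pi_0 C\cube{X}(\emptyset)$ and the connectivity of $B\cube{X}(\emptyset)$. I would encourage you to write that chase out carefully, since (as your own reduction inadvertently shows) the interaction between $\pi_0$ of a punctured-cube homotopy limit and the levelwise $\pi_0$'s is precisely where such arguments go wrong.
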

\begin{proof}
Let $\cube{Y}$ be a strongly co-Cartesian $(n+1)$-cube. 
By the hypothesis that $A$ and $C$ are $n$-excisive, the left and
right vertical maps are equivalences: 
$$\xymatrix{
A\cube{Y}(\emptyset)
\ar[d]^{\simeq}
\ar[r]
&
B\cube{Y}(\emptyset)
\ar[d]
\ar[r]
&
C\cube{Y}(\emptyset)
\ar[d]^{\simeq}
\\
{
\displaystyle
\holim_{\Power_0(\mathbf{n+1})} 
A\cube{Y}
}
\ar[r]
&
{
\displaystyle
\holim_{\Power_0(\mathbf{n+1})} 
B\cube{Y}
}
\ar[r]
&
{
\displaystyle
\holim_{\Power_0(\mathbf{n+1})} 
C\cube{Y}
}
}$$
We can then use the Five Lemma on the long exact sequences of the two
fibrations to conclude that the middle map is an isomorphism on
$\pi_m$, for $m\ge 1$. It remains to handle $\pi_0$. By hypothesis,
$\pi_0 B \cube{Y} \rightarrow \pi_0 C\cube{Y}$ is a surjection. A
diagram chase then shows that the middle vertical map is an
isomorphism on $\pi_0$. Therefore $B$ is $n$-excisive.
\end{proof}

%
%
\chapter{Left Kan Extensions}
\label{chap:left-kan}

The left Kan extension provides a natural way of extending a functor
defined on a category $\cat{C}$ to one defined on a larger category
$\cat{D}$. In complete generality, extensions of functors do not
always exist, but in our setting, where we are dealing with
subcategories of topological spaces or spectra, there is no problem.


\section{Strict Left Kan Extension}

To understand how the left Kan extension functions, first let us begin
by working in a simpler setting.  Suppose $V$ is a sub-vector space of
$W$, and we are given a function $f$ defined on $V$. The analogous
question is, ``Does there exist an $\tilde{f}$ defined on all of
$W$ that agrees with $f$ on $V$?''  In the case of vector spaces, the
answer is clearly yes; we can extend by zero (or anything else) on the
orthogonal complement to $V$. Evidently, this method is heavily
dependent on the existence of an inner product.

The case of categories and functors is not quite so simple. If a
morphism $\alpha: A \rightarrow B$ in $\cat{C}$ factors through an
object $D$ in $\cat{D}$ (after inclusion), then any functor that sends
all $D$ to zero must also send $\alpha$ to zero. But this may not be
what our original functor ``$f$'' does to $\alpha$. Because of this
complication, we need a more clever way of extending a functor from
$\cat{C}$ to one on $\cat{D}$.
 
First, consider the root of the problem: let $a, b \in \cat{C}$, and
let $d\in \cat{D}$, and let $F$ denote the functor on $\cat{C}$ that
we are trying to extend to $\cat{D}$. Suppose that there is a single
morphism from $a$ to $b$, and that it factors through $d$.
$$
\xymatrix{ 
a
\ar[r] 
\ar[d]
& 
b 
\\ 
d \ar[ur]
&
}
$$
One candidate for $\tilde{F}(d)$ that will make the diagram commute is
$\tilde{F}(d) = F(a)$. We can then collapse the morphism $a\rightarrow d$ to
the identity, and let $\tilde{F}(d\rightarrow b) = F(a\rightarrow b)$.

To extend this scheme to the situation where there is more than one
morphism from $a$ to $d$, we can let $\tilde{F}(d) = F(a) \times
\Hom(a,d)$. Then for each $j:a\rightarrow d$, we can let
$\tilde{F}(j)$ send $F(a)$ to $(F(a),j) \hookrightarrow F(a)\times\Hom(a,d)$.
To do this consistently for all objects of $\cat{D}$, take the
coproduct over all objects of $\cat{C}$. Then there is a small
matter that for objects in $\cat{C}$, the new function $\tilde{F}$ will be too
large, since it consists of at least $F(C)\times\Hom(C,C)$, so one
must divide out the morphisms of $\cat{C}$ by identifying
$(F(f)F(C),1)$ with $(F(C),f)$.
In general let $\tilde{F}$ be the functor given by taking the
coproduct over all objects in $C \in \cat{C}$ of $F(C) \times
\Hom(C,-)$, and then equalizing out the morphisms in $\cat{C}$; that
is, consider
$$ \bigvee_{C\in\cat{C}} F(C) \times \Hom(C,-) ,$$
and for every morphism $f: C\rightarrow C'$ in $\cat{C}$, identify
$(F(C),f)$ with $(F(C'),1)$. This has the effect of forcing
$\tilde{F}(C')$ to be equal to $F(C')$ since every element of
$f\in \Hom(C,C')$ gives rise to an identification $(F(C),f)$ with
$(F(C'),1)$, so every element of the coproduct is either empty (if
there are no morphisms $C\rightarrow C'$) or identified with $F(C')$.
Another way of writing this is 
$$\tilde{F} = F(\cat{C}) \tensor_{\cat{C}} \Hom(\cat{C},-) ,$$ 
where the coproduct over all $C\in \cat{C}$ is implicit in the meaning
of $\tensor_{\cat{C}}$.
This is the left Kan extension, also known as a ``coend''. It is
sometimes written $\int^{c} F(c) \times \Hom(c,-)$, which is useful
for understanding interchanges of limiting processes, but (in my
opinion) makes less
familiar the properties we are most interested in.

We will use the notation $L_I F$ to denote the left Kan extension of
$F$ along $I$. When $I$ is the inclusion of a subcategory $\cat{C}
\rightarrow \cat{D}$, we will use $L_{\cat{C}} F$ instead.
\begin{theorem}
(\cite[\S X.3, Corollary~3, p.~235]{MacLane:categories-for-the-working-mathematician})
\label{thm:strict-left-kan-agrees-with-orig}
If $I: \cat{C} \rightarrow \cat{D}$ is full and faithful, then the
natural transformation $L_I(F)\circ I \rightarrow F$ is an isomorphism.
\end{theorem}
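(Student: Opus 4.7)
The plan is to unfold the coend definition at an object of the form $I(c')$ and then reduce to the classical density (co-Yoneda) formula, using the full-and-faithful hypothesis to convert $\Hom_{\cat{D}}$ back to $\Hom_{\cat{C}}$.

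Concretely, first I would recall from the construction just given in the text that
$$L_I F(d) \;=\; \int^{c\in\cat{C}} F(c) \times \Hom_{\cat{D}}(I(c),d),$$
that is, the quotient of $\bigvee_{c\in\cat{C}} F(c)\times\Hom_{\cat{D}}(I(c),d)$ by the identifications $(F(f)(x),g) \sim (x, g\circ I(f))$ coming from morphisms $f:c\to c'$ in $\cat{C}$. Evaluating at $d = I(c')$ and applying the hypothesis that $I$ is full and faithful, the set $\Hom_{\cat{D}}(I(c),I(c'))$ is canonically identified with $\Hom_{\cat{C}}(c,c')$, so
$$L_I F(I(c')) \;\cong\; \int^{c\in\cat{C}} F(c)\times \Hom_{\cat{C}}(c,c').$$

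The second step is the density theorem for $\cat{C}$: the right-hand side is canonically isomorphic to $F(c')$ via the maps $(x, f:c\to c') \mapsto F(f)(x)$. I would verify this directly. There is a natural map $F(c') \to L_I F(I(c'))$ sending $x\mapsto [x, 1_{c'}]$, and an inverse $L_I F(I(c')) \to F(c')$ sending $[x,f]\mapsto F(f)(x)$; the latter is well-defined because $[F(f)(x),1_{c'}]$ and $[x,f]$ both map to $F(f)(x)$, and it is exactly the coend relation that forces them to be equal in $L_I F(I(c'))$. Checking the two composites gives identities, using the identifications $[x,1_{c'}]\mapsto F(1_{c'})(x) = x$ and $[x,f] = [F(f)(x), 1_{c'}]$ in the coend.

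Finally, I would check naturality: the isomorphism $L_I F(I(c'))\to F(c')$ commutes with morphisms in $\cat{C}$, essentially because for $g:c'\to c''$ in $\cat{C}$, both sides act on the representative $[x,f]$ by post-composing $f$ with $I(g)$, which under full-faithfulness is the same as applying $F(g)$ after $F(f)(x)$. The hardest (really only nontrivial) point is the density identification, and even this is routine once one has written the coend relations out; the full-and-faithful hypothesis enters solely to allow the passage from $\Hom_{\cat{D}}(I(c),I(c'))$ to $\Hom_{\cat{C}}(c,c')$ in the first step, and without it the argument fails because non-$\cat{C}$ morphisms in $\cat{D}$ could contribute extra elements with no compensating identifications.
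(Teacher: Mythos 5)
Your argument is correct, and it is essentially the standard (MacLane) proof: unfold the coend at $d = I(c')$, use full-and-faithfulness to replace $\Hom_{\cat{D}}(I(c),I(c'))$ by $\Hom_{\cat{C}}(c,c')$, and then invoke the co-Yoneda/density isomorphism $\int^{c}F(c)\times\Hom_{\cat{C}}(c,c')\cong F(c')$, with the two maps and the well-definedness check exactly as you describe. Note that the paper itself offers no proof of this statement --- it is quoted directly from MacLane --- so there is nothing internal to compare against except the proof of the homotopy-invariant analogue (Proposition~\ref{prop:left-kan-agrees-with-orig}), which proceeds quite differently: there one cannot simply compute the coend, so the argument instead exhibits a simplicial contracting homotopy on the bar resolution, using the fact that $C$ is a terminal object of the relevant overcategory. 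Your strict coend computation is cleaner and gives an honest isomorphism rather than a weak equivalence, but it does not transport to the fat-realization setting, which is why the paper needs the separate homotopical argument. One small point worth making explicit in the enriched setting of the paper: since $\Hom$ carries a topology, the bijection $\Hom_{\cat{C}}(c,c')\to\Hom_{\cat{D}}(I(c),I(c'))$ should be a homeomorphism for your first step to be an isomorphism of spaces rather than merely of underlying sets; for a full subcategory with the induced mapping-space topology this is automatic, so the argument goes through.
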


For more information on Kan extensions and coends, see
\cite[Chapter~X]{MacLane:categories-for-the-working-mathematician}.





\section{Homotopy Invariant Left Kan Extension}

We consider a simplicial resolution of the strict left Kan extension
for two reasons: we want to guarantee that we have a homotopy functor,
and the layers of the simplicial resolution are easier to understand
than strict left Kan extension itself.
For the remainder of this paper, ``left Kan extension'' will mean the
simplicial resolution of the left Kan construction, as defined in this
section.

\begin{definition}[Left Kan extension]
\label{def:left-Kan}
Let $F$ be a functor from spaces to spaces.
The homotopy-invariant left Kan extension $L_{\cat{C}} F$ of $F$ over a
subcategory $\cat{C}$ of the category of spaces $\cat{D}$ 
is given by the realization of the
simplicial functor to spaces:
\begin{equation}
  \label{eq:left-kan-spaces}
  [n] \mapsto \bigvee_{(C_0, \ldots, C_n)} F(C_0) \wedge
\left(
    \Hom_{\cat{C}}(C_0,C_1) \times \cdots \times \Hom_{\cat{D}}(C_n, -
    )
\right)_+ .
\end{equation}
The coproduct is taken over all $(C_0, \ldots, C_n)\in\cat{C}^{\times
  n}$.  (Recall that the product of any space with an empty space is
the empty space, so this is really the (continuous) nerve of the
category in disguise.) We use the construction $F \wedge (-)_+$ rather
than $F \times (-)$ so that the construction is immediately applicable
in the case of functors from spaces to spectra as well. Recall that
spaces is a topological category, and we use the mapping space $\Hom$.
\end{definition}

\begin{lemma}
The left Kan extension given by \eqref{eq:left-kan-spaces} is a
homotopy functor if $\cat{C}$ is a full subcategory of spaces whose
objects are cofibrant. 
\end{lemma}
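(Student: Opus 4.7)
The plan is to show that a weak equivalence $f : X \to Y$ of spaces induces a weak equivalence $L_{\cat{C}} F(X) \to L_{\cat{C}} F(Y)$ by verifying this levelwise on the defining simplicial space and then appealing to the Realization Lemma (Lemma~\ref{lem:realization-lemma}). The crucial observation is that the variable appears only in the final factor $\Hom_{\cat{D}}(C_n, -) = \Map(C_n, -)$; all other factors are fixed pieces of data depending only on $F$ and on objects and morphism spaces of $\cat{C}$.

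First I would fix $n$ and consider the map of $n$-simplices
\[
\bigvee_{(C_0,\ldots,C_n)} F(C_0) \wedge
\bigl(\Hom_{\cat{C}}(C_0,C_1)\times\cdots\times\Hom_{\cat{D}}(C_n,X)\bigr)_+
\longrightarrow
\bigvee_{(C_0,\ldots,C_n)} F(C_0) \wedge
\bigl(\Hom_{\cat{C}}(C_0,C_1)\times\cdots\times\Hom_{\cat{D}}(C_n,Y)\bigr)_+.
\]
Because $\cat{C}$ is a subcategory of cofibrant spaces, $C_n$ is a CW-like (cofibrant) space, and so by the remarks on homotopy invariant constructions in Section~\ref{sec:homotopy-functors}, the induced map $\Map(C_n, X) \to \Map(C_n, Y)$ is a weak equivalence. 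Crossing with the fixed mapping spaces $\Hom_{\cat{C}}(C_i, C_{i+1})$ preserves this equivalence (product with a fixed space is a homotopy functor in the other variable). Adding a disjoint basepoint and smashing with the pointed space $F(C_0)$ with nondegenerate basepoint also preserves the equivalence, and a wedge of weak equivalences between well-pointed spaces is again a weak equivalence. Hence each level of the map of simplicial spaces is a weak equivalence.

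Second, the maps induced by $f$ are clearly simplicial (the face and degeneracy maps of the bar-type construction only involve composition and insertion of identities in the middle slots, and $f$ affects only the last slot functorially), so we have a simplicial map whose levelwise components are weak equivalences. Applying the Realization Lemma produces a weak equivalence on the fat realizations, which is exactly the statement that $L_{\cat{C}} F(X) \to L_{\cat{C}} F(Y)$ is a weak equivalence.

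The only genuine obstacle is bookkeeping about basepoints: one must check that the smash and wedge preserve levelwise equivalences, which reduces to $F(C_0)$ and the various $(\cdots)_+$ factors being well-pointed; the former is part of our standing assumption on functors to pointed spaces, the latter is automatic because adding a disjoint basepoint yields a nondegenerate basepoint. Once cofibrancy of each $C_n$ is used to guarantee homotopy invariance of $\Map(C_n,-)$, every other step is one of the standard preservation facts recorded in Section~\ref{sec:homotopy-functors}.
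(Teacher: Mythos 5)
Your proof is correct and follows essentially the same route as the paper: reduce to the levelwise statement via the Realization Lemma, observe that the variable enters only through $\Map(C_n,-)$, which is homotopy invariant because $C_n$ is cofibrant, and then check that the wedge, smash, and product constructions preserve weak equivalences under the standing nondegenerate-basepoint assumptions. No gaps.
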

\begin{proof}
Realizations take levelwise weakly equivalent object to weakly
equivalent objects, so we need only show that in each dimension our
simplicial functor is a homotopy functor. This consists of tracing
through to verify that the conditions mentioned in
Section~\ref{sec:homotopy-functors} hold. A discussion of each of the
pieces of this argument occurs on
page~\pageref{page:homotopy-invariant-constructions}.

Each dimension consists of a
coproduct functors; this is homotopy invariant if after evaluation all
of the spaces involved have nondegenerate basepoints. We have made a
blanket assumption to this effect since all functors to spaces can be
(functorially) made to take values in spaces with nondegenerate
basepoints by adding a ``whisker'' if necessary. Similarly, the smash
product is a homotopy functor if all spaces involved have
nondegenerate basepoints.
On the right side of the smash product, we have a product of constant functors
$$\Hom(C_0,C_1)\times\cdots\times\Hom(C_{n-1},C_n)$$
with the functor
$\Hom(C_n,X)$. Constant functors are homotopy invariant, of course,
and $\Hom(C_n, X)$ is homotopy invariant because $C_n$ is a cofibrant
space (CW complex) by hypothesis. The product of
homotopy functors is a homotopy functor, so we are done.
\end{proof}

As with the strict left Kan extension, $L_n F$ is
equipped with a map (natural transformation) to $F$ given by mapping
 $$F(C_0)
\wedge \left( C_0 
\xrightarrow{\alpha_1} \cdots \xrightarrow{\alpha_{n}} C_n \xrightarrow{\beta} X
\right)_+$$ to $$F(\beta\alpha_{n-1}\cdots\alpha_1): F(C_0)
\rightarrow F(X).$$ Evidently, given a map $X\xrightarrow{f} Y$, we
have a map $L_\cat{C} F(X) \rightarrow L_\cat{C} F(Y)$ given by
sending the simplex $C_0 \xrightarrow{\alpha_1} C_1 \rightarrow  \cdots
\rightarrow C_n \xrightarrow{\beta} X$ to $C_0 \xrightarrow{\alpha_1}
C_1 \rightarrow  \cdots \rightarrow C_n \xrightarrow{f\circ\beta} Y$,
and this is compatible with the map to $F$, since $F(f)
F(\beta\alpha_{n}\cdots\alpha_1) = F(f\beta\alpha_{n}\cdots\alpha_1)$.

For this map to be continuous with respect to the topology on the
$\Hom$ sets requires $F$ to be a continuous functor. To emphasize the
topology, we will write $\Map$ for $\Hom$ here. (But recall that is
this work they are both the same --- both are topologized.)
Recall that a
functor $F$ is continuous if given spaces $A$ and $B$, the map of
spaces $\Map(A,B) \rightarrow \Map(FA,FB)$ sending $f$ to $Ff$ is
continuous. The map from the left Kan extension arises from the
composition of this map with the evaluation map:
$$\xymatrix{
F(C) \wedge \Map(C,X)_+
\ar[d]^{1 \wedge F}
\\
F(C) \wedge \Map(FC, FX)_+
\ar[d]^{\text{eval}}
\\
F(X)
}$$ 

The main interesting property of the left Kan extension is that it
agrees with the original functor on the category $\cat{C}$ up to
equivalence. This result is the analog of
Theorem~\ref{thm:strict-left-kan-agrees-with-orig} for the homotopy
invariant left Kan extension.
\begin{proposition}
  \label{prop:left-kan-agrees-with-orig}
  Let $F$ be a functor from spaces to spaces.
  Consider the left Kan extension $L_{\cat{C}} F$, where $\cat{C}$ is
  a full subcategory of spaces, and let
  $C\in\Obj(\cat{C})$. 
  Then the natural transformation from $L_{\cat{C}} F(C)$ to $F(C)$ is
  an equivalence.
\end{proposition}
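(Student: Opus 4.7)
The strategy is to exhibit an extra degeneracy on the simplicial space whose fat realization defines $L_{\cat{C}} F(C)$, thereby displaying the augmented simplicial space $L_\bullet \to F(C)$ as a split augmented object whose realization is weakly equivalent to the augmentation. The crucial hypothesis is that $\cat{C}$ is \emph{full} and $C \in \Obj(\cat{C})$: together these ensure that $\Hom_{\cat{D}}(C_n, C) = \Hom_{\cat{C}}(C_n, C)$ and, more importantly, that $\mathrm{id}_C$ is a legitimate $\cat{C}$-morphism available as raw material for the degeneracy.

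Concretely, denote the $n$-th level of the defining simplicial object by $L_n$. Define an extra degeneracy $h \colon L_n \to L_{n+1}$ by sending the summand indexed by $(C_0, \ldots, C_n)$,
$$F(C_0) \wedge \bigl( C_0 \xrightarrow{\alpha_1} C_1 \xrightarrow{\alpha_2} \cdots \xrightarrow{\alpha_n} C_n \xrightarrow{\beta} C \bigr)_+,$$
to the summand of $L_{n+1}$ indexed by $(C_0, \ldots, C_n, C)$,
$$F(C_0) \wedge \bigl( C_0 \xrightarrow{\alpha_1} \cdots \xrightarrow{\alpha_n} C_n \xrightarrow{\beta} C \xrightarrow{\mathrm{id}_C} C \bigr)_+,$$
and define $h_{-1} \colon F(C) \to L_0$ by $x \mapsto (x, \mathrm{id}_C) \in F(C) \wedge \Hom_{\cat{C}}(C, C)_+$. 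Recalling that the face maps of this two-sided bar construction are $d_0$ = consume $\alpha_1$ via the functoriality of $F$, $d_i$ for $1 \le i \le n-1$ = compose $\alpha_{i+1}\alpha_i$, and $d_n$ = compose $\beta\alpha_n$, a direct check gives $\varepsilon \circ h_{-1} = \mathrm{id}_{F(C)}$, $d_{n+1} \circ h = \mathrm{id}_{L_n}$ (since $\mathrm{id}_C \circ \beta = \beta$), and $d_i \circ h = h \circ d_i$ for $0 \le i \le n$, with analogous compatibilities for the degeneracies.

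With the extra degeneracy established, a standard simplicial homotopy argument (\cite[\S9]{May:simplicial-objects-in-algebraic-topology}) produces a simplicial contraction of $L_\bullet \to F(C)$ onto the constant simplicial space $F(C)$. Invoking the Realization Lemma~\ref{lem:realization-lemma} to pass the levelwise equivalence through the fat realization yields a weak equivalence $\realization{L_\bullet} \xrightarrow{\weakequiv} F(C)$, and by construction this equivalence is precisely the natural transformation of the preceding discussion.

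The only real obstacle is the bookkeeping for the simplicial identities; the identity $d_{n+1} h = \mathrm{id}$ is immediate from the unit law $\mathrm{id}_C \circ \beta = \beta$, but verifying commutation of $h$ with the remaining faces and degeneracies requires carefully matching the bar-construction conventions against the indexing in the definition of $L_{\cat{C}} F$. Once those conventions are pinned down, the computation is mechanical and the ``extra degeneracy implies contractible'' machinery takes over.
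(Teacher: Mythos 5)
Your extra-degeneracy argument is correct and is essentially the paper's own proof in cleaner clothing: the paper contracts the augmented simplicial object toward the terminal object $C$ by iterating a shift map that appends $\mathrm{id}_C$, which is exactly the simplicial homotopy your $h$ generates, and both arguments use fullness and $C\in\Obj(\cat{C})$ in the same two places. One slip to fix: the last step should not invoke the Realization Lemma~\ref{lem:realization-lemma}, since the augmentation $L_\bullet \to F(C)$ is not a levelwise equivalence; what you actually need is that a simplicial homotopy (here, the contraction produced by the extra degeneracy) realizes to a genuine homotopy of fat realizations, giving $\realization{L_\bullet}\weakequiv\realization{\mathrm{const}\,F(C)}\weakequiv F(C)$.
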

\begin{proof}
  This is a general fact about nerves of categories with terminal
  objects. Consider the general portion of the coproduct in dimension
  $n$ given by 
  $$F(C_0) \wedge \left( C_0 \xrightarrow{\alpha_1} C_1
    \rightarrow \cdots \rightarrow C_n \rightarrow C \right) . $$
  Iterating the commutative diagram
  \begin{equation*}
    \xymatrix{
      F(C_0) \ar[d]^{F(\alpha_1)}&\wedge & 
           C_0 \ar[r]^{\alpha_1} \ar[d]^{\alpha_1} & 
           C_1 \ar[r] \ar[d] & 
           \cdots\ar[r] & 
           C_n\ar[r]^{\beta} \ar[d]^{\beta} & 
           C \ar[d]^{=} \\
     F(C_1) & \wedge& C_1 \ar[r] & C_2 \ar[r] & \cdots\ar[r]^{\beta} & C \ar[r]^{=} & C 
   }
  \end{equation*}
  gives a homotopy from $F(C_0) \wedge \left( \Hom(C_0,C_1) \times
    \cdots \times
    \Hom(C_n,C) \right)_+$ to $F(C)\wedge \left( \text{id}_{C} \times
    \cdots \times \text{id}_{C} \right)_+$.  
  The latter is $F(C) \wedge S^0 \isom F(C)$.
  (The hypothesis of being a
  full subcategory is needed to write the $\beta$ on the second line,
  since there it is required to be an element of $\Hom_{\cat{C}}(C_n,
  C)$ instead of just $\Hom_{\cat{D}}(C_n,C)$.)
\end{proof}

The particular left Kan extensions we are interested in commute with
realizations of simplicial $k$-connected spaces, for large enough $k$,
because there is a bound on the dimension of the objects in the
subcategory being extended along. 

\begin{lemma}
The functor $\Map(S^n,-)$ commutes with realizations of simplicial
$(n-1)$-connected spaces.
\end{lemma}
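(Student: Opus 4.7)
The plan is to identify $\Map(S^n,-)$ with $\Omega^n$ and prove the claim by induction on $n$. Since $S^n$ is pointed and compact, there is a natural homeomorphism $\Map(S^n, Y) \cong \Omega^n Y$, so the lemma reduces to showing that $\Omega^n$ commutes with realization of simplicial $(n-1)$-connected spaces. The case $n=0$ is trivial as $\Omega^0$ is the identity.

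For the base case $n=1$, I would apply the Bousfield-Friedlander theorem (Theorem~\ref{thm:Bousfield-Friedlander}) to the levelwise homotopy pullback square
$$
\xymatrix{
\Omega X_\cdot \ar[r] \ar[d] & \ast \ar[d] \\
\ast \ar[r] & X_\cdot
}
$$
in which each level is the path-loop fibration of $X_m$. By hypothesis each $X_m$ is connected, so $X_\cdot$ satisfies the $\pi_*$-Kan condition, as remarked just after Definition~\ref{def:pi-star-Kan}; the constant simplicial point $\ast$ trivially does. The induced map $\pi_0\ast \to \pi_0 X_\cdot$ is the identity map of the constant simplicial set on a point, which is a Kan fibration. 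Bousfield-Friedlander then produces a homotopy pullback square after realization; since $\realization{\ast} \simeq \ast$, this identifies $\realization{\Omega X_\cdot}$ with $\Omega \realization{X_\cdot}$.

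For the inductive step, assume the lemma for $n-1$. An $(n-1)$-connected simplicial space $X_\cdot$ is a fortiori $(n-2)$-connected, so the inductive hypothesis supplies an equivalence $\Omega^{n-1}\realization{X_\cdot} \simeq \realization{\Omega^{n-1} X_\cdot}$. Each $\Omega^{n-1} X_m$ is then $0$-connected, so applying the base case to the simplicial space $\Omega^{n-1} X_\cdot$ yields $\Omega \realization{\Omega^{n-1} X_\cdot} \simeq \realization{\Omega^n X_\cdot}$. Composing these equivalences completes the induction.

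The main obstacle is the clean verification of the Bousfield-Friedlander hypotheses in the base case — particularly the $\pi_*$-Kan condition on $X_\cdot$ and the fibrancy of $\pi_0\ast \to \pi_0 X_\cdot$ — but both follow directly from levelwise connectedness, so the argument is really controlled by how tightly the connectivity hypothesis matches the number of loop functors being taken. The rest is bookkeeping.
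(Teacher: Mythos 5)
Your proof is correct and follows essentially the same route as the paper: an induction on $n$ that peels off one loop at a time, reducing to the statement that $\Omega = \Map(S^1,-)$ commutes with realizations of levelwise connected simplicial spaces. The paper handles that base case with Waldhausen's fibration lemma (Lemma~\ref{lem:realization-of-levelwise-fibration}), which it notes is itself a corollary of the Bousfield--Friedlander theorem you invoke directly, so the two arguments coincide in substance.
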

\begin{proof}
By adjunction, $\Map(S^n,X) \cong \Map(S^{n-1},\Map(S^1,X)) \cong
\Map(S^{n-1},\Omega X)$, and $\Omega X$ has connectivity one less than
$X$, so by induction we need only show that $\Map(S^1,-)$ commutes
with realizations of simplicial connected spaces.
Waldhausen's lemma
(Lemma~\ref{lem:realization-of-levelwise-fibration})
implies this, since it shows that if all $X_i$ are connected, then
both $\Omega \realization{X_\cdot}$ and $\realization{\Omega X_\cdot}$
are equivalent to the homotopy fiber of the map $0\rightarrow
\realization{X_\cdot}$. 
\end{proof}
\begin{corollary}
The functor $\Map(\bigvee S^n, -)$ commutes with realizations of
simplicial $(n-1)$-connected spaces.
\end{corollary}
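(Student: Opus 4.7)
The plan is to reduce to the preceding lemma by using the universal property of the wedge. For a finite wedge $\bigvee^k S^n$, we have a natural homeomorphism
$$\Map\!\left(\textstyle\bigvee^k S^n, X\right) \cong \prod^k \Map(S^n, X),$$
and the preceding lemma handles each factor: if $X_\cdot$ is a simplicial $(n-1)$-connected space, then $\Map(S^n, \realization{X_\cdot}) \simeq \realization{\Map(S^n, X_\cdot)}$. So the remaining task is to show that a finite product commutes with fat realization (of simplicial spaces that are ``good'' in the sense of Definition~\ref{def:good-simplicial-space}).

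For the product, I would use the bisimplicial trick behind Corollary~\ref{cor:useful-eilenberg-zilber}. Given two good simplicial spaces $A_\cdot$ and $B_\cdot$, the simplicial space $A_\cdot \times B_\cdot$ is the diagonal of the bisimplicial space $[p,q]\mapsto A_p \times B_q$; this bisimplicial space is good by repeated application of Lemma~\ref{lem:preserve-goodness}(3). Corollary~\ref{cor:useful-eilenberg-zilber} then lets me replace the realization of the diagonal by the iterated realization
$$\realization{[p]\mapsto \realization{[q]\mapsto A_p \times B_q}}.$$
Since the inner simplicial space is $A_p$ (a fixed space) crossed with $B_\cdot$, and product with a fixed space commutes with the colimit defining (strict) realization in our compactly generated category, the inner realization is $A_p \times \realization{B_\cdot}$. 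Applying the same observation once more outside yields $\realization{A_\cdot \times B_\cdot} \simeq \realization{A_\cdot} \times \realization{B_\cdot}$, and iterating gives the result for a $k$-fold product.

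Putting the pieces together: Lemma~\ref{lem:preserve-goodness}(5) shows that $[i]\mapsto \Map(S^n, X_i)$ is a good simplicial space (since $S^n$ is compact and cofibrant), so the $k$-fold product is also good, and the two arguments combine to give
$$\Map\!\left(\textstyle\bigvee^k S^n, \realization{X_\cdot}\right)
\cong \prod^k \Map(S^n, \realization{X_\cdot})
\simeq \prod^k \realization{\Map(S^n, X_\cdot)}
\simeq \realization{\Map\!\left(\textstyle\bigvee^k S^n, X_\cdot\right)}.$$
The main step requiring care is the commutation of realization with finite products; the $(n-1)$-connectedness hypothesis is used only to invoke the preceding lemma, not here. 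I do not expect a serious obstacle, provided one is careful to verify the goodness hypotheses (which hold throughout in our setting since all spaces have nondegenerate basepoints and $S^n$ is a CW complex).
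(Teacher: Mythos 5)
Your proof takes essentially the same route as the paper's: the entire argument there is the identification $\Map(\bigvee S^n,-)\cong \prod \Map(S^n,-)$ followed by the bare assertion that products commute with realizations, after which the preceding lemma is applied factorwise. You have simply supplied a justification (via the bisimplicial/Eilenberg--Zilber argument) for the product-commutes-with-realization step that the paper leaves implicit, so the two arguments agree in substance.
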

\begin{proof}
We know $\Map(\bigvee S^n, -) \cong \prod \Map(S^n,-)$, and products
commute with realizations.
\end{proof}
\begin{corollary}
\label{cor:Map-K-commutes-with-realization}
Let $K$ be a finite CW complex of dimension $n$. 
The functor $\Map(K, -)$ commutes with realizations of
simplicial $(n-1)$-connected spaces.
\end{corollary}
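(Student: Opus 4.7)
The plan is to induct on the number of cells of $K$. The base case $K = \basept$ is immediate since $\Map(\basept,Y)\simeq Y$ and realization commutes with the identity functor.

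For the inductive step, choose a cell of top dimension and write $K = K' \cup_{\phi} D^k$, where $k\le n$, $K'$ is the CW subcomplex with that cell removed, and $\phi\colon S^{k-1} \rightarrow K'$ is the attaching map. Then $K'$ is a finite CW complex of dimension at most $n$ with one fewer cell. Since $S^{k-1}\hookrightarrow D^k$ is a closed cofibration, the CW pushout
$$\xymatrix{
S^{k-1} \ar[r]^{\phi} \ar[d] & K' \ar[d] \\
D^k \ar[r] & K
}$$
is a homotopy pushout, and applying $\Map(-,Y)$ to it for any pointed target $Y$ produces a homotopy pullback square
$$\xymatrix{
\Map(K,Y) \ar[r] \ar[d] & \Map(D^k,Y) \ar[d] \\
\Map(K',Y) \ar[r] & \Map(S^{k-1},Y)
}$$

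Now let $Z_\cdot$ be a simplicial $(n-1)$-connected space. Applying this levelwise yields a square of simplicial spaces whose $i$-th level is a homotopy pullback. I would then invoke the Bousfield--Friedlander theorem (Theorem~\ref{thm:Bousfield-Friedlander}) to commute realization with this pullback. Its hypotheses concern the right-hand column. Since $D^k$ is pointed-contractible, $\Map(D^k,Z_i)$ is contractible and satisfies the $\pi_*$-Kan condition trivially. The lower-right term $\Map(S^{k-1},Z_i)\simeq \Omega^{k-1} Z_i$ is $(n-k)$-connected, hence connected since $k\le n$, so the $\pi_*$-Kan condition holds. The induced map on $\pi_0$ is a map between simplicial sets that are trivial in each degree, so it is a fibration of simplicial sets. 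Bousfield--Friedlander therefore produces a homotopy pullback after realization.

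It remains to identify each realized corner with the corresponding $\Map(-,\realization{Z_\cdot})$. By the inductive hypothesis (applied to $K'$, which has dimension $\le n$ and fewer cells), $\realization{\Map(K',Z_\cdot)}\simeq \Map(K',\realization{Z_\cdot})$. By the preceding lemma, $\realization{\Map(S^{k-1},Z_\cdot)}\simeq \Map(S^{k-1},\realization{Z_\cdot})$; its hypothesis of simplicial $(k-2)$-connectedness is implied by $(n-1)$-connectedness because $k-1\le n-1$. The contractible term $\Map(D^k,-)$ commutes with realization trivially. Combining these with the analogous pullback decomposition of $\Map(K,\realization{Z_\cdot})$ obtained by applying $\Map(-,\realization{Z_\cdot})$ to the CW pushout identifies the realized upper-left corner with $\Map(K,\realization{Z_\cdot})$, completing the induction. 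The main obstacle is the Bousfield--Friedlander application: ensuring that the right-hand column of the pullback square consists of simplicial spaces that are levelwise connected is precisely what forces the hypothesis $k\le n$ and the $(n-1)$-connectedness of $Z_\cdot$ to be used; once this is arranged, the argument is essentially automatic.
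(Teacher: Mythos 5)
Your proof is correct and follows essentially the same route as the paper's: induct on cells, apply $\Map(-,Y)$ to the cell-attachment pushout to get a levelwise homotopy pullback, and invoke Bousfield--Friedlander using that the right-hand column is levelwise connected (which is exactly where $k\le n$ and the $(n-1)$-connectedness of $Z_\cdot$ enter). The only cosmetic difference is that the paper works with $S^k_+$ and $D^{k+1}_+$ (disjoint basepoints added, since attaching maps are unpointed), which does not affect the connectivity estimates or the structure of the argument.
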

\begin{proof}
The result is true for $K=\basept$. We proceed by induction, showing
that you can add one cell and the result still holds. Suppose
$\Map(K',-)$ commutes with realizations of simplicial
$(n-1)$-connected spaces. Suppose that a $(k+1)$-cell is added to $K'$
along attaching map $\alpha$ to produce $K$.
$$\xymatrix{
S^k_+ 
\ar[r]^{\alpha}
\ar[d]
&
K'
\ar[d]
\\
D^{k+1}_+
\ar[r]
&
K
}$$
Applying the functor $\Map(-,X_i)$ to this co-Cartesian square produces
a Cartesian square:
$$\xymatrix{
\Map(K,X_i)
\ar[r]
\ar[d]
&
\Map(D^{k+1}_+,X_i)
\ar[d]
\\
\Map(K',X_i)
\ar[r]^{\alpha^*}
&
\Map(S^k_+,X_i)
}$$
Hence we have a 2-cube of simplicial spaces that is levelwise
Cartesian. We would like to conclude that after realization, this is
still a Cartesian square, so both $\Map(K,\realization{X_\cdot})$ and
$\realization{\Map(K,X_\cdot)}$ are equivalent to the inverse limit
over the rest of the cube:
\begin{align*}
\realization{\Map(K,X_\cdot)}
&\simeq \holim \left( 
\realization{\Map(D^{k+1}_+,X_\cdot)}
\rightarrow
\realization{\Map(S^k_+,X_\cdot)}
\leftarrow
\realization{\Map(K',X_\cdot)}
  \right)
\\
\intertext{which, by the induction hypotheses, is}
&\simeq \holim \left( 
\Map(D^{k+1}_+,\realization{X_\cdot})
\rightarrow
\Map(S^k_+,\realization{X_\cdot})
\leftarrow
\Map(K',\realization{X_\cdot})
  \right)
\\
&\simeq \Map(K,\realization{X_\cdot}) .
\end{align*}
When $k$ is at most the connectivity of $X_i$ (that is, $k \le n-1$,
so the cell of dimension $k+1$ being attached has dimension $k+1\le
n$, which is our hypothesis on $\dim K$), both of the spaces
$\Map(S^k_+, X_i)$ and $\Map(D^k_+, X_i)$ are connected.  In this
circumstance, they trivially satisfy the $\pi_*$-Kan condition, so by
Theorem~\ref{thm:Bousfield-Friedlander}, the
square is in fact Cartesian after realization.
\end{proof}
\begin{proposition}
  \label{prop:left-kan-commutes-with-some-realizations}
  Let $F$ be a functor from spaces to spaces.
  Let $\cat{C}$ be a subcategory of CW spaces whose objects have
  dimension at most $a$. Then $L_{\cat{C}} F$ commutes with
  realizations of $(a-1)$-connected simplicial spaces.
\end{proposition}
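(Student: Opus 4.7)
The strategy is to recognize $L_{\cat{C}} F(\realization{X_\cdot})$ as the diagonal of a bisimplicial construction and swap the order of realization, using the corollary about $\Map(K,-)$ to push one of the realizations inside.

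Concretely, write
$$Y_{n,k} = \bigvee_{(C_0,\ldots,C_n)} F(C_0)\wedge \bigl(\Hom(C_0,C_1)\times\cdots\times\Hom(C_{n-1},C_n)\times\Hom(C_n,X_k)\bigr)_+,$$
so that $L_{\cat C}F(X_k) = \realization{[n]\mapsto Y_{n,k}}$ and hence $\realization{[k]\mapsto L_{\cat C}F(X_k)} = \realization{[k]\mapsto \realization{[n]\mapsto Y_{n,k}}}$. On the other side, $L_{\cat C}F(\realization{X_\cdot})$ is the realization over $[n]$ of the same formula with $X_k$ replaced by $\realization{X_\cdot}$.

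The first key step is to move the outer realization inside. For fixed $(C_0,\ldots,C_n)$, separate the ``constant'' factor $P = \Hom(C_0,C_1)\times\cdots\times\Hom(C_{n-1},C_n)$ from $\Hom(C_n,X_k)$, so the $n$-th term of the simplicial functor defining $L_{\cat C}F(\realization{X_\cdot})$ becomes
$$\bigvee_{(C_0,\ldots,C_n)} F(C_0)\wedge P_+ \wedge \Hom(C_n,\realization{X_\cdot})_+.$$
Since $\dim C_n\le a$ and each $X_k$ is $(a-1)$-connected, Corollary~\ref{cor:Map-K-commutes-with-realization} gives $\Hom(C_n,\realization{X_\cdot})\simeq \realization{[k]\mapsto \Hom(C_n,X_k)}$. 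Smashing with the fixed pointed space $F(C_0)\wedge P_+$ commutes with realization (it is a homotopy colimit pushed through a left adjoint construction, preserving goodness via Lemma~\ref{lem:preserve-goodness}), and the coproduct likewise commutes with realization. Therefore the $n$-th term is equivalent to $\realization{[k]\mapsto Y_{n,k}}$, giving
$$L_{\cat C}F(\realization{X_\cdot})\simeq \realization{[n]\mapsto \realization{[k]\mapsto Y_{n,k}}}.$$

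The second step is to swap the two realizations via Corollary~\ref{cor:useful-eilenberg-zilber} (Eilenberg--Zilber), which requires goodness of the bisimplicial space $Y_{\cdot\cdot}$ in each variable. Goodness in the $k$-direction follows from Lemma~\ref{lem:preserve-goodness} applied to $X_\cdot$ (wedges, smashes and products with fixed spaces, together with the coproduct clause (6)); goodness in the $n$-direction is the standard fact that the nerve construction produces a good simplicial space. Swapping the realizations then yields $\realization{[k]\mapsto \realization{[n]\mapsto Y_{n,k}}} = \realization{[k]\mapsto L_{\cat C}F(X_k)}$, completing the proof.

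The main obstacle I anticipate is the bookkeeping around goodness and the smash-product/realization interchange: one has to verify that the ``constant'' factor $P_+$ and the wedge over $(C_0,\ldots,C_n)$ really do slide through realization without losing homotopy invariance, which is where the blanket assumption of nondegenerate basepoints and Lemma~\ref{lem:preserve-goodness} do the real work.
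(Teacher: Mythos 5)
Your proof is correct and follows essentially the same route as the paper's: the paper's (much terser) argument likewise observes that the only factor in \eqref{eq:left-kan-spaces} depending on $X$ is $\Hom(C_n,-)$, handles it via Corollary~\ref{cor:Map-K-commutes-with-realization}, and notes that the coproduct, the smash with constants, and the nerve-direction realization commute with realization unconditionally. Your write-up simply makes explicit the bisimplicial bookkeeping and the interchange of the two realization directions that the paper leaves implicit.
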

\begin{proof}
In \eqref{eq:left-kan-spaces}, we see that everything but
$\Hom(C_n,-)$ commutes with realizations of $X$ with no conditions.
The fact that $\Hom(C_n,-)$ commutes with realizations of simplicial
$(a-1)$-connected spaces is the content of
Corollary~\ref{cor:Map-K-commutes-with-realization}. 
\end{proof}

\begin{lemma}
\label{lem:LCFX-good}
Let $\cat{C}$ be the full subcategory of pointed spaces whose objects
are finite coproducts of $S^0$: $\bigvee^k S^0$ for $k=0,1,\ldots$. 
If $X_\cdot$ is a simplicial set, and $F$ is a functor from spaces to
nondegenerately based spaces (as all of our spaces are assumed to be),
then 
the simplicial space $[k] \mapsto L_\cat{C} F(X_k)$ is good
(\ref{def:good-simplicial-space}).
\end{lemma}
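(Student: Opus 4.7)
The plan is to reduce the claim to showing that the functor $L_{\cat{C}} F$ sends cofibrations of discrete spaces to closed cofibrations of spaces, and then to exploit the explicit simplicial formula \eqref{eq:left-kan-spaces} that defines $L_{\cat{C}} F$. Since $X_\cdot$ is a simplicial set, each degeneracy $s_j\colon X_k \hookrightarrow X_{k+1}$ is an inclusion of a subset (with discrete, hence closed, complement), so it is a closed cofibration of topological spaces. Thus the whole question becomes: if $Y \hookrightarrow Z$ is an inclusion of pointed discrete spaces, is $L_{\cat{C}} F(Y) \hookrightarrow L_{\cat{C}} F(Z)$ a closed cofibration?

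To answer this, I would look at the simplicial object \eqref{eq:left-kan-spaces} whose fat realization is $L_{\cat{C}} F$, inspecting each internal simplicial level $n$ one at a time. The key simplification for this particular $\cat{C}$ is that every object $C_i$ has the form $\bigvee^{k_i} S^0$, so $\Hom(C_n, Y) \cong Y^{k_n}$ and $\Hom(C_n, Z) \cong Z^{k_n}$, and an inclusion $Y \hookrightarrow Z$ of discrete sets induces an inclusion of discrete sets $Y^{k_n} \hookrightarrow Z^{k_n}$, which is a closed cofibration. The remaining factors $\Hom(C_i, C_{i+1})$ do not involve the variable $X$ and simply ride along as products with fixed spaces, which preserves cofibrations (Lemma~\ref{lem:preserve-goodness}(3)). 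Adjoining a disjoint basepoint preserves the cofibration property, and smashing with the nondegenerately based space $F(C_0)$ preserves cofibrations by part~(4) of Lemma~\ref{lem:preserve-goodness}. Finally, a wedge of cofibrations is a cofibration (a coproduct of cofibrations, as used in part~(2) of the same lemma), so the map on the $n$-th level of \eqref{eq:left-kan-spaces} is a closed cofibration.

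Having a level-wise cofibration of simplicial spaces in the internal direction, I would invoke the fact (used already in the proof of Corollary~\ref{cor:good-multisimplicial}) that fat realization of a levelwise closed cofibration is a closed cofibration. Applying this to the map of internal simplicial objects coming from $s_j\colon X_k \hookrightarrow X_{k+1}$ gives that $L_{\cat{C}} F(X_k) \hookrightarrow L_{\cat{C}} F(X_{k+1})$ is a closed cofibration, which is exactly the definition (\ref{def:good-simplicial-space}) of goodness for $[k] \mapsto L_{\cat{C}} F(X_k)$.

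The argument has no real obstacle; the only thing that requires care is bookkeeping the two simplicial directions (the inner $n$ that builds $L_{\cat{C}} F$, and the outer $k$ coming from $X_\cdot$) and noticing that the restriction of $\cat{C}$ to wedges of $S^0$ is what makes $\Hom(C_n, -)$ collapse to a finite Cartesian power, which is well-behaved on discrete spaces. Without that restriction (for more general $\cat{C}$), the analogous step would require knowing that $\Hom(C_n, s_j)$ is a cofibration, which is a subtler condition in general.
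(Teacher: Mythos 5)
Your proof is correct and follows essentially the same route as the paper: analyze each internal level of the simplicial formula \eqref{eq:left-kan-spaces}, check that every building block (mapping space, product with constants, disjoint basepoint, smash with $F(C_0)$, coproduct) carries the degeneracies to closed cofibrations, and then realize in the internal direction using the fact that realization of levelwise cofibrations is a cofibration. The only (harmless) variation is that you handle $\Hom(C_n,-)$ by the explicit identification $\Hom(\bigvee^{k_n}S^0,Y)\cong Y^{k_n}$ for discrete $Y$, whereas the paper invokes the general mapping-space clause of Lemma~\ref{lem:preserve-goodness}.
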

\begin{proof}
From Lemma~\ref{lem:preserve-goodness}, item~$7$, we know that when
$X_\cdot$ is good, then so is the mapping space $\Hom(C_n, X_\cdot)$. 
Following the construction of the left Kan extension
(Definition~\ref{def:left-Kan}), Lemma~\ref{lem:preserve-goodness},
item~$3$, shows that the product with the constant space 
$$\Hom(C_0,C_1)\times\cdots\times\Hom(C_{n-1},C_n)$$
is still a good space. Adding a disjoint basepoint is still good
(item~$1$ in the same lemma), as smashing with a space with a
nondegenerate basepoint (item~$4$), and 
taking the coproduct over all
$n$-tuples $(C_0,\ldots,C_n)$ (by item~$6$). Finally, the realization in
the direction internal to the left Kan extension still produces a good
simplicial space by item~$7$.
\end{proof}



\section{Defining Additive Calculus From The Left Kan Extension}
\label{sec:additive-calculus}


Let $L$ denote the left Kan extension (\ref{def:left-Kan}) over the
full subcategory of pointed spaces generated by finite coproducts of
$S^0$ (including the empty coproduct, $\basept$).
Let $F_X$ denote the functor sending $Y$ to $F(X\wedge Y)$. This fixes
information about $X$ into the functor, so that the left Kan extension
$L F_X$ contains information about the value of $F$ on coproducts of
$X$, not just the value of $F$ on points. The functor $L F_X$
naturally comes equipped with a map to $F_X$, but because we are
taking the left Kan extension over a subcategory that contains 
$S^0$, the unit of the smash product, there is also a map $F(X)
\rightarrow (L F_X) (S^0)$ --- note the change from $F_X$ to $F(X)$ ---
given by sending $F(X)$ to the 0-simplex $F(X\wedge S^0) \times
1_{S^0}$. 
Applying $P_n$ to a left Kan extension $L F_X$ creates a theory that
we refer to as $n$-additivization.
$$ P_n^{d} F(X) = P_n(L F_X) (S^0) .$$
The decoration ``d'' stands for
``discrete'', since the functor is defined by a left Kan extension
over a discrete subcategory of spaces.


Using the fact that left Kan extensions commute with realizations of
appropriately highly connected spaces, we can write one of these functors in
another, perhaps more familiar, way.
Let us compute $P_1^{d} F(X)$ for a reduced functor $F$.
\begin{align*}
P_1^{d} F(X)
&= P_1(L F_X)(S^0)  .
\end{align*}
{As in Section~\ref{sec:P1F}, this is equivalent to}
$$\colim_n \Omega^n L F_X ( S^n \wedge S^0) . $$
{Then $S^0$ is the identity of the smash product, so this equals}
$$ \colim_n  \Omega^n L F_X (S^n) . $$
Proposition~\ref{prop:left-kan-commutes-with-some-realizations}
applied with $a=0$ implies that $L F_X$ commutes with realizations
of all simplicial sets, so this is equivalent to
$$ \colim_n  \Omega^n \realization{L F_X(S^n_{\cdot})}  . $$
{Since $L F$ agrees with $F$ (up to equivalence) on the category of
  finite discrete spaces, and each $S^n_i$ is a finite discrete space,
  this equivalent to}
$$ \colim_n  \Omega^n \realization{F_X(S^n_{\cdot})} .$$
Which, by the definition of $F_X$, shows that 
$$P_1^d F(X) \simeq \colim_n  \Omega^n \realization{F(X\wedge S^n_{\cdot})} .$$

\begin{example}
\label{ex:K-2-2}
To work out a particular example, let $F(X) = K(H_2(X),2)$ be the
Eilenberg-MacLane space with $\pi_2 = H_2(X)$. (This is another example
of using a topological substitute for the category of abelian groups.)
We assert that $L F_X(S^1)$ is the bar construction on $F(X)$, and
hence $\Omega L F_X(S^1) \simeq \Omega BF(X) \simeq \Omega
K(H_2(X),3) \simeq F(X)$, so $P_1^{d} F(X) = F(X)$.

Recall that in our standard simplicial set model for $S^1$, there are
$n+1$ simplices in dimension $n$. That is, the model is $[n] \mapsto
\bigvee^n S^0$. Applying $H_2(X\wedge -)$ levelwise, we get $[n]
\mapsto H_2(\bigvee^n X)$, which is $\oplus^n H_2(X)$. The face maps
are induced by the fold map $S^0 \vee S^0 \rightarrow S^0$. This is
becomes addition on $H_2$, since addition is universal as a map from
$A\oplus A \rightarrow A$, for any abelian group $A$, that restricts
to the identity on each component of the coproduct $A \oplus A$. This
allows us to identify $H_2(X \wedge S^1_{\cdot})$ with the bar
construction $B H_2(X)$ on the abelian group $H_2(X)$. Since the
functor $K(-,2)$ preserves products of abelian groups, $K(H_2(X\wedge
S^1_{\cdot})),2)$ is the bar construction on $K(H_2(X),2)$, so $L
F_X(S^1)$ is $BF(X)$, as claimed.
\end{example}


%
%
\chapter{Excisive Functors from Spaces to Spectra}
\label{chap:finite-degree}

Functors from spaces to spectra that are $n$-excisive and satisfy the
limit axiom (\ref{def:limit-axiom}) are determined
by their left Kan extensions over the full subcategory of spaces
containing as objects the discrete spaces $\Ord{k}$, where $k$ ranges
from $0$ to $n$ (the degree of the functor). To recollect: we use the
notation $\Ord{k}$ to denote the space $\bigvee^k S^0$, which has
$k+1$ points. In this section, we write $L_n F$ for the left Kan
extension of $F$ along the inclusion of the full subcategory
of spaces whose objects are
$\Set{\Ord{0},\ldots,\Ord{n}}$.

\begin{definition}
\label{def:limit-axiom}
  A homotopy functor $F$ is said to satisfy the \emph{limit axiom} if
  $F$ commutes with filtered homotopy colimits of finite
  complexes. That is, if $\hocolim F(X_\alpha) \simeq F(\hocolim
  X_\alpha)$ for all filtered systems $\Set{X_\alpha}$ of finite complexes,
  then $F$ satisfies the limit axiom. 
\end{definition}
The limit axiom is needed to relate the values of $F$ on infinite
complexes to the values of $F$ on finite complexes. For instance,
there are nontrivial functors like $\Map(-,QS^0)$ that are
contractible on all finite complexes; the methods in this section 
evidently will not be able to say anything about these functors.

For the results in this section, considering functors to spectra is critical. 
The main way in which we use spectra as the target category
is embodied in the following lemma:
\begin{lemma}[Basic Lemma for Spectra]
  \label{lem:basic-lemma-for-spectra}
  If $\cube{X}$ is a strongly co-Cartesian $S$-cube of spectra, with
  $\abs{S}=n+1$, 
  and $F$ is an $n$-excisive functor taking values in spectra, 
  then $$F\cube{X}(S) \simeq \hocolim_{U \in P_1(S)} F\cube{X}(U).$$
  That is, $F\cube{X}$ is co-Cartesian.
\end{lemma}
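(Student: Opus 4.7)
The plan is to chain together two facts already in place: the definition of $n$-excisive (which controls what $F$ does to strongly co-Cartesian $(n+1)$-cubes) and the equivalence between Cartesian and co-Cartesian cubes in the category of spectra.

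First I would unpack the hypothesis. Since $|S| = n+1$ and $\cube{X}$ is strongly co-Cartesian, the definition of $n$-excisive (Section~\ref{sec:excisive-functors}) immediately gives that $F\cube{X}$ is a \emph{Cartesian} $S$-cube of spectra, i.e.\ the map
\[
F\cube{X}(\emptyset) \;\longrightarrow\; \holim_{U \in \Power_0(S)} F\cube{X}(U)
\]
is an equivalence. What I want, however, is the dual statement, that
\[
\hocolim_{U \in \Power_1(S)} F\cube{X}(U) \;\longrightarrow\; F\cube{X}(S)
\]
is an equivalence.

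The second step is therefore to upgrade Lemma~\ref{lem:spectra-co-cartesian=cartesian} to an ``if and only if.'' That lemma shows co-Cartesian $\Rightarrow$ Cartesian for cubes of spectra; the reverse implication follows by exactly the same argument, using the other half of Lemma~\ref{lem:omega-sigma-inverses} (that $\Sigma\Omega \mathbf{X} \simeq \mathbf{X}$). Concretely, the iterated total homotopy fiber and the iterated total homotopy cofiber of a cube of spectra differ only by an $|S|$-fold suspension, so one is contractible exactly when the other is. (One can also give an inductive proof on $|S|$ by viewing an $S$-cube as a map of $S'$-cubes with $|S'| = |S|-1$: the induced map on total (co)fibers is then the total (co)fiber of the whole cube, and Corollary~\ref{cor:spectra-cofibration=fibration} identifies fiber and cofiber at each stage.) Either way, in spectra Cartesian and co-Cartesian coincide.

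Applying this to $F\cube{X}$ yields that $F\cube{X}$ is co-Cartesian, which is exactly the conclusion $F\cube{X}(S) \simeq \hocolim_{U \in \Power_1(S)} F\cube{X}(U)$. The only real content is the Cartesian $\Rightarrow$ co-Cartesian half for spectra, and since the paper has already set up $\Omega \dashv \Sigma$ as mutually inverse equivalences on the homotopy category of spectra and proved the co-Cartesian $\Rightarrow$ Cartesian direction, I expect the main (and essentially only) obstacle is writing out this dual direction carefully; once it is in hand, the lemma is immediate from the definition of $n$-excisive.
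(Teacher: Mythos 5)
Your proposal is correct and follows exactly the paper's own (very short) argument: $n$-excisiveness makes $F\cube{X}$ Cartesian, and Cartesian and co-Cartesian coincide for cubes of spectra. You rightly observe that Lemma~\ref{lem:spectra-co-cartesian=cartesian} as stated only gives the co-Cartesian $\Rightarrow$ Cartesian direction, and your sketch of the converse (total fiber and total cofiber of a cube of spectra differ by a shift) correctly supplies the half that the paper's proof invokes without comment.
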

\begin{proof}
  Recall that $\Power_1(S)$ is the power set of $S$ with the terminal
  object removed.
  Since $F$ is $n$-excisive, it takes co-Cartesian $(n+1)$-cubes to
  Cartesian cubes. In the category of spectra, Cartesian cubes are
  also co-Cartesian, so the result follows.
\end{proof}



\section{$L_n F$ Is $n$-excisive}

Recall that the functor $L_n F(X)$ is given by the realization of a
simplicial spectrum:
\begin{equation}
\label{eq:LnF}
L_n F(X) 
=
\left\lvert
[k] \mapsto \bigvee_{(C_0,\ldots,C_k)} 
F(C_0) 
\wedge 
\left(
\Hom(C_0,C_1)\times \cdots \times \Hom(C_k,X)
\right)_+
\right\rvert
.
\end{equation}

We begin by showing that to know $L_n F$ is $n$-excisive, it is enough
to know that each simplicial dimension of $L_n F$ is $n$-excisive.

\begin{lemma}
\label{lem:levelwise-n-excisive-is-n-excisive}
   Let $F_\cdot$ be a simplicial functor from spaces to spectra. If
   each $F_i$ is $n$-excisive, then $\realization{F_\cdot}$ is $n$-excisive.
\end{lemma}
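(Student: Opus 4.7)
The plan is to exploit the fundamental fact about spectra that Cartesian cubes and co-Cartesian cubes coincide (Lemma~\ref{lem:spectra-co-cartesian=cartesian}), which converts the question about a homotopy inverse limit into one about a homotopy colimit. Realization is a homotopy colimit, and homotopy colimits commute with each other, so once we recast ``Cartesian'' as ``co-Cartesian'' the result should follow almost formally.

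In more detail, let $\cube{X}$ be a strongly co-Cartesian $(n+1)$-cube of spaces. For each simplicial degree $i$, the cube $F_i \cube{X}$ is Cartesian since $F_i$ is $n$-excisive by hypothesis. By Lemma~\ref{lem:spectra-co-cartesian=cartesian}, each $F_i \cube{X}$ is therefore also co-Cartesian; that is, the natural map
$$
\hocolim_{U \in \Power_1(\mathbf{n+1})} F_i \cube{X}(U) \longrightarrow F_i \cube{X}(\mathbf{n+1})
$$
is an equivalence for each $i$. Applying realization levelwise and using that realization (being itself a homotopy colimit) commutes with the homotopy colimit over $\Power_1(\mathbf{n+1})$, we obtain an equivalence
$$
\hocolim_{U \in \Power_1(\mathbf{n+1})} \realization{F_\cdot \cube{X}(U)} \longrightarrow \realization{F_\cdot \cube{X}(\mathbf{n+1})},
$$
so that $\realization{F_\cdot} \cube{X}$ is co-Cartesian. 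Invoking Lemma~\ref{lem:spectra-co-cartesian=cartesian} in the other direction, this cube is then also Cartesian, which is exactly the statement that $\realization{F_\cdot}$ is $n$-excisive.

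The main thing to verify carefully is the commutation of realization with the $\hocolim$ over $\Power_1(\mathbf{n+1})$; but since both are homotopy colimits in the category of spectra, this is automatic (we may, if desired, model both as diagonals of bisimplicial constructions and appeal to the discussion around Corollary~\ref{cor:useful-eilenberg-zilber}). No analytic or connectivity hypotheses are needed, precisely because we are working in spectra where the Blakers--Massey obstructions (encoded in Lemma~\ref{lem:spectra-co-cartesian=cartesian}) vanish. The argument would fail if the target were spaces rather than spectra, which is why this lemma is stated specifically for simplicial functors to spectra.
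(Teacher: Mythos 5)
Your proof is correct and is essentially the same as the one in the paper: both arguments convert ``Cartesian'' to ``co-Cartesian'' using the equivalence of these notions for spectra, observe that levelwise $n$-excisiveness gives a levelwise co-Cartesian cube, and then commute the realization (a homotopy colimit) with the homotopy colimit over $\Power_1(\mathbf{n+1})$ before converting back. The only cosmetic difference is that you cite Lemma~\ref{lem:spectra-co-cartesian=cartesian} explicitly in both directions, whereas the paper invokes the equivalence once and phrases the levelwise step via the Basic Lemma for Spectra (\ref{lem:basic-lemma-for-spectra}); the content is identical.
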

\begin{proof}
   Let $\cube{X}$ be a strongly co-Cartesian $S$-cube of spaces, with
   $\abs{S}=n+1$. If $\realization{F_\cdot \cube{X}}$ is Cartesian,
   then $\realization{F_\cdot}$ is $n$-excisive.
   Cartesian and co-Cartesian are equivalent notions for spectra, 
   so it suffices to show
   that $\realization{F_\cdot(\cube{X})}$ is co-Cartesian.

   Each $F_i$ is $n$-excisive, so $F_i \cube{X}(S) \simeq \hocolim_{U\in
     P_1(S)} F_i \cube{X}(U)$. Applying the realization functor to
   both sides, and noting that realization is a homotopy colimit and
   colimits commute, we have $\realization{F_\cdot \cube{X}(S)} \simeq
   \hocolim_{U\in P_1(S)} \realization{F_\cdot \cube{X}(U)}$. This
   shows that $\realization{F_\cdot \cube{X}}$ is co-Cartesian, as
   desired. 
\end{proof}

\begin{proposition}
  \label{prop:L-n-F-degree-n}
  If $F$ is a functor from spaces to spectra, then $L_n F$ is
  $n$-excisive. 
\end{proposition}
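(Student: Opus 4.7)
The plan is to apply Lemma~\ref{lem:levelwise-n-excisive-is-n-excisive} to reduce the claim to showing that each simplicial level of \eqref{eq:LnF} is $n$-excisive as a functor of $X$. At level $k$, that functor is a finite wedge, indexed by $(k+1)$-tuples $(C_0, \ldots, C_k) \in \Set{\Ord{0}, \ldots, \Ord{n}}^{k+1}$, of summands
$$ G_{C_\bullet}(X) = F(C_0) \wedge \bigl(\Hom(C_0, C_1) \times \cdots \times \Hom(C_{k-1}, C_k)\bigr)_+ \wedge \Hom(C_k, X)_+ . $$
All dependence on $X$ lies in the final smash factor; the remaining factors produce a fixed spectrum $A = A_{C_\bullet}$, so each summand equals $A \wedge \Hom(C_k, X)_+$. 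Finite wedges of spectra agree with finite products, and Cartesian cubes are preserved by finite products, so $n$-excisiveness is preserved by finite wedges in spectra. It therefore suffices to show that each summand is $n$-excisive.

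Since $C_k = \Ord{m}$ for some $m \le n$, a pointed map $C_k \to X$ is determined by its values at the $m$ non-basepoint points, so $\Hom(\Ord{m}, X) \cong X^m$, and the task reduces to proving that $X \mapsto A \wedge X^m_+$ is $n$-excisive whenever $m \le n$. The key tool is the stable splitting of the Cartesian power,
$$ \SigmaInfty X^m_+ \simeq \bigvee_{T \subseteq \Set{1, \ldots, m}} \SigmaInfty X^{\wedge \abs{T}}, $$
which rewrites $A \wedge X^m_+$ as a finite wedge of functors $A \wedge X^{\wedge j}$ for $0 \le j \le m$. Each such functor is $j$-excisive (indeed $j$-homogeneous) as a functor of $X$, hence $n$-excisive since $j \le m \le n$, and the finite wedge is $n$-excisive by the reasoning above.

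The main obstacle will be invoking the two standard facts used in the second step: the stable splitting of $X^m_+$ into smash powers, and the $j$-excisiveness of $X \mapsto X^{\wedge j}$. Neither is proved in the excerpt, though both are classical in stable homotopy theory and Goodwillie calculus respectively. If a self-contained argument is desired, the $j$-excisiveness of the smash power can instead be verified directly from Theorem~\ref{thm:blakers-massey-goodwillie}, using the product-of-pushouts decomposition of $\cube{X}^{\wedge j}$ on a strongly co-Cartesian $(j+1)$-cube $\cube{X}$ to show that the $(j+1)$-st cross effect vanishes, with Lemma~\ref{lem:spectra-co-cartesian=cartesian} converting the resulting co-Cartesian conclusion for spectra into the required Cartesian one.
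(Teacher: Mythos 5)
Your proof follows the paper's argument exactly: reduce to simplicial levels via Lemma~\ref{lem:levelwise-n-excisive-is-n-excisive}, use the equivalence of finite wedges and finite products in spectra to isolate a single summand $\mathbf{C}\wedge \Hom(C_k,X)_+ \cong \mathbf{C}\wedge X^{\times m}_+$ with $m\le n$, and conclude that this functor of $X$ is $n$-excisive. The only divergence is the final step, where the paper simply cites Goodwillie's Example~3.5 of \cite{Cal2} for the $n$-excisiveness of $\mathbf{C}\wedge X^{\times m}_+$, while you supply the underlying argument yourself via the stable splitting of $X^{m}_+$ into smash powers together with the $j$-homogeneity of $X\mapsto \mathbf{C}\wedge X^{\wedge j}$; both are correct.
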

\begin{proof}
   Lemma~\ref{lem:levelwise-n-excisive-is-n-excisive} shows that it
   suffices to demonstrate that each level of the simplicial spectrum
   in \eqref{eq:LnF} is $n$-excisive. Since in the category of
   spectra, finite coproducts are equivalent to products, and the product of
   $n$-excisive functors is $n$-excisive, we only need to show that
   the functor
   \begin{align*}
   F_k(X) 
   &= 
   F(C_0) 
   \wedge 
   \left(
     \Hom(C_0,C_1)\times \cdots \times \Hom(C_k,X)
   \right)_+
   \\ 
   \intertext{is $n$-excisive. Now for spaces it is easy to see that
     $(A\times B)_+ \cong A_+ \wedge B_+$, so this can be rewritten as}
   F_k(X)
   &\cong
   F(C_0) 
   \wedge 
   \left(
   \Hom(C_0,C_1)_+
   \wedge \cdots \wedge \Hom(C_k,X)_+ 
   \right)
   .
   \\
   \intertext{Using the associativity of smash product (of a space
     with a spectrum), we have:}
   F_k(X) 
   &\cong
   \left( 
     F(C_0) 
     \wedge 
     \Hom(C_0,C_1)_+
     \wedge \cdots \wedge 
     \Hom(C_{k-1},C_k)_+
     \right) \wedge
     \Hom(C_k,X)_+ 
     .
   \end{align*}
   This is the smash product of a constant functor (which we will
   denote $\mathbf{C}$) to spectra with
   $\Hom(C_k,X)_+$. The category over which we have taken the left Kan
   extension consists of finite sets of cardinality at most $n$, and
   $C_k$ is one of these sets. The space of maps of a finite set into
   $X$ is just a
   product of copies of $X$; the space of pointed maps of $\Ord{n}$ into $X$ is
   isomorphic to $X^{\times n}$. In \cite[Example~3.5]{Cal2},
   Goodwillie shows that 
   $\mathbf{C} \wedge \left( X^{\times
     n}_+\right)$ is $n$-excisive for any spectrum $\mathbf{C}$. Therefore,
   $L_n F$ is $n$-excisive.
\end{proof}

\begin{lemma}
\label{lem:left-Kan-limit-axiom}
For any functor $F$ from spaces to spaces or spectra, and any
subcategory $\cat{C}$ of spaces whose objects are finite CW-complexes, the
left Kan extension $L_\cat{C} F$ satisfies the limit axiom
(\ref{def:limit-axiom}). 
\end{lemma}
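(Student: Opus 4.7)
The plan is to exploit the explicit formula
\begin{equation*}
L_\cat{C} F(X) = \left|[n] \mapsto \bigvee_{(C_0,\ldots,C_n)} F(C_0)\wedge \bigl(\Hom(C_0,C_1)\times\cdots\times\Hom(C_n,X)\bigr)_+\right|
\end{equation*}
and push the filtered colimit $\hocolim_\alpha(-)$ through each constructor, using that $X$ enters only via the single factor $\Hom(C_n,X)$ where $C_n$ is finite.

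First I would observe that realization is itself a homotopy colimit, and homotopy colimits commute with each other, so it suffices to prove the levelwise statement: each simplicial dimension of $L_\cat{C} F$ commutes with filtered homotopy colimits of finite complexes. Next, since filtered colimits commute with coproducts, I reduce to showing that a single summand
\begin{equation*}
X \longmapsto F(C_0)\wedge\bigl(\Hom(C_0,C_1)\times\cdots\times\Hom(C_{n-1},C_n)\times\Hom(C_n,X)\bigr)_+
\end{equation*}
commutes with $\hocolim_\alpha$ of filtered systems of finite complexes. The factors to the left of $\Hom(C_n,X)$ are constant in $X$, and smashing with a fixed spectrum or space preserves filtered colimits, so everything reduces to the claim that $\Hom(C_n,-)$ commutes with filtered homotopy colimits of finite complexes.

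The last step is the one piece of genuine content: because $C_n$ is a finite CW complex (by the standing hypothesis on $\cat{C}$), it is compact, and the standard compactness argument shows that any continuous map $C_n\to\hocolim_\alpha X_\alpha$ from a compact space factors through some $X_\alpha$. Hence $\Map(C_n,-)$ commutes with filtered colimits along closed inclusions, which is what a filtered homotopy colimit of finite complexes can be modeled by (after replacing the system by a cofibrant one; since our objects are finite CW, this is automatic). This gives $\Hom(C_n,\hocolim_\alpha X_\alpha)\simeq\hocolim_\alpha\Hom(C_n,X_\alpha)$, and chaining the equivalences back up through the smash, wedge, and realization completes the proof.

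The only mildly delicate point is verifying that the colimit interchange is compatible with the \emph{homotopy} rather than strict colimit; this is handled by the blanket homotopy-invariance conventions established in Section~\ref{sec:homotopy-functors}, together with Lemma~\ref{lem:realization-lemma} (the realization lemma) to justify commuting the filtered colimit past the realization up to weak equivalence.
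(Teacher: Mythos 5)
Your proposal is correct and follows essentially the same route as the paper's proof: both isolate $\Hom(C_n,-)$ as the only constructor in \eqref{eq:left-kan-spaces} that depends on the input, use compactness of the finite CW complex $C_n$ to see that $\Hom(C_n,-)$ commutes with filtered homotopy colimits of finite complexes, and then pass the colimit through the remaining (constant-in-$X$) constructors and the realization. Your write-up is simply a more detailed version of the paper's two-sentence argument.
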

\begin{proof}
We need to show that if $Y$ is equivalent to the filtered homotopy
colimit of its finite subcomplexes $\Set{Y_\alpha}$, then $L_\cat{C}
F(Y) \simeq \hocolim L_\cat{C} F(Y_\alpha)$. 
If $C_n$ is a finite complex, then its image is compact, and hence lies
inside some finite $Y_\alpha$, so $\Hom(C_n,-)$ commutes with filtered
homotopy colimits. In the definition of the homotopy left Kan extension
(Equation~\eqref{eq:left-kan-spaces}), the only term that involves $Y$
or $Y_\alpha$ is $\Hom(C_n,-)$, where $C_n \in \Obj(\cat{C})$, so
$L_\cat{C} F$ satisfies the limit axiom because $\Hom(C_n,-)$ does for all
$C_n\in \Obj(\cat{C})$. 
\end{proof}

\begin{corollary}
\label{cor:LnF-limit-axiom}
If $F$ is a functor from spaces to spectra,
then the functor $L_n F$ satisfies the limit axiom.
\end{corollary}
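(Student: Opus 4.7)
The plan is to observe that this corollary is essentially an immediate specialization of the preceding Lemma~\ref{lem:left-Kan-limit-axiom}. That lemma asserts the limit axiom for $L_{\cat{C}} F$ whenever $\cat{C}$ is a subcategory of spaces whose objects are finite CW-complexes, and it applies equally to functors valued in spaces or in spectra. So the only thing I need to check is that the subcategory used to define $L_n F$ satisfies this hypothesis.

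By the definition given at the start of the chapter, $L_n F$ is the left Kan extension along the full subcategory of spaces whose objects are $\Set{\Ord{0}, \Ord{1}, \ldots, \Ord{n}}$, where $\Ord{k} = \bigvee^k S^0$ is a finite discrete pointed space with $k+1$ points. Each such $\Ord{k}$ is a finite CW-complex (indeed a finite $0$-dimensional CW-complex), and there are only finitely many of them in the subcategory. Hence the subcategory meets the hypothesis of Lemma~\ref{lem:left-Kan-limit-axiom}.

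Applying that lemma directly with $\cat{C} = \Set{\Ord{0}, \ldots, \Ord{n}}$ yields that $L_n F$ commutes with filtered homotopy colimits of finite complexes, which is precisely the limit axiom (Definition~\ref{def:limit-axiom}). There is no genuine obstacle here; the substantive content is already packaged in the prior lemma, whose proof hinged on the fact that $\Hom(C, -)$ commutes with filtered homotopy colimits for any finite CW-complex $C$ (images of compact spaces lie in some finite stage of the filtered colimit). The corollary is therefore a one-line invocation of that result.
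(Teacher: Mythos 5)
Your proof is correct and matches the paper's own argument exactly: the paper likewise notes that the objects $\Ord{k}$ are finite CW complexes and then cites Lemma~\ref{lem:left-Kan-limit-axiom} directly. Nothing further is needed.
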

\begin{proof}
The sets $[n]$ are all finite CW complexes, so this is immediate from
Lemma~\ref{lem:left-Kan-limit-axiom}. 
\end{proof}

\section{Excisive Functors Are Left Kan Extensions}

In this section, we establish that any $n$-excisive functor from
spaces to spectra that satisfies the limit axiom
(\ref{def:limit-axiom}) commutes with the realization of a simplicial
spaces. That is, such a functor $F$ is equivalent to its own left Kan
extension $L_n F$.

We begin by establishing the lemma that $L_n F$ and $F$ agree on
finite sets.

\begin{lemma}
\label{lem:LnF-F-agree-finite-sets}
  If $F$ is an $n$-excisive functor from spaces to spectra, then for
  all finite sets $X$, the map $L_n F(X) \rightarrow F(X)$ is an
  equivalence. 
\end{lemma}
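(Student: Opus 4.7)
The plan is to prove this by induction on the cardinality of $X$, using Proposition~\ref{prop:left-kan-agrees-with-orig} as the base case and the $n$-excisiveness of both $F$ and $L_n F$ as the inductive step.

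The base case is when $X = \Ord{k}$ for some $k \le n$. Since $\Ord{k}$ is an object of the full subcategory along which we are taking the left Kan extension, Proposition~\ref{prop:left-kan-agrees-with-orig} immediately gives that $L_n F(\Ord{k}) \to F(\Ord{k})$ is an equivalence.

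For the inductive step, assume the result for all finite sets $\Ord{j}$ with $j \le k-1$, where $k \ge n+1$, and consider $X = \Ord{k}$. The idea is to construct a strongly co-Cartesian $(n+1)$-cube $\cube{X}$ of finite sets whose terminal vertex is $\Ord{k}$ and whose other vertices are all of cardinality at most $k-1$. Concretely, distinguish $n+1$ of the wedge summands in $\Ord{k} = \bigvee^k S^0$ as ``new'' and let the remaining $k-n-1$ summands be ``fixed'', then define
$$\cube{X}(U) = \Ord{k-n-1} \vee \bigvee_{i \in U} S^0 \;\cong\; \Ord{k-n-1+\abs{U}}$$
for $U \subseteq \Set{1,\ldots,n+1}$. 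Each two-dimensional face is the pushout of wedges of $S^0$'s over a common subwedge, so $\cube{X}$ is strongly co-Cartesian. Moreover $\cube{X}(\Set{1,\ldots,n+1}) = \Ord{k}$, and every other vertex $\cube{X}(U)$ has cardinality at most $k-1$.

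Now apply both $F$ and $L_n F$ to this cube. By hypothesis $F$ is $n$-excisive, and by Proposition~\ref{prop:L-n-F-degree-n} so is $L_n F$. Hence Lemma~\ref{lem:basic-lemma-for-spectra} (Basic Lemma for Spectra) gives equivalences
$$F\cube{X}(\Set{1,\ldots,n+1}) \simeq \hocolim_{U \in \Power_1(\Set{1,\ldots,n+1})} F\cube{X}(U)$$
and the analogous statement with $F$ replaced by $L_n F$. The natural transformation $L_n F \to F$ induces a map of these two co-Cartesian cubes. By the inductive hypothesis, the map $L_n F\cube{X}(U) \to F\cube{X}(U)$ is an equivalence for every $U \ne \Set{1,\ldots,n+1}$, since all such $\cube{X}(U)$ are finite sets of cardinality $\le k-1$. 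A homotopy colimit of equivalences is an equivalence, so the induced map $L_n F(\Ord{k}) \to F(\Ord{k})$ is also an equivalence, completing the induction.

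The only real subtlety is confirming that the wedge-cube is strongly co-Cartesian in the pointed category and that the appeal to Lemma~\ref{lem:basic-lemma-for-spectra} legitimately lets us recover the terminal vertex from the punctured cube; the rest is routine induction.
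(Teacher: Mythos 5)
Your proof is correct and follows essentially the same strategy as the paper's: induction on cardinality, with Proposition~\ref{prop:left-kan-agrees-with-orig} as the base case and a strongly co-Cartesian $(n+1)$-cube of finite sets (terminal vertex $X$, all other vertices of smaller cardinality) combined with the Basic Lemma for Spectra for the inductive step. The only difference is cosmetic: you carry the ``extra'' $k-n-1$ summands as a fixed wedge factor at every vertex, whereas the paper absorbs them into one of the cube's generating sets $W_{n+1}$; both cubes do the same job.
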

\begin{proof}
  Let $m = \abs{X}$ be the cardinality of $X$.
  If $m\le \abs{\Ord{n}}$, then by
  Proposition~\ref{prop:left-kan-agrees-with-orig}, the map $L_n F(X)
  \rightarrow F(X)$ is an
  equivalence. If $m>\abs{\Ord{n}}$, then we may assume by induction that the
  result is true for all smaller $m$. 

  Let $S = \Set{1,2,\ldots,n+1}$.
  Define pointed sets $W_u$ for $u\in S$ as follows:
  $$
  W_u =
  \begin{cases}
    \Set{\basept,u} & \text{if $u\not= n+1$} \\
    \Set{\basept,n+1,\ldots,m-1} & \text{if $u=n+1$}
  \end{cases}
  $$
  Let $\cube{X}$ be the strongly co-Cartesian $S$-cube given by 
  $\cube{X}(U) = \bigvee_{u\in U} W_u$.
  Note that $\cube{X}(S) = \Set{\basept,1,2,\ldots,m-1}$ has $m$
  points, and hence is isomorphic to $X$.

  For $U\subsetneq S$, we have $\abs{F\cube{X}(U)} <
  m$, so 
  by our induction hypothesis, $L_n F
  \cube{X}(U) \simeq F\cube{X}(U)$ for all $U\in\Power_1(S)$.
  Now $L_n F$ is $n$-excisive (Proposition~\ref{prop:L-n-F-degree-n}),
  and $F$ is $n$-excisive, so the  Basic Lemma for 
  Spectra (\ref{lem:basic-lemma-for-spectra}) shows that both $L_n F
  \cube{X}$ and $F \cube{X}$ are co-Cartesian, so 
  we have an
  equivalence on the terminal vertices as well. 
  That is, $L_n F(X) \simeq F(X)$.
\end{proof}


\begin{theorem}
\label{thm:finite-degree=left-kan}
  Let $F$ be an $n$-excisive functor from spaces to spectra that
  satisfies the limit axiom (\ref{def:limit-axiom}), and let $L_n F$ be
  as defined in \eqref{eq:LnF}.
  For any space $X$, the map $L_n F(X)\rightarrow F(X)$ is a weak
  equivalence. 
\end{theorem}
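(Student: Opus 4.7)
The plan is to reduce to the agreement on finite discrete sets (Lemma~\ref{lem:LnF-F-agree-finite-sets}) in two stages. First, both $L_n F$ and $F$ satisfy the limit axiom---for $L_n F$ this is Corollary~\ref{cor:LnF-limit-axiom}, and for $F$ it is a hypothesis. Any pointed set, viewed as a discrete space, is a filtered homotopy colimit of its finite pointed subsets (each a finite CW complex), so the limit axiom together with Lemma~\ref{lem:LnF-F-agree-finite-sets} implies that $L_n F(Y) \to F(Y)$ is a weak equivalence for \emph{every} discrete pointed space $Y$. Next, replace $X$ by $\realization{\Sing(X)}$: each level $\Sing(X)_k$ is a discrete pointed space, so the previous step furnishes a levelwise weak equivalence of simplicial spectra $L_n F \circ \Sing(X) \xrightarrow{\simeq} F \circ \Sing(X)$, and hence an equivalence of their realizations by Lemma~\ref{lem:realization-lemma}.

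It remains to identify these two realizations with $L_n F(X)$ and $F(X)$. For $L_n F$, Proposition~\ref{prop:left-kan-commutes-with-some-realizations} applied with $a = 0$---the indexing subcategory $\{[0], \ldots, [n]\}$ consists of $0$-dimensional spaces---gives $L_n F(X) \simeq \realization{[k] \mapsto L_n F(\Sing(X)_k)}$. The corresponding claim for $F$ is the main obstacle. I would establish it via the Taylor tower: since $F \simeq P_n F$, the functor $F$ is an iterated extension of its homogeneous layers $D_k F$ ($k = 0, \ldots, n$), each of which has the form $X \mapsto (\mathbf{C}_k \wedge X^{\wedge k})_{h\Sigma_k}$ for a spectrum $\mathbf{C}_k$ carrying a $\Sigma_k$-action. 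The $k$-fold smash $X \mapsto X^{\wedge k}$ commutes with realization by the iterated Eilenberg--Zilber theorem (the diagonal of the $k$-fold simplicial space $[i_1, \ldots, i_k] \mapsto X_{i_1} \wedge \cdots \wedge X_{i_k}$ is equivalent to its multi-realization; compare Corollary~\ref{cor:useful-eilenberg-zilber}), while smashing with $\mathbf{C}_k$ and taking homotopy $\Sigma_k$-orbits are themselves homotopy colimits and so commute with realization.

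Finally, a fiber sequence of spectrum-valued functors in which the outer terms commute with realization of a given simplicial space forces the middle term to do so as well, since cofibration and fibration sequences of spectra coincide (Corollary~\ref{cor:spectra-cofibration=fibration}) and realization, being a homotopy colimit, commutes with cofibers. Inducting up the finite Taylor tower thus yields $F(X) \simeq \realization{[k] \mapsto F(\Sing(X)_k)}$. Chaining the equivalences produces
$$L_n F(X) \simeq \realization{[k] \mapsto L_n F(\Sing(X)_k)} \simeq \realization{[k] \mapsto F(\Sing(X)_k)} \simeq F(X),$$
as desired.
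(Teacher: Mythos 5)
Your argument is correct in outline, but it takes a genuinely different route from the paper, and it leans on an external input that the paper deliberately avoids. The paper also first reduces, via the limit axiom, to a class of spaces built from finite sets, but then proceeds by induction on the dimension and number of cells of a finite complex: the attaching pushout for an $(m+1)$-cell is blown up, by subdividing $D^{m+1}$ into at least $n+1$ top-dimensional simplices and gluing them in one at a time, into a strongly co-Cartesian cube of dimension $\ge n+1$ whose non-terminal vertices satisfy the induction hypothesis; the Basic Lemma for Spectra (\ref{lem:basic-lemma-for-spectra}) then transfers the equivalence to the terminal vertex. That argument uses nothing about $F$ beyond $n$-excisiveness and the stability of spectra. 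Your route instead resolves $X$ by the discrete spaces $\Sing(X)_k$ and reduces everything to the single claim that $F$ commutes with realizations --- which in the paper is Corollary~\ref{cor:finite-degree-commutes-with-realizations}, deduced \emph{from} Theorem~\ref{thm:finite-degree=left-kan} rather than used to prove it. Your steps for $L_nF$ (limit axiom plus Lemma~\ref{lem:LnF-F-agree-finite-sets} to pass from finite to arbitrary discrete sets, then Proposition~\ref{prop:left-kan-commutes-with-some-realizations} with $a=0$) are fine and match tools the paper supplies.

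The weak point is your justification that $F$ itself commutes with realizations. There is no circularity, but you are importing Goodwillie's classification of the homogeneous layers, $D_kF(X) \simeq \mathbf{C}_k \wedge_{h\Sigma_k} X^{\wedge k}$, and the version of that theorem quoted in the paper is for \emph{analytic} functors from spaces to \emph{spaces}. An $n$-excisive functor from spaces to spectra satisfying the limit axiom need not be analytic: already $F(X) = \mathbf{C}\wedge X$ with $\mathbf{C}$ not bounded below fails every condition $E_0(c)$, since smashing with $\mathbf{C}$ destroys all connectivity estimates. So to run your argument you must invoke the stronger statement from Goodwillie's Calculus~III that every \emph{finitary homogeneous} degree-$k$ functor to spectra has this form, with no analyticity hypothesis. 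That statement is true, so your proof is valid granting it; but it is a substantially heavier external theorem than anything the paper's proof uses, and it is precisely the kind of structural input this chapter is trying to rederive from the more elementary left-Kan-extension picture. If you want a self-contained argument, the cell-by-cell induction with the subdivision trick is the way to avoid it.
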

\begin{proof}
%
  The functors $L_n F$ and $F$ are both homotopy functors that satisfy
  the limit axiom, so it is sufficient to establish that the theorem
  is true when $X$ is the realization of a finite simplical set.
  We proceed by induction on the dimension of $X$, where by dimension
  we mean: as usual, $\dim(X_\cdot)$ is the largest $k$ such that
  $X_k$ contains nondegenerate elements, and 
  $$\dim (X) = \min \Set{ \dim(X_\cdot) 
    \suchthat X_\cdot \text{ finite and } \realization{X_\cdot}\simeq
  X} .$$



  \textbf{Base Case.} When $\dim(X) = 0$, the space $X$ is a finite
  set of points. Lemma~\ref{lem:LnF-F-agree-finite-sets} shows that
  the map $L_n F(X) \rightarrow F(X)$ is an equivalence for all
  finite~$X$.  

  \textbf{Induction Case: Adding an $(m+1)$-cell.} We now proceed by
  induction, assuming that if $\dim(X)\le m$, then $L_n F(X) \simeq
  F(X)$. To add $(m+1)$-cells to $X$, we consider a second induction
  on the minimal number of $(m+1)$-cells needed to build $X$.
  
  To form a space $Y$ by attaching an $(m+1)$-cell to $X$ along $f$, 
  one forms the pushout:
  \begin{equation}
    \label{eq:attaching-po}
\xymatrix{
    S^m
    \ar[r]
    \ar[d]^f
    &
    D^{m+1}
    \ar[d]
    \\
    X
    \ar[r]
    &
    Y
  }    
  \end{equation}
  In order to use the $n$-excisive properties of $L_n F$ and $F$, we
  need to blow up this cubical diagram to be of dimension at least
  $n+1$. We will do this by subdividing $D^{m+1}$ until it has at
  least $n+1$ simplices of dimension $(m+1)$, and then gluing them
  into its $m$-skeleton one by one.

  Let $D^{m+1}$ denote the standard $(m+1)$ simplex. Let $R(r)$ be the
  set of non-degenerate simplices of dimension $(m+1)$ in the $r$-fold
  subdivision of $D^{m+1}$, which we denote $sd_r D^{m+1}$.
  Choose $r \gg 0$ large enough that $\abs{R(r)} \ge
  n+1$, and for convenience let $R =
  R(r)$ for this $r$. Now form an $R$-cube $\cube{D}$ by gluing these
  $(m+1)$-simplices onto the $m$-skeleton of $sd_r ( D^{m+1} )$.
  Explicitly, 
  $$ \cube{D}(U) = \left(\bigcup_{r \in U} r\right) 
  \cup \Skel_{m} sd_r (D^{m+1}).$$
  Notice that $\cube{D}(R) = sd_r (D^{m+1})$, so this
  cube expresses the $(m+1)$-simplex as a pushout of dimension
  at least $n+1$.

  Instead of forming the exact analog of the pushout diagram in
  \eqref{eq:attaching-po}, we replace the space $X$ by another space
  $X'$, which is $X$ with the $m$-skeleton of $sd_r D^{m+1}$ glued on
  along the attaching map $f$.
  $$\xymatrix{
    S^m
    \ar[d]^f
    \ar[r]
    &
    \Skel_m sd_r D^{m+1}
    \ar[d]^{f'}
    \ar[r]
    &
    \cdots
    \ar[r]
    &
    sd_r D^{m+1}
    \ar[d]
    \\
    X
    \ar[r]
    &
    X'
    \ar[r]
    &
    \cdots
    \ar[r]
    &
    Y
    }$$
    While $X$ is not equivalent to $X'$, the space $X'$ still
    satisfies our induction hypothesis since we have not added any
    $(m+1)$-cells. We will not use $X$ itself further in this proof,
    except to identify the space $Y$ below as $X$ with an
    $(m+1)$-cell attached along~$f$.

  Define the $S = R\amalg \Set{\basept}$ cube $\cube{Y}$ to be the
  strongly co-Cartesian cube generated by $\cube{D}$ and the map
  $\cube{D}(\emptyset) \xrightarrow{f'} X'$.
  \begin{equation*}
    \cube{Y}(U) = 
    \begin{cases}
    \cube{D}(U) 
    & 
    \text{if $\basept \not\in U$} 
    \\
    \colim 
    \left( 
   X' \xleftarrow{f'} \cube{D}(\emptyset) \rightarrow \cube{D}(U-\Set{\basept}) 
    \right)
    & \text{if $\basept\in U$}
  \end{cases}
\end{equation*}
From its construction, it is evident that $\cube{Y}(S) = Y$, where $Y$
is the pushout $Y = \colim ( X \xleftarrow{f} S^m \rightarrow D^{m+1})$. 
 
$L_n F$ and $F$ are $n$-excisive (\ref{prop:L-n-F-degree-n}), and
$\abs{S}\ge n+1$, so
$L_n F \cube{Y}$ and $F\cube{Y}$ are co-Cartesian cubes
(\ref{lem:basic-lemma-for-spectra}). Therefore, to show $L_n F
\cube{Y}(S) \simeq F\cube{Y}(S)$ 
(that is, $L_n F(Y) \simeq
F(Y)$), we need only show $L_n F\cube{Y}(U) \simeq
F\cube{Y}(U)$ for $U \subsetneq S$.

All of the non-terminal vertices $\cube{D}(U)$ of $\cube{D}$ are
subdivisions of $D^{m+1}$ with some $(m+1)$-cells missing. All of
these retract relative to their boundary to complexes of dimension
$m$, so they all satisfy our induction hypothesis. Furthermore, this
retraction relative to the boundary also shows that $\cube{Y}(U \amalg
\Set{\basept})$ satisfies the induction hypothesis. Finally,
$\cube{D}(R) \simeq \basept$, so on all nonterminal vertices of
$\cube{Y}$, we have $L_n F\cube{Y}(U) \simeq F\cube{Y}(U)$. That is
what we needed to establish.
\end{proof}

\begin{corollary}
\label{cor:finite-degree-commutes-with-realizations}
Let $F$ be an $n$-excisive functor from spaces to spectra
satisfying the limit axiom (\ref{def:limit-axiom}). 
Then $F$ commutes with realizations of all simplicial spaces.
\end{corollary}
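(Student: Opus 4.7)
The plan is to identify $F$ with its own left Kan extension via Theorem~\ref{thm:finite-degree=left-kan} and then invoke Proposition~\ref{prop:left-kan-commutes-with-some-realizations} to move the realization past the Kan extension formula. Under the hypotheses of the corollary there is a natural equivalence $F \simeq L_n F$, so
\begin{equation*}
F(\realization{X_\cdot}) \simeq L_n F(\realization{X_\cdot}),
\end{equation*}
and applying Theorem~\ref{thm:finite-degree=left-kan} levelwise together with the Realization Lemma~(\ref{lem:realization-lemma}) gives $\realization{F(X_\cdot)} \simeq \realization{L_n F(X_\cdot)}$. The theorem therefore reduces to showing that $L_n F$ commutes with realizations of arbitrary simplicial spaces.

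For this I would apply Proposition~\ref{prop:left-kan-commutes-with-some-realizations} with $a=0$: the subcategory over which $L_n F$ is extended has as its objects the discrete spaces $\Ord{0}, \Ord{1}, \ldots, \Ord{n}$, each of CW-dimension~$0$. The proposition says $L_\cat{C} F$ commutes with realizations of $(a-1)$-connected simplicial spaces, so with $a=0$ the only connectivity hypothesis is $(-1)$-connectedness, which holds vacuously for every pointed simplicial space. Concretely, the content of the input Corollary~\ref{cor:Map-K-commutes-with-realization} in this case is just that $\Hom(\Ord{k}, -) \cong (-)^{\times k}$ commutes with realization, which is immediate since finite products are compatible with realization.

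The one point that requires comment is that Proposition~\ref{prop:left-kan-commutes-with-some-realizations} and Corollary~\ref{cor:Map-K-commutes-with-realization} are phrased for functors to spaces, while here $F$ takes values in spectra. This is the main bookkeeping obstacle, but it is not a serious one: the realization of a simplicial spectrum is formed levelwise on its constituent spaces, and the definition of $L_n F$ in~\eqref{eq:LnF} only involves source-variable operations (wedges, smash products with the fixed spectrum $F(C_0)$, products with constant spaces, and $\Hom(\Ord{k},-)$), all of which commute with realization in exactly the same way as in the space-valued setting. Lemma~\ref{lem:LCFX-good} ensures the bisimplicial object one encounters is good, so that the two realizations may be interchanged via Corollary~\ref{cor:useful-eilenberg-zilber}. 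With these routine checks in place, the chain of equivalences above yields $F(\realization{X_\cdot}) \simeq \realization{F(X_\cdot)}$.
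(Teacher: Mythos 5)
Your proposal is correct and follows essentially the same route as the paper: identify $F$ with $L_n F$ via Theorem~\ref{thm:finite-degree=left-kan} and then apply Proposition~\ref{prop:left-kan-commutes-with-some-realizations} with $a=0$, noting that every nonempty space is $(-1)$-connected. Your extra remarks on transporting the space-level statements to spectrum-valued functors are a reasonable elaboration of a point the paper leaves implicit, but they do not change the argument.
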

\begin{proof}
Theorem~\ref{thm:finite-degree=left-kan} shows that $F$
is equivalent 
to a left Kan extension over a subcategory of spaces containing only
objects of dimension~$0$. Then 
Proposition~\ref{prop:left-kan-commutes-with-some-realizations}  shows
that this left Kan extension commutes with realization of
$(-1)$-connected simplicial spaces. But all  (nonempty)  spaces are
$(-1)$-connected. 
%
\end{proof}
 



%
%

\chapter{Analyticity And Realization}
\label{chap:realization}

In this chapter we establish the result that analytic functors from
spaces to spaces
commute with realizations of highly connected spaces, and hence are
equivalent to certain left Kan extensions. In order to do this, we
also establish properties of analytic functors that show our intuition
about the behavior of the coefficient spectra is justified.

\section{Analytic Functors Have Connective Coefficient Spectra}
\label{sec:analytic-functors-connective}

In this section, we establish the following theorem, which states
that an analytic functor has coefficient spectra that are bounded
below. 
\begin{theorem}
\label{thm:analytic-bounded-below}
Let $F$ be a functor with coefficient spectra $\mathbf{C_i}$ (defined
only for $i\ge 1$). If $F$
is $r$-analytic with universal analyticity constant $c$, so $F$
satisfies $E_n(rn-c)$ for all $n$, then $\pi_j(\mathbf{C_{n+1}}) = 0$ for
$j< c-rn$. In particular, all $\mathbf{C_i}$ are bounded below.
\end{theorem}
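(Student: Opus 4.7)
My strategy is to evaluate $D_{n+1}F$ on a carefully chosen $(m-1)$-connected test space, bound its connectivity from below using $r$-analyticity, and then extract the desired bound on $\pi_\ast \mathbf{C_{n+1}}$ via the formula $D_{n+1}F(X) = \LoopInfty(\mathbf{C_{n+1}} \wedge_{h\Sigma_{n+1}} X^{\wedge n+1})$. The test space will be a wedge of spheres, chosen so that $X^{\wedge n+1}$ has a $\Sigma_{n+1}$-free summand, on which the homotopy orbits collapse to a plain smash and $\mathbf{C_{n+1}}$ itself appears as a retract.

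First I would obtain a connectivity lower bound on $D_{n+1}F(X)$ using the fiber sequence $D_{n+1}F(X) \to P_{n+1}F(X) \to P_n F(X)$. Because $F$ satisfies both $E_n(rn - c)$ and $E_{n+1}(r(n+1) - c)$, the explicit Blakers--Massey estimate from the proof of Lemma~\ref{lem:connectivity-F-PnF} gives, for $(m-1)$-connected $X$, that $F(X) \to P_n F(X)$ is $\gamma$-connected with $\gamma = n(m-r) + m + c$, and $F(X) \to P_{n+1}F(X)$ is $\alpha$-connected with $\alpha = (n+1)(m-r) + m + c$. A diagram chase in the long exact sequence of fibers (using $m > r$ so that $\alpha > \gamma$) then shows that $P_{n+1}F(X) \to P_n F(X)$ is at least $\gamma$-connected, whence its fiber $D_{n+1}F(X)$ is $(\gamma - 1)$-connected.

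For the retract, take $X = \bigvee^{n+1} S^m$. The smash power decomposes as $(\bigvee^{n+1} S^m)^{\wedge n+1} = \bigvee_{\phi\colon [n+1] \to [n+1]} S^{m(n+1)}$, with $\Sigma_{n+1}$ permuting the summands by $\phi \mapsto \phi \circ \sigma^{-1}$. The bijective summands form a $\Sigma_{n+1}$-free wedge summand isomorphic (as a $\Sigma_{n+1}$-spectrum) to $(\Sigma_{n+1})_+ \wedge S^{m(n+1)}$, and this inclusion has an equivariant retraction. Because homotopy orbits of a free action coincide with orbits, applying $\mathbf{C_{n+1}} \wedge_{h\Sigma_{n+1}}(-)$ produces $\mathbf{C_{n+1}} \wedge S^{m(n+1)} = \Sigma^{m(n+1)}\mathbf{C_{n+1}}$ as a retract of $\mathbf{C_{n+1}} \wedge_{h\Sigma_{n+1}} X^{\wedge n+1}$, so after applying $\LoopInfty$, the space $\LoopInfty \Sigma^{m(n+1)} \mathbf{C_{n+1}}$ is a retract of $D_{n+1}F(X)$.

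Combining the two: if $\pi_{j_0}\mathbf{C_{n+1}} \neq 0$, take $m$ large enough that $j_0 + m(n+1) \geq 0$, so that the class is visible in $\pi_{j_0 + m(n+1)} \LoopInfty \Sigma^{m(n+1)}\mathbf{C_{n+1}}$ and hence, via the retract, in $\pi_{j_0 + m(n+1)} D_{n+1}F(X)$. The connectivity bound from step one then forces $j_0 + m(n+1) \geq n(m-r) + m + c$, which simplifies (the $m$ terms cancel) to $j_0 \geq c - rn$, giving $\pi_j\mathbf{C_{n+1}} = 0$ for $j < c - rn$. The main obstacle is the second step: a priori the $\Sigma_{n+1}$ homotopy orbits could obscure low-degree information in $\mathbf{C_{n+1}}$ (for instance through vanishing coinvariants when $X$ is a single sphere), so the wedge-of-spheres trick producing a genuinely free $\Sigma_{n+1}$-summand is essential.
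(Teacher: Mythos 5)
Your argument is correct, and it reaches the bound $j_0 \ge c-rn$ by a route that is organized quite differently from the paper's, even though the decisive observation is the same in both. The paper first propagates the stable excision estimate $E_n(rn-c)$ from $F$ to $T_{n+1}F$, to $P_{n+1}F$, and finally to the layer $D_{n+1}F$ (Corollaries~\ref{cor:F-Enc-TnF-Enc} and~\ref{cor:F-Enc-Pn+1F-Enc} and Lemma~\ref{lem:F-Enc-Dn+1F-Enc}), and then proves a standalone statement (Lemma~\ref{lem:homogeneous-Enc-bounded-spectra}) that a homogeneous degree-$(n+1)$ functor satisfying $E_n(c')$ has coefficient spectrum with $\pi_j=0$ for $j<-c'$, by evaluating on the strongly co-Cartesian cube $U\mapsto\bigvee_U X$ and identifying the total cofiber of the smash-power cube with the free $\Sigma_{n+1}$-spectrum $\bigvee_{\sigma}X_{\sigma(1)}\wedge\cdots\wedge X_{\sigma(n+1)}$. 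Your ``bijective summands of $(\bigvee^{n+1}S^m)^{\wedge(n+1)}$'' retract is exactly this free piece, so the mechanism by which the homotopy orbits become transparent is identical; what differs is the surrounding machinery. You replace the entire $E_n(\cdot)$ bookkeeping for the layers by a single connectivity estimate for $F\to P_nF$ and $F\to P_{n+1}F$ on one test space plus an elementary diagram chase (which does require $m>r$, as you note, so that the estimate for $P_{n+1}$ dominates), and you replace the evaluation on a general cube by a retract of spectra. This makes your proof more self-contained and avoids proving that the layers inherit stable excision; the cost is that the paper's intermediate results, which are reused elsewhere (e.g.\ in Theorem~\ref{thm:analytic-functor-commutes-with-realization}), are not obtained along the way. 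Two small points to make explicit if you write this up: the identification $D_{n+1}F(X)\simeq\LoopInfty(\mathbf{C_{n+1}}\wedge_{h\Sigma_{n+1}}X^{\wedge(n+1)})$ presupposes $F$ reduced and satisfying the limit axiom, so you should open with the same reduction the paper makes (replace $F$ by the fiber of $F(X)\to F(0)$, which has the same coefficient spectra and the same $E_n$ constants); and you should record that $m$ must be taken large enough that both $m>r$ and $j_0+m(n+1)\ge 0$ hold simultaneously, which is harmless since $m$ is free.
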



If ``analyticity'' is to be a well-behaved concept, we need to prove
that if $F$ is analytic, then so is $P_n F$. We do this by showing
that if $F$ satisfies $E_m(c)$, then so does $T_n F$.
Goodwillie~\cite{Cal2} proves that when $m= n$, the functor $T_m F$
actually satisfies at least $E_m(c-1)$; we will reiterate his argument
as part of establishing the fact we are most interested in. We begin
by recalling a technical proposition.

\begin{proposition}
\label{prop:calc-2-prop-1-22}
(\cite[Proposition~1.22]{Cal2}) Let $\cube{X}$ be a functor from
$\Power_0(S)$ to $T$-cubes of spaces, and write $\cube{X}(U,V) =
(\cube{X}(U))(V)$. For each $U\in\Power_0(S)$, let $k_U$ be a constant
so that the $T$-cube
$\cube{X}(U)$ is $k_U$-Cartesian. Then the $T$-cube $V \mapsto
\holim(U\mapsto\cube{X}(U,V))$ is $k$-Cartesian with
$k=\min\Set{1-\abs{U}+k_U}$. 
$\qed$
\end{proposition}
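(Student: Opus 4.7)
The plan is to disentangle the $T$-cube structure from the $S$-cube structure using that homotopy inverse limits commute with each other and with homotopy fibers, and then apply a general connectivity estimate for homotopy limits over $\Power_0(S)$. Setting $\cube{Y}(V) = \holim_{U\in\Power_0(S)} \cube{X}(U,V)$, the assertion that $\cube{Y}$ is $k$-Cartesian is equivalent to saying that its total fiber
\[
\hofib\!\left( \cube{Y}(\emptyset) \longrightarrow \holim_{V\in\Power_0(T)} \cube{Y}(V) \right)
\]
is $(k-1)$-connected. By Fubini for homotopy inverse limits and the commutation of homotopy fibers with homotopy inverse limits, this total fiber is naturally equivalent to
\[
\holim_{U\in\Power_0(S)} \hofib\!\left( \cube{X}(U,\emptyset) \longrightarrow \holim_{V\in\Power_0(T)} \cube{X}(U,V) \right).
\]
By hypothesis, $\cube{X}(U)$ is $k_U$-Cartesian, so the inner expression is $(k_U-1)$-connected.

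This reduces the proposition to the following general lemma: if $G\colon \Power_0(S)\to \text{Spaces}$ is a functor with $G(U)$ being $d_U$-connected, then $\holim_{\Power_0(S)} G$ is at least $\min_U(d_U - \abs{U} + 1)$-connected. Granting the lemma and substituting $d_U = k_U-1$ yields
\[
\min_U\bigl((k_U-1) - \abs{U} + 1\bigr) \;=\; \min_U(k_U - \abs{U}) \;=\; k-1,
\]
exactly as needed to conclude that $\cube{Y}$ is $k$-Cartesian.

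I would prove the lemma by induction on $\abs{S}$. The base case $\abs{S}=1$ is trivial: $\holim_{\Set{S}} G = G(S)$ is $d_S$-connected, matching $d_S - 1 + 1$. For the inductive step, fix $s \in S$ and consider the subcategories $\cat{A}_1 = \Set{U \in \Power_0(S) : s \in U}$ (which has initial object $\Set{s}$, so $\holim_{\cat{A}_1} G \weaklyequiv G(\Set{s})$) and $\cat{A}_2 = \Power_0(S) \setminus \Set{\Set{s}}$. I would first check that the nerves of $\cat{A}_1$ and $\cat{A}_2$ cover the nerve of $\Power_0(S)$: any chain containing $\Set{s}$ must have it as its least element (since $\emptyset \notin \Power_0(S)$), whence the whole chain lies in $\cat{A}_1$, and any other chain lies in $\cat{A}_2$. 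Proposition~\ref{prop:Goodwillie-decompose-holim} then exhibits $\holim_{\Power_0(S)} G$ as the homotopy pullback
\[
G(\Set{s}) \; \times^{h}_{\holim_{\cat{A}_1 \cap \cat{A}_2} G} \; \holim_{\cat{A}_2} G .
\]
The inclusion $\Power_0(S \setminus \Set{s}) \hookrightarrow \cat{A}_2$ is homotopy initial (each overcategory has terminal object $U \setminus \Set{s}$), and $\cat{A}_1 \cap \cat{A}_2 \cong \Power_0(S \setminus \Set{s})$ via $U \mapsto U \setminus \Set{s}$, so both non-trivial factors are homotopy inverse limits over $\Power_0(S \setminus \Set{s})$ to which the inductive hypothesis applies. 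A homotopy pullback of $a$-, $b$-, and $c$-connected spaces is $\min(a,b,c-1)$-connected, and one then checks, using the index shift $\abs{V\cup\Set{s}} = \abs{V}+1$, that the three contributions $U = \Set{s}$, $s \notin U$, and $s \in U$ with $\abs{U} \geq 2$ combine into the single bound $\min_U(d_U - \abs{U} + 1)$.

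The main obstacle is the bookkeeping in this inductive step: one must verify that the covering property really holds (so that Proposition~\ref{prop:Goodwillie-decompose-holim} applies), that the cofinality arguments identifying the ingredient homotopy limits with $\holim$ over $\Power_0(S \setminus \Set{s})$ are valid, and that the $+1$ shift from reindexing $V$ to $V \cup \Set{s}$ in $\cat{A}_1 \cap \cat{A}_2$ precisely compensates for the $-1$ in the pullback estimate, so that all three pieces contribute to a uniform minimum over $U \in \Power_0(S)$. Once the indexing is set up correctly the arithmetic works out exactly.
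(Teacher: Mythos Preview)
The paper does not prove this proposition at all: it is quoted verbatim from Goodwillie's \emph{Calculus II} (as the citation and the terminal $\qed$ indicate), so there is no in-paper argument to compare against. Your proposal is a correct self-contained proof. The reduction via Fubini to a connectivity estimate for $\holim_{\Power_0(S)}$ of a diagram of spaces with prescribed connectivities is exactly the right move, and your inductive lemma (with the decomposition $\cat{A}_1=\{U:s\in U\}$, $\cat{A}_2=\Power_0(S)\setminus\{\{s\}\}$ and the cofinality identifications) checks out; the $-1$ from the pullback estimate and the $+1$ from the reindexing $V\mapsto V\cup\{s\}$ cancel precisely, as you note.
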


Proposition~\ref{prop:calc-2-prop-1-22} is immediately applicable to
the $T_n$ construction. 
Our main interest in this is for $n=m+1$, where the $E_m(c)$ condition
satisfied is not improved by $T_{m+1}$.
\begin{corollary}
\label{cor:F-Enc-TnF-Enc}
If $F$ satisfies $E_m(c)$, then for $n\le m+1$, so does $T_n F$.
\end{corollary}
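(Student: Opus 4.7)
The plan is to apply Proposition~\ref{prop:calc-2-prop-1-22} to express $T_n F$ on a strongly co-Cartesian cube as a homotopy inverse limit of cubes already controlled by the $E_m(c)$ hypothesis. Fix a strongly co-Cartesian $(m+1)$-cube $\cube{Y}$ of spaces with $\cube{Y}(\emptyset) \to \cube{Y}(\Set{i})$ being $k_i$-connected for each $i \in \mathbf{m+1}$; what must be shown is that $T_n F \cube{Y}$ is $(\sum_i k_i - c)$-Cartesian.

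Unwinding \eqref{eq:def-Tn}, the $(m+1)$-cube $T_n F \cube{Y}$ can be viewed as the homotopy inverse limit over $U \in \Power_0([n])$ of the $(m+1)$-cubes $\cube{X}(U)(V) = F(\cube{Y}(V) * U)$. For each fixed nonempty $U$, I would first argue that $\cube{X}(U)$ is highly Cartesian. Joining with a fixed space preserves pushouts, so $V \mapsto \cube{Y}(V) * U$ remains strongly co-Cartesian. Because $U$ is nonempty, the join absorbs a cone on $\cube{Y}(V)$, so the cofiber of each edge $\cube{Y}(\emptyset) * U \to \cube{Y}(\Set{i}) * U$ is an unreduced suspension of the original cofiber (and is contractible when $|U|=1$); hence each edge map has connectivity at least $k_i + 1$. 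Feeding this into $E_m(c)$ yields that $\cube{X}(U)$ is at least $(\sum_i k_i + (m+1) - c)$-Cartesian, uniformly in $U \in \Power_0([n])$.

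Next, Proposition~\ref{prop:calc-2-prop-1-22} applied with $S = [n]$ converts these uniform Cartesian bounds into one for the outer cube: $T_n F \cube{Y}$ is $k$-Cartesian with
\[
  k \;\geq\; \min_{U \in \Power_0([n])} \bigl(1 - |U| + \textstyle\sum_i k_i + (m+1) - c\bigr) \;=\; \sum_i k_i + (m - n + 1) - c,
\]
the minimum being attained at the maximum cardinality $|U| = n+1$ (where the finite bound on $k_U$ is sharp). The hypothesis $n \le m+1$ forces $m - n + 1 \ge 0$, so $k \ge \sum_i k_i - c$, and therefore $T_n F$ satisfies $E_m(c)$.

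The main obstacle, or rather the essential mechanism, is the connectivity improvement from $k_i$ to $k_i + 1$ at each edge after joining with a nonempty $U$. This single unit of gain at each of the $m+1$ edges is exactly what counterbalances the $1 - |U|$ penalty contributed by Proposition~\ref{prop:calc-2-prop-1-22}: the margin vanishes precisely at $n = m+1$ (so no improvement of $c$ is obtained there), and is positive for smaller $n$, recovering the $E_m(c-1)$ estimate for $T_m F$ that Goodwillie proves.
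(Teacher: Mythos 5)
Your proof is correct and follows essentially the same route as the paper: view $T_nF\cube{Y}$ as the homotopy inverse limit over $U\in\Power_0([n])$ of the cubes $V\mapsto F(\cube{Y}(V)*U)$, use that joining with a nonempty $U$ raises each edge connectivity by one so that $E_m(c)$ makes each inner cube $(\sum k_i+(m+1)-c)$-Cartesian, and then apply Proposition~\ref{prop:calc-2-prop-1-22} to get the $1-\abs{U}$ penalty, with $n\le m+1$ ensuring the net margin $m-n+1$ is nonnegative. Your bookkeeping at the final step is in fact cleaner than the paper's (which contains a sign slip in the displayed constant), and your closing remark correctly recovers Goodwillie's $E_m(c-1)$ estimate at $n=m$.
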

\begin{proof}
Let $\cube{Y}$ be a strongly co-Cartesian $T$-cube, with $T=[m]$ and
the map $\cube{Y}(\emptyset)\rightarrow\cube{Y}(\Set{j})$ being
$k_j$-connected.  Recall that $T_n F(X) = \holim_{U\in\Power_0(S)}
F(X*U)$, for $S=[n]$, so Proposition~\ref{prop:calc-2-prop-1-22}
applies to the consideration of the functor $\cube{X}(U,V) \mapsto
F(\cube{X}(V)*U)$ (which is only defined for $\emptyset\not=U\subset S$). 
Since $F$ satisfies $E_m(c)$ and $*U$ raises connectivity by one (for
$U\not=\emptyset$), the cube $\cube{X}(U)$ is $k_U$-Cartesian,
with $k_U = \Sigma_{j=0}^{m} (k_j+1) - c = \Sigma k_j - c + (m+1)$.
Applying Proposition~\ref{prop:calc-2-prop-1-22}, we see that 
the $T$-cube $V \mapsto T_n F(\cube{Y}(V))$ is $k$-Cartesian, with $k
= \min\Set{1-\abs{U}+k_U}$. As $\abs{U} \le n+1$, we know $k\ge
1-(n+1)+ \Sigma k_j - c + (m+1) = 1-(m-n)+ \Sigma k_j - c$. This shows
that $T_n F$ satisfies $E_m(c+m-n-1)$. Therefore, for $m\ge n-1$, if
$F$ satisfies $E_m(c)$, then so does $T_n F$. 
\end{proof}

This argument can now be used to show that $P_n F$ satisfies 
the same stable excision condition as $F$ for $n$-cubes. Of course,
applying $P_n F$ to larger cubes results in Cartesian cubes. 

\begin{example}
In general, $P_{n+1} F$ may have a better constant $E_n(c)$ than $F$
does. Consider the functor from spaces to spectra given by 
$$F(X) = \left( \mathbf{HZ} \wedge X \right) \times 
\left( \mathbf{S}^{-2}\wedge \mathbf{HZ} \wedge X \wedge X \right) .$$
This functor satisfies $E_0(2)$ since it takes $(-1)$-connected maps
to $(-3)$-connected maps, but $P_1 F(X) = \mathbf{HZ} \wedge X$
satisfies $E_0(0)$. The functor $F$ is $(-1)$-analytic with constant
$c=-2$; this example shows that the increasing of the constant
$E_n(c)$ when passing from $F$ to $P_{n+1} F$ relates to the constant,
not the analyticity or radius of convergence. 
\end{example}



\begin{corollary}
\label{cor:F-Enc-Pn+1F-Enc}
If $F$ satisfies $E_{n}(c)$, then so does $P_{n+1} F$. 
\end{corollary}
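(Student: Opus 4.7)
The plan is to bootstrap from Corollary~\ref{cor:F-Enc-TnF-Enc} via iteration and a filtered-colimit argument. First I would take $m=n$ in that corollary, with the $T$-index equal to $n+1$; the hypothesis $n+1 \le m+1$ is then satisfied (with equality), so the corollary asserts that $T_{n+1}F$ satisfies $E_n(c)$. Note that this is the borderline case of the corollary, where the constant is \emph{not} improved; that is precisely what we need, because we are going to iterate and take a colimit and want the same constant $c$ to survive. Iterating, each $T_{n+1}^k F$ satisfies $E_n(c)$.

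Next I would verify that the condition $E_n(c)$ passes to $P_{n+1}F = \hocolim_k T_{n+1}^k F$. Let $\cube{Y}$ be a strongly co-Cartesian $(n+1)$-cube with $\cube{Y}(\emptyset)\to\cube{Y}(\Set{i})$ being $k_i$-connected. For each $k$, by the previous step the map
\[
T_{n+1}^k F(\cube{Y}(\emptyset)) \to \holim_{U\in\Power_0([n])} T_{n+1}^k F(\cube{Y}(U))
\]
is $\bigl(\sum k_i - c\bigr)$-connected. Passing to the filtered homotopy colimit in $k$, the source becomes $P_{n+1}F(\cube{Y}(\emptyset))$ on the nose, and the target becomes $\hocolim_k \holim_{U} T_{n+1}^k F(\cube{Y}(U))$. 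Filtered homotopy colimits commute with finite homotopy inverse limits (the indexing category $\Power_0([n])$ is finite), so the target is equivalent to $\holim_{U}P_{n+1}F(\cube{Y}(U))$. Since $\pi_*$ commutes with filtered colimits, a filtered colimit of $\bigl(\sum k_i - c\bigr)$-connected maps is again $\bigl(\sum k_i - c\bigr)$-connected, establishing $E_n(c)$ for $P_{n+1}F$.

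The main obstacle is the interchange of $\hocolim_k$ with $\holim_{U\in\Power_0([n])}$ in the second paragraph; this is the usual fact that filtered homotopy colimits commute with finite homotopy limits in spaces (and in spectra), and it is the same ingredient that Goodwillie uses elsewhere to show that $P_n$ commutes with finite homotopy inverse limits. Once that interchange is in hand, the connectivity bound propagates through the colimit without loss, and the corollary follows immediately from Corollary~\ref{cor:F-Enc-TnF-Enc} applied with $m=n$.
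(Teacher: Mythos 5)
Your proposal is correct and follows essentially the same route as the paper: apply Corollary~\ref{cor:F-Enc-TnF-Enc} in the borderline case $m=n$ so that the constant $c$ is not degraded, iterate, and pass to the filtered colimit. The paper is terser on the last step (it just invokes that directed homotopy colimits preserve $k$-connected maps), whereas you make the interchange of $\hocolim_k$ with the finite $\holim$ explicit, but the underlying argument is the same.
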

\begin{proof}
Recall that $P_{n+1} F = \colim_k (T_{n+1})^k F$. Let $\cube{X}$ be a
strongly co-Cartesian $S$-cube, with $S=[n]$, and let $k_i$ denote the
connectivity of the map $\cube{X}(\emptyset) \rightarrow \cube{X}(\Set{i})$.
Suppose $F$ satisfies $E_n(c)$.
By Corollary~\ref{cor:F-Enc-TnF-Enc}, this implies $T_{n+1} F$ satisfies
$E_n(c)$, and hence by induction all $(T_{n+1})^k F$ satisfy $E_n(c)$. We
need to establish that the colimit also satisfies the same stable
excision condition. 
This follows because homotopy
groups commute with directed colimits; that is, directed
homotopy colimits preserve injections and surjections on
homotopy groups, and hence $k$-connected maps.
\end{proof}

Recall that $D_n F$ is the homogeneous $n$-excisive functor that is
the homotopy fiber of the map from the $n$-excisive approximation $P_n
F$ to the $(n-1)$-excisive approximation $P_{n-1} F$.

\begin{lemma}
\label{lem:F-Enc-Dn+1F-Enc}
If $F$ satisfies $E_n(c)$, then $D_{n+1} F$ satisfies $E_n(c)$ as
well. 
\end{lemma}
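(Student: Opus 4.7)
The plan is to exploit the defining fibration sequence
$$D_{n+1}F \rightarrow P_{n+1}F \rightarrow P_n F$$
together with the fact that, while $P_{n+1}F$ only inherits the condition $E_n(c)$ from $F$ (Corollary~\ref{cor:F-Enc-Pn+1F-Enc}), the functor $P_n F$ is $n$-excisive and hence takes any strongly co-Cartesian $(n+1)$-cube to a Cartesian cube. The ``excess'' Cartesianness of the base of this fibration will let me transfer the $E_n(c)$ estimate from $P_{n+1}F\cube{X}$ to $D_{n+1}F\cube{X}$.

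Concretely, I fix a strongly co-Cartesian $(n+1)$-cube $\cube{X}$ with $\cube{X}(\emptyset)\rightarrow\cube{X}(\Set{i})$ being $k_i$-connected, and apply the three functors vertexwise to obtain a map of $(n+1)$-cubes $P_{n+1}F\cube{X}\rightarrow P_nF\cube{X}$ whose vertexwise homotopy fiber is $D_{n+1}F\cube{X}$. Since homotopy fibers commute with homotopy inverse limits, $\holim_{\Power_0(\mathbf{n+1})} D_{n+1}F\cube{X}$ is the homotopy fiber of the induced map $\holim P_{n+1}F\cube{X}\rightarrow \holim P_nF\cube{X}$.

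I then compare the fiber sequence $D_{n+1}F\cube{X}(\emptyset)\rightarrow P_{n+1}F\cube{X}(\emptyset)\rightarrow P_nF\cube{X}(\emptyset)$ at the initial vertex with the fiber sequence formed by the three homotopy inverse limits. By Corollary~\ref{cor:F-Enc-Pn+1F-Enc}, the map $P_{n+1}F\cube{X}(\emptyset)\rightarrow \holim P_{n+1}F\cube{X}$ is $(\sum k_i-c)$-connected, whereas the analogous map for $P_nF$ is an equivalence by $n$-excision. In the resulting commutative square with an equivalence on the $P_nF$-row, the homotopy pullback of the right column with the bottom row is canonically equivalent to $\holim P_{n+1}F\cube{X}$, so the connectivity of the induced map on vertical homotopy fibers agrees with that of the top horizontal map; equivalently, a long-exact-sequence chase shows the same thing. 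Either way, $D_{n+1}F\cube{X}(\emptyset)\rightarrow\holim D_{n+1}F\cube{X}$ is $(\sum k_i-c)$-connected, which is exactly the $E_n(c)$ condition for $D_{n+1}F$.

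The main point to be careful about is the comparison of vertical fibers in the square; this reduces either to a five-lemma-style chase on long exact homotopy sequences or to the observation about pullback squares with an equivalence on one edge, so it is not really an obstacle. All of the serious content has already been packaged into Corollary~\ref{cor:F-Enc-Pn+1F-Enc} and the $n$-excision of $P_nF$.
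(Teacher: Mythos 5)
Your proposal is correct and matches the paper's proof in all essentials: both rest on the fibration sequence $D_{n+1}F \rightarrow P_{n+1}F \rightarrow P_nF$, on Corollary~\ref{cor:F-Enc-Pn+1F-Enc} giving $E_n(c)$ for $P_{n+1}F$, on the $n$-excisiveness of $P_nF$, and on commuting $\hofib$ with $\holim$ over the punctured cube. The only difference is cosmetic: the paper first cones off the initial vertex to reduce to strongly co-Cartesian cubes with contractible initial object (so that the connectivity of the comparison map can be read off from the connectivity of the punctured $\holim$ alone, sidestepping $\pi_0$ and basepoint bookkeeping), whereas you carry out that bookkeeping directly via the square comparison or a five-lemma chase.
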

\begin{proof}
We first show that we can reduce to considering strongly co-Cartesian
cubes with contractible initial object. This type of cube is
$k$-Cartesian if the (homotopy inverse limit of the) punctured cube is
$k$-connected, so we can determine Cartesian-ness by connectivity of a
space. We then commute homotopy inverse limits to compute the
connectivity of the punctured $D_{n+1}$ cube from that 
of $P_{n+1}$ and $P_n$. 

Consider the fibration sequence
$$ D_{n+1} F \rightarrow P_{n+1} F \rightarrow P_n F.$$
By Corollary~\ref{cor:F-Enc-Pn+1F-Enc}, the total space $P_{n+1} F$
satisfies $E_n(c)$, and of course $P_n F$ is $n$-excisive.

Let $T$ be a set of cardinality $n+1$, 
and let $\cube{X}$ be a strongly co-Cartesian $T$-cube.
Define the strongly co-Cartesian $T$-cube $\cube{Y}$  by
coning off the initial vertex $\cube{X}(\emptyset)$ of $\cube{X}$:
\begin{equation*}
\cube{Y}(U) = 
\colim \left( 
C(\cube{X}(\emptyset)) 
\leftarrow 
\cube{X}(\emptyset)
\rightarrow
\cube{X}(U)
\right)
\end{equation*}
Since the functors $D_{n+1}$, $P_{n+1}$
and $P_n$ are all $(n+1)$-excisive, they each take the $(n+2)$-cube
$\cube{X}\rightarrow\cube{Y}$ to a Cartesian cube. 
So after applying any one of these
functors, if the functored sub-cube $\cube{Y}$
is $k$-Cartesian then the functored sub-cube $\cube{X}$ is
$k$-Cartesian (by \cite[Proposition~1.6, p. 303]{Cal2}).
The sub-cube $\cube{Y}$ has contractible initial vertex, which is
the case we wanted to reduce to.
Now assume, using this reduction if necessary, that $\cube{X}$ is a
strongly co-Cartesian $T$-cube with contractible initial vertex. 
Since $D_{n+1}F$ is reduced and 
$\cube{X}(\emptyset) \simeq \basept$, the connectivity of the map 
$$ D_{n+1} F \cube{X}(\emptyset) \rightarrow 
\holim_{U \in \Power_0(T)} D_{n+1} F \cube{X}(U) $$
is determined by the connectivity of $\holim_{U \in \Power_0(T)}
D_{n+1} F \cube{X}(U)$ (since $D_{n+1} F \cube{X}(\emptyset)\simeq
\basept{}$). If our functor is not reduced, there is a fibration over 
$F(\basept)$ with fiber a reduced functor, so there
is no real difference in the arguments in this case; they are just
made relative to $F(\basept)$.

We can then compute:
\begin{align*}
\holim_{U \in \Power_0(T)} D_{n+1} F \cube{X}(U) 
&= 
\holim_{U \in \Power_0(T)} 
\hofib \left( P_{n+1} F \cube{X}(U) \rightarrow P_{n} F \cube{X}(U) \right)
\\
&\simeq
\hofib 
\left(
\holim_{U \in \Power_0(T)} P_{n+1} F \cube{X}(U)
\rightarrow
\holim_{U \in \Power_0(T)} P_{n} F \cube{X}(U)
\right)
\end{align*}
The $n$-excisiveness of $P_n F$ implies that the inverse limit of the
punctured $P_{n}$ cube is equivalent to $P_n F \cube{X}(\emptyset) \simeq
F(\basept)$, and the inverse limit of the punctured $P_{n+1}$ cube has
connectivity at least $\sum k_i - c$ relative to $F(\basept)$ because
$P_{n+1} F$ satisfies $E_n(c)$, so the 
homotopy fiber also has connectivity at least $\sum k_i - c$. Hence
$D_{n+1} F$ satisfies $E_n(c)$ as well.
\end{proof}

\begin{lemma}
\label{lem:homogeneous-Enc-bounded-spectra}
Let $\mathbf{C}$ be a spectrum with a $\Sigma_{n+1}$ action, and let
$F$ be the functor from spaces to spectra given by  
$$F(X) = \mathbf{C} \wedge_{h \Sigma_{n+1}} X^{\wedge(n+1)},$$ so $F$ 
is homogeneous of degree $n+1$, with $n\ge 0$. If 
$F$ satisfies $E_n(c)$, then $\pi_j \mathbf{C} = 0$ for $j<-c$.
\end{lemma}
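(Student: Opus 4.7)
The approach is to apply $E_n(c)$ to a carefully chosen strongly co-Cartesian $(n+1)$-cube of wedges of spheres, and to extract the connectivity of $\mathbf{C}$ from the resulting Cartesian-ness estimate via cross effects.

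Fix $k\ge 1$ and consider the $(n+1)$-cube $\cube{X}$ with $\cube{X}(U) = \bigvee_{i\in U} S^k$ for $U\subseteq\{1,\ldots,n+1\}$, with the natural inclusions of wedge summands as connecting maps. This cube is strongly co-Cartesian (every $2$-face is a standard wedge pushout), and each map $\cube{X}(\emptyset) = \ast \to \cube{X}(\{i\}) = S^k$ is $(k-1)$-connected. By the $E_n(c)$ hypothesis, $F\cube{X}$ is $((n+1)(k-1)-c)$-Cartesian; equivalently, its total homotopy fiber is $((n+1)(k-1)-c-1)$-connected.

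The key step is to identify this total fiber. For any reduced functor $F$ from spaces to spectra, there is a natural stable splitting
$$F\Bigl(\bigvee_{i\in U}Y_i\Bigr) \simeq \bigoplus_{\emptyset \ne S\subseteq U}\Perp_{|S|}F(Y_i,\,i\in S),$$
which exhibits $F\cube{X}$ as a direct sum of ``step'' sub-cubes indexed by nonempty $S\subseteq\{1,\ldots,n+1\}$: the $S$-summand equals $\Perp_{|S|}F(S^k,\ldots,S^k)$ on vertices $U\supseteq S$ and is zero elsewhere, with connecting maps identities in the first case and zero in the second. Since total fiber commutes with direct sums, and since the total fiber of a step sub-cube vanishes whenever $S$ is proper (the cube is constant along the directions in $\{1,\ldots,n+1\}\setminus S$, forcing the homotopy pullback to collapse), only the top step with $S=\{1,\ldots,n+1\}$ contributes, giving $\Omega^{n+1}\Perp_{n+1}F(S^k,\ldots,S^k)$. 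For $F$ homogeneous of degree $n+1$, the cross effect $\Perp_{n+1}F(Y_1,\ldots,Y_{n+1})$ picks out precisely the $\Sigma_{n+1}$-free orbit of bijections $[n+1]\to\{1,\ldots,n+1\}$ inside $(\bigvee_i Y_i)^{\wedge(n+1)}$, yielding $\Perp_{n+1}F(Y_1,\ldots,Y_{n+1})\simeq\mathbf{C}\wedge Y_1\wedge\cdots\wedge Y_{n+1}$. Specializing to $Y_i=S^k$ gives total fiber $\simeq\Omega^{n+1}(\mathbf{C}\wedge S^{k(n+1)}) \simeq \mathbf{C}\wedge S^{(k-1)(n+1)}$.

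Combining the two estimates, $\pi_j\mathbf{C}\simeq\pi_{j+(k-1)(n+1)}(\mathbf{C}\wedge S^{(k-1)(n+1)})=0$ whenever $j+(k-1)(n+1)<(k-1)(n+1)-c$, that is, whenever $j<-c$. This gives the conclusion, with the bound independent of the auxiliary parameter $k$. The main obstacle is the cross-effect identification of the total fiber: the theory of $\Perp_n$ is not developed until later in the paper, so the stable splitting above and the vanishing of the proper-$S$ step cubes would have to be justified by hand at this stage. A cleaner alternative is to substitute $F(X)=\mathbf{C}\wedge_{h\Sigma_{n+1}}X^{\wedge(n+1)}$ directly, decompose $(\bigvee_{i\in U}S^k)^{\wedge(n+1)}$ as a wedge indexed by functions $[n+1]\to U$ with its natural $\Sigma_{n+1}$-action by precomposition, and evaluate $\holim_{U\in\Power_0}F\cube{X}$ using Proposition~\ref{prop:Goodwillie-decompose-holim} to break up the holim; this yields the same estimate by more elementary, if more computational, means.
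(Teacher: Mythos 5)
Your proof is correct and follows essentially the same route as the paper's: both apply $E_n(c)$ to the wedge cube $U \mapsto \bigvee_U X$ and identify the only surviving piece of the total (co)fiber of $F$ of that cube as $\mathbf{C}\wedge X^{\wedge(n+1)}$, using the freeness of the $\Sigma_{n+1}$-action on the top cross-effect summand of the smash power. The differences are presentational — you keep the bookkeeping on the fiber side via the step-cube decomposition (and your loop count $\Omega^{n+1}$ is the careful one), where the paper passes through the total cofiber and shifts back.
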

\begin{proof}
We will show that $\mathbf{C}$ cannot have nonzero homotopy groups in
dimensions 
lower than is claimed by showing that if so, the result would be a
functor that does not satisfy $E_n(c)$.
The condition $E_n(c)$ gives information about the Cartesian-ness of
$F$ applied to certain cubes, so 
we will compute a bound on the Cartesian-ness by computing the
connectivity of the total fiber.  

Recall that in the
category of spectra, the total cofiber and total fiber are related by
a shift in dimension equal to the dimension of the cube.
To compute the total fiber of our functor $$F(X) =  \hocolim_{\Sigma_{n+1}} 
( \mathbf{C} \wedge X^{\wedge(n+1)} )$$
applied to the cube, we first
compute the total cofiber of the $(n+1)^{\text{st}}$ smash
power, then smash with $\mathbf{C}$, then take $\Sigma_{n+1}$ orbits,
and finally loop back $n$ times for the dimension shift. That is, we
compute 
\begin{align*}
\hocolim_{\Sigma_{n+1}} 
\left( 
\mathbf{C} \wedge 
\text{total} \cofib
\left( X^{\wedge(n+1)} \right) 
\right) 
&\simeq
\hocolim_{\Sigma_{n+1}} 
\left( 
\text{total} \cofib_{X\in\cube{X}}
\left( \mathbf{C} \wedge  X^{\wedge(n+1)} \right) 
\right) 
\\
&\simeq
 \text{total} \cofib_{X\in\cube{X}}
\left(
\hocolim_{\Sigma_{n+1}} 
\left( 
\mathbf{C} \wedge  X^{\wedge(n+1)}
\right) 
\right) 
\\
&\simeq
 \text{total} \cofib_{X\in\cube{X}} F(X)
.
\end{align*}
The second equivalence is because both the total cofiber and the
homotopy orbits are colimit constructions, and hence commute.
This shows that the cofiber we compute is actually that of $F$ applied
to the cube. 

Let $X$ be a space and consider the strongly co-Cartesian $(n+1)$-cube
$\cube{X}$ generated by $\cube{X}(\emptyset) = \basept{}$ and
$\cube{X}(\Set{i}) = X$, so $\cube{X}(U) = \bigvee_U X.$
Let $\cube{Y}$ be the cube $\cube{X}^{\wedge (n+1)}$, with
$\cube{Y}(U) = \cube{X}(U)^{\wedge (n+1)} $.

The total cofiber of the cube $\cube{Y}$ is equivalent to
the $n+1$ cross effect 
of the $n+1$ smash power, $cr_{n+1}(\bigwedge^{n+1})$. Writing $X_i
\cong X$ to make the action of $\Sigma_{n+1}$ clear, this is:
$$
cr_{n+1}(\bigwedge^{n+1})(X_1,\ldots,X_{n+1}) 
\xleftarrow{\simeq}
\bigvee_{\sigma\in\Sigma_{n+1}} X_{\sigma(1)} \wedge \cdots \wedge
X_{\sigma(n+1)} 
.$$
Fortunately, it is easy to see that 
the right hand side is a free $\Sigma_{n+1}$ space, 
so smashing
with $\mathbf{C}$ and taking  homotopy orbits gives $\mathbf{C}\wedge X^{\wedge
  (n+1)}$. Hence for this cube, the total fiber is $\Omega^n (
\mathbf{C}\wedge X^{\wedge (n+1)})$.

The Cartesian-ness of the cube $F\cube{X}$ is determined by the
connectivity of the total fiber. When the space $X$ is $m$-connected
then $X^{\wedge(n+1)}$ is $((n+1)(m+1)-1) = ((n+1)m+n)$-connected.
If $\mathbf{C}$ has its bottom nonzero homotopy group in dimension $w$,
the total fiber has connectivity
$(n+1)m+n-n+w = (n+1)m+w$. Since $F$ satisfies $E_n(c)$, we must
have $w\ge -c$.
\end{proof}

\begin{corollary}
\label{cor:homogeneous-Enc-bounded-spaces}
Let $\mathbf{C}$ be a spectrum with a $\Sigma_{n+1}$ action, and let
$F$ be the functor from spaces to spaces given by  
$$F(X) = \LoopInfty(\mathbf{C} \wedge_{h \Sigma_{n+1}} X^{\wedge(n+1)}),$$
 so $F$ 
is homogeneous of degree $n+1$, with $n\ge 0$. If 
$F$ satisfies $E_n(c)$, then $\pi_j \mathbf{C} = 0$ for $j<-c$.
\end{corollary}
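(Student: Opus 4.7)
The plan is to reduce Corollary~\ref{cor:homogeneous-Enc-bounded-spaces} to the preceding Lemma~\ref{lem:homogeneous-Enc-bounded-spectra} by comparing $F$ with its spectrum-valued analog $G(X) = \mathbf{C} \wedge_{h\Sigma_{n+1}} X^{\wedge(n+1)}$, so that $F = \LoopInfty \circ G$. Since $\LoopInfty$ is a right adjoint, it preserves homotopy inverse limits; in particular, it commutes with the iterated fiber construction computing the total fiber of a cube. Therefore, for any cube $\cube{X}$ with $\cube{X}(\emptyset) \simeq \basept$, the total fiber of $F\cube{X}$ is $\LoopInfty$ applied to the total fiber of $G\cube{X}$.

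I then reuse the specific strongly co-Cartesian $(n+1)$-cube from the proof of Lemma~\ref{lem:homogeneous-Enc-bounded-spectra}, with $X = S^{m+1}$: the cube generated by $\cube{X}(\emptyset) = \basept$ and $\cube{X}(\Set{i}) = X$. That proof computes the total fiber of $G\cube{X}$ as the spectrum $\Omega^n(\mathbf{C} \wedge S^{(m+1)(n+1)})$, whose $j$-th homotopy group is naturally isomorphic to $\pi_{j-(n+1)m-1}\mathbf{C}$.

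Suppose for contradiction that $\pi_w \mathbf{C} \neq 0$ for some $w < -c$. I choose $m$ large enough that $j := w + (n+1)m + 1 > 0$; then $\pi_j$ of the spectrum total fiber of $G\cube{X}$ is nonzero, and since $j > 0$ the functor $\LoopInfty$ detects this class, so $\pi_j$ of the total fiber of $F\cube{X}$ is nonzero as well. On the other hand, the hypothesis that $F$ satisfies $E_n(c)$ forces the total fiber of $F\cube{X}$ to be $((n+1)m-c)$-connected, and $j = w + (n+1)m + 1 \leq (n+1)m - c$ (because $w + 1 \leq -c$), contradicting the nontriviality of $\pi_j$. Therefore $\pi_j \mathbf{C} = 0$ for all $j < -c$.

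The main subtlety in executing this plan is the bookkeeping needed to match connectivity conventions with Lemma~\ref{lem:homogeneous-Enc-bounded-spectra}; beyond that the argument is structurally identical to the spectra-valued case, with the single caveat that the connectivity estimate produced by the $E_n(c)$ condition on $F$ can only be used to read off information about $\pi_w \mathbf{C}$ when the corresponding homotopy class of the total fiber sits in non-negative degrees, which is precisely why we must pass to large $m$.
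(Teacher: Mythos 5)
Your argument is correct and takes essentially the same route as the paper: pass to the spectrum-valued functor $G$ with $F=\LoopInfty\circ G$, use that $\LoopInfty$ preserves homotopy fibers (hence total fibers of cubes), take $X$ highly connected so that the homotopy class detecting $\pi_w\mathbf{C}$ lands in non-negative degree where $\LoopInfty$ sees it, and then quote the total-fiber computation from Lemma~\ref{lem:homogeneous-Enc-bounded-spectra}. Your connectivity bookkeeping (identifying ``$k$-Cartesian'' with ``total fiber $k$-connected'' for a cube with contractible initial vertex) matches the convention the paper itself uses in the proof of that lemma, so the constant $-c$ comes out the same.
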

\begin{proof}
Choosing $X$ to be highly enough connected, all of the maps in
the cube of Lemma~\ref{lem:homogeneous-Enc-bounded-spectra} are
connected enough that 
the Cartesian-ness of the cube is determined by the connectivity of
the total fiber. Again, if $X$ is connected enough, the total fiber will
be connective, so after the application of $\LoopInfty$  (which
preserves fibers), it will have the same connectivity as the fiber as
spectra, so we the result for spaces follows.
\end{proof}

\begin{proof}[Proof of Theorem~\ref{thm:analytic-bounded-below}]
Without loss of generality, we may assume that
$F$ is reduced. We do this by replacing $F$ with the
functor $\tilde{F}(X) = \hofib \left(F(X)\rightarrow F(0)\right)$ and
observing that they satisfy the same excision conditions $E_n(c)$ for
all $n\ge 0$.
If $F$ is $r$-analytic, then $F$
satisfies $E_n(rn-c)$, so by Lemma~\ref{lem:F-Enc-Dn+1F-Enc}, $D_{n+1}
F$ satisfies $E_n(rn-c)$, so by
Corollary~\ref{cor:homogeneous-Enc-bounded-spaces}, 
$\pi_j \mathbf{C_{n+1}} = 0$ for $j<c-rn$. 
\end{proof}


\section{$\LoopInfty$ Commutes With Certain Realizations}
\label{sec:loopinfty-commutes-with-certain-realizations}

The result of this section is that 
when all of the spectra $X_i$ in a simplicial spectrum $X_{\cdot}$ are
connective, the functor $\LoopInfty$ can be applied before or after
realization, with the same results. 
(Recall that we call a spectrum connective if all of its
negative homotopy groups are zero.)

\begin{theorem}
  \label{thm:loop-infty-commutes-with-connective-spectra}
If $X_{\cdot}$ is a simplicial connective spectrum, then the
simplicial spectrum
$\LoopInfty
\realization{[n] \mapsto X_{n}}
$
is equivalent to the simplicial spectrum
$
\realization{[n] \mapsto \LoopInfty X_{n}}.
$
\end{theorem}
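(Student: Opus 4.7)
The plan is to reduce the theorem to a statement about commuting $\Omega^n$ past realization for individual spaces, by unwinding the definition $\LoopInfty \mathbf{Y} = \colim_n \Omega^n Y_n$. Without loss of generality, replace $X_\cdot$ by a levelwise equivalent simplicial omega spectrum; since each $X_m$ is connective, the adjoint structure maps being equivalences force $\pi_j X_{m,n} = \pi_{j-n} X_m = 0$ for $j < n$, so each space $X_{m,n}$ is $(n-1)$-connected. Then the $n$-th space of $\realization{X_\cdot}$ is $\realization{[m]\mapsto X_{m,n}}$ and the structure maps are the natural ones, so
\[
\LoopInfty \realization{X_\cdot} \;=\; \colim_n \Omega^n \realization{[m]\mapsto X_{m,n}},
\]
while on the other side
\[
\realization{[m] \mapsto \LoopInfty X_m} \;=\; \realization{[m] \mapsto \colim_n \Omega^n X_{m,n}} \;\simeq\; \colim_n \realization{[m]\mapsto \Omega^n X_{m,n}},
\]
the last equivalence holding because realization is itself a homotopy colimit and filtered homotopy colimits commute with homotopy colimits.

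It therefore suffices to produce, for each $n\ge 1$, a natural equivalence $\Omega^n \realization{[m] \mapsto X_{m,n}} \simeq \realization{[m] \mapsto \Omega^n X_{m,n}}$, and then pass to the colimit along the spectrum structure maps. This is the main step, and it will be handled by iterating Waldhausen's fibration lemma (Lemma~\ref{lem:realization-of-levelwise-fibration}) applied to the path-loop fibration $\Omega Y \to PY \to Y$. In each simplicial degree $m$, applying this to $Y = X_{m,n}$ (with $n\ge 1$) gives a levelwise fibration sequence with connected base, since $X_{m,n}$ is $(n-1)$-connected and hence connected. By Waldhausen's lemma the realization remains a fibration sequence, and since $P(-)$ is valued in contractible spaces, $\realization{[m]\mapsto P X_{m,n}}$ is contractible; thus the fiber of $\realization{[m] \mapsto X_{m,n}}$ is $\realization{[m]\mapsto \Omega X_{m,n}}$, giving the base case $\Omega\realization{X_{\cdot,n}}\simeq \realization{\Omega X_{\cdot,n}}$.

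For the iteration I need $\Omega^k X_{m,n}$ to remain connected at each stage so that Waldhausen applies again; but $\Omega^k X_{m,n}$ is $(n-k-1)$-connected, so the induction goes through for $k=0,1,\ldots,n-1$, which is exactly the range I need to produce $\Omega^n$. The main obstacle is precisely this bookkeeping: making sure that the connectivity hypothesis of Waldhausen's lemma survives each application of $\Omega$, which is the reason connectivity of the spectrum is essential rather than merely bounded-belowness. Finally, stitching the levelwise equivalences together and commuting $\colim_n$ with $\realization{-}$ (a filtered homotopy colimit interchange) delivers the desired equivalence; one can alternatively invoke Theorem~\ref{thm:Bousfield-Friedlander} in place of repeated use of Waldhausen's lemma, with the $\pi_\ast$-Kan condition verified via connectivity of each simplicial degree.
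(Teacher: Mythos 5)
Your argument is correct, but it proves the theorem by a genuinely different route than the paper. You unwind $\LoopInfty$ as $\colim_n \Omega^n(-)_n$, pass to a levelwise omega-spectrum replacement so that connectivity of each $\mathbf{X}_m$ forces $X_{m,n}$ to be $(n-1)$-connected, commute each $\Omega^n$ past realization by iterating Waldhausen's lemma on the path--loop fibration (this step is in effect the paper's own lemma that $\Map(S^n,-)$ commutes with realizations of simplicial $(n-1)$-connected spaces), and then interchange the filtered colimit with realization. The paper instead inducts up the simplicial skeleta of $X_\cdot$: it splits off the degeneracies stably to identify the filtration quotients as $S^p_\cdot \wedge \overbar{X_p}$, handles those via Segal's bar-construction delooping of $\LoopInfty X$, realizes the resulting levelwise fibrations with Bousfield--Friedlander, and finishes with a five-lemma argument that needs care at $\pi_0$. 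Your route is shorter and more elementary --- the omega-spectrum replacement absorbs all the delooping that the paper has to build by hand with the bar construction, and it makes completely explicit where connectivity enters (keeping the base connected at each of the $n$ successive loopings). What the paper's route buys is that it never leaves the given models and it tracks the infinite loop space structure on both sides, which it uses later. Two points you assert without proof and should state: (i) the levelwise equivalences $\realization{\Omega^n X_{\cdot,n}} \simeq \Omega^n\realization{X_{\cdot,n}}$ must be checked compatible with the (adjoint) structure maps of the spectrum $\realization{X_\cdot}$ before the two colimit systems can be identified --- this is a routine naturality diagram coming from the definition of the realization's structure maps; and (ii) the omega-spectrum replacement must not change $\realization{X_\cdot}$ up to stable equivalence, which follows from the skeletal filtration (realization is a homotopy colimit and preserves levelwise stable equivalences) but is not automatic from the Realization Lemma, since the replacement is not a levelwise equivalence of \emph{spaces}.
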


The remainder of this section consists of the proof of this theorem
and subsidiary results required therein.
Let $S^1_{\cdot}$ be the standard model $\Delta^1/\partial \Delta^1$
(where $\Delta^1$ is $[n]\mapsto \Hom([n],[1])$) for the simplicial
1-sphere.  

\begin{lemma}
  \label{lem:loop-infty-commutes-with-suspension}
  If $X$ is a connective spectrum,  then 
  $$
  \realization{[n] \mapsto \LoopInfty ( S^1_{n} \wedge X )}
  $$
  is equivalent to
  $$
  \LoopInfty \realization{[n] \mapsto S^1_{n} \wedge X} 
 .$$
 Furthermore, both have the same infinite loop space structure.
\end{lemma}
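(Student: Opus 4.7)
The plan is to express both sides in terms of the spaces $\Omega^k(S^1_n \wedge X_k)$ computed levelwise, and to use iterated applications of Waldhausen's realization fibration lemma (Lemma~\ref{lem:realization-of-levelwise-fibration}) to move $\Omega^k$ past the realization. Up to equivalence I may replace $X$ by a connective omega spectrum, so $X_k$ is $(k-1)$-connected. Since the $k$-th space of the realization of a simplicial spectrum is the realization of the simplicial space of $k$-th spaces, and since $\LoopInfty \mathbf{Y} \simeq \colim_k \Omega^k Y_k$, the right-hand side may be rewritten as
$$ \LoopInfty \realization{[n]\mapsto S^1_n \wedge X} \simeq \colim_k \Omega^k \realization{[n]\mapsto S^1_n \wedge X_k} . $$

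Next I would observe that for each fixed $k \geq 1$, every level of the simplicial space $[n] \mapsto S^1_n \wedge X_k$ is equivalent to a wedge of copies of $X_k$ (since $S^1_n$ is a finite pointed set), and hence is $(k-1)$-connected. Iterating Lemma~\ref{lem:realization-of-levelwise-fibration} (whose content is that $\Omega$ commutes with realizations of simplicial spaces whose levels are connected), I obtain, for each $k \geq 1$, a natural equivalence
$$ \Omega^k \realization{[n]\mapsto S^1_n \wedge X_k} \simeq \realization{[n]\mapsto \Omega^k(S^1_n \wedge X_k)} . $$
Checking the connectivity hypothesis at every one of the $k$ stages of this iteration is the main technical point: after applying $\Omega^j$, the levels are $(k-j-1)$-connected, which remains at least $0$-connected exactly because $j < k$. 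It is precisely the assumption that $X$ is connective that guarantees this uniformly in $k$.

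Finally, realization is a homotopy colimit, and filtered homotopy colimits commute with arbitrary homotopy colimits, so the $\colim_k$ commutes past the realization in the $[n]$-direction. Combined with $\LoopInfty(S^1_n \wedge X) \simeq \colim_k \Omega^k(S^1_n \wedge X_k)$, this yields
$$ \colim_k \realization{[n]\mapsto \Omega^k(S^1_n \wedge X_k)} \simeq \realization{[n]\mapsto \LoopInfty(S^1_n \wedge X)} . $$
Chaining the three equivalences establishes the asserted weak equivalence. For the claim about infinite loop space structures: every equivalence in the chain is built from natural transformations of spectra (the realization of spectrum-valued functors, filtered colimit past $\Omega^k$, etc.), all of which respect the $\LoopInfty$ structure present on each term; hence the resulting weak equivalence is an equivalence of infinite loop spaces. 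The main obstacle throughout is maintaining the connectivity bookkeeping required to apply Waldhausen's lemma $k$ times in succession, which is exactly the flexibility purchased by the connectivity hypothesis on $X$.
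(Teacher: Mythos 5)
Your argument is correct, and it takes a genuinely different route from the paper's. The paper smashes $X$ with the levelwise cofibration $S^0_\cdot \to D^1_\cdot \to S^1_\cdot$, uses stability to convert this into levelwise fibration sequences, identifies $[n]\mapsto \LoopInfty(S^1_n\wedge X)$ with the simplicial bar construction on $\LoopInfty X$ so that Segal's delooping theorem makes the top row a fibration sequence after realization, and finishes with the five lemma plus a $\pi_0$ check. You instead replace $X$ by a connective omega spectrum so that $X_k$ is $(k-1)$-connected, unwind $\LoopInfty$ as $\colim_k \Omega^k(-)_k$, and commute each $\Omega$ past the realization by iterating Lemma~\ref{lem:realization-of-levelwise-fibration}; this iteration is exactly the content of the paper's own (unnumbered) lemma that $\Map(S^k,-)$ commutes with realizations of simplicial $(k-1)$-connected spaces, so you could simply cite it rather than redo the induction. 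Your connectivity bookkeeping is right, the $k=0$ term is cofinally irrelevant, and commuting the filtered colimit with realization is at the same level of rigor as the paper's own definition of $\LoopInfty$ as a strict colimit. What each approach buys: yours is more elementary (no Segal machinery) and, after a functorial omega-spectrum replacement, applies verbatim to an arbitrary simplicial connective spectrum, so it essentially subsumes the skeletal induction used later to prove Theorem~\ref{thm:loop-infty-commutes-with-connective-spectra}. The paper's route, by contrast, produces an explicit delooping of $\LoopInfty X$ by the realization, which is what gives content to the final clause of the lemma --- that the two spaces carry the \emph{same} infinite loop space structure. Your one-sentence appeal to ``natural transformations of spectra'' is the only thin spot: the left-hand side does not a priori come with an infinite loop structure until one either identifies it as a bar construction (as the paper does) or transports the structure across your equivalence, and you should say which of these you mean.
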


A proof of this lemma different from the one that follows appears in
the literature in 
\cite{Beck:classifying-spaces-for-homotopy-everything-H-spaces}.

\begin{proof}
  Consider the levelwise cofiber sequence of simplicial sets
  $S^0_{\cdot} \rightarrow D^1_{\cdot} \rightarrow S^1_{\cdot} $.
  Applying the two functors in question, we have the following diagram:
  $$
  \xymatrix{
    \realization{\LoopInfty ( S^0_{\cdot} \wedge X ) } \ar[r]\ar[d] &
    \realization{\LoopInfty ( D^1_{\cdot} \wedge X ) } \ar[r]\ar[d] &
    \realization{\LoopInfty ( S^1_{\cdot} \wedge X ) } \ar[d] 
    \\
    \LoopInfty \realization{ S^0_{\cdot} \wedge X } \ar[r] &
    \LoopInfty \realization{ D^1_{\cdot} \wedge X } \ar[r] &
    \LoopInfty \realization{ S^1_{\cdot} \wedge X }
    }
  $$
  Note that $S^0_{\cdot} \wedge X$ is a trivial simplicial set, so the
  leftmost map is an equivalence. Also, $D^1_{\cdot}$ is simplicially
  contractible, so both of the spaces appearing in the middle are
  contractible (and hence the map is an equivalence). The bottom
  row is a fibration (up to homotopy) because cofiber sequences of
  spectra are equivalent to fiber sequences, and $\LoopInfty$
  preserves fiber sequences. 

  The top row is also a fiber sequence. This depends on the fact that
  $\LoopInfty(S^1_{\cdot} \wedge X)$ is the simplicial bar
  construction on $\LoopInfty X$, and that this produces a delooping
  of $\LoopInfty X$ (by  
  \cite[Proposition~1.4, p.~295]{Segal:categories-and-cohomology-theories}).
  Using the stated model for $S^1_{\cdot}$, we have 
  $S^1_{n} \wedge X = [n] \wedge X = \bigvee_n X \simeq \prod_n X$, 
  since finite coproducts and products
  are equivalent in spectra. Applying $\LoopInfty$ (which commutes
  with products), we have the
  simplicial object $[n] \mapsto \left( \LoopInfty X \right)^{\times
    n}$. We leave skeptical readers to verify for themselves that 
  the fold map $X \vee X \rightarrow X$
  of spectra induces the product map for the $H$-space $\LoopInfty X$
  (via the equivalence $X \times X \simeq X \vee X$).
  Using the five lemma, we immediately find that the right hand map is
  an equivalence on $\pi_i$, for $i\ge 1$. The space
  $\realization{\LoopInfty ( S^1_{\cdot} \wedge X ) }$  is always
  connected, because there are no zero simplices.
  Since $X$ is connective, we also have 
  $\LoopInfty \realization{ S^1_{\cdot} \wedge X }$ connected, so
  $\pi_0 = 0$ in both cases. This shows that the right-hand map is a
  weak equivalence. 
%
\end{proof}

\begin{corollary}
  If $X$ is a connective spectrum,  then 
  $
  \realization{\LoopInfty ( S^n_{\cdot} \wedge X )}
  \simeq 
  \LoopInfty \realization{S^n_{\cdot} \wedge X} 
  $.
\end{corollary}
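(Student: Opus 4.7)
The plan is to prove this by induction on $n$, with base case $n=1$ being Lemma~\ref{lem:loop-infty-commutes-with-suspension}. Alternatively, one may appeal to Theorem~\ref{thm:loop-infty-commutes-with-connective-spectra} directly: the simplicial spectrum $[k]\mapsto S^n_k\wedge X$ is levelwise a finite wedge of copies of $X$ (since $S^n_k$ is a finite pointed set), hence connective, so the theorem applies verbatim.

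For the induction using only the lemma, at stage $n$ take the simplicial model $S^n_\cdot := \diag([p,q]\mapsto S^1_p \wedge S^{n-1}_q)$, whose realization is $S^n$, so that it serves as a valid simplicial model up to weak equivalence. Consider the bisimplicial spectrum $V_{p,q} = \LoopInfty(S^1_p \wedge S^{n-1}_q \wedge X)$. By Corollary~\ref{cor:useful-eilenberg-zilber} (with goodness supplied by Lemma~\ref{lem:preserve-goodness}), its diagonal realization $\realization{\LoopInfty(S^n_\cdot \wedge X)}$ agrees up to weak equivalence with the iterated realization. Realizing in the $q$-direction first, and using that $S^1_p\wedge(-) = \bigvee_p(-)$ together with the fact that $\LoopInfty$ converts finite wedges of spectra into finite products (since finite wedges and finite products of spectra coincide in the stable category), we obtain
$$\realization{[q]\mapsto V_{p,q}} \;\simeq\; \prod_p\realization{\LoopInfty(S^{n-1}_\cdot \wedge X)}.$$
By the inductive hypothesis applied to the connective spectrum $X$, this equals $\prod_p\LoopInfty(S^{n-1}\wedge X) \simeq \LoopInfty(S^1_p \wedge Y)$, where $Y := S^{n-1}\wedge X$ is a connective spectrum.

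Applying Lemma~\ref{lem:loop-infty-commutes-with-suspension} to $Y$, the outer realization yields
$$\realization{[p]\mapsto \LoopInfty(S^1_p\wedge Y)}\;\simeq\; \LoopInfty\realization{[p]\mapsto S^1_p\wedge Y}\;\simeq\;\LoopInfty(S^1\wedge Y)\;=\;\LoopInfty(S^n\wedge X)\;\simeq\;\LoopInfty\realization{S^n_\cdot\wedge X},$$
completing the induction. The main obstacle is keeping the bisimplicial manipulations rigorous — specifically, verifying goodness of $V$ so Corollary~\ref{cor:useful-eilenberg-zilber} applies, and carefully interchanging $\LoopInfty$ with the finite wedges arising from $S^1_p\wedge(-)$ — but both are supplied by results recalled earlier in the chapter.
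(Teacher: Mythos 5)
Your inductive argument---writing $S^n_\cdot$ as the diagonal of the bisimplicial model $S^1_\cdot \wedge S^{n-1}_\cdot$, applying Eilenberg--Zilber, realizing one direction at a time, and feeding $Y = S^{n-1}\wedge X$ back into Lemma~\ref{lem:loop-infty-commutes-with-suspension}---is exactly the paper's (very terse) proof, correctly fleshed out. One caution: your proposed shortcut of invoking Theorem~\ref{thm:loop-infty-commutes-with-connective-spectra} directly would be circular, since the proof of that theorem itself needs this corollary to handle the bases $\LoopInfty(S^n_\cdot \wedge \overline{X_n})$ of the skeletal fibrations, so the inductive route is the one to keep.
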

\begin{proof}
  Write $S^n_{\cdot}$ as the diagonal of the bisimplicial set
  $S^1_{\cdot} \wedge S^{n-1}_{\cdot}$. Use the Eilenberg-Zilber
  theorem to replace with the whole bisimplicial set, and apply
  Lemma~\ref{lem:loop-infty-commutes-with-suspension} inductively.
\end{proof}

We will demonstrate that all grouplike $H$-spaces satisfy the
$\pi_*$-Kan condition, giving us a large class of examples.

\begin{definition}[Simple space]
A connected space $X$ is called \emph{simple} if $\pi_1 X$ is abelian
and acts trivially on the higher homotopy groups.
A general space $X$ is called \emph{simple} if each component of $X$ is a
simple space.
\end{definition}

The following lemma appears as an exercise in
\cite{Bousfield-Friedlander:Gamma-Spaces}. 

\begin{lemma}
(\cite[B.3.1, p.~120]{Bousfield-Friedlander:Gamma-Spaces})
\label{lem:identify-pistar-kan} 
Let $X$ be a simplicial space, and let $[S^t,-]$ denote the unpointed
homotopy classes of (unpointed) maps out of $S^t$. 
If each $X_m$ is a simple space, then 
\begin{equation}
\label{eq:pit-pi0-fibration}
[S^t, X_\cdot]\rightarrow \pi_0 X_\cdot
\end{equation}
 is a fibration
of simplicial sets if and only if $X_\cdot$ satisfies the $\pi_*$-Kan
condition.
$\qed$
\end{lemma}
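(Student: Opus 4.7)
The plan is to unwind both conditions into the same concrete horn-filling data, using the simplicity hypothesis as a dictionary between free and based homotopy classes. First I would fix notation: for each $m$, simplicity of $X_m$ gives a canonical bijection
$$[S^t, X_m] \;\cong\; \bigsqcup_{c\in\pi_0 X_m} \pi_t(X_m, a_c),$$
where $a_c$ is any chosen basepoint in the component $c$, and the identification is independent of that choice because $\pi_1 X_m$ acts trivially on $\pi_t X_m$. Under this identification, the simplicial face operator $\partial_i : [S^t, X_m] \to [S^t, X_{m-1}]$ is induced by post-composition with $\partial_i : X_m \to X_{m-1}$, which on the right-hand side is exactly the map $\pi_t(X_m, a) \to \pi_t(X_{m-1}, \partial_i a)$.

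Second, I would translate the Kan fibration condition on $p : [S^t, X_\cdot] \to \pi_0 X_\cdot$ (for a fixed $t \ge 1$) into the language above. Lifting a $k$-th horn $\Lambda^m_k \to [S^t, X_\cdot]$ along a filler $\Delta^m \to \pi_0 X_\cdot$ means: given a simplex $c \in \pi_0 X_m$, a choice $a\in c\subset X_m$ representing it, and a coherent family $x_i\in \pi_t(X_{m-1}, \partial_i a)$ for $i\neq k$ satisfying $\partial_i x_j = \partial_{j-1} x_i$ whenever $i<j$ and $i,j\neq k$, one must produce $y\in \pi_t(X_m, a)$ with $\partial_i y = x_i$. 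This is literally the first clause of Definition~\ref{def:pi-star-Kan}, so the two assertions are equivalent horn-by-horn.

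Third, I would note that the second clause of the $\pi_\ast$-Kan condition, that $\pi_0 X_\cdot$ be fibrant, is the target-fibrancy condition forced on $\pi_0 X_\cdot$ by taking the lift argument in the degenerate $t=0$ case (or equivalently, by noting that the projection $[S^t, X_\cdot] \to \pi_0 X_\cdot$ admits a section given by constant maps, so source fibrancy and base fibrancy are equivalent and either statement encompasses it). Either way, the two pieces of the $\pi_\ast$-Kan condition match exactly the data of $p$ being a Kan fibration for all $t\ge 1$.

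The main obstacle to watch is keeping the basepoint bookkeeping honest: the horn data in $[S^t, X_\cdot]$ lives over varying components, and one must check that any two choices of $a\in c$ give the same $\pi_t$-data after the canonical identification, and that the compatibility relations $\partial_i x_j = \partial_{j-1} x_i$ in $[S^t, X_{m-2}]$ translate correctly under the identifications at each intermediate face. This is precisely where the hypothesis that every $X_m$ is simple (so $\pi_1$ acts trivially on $\pi_t$) is used; without it the unpointed set $[S^t, X_m]$ would only be a quotient of $\pi_t(X_m, a_c)$ by the $\pi_1$-action, and the face maps on horns would be multi-valued at the level of pointed homotopy groups.
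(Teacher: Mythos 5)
Your proposal is correct and follows essentially the same route as the paper's proof: both use the simplicity hypothesis to identify the fiber of $[S^t,X_m]\rightarrow\pi_0 X_m$ over a component with $\pi_t(X_m,a)$ for a chosen basepoint $a$ in that component, and then match the horn-filling data of a Kan fibration with the first clause of the $\pi_*$-Kan condition verbatim. Your extra care about the $\pi_0$-fibrancy clause and the basepoint bookkeeping is a welcome addition that the paper's proof glosses over.
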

\begin{proof}
Recall from \ref{def:pi-star-Kan} that the $\pi_*$-Kan means that if
given $t\ge 1$ and $a\in X_{m+1}$ and a coherent collection $x_i \in
\pi_t(X_m,\partial_i a)$, with $0\le i\le m+1$ and $i\not= k$, there
exists a $y \in \pi_t(X_{m+1},a)$ with $\partial_i y = x_i$ for
$i\not= k$. Also, recall that map $p: E\rightarrow B$ of simplicial sets
is a fibration if given $a\in B_{m+1}$ and a coherent collection $x_i
\in E_m$ with $p(x_i) = \partial_i a$, for $0\le i\le m+1$ and $i\not=
k$, there exists a $y\in E_{m+1}$ with $p(y) = a$ and $\partial_i y =
x_i$ for $i\not= k$. 

In a simple space, elements of $\pi_t(X_m,\partial_i a)$
are in bijective correspondence with free homotopy classes of maps of
$S^t$ to $X_m$ that land in the component of $\partial_i a$. (In
general, allowing free homotopies identifies maps that are the same
orbit under the action of $\pi_1$.) 
Now comparing the definitions of a fibration of
simplicial sets and the $\pi_*$-Kan condition to verify that if $X$
satisfies the $\pi_*$-Kan 
condition, then \eqref{eq:pit-pi0-fibration} is a fibration.

If \eqref{eq:pit-pi0-fibration} is a fibration, then we can use this
fact to produce a $[y] \in [S^t, X_{m+1}]$ that lands in the same path
component as $a$, and satisfying $\partial_i [y] = [x_i]$ for $i\not= k$. 
This $[y]$ can be realized as a map $y: S^t \rightarrow X_{m+1}$ that
takes the basepoint of $S^t$ to $a$. Now $\partial_i y: (S^t,*)
\rightarrow (X_m, \partial_i a)$, and $[\partial_i y] = [x_i]$. But
the free homotopy classes of maps $[x_i]$ landing in the path
component of $\partial_i a$ are in one-to-one correspondence with the
elements of $\pi_t (X_m,\partial_i a)$, so $\partial_i y$ must
actually be $x_i$.  This shows that $X_\cdot$ satisfies the
$\pi_*$-Kan condition.
\end{proof}

\begin{corollary}
\label{cor:grouplike-H-spaces-pi-star}
If $X_\cdot$ is a simplicial grouplike $H$-space, then $X_\cdot$
satisfies the $\pi_*$-Kan condition.
\end{corollary}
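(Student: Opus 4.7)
The plan is to invoke Lemma~\ref{lem:identify-pistar-kan}, so I need to verify (i) that each $X_m$ is simple, and (ii) that the map $[S^t, X_\cdot] \rightarrow \pi_0 X_\cdot$ is a Kan fibration of simplicial sets.

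For (i), this is essentially the classical fact that a grouplike $H$-space is simple. The $H$-space multiplication makes left multiplication by any element $g\in X_m$ a weak self-equivalence of $X_m$ (with a homotopy inverse coming from the grouplike hypothesis), so any two path components are weakly equivalent; within a component, the Eckmann--Hilton argument gives that $\pi_1 X_m$ is abelian, and the action of $\pi_1$ on higher $\pi_n$ is conjugation by a loop, which is trivial for $H$-spaces. So each $X_m$ is simple in the sense of the preceding definition, and Lemma~\ref{lem:identify-pistar-kan} applies.

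For (ii), the strategy is to exhibit both $[S^t, X_\cdot]$ and $\pi_0 X_\cdot$ as simplicial groups and the projection as a surjective homomorphism. The $H$-space multiplication on $X_m$ induces a pointwise $H$-space structure on the space of maps $\mathrm{Map}(S^t, X_m)$, which passes to unpointed homotopy classes to give a group structure on $[S^t, X_m]$ (inverses exist because $X_m$ is grouplike). Face and degeneracy maps in $X_\cdot$ are compatible with multiplication, so $[S^t, X_\cdot]$ is a simplicial group and the projection to the simplicial group $\pi_0 X_\cdot$ is a surjective homomorphism. I will then invoke the standard fact that every surjective homomorphism of simplicial groups is a Kan fibration (simplicial groups are Kan complexes, and this upgrades to surjections being fibrations).

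The main obstacle is verifying (ii) cleanly: showing that free homotopy classes really inherit a group structure that is simplicial, rather than merely being some twisted bundle over $\pi_0$. The key computational ingredient is that on each level $m$, translation by any $g\in X_m$ is a weak equivalence, which gives a canonical isomorphism $[S^t, X_m] \cong \pi_0 X_m \times \pi_t(X_m, e)$ compatible enough to make the projection a split homomorphism. If the appeal to the ``surjection of simplicial groups is a fibration'' theorem is unwelcome, one can instead give a direct argument: given a coherent system $x_i \in \pi_t(X_m, \partial_i a)$ as in Definition~\ref{def:pi-star-Kan}, use left multiplication by $(\partial_i a)^{-1}$ to transport everything into the identity component, producing a coherent family in the simplicial abelian group $\pi_t(X_\cdot, e)$ (abelian for $t\geq 1$ by Eckmann--Hilton); simplicial abelian groups are Kan, so a lift exists, and translating back by $a$ yields the desired $y\in \pi_t(X_{m+1}, a)$.
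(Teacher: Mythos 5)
Your proposal is correct and follows essentially the same route as the paper: check simplicity to invoke Lemma~\ref{lem:identify-pistar-kan}, then exhibit $[S^t, X_\cdot]$ and $\pi_0 X_\cdot$ as simplicial groups with a surjective homomorphism between them, and use the fact that surjections of simplicial groups are Kan fibrations. Your extra care in justifying simplicity of the components and the alternative direct lifting argument via translation to the identity component are welcome elaborations but do not change the structure of the argument.
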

\begin{proof}
All $H$-spaces are simple, so Lemma~\ref{lem:identify-pistar-kan} can
be used. 
The map $[S^t,X_\cdot] \rightarrow \pi_0 X_\cdot$ is
obviously surjective. We will show that both the source and target are
simplicial groups; all surjections of simplicial groups are
fibrations, so this will allow us to apply
Lemma~\ref{lem:identify-pistar-kan} to conclude that $X_\cdot$
satisfies the $\pi_*$-Kan condition.

Since $X_\cdot$ is a grouplike $H$-space, the
simplicial set $\pi_0 X_\cdot$ is actually a simplicial group. 

The set $[S^t, X_m]$ is a group with multiplication induced by the
$H$-space multiplication on $X_m$. A grouplike $H$-space only
satisfies the axioms for a group up to homotopy, but we are
considering homotopy classes of maps, so that is not a problem.
\end{proof}

\begin{corollary}
  \label{cor:loop-spaces-pi-star}
  Let $X_\cdot$ be a simplical space. The 
  simplicial space $[n] \mapsto \Omega X_n$
  satisfies the $\pi_*$-Kan condition. 
  In particular, simplicial infinite loop spaces (arising from $[n]
  \mapsto \LoopInfty X_n$) satisfy this condition.
\end{corollary}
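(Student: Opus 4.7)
The plan is to reduce immediately to the previous corollary by observing that a (simplicial) loop space is a (simplicial) grouplike $H$-space.

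First, I would note that for any pointed space $Y$, the loop space $\Omega Y$ carries a natural grouplike $H$-space structure: concatenation of loops provides a multiplication that is associative and unital up to homotopy, and reversal of loops gives homotopy inverses. Naturality means that for any map $f\colon Y \to Y'$, the induced map $\Omega f\colon \Omega Y \to \Omega Y'$ is an $H$-map. Applying this levelwise to the simplicial space $X_\cdot$, the simplicial space $[n]\mapsto \Omega X_n$ is a simplicial grouplike $H$-space: each face and degeneracy is an $H$-map because it is induced by a map of spaces.

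Having recognized $[n]\mapsto \Omega X_n$ as a simplicial grouplike $H$-space, Corollary~\ref{cor:grouplike-H-spaces-pi-star} applies verbatim and yields the $\pi_*$-Kan condition. For the second assertion about $\LoopInfty$, recall that $\LoopInfty \mathbf{X} = \colim_n \Omega^n X_n$ is itself a grouplike $H$-space (indeed an infinite loop space), with multiplication inherited from any of the loop coordinates, and this structure is natural in $\mathbf{X}$. Hence $[n]\mapsto \LoopInfty X_n$ is again a simplicial grouplike $H$-space, and Corollary~\ref{cor:grouplike-H-spaces-pi-star} applies once more.

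The only potential obstacle is making sure that the $H$-space structure on $\Omega X_n$ (or $\LoopInfty X_n$) is compatible with the simplicial structure, but this is automatic from the naturality of the loop-concatenation multiplication: every face and degeneracy map in the simplicial direction is of the form $\Omega$ (resp.\ $\LoopInfty$) applied to an underlying map of spaces (resp.\ spectra), and such maps are always $H$-maps. Thus no additional work beyond invoking the preceding corollary is required.
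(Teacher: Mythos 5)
Your proof is correct and is essentially identical to the paper's: the paper also observes that loop spaces (and infinite loop spaces) are grouplike $H$-spaces and then invokes Corollary~\ref{cor:grouplike-H-spaces-pi-star}. Your added remarks on naturality of the $H$-structure are fine but not needed beyond what the paper records.
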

\begin{proof}
  Loop spaces are grouplike $H$-spaces, so
  Corollary~\ref{cor:grouplike-H-spaces-pi-star} applies. 
\end{proof}

We are finally ready to begin the proof of the main result of this section.
\begin{proof}[Proof of
Theorem~\ref{thm:loop-infty-commutes-with-connective-spectra}]
  Let $X_{\cdot}$ be a simplicial connective spectrum.
  If necessary, begin by functorially replacing $X_{\cdot}$ by a
  weakly equivalent simplicial connective spectrum in which the
  degeneracy maps are cofibrations.
  Let $\overbar{X_n}$ denote the quotient of $X_n$ by the union of the
  images of the
  degeneracy maps. The degeneracy maps are cofibrations, so the strict
  quotient is a homotopy invariant. 
  Since we are in the category of spectra (a stable category), we can
  split off the degenerate elements up to equivalence, giving the
  standard decomposition:
  $$ X_n \simeq \bigvee_{\Surj(n,k)} \overbar{X_k} , $$
  where $\Surj(n,k)$ denotes the surjective maps from $[n]$ to $[k]$
  in $\Delta$.
  Each degeneracy map $s_j$ has an inverse $d_j$, so if each $X_k$ is
  connective, then so is each $\overbar{X_k}$.
  This decomposition lets us identify the cokernel of the inclusion of
  the $(n-1)$ skeletion into the $n$ skeleton. 
  
  We now proceed by induction up the simplicial skeleta. Let $X_{\le n}$
  denote the simplicial $n$-skeleton of $X_{\cdot}$. The inclusion of
  the $(n-1)$-skeleton into the $n$-skeleton gives rise to a
  (levelwise) cofibration sequence 
  \begin{equation}
    \label{eq:simplicial-skeleta}
  X_{\le n-1} \rightarrow X_{\le n} \rightarrow S^n_{\cdot} \wedge
  \overbar{X_n}  .
  \end{equation}
  Since cofibration sequences and fibration sequences are equivalent
  for spectra, and $\LoopInfty$ preserves fibration sequences, 
  $$ \LoopInfty X_{\le n-1} \rightarrow \LoopInfty X_{\le n}
  \rightarrow \LoopInfty ( S^n_{\cdot} \wedge \overbar{X_n} ) $$
  is a levelwise fibration sequence of spaces. Since $\overbar{X_n}$
  is connective, and $X_{\le n-1}$ is connective by induction, 
  the fibration is necessarily surjective (levelwise) on 
  $\pi_0$ (because the long exact sequence of a fibration sequence of
  spectra continues past $\pi_0$ to $\pi_{-1}$, which is $0$ in this
  case). Furthermore, since $\pi_0$ of an infinite loop space is a
  group, this is actually a surjective map of simplicial groups, 
  which is fortunately a fibration (\cite[Exercise~8.2.5,
  p.~262]{Weibel:homological-algebra}).
  By Corollary~\ref{cor:loop-spaces-pi-star}, both $\LoopInfty X_{\le
    n}$ and $\LoopInfty ( S^n_{\cdot} \wedge \overbar{X_n} )$ satisfy the
  $\pi_*$-Kan condition. 
  A theorem of Bousfield and Friedlander
  (Theorem~\ref{thm:Bousfield-Friedlander}) 
  now shows that we have a fibration after realization as well:
  \begin{equation*}
    \realization{\LoopInfty X_{\le n-1} } 
    \rightarrow 
    \realization{\LoopInfty X_{\le n}} 
    \rightarrow 
    \realization{\LoopInfty ( S^n_{\cdot} \wedge   \overbar{X_n} ) } .
  \end{equation*}

  The realization of \eqref{eq:simplicial-skeleta}  is still a
  cofibration sequence, and hence a fibration sequence, so we also
  have a fibration sequence
  \begin{equation*}
    \LoopInfty \realization{X_{\le n-1} } 
    \rightarrow 
    \LoopInfty\realization{ X_{\le n}} 
    \rightarrow 
    \LoopInfty \realization{S^n_{\cdot} \wedge   \overbar{X_n} } .
  \end{equation*}

  Combining these two fibration sequences gives rise to the
  commutative diagram: 
  $$
  \xymatrix{
    \realization{\LoopInfty X_{\le n-1} } \ar[r]\ar[d] &
    \realization{\LoopInfty X_{\le n} } \ar[r]\ar[d] &
    \realization{\LoopInfty ( S^n_{\cdot} \wedge \overbar{X_{n}} ) } \ar[d] 
    \\
    \LoopInfty \realization{X_{\le n-1} } \ar[r] &
    \LoopInfty \realization{X_{\le n} } \ar[r] &
    \LoopInfty \realization{ S^n_{\cdot} \wedge \overbar{X_{n}} }
    }
  $$
  The map on the fibers is an equivalence by induction, and the map on
  the bases is an equivalence by
  Lemma~\ref{lem:loop-infty-commutes-with-suspension}, so the map
  on the total spaces is certainly an isomorphism on $\pi_j$ for $j
  \ge 1$ (using the five lemma). Actually, all of the $\pi_0$ are
  abelian groups, so the five lemma implies the total spaces are
  equivalent. This is obvious for the bottom row, since
  $\pi_0\LoopInfty(X) = \pi_0 X$ is an abelian group. In the top row,
  we use the fact that $\pi_0 \realization{\LoopInfty Y_{\cdot}}$ is
  the quotient of $\pi_0 \LoopInfty Y_0$ by $\pi_0 \LoopInfty Y_1$,
  and the map $\LoopInfty Y_0 \rightarrow \LoopInfty Y_1$ is an
  infinite loop map (and hence a map of groups).
%
\end{proof}
\begin{corollary}
  \label{cor:homogeneous-functors-commute-with-some-realizations}
  Let $\mathbf{C}$ be a spectrum, and let $F(X) =
  \LoopInfty ( \mathbf{C} \wedge X^{\wedge n})_{h \Sigma_n}$ be a
  (homogeneous) functor from spaces to spaces. 
  If $X_\cdot$ is a simplicial space such that $\mathbf{C}
  \wedge X_i^{\wedge n}$ is connective for all $i$, then
  $F(\realization{X_\cdot}) \simeq \realization{F(X_\cdot)}$; that is,
  $F$ commutes with the realization of $X_\cdot$.
\end{corollary}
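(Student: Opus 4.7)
The plan is to peel $F$ apart into three homotopy‐colimit operations (smash power, smash with $\mathbf{C}$, homotopy orbits) followed by the one homotopy limit $\LoopInfty$, and to move realization past each in turn, invoking Theorem~\ref{thm:loop-infty-commutes-with-connective-spectra} at the crucial step.

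First I would set up bookkeeping: let $\mathbf{Y}_\cdot$ be the simplicial spectrum defined by $\mathbf{Y}_k = (\mathbf{C} \wedge X_k^{\wedge n})_{h\Sigma_n}$, so that $F(X_k) = \LoopInfty \mathbf{Y}_k$ on the nose. The connectivity hypothesis says $\mathbf{C}\wedge X_k^{\wedge n}$ is connective for every $k$, and since homotopy orbits are a homotopy colimit over $B\Sigma_n$ and homotopy colimits of connective spectra are connective, each $\mathbf{Y}_k$ is connective. Hence Theorem~\ref{thm:loop-infty-commutes-with-connective-spectra} applies to $\mathbf{Y}_\cdot$ and gives
\[
\realization{F(X_\cdot)} = \realization{[k]\mapsto \LoopInfty \mathbf{Y}_k} \simeq \LoopInfty \realization{\mathbf{Y}_\cdot}.
\]

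Next I would slide realization through the remaining operations, all of which are homotopy colimits in the spectrum variable and hence commute with realization up to natural equivalence:
\[
\realization{\mathbf{Y}_\cdot} = \realization{[k]\mapsto (\mathbf{C}\wedge X_k^{\wedge n})_{h\Sigma_n}} \simeq \bigl(\mathbf{C}\wedge \realization{[k]\mapsto X_k^{\wedge n}}\bigr)_{h\Sigma_n}.
\]
The smashing with the constant $\mathbf{C}$ and the $\Sigma_n$-homotopy orbits both pass through $\realization{\cdot}$ because realization is itself a homotopy colimit and colimits commute. What remains is to identify $\realization{[k]\mapsto X_k^{\wedge n}}$ with $\realization{X_\cdot}^{\wedge n}$. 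I would do this by realizing the $n$-fold multisimplicial space $[k_1,\dots,k_n]\mapsto X_{k_1}\wedge\cdots\wedge X_{k_n}$ in two ways: realizing one variable at a time yields $\realization{X_\cdot}^{\wedge n}$, while realizing the diagonal yields $\realization{[k]\mapsto X_k^{\wedge n}}$. Corollary~\ref{cor:useful-eilenberg-zilber} (the Eilenberg--Zilber equivalence for good multisimplicial spaces) gives these the same value, provided the levels are good; this can be arranged by Lemma~\ref{lem:preserve-goodness} (items 3--4, smash product preserves goodness), after a cofibrant replacement of $X_\cdot$ if necessary.

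Assembling the three equivalences produces
\[
\realization{F(X_\cdot)} \simeq \LoopInfty\bigl(\mathbf{C}\wedge \realization{X_\cdot}^{\wedge n}\bigr)_{h\Sigma_n} = F\bigl(\realization{X_\cdot}\bigr),
\]
as desired. The main obstacle I anticipate is the step that invokes Theorem~\ref{thm:loop-infty-commutes-with-connective-spectra}: one must be sure that levelwise connectivity survives the passage to homotopy $\Sigma_n$-orbits, and one must verify that the simplicial spectrum $\mathbf{Y}_\cdot$ is in good enough shape (degeneracies are cofibrations, or one replaces functorially) for the theorem's hypotheses to apply. The rest is formal manipulation of homotopy colimits and a standard Eilenberg--Zilber identification of $\realization{X_\cdot^{\wedge n}}$ with $\realization{X_\cdot}^{\wedge n}$.
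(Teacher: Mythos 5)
Your argument is correct, and the decisive step --- pushing $\LoopInfty$ past the realization using Theorem~\ref{thm:loop-infty-commutes-with-connective-spectra}, after observing that homotopy orbits do not lower connectivity --- is exactly the step the paper makes. Where you diverge is in how you move the realization past the spectrum-valued part $X \mapsto (\mathbf{C}\wedge X^{\wedge n})_{h\Sigma_n}$: you do this by hand, commuting realization with the homotopy colimits (smash with $\mathbf{C}$, homotopy orbits) and then identifying $\realization{[k]\mapsto X_k^{\wedge n}}$ with $\realization{X_\cdot}^{\wedge n}$ via the Eilenberg--Zilber equivalence for good multisimplicial spaces. The paper instead cites Corollary~\ref{cor:finite-degree-commutes-with-realizations}: this functor to spectra is $n$-excisive and satisfies the limit axiom, hence is its own left Kan extension over finite sets and commutes with realizations of \emph{all} simplicial spaces. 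Your route is more elementary and self-contained for this particular homogeneous functor --- it avoids the entire Chapter~\ref{chap:finite-degree} machinery --- at the cost of the goodness/cofibrancy bookkeeping you correctly flag, plus one point you should make explicit: the Eilenberg--Zilber identification of $\realization{X_\cdot^{\wedge n}}$ with $\realization{X_\cdot}^{\wedge n}$ must be taken $\Sigma_n$-equivariantly (it is, by naturality of the comparison maps under permutation of the simplicial directions) so that it descends to an equivalence of homotopy orbits. The paper's route, by contrast, buys generality: the same citation handles any finite-degree functor to spectra, not just smash powers.
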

\begin{proof}
  Corollary~\ref{cor:finite-degree-commutes-with-realizations} shows
  that functors of finite degree from spaces
  to spectra commute with realizations.
  Proposition~\ref{thm:loop-infty-commutes-with-connective-spectra}
  shows that $\LoopInfty$ commutes with realizations of all simplicial
  connective spectra. The spectrum $\mathbf{C}\wedge X_i^{\wedge n}$ is
  connective, and taking homotopy orbits does not lower connectivity, 
  so the result is an immediate corollary of combining those
  two. 
\end{proof}

\section{Analytic Functors Commute With Highly Connected Realizations}
\label{sec:analytic-realizations}

This section uses the results of the previous two sections 
(\S\ref{sec:analytic-functors-connective} and
\S\ref{sec:loopinfty-commutes-with-certain-realizations}) to show that
analytic functors with the limit axiom commute with realizations of simplicial
$k$-connected spaces, for sufficiently large $k$.

\begin{theorem}
\label{thm:analytic-functor-commutes-with-realization}
Let $F$ be a reduced analytic functor from spaces to spaces satisfying
the limit axiom (\ref{def:limit-axiom}) and the stable excision condition
$E_n(rn-c)$ for all $n$ (as defined in \S\ref{sec:excisive-functors}). 
If $X_\cdot$ is a simplicial $k$-connected
space, with $k \ge \max(r,-c)$, then $F(\realization{X_\cdot}) \simeq
\realization{F(X_\cdot)}$. That is, $F$ commutes with realizations of
simplicial $k$-connected spaces.
\end{theorem}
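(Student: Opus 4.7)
The plan is to reduce the statement to the Goodwillie tower of $F$ and then analyze each stage $P_n F$ separately. Since $k \geq r$, Lemma~\ref{lem:connectivity-F-PnF} gives that for any $k$-connected space $Y$, the natural map $F(Y) \to P_n F(Y)$ has connectivity at least approximately $n(k+1-r)$, which tends to $\infty$. A standard fact about realizations of simplicial $k$-connected spaces is that $\realization{X_{\cdot}}$ is itself $k$-connected, so both $\realization{X_{\cdot}}$ and each $X_i$ lie in the radius of convergence of the Taylor tower. Granting for the moment that $P_n F$ commutes with the realization of $X_{\cdot}$ for every $n$, fix a degree $N$ and choose $n$ so large that $F(Y) \to P_n F(Y)$ is $(N+1)$-connected on every $k$-connected $Y$. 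On $\pi_N$ one then has
$$\pi_N F(\realization{X_{\cdot}}) \cong \pi_N P_n F(\realization{X_{\cdot}}) \cong \pi_N \realization{P_n F(X_{\cdot})} \cong \pi_N \realization{F(X_{\cdot})},$$
where the first uses analyticity, the second is the inductive claim, and the third uses the Realization Lemma (\ref{lem:realization-lemma}) applied to the levelwise $(N+1)$-connected map $F(X_{\cdot}) \to P_n F(X_{\cdot})$. As $N$ is arbitrary, this gives the desired equivalence.

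So the main task is to show by induction on $n$ that $P_n F$ commutes with realizations of levelwise $k$-connected simplicial spaces. The base case $n=0$ is trivial since $F$ is reduced, so $P_0 F \simeq \basept$. For the inductive step I would apply the delooped fibration sequence $P_n F \to P_{n-1} F \to \Omega^{-1} D_n F$ of Theorem~\ref{thm:delooping-Dn} levelwise to $X_{\cdot}$. Because $D_n F$ and its delooping take values in infinite loop spaces of connective spectra (verified below), the base $\Omega^{-1} D_n F(X_i)$ is a connected infinite loop space for each $i$, so Waldhausen's fibration lemma (\ref{lem:Waldhausen-fibration-lemma}) — or equivalently Bousfield--Friedlander (\ref{thm:Bousfield-Friedlander}) via the $\pi_{*}$-Kan condition for grouplike $H$-spaces established in Corollary~\ref{cor:grouplike-H-spaces-pi-star} — guarantees that realization preserves the sequence. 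Comparing to the fibration sequence obtained by applying $P_n F \to P_{n-1} F \to \Omega^{-1} D_n F$ to $\realization{X_{\cdot}}$, the inductive hypothesis identifies the middle terms, the layer identification (below) identifies the right-hand terms, so the homotopy fibers are equivalent, yielding $\realization{P_n F(X_{\cdot})} \simeq P_n F(\realization{X_{\cdot}})$.

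The layer step uses Corollary~\ref{cor:homogeneous-functors-commute-with-some-realizations}: the homogeneous layer $\Omega^{-1} D_n F(X) \simeq \LoopInfty(\Sigma C_n \wedge_{h\Sigma_n} X^{\wedge n})$ commutes with the realization of $X_{\cdot}$ provided $\Sigma C_n \wedge X_i^{\wedge n}$ is connective for every $i$. Here Theorem~\ref{thm:analytic-bounded-below} enters: since $F$ satisfies $E_n(rn-c)$, the coefficient spectrum $C_n$ has $\pi_j = 0$ for $j < c - r(n-1)$, i.e.\ $C_n$ is $(c - r(n-1) - 1)$-connected. A short calculation shows $\Sigma C_n \wedge X_i^{\wedge n}$ has connectivity at least $c + r - 1 + n(k+1-r)$; the hypotheses $k \geq r$ and $k \geq -c$ together force this quantity to be at least $-1$ (the worst case is $n=1$, $k = -c$, where the two conditions are precisely what is needed), so the smash product is connective.

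The hard part will be the bookkeeping of connectivity constants, and making sure the fibrancy hypotheses of the Bousfield--Friedlander / Waldhausen theorem really are satisfied at each inductive step. The combined bound $k \geq \max(r,-c)$ is not cosmetic: $k \geq r$ is needed for the Taylor tower to converge on $\realization{X_{\cdot}}$ and on each $X_i$, while $k \geq -c$ is needed at $n=1$ to ensure that $C_1 \wedge X_i$ is already connective so that the very first layer commutes with realization. Once these connectivity inequalities are checked uniformly in $n$, the induction goes through cleanly and the reduction at the beginning of the argument finishes the proof.
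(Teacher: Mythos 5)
Your proposal is correct and follows essentially the same route as the paper's own proof: use Theorem~\ref{thm:analytic-bounded-below} plus the connectivity of $X_i^{\wedge n}$ to see that the layers (and their deloopings) have connective, in fact connected, values so that Corollary~\ref{cor:homogeneous-functors-commute-with-some-realizations} applies, then induct up the tower via the delooped fibration sequence of Theorem~\ref{thm:delooping-Dn} and Waldhausen's lemma, and finish with the convergence estimate of Lemma~\ref{lem:connectivity-F-PnF}. The only differences are cosmetic (base case $P_0$ versus $P_1$, and a more explicit ``fix $N$, take $n$ large'' passage to the limit), and your connectivity bookkeeping matches the paper's computation $(c-rm)+(m+1)(k+1)\ge m+1$.
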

\begin{proof}
Recall that smashing with a space of connectivity $k$ increases
connectivity by $(k+1)$. By Theorem~\ref{thm:analytic-bounded-below},
the coefficient spectrum $\mathbf{C}_{m+1}$, for $m\ge 0$, has its
bottom nonzero homotopy group in dimension $(c-rm)$. Computing
$\mathbf{C}_{m+1} \wedge X_i^{\wedge (m+1)}$, we find that its bottom
nonzero homotopy group is in dimension 
\begin{align*}
(c-rm)+(m+1)(k+1)
&= (k+c) + (k-r)m + (m+1)
\\
&\ge (0) + (0)m + m+1 ,
\end{align*}
so in particular it is always connected. Therefore, we may apply
Corollary~\ref{cor:homogeneous-functors-commute-with-some-realizations}
to conclude that $D_n F$ commutes with the realization of $X_\cdot$.

We now induct up the Taylor tower to show that each $P_n F$ commutes
with the realization of $X_\cdot$. 
To start the induction, note that $P_1 F = D_1 F$ is connected and
commutes with the realization of $X_\cdot$.  Then suppose inductively
that $P_n F$ is connected and commutes with the realization of
$X_\cdot$. Theorem~\ref{thm:delooping-Dn} says that $P_{n+1} F(X)$ can
be computed as the homotopy fiber of a map $P_n F(X) \rightarrow \Omega^{-1}
D_{n+1} F(X)$. Our connectivity estimate from the previous paragraph
shows that under our hypotheses, $D_{n+1} F$ is a simply connected for
$n\ge 1$ when evaluated on each $X_i$, and commutes with the realization of
$X_\cdot$. Since $D_{n+1} F(X_i)$ is connected, we may apply
Lemma~\ref{lem:Waldhausen-fibration-lemma} to compute $P_{n+1} F(
\realization {X_\cdot})$ as  $\realization{P_{n+1} F(
X_\cdot)}$, so $P_{n+1} F$ also commutes with the realization of
$X_\cdot$. 

The connectivity of the map $F(X) \rightarrow P_n F(X)$ grows with $n$
and the connectivity of $X$, provided that the connectivity of $X$ is
at least $r$; that is, $X$ is within the radius of convergence. This
is the case under our hypotheses, and each $P_n F$ commutes with the
realization of $X_\cdot$, so $F$ must also commute with the
realization of $X_\cdot$.
%
%
\end{proof}



%
%

\chapter{Cotriples For Additive Functors}
\label{chap:cotriples}
The most basic notion of ``degree'' of a functor is something that is
called ``additive degree''. A functor is additive degree $n$ if
$F(\bigvee^n X)$ is determined by $F(\bigvee^k X)$ for $k<n$, in the
sense that $F(\bigvee^n X)$ is the inverse limit of a certain diagram
involving only the $F(\bigvee^k X)$, for $k<n$. The algebraic
intuition for additive degree $n$, or ``$n$-additive'', functors has
the same roots as for 
$n$-excisive functors: a polynomial of degree $n$ is determined by its
values on $n+1$ points; that is, the set $\Set{\bigvee^k S^0 \suchthat
  0\le k\le n}$. The difference
between additive and excisive functors is that if $F$ is only
additive, there need not be any relationship between 
$F(X)$ and $\Omega F(\Sigma X)$. 
Our interest in additive functors stems from the fact that in many
cases 
the difference between $F$ and its additive approximation results from
a standard construction
called a ``cotriple''. Use of this construction provides a spectral
sequence to compute the homotopy groups of the $n$-additive
approximation to a functor, and 
plays an important role in our understanding of $n$-additive functors
in general.
\section{Additivity, Homotopy Fibers, And Special Notation}
\label{sec:additivity-hofib-notation}

In this section, we make precise what we mean by an $n$-additive functor. 

Throughout this chapter, we will mainly be
concerned with cubes made up of coproducts of spaces $X_\alpha$. Let
$T$ be a set, let $\Set{X_\alpha}_{\alpha\in T}$ be a collection of
spaces, 
and define the $T$-cube $\cube{X}$ by
\begin{equation*}
  \cube{X}^{\Set{X_\alpha}}_T (U) = \bigvee_{\alpha\in T-U} X_\alpha  
\end{equation*}
with the inclusion $i: U \hookrightarrow V$ inducing the map $\cube{X}(i)$ given by 
\begin{equation*}
  \cube{X}(i)(X_\alpha) = 
  \begin{cases}
    \ast & \alpha\in V \\
    X_\alpha & \alpha\notin V
  \end{cases}
\end{equation*}
When all of the $X_\alpha$ are the same space $X$, we will write
$\cube{X}^X_T$ for the cube. 
For notational convenience, we will immediately suppress the
dependence of $\cube{X}$ on $X_\alpha$ and $T$ unless it is not clear from
context. Notice that such a cube $\cube{X}$ is equivalent to strongly
co-Cartesian cube, since one could include $X_\alpha$ in the cone over
$X_\alpha$ instead of collapsing it to a single point. The equivalent
cube would 
then be:
\begin{equation*}
  \cube{X}^{\Set{X_\alpha}}_T (U) = \bigvee_{\alpha\in T-U} X_\alpha  \vee
  \bigvee_{\alpha\in U} CX_\alpha .
\end{equation*}

\begin{definition}[$n$-additive]
\label{def:n-additive}
A functor $F$ is $n$-additive if the $(n+1)$-cube $F \cube{X}^X_{n+1}$ 
is Cartesian for all spaces $X$.
\end{definition}

\begin{remark}
We do not require that $F\cube{X}^{\Set{X_\alpha}}_T$ be Cartesian for
arbitrary collections of spaces $X_\alpha$; only those with all
$X_\alpha$ the same space $X$.
\end{remark}

We can rephrase the definition of $n$-additivity as follows: for all
$X$, an $n$-additive functor $F$ gives an equivalence 
$$ F \cube{X}^X_{n+1}(\emptyset) \xrightarrow{\simeq}
\holim_{U\in\Power_0(S)} F \cube{X}^X_{n+1}(U) . $$
Our approach to $n$-additivity will be to break the problem of
understanding this map into two parts: we show when the (homotopy)
fiber of this 
map is contractible, and understand some general conditions under which the
map is surjective on $\pi_0$. These two parts combined allow us to
understand when $F\cube{X}^X_{n+1}$ is Cartesian.
We use the term \emph{homotopy fiber} of a cube to describe
the homotopy fiber of a map like the one above. 

\begin{definition}[Homotopy fiber]
\label{def:homotopy-fiber}
(\cite[1.1]{Cal2}) Let $\cube{X}$ be an $S$-cube of pointed spaces, and
for $T\subset S$, define the topological cube $I^T$ to be the product
of $T$ copies of the unit interval $I=[0,1]$. (When $T=\emptyset$,
this is interpreted as $I^\emptyset = \Set{0}$.)
A point $\Phi \in \hofib \cube{X}$ is a
collection of continuous maps $\Phi_T: I^T \rightarrow \cube{X}(T)$,
one for each subset $T\subset S$, satisfying the two conditions below.
\begin{enumerate}
\item $\Phi$ is natural with respect to $T$. That is, for $U\subset
  T\subset S$, the following diagram commutes:
$$\xymatrix{ I^U
\ar[r]
\ar[d]^{\Phi_U}
&
I^T
\ar[d]^{\Phi_T}
\\
\cube{X}(U)
\ar[r]
&
\cube{X}(T)
}$$
where the upper arrow is the map that takes a function $U\rightarrow
I$ and extends it to a function $T \rightarrow I$ by making it zero on
$T-U$.

\item For each $T\subset S$, the function $\Phi_T$ takes the set of
  points with at least one coordinate having value one,
  $(I^T)_1 := \Set{u\in I^T : \exists_{s\in T} u_s = 1}$, to the
  basepoint in $\cube{X}(T)$.
\end{enumerate}
\end{definition}

This definition of the homotopy fiber of a cube is homeomorphic to
defining the homotopy fiber of an $S$-cube $\cube{X}$ to be the
homotopy fiber of the map 
$ \cube{X}(\emptyset) \rightarrow 
\holim_{U\in\Power_0(S)} \cube{X}(U).$
It also agrees with the construction of the homotopy fiber given
inductively by repeatedly taking homotopy fibers of the structure maps
in a single direction.

\section{Cross Effects}
\label{sec:cross-effects}

The $n$-th cross effect of a functor is a functorial comparison of
$F(\bigvee^n X)$ with $F$ on lower order coproducts of $X$. By taking the
homotopy fiber of maps to smaller coproducts of $X$, the cross effect
``kills off'' their contribution to $F(\bigvee^n X)$, leaving only the
part that does not ``come from'' lower order coproducts of $X$.

\begin{definition}[$cr_n$, $\Perp_n$]
\label{def:crn-perp}
Define the $n^{\text{th}}$ cross effect of a
functor $F$ to be the functor of $n$ variables
\begin{equation*}
  cr_nF(X_1,\ldots,X_n) = \hofib_{U\in\Power(\mathbf{n})}
  F\cube{X}^{\Set{X_i}}_{\mathbf{n}}(U) ,
\end{equation*}
where $\mathbf{n}$ denotes the set $\Set{1,\ldots,n}$.
In this notation, the subscripts of the $X_\alpha$ on the left correspond
to the $X_\alpha$ in $\cube{X}$ on the right.

Let $\Perp_n F(X) = cr_nF(X,\ldots,X)$ be the diagonal of the
$n^{\text{th}}$ cross effect of $F$ evaluated at $X$. Denote the
iteration of this functor by $\Perp_n^{(a)}F$. We will later show that
$\Perp_n$ is part of a cotriple.

\emph{Abuse of notation}: At some points in 
Section~\ref{sec:perp-F-zero}, we need to discuss
$\Perp_n F$ as a functor of $n$ variables. At those points, we
will write $\Perp_n F(X, \ldots, X)$, understanding that this is
the same as $cr_n F(X, \ldots, X)$, and hope that this causes no
confusion. 
\end{definition}

Actually, the vanishing of the cross effect $cr_n$ for all choices of inputs
is equivalent to the vanishing of $\Perp_n$.
\begin{lemma}
The functor $cr_n F(X_1,\ldots,X_n)$ is contractible for all choices
of inputs $X_i$ if and only if $\Perp_n F(X)$ is contractible for all
$X$.
\end{lemma}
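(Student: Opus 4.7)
The plan is to exploit the fact that $\Perp_n F$ and $cr_n F$ are related by a wedge-retraction argument. One direction is trivial: if $cr_n F(X_1,\ldots,X_n)$ is contractible for every tuple of inputs, then specializing to $X_1 = \cdots = X_n = X$ shows that $\Perp_n F(X) = cr_n F(X,\ldots,X)$ is contractible. So all the work is in the reverse direction.

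For the nontrivial direction, I would fix an arbitrary tuple $(X_1,\ldots,X_n)$ and let $Y = X_1 \vee \cdots \vee X_n$. The key construction is a retraction at the level of $\mathbf{n}$-cubes
$$
\cube{X}^{\{X_i\}}_{\mathbf{n}} \xrightarrow{\ \iota\ } \cube{X}^{Y}_{\mathbf{n}} \xrightarrow{\ p\ } \cube{X}^{\{X_i\}}_{\mathbf{n}},
$$
where at each vertex $U$ the map $\iota$ sends the summand $X_\alpha$ (for $\alpha \in \mathbf{n} - U$) into the $\alpha$-th copy of $Y$ via the wedge inclusion $X_\alpha \hookrightarrow Y$, and $p$ sends the $\alpha$-th copy of $Y$ to $X_\alpha$ via the collapse map $Y \twoheadrightarrow X_\alpha$ (identity on $X_\alpha$, trivial on the other summands). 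A short compatibility check shows $\iota$ and $p$ commute with the structure maps of the cubes (trivial summands stay trivial; surviving summands are sent identically), and by construction $p \circ \iota$ is the identity.

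Applying the functor $F$ preserves this strict retraction of cubes, and then applying the homotopy fiber construction from Definition~\ref{def:homotopy-fiber} (which is functorial in maps of cubes) yields a retraction of spaces
$$
cr_n F(X_1,\ldots,X_n) \longrightarrow cr_n F(Y,\ldots,Y) = \Perp_n F(Y) \longrightarrow cr_n F(X_1,\ldots,X_n)
$$
whose composite is the identity. Since any retract of a contractible space is contractible, the hypothesis $\Perp_n F(Y) \simeq \ast$ forces $cr_n F(X_1,\ldots,X_n) \simeq \ast$, as desired.

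The only point that requires any care is verifying that the vertex-wise maps $\iota$ and $p$ really do assemble into maps of cubes; everything after that is formal. I expect no serious obstacle, since the retraction is strict rather than just up to homotopy, so no fibrancy/cofibrancy issues or derived-functor subtleties enter.
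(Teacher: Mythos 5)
Your proposal is correct and is essentially the paper's own argument: the paper also sets $X = \bigvee_i X_i$ and observes that $cr_n F(i_1,\ldots,i_n)$ is a section of $cr_n F(p_1,\ldots,p_n)\colon \Perp_n F(X) \rightarrow cr_n F(X_1,\ldots,X_n)$, so the latter is a retract of a contractible space. Your version merely makes explicit the cube-level maps underlying that retraction, which is a fine (if slightly more verbose) way to present the same idea.
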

\begin{proof}
Since $\Perp_n F(X) = cr_n F(X,\ldots,X)$, one implication is trivial.
Now suppose $\Perp_n F(X)$ is contractible for all $X$. Given
$\Set{X_i}_{i=1}^{i=n}$, let $X = \bigvee X_i$. Let $i_j: X_i
\rightarrow X$ denote the inclusion of the $j^{\text{th}}$ factor, and
let $p_j$ denote the projection onto the $j^{\text{th}}$ factor. 
The map 
$$
cr_n F(p_1,\ldots,p_n): 
\Perp_n F(X)
=
cr_n F(X,\ldots,X)
\rightarrow
cr_n F(X_1,\ldots,X_n)
$$
has a section $cr_n F(i_1,\ldots,i_n)$, so if $\Perp_n F(X) \simeq \basept$,
then $cr_n F(X_1,\ldots,X_n)\simeq \basept$ (\emph{e.g.}, because
$\pi_*(Id)$ factors through $0$).
\end{proof}

Since homotopy inverse limits commute, $\Perp_n^{(a)} F(X)$ may be
computed by 
\begin{equation*}
  \Perp_n^{(a)}F = \hofib_{U\in\Power(\mathbf{n})^a} F\cube{X}(U) .
\end{equation*}

\begin{example}
\label{example:perp2-cube}
  The second iterated cross effect, $\Perp_n^2 F(X)$, is
  \begin{align*}
    \Perp_n^{(2)} F(X)
    &=
    \hofib_{V_2 \in\Power(\mathbf{n})} \left( \Perp_n F \right) 
  \cube{X}^X_{\mathbf{n}}(V_2)
    \\
    &=
    \hofib_{V_2 \in \Power(\mathbf{n})}
    \left( 
      \hofib_{V_1 \in \Power(\mathbf{n})}
      F
      \cube{X}^{\cube{X}^X_{\mathbf{n}}(V_2)} (V_1)
    \right)
    \\
    &=
    \hofib_{\substack{V_1\in\Power(\mathbf{n}) \\ V_2 \in \Power(\mathbf{n})}} 
    F 
    \cube{X}^{\cube{X}^X_{\mathbf{n}}(V_2)}_{\mathbf{n}}(V_1)
  \end{align*}
  To decode the cube $\cube{X}$ that appears, recall that the
  superscript denotes the space from which the coproducts are formed,
  so we have:
  \begin{align*}
    \cube{X}^{\cube{X}^X_\mathbf{n}(V_2)}_\mathbf{n}(V_1)
    &=
    \bigvee_{v_1\not\in V_1} \cube{X}^X_\mathbf{n}(V_2)
    \\
    &=
    \bigvee_{v_1\not\in V_1}
    \bigvee_{v_2\not\in V_2}
    X
  \end{align*}
  From this example, the general form of the cubes
  used to compute $\Perp^{(a)}$ for $a>2$ should be clear.
\end{example}

In order to work with cross effects, we need to establish certain
basic properties. One of the most fundamental is that all cross
effects can be built up by iterating the second cross effect. 

In the next few results, we consider cross-effect cubes as functorial in the
spaces that are used to create them, and write $\cube{X}[X_1,\ldots]$
to indicate the cube $\cube{X}$ built using the spaces $X_1$, \emph{etc}.

Recall that
the $\mathbf{n}$-cube defining $cr_n F(Y_1,\ldots,Y_n)$ is given
by 
$$\cube{Y}[Y_1,\ldots,Y_n](U)
= 
F \left( \bigvee_{b \not\in U} Y_b \right) .$$
Consider the result of applying the functor $cr_2(-)(X_1,X_2)$ in
the first variable of this functor. That gives the $\mathbf{2}$-cube
of $\mathbf{n}$-cubes:
\begin{equation*}
    \cube{Z}[X_1,X_2](V)
    = \cube{Y}(U)[\bigvee_{v \not\in V} X_v,Y_2,\ldots,Y_n] .
\end{equation*}
Now for brevity, let $S = \mathbf{n}\amalg\mathbf{2}$, and let $T = S -
\Set{1}\amalg\emptyset$.
The cube $\cube{Z}$ can be written as an $S$-cube by 
defining $X_{k+1} = Y_k$, for
$k=2,\ldots, n$, so the cube is:
\begin{equation*} 
\cube{W}[X_1,\ldots,X_{n+1}](U\amalg V)
=
    \begin{cases}
    F\left( 
    \bigvee_{v\not\in V} X_v
    \vee
    \bigvee_{u\not\in U\cup\Set{1}} X_{u+1}
    \right)
    &
    \text{$1\not\in U$}
    \\
    F\left( 
    \bigvee_{u\not\in U} X_{u+1}
    \right)
    &
    \text{$1 \in U$}
    \end{cases}
\end{equation*}
We will immediately suppress writing the $[X_1,\ldots,X_{n+1}]$ except
where the spaces $X_i$ are relevant.

There are two things to note. 
First, the cube used to compute
$cr_{n+1}F(X_1,\ldots,X_{n+1})$ is exactly the $(n+1)$-cube 
$$\left\{ A \mapsto \cube{W}[X_1,\ldots,X_{n+1}](A) :
  A \subset T \right\} ,$$
which Goodwillie denotes $\partial^T \cube{W}[X_1,\ldots,X_{n+1}]$.
Second, when $1\in U$, the sub-cube
$\cube{W}(U\amalg -)$ is a constant cube, so the other $(n+1)$-cube,
$\partial_{\Set{1}\amalg\emptyset} \cube{W}$,
that makes up $\cube{W}$ consists of a cube of constant $2$-cubes.

\begin{lemma}
\label{lem:cr-n+1-cr-2-cr-n}
If the $(n+2)$-cube used to compute the second cross effect of $cr_n
F(Y_1,\ldots,Y_n)$ in a single variable, \emph{e.g.}, $Y_1$, is
Cartesian, then the $(n+1)$-cube 
defining $cr_{n+1} F(X_1,\ldots,X_{n+1})$ is Cartesian.
\end{lemma}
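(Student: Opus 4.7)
The plan is to regard the $(n+2)$-cube $\cube{W}$ as a map (a $1$-dimensional cube) of $(n+1)$-cubes by splitting along the single coordinate indexed by $\Set{1}\amalg\emptyset\subseteq S$, that is, according to whether $1\in U$. The initial face of this splitting is $\cube{A} := \partial^T \cube{W}$, which the discussion preceding the lemma identifies with the $(n+1)$-cube defining $cr_{n+1}F(X_1,\ldots,X_{n+1})$; the terminal face is $\cube{B} := \partial_{\Set{1}\amalg\emptyset}\cube{W}$, which the excerpt observes is constant in the $V\in\Power(\mathbf{2})$ direction.

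I will use the standard fact that the total homotopy fiber of a cube can be computed one coordinate direction at a time. Applied to the $1$-versus-$(n+1)$ splitting above, this yields
$$\hofib(\cube{W}) \;\simeq\; \hofib\bigl(\hofib(\cube{A}) \longrightarrow \hofib(\cube{B})\bigr).$$
Since $\cube{W}$ is Cartesian by hypothesis, $\hofib(\cube{W})\simeq\basept$, so the induced map $\hofib(\cube{A})\to\hofib(\cube{B})$ is a weak equivalence.

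Next I show $\hofib(\cube{B})\simeq\basept$. View $\cube{B}$ as an $(\mathbf{n}-\Set{1})$-cube of $\mathbf{2}$-cubes in the $V$ direction. By the constancy observation, each such $\mathbf{2}$-subcube has all four vertices equal and all structure maps equal to the identity, so it is trivially Cartesian; iterating the total fiber in the remaining $(\mathbf{n}-\Set{1})$ directions on a cube whose values are contractible then yields $\hofib(\cube{B})\simeq\basept$. Combined with the preceding paragraph this gives $\hofib(\cube{A})\simeq\basept$, which is exactly the statement that $\cube{A}=\partial^T\cube{W}$ is Cartesian, as desired.

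The only non-cosmetic step is the iterated formula for the total homotopy fiber used above; this is standard but should be cited or verified. Apart from that, the argument is purely bookkeeping that exploits the two identifications of $\partial^T \cube{W}$ and $\partial_{\Set{1}\amalg\emptyset} \cube{W}$ already recorded immediately before the lemma.
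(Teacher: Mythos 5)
Your decomposition of $\cube{W}$ into the two $(n+1)$-dimensional faces $\cube{A}=\partial^T\cube{W}$ and $\cube{B}=\partial_{\Set{1}\amalg\emptyset}\cube{W}$, and your computation that the total fiber of the constant face $\cube{B}$ is contractible, are both fine; the iterated-fiber formula you invoke is the one recorded after Definition~\ref{def:homotopy-fiber}. The problem is your final sentence. What the argument actually produces is $\hofib(\cube{A})\simeq\basept$, i.e.\ contractibility of the total homotopy fiber of the $cr_{n+1}$-cube. That is \emph{not} ``exactly the statement'' that the cube is Cartesian: Cartesian means the map $\cube{A}(\emptyset)\rightarrow\holim_{\Power_0(T)}\cube{A}$ is an equivalence, whereas contractibility of its homotopy fiber over the basepoint says nothing about surjectivity on $\pi_0$. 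This is not a pedantic distinction in this paper --- Section~\ref{sec:fiber-contractible-Cartesian} exists precisely because the two notions differ, and the lemma carries no connected-value or group-value hypotheses under which they could be identified. (A smaller instance of the same slip occurs earlier: from $\hofib(\cube{W})\simeq\basept$ you infer that $\hofib(\cube{A})\rightarrow\hofib(\cube{B})$ is a weak equivalence, which again needs $\pi_0$-surjectivity; that step is repairable by first proving $\hofib(\cube{B})\simeq\basept$ and then reading off $\hofib(\cube{A})\simeq\hofib(\cube{W})$, but the repaired version still only yields contractibility of the total fiber.)

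The paper's proof avoids this issue entirely by working with the homotopy inverse limits themselves rather than with total fibers: it shows that the restriction $\holim_{\Power_0(S)}\cube{W}\rightarrow\holim_{\Power_0(T)}\cube{W}$ is an equivalence, via a cofinality argument through an intermediate category together with the observation that the vertex $\Set{1}\amalg\emptyset$ contributes nothing because $\cube{W}(\Set{1}\amalg-)$ is constant. Since $\cube{W}$ and $\cube{A}$ share the initial vertex, this identifies the two comparison maps up to equivalence and gives the Cartesian conclusion honestly --- indeed it gives the ``if and only if,'' which is what Corollary~\ref{cor:perp-n+1-perp-2-perp-n} later requires. To salvage your route you would need a separate argument controlling $\pi_0$ of $\holim_{\Power_0(T)}\cube{A}$, at which point you are essentially reproducing the holim comparison.
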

\begin{proof}
As discussed above, the cube $\cube{W}[X_1,\ldots,X_{n+1}]$ is the
$(n+2)$-cube that is used to compute the 
second cross effect with respect to $X_1$ and $X_2$ of the functor
$cr_n F(-,X_3,\ldots,X_{n+1})$, and
$\partial^T\cube{W}[X_1,\ldots,X_{n+1}]$ is the $(n+1)$-cube used to
compute $cr_n F(X_1,\ldots,X_{n+1})$, so 
we need to establish that $\cube{W}$ is Cartesian if and only if
$\partial^T \cube{W}$ is Cartesian. We will do this by showing that
the bottom arrow on the following commutative diagram is an
equivalence, and hence if either one of the two vertical maps is an
equivalence, then so is the other.
$$
\xymatrix{
\cube{W}(\emptyset)
\ar[d] \ar[dr]
&
\\
{ \holim_{\Power_0(S)} \cube{W}}
\ar[r]
&
{\holim_{\Power_0(T)} W}
}$$
We will  compare the homotopy inverse
limits of $\cube{W}$ over $\Power_0(S)$ and $\Power_0(T)$ in two stages. 
Let $\cat{R} = \Power_0(S) - \Set{1}\amalg\emptyset$ be the category
$\Power_0(S)$ without the object $\Set{1}\amalg\emptyset$. The
inclusions $\Power_0{T} \hookrightarrow \cat{R} \hookrightarrow
\Power_0{S}$ induce maps
\begin{equation}
\label{eq:holim-cr2-crn}
\holim_{\Power_0(S)} \cube{W}
\rightarrow
\holim_{\cat{R}} \cube{W}
\rightarrow
\holim_{\Power_0(T)} \cube{W}
.
\end{equation}
We will show that both of these maps are equivalences.
The right hand map in \eqref{eq:holim-cr2-crn} is an equivalence by 
\cite[\S{}XI.9,
Theorem~9.2]{Bousfield-Kan:homotopy-limits-completions-and-localizations}
because $\Power_0{T}$ is left cofinal in $\cat{R}$. To verify left
cofinality, let $U\amalg V \in\Obj(\cat{R})$ (that is, $\Set{1}\amalg
\emptyset \not= U \amalg V \subset S$).  The set $(U-\Set{1}) \amalg V$
is in $\Power_0(T)$ (because the restricton on $U\amalg V$ guarantees
that this is not $\emptyset \amalg\emptyset$), 
so the category $(\Power_0(T) \rightarrow
\cat{R})/U\amalg V$ contains the object $(U-\Set{1}) \amalg V$ with the
inclusion map $(U-\Set{1}) \amalg V \rightarrow U \amalg V$. Hence
this category is nonempty. 
Morphisms in $\Power_0(-)$ are inclusions of subsets, 
so there is at most one morphism between any two
objects; hence $U\amalg V \rightarrow U\amalg V$ is the terminal
object in this category, and it is contractible.
The left hand map in \eqref{eq:holim-cr2-crn} is an equivalence for
reasons particular to the cube $\cube{W}$, as we will now show.
Recall that a homotopy inverse limit of a functor $\cube{W}$ over a
category $\cat{D}$ is the space of maps $\Map_{\cat{D}}
(\realization{\cat{D}/-}, \cube{W}(-))$. Let $i: U\rightarrow V$ be a
morphism in $\cat{D}$, and let $\phi\in \Map_{\cat{D}}
(\realization{\cat{D}/-}, \cube{W}(-))$. Then the components $\phi_U$
and $\phi_V$ cause the following diagram to commute:
$$\xymatrix{
\realization{\cat{D}/U}
\ar[r]^{\phi_U}
\ar[d]^{\realization{\cat{D}/i}}
&
\cube{W}(U)
\ar[d]^{\cube{W}(i)}
\\
\realization{\cat{D}/V}
\ar[r]^{\phi_V}
&
\cube{W}(V)
}$$
Consider the case when $U=\Set{1}\amalg\emptyset$ and $V =
\Set{1}\amalg\Set{1}$. 
Recall that the $\mathbf{2}$-cube $\cube{W}(\Set{1}\amalg -)$ is
constant, so in particular for the inclusion $i: \Set{1}\amalg\emptyset
\hookrightarrow \Set{1}\amalg\Set{1}$, the map $\cube{W}(i)$ is the
identity.
Then the commutative square above shows that $\phi_{U} =
\cube{W}(i)^{-1} \circ \phi_V \circ \realization{\cat{D}/i}$. That is,
given a $\phi_{\Set{1}\amalg\Set{1}}$, there is a unique
$\phi_{\Set{1}\amalg\emptyset}$ that corresponds to it. This means
that the restriction map from $\holim_{\Power_0(S)}$ to
$\holim_{\cat{R}}$ is an isomorphism since the only map that is in the
former that is not in the latter is $\phi_{\Set{1}\amalg\emptyset}$.
\end{proof}

\begin{corollary}
\label{cor:crn-from-cr2}
For $n\ge 2$, the $(n+1)^{\text{st}}$ cross effect $cr_{n+1} F(X_1, \ldots, X_{n+1})$ is equivalent to
the iterated cross effect
$cr_2 (cr_n F(X_1, \ldots, X_{n-1}, -))(X_{n},X_{n+1})$.
\end{corollary}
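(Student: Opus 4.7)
The plan is to read off this corollary directly from the content established in the proof of Lemma~\ref{lem:cr-n+1-cr-2-cr-n}, reinterpreting that lemma's Cartesian-ness statement as an equivalence of total homotopy fibers (which is what cross effects actually are).

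First I would unfold the right-hand side. By definition $cr_2 G(Y_1, Y_2)$ is the total homotopy fiber of a $\mathbf{2}$-cube, so $cr_2(cr_n F(X_1, \ldots, X_{n-1}, -))(X_n, X_{n+1})$ is the total homotopy fiber of a $\mathbf{2}$-cube whose vertices are themselves total homotopy fibers of $\mathbf{n}$-cubes. Since homotopy inverse limits commute, this assembles into the total homotopy fiber of a single $(n+2)$-cube, which I would identify with the cube $\cube{W}[X_1,\ldots,X_{n+1}]$ constructed in the discussion immediately preceding Lemma~\ref{lem:cr-n+1-cr-2-cr-n} (indexed by $S = \mathbf{n}\amalg\mathbf{2}$).

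Next I would verify that the sub-cube $\partial^T \cube{W}$ (indexed by subsets of $T = S - \Set{1}\amalg\emptyset$) agrees vertex-by-vertex with the $(n+1)$-cube $\cube{Y}^{\{X_i\}}_{\mathbf{n+1}}$ defining $cr_{n+1} F(X_1, \ldots, X_{n+1})$; this is exactly the identification already noted in the paper after the definition of $\cube{W}$, and amounts to a direct comparison of the formulas for the wedge summands at each vertex. In particular the initial vertices coincide: $\cube{W}(\emptyset) = \partial^T\cube{W}(\emptyset) = F(X_1 \vee \cdots \vee X_{n+1})$.

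The final step is the real content. The proof of Lemma~\ref{lem:cr-n+1-cr-2-cr-n} shows that the restriction map
$$\holim_{\Power_0(S)} \cube{W} \xrightarrow{\simeq} \holim_{\Power_0(T)} \cube{W}$$
is an equivalence: the sub-cube $\partial_{\Set{1}\amalg\emptyset}\cube{W}$ is constant in the $\mathbf{2}$-direction, so the missing vertex $\Set{1}\amalg\emptyset$ is redundant and can be dropped, and a left cofinality argument then passes from the resulting intermediate category $\cat{R}$ to $\Power_0(T)$. Taking the homotopy fiber of each side over the shared initial vertex $\cube{W}(\emptyset)$ converts this equivalence of holims into the desired equivalence
$$cr_2(cr_n F(X_1, \ldots, X_{n-1}, -))(X_n, X_{n+1}) \simeq cr_{n+1} F(X_1, \ldots, X_{n+1}).$$
The only real obstacle is bookkeeping: aligning the slot of the ``$-$'' in $cr_n F(X_1, \ldots, X_{n-1}, -)$ with the coordinate singled out in $\cube{W}$, and confirming that the identification of initial vertices is compatible with the restriction of holims. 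With those identifications made, the corollary is formally immediate from (the proof of) the lemma; no new estimates or constructions are needed.
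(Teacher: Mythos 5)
Your proposal is correct and rests on essentially the same foundation as the paper's proof: both use the $(n+2)$-cube $\cube{W}[X_1,\ldots,X_{n+1}]$ and the degeneracy of $\cube{W}$ in the $\Set{1}\amalg\emptyset$ direction, and differ only in how the final equivalence is extracted. You reuse the computation inside the proof of Lemma~\ref{lem:cr-n+1-cr-2-cr-n} (the equivalence $\holim_{\Power_0(S)}\cube{W}\rightarrow\holim_{\Power_0(T)}\cube{W}$, followed by taking fibers over the common initial vertex $\cube{W}(\emptyset)$), whereas the paper writes $\cube{W}$ as a map of $(n+1)$-cubes $\partial^T\cube{W}\rightarrow\partial_{\Set{1}\amalg\emptyset}\cube{W}$ and notes that the second has contractible total fiber; the two mechanisms are interchangeable.
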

\begin{proof}
As noted prior to Lemma~\ref{lem:cr-n+1-cr-2-cr-n}, the $(n+2)$-cube
$\cube{W}$ that computes the iterated cross effect can be written as a
$1$-cube of $(n+1)$-cubes: 
$$\partial^T \cube{W} 
\rightarrow 
\partial_{\Set{1}\amalg \emptyset} \cube{W}.$$
The cube $\partial^T \cube{W}$ is exactly the cube used to define
the $(n+1)^{\text{st}}$ cross-effect of $F$, so 
$$\hofib \partial^T \cube{W} = cr_{n+1} F(X_1,\ldots,X_{n+1}) ,$$
and the cube 
$\partial_{\Set{1}\amalg \emptyset} \cube{W}$ is a cube of constant
$2$-cubes, so
$$\hofib \partial_{\Set{1}\amalg \emptyset} \cube{W} \simeq \basept.$$
Computing $\hofib \cube{W}
= cr_2 (cr_n F(-,X_3,\ldots,X_{n+1}))(X_{1},X_{2})$ 
by taking the homotopy fiber of these
homotopy fibers gives us a natural map 
$$cr_2 (cr_n F(-,X_3,\ldots,X_{n+1}))(X_{1},X_{2})
\xrightarrow{\simeq} cr_{n+1} F(X_1,\ldots,X_{n+1}).$$
\end{proof}

\begin{corollary}
  \label{cor:perp-n+1-perp-2-perp-n}
  If the $(n+1)$-cube defining $cr_{n+1} F(X_1,\ldots,X_{n+1})$ is
  Cartesian, then
  taking the second cross effect with respect to any single $X_i$ of
  the functor $cr_{n} F(X_1,\ldots,X_n)$
  results in a Cartesian $2$-cube. 
\end{corollary}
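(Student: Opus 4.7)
The plan is to observe that the proof of Lemma~\ref{lem:cr-n+1-cr-2-cr-n} actually establishes an ``if and only if,'' so the present corollary is essentially its converse. Concretely, the commutative triangle
\[
\cube{W}(\emptyset) \longrightarrow \holim_{\Power_0(S)} \cube{W} \xrightarrow{\simeq} \holim_{\Power_0(T)} \cube{W}
\]
constructed there has its horizontal arrow proved to be a weak equivalence. The left vertical is the map whose being an equivalence says that $\cube{W}$ is Cartesian, while the diagonal composite is the corresponding map for $\partial^T\cube{W}$, which by construction is the $(n+1)$-cube defining $cr_{n+1} F(X_1,\ldots,X_{n+1})$. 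Hence $\cube{W}$ is Cartesian if and only if $\partial^T\cube{W}$ is, and in particular the hypothesis of the corollary forces $\cube{W}$ to be Cartesian.

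Next I would interpret what Cartesian-ness of $\cube{W}$ says once it is viewed as a $\mathbf{2}$-cube of $\mathbf{n}$-cubes. Splitting the indexing set $S = \mathbf{n}\amalg\mathbf{2}$, for each fixed $V\subset\mathbf{2}$ the $n$-cube $U\mapsto \cube{W}(U\amalg V)$ is, by the explicit construction in Lemma~\ref{lem:cr-n+1-cr-2-cr-n}, exactly the defining cube for $cr_n F(\bigvee_{v\notin V} X_v,\, X_3,\ldots,X_{n+1})$, whose total homotopy fiber is that cross effect. Since iterated total homotopy fibers of cubes commute, $\cube{W}$ is Cartesian if and only if the $\mathbf{2}$-cube
\[
V \longmapsto cr_n F\!\left(\textstyle\bigvee_{v\notin V} X_v,\, X_3,\ldots,X_{n+1}\right), \qquad V\subset \mathbf{2},
\]
of vertex-wise $n$-cube hofibers is Cartesian. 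This last $\mathbf{2}$-cube is precisely the one obtained by taking the second cross effect of $cr_n F(-,X_3,\ldots,X_{n+1})$ in its first variable, which gives the conclusion for $i=1$.

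For the second cross effect with respect to any other variable $X_i$, I would invoke the manifest symmetry of the defining cube of $cr_n F$ under permutations of its arguments and repeat the argument verbatim with coordinates relabeled so that $X_i$ plays the role of the first variable. The only genuinely non-formal input is the equivalence $\holim_{\Power_0(S)}\cube{W}\simeq \holim_{\Power_0(T)}\cube{W}$, which has already been secured in Lemma~\ref{lem:cr-n+1-cr-2-cr-n} via a cofinality argument combined with the constancy of the subcube $\cube{W}(\Set{1}\amalg-)$; everything else is a routine manipulation of iterated homotopy fibers, so I expect no serious obstacle.
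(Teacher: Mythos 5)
Your proposal follows essentially the same route as the paper: first upgrade the hypothesis to Cartesian-ness of the $(n+2)$-cube $\cube{W}$ using the equivalence $\holim_{\Power_0(S)}\cube{W}\simeq\holim_{\Power_0(T)}\cube{W}$ established in the proof of Lemma~\ref{lem:cr-n+1-cr-2-cr-n}, then pass to the $\mathbf{2}$-cube obtained by taking total fibers of the inner $\mathbf{n}$-cubes. The one place where your justification is too thin is that last step. The commuting of iterated total homotopy fibers only identifies the \emph{total fiber} of $\cube{W}$ with that of the $2$-cube of vertexwise fibers, and for cubes of spaces ``total fiber contractible'' is strictly weaker than ``Cartesian'' (this is exactly the $\pi_0$ issue the paper has to work around in \S\ref{sec:fiber-contractible-Cartesian}). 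The implication you actually need --- that Cartesian-ness of the big cube descends to the cube of total fibers --- is Lemma~\ref{lem:big-cube-Cartesian-fiber-cube-Cartesian}, whose proof goes through the holim decomposition of Proposition~\ref{prop:Goodwillie-decompose-holim} rather than a formal fiber-commutation argument; citing that lemma is precisely what the paper's proof does. Note also that your ``if and only if'' at that point is an overclaim: only the forward direction is established there, though only that direction is needed. With the citation corrected, the argument (including the symmetry reduction to $i=1$) is fine.
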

\begin{proof}
  Apply Lemma~\ref{lem:big-cube-Cartesian-fiber-cube-Cartesian} to the
  result of the preceding Lemma~\ref{lem:cr-n+1-cr-2-cr-n} to
  reduce from an $(n+2)$-cube to a
  $2$-cube by taking fibers to compute the space $cr_n$ from the
  $n$-cube defining it.
\end{proof}

Another important feature of cross effects is that the cubes from
which they are built have section maps to every structure map in the
cube. This means that they are much nicer than arbitrary cubes; in
particular, the homotopy group $\pi_k$ of the total fiber can be
computed from $\pi_k$ of the vertices of the cube. 

\begin{hypothesis}[Compatible sections to structure maps.]
\label{hypothesis:compatible-sections}
We say that a $T$-cube $\cube{X}$ has \emph{compatible sections to all
structure maps} if for each inclusion of subsets  $i_{U,V}: U
\hookrightarrow V$, there exists a section map $s_{V,U}: \cube{X}(V)
\rightarrow \cube{X}(U)$, and furthermore these section maps compose
so that $s_{W,V} \circ s_{V,U} = s_{W,U}$.
\end{hypothesis}

\begin{lemma}
\label{lem:crn-cube-has-sections}
The cubes $\cube{X}_T^{\Set{X_i}}$ used to construct the cross effects
satisfy the compatible sections
hypothesis (\ref{hypothesis:compatible-sections}).
\end{lemma}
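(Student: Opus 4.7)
The plan is to exhibit the sections explicitly and verify they compose correctly; there is no real obstacle here since the cube is built out of wedges and the structure maps are the canonical projections onto sub-wedges, which have obvious section maps given by inclusion of wedge summands.

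First I would unwind the definitions. By construction, for $U \subset V$ in $\Power(T)$, the structure map $\cube{X}(i_{U,V}): \cube{X}(U) \to \cube{X}(V)$ sends the wedge summand $X_\alpha$ (for $\alpha \in T-U$) to itself when $\alpha \in T-V$, and crushes it to the basepoint when $\alpha \in V-U$. Explicitly,
\begin{equation*}
\cube{X}(i_{U,V}): \bigvee_{\alpha \in T-U} X_\alpha \longrightarrow \bigvee_{\alpha \in T-V} X_\alpha
\end{equation*}
is the identity on each summand indexed by $\alpha \in T-V$ and the constant map to the basepoint on the remaining summands.

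Next, I would define the candidate section $s_{V,U}: \cube{X}(V) \to \cube{X}(U)$ to be the natural inclusion of wedge summands induced by the containment $T-V \subset T-U$; that is, the map that sends the summand $X_\alpha$ of $\bigvee_{\alpha \in T-V} X_\alpha$ to the summand with the same index in $\bigvee_{\alpha \in T-U} X_\alpha$. Since wedge summands are the canonical retracts of a wedge, this is well-defined and basepoint-preserving. Checking the section property is a direct computation: the composite $\cube{X}(i_{U,V}) \circ s_{V,U}$ sends each summand $X_\alpha$ (with $\alpha \in T-V$) to its image in $\bigvee_{\alpha \in T-U} X_\alpha$, and then (since $\alpha \in T-V$ means $\alpha \notin V-U$) back to the same summand $X_\alpha$, giving the identity on $\cube{X}(V)$.

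Finally, I would verify the compatibility condition. For $U \subset V \subset W$, both $s_{V,U}\circ s_{W,V}$ and $s_{W,U}$ are maps $\bigvee_{\alpha \in T-W} X_\alpha \to \bigvee_{\alpha \in T-U} X_\alpha$, and each is nothing but the inclusion of wedge summands coming from the chain of containments $T-W \subset T-V \subset T-U$; these agree summand-by-summand, proving the transitivity of the family $\{s_{V,U}\}$. This completes the verification that Hypothesis~\ref{hypothesis:compatible-sections} holds. The only mild care needed is to respect basepoints and to recall that in our setting wedges are homotopically meaningful (our spaces are assumed nondegenerately based), so these canonical summand inclusions are honest maps in the category under discussion.
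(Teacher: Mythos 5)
Your proof is correct and follows the same route as the paper: the section $s_{V,U}$ is the inclusion of wedge summands induced by $T-V\subset T-U$, and compatibility is the evident transitivity of these inclusions. You have simply spelled out the verification that the paper declares ``easy to see.''
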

\begin{proof}
Explicitly, given $U\subset V$ and the induced
projection $\bigvee_{u \notin U} X_u \rightarrow \bigvee_{v \notin V}
X_v$, has a section map that is the identity on each $X_v$ for $v
\notin V$ (by hypothesis, $U \subset V$, so if $v$ is not in $V$, then
$v$ is also not in $U$).
It is easy to see that these are all compatible in the sense
of~\ref{hypothesis:compatible-sections}. 
\end{proof}

\begin{lemma}
\label{lem:pi-k-perp-is-perp-pi-k}
The group (or set) $\pi_k \Perp F(X)$ is isomorphic to the iterated
fiber of the cube of groups (or sets) $\fib \pi_k F\cube{X}$.
Extending our definition of $\Perp$ to functors to groups or sets, this
can be restated as: $\pi_k \Perp F(X) = \Perp\pi_k F(X)$; that is,
$\Perp$ commutes with $\pi_k$.
\end{lemma}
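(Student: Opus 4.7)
The plan is to prove this by induction on $n$ (the dimension of the cube $\cube{X} = \cube{X}_T^X$), exploiting the compatible sections hypothesis of Lemma~\ref{lem:crn-cube-has-sections}. The iterated homotopy fiber $\Perp F(X)$ can be computed one direction at a time; the goal is to show that at each step, the application of $\pi_k$ commutes with taking homotopy fibers.

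The base case is a single map $f : A \to B$ equipped with a section $s : B \to A$ (so $f s = \mathrm{id}_B$). The fibration sequence $\hofib(f) \to A \to B$ has its long exact sequence of homotopy groups, and functoriality of $\pi_k$ applied to $s$ shows that $\pi_k f$ is split surjective for every $k \geq 0$ (with the obvious meaning as a set map when $k=0$). Hence the long exact sequence degenerates into split short exact sequences
\begin{equation*}
0 \to \pi_k \hofib(f) \to \pi_k A \to \pi_k B \to 0,
\end{equation*}
so $\pi_k \hofib(f)$ is the (set or group theoretic) fiber of $\pi_k f$. This handles the statement for $1$-cubes.

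For the inductive step, view the $n$-cube $F \cube{X}$ as a $1$-cube of $(n-1)$-cubes along some chosen coordinate direction, say $F \cube{X}_0 \to F \cube{X}_1$. The compatible sections of $\cube{X}$ restrict to compatible sections of each $\cube{X}_i$, and also provide a natural transformation $F \cube{X}_1 \to F \cube{X}_0$ that is a section of the map of sub-cubes. By induction, $\pi_k$ commutes with taking the total fiber of each $(n-1)$-sub-cube, yielding spaces whose $\pi_k$ equal $\Perp(\pi_k F \cube{X}_i)$. Functoriality of the total fiber construction turns the section of $(n-1)$-cubes into a section of the induced map between total fibers, so we are in position to apply the base case one more time: the iterated fiber of $F \cube{X}$, written as the homotopy fiber of a split map of total fibers, has $\pi_k$ equal to the fiber of that split map of $\pi_k$, which by induction is $\Perp \pi_k F(X)$.

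The main obstacle I anticipate is the bookkeeping showing that the sections of the full cube assemble, level by level, into sections of the spaces of total fibers appearing in the induction (and that the natural basepoints are preserved so that the set-level statement at $k=0$ has a well-defined meaning). Once that functoriality is in hand, the argument is just an iterated application of the simple split-fibration observation in the base case.
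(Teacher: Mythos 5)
Your proposal is correct and follows essentially the same route as the paper: compute the total fiber by iterating fibers one direction at a time, use the sections to split each long exact sequence into split short exact sequences so that $\pi_k$ of a fiber is the fiber of $\pi_k$, and observe that the compatible sections descend to the fibers so the argument iterates. The only cosmetic difference is the order of iteration (you take total fibers of the two $(n-1)$-faces first and then the fiber of the induced split map, while the paper peels off one direction of structure maps at a time), which amounts to the same computation.
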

\begin{proof}
Recall that $\Perp F(X)$ is the homotopy fiber of a cube $F\cube{X}$
that has compatible sections to all structure maps.
Also, recall that the total homotopy fiber of a cube can be computed
by iterating the process of taking fibers in one direction at a time. 

The existence of sections means that the long exact sequences of the
fibrations in one direction involved actually break up into short exact
sequences for each $\pi_k$. This means that $\pi_k$ of the homotopy
fiber of each structure map $\cube{X}(i_{U,V})$ is the fiber of the map $\pi_k
\cube{X}(i_{U,V})$.  The fact that the sections are compatible means that they
pass to sections on the fibers, so this argument can be iterated until
the total fiber is reached.
\end{proof}


\section{Cotriples}
\label{sec:cotriples}

(The introduction to cotriples in this section follows Weibel
\cite[Chapter~8.6]{Weibel:homological-algebra}.)

In homological algebra, one frequently uses the technique of forming a
free resolution of an $R$-module $M$. The canonical functorial way of
doing this is to begin by applying the free functor $F$ to set of
elements of the module $M$, producing $R[M]$ whose elements are formal
sums $\Sigma r_i m_i$, and then mapping that to $M$ by sending the
formal element $r_i m_i$ to the element $r_i \cdot m_i$ given by
letting $r_i$ act on $m_i$. Iterating this construction produces an
acyclic (in dimension $>0$) chain of free $R$-modules, and hence a
free resolution of $M$. Another way of looking at this
construction is as result of iteratively applying the functor $\Perp =
FU$, the composition of the adjoint pair consisting of the 
free $R$-module functor and the forgetful
functor from $R$-modules to sets. An axiomatization of this
approach creates the objects called ``cotriples''. 

The intent of a cotriple is to create a simplicial object that
functions as a resolution of $X$. The simplicial object
$RX = \left( [n] \mapsto \Perp^{n+1} X \right)$ is equipped with a
natural map $RX\rightarrow X$ (derived from $\Perp \rightarrow
\text{Id}$) that associates the resolution $RX$ to the object $X$. (In
general, $RX$ is acyclic in positive degrees and $\pi_0 RX = X$.) In
particular, if $X = \Perp Y$ (\emph{e.g.}, $X$ is already a free
module), then this complex is homotopic to the constant simplicial
object $\Perp Y$, so iterating the construction of these resolutions
is idempotent up to homotopy.

Precisely speaking, a cotriple is a functor $\Perp$ equipped with
natural transformations $\delta: \Perp \rightarrow \Perp^2$ and
$\epsilon: \Perp \rightarrow
\text{Id}$ such that the following diagrams commute:
\begin{eqnarray*}
\xymatrix{
\Perp 
\ar[r]^{\delta} 
\ar[d]^{\delta}
&
\Perp^2 
\ar[d]^{\Perp\delta} 
\\
\Perp^2 
\ar[r]^{\delta\Perp} 
&
\Perp^3
}
&
\xymatrix{
&
\Perp
\ar[dr]^{=}
\ar[dl]^{=}
\ar[d]^{\delta}
\\
\Perp
&
\Perp^2
\ar[l]^{\epsilon \Perp}
\ar[r]_{\Perp \epsilon}
&
\Perp
}
\end{eqnarray*}
The notation of Section~\ref{sec:cross-effects} is not a coincidence;
the cross effect $\Perp_n$ is in fact a cotriple, with augmentation
map $\epsilon$ induced by the fold map $\bigvee^n X \to X$, and the
diagonal map $\delta$ induced by the diagonal inclusion of $\mathbf{n}$ into
$\mathbf{n}\times\mathbf{n}$. 
The proof is somewhat technical, so we illustrate the idea in
Section~\ref{sec:crn-cotriple-sketch} and prove it in
Section~\ref{sec:crn-cotriple-proof}. 

%
%
\section{Illustration: The Cross-Effects Form A Cotriple}
\label{sec:crn-cotriple-sketch}

This section contains an illustration of the idea of a proof that the
functor $\Perp$ is a cotriple. The purpose of this section is to
provide a plausible motivation for the somewhat technical proof
contained in Section~\ref{sec:crn-cotriple-proof}. From this
illustration, the reader can see that 
There are very few ingredients needed to prove that $\Perp$ is a
cotriple; this section gives the reader an idea what they are and how
they could be assembled to form a proof.
We only consider $\Perp=\Perp_2$ in
this section.


Recall that a cotriple requires two commuting diagrams:
\begin{equation}
  \label{eq:diagram-for-faces}
\xymatrix{
\Perp\Perp 
\ar[r]^{\Perp\epsilon}
\ar[d]^{\epsilon\Perp}
&
\Perp
\ar[d]^{\epsilon}
\\
\Perp
\ar[r]^{\epsilon}
&
1}
\end{equation}
and
\begin{equation}
  \label{eq:diagram-for-degen}
\xymatrix{
&
\Perp
\ar[dl]^{=}
\ar[d]^{\delta}
\ar[dr]^{=}
&
\\
\Perp
&
\Perp\Perp
\ar[l]^{\Perp\epsilon}
\ar[r]^{\epsilon\Perp}
&
\Perp
}  
\end{equation}

\subsection{The Notation}

This section uses some nonstandard, but very visually intuitive,
notation. To understand the functors $\Perp$ and $\Perp\Perp$, we will
consider them as the total fibers of $2$- and $4$-dimensional cubes,
respectively. (Remember that we are only working with $\Perp=\Perp_2$
to keep the argument understandable to the reader.)

The space $\Perp F(X)$ is the total homotopy fiber of the cube 
$$\xymatrix{
F(X \vee X) 
\ar[r]
\ar[d]
& 
F(X)
\ar[d]
\\
F(X)
\ar[r]
&
F(0)
}$$

The argument we make is essentially independent of $F$,
so we will omit the application of $F$ to our cubes.  (There is one
exception to the assertion that $F$ does not matter: at some points we
need to consider $0$ instead of $F(0)$.) That leaves us with the
cubes:
$$\xymatrix{
X \vee X
\ar[r]
\ar[d]
& 
X
\ar[d]
\\
X
\ar[r]
&
0
}$$

We will write subscripts on the spaces $X$ to distinguish them. This
has the effect of making it clear what the maps are: they are the
identity on $X_i$ and the zero map between spaces without the same
subscript. 
$$\xymatrix{
X_1 \vee X_2
\ar[r]
\ar[d]
& 
X_1
\ar[d]
\\
X_2
\ar[r]
&
0
}$$

In order to make it possible to write four dimensional cubes, we engage
in one more reduction of structure; we also omit the arrows entirely,
writing the cubes in the form of matrices:
$$
\begin{pmatrix}
  \begin{pmatrix}
    X_1 & X_2
  \end{pmatrix}
  &
  X_1
  \\
  X_2
  &
  0
\end{pmatrix}
$$

When we write four dimensional cubes  for $\Perp\Perp$, we will doubly
index the spaces $X$ as $X_{i,j}$.
Our convention for the meaning of the indices in $X_{i,j}$ is that the
first index, $i$, corresponds to the first application of the functor
$\Perp$ (that is, the rightmost $\Perp$). The second index, $j$,
corresponds to the second application of $\Perp$ (the left one).

\subsection{Explicit Models For $\Perp_2$ And $\Perp_2 \Perp_2$}

In order to write down the maps $\delta$,  $\epsilon\Perp$, and
$\Perp\epsilon$ explicitly, we will use explicit models for the cross
effects.  Recall that using Goodwillie's model for the total fiber,
given a cube of cubes, taking homotopy fibers twice commutes up to
natural homeomorphism. We will use this to blow up models for $\Perp
F(X)$.

As above, recall that in our notation, $\Perp F$ is the homotopy fiber
of $F$ applied to the cube:
$$
\begin{pmatrix}
  \begin{pmatrix}
    X_1 & X_2
  \end{pmatrix}
  &
  X_1
  \\
  X_2
  &
  0
\end{pmatrix}
$$

Note that the total fiber of the cube above is homeomorphic to the
total fiber of the following $4$-cube ($2$-cube of $2$-cubes): 
$$
\xymatrix{
{
\begin{pmatrix}
  \begin{pmatrix}
    X_1 & X_2
  \end{pmatrix}
  &
  X_1
  \\
  X_2
  &
  0
\end{pmatrix}
}
&
{
\begin{pmatrix}
  0&0 \\ 0&0
\end{pmatrix}
}
\\
{
\begin{pmatrix}
  0&0 \\ 0&0
\end{pmatrix}
}
&
{
\begin{pmatrix}
  0&0 \\ 0&0
\end{pmatrix}
}
}
$$
We can enlarge this to the following cube, which we
will refer to as computing $\widetilde{\Perp_2}$, by expanding some of
the sub-cubes but maintaining the property that the total fiber of all
of the cubes except that in the upper left is contractible.
$$
\xymatrix{
{
\begin{pmatrix}
  \begin{pmatrix}
    X_1 & X_2
  \end{pmatrix}
  &
  X_1
  \\
  X_2
  &
  0
\end{pmatrix}
}
&
{
\begin{pmatrix}
  X_1 & X_1 \\
  0 & 0
\end{pmatrix}
}
\\
{
\begin{pmatrix}
  X_2 & 0 \\
  X_2 & 0
\end{pmatrix}
}
&
{
\begin{pmatrix}
  0&0 \\ 0&0
\end{pmatrix}
}
}
$$
There is a map $\widetilde{\Perp}F(X) \rightarrow \Perp F(X)$ induced
by sending the vertices of all cubes except that in the upper right to
zero. (Strictly speaking, apply $F$ first, then map to $0$. This
causes the homotopy fibers of those cubes to have exactly one point,
so the total fiber is homeomorphic to $\Perp F(X)$.)

Given a $T$-cube $\cube{X}$ and a function $f: S\rightarrow T$, there
is an induced functor $\Power(f): \Power(S) \rightarrow \Power(T)$, and then
this induces a map
$$ \hofib_{\Power(T)} \cube{X} \rightarrow 
\hofib_{\Power(S)} \Power(f)^* \cube{X}.$$
This can be used to produce a map from the homotopy fiber of the
two-dimensional cube for  $\Perp F(X)$ to the homotopy fiber of the
four-dimensional cube for $\widetilde{\Perp} F(X)$; see
Section~\ref{sec:homotopy-fibers} for details. 

The functor $\Perp\Perp$ is computed by applying $\Perp$ twice; this
naturally corresponds to the total fiber of the following $4$-cube
($2$-cube of $2$-cubes):
\begin{equation}
\label{eq:perp-perp}
\xymatrix{
{
\begin{pmatrix}
  \begin{pmatrix}
    X_{11} & X_{21} \\
    X_{12} & X_{22}
  \end{pmatrix}
  &
  \begin{pmatrix}
    X_{11} & X_{21}
  \end{pmatrix}
  \\
  \begin{pmatrix}
    X_{12} & X_{22}
  \end{pmatrix}
  &
  0
\end{pmatrix}
}
&
{
\begin{pmatrix}
  \begin{pmatrix}
    X_{11}  \\
    X_{12} 
  \end{pmatrix}
  &
  \begin{pmatrix}
    X_{11} 
  \end{pmatrix}
  \\
  \begin{pmatrix}
    X_{12} 
  \end{pmatrix}
  &
  0
\end{pmatrix}
}
\\
{
\begin{pmatrix}
  \begin{pmatrix}
    X_{21}  &
    X_{22} 
  \end{pmatrix}
  &
  \begin{pmatrix}
    X_{21} 
  \end{pmatrix}
  \\
  \begin{pmatrix}
    X_{22} 
  \end{pmatrix}
  &
  0
\end{pmatrix}
}
&
{
\begin{pmatrix}
  0 & 0 \\
  0 & 0
\end{pmatrix}
}}
\end{equation}
The diagonal map $\delta: \Perp \rightarrow \Perp\Perp$ is induced by
the composition of the map $\Perp \rightarrow \widetilde{\Perp}$ with
the map from $\widetilde{\Perp}$ to  $\Perp \Perp$ is induced by
sending $X_1$ to $X_{11}$ and $X_2$ to $X_{22}$.

There are two maps $\Perp\Perp \rightarrow \Perp$. Recall that our
convention for the meaning of the indices in $X_{i,j}$ is that the
first index, $i$, corresponds to the first application of the functor
$\Perp$ (that is, the rightmost $\Perp$), and the second index, $j$,
corresponds to the second application of $\Perp$ (here, the left
one). 

With this convention, the map $\Perp\epsilon: \Perp\Perp
\rightarrow \Perp$ is induced by
sending $X_{m,n}$ to $X_{m}$. To remind the reader of which spaces map
to which, we will write the image as $X_{m,*}$ before identifying
$X_{m,*}$ with $X_m$ in the cube defining the single application of
$\Perp$. Formulating this in terms of cubical diagrams,
$\Perp\epsilon$ is induced by the map of $\Perp\Perp$
\eqref{eq:perp-perp} to the following $4$-cube, followed by taking
the total fiber (which is now easy to see is homeomorphic to $\Perp$).
$$\xymatrix{
{
  \begin{pmatrix}
    \begin{pmatrix}
      X_{1,*} & X_{2,*}
    \end{pmatrix}
    &
    0
    \\
    0 & 0
  \end{pmatrix}
}
  &
{
  \begin{pmatrix}
    X_{1,*} & 0 \\
    0 & 0 
  \end{pmatrix}
}
  \\
{
  \begin{pmatrix}
    X_{2,*} & 0 \\
    0 & 0
  \end{pmatrix}
}
  &
{
  \begin{pmatrix}
    0 & 0 \\
    0 & 0
  \end{pmatrix}
}
}
$$
Although we have not illustrated this fact in this section, 
the map $\Perp \rightarrow \widetilde{\Perp}$ is a section to
the map induced by the zero map on the spaces $X_i$ not in the upper
left sub-cube, so the composition  $(\Perp\epsilon)\delta = 1$, and hence the
left hand triangle in \eqref{eq:diagram-for-degen} commutes.

The map $\epsilon\Perp$ is similar. It is induced by mapping the
$4$-cube for $\Perp\Perp$ to the following:

$$\xymatrix{
{
  \begin{pmatrix}
    \begin{pmatrix}
      X_{*,1} \\ X_{*,2}
    \end{pmatrix}
    &
    X_{*,1}
    \\
    X_{*,2} & 0
  \end{pmatrix}
}
  &
{
  \begin{pmatrix}
    0 & 0 \\
    0 & 0
  \end{pmatrix}
}
  \\
{
  \begin{pmatrix}
    0 & 0 \\
    0 & 0
  \end{pmatrix}
}
  &
{
  \begin{pmatrix}
    0 & 0 \\
    0 & 0
  \end{pmatrix}
}
}
$$
As with the case of $\Perp\epsilon$, we have the composition
$(\Perp\epsilon) \delta = 1$, so the right triangle in
\eqref{eq:diagram-for-degen} commutes.
Finally, we verify that the square in
Equation~\eqref{eq:diagram-for-faces} commutes. The identification of
the image of $\Perp\epsilon$ with $\Perp$ comes from identifying
$X_{1,*}$ with $X_1$ and $X_{2,*}$ with $X_2$. The identification of
the image of $\epsilon\Perp$ with $\Perp$ comes from identifying
$X_{*,1}$ with $X_1$ and $X_{*,2}$ with $X_2$. This means, for
instance, that the space $X_{1,2}$ is identified with $X_{1,*}$ under
the map $\Perp\epsilon$, but identified with $X_{*,2}$ under
$\epsilon\Perp$, so these two maps are not the same as maps from
$\Perp\Perp$ to $\Perp$.
The map $\epsilon$ is induced by mapping
$$\begin{pmatrix}
  \begin{pmatrix}
    X_1 & X_2
  \end{pmatrix}
&
X_1
\\
X_2 
&
0
\end{pmatrix}
$$
to
$$
\begin{pmatrix}
X
&
0
\\
0
&
0
\end{pmatrix}
$$
Under this map, all $X_{m,n}$ in the upper left corner are identified,
equalizing the images of $\epsilon\Perp$ and $\Perp\epsilon$,
so the diagram in Equation~\eqref{eq:diagram-for-faces} commutes.

\clearpage

\section{Proof: The Cross-Effects Form A Cotriple}
\label{sec:crn-cotriple-proof}

In this section we produce a formal proof that $\Perp$ is a
cotriple. We require only a good model for the homotopy fiber of a
cube, such as the one 
given in Definition~\ref{def:homotopy-fiber}, that is functorial in
the indexing category.

We introduce the machinery of ``free cubes'' in order to have a good
method of being precise about certain maps. Actually, the ``free
cubes'' that we will use are like free modules over a ring.  The
``ring'' $U$ determines the order of the cross effect $\Perp_U$ being
used.  A $(k\times U)$-cube (meaning the indexing category is the
disjoint union of $k$ copies of $U$) is the analogy of a rank $k$ free
module over the ``ring'' $U$.  The rank $k$ determines the number of
iterations $\Perp_U^k$.

\subsection{Free Cubes}

We will begin by defining a ``free cube'' with a given ``generating
function''. This requires that we build up a bit of notation. 

\begin{notation}
%
  Given a set $U$ and a subset $A$ of $U$, let $A^c$ denote the complement of
  $A$ in $U$.
\end{notation}


%

We define a ``diagonal'' to encode the information needed to
construct a cube of coproducts and inclusion and projection maps of
the type used to define the cross effect. 

\begin{definition}[Diagonal]
  For any sets $S$ and $U$, define the ``diagonal'' $\Delta(S,U)$ to
  be the subsets of $\Power(S \times U)$ that are complements of a
  singleton in each component. That is, given a function $f:
  S\rightarrow U$, define the set $B_f \subset S \times U$ by
  $$ B_f = \bigcup_{s\in S} (s, f(s)^c) ,$$
  and then define the diagonal $\Delta(S,U)$ to be
$$
\Delta(S,U) 
=
\bigcup_{f: S\rightarrow U} 
\left\{ 
B_f
\right\}
.$$
\end{definition}

\begin{remark}
  When $S$ is the empty set, there is one function $f: \emptyset
  \rightarrow U$, resulting in the empty set as the union over $s \in
  \emptyset = S$ being the only member of $\Delta(\emptyset,U)$.
  Also note that $\Delta(S,U) \cong \Hom(S,U) \cong U^S$ via the
  correspondence $B_f \leftrightarrow f$.
\end{remark}

\begin{example}
Let $\cube{X}$ denote the $T$-cube $\cube{X}^{\Set{X_u}}_T$ 
from Section~\ref{sec:additivity-hofib-notation} that forms a
basis for defining the cross effect. Recall that $\cube{X}(U) =
\bigvee_{u\not\in U} X_u$. The ``digaonal'' sets $V \in
\Delta(\mathbf{1},T)$ are those sets for which $\cube{X}(V)$ consists
only of a single space $X_v$ for some $v\in T$.
\end{example}

\begin{lemma}
The diagonal $\Delta(S,U)$ is contravariantly functorial in $S$ and
covariantly functorial in $U$.
\end{lemma}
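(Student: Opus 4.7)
The plan is to leverage the bijection $\Delta(S,U) \cong \Hom(S,U)$ noted in the preceding remark, under which $B_f \leftrightarrow f$. Functoriality of $\Hom$ in its two arguments then transports directly to $\Delta$.

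First I would fix the maps on morphisms. Given $g : S' \to S$, define $\Delta(g, U) : \Delta(S,U) \to \Delta(S', U)$ by sending $B_f$ to $B_{f \circ g}$; under the bijection this is just precomposition with $g$, which is the standard contravariant action of $\Hom(-, U)$. Dually, given $h : U \to U'$, define $\Delta(S, h) : \Delta(S, U) \to \Delta(S, U')$ by sending $B_f$ to $B_{h \circ f}$; under the bijection this is postcomposition with $h$, the standard covariant action of $\Hom(S, -)$.

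Next I would verify the functoriality axioms. Identities are immediate: $B_f \circ \mathrm{id}_S = B_f$ and $\mathrm{id}_U \circ B_f = B_f$. For composition, given $S'' \xrightarrow{g'} S' \xrightarrow{g} S$, one has $(f \circ g) \circ g' = f \circ (g \circ g')$, so $\Delta(g', U) \circ \Delta(g, U) = \Delta(g \circ g', U)$, confirming contravariance in $S$. Similarly, for $U \xrightarrow{h} U' \xrightarrow{h'} U''$, the equality $h' \circ (h \circ f) = (h' \circ h) \circ f$ yields covariance in $U$. Since the bijection $\Delta(S,U) \cong \Hom(S,U)$ is natural in both arguments by construction, these verifications are automatic.

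There is no real obstacle here; the only small thing to check is that the induced set-level maps on the $B_f$ are actually well-defined as subsets of $S \times U$, i.e., that the image $B_{f \circ g}$ sits inside $S' \times U$ (respectively $B_{h \circ f}$ inside $S \times U'$) and indeed lies in $\Delta(S', U)$ (respectively $\Delta(S, U')$). This is immediate from the defining formula $B_f = \bigcup_{s} (s, f(s)^c)$ and the fact that $f \circ g : S' \to U$ and $h \circ f : S \to U'$ are honest functions. I would conclude with a short sentence noting that one could equally describe $\Delta(g, U)$ directly on subsets as $B \mapsto (g \times \mathrm{id}_U)^{-1}(B)$, and $\Delta(S, h)$ via the pushforward along $\mathrm{id}_S \times h$, which gives a coordinate-free verification if desired.
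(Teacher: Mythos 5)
Your proposal is correct and matches the paper's argument: the paper likewise observes that functoriality is immediate from $\Delta(S,U)\cong\Hom(S,U)$ and then records the same explicit maps, namely $B_f\mapsto B_{h\circ f}$ for covariance in $U$ and the preimage $(g\times 1)^{-1}(B)$ (equivalently $B_f\mapsto B_{f\circ g}$) for contravariance in $S$. The only cosmetic difference is that the paper leads with the preimage description for the contravariant direction, which you mention as an afterthought.
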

\begin{proof}
This is clear from the natural isomorphism $\Delta(S,U) \cong
\Hom(S,U)$, but we will give explicit maps that we will use later.

The diagonal $\Delta(S,U)$ is contravariantly functorial in $S$. Given
a map $g: S \rightarrow T$, we can define a map $\Delta(g,1):
\Delta(T,U) \rightarrow\Delta(S,U)$ sending $B \in \Delta(T,U)$ to
$(g\times 1)^{-1} (B)$. 
$$ \Delta(g,1)(B) = (g \times 1)^{-1}(B)$$
The set $(g\times 1)^{-1} (B)$ is still
the complement of a singleton in each component, because for
each $j\in S$, we have $\Set{j} \times U \cong \Set{g(j)} \times U$.

The diagonal $\Delta(S,U)$ is covariantly functorial in $U$. Given a
function $h: U \rightarrow V$, define $\Delta(1,h)$  by
``pushing the missed singletons along $h$'':
$$\Delta(1,h)(B_f) = B_{h\circ f}$$
\end{proof}

\begin{definition}[Free cube]
  \label{def:free-cube}
  Given sets $U$ and $S$ and a functor $g$ from the discrete category
  $\Delta(S,U)$ to a 
  pointed category with coproducts (for example pointed spaces or
  cubes of pointed spaces), we define
  $\Free(S,U,g)$ to be
  the $(S \times U)$-cube $\cube{X}$
  with vertices
  $$\cube{X}(A) = \bigvee_{\Set{B \in\Delta(S,U) : A \subset B}} g(B)$$
  Morphisms in $\cube{X}$ are induced by the maps $g(B) \rightarrow
  g(B')$ that are the identity if $B=B'$ and the zero map otherwise.
\end{definition}

\begin{remark}
\label{remark:maps-out-of-free-cube}
It is easy to define a map out of a free cube, or between free cubes.
Every point in a free cube is in the image of a section map of one of
the spaces on the diagonal (that is, a space $\cube{X}(B)$ with $B\in
\Delta(S,U)$), so it suffices to give a map from each space on the
diagonal. 
From this, it is clear that $\Free(S,U,g)$ is a covariant functor
with respect to natural transformations of $g$.
\end{remark}

\begin{definition}[Alternative free cube]
  \label{def:free-cube-alt}
  An alternative formulation of Definition~\ref{def:free-cube} follows.
  This form of the definition of a free cube is useful
  because it more closely resembles the definition of the cross effect.
 For $A \subset S\times U$ and for $s\in S$, define
  the projection of $A$ on the the $s$ factor, 
  $p_s(A)$, to be the intersection of $A$  with $\Set{s}\times U$,
  considered as a subset of $U$. Then $A \subset B_f$ if and only if
  $f(s) \not\in p_s(A)$ for all $s\in S$, so we can define
  $\Free(S,U,g)$ to be the cube $\cube{X}$ 
  with vertices
  $$\cube{X}(A) = \bigvee_{\Set{B_f \in\Delta(S,U) : \forall s\in S,
      f(s)\not\in p_s(A)}} g(B_f)$$
\end{definition}

\begin{example}
  The $n^{\text{th}}$ cross effect, $cr_n F(X_1,\dotsc,X_n)$, is the
  total fiber of $F$ applied to a free cube $\cube{X}$. Let
  $\mathbf{n} = \Set{1,\dotsc,n}$, and define $\cube{X} 
  =
  \Free(\mathbf{1},\mathbf{n},g)$, 
  with $g(\Set{i}^c) = X_i$. When $n=2$, we can write this down very
  explicitly. First, $\Delta(\mathbf{1},\mathbf{2}) =
  \Set{\Set{1},\Set{2}}$. Then we can compute:
  \begin{align*}
    \cube{X}(\emptyset) 
    &= 
    \bigvee_{B\in \Delta(\mathbf{1},\mathbf{n})} 
    g(B) 
    \\
    &=
    g(\Set{1}) \vee g(\Set{2})
    \\
    &=
    g(\Set{2}^c) \vee g(\Set{1}^c)
    \\
    &=
    X_2 \vee X_1
  \end{align*}
  Similarly, $\cube{X}(\Set{1}) = X_2$, $\cube{X}(\Set{2}) = X_1$, and
   $\cube{X}(\Set{1,2}) = 0$, so the cube is
  exactly the cross-effect cube we wanted to see.
\end{example}

In general, given a functor $\cube{X}$ defined on a category $\cat{D}$ and
another functor $F: \cat{C}\rightarrow \cat{D}$, we can define the
pullback functor $F^* \cube{X}$ precomposing with $F$. When dealing
with cubical diagrams, a function $f: S\rightarrow T$ induces a
functor $\Power(f): \Power(S) \rightarrow\Power(T)$; in this case,
the pullback operation is $\Power(f)^*$.

Similarly, given sets $n$ and $m$, and 
a function $f:n \rightarrow m$ between them, there is an induced
map of sets $f\times 1: n\times U \rightarrow m \times U$ that can then
be used to define a functor $\Power(f\times 1): \Power(n\times U)
\rightarrow \Power(m\times U)$. In this case, we will denote the
pullback by $\Power(f\times 1)^*$.

We now establish that ``free cubes'' are closed under the pullback
operation.
\begin{lemma}
\label{lem:pullbacks-of-free-cubes}
  Let $m$ and $n$ be sets, let the $(m\times U)$-cube $\cube{X}=
  \Free(m,U,g)$ be a free cube, and 
  let $f: n\rightarrow m$ be a function. The $(n\times U)$-cube
  $\Power(f\times 1)^*
  \cube{X}$ is isomorphic to a free cube $\cube{Y} = \Free(n,U,h)$ with 
  $$h(B) = \bigvee_{B'\in \Delta(f,1)^{-1}(B)} g(B') .$$
\end{lemma}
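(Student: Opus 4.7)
The plan is to show that both the pullback cube $\Power(f\times 1)^*\cube{X}$ and the proposed free cube $\cube{Y} = \Free(n, U, h)$ have, at each vertex $A \subset n \times U$, a canonically isomorphic coproduct decomposition; the isomorphism will come from a reindexing of summands that is natural in the inclusions of subsets, and hence will intertwine the ``identity-or-zero'' structure maps that characterize free cubes.

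First, I would unpack both sides at a fixed $A \subset n \times U$. By the definition of pullback and of $\cube{X}$,
$$(\Power(f\times 1)^*\cube{X})(A) \;=\; \cube{X}((f\times 1)(A)) \;=\; \bigvee_{\{B' \in \Delta(m,U) \,:\, (f\times 1)(A) \subset B'\}} g(B').$$
On the other hand, expanding $h$ inside $\cube{Y}(A)$ gives
$$\cube{Y}(A) \;=\; \bigvee_{\{B \in \Delta(n,U) \,:\, A \subset B\}} \;\;\bigvee_{B' \in \Delta(f,1)^{-1}(B)} g(B').$$
So the key step is to produce a bijection between the indexing set $\{B' \in \Delta(m,U) : (f\times 1)(A) \subset B'\}$ on one side and the set of pairs $(B, B')$ with $B \in \Delta(n,U)$, $A \subset B$, and $\Delta(f,1)(B') = B$ on the other.

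The bijection will come from the elementary adjointness identity
$$(f\times 1)(A) \subset B' \;\Longleftrightarrow\; A \subset (f\times 1)^{-1}(B') \;=\; \Delta(f,1)(B'),$$
so that every $B'$ on the left corresponds uniquely to the pair $(\Delta(f,1)(B'), B')$ on the right, and vice versa. Because the set $\Delta(f,1)(B')$ lies in $\Delta(n,U)$ (being the pullback of a diagonal set, as noted in the lemma establishing functoriality of $\Delta$), the pairs are well-defined and the correspondence is a genuine bijection of indexing sets.

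The main obstacle, which I expect to be routine but requires care, is to verify that this bijection is natural with respect to the inclusion $A \subset A'$ in $\Power(n \times U)$, so that we get an isomorphism of functors and not merely of vertices. This reduces to checking that for each summand $g(B')$, the structure map it contributes (identity on itself, zero to other summands) is the same whether we route it through the pullback's ``identity-or-zero'' rule on $\Delta(m,U)$ or through the free cube $\cube{Y}$'s rule on $\Delta(n,U)$; this follows because in both cubes the summand $g(B')$ survives precisely when $A \subset (f\times 1)^{-1}(B')$, and under the bijection this is the same condition. Combined with Remark~\ref{remark:maps-out-of-free-cube} (maps out of a free cube are determined by their behavior on the diagonal), this identifies $\Power(f\times 1)^*\cube{X}$ with $\Free(n, U, h)$. $\qed$
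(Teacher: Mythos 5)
Your proposal is correct and follows essentially the same route as the paper: both fix a vertex $A \subset n\times U$, expand the double coproduct defining $\cube{Y}(A)$, and identify the indexing sets via the adjointness $(f\times 1)(A)\subset B' \Leftrightarrow A \subset \Delta(f,1)(B')$. Your added paragraph checking compatibility with the structure maps makes explicit a point the paper leaves implicit, but the core argument is the same.
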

\begin{proof}
  We consider the question one vertex at a time. Fix a subset 
  $A \subset n\times U$.
  \begin{align*}
    \cube{Y}(A) 
    &=
    \bigvee_{\Set{B \in \Delta(n,U) : A \subset B}}
    h(B)
    \\
    \intertext{Expanding the definition of $h$ gives:}
    &{}
    \bigvee_{\Set{B \in \Delta(n,U) : A \subset B}}
    \bigvee_{\Set{B' \in \Delta(f,1)^{-1}(B)}}
    g(B')
    \\
    \intertext{Interchanging the order of quantifiers and combining
      them turns this into:}
    &{}
    \bigvee_{\Set{B'\in\Delta(m,U) : A \subset \Delta(f,1)(B') }} g(B')
  \end{align*}
  We now show that the indexing set
  $$\Set{B' \in \Delta(m,U) : A \subset \Delta(f,1)(B') }$$
  is the same as
  $$\Set{B' \in \Delta(m,U) : (f\times 1) A \subset B' }.$$
  The latter is the indexing set for $\Power(f\times 1)^*\cube{X}$, so
  this will establish that $\cube{Y} \cong \Power(f\times 1)^* \cube{X}$.
  This is elementary set theory.
  There are two directions to show containment. Recall that
  $\Delta(f,1)(B') = (f\times 1)^{-1} (B')$. 
  If $A \subset \Delta(f,1) B' $, then this means:
  \begin{align*}
    A &\subset (f\times 1)^{-1} B' 
    \\
 (f\times 1)A &\subset (f\times 1)(f \times 1)^{-1} B' \subset B'
 ,
\\
\intertext{where $(f\times 1)(f \times 1)^{-1} B'$ may be smaller than $B'$ if
  some components are not in the image (i.e., if $f$ is not
  surjective). This establishes containment in one direction.
  On the other hand,
  if $(f\times 1) A\subset B'$, then applying $(f\times 1)^{-1}$ gives
}
  A 
  \subset
  (f\times 1)^{-1}(f \times 1) A 
  &\subset
  (f\times 1)^{-1} B' 
  =
  \Delta(f,1)(B')
,
  \end{align*}
  which establishes containment in the other direction.
\end{proof}

\subsection{Homotopy Fibers}
\label{sec:homotopy-fibers}

In this section we briefly recall from Bousfield-Kan \cite[XI,
\S{}9, p.~316]{Bousfield-Kan:homotopy-limits-completions-and-localizations}
the map on homotopy fibers induced by a functor on diagram categories.
The purpose of this section is to prove the following proposition
(which follows immediately from this work of Bousfield
and Kan, as indicated below):
\begin{proposition}
\label{prop:induced-map-on-hofib}
Let $f: S\rightarrow T$ be a map of sets, and let $\cube{X}$ be a
$T$-cube of pointed spaces. Then $f$ induces a natural map
$$\hofib_{\Power(T)} \cube{X} \xrightarrow{} \hofib_{\Power(S)}
\Power(f)^*\cube{X},$$
and if $f$ is surjective, then this map is a homotopy equivalence.
\end{proposition}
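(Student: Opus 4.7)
The plan is to construct the map directly from the model in Definition~\ref{def:homotopy-fiber}, and then to prove the equivalence claim when $f$ is surjective by constructing an inverse from a set-theoretic section of $f$.

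For each $U \subset S$, the map $f$ induces a continuous push-forward $f_* : I^U \to I^{f(U)}$ given by $(f_*(x))_t = \max\{x_s : s \in U,\; f(s) = t\}$ (a finite maximum of continuous functions on a nonempty index set). Two properties are crucial: $f_*$ is natural with respect to inclusions $U \subset U'$ under the zero-extension convention for $I^U \hookrightarrow I^{U'}$, and $f_*$ carries $(I^U)_1$ into $(I^{f(U)})_1$. Given $\Phi$ in $\hofib_{\Power(T)} \cube{X}$, setting $\Psi_U := \Phi_{f(U)} \circ f_*$ yields a well-defined element of $\hofib_{\Power(S)} \Power(f)^* \cube{X}$, since the naturality and boundary conditions for $\Psi$ follow immediately from the corresponding ones for $\Phi$. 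Naturality of this construction in $\cube{X}$ is immediate.

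For the equivalence when $f$ is surjective, choose a set-theoretic section $\sigma : T \to S$ of $f$. Because $\sigma$ is injective, the bijection $V \to \sigma(V)$ induces a homeomorphism $\tilde{\sigma} : I^V \xrightarrow{\cong} I^{\sigma(V)}$. Define a candidate inverse by $\Phi_V := \Psi_{\sigma(V)} \circ \tilde{\sigma}$. The composition $\hofib_{\Power(T)} \to \hofib_{\Power(S)} \to \hofib_{\Power(T)}$ is the identity on the nose: it sends $\Phi$ to $\Phi'_V = \Phi_{f(\sigma(V))} \circ f_* \circ \tilde{\sigma} = \Phi_V \circ (f_*\tilde{\sigma})$, and a direct coordinate calculation using $f\sigma = 1_T$ shows that $f_* \circ \tilde{\sigma}$ is the identity on $I^V$.

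The other composition sends $\Psi_U$ to $\Psi_{\sigma(f(U))} \circ \tilde{\sigma} \circ f_*$, which essentially replaces each $s \in U$ with the preferred representative $\sigma(f(s))$ of its fiber and so, in general, differs from $\Psi_U$. The main obstacle is to exhibit a homotopy between these two maps which is natural in $\Psi$. Such a homotopy is built from the higher component $\Psi_{U \cup \sigma(f(U))}$: one constructs a continuous path $y(x,h) \in I^{U \cup \sigma(f(U))}$ connecting the zero-extension of $x \in I^U$ (which $\Psi_{U \cup \sigma(f(U))}$ sees as $\Psi_U(x)$) to the zero-extension of $\tilde{\sigma} f_*(x) \in I^{\sigma(f(U))}$, taking care that $y(x,h)$ lies in $(I^{U \cup \sigma(f(U))})_1$ for all $h$ whenever $x$ has a coordinate equal to $1$. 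The same content can be packaged more abstractly: the homotopy fiber is a Bousfield--Kan homotopy limit, and when $f$ is surjective, $\Power(f) : \Power(S) \to \Power(T)$ is homotopy-right-cofinal (for each $V \in \Power(T)$ the undercategory $V \downarrow \Power(f)$ has $S$ as a terminal object, hence is contractible), so the induced map on homotopy limits is an equivalence by \cite[XI.9.2]{Bousfield-Kan:homotopy-limits-completions-and-localizations}.
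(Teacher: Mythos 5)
Your construction of the map itself is correct and genuinely more elementary than the paper's. The paper never touches the point--set model of Definition~\ref{def:homotopy-fiber} here: it writes $\hofib_{\Power(T)}\cube{X}$ as the strict fiber of $\holim_{\Power(T)}\cube{X}\rightarrow\holim_{\Power_0(T)}\cube{X}$ and produces the map from the Bousfield--Kan description of $\holim$ as coherent families of maps out of the nerves $\realization{\cat{D}/-}$. Your fiberwise-maximum map $f_*\colon I^U\rightarrow I^{f(U)}$, together with the two verifications (naturality under zero-extension, and that $f_*$ carries $(I^U)_1$ into $(I^{f(U)})_1$), is a clean substitute, and the first half of your argument is complete.

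The second half has a genuine gap. The explicit homotopy between $\Psi_U$ and $\Psi_{\sigma(f(U))}\circ\tilde\sigma\circ f_*$ is asserted rather than constructed, and the two points you gesture at are exactly where the work lies. First, the obvious linear path in $I^{U\cup\sigma(f(U))}$ does \emph{not} stay in the subspace of points with a coordinate equal to $1$: take $S=\Set{1,2}$, $T=\Set{*}$, $\sigma(*)=1$, $U=\Set{2}$, $x_2=1$; at time $h=1/2$ the interpolated point is $(1/2,1/2)$. So ``taking care'' means replacing the linear path by something like a two-stage path through the coordinatewise maximum. Second, the paths must be chosen naturally in $U$ so that they assemble into a single path in the space $\hofib_{\Power(S)}\Power(f)^*\cube{X}$; this coherence is an additional check you never address. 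Neither point is fatal, but as written the homotopy does not exist yet.

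The abstract fallback does not rescue this, because it invokes the wrong cofinality criterion. For homotopy \emph{limits} (Bousfield--Kan, XI.9.2) one needs the overcategories $\Power_0(f)/V$ to be contractible --- their objects are nonempty $U\subset S$ with $f(U)\subset V$, i.e.\ the nonempty subsets of $f^{-1}(V)$, with terminal object $f^{-1}(V)$, which is why surjectivity enters. Contractibility of the undercategories $V\downarrow\Power(f)$ (your ``$S$ is terminal'') is the condition governing homotopy \emph{colimits}; it happens to hold here too, but the theorem you cite does not apply to it. You also apply cofinality to the full power sets, where it has no content: both $\Power(S)$ and $\Power(T)$ have initial objects, so both of those homotopy limits are just $\cube{X}(\emptyset)$. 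The statement that actually needs cofinality concerns $\Power_0(f)\colon\Power_0(S)\rightarrow\Power_0(T)$, after which the equivalence of homotopy fibers follows by comparing the two fiber sequences. That is precisely the paper's proof; if you want to lean on cofinality, reproduce it with the correct comma categories rather than the flipped ones.
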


Given sets $S$ and $T$ and a function $f: S \rightarrow T$, there is
an induced functor on the power set categories: $\Power(f): \Power(S)
\rightarrow \Power(T)$. Since the inverse image of the empty set is
the empty set, this functor restricts to a functor $\Power_0(f)$ from
$\Power_0(S)$ to $\Power_0(T)$.
Let $\cube{X}$ be a $T$-cube; that is, a
functor whose domain category is $\Power(T)$. One definition of the
homotopy fiber of $\cube{X}$ 
is strict fiber in the following fiber sequence:
$$
\hofib_{\Power(T)} \cube{X}
\rightarrow
\holim_{\Power(T)} \cube{X}
\rightarrow
\holim_{\Power_0(T)} \cube{X}
$$
This shows that in order to produce a map $\hofib \cube{X} \rightarrow
\hofib \Power(f)^* \cube{X}$, it suffices to produce the two right
vertical maps in the following commutative diagram:
$$\xymatrix{
\hofib_{\Power(T)} \cube{X}
\ar[r] \ar[d]
&
\holim_{\Power(T)} \cube{X}
\ar[r] \ar[d]
&
\holim_{\Power_0(T)} \cube{X}
\ar[d]
\\
\hofib_{\Power(S)} \Power(f)^*\cube{X}
\ar[r] 
&
\holim_{\Power(S)} \Power(f)^*\cube{X}
\ar[r]
&
\holim_{\Power_0(S)} \Power(f)^*\cube{X}
}$$

This is the classical situation from Bousfield and Kan. It is just as
easy to describe in the general setting, so we do so.
Given categories $\cat{C}$ and
$\cat{D}$ and a functor $\cube{X}$ on $\cat{D}$ and a functor $F:
\cat{C}\rightarrow \cat{D}$, we want to produce a map 
$$\holim_{\cat{D}} \cube{X} \rightarrow \holim_{\cat{C}} F^*\cube{X}$$
That is, produce a function
$$
\Hom_{\cat{D}} (\realization{\cat{D}/-}, \cube{X}(-))
\rightarrow
\Hom_{\cat{C}} (\realization{\cat{C}/-}, \cube{X}\circ F(-))
$$
Essentially, this follows from the contravariance of $\Hom$ and the
fact that a functor $F: \cat{C} \rightarrow \cat{D}$ induces a
simplicial map $\realization{\cat{C}/c} \rightarrow
\realization{\cat{D}/F(c)}$ for all objects $c$ in $\cat{C}$.

Specifically, the elements on the left of the diagram above are
coherent collections of maps  
$\phi_d$ sending $n$-simplices corresponding to
$d \leftarrow d_1 \leftarrow \cdots \leftarrow d_n$ to 
$\cube{X}(d)$. Given one of these, 
define a function $\phi_c$ on $\realization{\cat{C}/c}$ by sending the
$n$-simplex $c \xleftarrow{\alpha_1} \cdots \xleftarrow{\alpha_n}
c_n$ to the same place as the $n$-simplex in $\cat{D}/F(c)$ corresponding
to its image under $F$:
$$ \phi_{F(c)} ( F(c) \xleftarrow{F(\alpha_1)} \cdots 
\xleftarrow{F(\alpha_n)} F(c_n) ). $$
Note that the target of this function is $\cube{X}F(c) =
F^*\cube{X}(c)$,  just as required. Coherence of the collection
$\Set{\phi_c}$ follows from that of the collection $\Set{\phi_d}$.


If the map $f: S \rightarrow T$ is surjective, then $\Power_0(f)$ is a
``left cofinal'' functor from $\Power_0(S)$ to $\Power_0(T)$, and hence
\cite[\S{}XI.9,
Theorem~9.2]{Bousfield-Kan:homotopy-limits-completions-and-localizations}
the induced map between homotopy fibers is a homotopy equivalence.
To verify that in this case $\Power_0(f)$ is left cofinal, we need to
check that for each $V \subset T$, a certain category $\Power_0(f)/V$ is
contractible. The objects in this category are elements $(U,\mu)$,
where $U \subset S$ and $\mu: f(U) \rightarrow V$ is a morphism in
$\Power_0(T)$; that is, an inclusion of $f(U)$ into $V$. 
In the power set category, the maps are inclusions of subsets, so
there can be at most one map between objects; hence the category
$\Power_0(f)/V$  has one element for each subset of $f^{-1}(V)$, and
maps correspond to inclusions. All subsets of $f^{-1}(V)$ include in
$f^{-1}(V)$, so $\Power_0(f)/V$ is contractible whenever $f^{-1}(V)$ is
nonempty. This shows that if $f$ is surjective, then $\Power_0(f)$ is
left cofinal.

If $f^{-1}(V)$ is empty for some $V$ (that is, when $f$ is not
surjective), then the category $\Power_0(f)/V$ is empty, and the empty
space is not contractible (that is, $\emptyset$ is not equivalent to
$\basept$), so $\Power_0(f)$ is not left cofinal in that case.

This completes the proof of
Proposition~\ref{prop:induced-map-on-hofib}. $\qed$

\subsection{$\Perp$ As Composition With Free Cube Functor}
\label{sec:hofib-free-cube-functor}

The purpose of developing the machinery of ``free cubes'' was to
enable us to show that $\Perp$ is a cotriple. In this section, we
identify the functor $\Perp$ as a composition of functors with the
free cube functor. Throughout this section, we fix a set $U$ and a
functor $F$ and a space $X$, and consider only $\Perp_U F(X)$.

Given sets $U$ and $S$, let $c_X$ be the function on $\Delta(S,U)$
that has a constant value $X$. Let $C(S)$ be the contravariant functor
of sets $S$ given by 
$$ C(S) = \hofib F \circ \Free(S,U,c_X) .$$

\emph{Functoriality.}
We need to establish that this is a functor. Let $\cube{Y} =
\Free(T,U,c_X)$. Given a function $f: S\rightarrow T$, we can
construct the $(S\times U)$-cube $\Power(f\times 1)^* \cube{Y}$. From
Proposition~\ref{prop:induced-map-on-hofib}, there is a natural map 
$$ \hofib_{\Power(T)} F \cube{Y}  
\rightarrow 
\hofib_{\Power(S)} \Power(f\times 1)^* \cube{Y},$$
so it remains to construct a map 
$$
\hofib_{\Power(S)}
F
\circ
\Power(f \times 1)^* \cube{Y} 
\rightarrow 
\hofib_{\Power(S)}
F \circ
\Free(S,U,c_X) .$$

Recall from Lemma~\ref{lem:pullbacks-of-free-cubes} that $\Power(f
\times 1)^* \cube{Y} $ is a free cube with generating function 
$$h(B) = \bigvee_{\Delta(f,1)^{-1}(B)} X .$$
To map this to the constant function $c_X$, we use the fold map from 
$h(B) = \bigvee X$ to $c_X(B) = X$. If $h(B)$ is the empty coproduct,
$0$, then the fold map is the inclusion of $0$ in $X$.
As noted in Remark~\ref{remark:maps-out-of-free-cube}, since $h$ is
the ``generating function'' for $\Power(f\times 1)^* \cube{Y}$, 
specifying 
maps $h(B) \rightarrow c_X(B)$ suffices to determine a map of cubes
$$\Power(f \times 1)^* \cube{Y} \rightarrow \Free(S,U,c_X) .$$
The homotopy fiber functor is covariant with respect to maps of cubes,
so applying $F$ and $\hofib$ give the required map
$$
\hofib_{\Power(S)}
F
\circ
\Power(f \times 1)^* \cube{Y} 
\rightarrow 
\hofib_{\Power(S)}
F \circ
\Free(S,U,c_X) .$$

\emph{Identification with $\Perp$.}
Let $\mathbf{k}$ denote the set $\Set{1,\ldots,k}$. To identify
$C(\mathbf{k})$ with $\Perp^k F(X)$, we recall from
Example~\ref{example:perp2-cube} that $\Perp^k F(X)$
is 
$$
\hofib_{\Set{V_1,\ldots, V_k} \subset U} 
F
\left(
\bigvee_{i=1}^{i=k} \bigvee_{v_i \not\in V_i} X
\right)
$$
The cube involved is a $(\mathbf{k}\times U)$-cube $\cube{X}$. Given $V \subset
(\mathbf{k}\times U)$,  for each $i \in \mathbf{k}$ define $V_i$ to be
the projection of $V \cap \Set{i}\times U$ to $U$. In the notation of
Definition~\ref{def:free-cube-alt}, $V_i = p_i(V)$. 
The cube then has vertices
$$\cube{X}(V) = \bigvee_{i=1}^{i=k} \bigvee_{v_i \not\in V_i} X .$$
From the alternative construction of a ``free cube'' in
Definition~\ref{def:free-cube-alt}, we see that this is exactly a free
cube $\Free(\mathbf{k},U,c_X)$, with $c_X$ a constant function whose
value is $X$ on all $B_f \in \Delta(\mathbf{k},U)$.
This shows that $C(\mathbf{k}) \cong \Perp^k_U F(X)$.

\emph{Claim}: The map $\epsilon: \Perp \rightarrow 1$ is induced by
applying $C$ to the inclusion $i: \emptyset \rightarrow 1$. 
Let $\cube{X} = \Free(1,U,c_X)$. 
According to Section~\ref{sec:homotopy-fibers}, the homotopy fiber of
the cube $F\cube{X}$ is a coherent collection of maps
$$\hofib_{\Power(U)} F \cube{X}
=
\left\{
\phi_c : \realization{\cat{C}/c} \rightarrow F \cube{X}(c)
\right\}_{c\in\Obj\cat{C}}
$$
The map from the homotopy fiber of $F\cube{X}$ over $\Power(U)$ to
the homotopy fiber of the restriction of $F$ over $\Power(\emptyset)$
sends each map $\phi = \Set{\phi_c}$ to the map $\phi_\emptyset$ induced by
the image of the point $\emptyset \rightarrow \emptyset$ in
$\cat{C}/\emptyset$; this is just a single point $\phi(\basept)$ in
$\cube{X}$. That is, $i^*$ induces the identity on
$\cube{X}(\emptyset) = X \vee X$. 

\emph{Definition}: The map $\delta: \Perp \rightarrow
\Perp\Perp$ is induced by the applying $C$ to the fold map $\Set{1,2}
\rightarrow \Set{1}$. The map $\delta$ has not been specified any
other way elsewhere in this work.

\subsection{$\Perp$ Is A Cotriple}

We are now in a position to use this machinery to show that $\Perp$
forms a cotriple. Fix a set $U$ and a functor $F$, and consider only
iterates of $\Perp_U$ applied to $F$, and evaluated at a fixed space
$X$. That is, we only work with $\Perp_U^k F(X)$. This is sufficient
to show that $\Perp_U$ is a cotriple.

A cotriple $\Perp$ requires that the following diagram commute:
\begin{equation*}
\xymatrix{
\Perp\Perp 
\ar[r]^{\Perp\epsilon}
\ar[d]^{\epsilon\Perp}
&
\Perp
\ar[d]^{\epsilon}
\\
\Perp
\ar[r]^{\epsilon}
&
1}
\end{equation*}

Applying the functor $C$ to the diagram 
$$\xymatrix{ 
\Set{1,2}
&
\Set{1}
\ar[l]^{i_1}
\\
\Set{2}
\ar[u]^{i_2}
&
\emptyset
\ar[l]
\ar[u]
} $$
yields a commuting diagram
$$\xymatrix{
C(\Set{1,2})
\ar[r]
\ar[d]
&
C(\Set{1})
\ar[d]
\\
C(\Set{2})
\ar[r]
&
C(\emptyset)
}$$
In view of our identification $C(\mathbf{k}) \cong \Perp^k_U F(X)$,
this is the diagram above.

The other commuting diagram required for a cotriple is the following:
\begin{equation*}
\xymatrix{
&
\Perp
\ar[dl]^{=}
\ar[d]^{\delta}
\ar[dr]^{=}
&
\\
\Perp
&
\Perp\Perp
\ar[l]^{\Perp\epsilon}
\ar[r]^{\epsilon\Perp}
&
\Perp
}  
\end{equation*}

This results from applying $C$ to the diagram of sets:
$$\xymatrix{
{} 
&
\Set{1}
&
{}
\\
\Set{1}
\ar[ur]^=
\ar[r]^{i_1}
&
\Set{1,2}
\ar[u]
&
\Set{2}
\ar[ul]^=
\ar[l]^{i_2}
}$$

To summarize, we have shown:
\begin{theorem}
\label{thm:perp-is-cotriple}
The functor $\Perp$ of Definition~\ref{def:crn-perp} is a cotriple
on the category of homotopy functors from pointed spaces to pointed
spaces. $\qed$ 
\end{theorem}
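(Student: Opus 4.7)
The plan is to reduce the two cotriple coherence diagrams to easily checkable commutative diagrams in the category of finite sets, via the functor $C$ built in Section~\ref{sec:hofib-free-cube-functor}. Fix a set $U$, a homotopy functor $F$ from pointed spaces to pointed spaces, and a pointed space $X$. Recall the contravariant functor $C(S) = \hofib F \circ \Free(S,U,c_X)$ from sets to pointed spaces, together with the identification $C(\mathbf{k}) \cong \Perp_U^{k} F(X)$ that was established in Section~\ref{sec:hofib-free-cube-functor}. My strategy is to exhibit both $\epsilon$ and $\delta$ as images under $C$ of maps between the sets $\emptyset$, $\Set{1}$, $\Set{2}$, and $\Set{1,2}$, and then observe that the required squares and triangles for a cotriple are the $C$-images of obvious commuting diagrams of sets.

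First I would carefully verify the naturality and identification of $\epsilon$. By the construction in Section~\ref{sec:hofib-free-cube-functor}, the inclusion $i: \emptyset \hookrightarrow \Set{1}$ induces, via Lemma~\ref{lem:pullbacks-of-free-cubes} and Proposition~\ref{prop:induced-map-on-hofib}, a natural transformation $C(\Set{1}) \to C(\emptyset)$ whose effect on $\hofib$ agrees with the augmentation $\Perp_U F(X) \to F(X)$ of Definition~\ref{def:crn-perp} (i.e., the map induced by the fold $\bigvee^{\abs{U}} X \to X$ at the initial vertex). Next I would likewise identify $\delta: \Perp \to \Perp\Perp$ with $C$ applied to the fold $\Set{1,2} \to \Set{1}$: under the isomorphism $\Delta(\mathbf{k},U) \cong U^{\mathbf{k}}$, this fold diagonally embeds indexing functions and corresponds under Lemma~\ref{lem:pullbacks-of-free-cubes} to the cube map that doubly-indexes coproducts, matching the diagonal construction illustrated in Section~\ref{sec:crn-cotriple-sketch}.

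With $\epsilon$ and $\delta$ so identified, both cotriple axioms follow from functoriality of $C$. For the coassociativity square
\begin{equation*}
\xymatrix{
\Perp\Perp \ar[r]^{\Perp\epsilon} \ar[d]_{\epsilon\Perp} & \Perp \ar[d]^{\epsilon} \\
\Perp \ar[r]_{\epsilon} & 1
}
\end{equation*}
I apply $C$ to the evident pushout square of sets
\begin{equation*}
\xymatrix{
\Set{1,2} & \Set{1} \ar[l]_{i_1} \\
\Set{2} \ar[u]^{i_2} & \emptyset \ar[l] \ar[u]
}
\end{equation*}
(which trivially commutes as a diagram of inclusions of finite sets). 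For the counit triangles
\begin{equation*}
\xymatrix{
& \Perp \ar[dl]_{=} \ar[d]^{\delta} \ar[dr]^{=} & \\
\Perp & \Perp\Perp \ar[l]^{\Perp\epsilon} \ar[r]_{\epsilon\Perp} & \Perp
}
\end{equation*}
I apply $C$ to the analogous diagram of sets in which both legs $\Set{1} \to \Set{1,2} \to \Set{1}$ (via $i_1$, $i_2$ followed by the fold) are the identity. Since $C$ is a functor, both diagrams of spaces commute.

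The step I expect to be the main obstacle is the identification of $\delta$ as $C$ applied to the fold map: the illustration in Section~\ref{sec:crn-cotriple-sketch} uses the auxiliary enlargement $\widetilde{\Perp}$ to produce a section, and one must check that $\Power(\mathrm{fold}\times 1)^*$ of the outer free cube of $\Perp\Perp$ really is, after applying $\Free$ and then fold maps $\bigvee X \to X$ at each diagonal vertex, naturally isomorphic to the free cube for $\Perp$, in a way compatible with the homotopy-fiber construction of Proposition~\ref{prop:induced-map-on-hofib}. Once that compatibility is nailed down—essentially a bookkeeping exercise in the combinatorics of the diagonal $\Delta(S,U)$ under $\Delta(f,1)$—functoriality of $C$ delivers the cotriple structure for free.
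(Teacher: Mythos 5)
Your proposal follows essentially the same route as the paper: the paper's proof in Section~\ref{sec:crn-cotriple-proof} builds exactly the functor $C(S)=\hofib F\circ\Free(S,U,c_X)$, identifies $C(\mathbf{k})\cong\Perp_U^k F(X)$, and obtains both coherence diagrams by applying $C$ to the same two diagrams of finite sets you write down. The one worry you flag --- identifying $\delta$ with $C$ of the fold $\Set{1,2}\to\Set{1}$ --- is dissolved in the paper by simply taking that as the \emph{definition} of $\delta$ (it is not specified elsewhere), so no compatibility check is needed there.
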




%
%

\chapter{Properties Of $P_n^d$ And $\Perp_n$}
\label{chap:properties-of-Pnd}
Recall that the $n$-additive approximation functor, $P_n^d$ is given
by 
$$ P_n^d F(X) = P_n (L F_X)(S^0),$$
where $L$ is the left Kan extension over all finite sets and $F_X(Y) =
F(X\wedge Y)$.

In this chapter, we prove basic properties about the functors $P_n^d$
and $\Perp_n$. In order to be able to work effectively with $P_n^d$,
we need to restrict the functors under consideration to those that
give us some control of the behavior on $\pi_0$. To do this, we
introduce two hypotheses that a functor may satisfy in order for our
results to be applicable.

\begin{hypothesis}[Connected Values]
\label{hypothesis-1}
$F$ has connected values (on coproducts of $X$) if the functor $F$ has
the property that for spaces $X$ under consideration, $F(\bigvee X)$
is connected for all finite coproducts of $X$.
\end{hypothesis}


\begin{hypothesis}[Group Values]
\label{hypothesis-2}
In the following definition, let $\cat{T}$ denote the category of
pointed spaces, and let $\cat{C}$ denote the full 
subcategory of $\cat{T}$ generated by all finite coproducts of $S^0$.
Let $\cat{G}$ denote the category of topological groups, and let $U:
\cat{G} \rightarrow \cat{T}$ be the forgetful functor.

$F$ is group-valued (on coproducts of $X$) if there exists a functor
$F'$ so that the following diagram commutes:
$$\xymatrix{
&
\cat{G}
\ar[d]^U
\\
\cat{C}
\ar[ur]^{F'}
\ar[r]_{F(X \wedge -)}
&
\cat{T}
}$$
In this case, we will conflate $F(X\wedge -)$ with its lift to groups.
\end{hypothesis}

Functors that satisfy Hypothesis~\ref{hypothesis-1} (connected values)
or Hypothesis~\ref{hypothesis-2} (group values) on coproducts of $X$
are the subject of the first two sections in this chapter.
Section~\ref{sec:Pnd-preserves-fibrations} gives conditions under
which the functor $P_n^d$ preserves fibrations of functors.
Section~\ref{sec:Pnd-preserves-connectivity} establishes a
fundamentally important lemma for working with the approximations
$P_n^d$: they preserve connectivity of (good) natural transformations
of functors. 

The last section of this chapter
(\S\ref{sec:fiber-contractible-Cartesian}) contains a proof of a
technical result showing that under good circumstances if the total fiber
of a cube is contractible, then it is Cartesian. This
technical lemma makes it possible to deduce useful information from
the functor $\Perp_n$.

%
%
\section{$P_n^d$ Preserves (Group Or Connected) Fibrations}
\label{sec:Pnd-preserves-fibrations}
In this section, we give conditions under which $P_n^d$ preserves
fibrations. This lets us understand the effects of $P_n^d$ when we
understand the decomposition of a functor as a part of a fibration
over some other functor.

\begin{proposition}
\label{prop:LK-group-connected-base}
Given a space $X$ and functors $A$, $B$, and $C$, suppose 
$A(Y) \rightarrow B(Y) \rightarrow C(Y)$ is a fibration sequence for
all finite coproducts $Y = \bigvee X$ of $X$. 
If, on finite coproducts of $X$, either:
\begin{enumerate}
\item $C$ takes connected values
(Hypothesis~\ref{hypothesis-1});
or
\item $B$ and $C$ take group values (Hypothesis~\ref{hypothesis-2}),
and the map $B\rightarrow C$ is a surjective homomorphism of groups, 
\end{enumerate}
then
\begin{equation}
\label{eq:simple-LK-fib}
LA_X(Z) \rightarrow LB_X(Z) \rightarrow LC_X(Z)
\end{equation}
is a fibration sequence for all spaces $Z$. Furthermore, the sequence
is surjective on $\pi_0$.
\end{proposition}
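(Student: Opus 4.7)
My plan is to realize $LF_X(Z)$ as a simplicial space (per Definition~\ref{def:left-Kan}), verify that the three functors $A_X, B_X, C_X$ give a levelwise fibration sequence of simplicial spaces, and then apply one of the fibration-after-realization theorems from Chapter~\ref{chap:categories} (Waldhausen's lemma under hypothesis~(1), Bousfield--Friedlander under hypothesis~(2)) to transport the fibration through the realization. Recall that the $n$-simplices of $LF_X(Z)$ are
\begin{equation*}
\bigvee_{(C_0,\ldots,C_n)} F(X \wedge C_0) \wedge \bigl(\Hom(C_0,C_1) \times \cdots \times \Hom(C_n,Z)\bigr)_+ ,
\end{equation*}
with each $C_i$ a finite coproduct of $S^0$, so that $X \wedge C_0$ is always a finite coproduct of $X$ and the given hypotheses on $A, B, C$ apply to $F(X \wedge C_0)$.

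For the levelwise fibration, fix a tuple $(C_0,\ldots,C_n)$ and set $K = \Hom(C_0,C_1) \times \cdots \times \Hom(C_n,Z)$. By hypothesis $A(X\wedge C_0) \to B(X \wedge C_0) \to C(X \wedge C_0)$ is a fibration, and I would show that $- \wedge K_+$ preserves it by identifying $F \wedge K_+$ with the half-smash $F \times K / (\ast \times K)$: take the product with $K$ (which preserves fibrations) and then collapse along the cofibration $\ast \times K \hookrightarrow F \times K$. Taking a wedge over the index tuples then produces the levelwise fibration, provided either each summand of the base is connected or the whole structure lifts to groups --- which is exactly the role of the two hypotheses.

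For the passage to realizations, under hypothesis~(1) each value $C(X \wedge C_0)$ is connected, hence so is each half-smash $C(X \wedge C_0)\wedge K_+$ and the wedge over index tuples; thus every level of $LC_X(Z)$ is connected and Lemma~\ref{lem:Waldhausen-fibration-lemma} yields the desired fibration after realization, with surjectivity on $\pi_0$ automatic from the connectedness of the base. Under hypothesis~(2), each $B(X\wedge C_0)$ and $C(X\wedge C_0)$ is a topological group, the map between them is a surjective homomorphism, and I would verify the $\pi_\ast$-Kan condition using the grouplike $H$-space structure (as in Corollary~\ref{cor:grouplike-H-spaces-pi-star}) so that Theorem~\ref{thm:Bousfield-Friedlander} delivers the fibration after realization, with $\pi_0$-surjectivity inherited levelwise from the assumed surjection of groups.

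The main obstacle I anticipate is the $\pi_\ast$-Kan verification in the group-valued case: the simplicial space at each level is not itself a simplicial group but a wedge of half-smash products with discrete-like spaces, so the clean ``surjective map of simplicial groups is a fibration'' argument does not apply verbatim and will require either identifying each wedge summand with a twisted product of copies of the group values, or reducing to the group-valued setting by a compatibility argument across wedge summands. A secondary bookkeeping issue is the degenerate summand where $C_0 = \ast$; for the connectedness and group hypotheses to both have teeth, I would handle this case separately, noting that it contributes only $F(\ast) \wedge K_+$ which is either a point (if $F$ is reduced) or handled by the ambient hypotheses on $F(\ast)$.
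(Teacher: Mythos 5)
Your strategy of working level by level in the bar-construction model of $L F_X(Z)$ from Definition~\ref{def:left-Kan} hits a genuine obstruction before realization is even in play: a wedge of fibration sequences is not a fibration sequence. Over a non-basepoint of the summand $C(X\wedge C_0)\wedge K_+$ indexed by one tuple, the fiber is the single corresponding $A(X\wedge C_0)\wedge K_+$, while over the wedge point the preimage is the entire wedge of all the $A$-summands, and these need not be weakly equivalent; connectedness of the base summands does not repair this (compare the wedge of path fibrations $PS^1 \to S^1$: the strict fiber of $PS^1\vee PS^1 \to S^1\vee S^1$ over the basepoint is $\Omega S^1\vee \Omega S^1$, but the homotopy fiber is $\Omega(S^1\vee S^1)$, whose $\pi_0$ is a free group on two generators). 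The second obstruction is the one you flagged yourself and did not resolve: under Hypothesis~\ref{hypothesis-2} the simplicial levels of your model are wedges of half-smashes, not groups, so neither the $\pi_*$-Kan condition nor ``a surjective map of simplicial groups is a fibration'' is available for them.

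The paper avoids both problems by choosing a different simplicial model. Because the indexing subcategory consists of $0$-dimensional objects, Proposition~\ref{prop:left-kan-commutes-with-some-realizations} (with $a=0$) together with Proposition~\ref{prop:left-kan-agrees-with-orig} identifies $L F_X(Z)$ with $\realization{F(X\wedge Z_\cdot)}$ for a simplicial set $Z_\cdot$ realizing $Z$. Each simplicial level is then literally $F(\bigvee X)$ for some finite coproduct, so the levelwise sequence is a fibration directly by hypothesis, with no wedge or half-smash manipulation. Under Hypothesis~\ref{hypothesis-1} the base is levelwise connected and Lemma~\ref{lem:Waldhausen-fibration-lemma} applies; under Hypothesis~\ref{hypothesis-2} the total space and base are honest simplicial topological groups, hence satisfy the $\pi_*$-Kan condition, and the assumed levelwise surjection of groups makes $\pi_0 B \to \pi_0 C$ a fibration of simplicial sets, so Theorem~\ref{thm:Bousfield-Friedlander} applies. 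Surjectivity on $\pi_0$ is then just the observation that the realization of a levelwise $0$-connected map is $0$-connected. You should reroute your argument through this identification rather than the bar construction.
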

\begin{proof}
Equation~\eqref{eq:simple-LK-fib} is equivalent to
\begin{equation}
\label{eq:A-B-C-realization-seq}
\realization{A(X \wedge Z_{\cdot})}
\rightarrow 
\realization{B(X \wedge Z_{\cdot})}
\rightarrow 
\realization{C(X \wedge Z_{\cdot})} ,
\end{equation}
and since $Z_\cdot$ is discrete, $C(X \wedge Z_{\cdot}) = C(\bigvee
X)$, for some coproduct of $X$. In the case of
Hypothesis~\ref{hypothesis-1}, the base space is
always connected, so Waldhausen's Lemma
(Lemma~\ref{lem:Waldhausen-fibration-lemma})
tells us that the fiber of the realization is the realization of the
fibers, so \eqref{eq:simple-LK-fib} is a fibration.
In the case of Hypothesis~\ref{hypothesis-2}, the total space and the
base space are simplicial groups, and hence satisfy the $\pi_*$-Kan
condition (\ref{def:pi-star-Kan}). Furthermore, since we have
assumed that the map $B\rightarrow C$ is surjective, the
second map in \eqref{eq:A-B-C-realization-seq} is surjective levelwise. In
particular, a surjective map of simplicial groups is a fibration, so
this map is a fibration on $\pi_0$. That allows us to apply
Theorem~\ref{thm:Bousfield-Friedlander} (Bousfield-Friedlander) 
to
conclude that \eqref{eq:simple-LK-fib} is a fibration.
Surjectivity on $\pi_0$ follows from noting that the realization of a
levelwise $0$-connected map is $0$-connected.
\end{proof}

\begin{corollary}
\label{cor:Pnd-group-connected-base-no-pi0}
Under the conditions of
Proposition~\ref{prop:LK-group-connected-base}, 
\begin{equation}
\label{eq:simple-Pnd-fib}
P_n^d A(X) \rightarrow P_n^d B(X) \rightarrow P_n^d C(X)
\end{equation}
is a fibration sequence.
\end{corollary}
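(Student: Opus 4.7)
The plan is to transport the fibration of left Kan extensions already established in Proposition~\ref{prop:LK-group-connected-base} through the construction $F \mapsto P_n F(S^0)$, using the fact that $P_n$ commutes with finite homotopy inverse limits.

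First, I would invoke Proposition~\ref{prop:LK-group-connected-base} directly: its hypotheses are exactly what is being assumed, so for every space $Z$ the sequence
\begin{equation*}
 L A_X(Z) \to L B_X(Z) \to L C_X(Z)
\end{equation*}
is a fibration sequence, and surjective on $\pi_0$. Equivalently, $L A_X$ is naturally equivalent to $\hofib\bigl(L B_X \to L C_X\bigr)$ as a functor of $Z$.

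Next I would apply $P_n$ to this functorial fibration sequence. As noted in the paragraph right after Theorem~2.34 of the excerpt, ``The functor $P_n$ commutes with finite homotopy inverse limits, since filtered $\hocolim$ and finite $\holim$ commute (up to equivalence).'' The homotopy fiber is a finite homotopy inverse limit (a pullback over the basepoint), so
\begin{equation*}
 P_n(L A_X) \;\weakequiv\; P_n\bigl(\hofib(L B_X \to L C_X)\bigr) \;\weakequiv\; \hofib\bigl(P_n(L B_X) \to P_n(L C_X)\bigr).
\end{equation*}
In other words, $P_n(L A_X) \to P_n(L B_X) \to P_n(L C_X)$ is again a fibration sequence of functors in the variable $Z$.

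Finally I would evaluate at $Z = S^0$. By definition $P_n^d F(X) = P_n(L F_X)(S^0)$, and evaluation at a fixed space preserves homotopy fibers (it is just applied pointwise), so the sequence
\begin{equation*}
 P_n^d A(X) \to P_n^d B(X) \to P_n^d C(X)
\end{equation*}
is a fibration sequence. There is no real obstacle; the only point worth a line of care is confirming that the $\pi_0$ surjectivity in Proposition~\ref{prop:LK-group-connected-base} combined with the functorial identification of $\hofib$ really does give an honest fibration rather than merely a long cofibre-style sequence, but since $P_n$ is built from $\holim$ and filtered $\hocolim$ it preserves the $\hofib$ description up to equivalence, which is exactly the statement being asserted.
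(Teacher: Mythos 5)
Your proposal is correct and follows essentially the same route as the paper: apply Proposition~\ref{prop:LK-group-connected-base} to get the fibration of left Kan extensions, note that $P_n$ preserves it (which the paper asserts tersely and you justify via $P_n$ commuting with finite homotopy inverse limits, a fact the paper records), and then evaluate at $S^0$. No further comment is needed.
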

\begin{proof}
By
Proposition~\ref{prop:LK-group-connected-base}, applying $L(-)_X$ to the
sequence $A\rightarrow B\rightarrow C$ yields a fibration sequence of
functors (\emph{i.e.}, a fibration when evaluated at any space). 
Applying $P_n$ preserves this fibration, as does evaluation
at $S^0$. That is the definition of $P_n^d$.
\end{proof}

\begin{remark}
Notice that this argument does not show that the resulting fibration
sequence is surjective on $\pi_0$. Even $P_1$ need not preserve
connectivity of maps (or spaces). For example, the functor $F(X)
= \Sigma\Omega X$ always produces $0$-connected spaces, but $P_1 F(X)
\simeq QX$
need only be $(-1)$-connected. ($F$ does not increase the
connectivity of $0$-connected spaces, just $(-1)$-connected spaces.)
\end{remark}

%
%
\section{$P_n^d$ Preserves Connectivity Of Natural Transformations}
\label{sec:Pnd-preserves-connectivity} 
In this section, we establish a property of fundamental importance
when working with $P_n^d$: the $n$-additive approximation preserves
the connectivity of natural transformations that satisfy some basic
good properties.

\begin{theorem}
\label{thm:Pnd-preserves-connectivity}
Let $F$ and $G$ be functors that have connected values
(Hypothesis~\ref{hypothesis-1}) 
or group values
(Hypothesis~\ref{hypothesis-2}) 
on coproducts of $X$.
Let $\eta: F \rightarrow G$ be a natural transformation of functors,
and suppose that $\eta$ lifts to a homomorphism of groups on $\pi_0$
in the case of Hypothesis~\ref{hypothesis-2}.
If $\eta$ is $k$-connected on coproducts of $X$, then $P_n^d \eta$ is
$k$-connected.
\end{theorem}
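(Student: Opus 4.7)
The plan is to prove the claim in two stages: first, upgrade the hypothesis on $\eta$ from being $k$-connected on coproducts of $X$ to a statement about $L\eta_X$ being $k$-connected on every space; second, argue that applying $P_n$ and evaluating at $S^0$ preserves this $k$-connectivity.

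Step 1 uses the two main features of the left Kan extension $L(-)_X$: it commutes with realizations of simplicial sets (Proposition~\ref{prop:left-kan-commutes-with-some-realizations} for $a = 0$), and it agrees up to equivalence with $F(X\wedge-)$ on finite discrete sets. Given a space $Z \simeq \realization{Z_\cdot}$, these yield $LF_X(Z) \simeq \realization{F(X \wedge Z_\cdot)}$, and the simplicial levels $X \wedge Z_n$ are finite coproducts of copies of $X$, so the hypothesis gives levelwise $k$-connectivity of $F(X \wedge Z_n) \to G(X \wedge Z_n)$. Under Hypothesis~\ref{hypothesis-1} (connected values), Waldhausen's lemma converts this into a fibration on realization; under Hypothesis~\ref{hypothesis-2} (group values), the Bousfield--Friedlander theorem does (using the group-homomorphism condition, plus surjectivity on $\pi_0$ which follows from $\eta$ being $k$-connected with $k \geq 0$). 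The realized fiber is $(k-1)$-connected by the standard $E^1$-spectral sequence for a simplicial space with uniformly $(k-1)$-connected levels, so $L\eta_X$ is a $k$-connected map at every space.

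Step 2 handles the $P_n$-and-evaluation. Since $P_n$ commutes with finite homotopy inverse limits (Section~\ref{sec:excisive-functors}), one has
\begin{equation*}
\hofib(P_n^d\eta)(X) \simeq P_n(\hofib(L\eta_X))(S^0);
\end{equation*}
writing $H := \hofib(L\eta_X)$, we need $P_n H(S^0)$ to be $(k-1)$-connected. Decomposing $P_n H = \hocolim_j T_n^j H$ and using that filtered colimits preserve connectivity, it suffices to show that $T_n$ preserves the property of taking $(k-1)$-connected values.

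The main obstacle lies here: a generic cubical-pullback argument for $T_n H(Y) = \holim_{U \in \Power_0([n])} H(Y * U)$ would formally lose one level of connectivity per application of $T_n$, which after iterating would be fatal. The resolution exploits the extra structure of $H$: being itself a left Kan extension (the fiber of one, by Step 1 applied to the fibration hypothesis), $H$ commutes with realizations and is determined by its values on coproducts of $X$; and each $Y * U$ for $U \neq \emptyset$ is automatically connected, providing the vanishing needed at the relevant edge of the Bousfield--Kan spectral sequence $E_2^{p,q} = \lim^p \pi_q H(Y * U)$ for the $\holim$. Stitching these together, with separate treatment of the connected-values versus group-values hypothesis (the latter ensuring the $\pi_*$-Kan condition that justifies the spectral-sequence-style collapse), shows $T_n H$ retains $(k-1)$-connected values, completing the argument after evaluation at $S^0$.
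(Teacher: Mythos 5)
Your Step~1 is sound and matches what the paper does implicitly: $L\eta_X$ is $k$-connected on all spaces because $LF_X(Z)\simeq\realization{F(X\wedge Z_\cdot)}$ and the levels are coproducts of $X$. The gap is in Step~2. The claim that ``$T_n$ preserves the property of taking $(k-1)$-connected values'' is false in general, and your proposed resolution does not repair it. Already for a reduced $H$ one has $T_1H(Y)\simeq\Omega H(\Sigma Y)$, so a single $T_n$ loses connectivity unless $H$ gains connectivity under suspension of its input. For general $n$, the punctured $(n+1)$-cube $\holim$ over $\Power_0([n])$ (whose nerve has dimension $n$) can lose up to $n$ levels of connectivity, because the potentially nonvanishing $\lim^p$ terms for $1\le p\le n$ in the Bousfield--Kan spectral sequence involve the \emph{higher} homotopy of $H(Y*U)$, not its low-dimensional vanishing; the connectedness of $Y*U$ for $U\neq\emptyset$ buys you exactly one extra level of connectivity of $H(Y*U)$ (via the realization being levelwise trivial in simplicial degree $0$), not the $n$ levels you need. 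So $T_n^jH(S^0)$ is only formally $(k-1-j(n-1))$-connected, which diverges for $n\ge 2$, and the colimit defining $P_nH(S^0)$ gives you nothing. (There is also a smaller issue: a $(k-1)$-connected homotopy fiber over the basepoint does not by itself make the map $k$-connected; you still need surjectivity on $\pi_0$ of the base, which is the recurring $\pi_0$ problem this paper handles separately.)

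The paper avoids iterating $T_n$ entirely. It first reduces to reduced functors, then inducts up the Goodwillie tower of $LF_X$, comparing $P_m^d F$ and $P_m^d G$ layer by layer. The layers $D_m^d$ are expressed through the derivative spectra $\partial^{(m)}LF_X(\basept)$, whose $k$-th space is $\Omega^{k(m-1)}\realization{\Perp_m F_X(S^k_\cdot)}$; since $cr_m$ is multireduced, the simplicial space $\Perp_m F_X(S^k_\cdot)$ is contractible below simplicial degree $k$, so its realization is $(mk+w)$-connected and the map induced by a $w$-connected $\eta$ is $(mk+w)$-connected there. After the $\Omega^{k(m-1)}$ this is $(k+w)$-connected on the $k$-th space, i.e.\ a $w$-connected map of \emph{spectra} --- the loss you cannot avoid at the space level disappears at the spectrum level. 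The $\pi_0$ control needed to run the Five Lemma up the tower comes from connectivity of these derivative spectra (Lemma~\ref{lem:derivatives-connective} and Corollary~\ref{cor:Pn+1-Pn-surjective}). If you want to keep your outline, you would have to replace the $T_n$-iteration in Step~2 with this passage through $D_m$ and the derivative spectra, or else prove a stable-excision-type statement for $\hofib(L\eta_X)$ strong enough to make the cubes $U\mapsto H(Y*U)$ highly Cartesian --- which is essentially the same work.
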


First let us recall some more of Goodwillie's calculus machinery. For
our purposes the action of $\Sigma_n$ is not particularly important,
but we describe it here for completeness. This exposition is based on
Goodwillie's lectures in Aberdeen, Scotland, June 18--23, 2001.

Before we begin, the definition of a derivative will require an action
of $\Sigma_n$ on 
$\Omega^{n-1}$. To write this, we will let $V_n$ denote the standard
representation of $\Sigma_n$ on $\mathbb{R}^n$. Let $\overbar{V_n}$
denote the representation of $\Sigma_n$ on $\mathbb{R}^{n-1}$ created
by splitting the one-dimensional trivial representation off of $V_n$.
Let $S^{V}$ be the one-point compactification of $V$. Let $\Omega^V$
denote $\Map(S^V, -)$. For $k\in\mathbb{N}$, let $k\cdot V$ denote
the product of $V$ with itself $k$ times.

\begin{definition}[Derivative of $F$]
\label{def:derivative-of-F}
The $n^{\text{th}}$ derivative of $F$ (at $\basept$), denoted
$\partial^{(n)} F(\basept)$, is the following spectrum with
$\Sigma_n$ action, which we will denote $\mathbf{Y}$.
The space $Y_k$ in the spectrum is
$\Omega^{k \cdot \overbar{V_n}} cr_n F(S^k,\ldots, S^k)$. The
structure map $Y_k \rightarrow \Omega Y_{k+1}$ arises from applying
$T_1$ in each variable of $cr_n$ (that is, suspending inside and looping
outside). Given any symmetric functor of $n$ variables
$H(A,\ldots,A)$,  this procedure produces a 
$\Sigma_n$-equivariant map $H (A,\ldots,A)
\rightarrow \Omega^n H (S^1 \wedge A, \ldots,  S^1\wedge A)$.
Here $\Omega^n$ is $\Omega^{V_n}$, so we can split off the trivial
representation and write this as a map
$$H (A,\ldots,A)
\rightarrow 
\Omega \Omega^{\overbar{V_n}} 
H (S^1 \wedge A, \ldots,  S^1\wedge A).
$$
Using $H = \Omega^{k \cdot \overbar{V_n}} cr_n F$ and $A=S^k$
produces the desired map $Y_k \rightarrow \Omega Y_{k+1}$.
\end{definition}

We will only really be interested in the nonequivariant homotopy type;
for our application we only need to understand the spectrum
$\mathbf{Y}$ with $Y_k = \Omega^{k(n-1)}\Perp_n F(S^k)$.

When $F$ satisfies the limit axiom
(\ref{def:limit-axiom}), we can express $D_n F(X)$ using the derivative:
\begin{equation}
\label{eq:def-of-deriv}
D_n F(X) \simeq \Omega^\infty \left( \partial^{(n)} F(\basept)
\wedge_{h \Sigma_n} X^{\wedge n} \right).
\end{equation}
In order to prove Theorem~\ref{thm:Pnd-preserves-connectivity}, we
first want to establish that $\partial^{(n)} LF_X(\basept)$ is a
connective spectrum for all $n$. Note that $L F_X$ always satisfies
the limit axiom (Lemma~\ref{lem:left-Kan-limit-axiom}).

\begin{lemma}
\label{lem:derivatives-connective}
If $F$ is a reduced functor that 
has connected values (Hypothesis~\ref{hypothesis-1}) or group
values (Hypothesis~\ref{hypothesis-2}) on coproducts of $X$, then for
all $n$, the derivative $\partial^{(n)} LF_X(\basept)$
of $L F_X$ is a connective spectrum.
\end{lemma}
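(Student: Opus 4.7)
The plan is to identify $\partial^{(n)}LF_X(\basept)$ with a stabilized realization whose building blocks are visibly connective, thereby obtaining connectivity of the whole spectrum.

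First I would unwind both the left Kan extension and the cross-effect in terms of realizations. Since the subcategory along which $LF_X$ is defined consists of $0$-dimensional (discrete) spaces, Proposition~\ref{prop:left-kan-commutes-with-some-realizations} gives that $LF_X$ commutes with the realization of any pointed simplicial set, and Proposition~\ref{prop:left-kan-agrees-with-orig} gives $LF_X(Z) \simeq F(X \wedge Z)$ on finite discrete $Z$. Thus, for any simplicial set model $Y_{\cdot}$, we obtain
\[
LF_X(Y) \simeq \realization{[i] \mapsto F(X \wedge Y_i)}.
\]
Applying this vertex-by-vertex to the cube defining $\Perp_n$ shows $\Perp_n LF_X(Y)$ is the total homotopy fiber of a cube of realizations.

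Next I would commute the total homotopy fiber defining $\Perp_n$ past this realization by iteratively applying the Bousfield--Friedlander theorem (Theorem~\ref{thm:Bousfield-Friedlander}). The required $\pi_*$-Kan condition (Definition~\ref{def:pi-star-Kan}) is exactly what the hypothesis supplies: under Hypothesis~\ref{hypothesis-1}, each simplicial level $F(\bigvee X)$ is connected, so the $\pi_*$-Kan condition holds trivially; under Hypothesis~\ref{hypothesis-2}, each level is a topological group and therefore a grouplike $H$-space, so Corollary~\ref{cor:grouplike-H-spaces-pi-star} applies. The net result, applied with the multi-simplicial sphere model $S^k \simeq (S^1_{\cdot})^{\wedge k}$, is an identification
\[
\Perp_n LF_X(S^k) \simeq \realization{[i_1,\ldots,i_k] \mapsto \Perp_n F(\bigvee^{i_1\cdots i_k} X)}.
\]

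Finally I would extract connectivity. Because $F$ is reduced, $\Perp_n F(\basept) \simeq \basept$, so any multi-index with some $i_j = 0$ contributes trivially to the realization; only the multi-degrees with all $i_j \geq 1$ matter. This concentration, together with the $k$ independent directions of bar-like structure, forces the realization to be approximately $kn$-connected, so that after passing through the $\Omega^{k(n-1)}$ in the definition of $Y_k = \Omega^{k(n-1)} \Perp_n LF_X(S^k)$ and assembling the spectrum, the colimit $\pi_j^{sp} = \colim_k \pi_{j+kn}\Perp_n LF_X(S^k)$ vanishes for $j < 0$. The structure maps of the derivative spectrum preserve this bound, so $\partial^{(n)}LF_X(\basept)$ is connective.

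The main obstacle is the last step: rigorously verifying the $kn$-fold connectivity of the multi-simplicial realization. The factor of $k$ is immediate from the $k$ independent directions of ``basepoint-at-zero'' structure, but extracting the additional factor of $n$ requires exploiting the $n$-variable nature of $\Perp_n$, essentially arguing that each direction of the bar construction interacts multiplicatively with the cross-effect's contribution. This is where the compatible sections of Lemma~\ref{lem:crn-cube-has-sections} and the identity $\pi_k\Perp_n = \Perp_n \pi_k$ from Lemma~\ref{lem:pi-k-perp-is-perp-pi-k} become essential to the book-keeping.
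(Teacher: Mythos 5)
Your setup coincides with the paper's: commute $L$, $\Perp_n$, and realization (this is exactly Lemma~\ref{lem:perp-commutes-with-realization-v2}, proved by the Waldhausen / Bousfield--Friedlander dichotomy you describe), then feed a connectivity estimate into the colimit computing $\pi_m\,\partial^{(n)}LF_X(\basept)$. But the step you flag as ``the main obstacle'' is the entire content of the lemma, and the route you sketch for it does not close. Modelling $S^k$ as $(S^1_\cdot)^{\wedge k}$ gives $k$ simplicial directions each contributing one degree of connectivity, hence only roughly $k$ in total; the missing factor of $n$ cannot be extracted from the compatible sections of Lemma~\ref{lem:crn-cube-has-sections} or from $\pi_k\Perp_n=\Perp_n\pi_k$ (Lemma~\ref{lem:pi-k-perp-is-perp-pi-k}), since neither improves connectivity. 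Moreover ``approximately $kn$-connected'' is not sufficient: if $\Perp_n LF_X(S^k)$ were only $(nk+c)$-connected for some fixed $c\le -2$, then after looping by $k(n-1)$ and passing to $\colim_k\pi_{m+kn}$ you would only get vanishing for $m\le c$, leaving possibly nonzero negative homotopy. You need the sharp bound that $\realization{\Perp_n F(X\wedge S^k_\cdot)}$ is $(nk-1)$-connected.

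The paper obtains that bound by multiplying out the $n$ \emph{inputs of the cross effect} rather than the $k$ smash factors of the sphere. Keep the minimal simplicial model of $S^k$, so that $S^k_i=\basept$ for $i<k$, and regard $[i]\mapsto cr_nF(X\wedge S^k_i,\ldots,X\wedge S^k_i)$ as the diagonal of the $n$-fold multisimplicial space $[i_1,\ldots,i_n]\mapsto cr_nF(X\wedge S^k_{i_1},\ldots,X\wedge S^k_{i_n})$. Since $cr_nF$ is contractible whenever any one input is contractible, this multisimplicial space is contractible unless every $i_j\ge k$, and the Eilenberg--Zilber theorem (Theorem~\ref{thm:eilenberg-zilber}, via Corollary~\ref{cor:useful-eilenberg-zilber}) then shows the realization of the diagonal is $(nk-1)$-connected. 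Subtracting $k(n-1)$ gives that $Y_k=\Omega^{k(n-1)}\Perp_n LF_X(S^k)$ is $(k-1)$-connected, and the colimit computation you already wrote down finishes the argument. Your instinct that the $n$-variable nature of $\Perp_n$ must be exploited is correct, but the mechanism is this Eilenberg--Zilber blow-up of the $n$ cross-effect variables; the $(S^1)^{\wedge k}$ decomposition of the sphere is a red herring and can be discarded.
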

\begin{proof}
Lemma~\ref{lem:perp-commutes-with-realization-v2} essentially shows
that $L$ and $\Perp$ commute; we have defered the proof to the end of
this section so as not to get
bogged down in technicalities.
Using Lemma~\ref{lem:perp-commutes-with-realization-v2}, we can compute
$\Perp_n L F_X(S^k) \simeq \Perp_n \realization {F (X \wedge
  S^k_\cdot)}$ by computing $\realization {\Perp_n F(X\wedge
  S^k_\cdot)}$. Now $\Perp_n F(X\wedge S^k_\cdot) = cr_n F(X\wedge
S^k_\cdot, \ldots, X \wedge S^k_\cdot)$, and $cr_n F$ is contractible
if any one of its inputs is contractible, 
so by the Eilenberg-Zilber Theorem
(\ref{thm:eilenberg-zilber}), $\realization{\Perp_n F(X \wedge
  S^k_\cdot)}$ is $(n k-1)$-connected. Subtracting $k(n-1)$ from
$nk-1$ shows that  $\Omega^{k(n-1)} \Perp_n
(LF_X)(S^k)$ is $(k-1)$-connected. 

To determine the connectivity of the spectrum $\mathbf{Y} =
\partial^{(n)} LF_X(\basept)$, we compute the colimit:
\begin{align*}
\pi_m \mathbf{Y} 
&=
\colim_k \pi_{m+k} 
Y_k
\\
&= 
\colim_k \pi_{m+k} 
\Omega^{k(n-1)} \Perp_n (LF_X)(S^k).
\end{align*}
We have just shown that the space $\Omega^{k(n-1)} \Perp_n
(LF_X)(S^k)$ is $(k-1)$-connected, so $\pi_m \mathbf{Y} = 0$ for
$m<0$. That is, $\partial^{(n)} LF_X(\basept) = \mathbf{Y}$ is a
connective spectrum. 
\end{proof}

\begin{corollary}
\label{cor:Pn+1-Pn-surjective}
If $F$ is a reduced functor that 
has connected values (Hypothesis~\ref{hypothesis-1}) or group
values (Hypothesis~\ref{hypothesis-2}) on coproducts of $X$, then for
all $n\ge 1$, the map $P_{n+1}^d F(X)\rightarrow P_n^d F(X)$ is
surjective in $\pi_0$.
\end{corollary}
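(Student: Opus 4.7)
The plan is to combine Lemma~\ref{lem:derivatives-connective} with a one-step extension of the layer fibration for the Taylor tower of $L F_X$ evaluated at $S^0$. Since $F$ is reduced, the functor $L F_X$ is reduced (by Proposition~\ref{prop:left-kan-agrees-with-orig}, $L F_X(\basept) \simeq F_X(\basept) = F(\basept) = \basept$), and so the layers $D_{n+1}(L F_X)$ are reduced and homogeneous of degree $n+1$. Because $L F_X$ satisfies the limit axiom (Lemma~\ref{lem:left-Kan-limit-axiom}), the identification
$$
D_{n+1}(L F_X)(Y) \simeq \LoopInfty\bigl(\partial^{(n+1)} (L F_X)(\basept) \wedge_{h\Sigma_{n+1}} Y^{\wedge(n+1)}\bigr)
$$
from \eqref{eq:def-of-deriv} applies. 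In particular, $D_{n+1}(L F_X)$ is valued in infinite loop spaces and admits a canonical one-fold delooping $\Omega^{-1} D_{n+1}(L F_X)$. I would then invoke Theorem~\ref{thm:delooping-Dn} to extend the layer fibration one stage, obtaining a fibration sequence
$$
P_{n+1}(L F_X)(S^0) \to P_n(L F_X)(S^0) \to \Omega^{-1} D_{n+1}(L F_X)(S^0).
$$

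Next I would compute $\pi_0$ of the target. Specializing the formula above at $Y = S^0$ and delooping once gives
$$
\Omega^{-1} D_{n+1}(L F_X)(S^0) \simeq \LoopInfty\bigl(\Sigma\,(\partial^{(n+1)} (L F_X)(\basept))_{h\Sigma_{n+1}}\bigr),
$$
so its $\pi_0$ equals $\pi_{-1}$ of the spectrum $(\partial^{(n+1)} (L F_X)(\basept))_{h\Sigma_{n+1}}$. By Lemma~\ref{lem:derivatives-connective}, the spectrum $\partial^{(n+1)} (L F_X)(\basept)$ is connective, and since homotopy orbits are a homotopy colimit of spectra they do not lower connectivity, so the homotopy orbit spectrum is itself connective. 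Hence $\pi_{-1} = 0$, and so $\pi_0 \Omega^{-1} D_{n+1}(L F_X)(S^0) = 0$.

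Exactness of the extended long exact sequence associated to the displayed fibration then forces the map $\pi_0 P_{n+1}(L F_X)(S^0) \to \pi_0 P_n(L F_X)(S^0)$ to be surjective, which by definition of $P_n^d$ is exactly the desired surjectivity. The main delicate point, and the one I would want to verify most carefully, is the appeal to Theorem~\ref{thm:delooping-Dn}: that theorem is stated for reduced analytic functors, and while $L F_X$ is reduced it is not obviously analytic, so I would want to check that the delooping argument in its proof relies only on the infinite-loop-space structure of $D_{n+1}$ (which we have here via \eqref{eq:def-of-deriv} and Lemma~\ref{lem:left-Kan-limit-axiom}) rather than on full analyticity.
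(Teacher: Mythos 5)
Your proposal is correct and follows essentially the same route as the paper: deloop the layer fibration via Theorem~\ref{thm:delooping-Dn}, use Lemma~\ref{lem:derivatives-connective} to see that $\partial^{(n+1)} LF_X(\basept)$ is connective so that $\pi_0 \Omega^{-1} D_{n+1} LF_X = 0$, and conclude surjectivity on $\pi_0$ from the long exact sequence. The caveat you raise about applying the delooping theorem to $L F_X$ without verifying analyticity is a fair one, but the paper's own proof makes the identical appeal without further comment.
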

\begin{proof}
Theorem~\ref{thm:delooping-Dn} (Goodwillie's delooping of $D_n$) shows
that the fibration 
$$D_{n+1} LF_X \rightarrow P_{n+1} L F_X \rightarrow P_n LF_X$$
deloops to a fibration
$$ P_{n+1} LF_X \rightarrow P_n LF_X \rightarrow \Omega^{-1} D_{n+1} LF_X.$$
The delooping of $D_{n+1} LF_X$ consists of smashing with the
suspension of $\partial^{(n+1)} LF_X(\basept)$ and taking homotopy orbits. 
By Lemma~\ref{lem:derivatives-connective}, the spectrum
$\partial^{(n+1)} LF_X(\basept)$ is connective, so its suspension is
$0$-connected; hence $\pi_0\Omega^{-1} D_{n+1} LF_X = 0$, so
evaluation at $S^0$ shows that the map 
$P_{n+1}^d F(X) \rightarrow P_{n}^d F(X)$ is surjective on $\pi_0$.
\end{proof}

\begin{lemma}
\label{lem:Dnd-preserves-conn}
Let $F$ and $G$ be reduced functors, and suppose that either 
$F$ and $G$ have connected values
(Hypothesis~\ref{hypothesis-1}) on coproducts of $X$, or
$F$ and $G$ have group values (Hypothesis~\ref{hypothesis-2}) on
coproducts of $X$. 
If $\eta: F\rightarrow G$ is a natural transformation that is
$w$-connected on coproducts of $X$, then for $n\ge 1$, the natural
transformation $D_n^d \eta$ is $w$-connected.
\end{lemma}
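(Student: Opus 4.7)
The plan is to reduce to the $w$-connectedness of the induced map on derivative spectra, via Goodwillie's formula $D_n^d F(X) \simeq \Omega^\infty((\partial^{(n)} LF_X(\basept))_{h\Sigma_n})$. First I would show that $L\eta_X\colon LF_X \to LG_X$ is a $w$-connected natural transformation of functors on spaces. From the simplicial presentation (Definition~\ref{def:left-Kan}), $LF_X(Y)$ is a realization whose $k$-th level is a wedge of smash products $F(X \wedge C_0) \wedge (\mathrm{Hom}\text{-spaces})_+$ with $C_0$ a finite pointed set, so that $X \wedge C_0$ is a coproduct of copies of $X$. By hypothesis the level-wise map is $w$-connected, and Hypotheses~\ref{hypothesis-1} or \ref{hypothesis-2} together with Lemma~\ref{lem:LCFX-good} make the simplicial space good enough that the realization inherits $w$-connectivity (via a skeletal / spectral sequence argument with $\pi_*$-Kan controls).

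Second, I would analyze the induced map on derivative spectra $\partial^{(n)} L\eta_X(\basept)$, whose $k$-th space is $\Omega^{k(n-1)} \Perp_n LF_X(S^k) \simeq \Omega^{k(n-1)} \realization{\Perp_n F(X \wedge S^k_\cdot)}$ (using Lemma~\ref{lem:perp-commutes-with-realization-v2}). At each simplicial degree $j$, the map is $\Perp_n$ applied to a $w$-connected map of $F$-values on coproducts of $X$; by Lemma~\ref{lem:pi-k-perp-is-perp-pi-k} (that $\Perp_n$ commutes with $\pi_*$ on cubes with compatible sections), this level-wise map is $w$-connected. Combining with connectivity of the derivative spectra (Lemma~\ref{lem:derivatives-connective}) and the spectrum stabilization identifying $\pi_m$ with $\pi_m \Perp_n F(X)$ in the relevant range, the induced map $\partial^{(n)} L\eta_X(\basept)$ is $w$-connected as a map of connective spectra. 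Homotopy orbits and $\Omega^\infty$ preserve $w$-connectivity between connective spectra, yielding that $D_n^d \eta$ is $w$-connected.

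The hard part will be controlling the connectivity of the derivative-spectrum map. Naive level-wise estimates on $\Omega^{k(n-1)} \Perp_n LF_X(S^k)$ give only $(w + k(2-n))$-connectivity, which degrades for $n \ge 3$. Resolution uses the $\Omega$-spectrum structure of the derivative (so that $\pi_m$ is computed at the stage $k=0$, where $Y_0 = \Perp_n F(X)$ gives the clean $w$-connected estimate), or alternatively an induction on $n$ using the delooping fibration $P_n \to P_{n-1} \to \Omega^{-1} D_n$ (Theorem~\ref{thm:delooping-Dn}) together with the parallel statement for $P_{n-1}^d$ and a 5-lemma chase (where surjectivity on $\pi_w$ is controlled by the fact that the infinite-loop-space structure makes all relevant homotopy groups abelian).
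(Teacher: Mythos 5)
Your overall strategy is the paper's: reduce to showing that $\partial^{(n)} LF_X(\basept) \rightarrow \partial^{(n)} LG_X(\basept)$ is a $w$-connected map of spectra, then observe that homotopy orbits, smashing with $(S^0)^{\wedge n}$, and $\LoopInfty$ preserve connectivity of maps of connective spectra. You also correctly locate the hard point: a levelwise estimate that loses $k(n-1)$ to the loops degrades as $k\to\infty$ for $n\ge 3$. But both of your proposed repairs fail. The derivative spectrum is \emph{not} an $\Omega$-spectrum --- its structure maps come from stabilizing ($T_1$ in each variable), and $\pi_m \partial^{(n)}LF_X(\basept) = \colim_k \pi_{m+k} Y_k$ is genuinely a colimit, not something read off at $k=0$; if it were computed at $k=0$ then $D_nF$ would coincide with $\Perp_n F$. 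And the induction via the delooping $P_n \to P_{n-1} \to \Omega^{-1}D_n$ is circular: from $w$-connectivity of the map on $P_{n-1}^d$ alone you cannot extract anything about $\Omega^{-1}D_{n}^d$ without already knowing the map on $P_n^d$, and in this development the statement for $P_n^d$ (Corollary~\ref{cor:Pnd-preserves-conn}) is \emph{deduced from} the present lemma by inducting up the tower.

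The missing idea is a sharper connectivity estimate at each level $k$, exploiting multireducedness. The $k$-th space of $\partial^{(n)}LF_X(\basept)$ is $\Omega^{k(n-1)}\realization{\Perp_n F_X(S^k_\cdot)}$, and $\Perp_n F_X(S^k_\cdot) = cr_n F_X(S^k_\cdot,\ldots,S^k_\cdot)$ is an $n$-fold multisimplicial space that is contractible whenever any one of its $n$ inputs is a point; since $S^k_\cdot$ is trivial below simplicial degree $k$, the multisimplicial object is contractible unless all $n$ degrees are $\ge k$. Hence (as in Lemma~\ref{lem:derivatives-connective}) both realizations are $(nk-1)$-connected and the map between them is $(nk+w)$-connected, not merely $(k+w)$-connected. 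Applying $\Omega^{k(n-1)}$ leaves a $(k+w)$-connected map of $k$-th spaces, and in the colimit $\pi_m = \colim_k \pi_{m+k}Y_k$ this is exactly what is needed: an isomorphism for $m<w$ and a surjection for $m=w$, i.e.\ a $w$-connected map of spectra. With that estimate in place the rest of your argument goes through.
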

\begin{proof}
For any functor $H$, we know
\begin{align*}
D_n^d H(X) &= D_n (LH_X)(S^0) \\
&= \LoopInfty \left(
\partial^{(n)} LH_X (\basept) \wedge_{h\Sigma_n} (S^0)^{\wedge n} \right) . 
\end{align*}
Taking homotopy orbits and smashing with a fixed space preserves
connectivity, so this is really a question about the connectivity of
the map $\partial^{(n)} LF_X (\basept) \rightarrow \partial^{(n)} LG_X
(\basept)$. 

First, consider the case $n=1$.
Since $\eta: F\rightarrow G$ is $w$-connected on coproducts of $X$,
the map $\realization{F_X(S^k_\cdot)} \rightarrow
\realization{G_X(S^k_\cdot)}$ is $(k+w)$-connected (by
Eilenberg-Zilber, since both are contractible levelwise until
dimension $k$). The derivative spectrum $\partial LF_X(\basept)$ 
has $k^{\text{th}}$ space $\realization{F_X(S^k_\cdot)}$, and
similarly for $\partial LG_X(\basept)$, so this shows that the map 
$\partial LF_X(\basept) \rightarrow
\partial LG_X(\basept)$ is a $w$-connected map.

Similarly, for all $n\ge 1$, the map 
 $\realization{\Perp_n F_X(S^k_\cdot)}  \rightarrow 
\realization{\Perp_n G_X(S^k_\cdot)}$ is $(nk+w)$-connected. The
derivative spectrum $\partial^{(n)}LF_X(\basept)$ 
then has as its $k^{\text{th}}$ space the space
$\Omega^{k(n-1)} \realization{\Perp_n F_X(S^k_\cdot)}$. On these
spaces the map induced by $\eta$ is $(k+w)$-connected, exactly as
required to produce a $w$-connected map 
$\partial^{(n)} LF_X(\basept) \rightarrow
\partial^{(n)} LG_X(\basept)$.
\end{proof}

\begin{corollary}
\label{cor:Pnd-preserves-conn}
Let $F$ and $G$ be reduced functors, and suppose that either 
$F$ and $G$ have connected values
(Hypothesis~\ref{hypothesis-1}) on coproducts of $X$, or
$F$ and $G$ have group values (Hypothesis~\ref{hypothesis-2}) on
coproducts of $X$. 
If $\eta: F\rightarrow G$ is a natural transformation that is
$w$-connected on coproducts of $X$, then for $n\ge 1$, the natural
transformation $P_n^d \eta$ is $w$-connected.
\end{corollary}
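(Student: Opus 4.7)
The plan is to prove this by induction on $n$, climbing the Goodwillie tower of $LF_X$ one stage at a time, and at each stage comparing $\eta_F$ and $\eta_G$ through the delooped fibration of Theorem~\ref{thm:delooping-Dn}. The $n=1$ case is essentially immediate: since $F$ and $G$ are reduced, $P_0 LF_X = P_0 LG_X \simeq \basept$, so $P_1^d F(X) \simeq D_1^d F(X)$ and likewise for $G$, and the $w$-connectivity of $P_1^d \eta$ is exactly the content of Lemma~\ref{lem:Dnd-preserves-conn}.

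For the inductive step, assume $P_{n-1}^d \eta$ is $w$-connected. The delooping theorem gives a fibration of functors
$$ P_n L F_X \rightarrow P_{n-1} L F_X \rightarrow \Omega^{-1} D_n L F_X, $$
and the analogous fibration for $G$; evaluating at $S^0$ and assembling the maps induced by $\eta$ produces a map of fibration sequences
$$
\xymatrix{
P_n^d F(X) \ar[r] \ar[d]^{P_n^d \eta} & P_{n-1}^d F(X) \ar[r] \ar[d]^{P_{n-1}^d \eta} & \Omega^{-1} D_n^d F(X) \ar[d]^{\Omega^{-1} D_n^d \eta} \\
P_n^d G(X) \ar[r] & P_{n-1}^d G(X) \ar[r] & \Omega^{-1} D_n^d G(X) .
}
$$
By the inductive hypothesis, the middle vertical map is $w$-connected. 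By Lemma~\ref{lem:Dnd-preserves-conn}, $D_n^d \eta$ is $w$-connected, so $\Omega^{-1} D_n^d \eta$ is $(w+1)$-connected (shifting connectivity up by one under the delooping). A standard five-lemma chase on the long exact sequences of homotopy groups then yields that $P_n^d \eta$ is $w$-connected: for $i < w$ the outer four groups match under iso or are controlled by iso maps, forcing an iso at $\pi_i P_n^d$, and at $i = w$ the surjectivity version of the five lemma gives surjectivity of $P_n^d \eta$ on $\pi_w$.

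The one genuine subtlety, and the main obstacle to watch for, is the behavior at $\pi_0$. The long exact sequence of a fibration terminates there, and one needs that the maps of fibrations induce well-defined group actions and connecting maps compatible on the set-theoretic tail. This is exactly where Hypothesis~\ref{hypothesis-1} or Hypothesis~\ref{hypothesis-2} earns its keep: in both cases $P_{n-1}^d F(X)$ and $P_{n-1}^d G(X)$ are either connected or are simplicial groups (built from simplicial groups via $P_n$, which preserves this structure through finite homotopy inverse limits), and Corollary~\ref{cor:Pn+1-Pn-surjective} guarantees that the map $P_n^d \to P_{n-1}^d$ is surjective on $\pi_0$, so the long exact sequence extends cleanly. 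This is enough to run the standard five-lemma argument all the way to the bottom of the homotopy groups and complete the induction.
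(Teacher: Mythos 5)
Your proof is correct and follows essentially the same route as the paper: the paper's own argument is a two-sentence sketch that inducts up the Goodwillie tower of $LF_X$, invoking Lemma~\ref{lem:Dnd-preserves-conn} for the layers, the Five Lemma for the inductive step, and Corollary~\ref{cor:Pn+1-Pn-surjective} for the $\pi_0$ control. You have simply written out the same skeleton in full (using the delooped fibration of Theorem~\ref{thm:delooping-Dn} to set up the Five Lemma), and the details you supply — including the connectivity shift for $\Omega^{-1}D_n^d\eta$ and the attention to the tail of the long exact sequence — are accurate.
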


\begin{proof}
We induct up the Goodwillie tower for $F$ and $G$.
Our hypotheses on the values of $F$ and $G$, along with
Corollary~\ref{cor:Pn+1-Pn-surjective} provide the control on $\pi_0$
needed to use the Five Lemma, making this an easy consequence of
Corollary~\ref{lem:Dnd-preserves-conn}.
\end{proof}

\begin{proof}[Proof of Theorem~\ref{thm:Pnd-preserves-connectivity}]
First we will show that we may reduce to the case of reduced functors. 
Let $F_0(X) = F(0)$ and $G_0(X) = G(0)$ be constant functors, and
consider the fibration
$$\xymatrix{
\widetilde{F}
\ar[r]
\ar[d]
&
\widetilde{G}
\ar[d]
\\
{F}
\ar[r]
\ar[d]
&
{G}
\ar[d]
\\
{F_0}
\ar[r]
&
{G_0}
\\
}$$
By Corollary~\ref{cor:Pnd-group-connected-base-no-pi0}, 
applying $P_n^d$ preserves these fibrations. The map  $F\rightarrow
F_0$ and $G\rightarrow G_0$ have sections induced by $0\rightarrow X$,
so applying $P_n^d$ also preserves surjectivity on $\pi_0$.

The approximation $P_n^d$ applied to a constant functor is just the
constant functor again, so the map on the bases is $k$-connected.
Corollary~\ref{cor:Pnd-preserves-conn} shows that the induced map
$P_n^d \widetilde{F} \rightarrow P_n^d \widetilde{G}$ is
$k$-connected.  The Five Lemma applies in this situation (connected or
group values, surjective on $\pi_0$), allowing us to conclude that the
map on the total spaces is $k$-connected.
\end{proof}

Theorem~\ref{thm:Pnd-preserves-connectivity} gives us the following
slight but essential improvement of
Corollary~\ref{cor:Pnd-group-connected-base-no-pi0}:
\begin{corollary}
\label{cor:Pnd-group-connected-base}
Given a space $X$ and functors $A$, $B$, and $C$, suppose 
$A(Y) \rightarrow B(Y) \rightarrow C(Y)$ is a fibration sequence for
all finite coproducts $Y = \bigvee X$ of $X$. 
If, on finite coproducts of $X$, either:
\begin{enumerate}
\item $C$ takes connected values
(Hypothesis~\ref{hypothesis-1});
or
\item $B$ and $C$ take group values (Hypothesis~\ref{hypothesis-2}),
and the map $B\rightarrow C$ is a surjective homomorphism of groups, 
\end{enumerate}
then
\begin{equation*}
P_n^d A(X) \rightarrow P_n^d B(X) \rightarrow P_n^d C(X)
\end{equation*}
is a fibration sequence that is surjective on $\pi_0$.
$\qed$
\end{corollary}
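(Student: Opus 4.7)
The plan is to read this corollary off as an immediate combination of Corollary~\ref{cor:Pnd-group-connected-base-no-pi0} with Theorem~\ref{thm:Pnd-preserves-connectivity}. The fibration sequence $P_n^d A(X) \to P_n^d B(X) \to P_n^d C(X)$ is already supplied by Corollary~\ref{cor:Pnd-group-connected-base-no-pi0}, so the only new content is the surjectivity on $\pi_0$ of the rightmost map.

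To obtain that surjectivity, I would first observe that the natural transformation $\eta \colon B \to C$ is $0$-connected (i.e.\ surjective on $\pi_0$) on every finite coproduct $Y = \bigvee X$. In case~(1) this is automatic, since $\pi_0 C(Y) = 0$ under Hypothesis~\ref{hypothesis-1}; in case~(2) it is exactly the surjective-group-homomorphism assumption, and moreover $\eta$ lifts to a homomorphism on $\pi_0$ as required by the $\pi_0$-lifting clause of Theorem~\ref{thm:Pnd-preserves-connectivity}. Applying that theorem with $k = 0$ then delivers that $P_n^d \eta$ is $0$-connected at $X$, which is precisely the desired surjectivity of $\pi_0 P_n^d B(X) \to \pi_0 P_n^d C(X)$.

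There is essentially no obstacle here: the statement is an assembly of two previously proved results, in line with the $\qed$ that the author places directly on the statement. The only bookkeeping is matching the two cases of the corollary with the two alternative hypotheses of Theorem~\ref{thm:Pnd-preserves-connectivity}, which is what I have done above.
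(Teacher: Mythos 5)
Your overall route is exactly the paper's: the fibration sequence is Corollary~\ref{cor:Pnd-group-connected-base-no-pi0}, and the $\pi_0$-surjectivity is extracted from Theorem~\ref{thm:Pnd-preserves-connectivity}; case~(2) of your argument is complete as written. In case~(1), however, your invocation of Theorem~\ref{thm:Pnd-preserves-connectivity} with $\eta\colon B\to C$ is not licensed: that theorem requires \emph{both} the source and the target of $\eta$ to satisfy Hypothesis~\ref{hypothesis-1} or Hypothesis~\ref{hypothesis-2} on coproducts of $X$ (its proof runs through Lemma~\ref{lem:Dnd-preserves-conn} and Corollary~\ref{cor:Pn+1-Pn-surjective}, which need this for each functor separately), and case~(1) of the corollary assumes nothing about $B$. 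The repair is the one the paper itself uses in Proposition~\ref{prop:perp-f-nonzero-fib-conn}: apply Theorem~\ref{thm:Pnd-preserves-connectivity} to the $0$-connected transformation $0\to C$ (both the zero functor and $C$ have connected values), concluding that $0\simeq P_n^d(0)\to P_n^d C(X)$ is $0$-connected, i.e.\ $\pi_0 P_n^d C(X)=0$, so surjectivity on $\pi_0$ is automatic. With that substitution in case~(1), your proof agrees with the intended one.
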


We have deferred the proof that $\Perp$ commutes with realizations
until this point. It is straightforward.
\begin{lemma}
\label{lem:perp-commutes-with-realization-v2}
If $F$ has connected values (Hypothesis~\ref{hypothesis-1}) or group
values (Hypothesis~\ref{hypothesis-2}) on coproducts of $X$, then
$$
cr_n (LF_X)(Y^1,\ldots,Y^n)
\simeq 
L^n (cr_n F)_{(X,\ldots,X)} (Y^1,\ldots,Y^n)
.
$$
For each $j$, let $Y^j_\cdot$ be a simplicial set whose realization is
$Y^j$. Then 
$$L^n (cr_n F)_{(X,\ldots,X)} (Y^1,\ldots,Y^n)
\simeq
\realization{(cr_n F)_{(X,\ldots,X)} (Y^1_\cdot,\ldots,Y^n_\cdot)}
,$$
where the realization is taken in each of the $n$ dimensions involved,
and the realization can be the ``strict'' realization with no adverse
effect on the homotopy type of the result.

The statement above can be abbreviated to $\Perp_n(LF_X)(Y) \simeq L
(\Perp_n F)_X(Y)$. 
\end{lemma}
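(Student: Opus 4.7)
The plan is to reduce the claim to commuting total homotopy fibers of cubes with realizations, and then apply Theorem~\ref{thm:Bousfield-Friedlander} iteratively. The key preliminary step is to observe that $LF_X(Z) \simeq \realization{F(X \wedge Z_\cdot)}$ for any simplicial model $Z_\cdot$ of $Z$. This follows by combining Proposition~\ref{prop:left-kan-commutes-with-some-realizations} (applied with $a=0$, so $L$ commutes with realizations of arbitrary simplicial sets) with Proposition~\ref{prop:left-kan-agrees-with-orig} to identify $LF_X(Z_k) \simeq F_X(Z_k) = F(X \wedge Z_k)$ on each finite discrete $Z_k$. Applied variable-by-variable, the same reasoning establishes the second asserted equivalence, $L^n(cr_n F)_{(X,\ldots,X)}(Y^1,\ldots,Y^n) \simeq \realization{(cr_n F)_{(X,\ldots,X)}(Y^1_\cdot,\ldots,Y^n_\cdot)}$.

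Rewriting each vertex of the $n$-cube defining $cr_n(LF_X)(Y^1,\ldots,Y^n)$ using this observation exhibits it as an $n$-cube of realizations of an $n$-simplicial diagram, with the $b$-th simplicial direction driven by $Y^b_\cdot$. Taking the levelwise total fiber in this diagram produces the $n$-simplicial space $(k_1,\ldots,k_n) \mapsto cr_n F(X \wedge Y^1_{k_1},\ldots,X \wedge Y^n_{k_n})$, whose realization is $L^n(cr_n F)_{(X,\ldots,X)}(Y^1,\ldots,Y^n)$. The first asserted equivalence thus amounts to pulling the total homotopy fiber through the $n$-fold realization. Using Corollary~\ref{cor:crn-from-cr2}, I would reduce to the 2-variable case and then iterate: at each stage the relevant structure map lives in a cross-effect cube and, by Lemma~\ref{lem:crn-cube-has-sections}, carries a functorial section, which becomes a levelwise splitting after applying $F$. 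Under Hypothesis~\ref{hypothesis-1} the simplicial spaces involved are levelwise connected, so the $\pi_*$-Kan condition is automatic; under Hypothesis~\ref{hypothesis-2} they come from simplicial groups and $\pi_*$-Kan holds by Corollary~\ref{cor:grouplike-H-spaces-pi-star}. The splittings supply the $\pi_0$-surjectivity, so Theorem~\ref{thm:Bousfield-Friedlander} (or Lemma~\ref{lem:Waldhausen-fibration-lemma} in the connected case) applies at each step, and the homotopy fiber passes through the realization.

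The claim that the strict realization suffices reduces to verifying goodness of the relevant simplicial spaces in the sense of Definition~\ref{def:good-simplicial-space}; this follows from Lemma~\ref{lem:LCFX-good} together with the closure properties of Lemma~\ref{lem:preserve-goodness}. The hard part will be the bookkeeping: tracking which of the $n$ simplicial directions is being pulled through the total fiber at each iteration, and verifying that after each intermediate homotopy fiber is taken both the section structure and the standing hypothesis on $F$ persist, so that the $\pi_*$-Kan and $\pi_0$-surjectivity inputs to Bousfield-Friedlander remain in force as fibers are built up.
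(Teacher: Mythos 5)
Your proposal is correct and follows essentially the same route as the paper: rewrite $LF_X$ at each vertex as a realization, identify the levelwise total fiber as $(cr_n F)_{(X,\ldots,X)}$, and pass the fiber through the realization via Waldhausen's lemma in the connected case or Theorem~\ref{thm:Bousfield-Friedlander} in the group case, using the compatible sections for the $\pi_0$ condition and goodness for the strict-realization claim. The only presentational difference is that the paper makes explicit the diagonal/Eilenberg--Zilber step (Corollary~\ref{cor:useful-eilenberg-zilber}) needed to convert the single realization of $F_X(\diag(Y^1_\cdot \vee \cdots \vee Y^n_\cdot))$ into the $n$-fold realization, a point your write-up leaves implicit in the phrase ``realization taken in each of the $n$ dimensions.''
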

\begin{proof}
%
  The space $cr_n (LF_X)(Y^1,\ldots,Y^n)$ is the homotopy fiber of a
  cube involving $LF_X\left(\bigvee_{u\in U} Y^u \right)$ for
  $U\subset\Set{1,\ldots,n}$. We will write this part of our argument
  assuming $U = \Set{1,\ldots,n}$ for simplicity.
  By
  Proposition~\ref{prop:left-kan-commutes-with-some-realizations},
  this is the realization of the simplicial space 
  $$F_X (\diag(Y^1_\cdot\vee \cdots\vee Y^n_\cdot))
  =
  \diag F_X ( Y^1_\cdot\vee\cdots\vee Y^n_\cdot)
  .$$
  Lemma~\ref{lem:LCFX-good} shows that
  this is a good space, so we can use the strict realization. Then the 
  Eilenberg-Zilber theorem (\ref{thm:eilenberg-zilber}) shows that the
  diagonal has the same homotopy type as the (multidimensional)
  realization of the $n$-dimensional simplicial space $F_X (
  Y^1_\cdot\vee\cdots\vee Y^n_\cdot)$. 

  Now if we show that we can take the fibers of the maps before
  applying the realization functor, we will be able to interpret the
  fibers of each dimension $(k_1,\ldots,k_n)$ as $(cr_n
  F)_{(X,\ldots,X)} (Y^1_{k_1},\ldots,Y^n_{k_n})$, which is the left
  Kan extension in each variable, as desired for the lemma.

  If $F$ satisfies Hypothesis~\ref{hypothesis-1}, then we can
  compute the fibers in the $\Perp$-cube levelwise using Waldhausen's
  Lemma (Lemma~\ref{lem:Waldhausen-fibration-lemma}).

  If $F$ satisfies Hypothesis~\ref{hypothesis-2}, then we will use
  Theorem~\ref{thm:Bousfield-Friedlander} (Bousfield-Friedlander) 
  to produce the same
  result.
  In this case, $F_X$ also satisfies Hypothesis~\ref{hypothesis-2} on
  coproducts of $S^0$ (i.e., on all finite sets). This shows that for
  any simplicial set $Y_\cdot$, we may regard the simplicial space
  $LF_X(Y) \simeq \realization{F_X(Y_\cdot)}$ as a simplicial group.
  Hence each corner of the
  $\Perp$-cube satisfies the $\pi_*$-Kan condition. 
  Furthermore, all of the maps in the $\Perp$-cube have compatible
  sections, so at each stage of taking iterated fibers all of the
  structure maps have sections. This gives us surjective maps of
  simplicial groups, so the induced maps on $\pi_0$ are
  fibrations. These two conditions are enough to apply
  Theorem~\ref{thm:Bousfield-Friedlander} to compute the fibers
  levelwise. 
\end{proof}


%
%
\section{Fiber Contractible Implies Cartesian (Group Or Connected)}
\label{sec:fiber-contractible-Cartesian}

We will prove the critical fact that in the
cases we consider, the cross effect vanishing is equivalent to the
cross effect cubes being Cartesian.

We generally want to use the fact that the cross effect is
contractible to conclude that the initial space in the
cross-effect cube is equivalent to the (homotopy) inverse limit of the
rest of the spaces. Unfortunately, this is not always true; the
problem is that the homotopy fiber does not detect failure to be
surjective on $\pi_0$. Fortunately, with some mild hypotheses we are
able to ensure that we stay within the realm where this issue is
avoided for one reason or another. 

This section lays the groundwork that shows that in good situations,
the cross-effect cubes are well behaved and total fiber contractible
implies Cartesian.
Section~\ref{sec:cartesian-connected-values} considers the case of a
functor to connected spaces.
Section~\ref{sec:cartesian-group-values} considers the case when the
functor is group-valued. 

If the spectral sequence developed by Bousfield and Kan in
\cite{Bousfield-Kan:homotopy-limits-completions-and-localizations}
were shown to converge to $\pi_0$ under these conditions, it could be
used to make the arguments in this section shorter. Their work would
give 
$\pi_k \holim_{\Power_0(S)} \cube{X} = \lim_{\Power_0(S)} \pi_k
\cube{X}$
with higher $\lim^i$ terms vanishing because of the structure of the
cube used to compute $\Perp$.


\subsection{Connected Values}
\label{sec:cartesian-connected-values}

Even if all of the spaces in the cross-effect cube are connected,
we need to know that the homotopy inverse limit of the punctured cube
(``the rest of the spaces''=$\Power_0(S)$) is still connected in order
to be able to conclude that cross effect zero implies the cube is
Cartesian. 

To show that the homotopy inverse limit of the punctured cube is
connected, we proceed as follows: first, we show that a pullback of a
diagram of connected spaces with section maps is connected; then we
decompose the whole homotopy inverse limit into (iterated) pullbacks
of diagrams of this form and diagrams with initial objects.

Given a cube of spaces with compatible sections, the first thing we
want to do is replace all of the maps in the cube by fibrations so
that the homotopy inverse limit is equivalent to the strict inverse
limit.
\begin{lemma}
\label{lem:fibration-cube-with-sections}
If $\cube{X}$ is a cube of spaces with compatible sections to all
structure maps (\ref{hypothesis:compatible-sections}), 
then $\cube{X}$ is equivalent to a cube $\cube{X'}$ of
spaces in which all structure maps are fibrations, and all of these
maps still have compatible sections.
\end{lemma}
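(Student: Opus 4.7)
The plan is to set
\begin{equation*}
\cube{X'}(V) := \holim_{W \in \Power(T),\, W \supseteq V} \cube{X}(W),
\end{equation*}
using the Bousfield--Kan model of $\holim$, with structure maps of $\cube{X'}$ given by restriction along the inclusions of the indexing overcategories $\{W : W \supseteq V\} \hookrightarrow \{W : W \supseteq U\}$ for $U \subset V$. Two basic observations should identify $\cube{X'}$ as the fibrant replacement we need: (i) the indexing category $\{W : W \supseteq V\}$ has $V$ as its initial object, so the canonical map $\cube{X}(V) \to \cube{X'}(V)$ is a weak equivalence, giving a levelwise equivalence $\cube{X}\simeq \cube{X'}$; and (ii) Bousfield--Kan homotopy limits of diagrams of fibrant spaces (and every topological space in our category is fibrant) convert restriction along a full subcategory inclusion into a fibration, because the corresponding map of nerves of overcategories is a cofibration. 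Consequently the structure maps of $\cube{X'}$ are all fibrations.

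To lift the sections, I will view the given family $\{s_{W,W'}\}$ as yielding, for each pair $U \subset V$, a map of indexed diagrams from $\{\cube{X}(W)\}_{W \supseteq V}$ to $\{\cube{X}(W')\}_{W' \supseteq U}$: send the $W'$-component of the target to the $(W' \cup V)$-component of the source via $s_{W' \cup V,\, W'}$ (note $W' \cup V \supseteq V$, so the source slot exists). Applying $\holim$ functorially produces the desired section $\tilde s_{V,U} : \cube{X'}(V) \to \cube{X'}(U)$. Concretely, $\tilde s_{V,U}$ sends a coherent family $(\phi_W)_{W \supseteq V}$ to the coherent family whose $W'$-entry (for $W' \supseteq U$) is $s_{W' \cup V,\, W'}(\phi_{W' \cup V})$; this reduces to $\phi_{W'}$ whenever $W' \supseteq V$, so $\tilde s_{V,U}$ is a genuine section of the structure map $\cube{X'}(U) \to \cube{X'}(V)$.

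The hard part will be the bookkeeping for coherence and compositional compatibility. I will need to check (a) that the extended family $(s_{W' \cup V,\, W'}(\phi_{W' \cup V}))_{W' \supseteq U}$ is again coherent in the BK sense, which follows from the fact that the given sections $s_{W,W'}$ commute with the structure maps of $\cube{X}$ (an immediate consequence of hypothesis~\ref{hypothesis:compatible-sections} applied to the factorizations $W' \subseteq W'' \subseteq W''\cup V$); and (b) that $\tilde s_{V'', V'} \circ \tilde s_{V', V} = \tilde s_{V'', V}$ for chains $V \subseteq V' \subseteq V''$, which unwinds to the pointwise identity $s_{W' \cup V'',\, W'} = s_{W' \cup V',\, W'} \circ s_{W' \cup V'',\, W' \cup V'}$, a direct instance of the compositional hypothesis on the original sections. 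Both verifications are routine but notationally dense, and should require no input beyond the hypothesis itself.
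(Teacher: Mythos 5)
Your construction is the same as the paper's: the paper also replaces $\cube{X}(V)$ by $\holim_{W\supseteq V}\cube{X}(W)$ (citing Goodwillie's Remark~1.14 for the fibrancy) and then uses the given sections to induce sections on these homotopy limits, sketching the extension of a coherent family over the larger indexing poset in essentially the way your $W'\mapsto W'\cup V$ formula makes explicit. Your write-up is, if anything, slightly more precise than the paper's sketch, so no changes are needed.
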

\begin{proof}
From \cite[Remark~1.14, p.~305]{Cal2}, every cube of spaces is
equivalent to a fibration cube by replacing $\cube{X}(U)$ by
$\cube{Y}(U) = \holim_{U \subset V} \cube{X}(V) = \holim (\partial_U
\cube{X})$. The maps in the cube 
are then induced by the inclusion of indexing categories. The section
maps in $\cube{X}$ can then be used to give section maps in
$\cube{Y}$. 

We will now briefly sketch an example showing how to construct the
section maps.  The reader may wish to review the definition of the
precise construction of homotopy inverse limit that we are using
before proceeding. 

Given a map $f:c \rightarrow d$ in $\cat{C} = \Power(S)$, and a map $g:
\realization{d\undercat \cat{C} \overcat d} \rightarrow \cube{X}(d)$
(representing a point in $\holim ( \partial_{\Set{d}} \cube{X})$),
produce an element in $\holim ( \partial_{\Set{c}} \cube{X})$ as
follows: collapse $\realization{ c \undercat \cat{C} \overcat d }$ to the
image of $\realization{ d \undercat \cat{C} \overcat d }$, then use $g$ to
map to $\cube{X}(d)$. On $\realization{ c \undercat \cat{C} \overcat c }$,
first map a point to its image in  $\realization{ c \undercat \cat{C}
  \overcat d }$, then to $\cube{X}(d)$, and then via the section map to
$\cube{X}(c)$. 
The general case is obviously very similar, just harder to write down
explicitly.
\end{proof}

Now we establish that if we have section maps, the pullback of
connected spaces is connected. 
\begin{lemma}
\label{lem:holim-connected-two-cube}
Let $\cube{X}$ be an $\mathbf{2}$-cube with compatible sections to all
structure maps (\ref{hypothesis:compatible-sections}). 
If each space $\cube{X}(U)$ is connected, then so is
$\holim_{\Power_0(\mathbf{2})} \cube{X}$.
\end{lemma}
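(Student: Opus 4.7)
The plan is to reduce to a strict pullback and then use the existence of sections to force connectedness via the long exact sequence of a fibration.

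First I would apply Lemma~\ref{lem:fibration-cube-with-sections} to replace $\cube{X}$ by an equivalent cube (still denoted $\cube{X}$) in which every structure map is a fibration and sections are still compatibly present. Under this replacement the homotopy pullback $\holim_{\Power_0(\mathbf{2})}\cube{X}$ is equivalent to the strict pullback
\[
P \;=\; \cube{X}(\{1\})\;\times_{\cube{X}(\{1,2\})}\;\cube{X}(\{2\}).
\]
Since the projection $p:P\to\cube{X}(\{2\})$ is the pullback of the fibration $f:\cube{X}(\{1\})\to\cube{X}(\{1,2\})$, it is itself a fibration, with fiber equal to $F:=\fib(f)$.

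Next I would show $F$ is connected. The map $f$ has a compatible section $s:\cube{X}(\{1,2\})\to\cube{X}(\{1\})$ with $f\circ s = \mathrm{id}$, so $f_*:\pi_1\cube{X}(\{1\})\to\pi_1\cube{X}(\{1,2\})$ is a (split) surjection of groups. The long exact sequence of $F\to\cube{X}(\{1\})\xrightarrow{f}\cube{X}(\{1,2\})$ reads
\[
\pi_1\cube{X}(\{1\})\xrightarrow{f_*}\pi_1\cube{X}(\{1,2\})\to\pi_0 F\to\pi_0\cube{X}(\{1\}),
\]
and the right-hand term vanishes by hypothesis while $f_*$ is surjective, so $\pi_0 F=0$, i.e.\ $F$ is connected.

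Finally, applying the long exact sequence of the fibration $F\to P\to\cube{X}(\{2\})$ and using that both $F$ and $\cube{X}(\{2\})$ are connected gives $\pi_0 P = 0$, which is the desired conclusion. The only step that is not essentially formal is arranging the fibration replacement while preserving the sections, and that is already handled by Lemma~\ref{lem:fibration-cube-with-sections}; everything else is a direct application of the long exact sequence combined with the observation that a section forces surjectivity on $\pi_1$.
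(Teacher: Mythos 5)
Your proof is correct, and it begins exactly as the paper does: invoke Lemma~\ref{lem:fibration-cube-with-sections} to replace $\cube{X}$ by a fibration cube with compatible sections, so that the homotopy inverse limit becomes the strict pullback. Where you diverge is in the final step. The paper argues by hand: given a point $(f_X,f_Z,f_Y)$ of the pullback, it uses the fact that the sections force the fibers of $p_X$ and $p_Y$ to be connected in order to slide the point to $(s_X f_Z, f_Z, s_Y f_Z)$ within the fibers, and then lifts a null-homotopy of $f_Z$ through both sections to kill the point. You instead run the long exact sequence twice: once on $\fib(f)\rightarrow \cube{X}(\Set{1})\rightarrow\cube{X}(\Set{1,2})$, where the splitting on $\pi_1$ plus connectedness of $\cube{X}(\Set{1})$ forces $\pi_0\fib(f)=\basept$, and once on $\fib(f)\rightarrow P\rightarrow \cube{X}(\Set{2})$ to conclude $\pi_0 P=\basept$. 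The two arguments encode the same underlying observation (a section prevents the connectivity of the fiber from dropping), but yours is more economical in an interesting way: it uses only the section to $\cube{X}(\Set{1})\rightarrow\cube{X}(\Set{1,2})$, whereas the paper's explicit homotopy uses both sections symmetrically. The one point worth making explicit in your write-up is that the fiber of $p:P\rightarrow\cube{X}(\Set{2})$ over the basepoint is $\fib(f)$ taken over the basepoint of $\cube{X}(\Set{1,2})$, which is fine here because all maps are pointed (and in any case all fibers of $f$ are equivalent since the base is connected).
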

\begin{proof}
By Lemma~\ref{lem:fibration-cube-with-sections}, we may assume that
$\cube{X}$ is a fibration cube. Then the homotopy inverse limit is
equivalent to the strict inverse limit, so we need only show that if 
$$\cube{X} = ( X \xrightarrow{p_X} Z \xleftarrow{p_Y} Y ) $$
is a diagram with sections $s_X$ and $s_Y$ to the maps $p_X$ and
$p_Y$, and all three spaces are connected, then so is their inverse
limit. 

A map $f$ of the $0$-sphere to the inverse limit is equivalent to
compatible maps $f_X$, $f_Y$, and $f_Z$ of $S^0$ to all three spaces.
We first show that $f = (f_X, f_Z, f_Y)$ is homotopic to the map $f' =
(s_X f_Z, f_Z, s_Y f_Z)$, and then use a homotopy in $Z$ to show $f'$
is null homotopic.

Since $Z$ is
connected, the fibers of the map $p_X: X\rightarrow Z$ over every
point are equivalent.  Due to the section $s_X$, the fibers over every
point in $Z$ are connected (existence of the section map implies
surjectivity on $\pi_*$, so connectivity of the fiber cannot drop).
Let $1\in S^0$ denote the non-basepoint element of the $0$-sphere.
The points $f_X(1)$ and $s_z f_Z(1)$ are in the same fiber over the
point $f_Z(1)$, and this fiber is connected, so there is a homotopy
$H_X: f_X \simeq s_z f_Z$ that stays entirely within the fiber (so
$p_X H_X$ is the constant map $f_Z$). 

By the symmetry of $X$ and $Y$, this shows that the map $f = (f_X,
f_Z, f_Y)$ is homotopic to the map $f'$ with components $(s_X f_Z,
f_Z, s_Y f_Z)$. Now let $H: D^1 \rightarrow Z$ be a homotopy $f_Z
\simeq \basept$. Then the homotopy $(s_X H, H, s_Y H)$ is a null
homotopy of $f'$. 

This shows that $\pi_0 \holim_{\Power_0(\mathbf{2})} \cube{X} = \basept$, as
required for the lemma.
\end{proof}

We can now use  Proposition~\ref{prop:Goodwillie-decompose-holim} to
decompose the inverse limit of the 
punctured cube into pullbacks and inverse limits with initial objects. 


\begin{lemma}
\label{lem:holim-connected}
Let $\cube{X}$ be an $S$-cube with compatible sections to all
structure maps (\ref{hypothesis:compatible-sections}). 
If each space $\cube{X}(U)$ is connected, then so is
$\holim_{\Power_0(S)} \cube{X}$.
\end{lemma}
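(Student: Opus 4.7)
The plan is to proceed by induction on $|S|$, using Lemma~\ref{lem:holim-connected-two-cube} (the case $|S|=2$) to handle the inductive step after reducing $\holim_{\Power_0(S)} \cube{X}$ to a homotopy pullback of a span of connected spaces with sections. When $|S|=1$, $\Power_0(S)$ has a unique object $S$, so $\holim_{\Power_0(S)} \cube{X} \simeq \cube{X}(S)$, which is connected by hypothesis. For the inductive step, suppose $|S| = n \geq 2$ and fix some $s \in S$, writing $S' = S \setminus \{s\}$.

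I would apply Proposition~\ref{prop:Goodwillie-decompose-holim} with the cover $\cube{A}_1 = \{U \in \Power_0(S) : s \in U\}$ and $\cube{A}_2 = \Power_0(S) \setminus \{\{s\}\}$. A nondegenerate chain $U_0 \subsetneq \cdots \subsetneq U_k$ in $\Power_0(S)$ either has $U_0 = \{s\}$, in which case every $U_i \supseteq \{s\}$ lies in $\cube{A}_1$, or else $U_0 \neq \{s\}$, in which case every $U_i \supseteq U_0$ also differs from $\{s\}$ and lies in $\cube{A}_2$, so the nerves indeed cover. Observe that $\cube{A}_1$ has initial object $\{s\}$, giving $\holim_{\cube{A}_1} \cube{X} \simeq \cube{X}(\{s\}) =: A$; the intersection $\cube{A}_1 \cap \cube{A}_2 = \{U : s \in U,\ |U| \geq 2\}$ is isomorphic to $\Power_0(S')$ via $V \leftrightarrow V \cup \{s\}$, so $\holim_{\cube{A}_1 \cap \cube{A}_2} \cube{X} \simeq \holim_{V \in \Power_0(S')} \cube{X}(V \cup \{s\}) =: B$; and $\cube{A}_2$ is isomorphic to the product category $\Power_0(S') \times [1]$ (sending $(V,0) \mapsto V$, $(V,1) \mapsto V \cup \{s\}$, with $(V,i) \to (W,j)$ exactly when $V \subseteq W$ and $i \leq j$). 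By a Fubini argument plus the fact that $0 \in [1]$ is initial, $\holim_{\cube{A}_2} \cube{X} \simeq \holim_{V \in \Power_0(S')} \cube{X}(V) =: C$. The proposition then presents $\holim_{\Power_0(S)} \cube{X}$ as the homotopy pullback of the span $A \to B \leftarrow C$.

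By the inductive hypothesis applied to the $(n-1)$-cubes $V \mapsto \cube{X}(V)$ and $V \mapsto \cube{X}(V \cup \{s\})$ (both inheriting compatible sections and connected values from $\cube{X}$), $B$ and $C$ are connected; $A$ is connected by hypothesis. The compatible sections of $\cube{X}$ assemble into sections $B \to A$ (project to any $\cube{X}(V \cup \{s\})$ and apply $\cube{X}(V \cup \{s\}) \to \cube{X}(\{s\})$) and $B \to C$ (apply the sections $\cube{X}(V \cup \{s\}) \to \cube{X}(V)$ levelwise, using compatibility to assemble a map of homotopy limits). The fiber-and-section argument in the proof of Lemma~\ref{lem:holim-connected-two-cube} uses only this data (connected spaces, and a section to each structure map into the common target), so it applies to the span $A \to B \leftarrow C$ and shows the pullback is connected. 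The main obstacle will be making the Fubini-type identification of $\holim_{\cube{A}_2} \cube{X}$ with $C$ precise: one needs the interior homotopy limit $\holim_{[1]}(\cube{X}(V) \to \cube{X}(V \cup \{s\})) \simeq \cube{X}(V)$ to hold naturally in $V$, which requires either fibrant replacement or explicit use of the section maps to replace structure maps by fibrations in a way coherent across all $V \in \Power_0(S')$.
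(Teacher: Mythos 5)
Your overall strategy is the paper's own: apply Proposition~\ref{prop:Goodwillie-decompose-holim} and reduce to the pullback-with-sections case, Lemma~\ref{lem:holim-connected-two-cube}; in fact your cover of $\Power_0(S)$ by $\cube{A}_1 = \Set{U \suchthat s\in U}$ and $\cube{A}_2 = \Power_0(S)-\Set{\Set{s}}$ is exactly the paper's first step, since $\cube{A}_2$ is the union of the subcategories $\Set{U \suchthat i\in U}$ over $i\neq s$. The difference is organizational: the paper keeps decomposing the remaining pieces inside $\Power_0(S)$ and only sketches that bookkeeping, while you identify $\cube{A}_1\cap\cube{A}_2$ with $\Power_0(S-\Set{s})$ and $\cube{A}_2$ with $\Power_0(S-\Set{s})\times[1]$ and run a clean induction on $\abs{S}$. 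That is an improvement in clarity, and the Fubini point you flag is harmless here: all spaces are fibrant and the retraction of $\holim_{[1]}$ onto the value at the initial vertex is natural.

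The one real gap is the claimed section $B\to A$. The map you describe (evaluate at a vertex $\cube{X}(V\cup\Set{s})$, then apply the cube's section down to $\cube{X}(\Set{s})$) is a well-defined map $B\to A$, but it is not a section of $A\to B$: the composite $B\to A\to B$ replaces a compatible system by the image of a single point of $\cube{X}(\Set{s})$ and is not the identity, even up to homotopy. Indeed, a homotopy section of $A\to B$ would say that $\pi_*\cube{X}(\Set{s})$ surjects onto $\pi_*$ of the homotopy limit of the punctured subcube through $\Set{s}$, which is essentially the statement being proved one dimension down, so it cannot be assumed. The repair is that only one leg of the span needs a section: the levelwise sections $\cube{X}(V\cup\Set{s})\to\cube{X}(V)$ assemble (with the same naturality caveat the paper itself elides in Lemma~\ref{lem:fibration-cube-with-sections}) into a strict section of $g\colon C\to B$, so $g$ is surjective on $\pi_0$ and $\pi_1$ and its homotopy fiber is nonempty and connected; the homotopy pullback then fibers over the connected space $A$ with nonempty connected fibers and is therefore connected. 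With that substitution (or by rerunning the two-cube argument using only the section $s_Y$), your induction closes.
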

\begin{proof}
As indicated, our approach is to use
Proposition~\ref{prop:Goodwillie-decompose-holim} to write
$\holim_{\Power_0(S)} \cube{X}$ 
as pullbacks and spaces that are vertices of $\cube{X}$. As in
Lemma~\ref{lem:fibration-cube-with-sections}, when this is done in a
natural way, all of the maps between the inverse limits will have
sections, so we will be able to apply the result for the case of a
pullback with sections, Lemma~\ref{lem:holim-connected-two-cube},
repeatedly to conclude that the whole inverse limit is connected.

It is well known how to produce an inverse limit of
the type used here by iterating pullbacks, so here we will just sketch
the method used. To produce a homotopy pullback from
Proposition~\ref{prop:Goodwillie-decompose-holim}, we need to produce
a decomposition of $\Power_0(S)$. Let $\cube{A}(U)$ be the full
subcategory of $\Power_0(S)$ generated by $ \Set{ V \subset S 
\suchthat  U\subset V }$. Begin the decomposition by considering 
$\cube{A}(U)$ for $U$ a maximal element, and the union of
$\cube{A}(V)$ over all of the other maximal elements $V\not= U$. 
The sets are finite and the maximum height of the maximal elements
decreases in the intersection, so proceeding inductively we end up
with the base case of a pullback in from
Lemma~\ref{lem:holim-connected-two-cube}. 
\end{proof}

\begin{example}
As an example of the decomposition of the homotopy inverse limit in
Lemma~\ref{lem:holim-connected}, consider the $S$-cube $\cube{X}$ with
$S = \Set{1,2}$. We have two maximal elements in $\Power_0(S)$:
$\Set{1}$ and $\Set{2}$, so our decomposition is 
$A(1) = (\Set{1} \rightarrow \Set{1,2})$ and $A(2) = (\Set{2}
\rightarrow \Set{1,2})$.
Proposition~\ref{prop:Goodwillie-decompose-holim} gives
$\holim{\Power_0(S)} \cube{X}$ equal to the homotopy pullback of
$$ 
\holim_{A(1)} \cube{X} 
\rightarrow 
\holim_{A(1)\cap A(2)} \cube{X}
\leftarrow
\holim_{A(2)} \cube{X} .
$$
Now $\holim_{A(1)} \cube{X} \simeq \cube{X}(1)$, since $\Set{1}$ is
initial in $A(1)$, and similarly $\holim_{A(2)} \cube{X} \simeq
\cube{X}(2)$, and $\holim_{A(1)\cap A(2)} \cube{X} \simeq
\cube{X}(\Set{1,2})$.
\end{example}

\begin{example}
A more complicated example is the case of a $3$-cube, where we begin
with $A(1)$ and $A(2)\cup A(3)$. Then their intersection is 
$\left( A(1)\cap
A(2)\right) \cup \left( A(1)\cap A(3) \right)$. This category is 
$$ \Set{1,2} \rightarrow \Set{1,2,3} \leftarrow \Set{1,3}, $$
so the inverse limit over it is a pullback. Then one proceeds to
decompose $A(2)\cup A(3)$ into $A(2)$ and $A(3)$ in a manner similar
to the $2$-cube from the previous example. 
\end{example}

Knowing that the homotopy limit is connected is the key piece of
information to conclude that the $\Perp$-cube is Cartesian when the
total fiber is contractible.
\begin{lemma}
\label{lem:connected-cartesian}
Let $F$ be a functor satisfying Hypothesis~\ref{hypothesis-1}
(connected values) on coproducts of $X$. 
If $\Perp_n F(X) \simeq 0$, then the cube defining $\Perp_n
F(X)$ is Cartesian.
\end{lemma}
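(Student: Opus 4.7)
The plan is to deduce the Cartesian-ness from the fact that when everything in sight is connected, a fibration with contractible fiber is automatically a weak equivalence (no $\pi_0$ pathology).

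Unpacking the definitions, the cube $F\cube{X}_\mathbf{n}^X$ is Cartesian precisely when the natural map
$$F\cube{X}(\emptyset) \longrightarrow \holim_{U\in\Power_0(\mathbf{n})} F\cube{X}(U)$$
is a weak equivalence, and $\Perp_n F(X)$ is by definition the homotopy fiber of this map. So the assumption $\Perp_n F(X)\simeq \basept$ already tells us the map is an isomorphism on $\pi_k$ for $k\ge 1$ via the long exact sequence of the fibration. The only thing left to verify is that the map is also an isomorphism on $\pi_0$.

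The initial vertex $F\cube{X}(\emptyset) = F(\bigvee^n X)$ is connected by Hypothesis~\ref{hypothesis-1}, and every other vertex $F\cube{X}(U) = F(\bigvee_{u\notin U} X)$ is likewise a value of $F$ on a finite coproduct of $X$ and hence connected. By Lemma~\ref{lem:crn-cube-has-sections}, the cube $\cube{X}_\mathbf{n}^X$ has compatible sections to every structure map, and applying the functor $F$ preserves these sections. I can therefore apply Lemma~\ref{lem:holim-connected} to the $\Power_0(\mathbf{n})$-indexed diagram $U\mapsto F\cube{X}(U)$ (strictly speaking to its restriction, but the decomposition argument in Lemma~\ref{lem:holim-connected} goes through for $\Power_0$ just as well) to conclude that $\holim_{U\in\Power_0(\mathbf{n})} F\cube{X}(U)$ is connected.

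With both the source and target connected and pointed, the map is trivially a bijection on $\pi_0$. Combined with the fibration-sequence argument above, this shows the map is a weak equivalence, so $F\cube{X}_\mathbf{n}^X$ is Cartesian. The only subtle point worth flagging is the appeal to Lemma~\ref{lem:holim-connected}: that lemma shows connectedness of the homotopy inverse limit over $\Power_0(S)$ by decomposing it via Proposition~\ref{prop:Goodwillie-decompose-holim} into iterated homotopy pullbacks of connected spaces whose structure maps carry sections, and then invoking Lemma~\ref{lem:holim-connected-two-cube} at each step. Everything else is a direct consequence of the long exact sequence of the defining fibration of $\Perp_n F(X)$.
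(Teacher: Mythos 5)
Your proof is correct and takes essentially the same route as the paper's: both reduce the problem to showing that $\holim_{U\in\Power_0(\mathbf{n})} F\cube{X}(U)$ is connected via Lemma~\ref{lem:holim-connected}, and then conclude that a map with contractible homotopy fiber over a connected base is an equivalence. (Your parenthetical worry about ``restricting'' to $\Power_0$ is unnecessary, since Lemma~\ref{lem:holim-connected} is already stated for the homotopy inverse limit over $\Power_0(S)$.)
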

\begin{proof}
Let $\cube{C}$ be the cube defining $\Perp_n F(X)$. 
Lemma~\ref{lem:holim-connected} shows that 
$\holim_{U \in \Power_0(\mathbf{n})} \cube{C}(U)$ is connected. 
The cross effect is the homotopy fiber in the quasifibration
$$ \Perp_n F(X) 
\rightarrow 
\cube{C}(\emptyset) 
\rightarrow 
\holim_{U \in \Power_0(\mathbf{n})} \cube{C}(U) .
$$
The cube $\cube{C}$ is Cartesian if the right map is an equivalence. 
A map is an equivalence if it has a contractible homotopy fiber and is
surjective on $\pi_0$. The fiber is $\Perp_n F(X) \simeq 0$, and the
base has $\pi_0 = \basept$, so the map is an equivalence.
\end{proof}

\subsection{Group Values}
\label{sec:cartesian-group-values}
In this section, we establish that for a functor to groups, $\Perp F(X)
\simeq 0$ implies that the cube defining $\Perp F(X)$ is Cartesian.

\begin{lemma}
\label{lem:groups-surj-X0-lim}
Let $\cube{X}$ be an $n$-cube ($n\ge 1$) of discrete groups.
If $\cube{X}$ has compatible 
sections to all structure maps
(\ref{hypothesis:compatible-sections}), then the map
$$\cube{X}(\emptyset) \rightarrow \lim_{U\in\Power_0(\mathbf{n})} \cube{X}(U)$$
is surjective.
\end{lemma}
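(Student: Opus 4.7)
The plan is to prove the statement by induction on $n$. The base case $n=1$ is immediate: the map $\cube{X}(\emptyset) \to \cube{X}(\Set{1})$ is literally the structure map, and the hypothesis gives a section $s_{\Set{1},\emptyset}$ to it, so it is surjective.

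For the inductive step, suppose $n \geq 2$ and let $(x_U)_{U \in \Power_0(\mathbf{n})}$ be a compatible family. First I would apply the induction hypothesis to the sub-$(n-1)$-cube $\cube{X}|_{\Power(\Set{1,\ldots,n-1})}$, which clearly inherits a system of compatible sections from $\cube{X}$, to produce $y' \in \cube{X}(\emptyset)$ with $p_U(y') = x_U$ for every $\emptyset \neq U \subset \Set{1,\ldots,n-1}$. The remaining task is to modify $y'$ so that its projection to $\cube{X}(\Set{n})$ agrees with $x_{\Set{n}}$, without disturbing the already-correct projections. Set $w := x_{\Set{n}} \cdot p_{\Set{n}}(y')^{-1} \in \cube{X}(\Set{n})$. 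A short diagram chase using cube functoriality shows that for each nonempty $U \subset \Set{1,\ldots,n-1}$, $\cube{X}(\Set{n} \subset U \cup \Set{n})(w) = e$; equivalently, $w$ lies in the intersection $\mathcal{I} := \bigcap_{i < n} \ker\bigl(\cube{X}(\Set{n}) \to \cube{X}(\Set{i,n})\bigr)$.

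It therefore suffices to exhibit $\delta \in \cube{X}(\emptyset)$ lying in $\mathcal{K} := \bigcap_{i < n} \ker p_{\Set{i}}$ and satisfying $p_{\Set{n}}(\delta) = w$; then $y := y' \cdot \delta$ finishes the job. To construct $\delta$, I would begin with $\delta_0 := s_{\Set{n},\emptyset}(w)$, which has the correct projection $p_{\Set{n}}(\delta_0) = w$. Its other projections $a_U := p_U(\delta_0)$ for $U \in \Power_0(\mathbf{n}-1)$ form a compatible family, and the condition $w \in \mathcal{I}$ forces each $a_U$ to lie in the kernel of the structure map $\cube{X}(U) \to \cube{X}(U \cup \Set{n})$. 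One then applies the inductive hypothesis again to the sub-$(n-1)$-cube to pull this compatible family back to a correction $\delta_1 \in \cube{X}(\emptyset)$, and takes $\delta := \delta_0 \cdot \delta_1^{-1}$.

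The main obstacle is the bookkeeping: the naive correction $\delta_0 \cdot \delta_1^{-1}$ may drift in the $\Set{n}$-component because $p_{\Set{n}}(\delta_1)$ need not be trivial. The hard part of the argument is showing that one can in fact choose (or iteratively adjust) $\delta_1$ so that its $\Set{n}$-projection vanishes, or equivalently that the composite $p_{\Set{n}} \circ s_{V,\emptyset}$ vanishes on the kernel subfamily just constructed. This is where the full force of the compatibility of sections across the directions involving $n$ and those not involving $n$ must be brought to bear --- and is the step where being in the category of discrete groups (so that kernels are honest subgroups and semidirect-product decompositions $\cube{X}(\emptyset) = \ker p_{\Set{i}} \rtimes s_{\Set{i},\emptyset}(\cube{X}(\Set{i}))$ are available) is essential.
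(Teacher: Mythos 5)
Your setup is fine through the construction of $w$ and $\delta_0$, and you have correctly located the difficulty, but the proof does not close it, and the closing move you gesture at is not available. Hypothesis~\ref{hypothesis:compatible-sections} makes the sections compatible only \emph{with one another}; it imposes no relation between a section in one direction and a structure map in a transverse direction. So there is no reason for $p_{\Set{n}}\circ s_{V,\emptyset}$ to vanish on your kernel subfamily, and the individual semidirect-product splittings of $\cube{X}(\emptyset)$ along the $p_{\Set{i}}$ do not coordinate with the $\Set{n}$-direction. An iterative adjustment also fails to terminate in any obvious way: the drift $p_{\Set{n}}(\delta_1)$ lies again in $\mathcal{I}$, so each pass reproduces the original problem with a new target.

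The missing idea is to apply the inductive hypothesis not to the face sub-cube indexed by $\Power(\Set{1,\ldots,n-1})$ but to the kernel $(n-1)$-cube in the $n$-direction, $\cube{Y}(U) := \ker\bigl(\cube{X}(U)\rightarrow\cube{X}(U\cup\Set{n})\bigr)$ for $U\subset\Set{1,\ldots,n-1}$, which inherits compatible sections. Your family $(a_U)$ lies in $\cube{Y}$ precisely because $w\in\mathcal{I}$, so induction applied to $\cube{Y}$ produces $\delta_1\in\cube{Y}(\emptyset)=\ker p_{\Set{n}}$ with $p_U(\delta_1)=a_U$; then $\delta:=\delta_0\cdot\delta_1^{-1}$ has trivial projections to all $\cube{X}(\Set{i})$ for $i<n$ and satisfies $p_{\Set{n}}(\delta)=w$ exactly, since $p_{\Set{n}}(\delta_1)=e$ by construction. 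This is the paper's mechanism, packaged slightly differently: there one normalizes the whole coherent family at the outset by $w:=s_{\Set{n},\emptyset}(x_{\Set{n}})$, which kills every component $x_V$ with $\Set{n}\subset V$, and then runs the entire induction inside the fiber cube $\cube{Y}$, so no two-stage correction is needed.
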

\begin{proof}
We need to show that the map above is surjective.
This is equivalent to showing that there exists an
$x_\emptyset \in \cube{X}(\emptyset)$ mapping to each coherent system of
elements $x_U \in \cube{X}(U)$, with $U \not= \emptyset$.

$\cube{X}$ is a cube of groups, and hence all of the structure maps are
group homomorphisms. This allows us to subtract an arbitrary
$w\in\cube{X}(\emptyset)$ from $x_\emptyset$, and subtract the images
$\Image_U(w)$ of $w$ in $\cube{X}(U)$ from each $x_U$, to show the
question is equivalent 
to the existence
of an $x_\emptyset - w \in \cube{X}(\emptyset)$ mapping to
each coherent system of elements $x_U - \Image_U(w)$. 

Given a coherent system of elements $x_U$ in an $n$-cube, let $w$ be
the image of $x_{\Set{n}}$ in $\cube{X}(\emptyset)$ using the section
map $\cube{X}(\Set{n}) \rightarrow \cube{X}(\emptyset)$.
Define $z_U = x_U - \Image_U(w)$, noting that when $\Set{n}\subset V$, we
have $\Image_V(w) = x_W$, so $z_W = 0$.
By the preceding paragraph, the surjectivity that we are trying to establish
is equivalent to the existence of a
$z_\emptyset$ mapping to each coherent collection $z_U$.

If $n=1$, then $\lim_{U\in\Power_0(\mathbf{1})} \cube{X}(U) = X(\Set{1})$, 
so the section map
$\cube{X}(\Set{1}) \rightarrow \cube{X}(\emptyset)$ produces a
$z_\emptyset$ mapping to $z_{\Set{1}}$, as desired.

If $n>1$, then we proceed by induction, assuming the lemma is true for
smaller $n$.
Taking the fiber of $\cube{X}$ in the direction of $\Set{n}$, we have
an $(n-1)$-cube 
$$\cube{Y}(U) := \fib \left(
\cube{X}(U) \rightarrow \cube{X}(U \cup \Set{n}) \right).$$
The cube $\cube{Y}$ satisfies the hypothesis of the lemma because
taking fibers preserves compatible sections.
Notice that for $\Set{n} \not\subset U$, the element $z_U$ passes to
the fiber, since it maps to $z_{U\cup \Set{n}} = 0$. 

Now $\cube{Y}$ is an $(n-1)$-cube, so by induction, the map from
$\cube{Y}(\emptyset)$  to $\lim \cube{Y}(U)$ is surjective. That is,
there exists a $y\in \cube{Y}(\emptyset)$ with $\Image_U(y) = z_U$. 
Mapping $y$ to $z \in \cube{X}(\emptyset)$ gives an element $z$ with
$\Image_U(z) = z_U$ for $U \subset \Set{1,\ldots,n-1}$. As above, if
$\Set{n} \subset U$, then $z_U = 0$, so $\Image_U(z) = z_U$ in this
case as well. Therefore, we have produced an element $z$ mapping to
each coherent collection of elements $z_U$, as desired.
\end{proof}

\begin{corollary}
\label{cor:discrete-groups-cartesian}
Let $\cube{X}$ be an $n$-cube ($n\ge 1$) of discrete groups with
compatible sections to all structure maps
 (\ref{hypothesis:compatible-sections}). If the total fiber of 
$\cube{X}$ is zero, then $\cube{X}$ is Cartesian.
\end{corollary}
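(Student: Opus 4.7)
The plan is to reduce the statement to Lemma~\ref{lem:groups-surj-X0-lim} plus elementary group theory, by verifying that for our retractive cube of discrete groups the homotopy limit and the strict limit agree, and then applying the preceding lemma for surjectivity and an unpacking of Definition~\ref{def:homotopy-fiber} for injectivity. Concretely, since all structure maps of $\cube{X}$ admit compatible sections they are split surjections of discrete groups, hence fibrations. Using Proposition~\ref{prop:Goodwillie-decompose-holim} (or the standard iterated-pullback decomposition used in Lemma~\ref{lem:holim-connected}), the homotopy limit $\holim_{\Power_0(\mathbf{n})} \cube{X}$ can therefore be built as an iterated strict pullback of discrete groups along fibrations, so it is itself discrete and coincides with the strict limit $\lim_{\Power_0(\mathbf{n})} \cube{X}$. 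Consequently, to prove $\cube{X}$ is Cartesian it suffices to prove that the map $\cube{X}(\emptyset) \to \lim_{\Power_0(\mathbf{n})} \cube{X}$ is a bijection of discrete groups.

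Next I would unpack the hypothesis $\hofib \cube{X} = 0$ via Definition~\ref{def:homotopy-fiber}. Since each $\cube{X}(T)$ is discrete, every continuous map $\Phi_T: I^T \to \cube{X}(T)$ is constant. For $T \neq \emptyset$ the set $I^T$ contains points with a coordinate equal to one, and condition~(2) forces the constant value to be the basepoint. The naturality diagrams for the inclusions $\emptyset \hookrightarrow V$ (with $V \neq \emptyset$) then say that $\Phi_\emptyset \in \cube{X}(\emptyset)$ must map to $0$ in every $\cube{X}(V)$. Thus $\hofib \cube{X}$ is precisely the kernel of $\cube{X}(\emptyset) \to \lim_{\Power_0(\mathbf{n})} \cube{X}$. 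The hypothesis that the total fiber is zero gives injectivity, and Lemma~\ref{lem:groups-surj-X0-lim} gives surjectivity, so the map is a bijection and we are done.

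The step I expect to require the most care is the identification of $\holim$ with $\lim$; it uses in an essential way that the sections are \emph{compatible}, so that when we iterate pullbacks in different directions the intermediate diagrams still have sections on their structure maps and so remain diagrams of discrete groups whose homotopy limits are strict. Everything else is either a direct application of the previous lemma or a routine unpacking of the definition of the homotopy fiber of a cube.
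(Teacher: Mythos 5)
Your proposal is correct and follows essentially the same route as the paper's proof: use the compatible sections to see all structure maps are fibrations, so the homotopy limit of the punctured cube agrees with the strict limit, then obtain surjectivity from Lemma~\ref{lem:groups-surj-X0-lim} and injectivity from the vanishing of the total fiber (which is the kernel of the comparison map). You are somewhat more explicit than the paper about why $\holim = \lim$ and why the total fiber is exactly the kernel, but these are elaborations of the same argument, not a different one.
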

\begin{proof}
Recall that a $T$-cube $\cube{X}$ is Cartesian when
$\cube{X}(\emptyset) \rightarrow \holim_{U \not= \emptyset}
\cube{X}(U)$ is an equivalence. The hypotheses of the existence of
section maps means that all of the structure maps are fibrations.
This means that the homotopy inverse limit is equivalent to the strict
inverse limit, so we need only show that 
$\cube{X}(\emptyset) \xrightarrow{\cong} \lim_{U \not= \emptyset} \cube{X}(U).$
Corollary~\ref{cor:discrete-groups-cartesian} shows that the map is
surjective. The total fiber of the cube is equivalent to the fiber of
this map, so if the fiber is zero then we have a surjective map of
groups with zero kernel; that is, an isomorphism.
\end{proof}

The combination of Lemma~\ref{lem:groups-surj-X0-lim} and
Lemma~\ref{lem:connected-cartesian} 
allow us to conclude that for all group-valued functors $\Perp F
\simeq 0$ means the $\Perp$-cube is Cartesian.

\begin{lemma}
\label{lem:groups-cartesian}
Let $F$ be a functor satisfying Hypothesis~\ref{hypothesis-2} (group
values) on coproducts of $X$. 
If $\Perp_n F(X) \simeq 0$, then the cube defining $\Perp_n F(X)$ is
Cartesian. 
\end{lemma}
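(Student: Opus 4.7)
The plan is to reduce the Cartesianness of the cube $\cube{C}$ defining $\Perp_n F(X)$ to two separate statements: an isomorphism on $\pi_k$ for $k \ge 1$, and surjectivity on $\pi_0$. The first comes essentially for free from the long exact sequence of the fibration
\[
\Perp_n F(X) \to \cube{C}(\emptyset) \to \holim_{U \in \Power_0(\mathbf{n})} \cube{C}(U),
\]
since the hypothesis gives that the fiber is contractible. The real content, as in \ref{lem:connected-cartesian}, is obtaining surjectivity on $\pi_0$, and here the group-valued hypothesis replaces the role that connectedness played in the previous section.

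First I would use Lemma~\ref{lem:crn-cube-has-sections} to note that $\cube{C}$ has compatible sections to every structure map, and use Lemma~\ref{lem:fibration-cube-with-sections} to replace $\cube{C}$ by an equivalent cube in which every structure map is a fibration (still with compatible sections). Then $\holim \cube{C}$ is just the strict limit $\lim \cube{C}$. Because $F$ lifts to a functor into topological groups on coproducts of $X$, every vertex $\cube{C}(U)$ is a topological group, the structure maps are continuous group homomorphisms, and each strict fiber is again a topological group. In particular $\pi_0 \cube{C}$ is a cube of discrete groups with compatible sections to all structure maps.

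Next I would apply Lemma~\ref{lem:pi-k-perp-is-perp-pi-k}, which identifies $\pi_0 \Perp_n F(X)$ with the total (iterated) fiber of the cube of groups $\pi_0 \cube{C}$. The hypothesis $\Perp_n F(X) \simeq 0$ gives $\pi_0 \Perp_n F(X) = 0$, so Corollary~\ref{cor:discrete-groups-cartesian} forces the cube $\pi_0 \cube{C}$ to be Cartesian, that is
\[
\pi_0 \cube{C}(\emptyset) \xrightarrow{\cong} \lim_{U \in \Power_0(\mathbf{n})} \pi_0 \cube{C}(U).
\]

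Finally I would bridge $\pi_0 \lim \cube{C}$ and $\lim \pi_0 \cube{C}$. Decomposing the punctured-cube limit inductively via Proposition~\ref{prop:Goodwillie-decompose-holim}, exactly as in the proof of Lemma~\ref{lem:holim-connected}, reduces the question to a single pullback of topological groups along a fibration that admits a section. In that base case the existence of the section makes every structure map surjective on components, so $\pi_0$ commutes with the pullback; the inductive step goes through since pullbacks of sections are sections. Combining this with the previous step gives surjectivity (in fact bijectivity) of $\pi_0 \cube{C}(\emptyset) \to \pi_0 \holim \cube{C}$, which together with the LES argument for $\pi_k$, $k \ge 1$, shows that the map $\cube{C}(\emptyset) \to \holim_{\Power_0(\mathbf{n})} \cube{C}$ is a weak equivalence.

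The main obstacle is the $\pi_0$ argument: contractibility of a homotopy fiber does not by itself yield surjectivity on $\pi_0$, so one genuinely needs the group-valued hypothesis and the compatible-sections machinery of Section~\ref{sec:fiber-contractible-Cartesian} to propagate surjectivity through the iterated pullback decomposition of the punctured-cube limit. Everything else is formal once these ingredients are in place.
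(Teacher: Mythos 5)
Your proof is correct in outline but takes a genuinely different route from the paper's. The paper decomposes the \emph{functor} vertically via the fibration $\widehat{F}\rightarrow F\rightarrow \pi_0 F$, shows (using the section of cubes $\pi_0 F\cube{X}\rightarrow F\cube{X}$) that contractibility of $\Perp_n F(X)$ forces contractibility of $\Perp_n\widehat{F}(X)$ and $\Perp_n\pi_0 F(X)$, applies Lemma~\ref{lem:connected-cartesian} to the connected part and Corollary~\ref{cor:discrete-groups-cartesian} to the discrete part, and then glues the two Cartesian cubes back together with a five-lemma/diagram chase on $\pi_0$. You instead work with the single cube $\cube{C}$ directly: Lemma~\ref{lem:pi-k-perp-is-perp-pi-k} plus Corollary~\ref{cor:discrete-groups-cartesian} give that $\pi_0\cube{C}$ is a Cartesian cube of discrete groups, and you then try to transport this to $\pi_0\holim_{\Power_0}\cube{C}$ by commuting $\pi_0$ past the iterated pullback decomposition of Proposition~\ref{prop:Goodwillie-decompose-holim}. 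Your approach buys a more uniform argument that never splits off the identity component; the paper's buys a cleaner reduction to two already-proved cases. One step of yours is under-argued: to deduce surjectivity of $\pi_0\cube{C}(\emptyset)\rightarrow\pi_0\holim_{\Power_0}\cube{C}$ from surjectivity onto $\lim_{\Power_0}\pi_0\cube{C}$, what you actually need is \emph{injectivity} of the comparison map $\pi_0\holim_{\Power_0}\cube{C}\rightarrow\lim_{\Power_0}\pi_0\cube{C}$, and ``the section makes every structure map surjective on components'' addresses only the (easier, and not needed) surjectivity direction. The injectivity in the base-case pullback $X\times_Z Y$ of topological groups does hold, but because the section $Z\rightarrow Y$ kills the connecting map $\pi_1(X)\rightarrow\pi_0\bigl(\ker(Y\rightarrow Z)\bigr)$ (equivalently, trivializes the $\pi_1(Z)$-action on $\pi_0$ of the fiber); you should say this explicitly and check that the induced sections persist through the inductive decomposition, exactly as in Lemma~\ref{lem:holim-connected}. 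With that repair the argument goes through.
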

\begin{proof}
Decompose $F$ as the fibration
$$ \widehat{F} \rightarrow F \rightarrow \pi_0 F ,$$
where $\widehat{F}$ is the connected component of the basepoint of $F$.
Let $\cube{X}$ be the $(n+1)$-cube used in
Definition~\ref{def:n-additive} to define $\Perp_n F(X)$.

In general, $X \rightarrow \pi_0 X$ has a section given by
chosing a point in each component. Using the canonical sections to the
structure maps in the cube $\cube{X}$, we may make compatible choices
for these maps $\pi_0 \cube{X}(U) \rightarrow \cube{X}(U)$ so that
they assemble to a map of cubes $\pi_0 \cube{X} \rightarrow
\cube{X}$. We will use this in two places later in the proof.

We claim that if $\Perp_n F(X)$ is contractible, then $\Perp_n
\widehat{F}(X)$ and $\Perp_n \pi_0 F(X)$ are both contractible as
well.  
The section map of cubes $\pi_0 F\cube{X} \rightarrow F\cube{X}$
produces a section $\Perp \pi_0 F(X) \rightarrow \Perp F(X)$.
This shows
that in the long exact sequence on homotopy associated to the
fibration
$$ \Perp\widehat{F}(X) \rightarrow \Perp F(X) \rightarrow \Perp \pi_0 F(X),$$
there is a surjective map
$\pi_k \Perp F(X) \rightarrow \pi_k \Perp \pi_0 F(X)$, for all $k$. 
Since $\pi_k \Perp F(X) = 0$, this shows $\Perp \pi_0 F(X)$ is contractible. 
Similarly,
$\Perp \widehat{F}(X)$ is now the fiber in a quasifibration whose
total space and base space are contractible, so it is contractible as well.

The functor $\widehat{F}$ satisfies Hypothesis~\ref{hypothesis-1} on
coproducts of $X$, so by Lemma~\ref{lem:connected-cartesian},
the cube $\widehat{F}\cube{X}$ 
defining $\Perp_n \widehat{F}(X)$ is Cartesian. 
The functor $\pi_0 F$ is a functor to discrete groups, so
Corollary~\ref{cor:discrete-groups-cartesian} shows that the cube
$\pi_0 F \cube{X}$ defining $\Perp_n \pi_0 F(X)$ is Cartesian. This
shows that the left and right vertical arrows in the following diagram
are equivalences:
$$\xymatrix{
\widehat{F} \cube{X}(\emptyset)
\ar[r]
\ar[d]^{\simeq}
&
F\cube{X}(\emptyset)
\ar[r]
\ar[d]
&
\pi_0 F \cube{X}(\emptyset)
\ar[d]^{\simeq}
\\
{\holim_{\Power_0} \widehat{F}\cube{X}}
\ar[r]
&
{\holim_{\Power_0} F\cube{X}}
\ar[r]
&
{\holim_{\Power_0} \pi_0 F\cube{X}}
}$$
The top row is a fibration by the construction of $\widehat{F}$. 
The bottom row is also a fibration, since homotopy inverse limits
commute up to natural equivalence.

Consider the long exact
sequence on homotopy groups. From the construction of $\widehat{F}$,
we know that $\pi_0 \widehat{F} = 0$, so $\pi_0 \widehat{F}
\cube{X}(\emptyset) = 0$, 
and hence $\pi_0({\holim_{\Power_0} \widehat{F}\cube{X}})$ is also
$0$. 
Furthermore, $F\rightarrow \pi_0 F$ is an isomorphism on $\pi_0$. 

Using the above data in the long exact sequences from the horizontal
fibrations gives us the following diagram with exact rows:
$$\xymatrix{
0
\ar[r]
\ar[d]
&
\pi_0 F\cube{X}(\emptyset)
\ar[r]^{\cong}
\ar[d]
&
\pi_0 \left(\pi_0 F \cube{X}(\emptyset) \right)
\ar[d]^{\cong}
\\
0
\ar[r]
&
\pi_0 {\holim_{\Power_0} F\cube{X}}
\ar[r]
&
\pi_0{\holim_{\Power_0} \pi_0 F \cube{X}}
}$$
Either by inverting the isomorphisms and using the center vertical map
or by 
applying $\holim_{\Power_0}$
to the map of cubes $\pi_0 F\cube{X}\rightarrow F\cube{X}$,
we have a section to the bottom right map in the diagram, so it is
surjective. By exactness, we have a surjective map of groups (by our
hypothesis on $F$) that has kernel zero,
so it is an isomorphism. 
\end{proof}


%
%
\chapter{The Main Theorem}
\label{chap:main-theorem}
In this chapter, we establish the fundamental theorem that makes the
use of a cotriple workable for functors from spaces to spaces.

%
%
\section{Main Theorem: Statement And Outline}
\label{sec:main-theorem-outline}
The main theorem of this paper is the following.

\begin{theorem}
\label{thm:Pnd-from-perp}
\label{thm:main-theorem}
Let $F$ be a homotopy functor from pointed spaces to pointed spaces.
If $F$ has either connected values (Hypothesis~\ref{hypothesis-1}) or
group values (Hypothesis~\ref{hypothesis-2}) on coproducts of $X$,
then the following is a fibration sequence up to homotopy:
\begin{equation}
\label{eq:perp-fibration}
\realization{\Perp_{n+1}^{*+1} F(X)}
\rightarrow 
F(X)
\rightarrow
P_n^d F(X)
.
\end{equation}
\end{theorem}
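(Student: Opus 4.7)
The plan is to establish the fibration sequence in three steps, following the standard strategy of a cotriple resolution versus a localization.

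First, I identify the natural transformations. The map $F(X) \to P_n^d F(X)$ is obtained by composing $F(X) \simeq L F_X(S^0)$ (Proposition~\ref{prop:left-kan-agrees-with-orig}, since $S^0$ lies in the subcategory used in the left Kan extension) with Goodwillie's natural transformation $L F_X \to P_n(L F_X)$ evaluated at $S^0$. The map $\realization{\Perp_{n+1}^{*+1} F(X)} \to F(X)$ is the augmentation of the cotriple simplicial resolution, which exists by Theorem~\ref{thm:perp-is-cotriple}. One must verify that this simplicial space is good (via Lemma~\ref{lem:LCFX-good} adapted to the present setting) and that Hypotheses~\ref{hypothesis-1} or \ref{hypothesis-2} propagate through each $\Perp_{n+1}^{k+1} F$, so that the subsequent recognition and fibration lemmas apply.

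Second, let $C(X)$ be the homotopy cofiber of the augmentation. I would show that $\realization{\Perp_{n+1}^{*+1} F(X)} \to F(X) \to C(X)$ is a fibration sequence up to homotopy, invoking Lemma~\ref{lem:Waldhausen-fibration-lemma} in the connected case or Theorem~\ref{thm:Bousfield-Friedlander} in the group case, after checking the $\pi_*$-Kan condition. I would then argue $C$ is $n$-additive via the standard extra-degeneracy argument for cotriples: applying $\Perp_{n+1}$ to the simplicial functor $\Perp_{n+1}^{*+1} F$ produces a simplicial object admitting an extra degeneracy supplied by the comultiplication $\delta$ of the cotriple, so its realization contracts onto the augmentation $\Perp_{n+1} F$. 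Provided $\Perp_{n+1}$ commutes with the realization in question (again by Bousfield--Friedlander under our hypotheses), this forces the augmentation to become an equivalence after $\Perp_{n+1}$, and hence $\Perp_{n+1} C \simeq \basept$. Lemma~\ref{lem:connected-cartesian} or \ref{lem:groups-cartesian} then upgrades cross-effect vanishing to the assertion that the defining $(n+1)$-cube of $C$ is Cartesian, i.e., that $C$ is $n$-additive.

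Third, I identify $C$ with $P_n^d F$. Apply $P_n^d$ to the fibration $\realization{\Perp_{n+1}^{*+1} F} \to F \to C$, obtaining a fibration by Corollary~\ref{cor:Pnd-group-connected-base}. The leftmost term is contractible: using Lemma~\ref{lem:perp-commutes-with-realization-v2} iteratively, $L(\Perp_{n+1}^{k+1} F)_X \simeq \Perp_{n+1}^{k+1}(L F_X)$, so $P_n^d \Perp_{n+1}^{k+1} F(X) = P_n(\Perp_{n+1}^{k+1}(L F_X))(S^0)$; the functor $\Perp_{n+1}$ of any functor is the diagonal of an $(n+1)$-multi-reduced multivariable functor whose Goodwillie layers vanish below degree $n+1$, so $P_n \Perp_{n+1} G \simeq \basept$ for any $G$. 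Since $C$ is $n$-additive we have $P_n^d C \simeq C$, and the fibration collapses to $P_n^d F \simeq C$, completing the identification and proving the theorem. The main obstacle is the extra-degeneracy argument in the second step: one must verify that $\Perp_{n+1}$ commutes with the relevant realization, which requires Hypotheses~\ref{hypothesis-1} or \ref{hypothesis-2} to propagate through every iterate of $\Perp_{n+1}$ so that Theorem~\ref{thm:Bousfield-Friedlander} applies at each stage, and that the final cross-effect vanishing can be promoted to Cartesian-ness of the defining cube by Lemma~\ref{lem:connected-cartesian} or \ref{lem:groups-cartesian}.
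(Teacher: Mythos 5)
Your overall architecture (cotriple resolution, extra degeneracy, $P_n^d$ kills the resolution, additive functors are $P_n^d$-local) is the right one, but there are two genuine gaps, and both are where the paper spends most of its effort.

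First, you define $C$ as the homotopy \emph{cofiber} of the augmentation and then want both (a) that $\realization{\Perp_{n+1}^{*+1}F}\to F\to C$ is a fibration sequence and (b) that $\Perp_{n+1}C\simeq\basept$ follows from the extra-degeneracy argument. Neither works with the cofiber. In spaces a cofiber sequence is not a fibration sequence, and asserting that the augmentation is, up to homotopy, the inclusion of the fiber of $F\to C$ is essentially the theorem you are trying to prove. More decisively, $\Perp_{n+1}$ is a homotopy inverse limit construction: it commutes with homotopy \emph{fibers}, not cofibers, so the equivalence $\Perp_{n+1}\realization{\Perp_{n+1}^{*+1}F}\simeq\Perp_{n+1}F$ supplied by the extra degeneracy gives you no control over $\Perp_{n+1}C$. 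The paper avoids this by working with the homotopy fiber $A_F$ of the augmentation (Definition~\ref{def:AF}); then $\Perp_{n+1}A_F\simeq 0$ is immediate from commuting $\hofib$ with $\Perp$ (Lemma~\ref{lem:perp-AF-zero}), and the fibration \eqref{eq:perp-fibration} is extracted from the Cartesian square comparing the $A_F$-fibration with its image under $P_n^d$, using $P_n^d\bigl(\realization{\Perp_{n+1}^{*+1}F}\bigr)\simeq 0$.

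Second, the sentence ``since $C$ is $n$-additive we have $P_n^d C\simeq C$'' hides the hardest part of the proof. That implication is exactly Proposition~\ref{prop:perp-F-zero}, and it is not formal: $n$-additivity of $C$ does not obviously make $LC_X$ $n$-excisive. The paper proves it by a ladder induction interleaved with the $\Perp_{n+1}F\not\simeq 0$ case: one first gets the fibration $\realization{\Perp_n^{*+1}F}\to F\to P_{n-1}^d F$ at stage $n-1$, identifies the fiber as $D_n^d F$ using Segal's delooping (Lemma~\ref{lem:perp-2-zero-Segal}), the homotopy-orbit model for the iterated cross effects (Lemma~\ref{lem:perp-homotopy-orbits-spectrum}), and the commutation of $\LoopInfty$ with realizations of connective spectra, and only then concludes $n$-excisiveness of $LF_X$ from closure of $n$-excision under extensions (Lemma~\ref{lem:excisive-up-fibration}), with separate $\pi_0$ bookkeeping. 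Your proposal needs this entire strand as an input; as written it is circular to cite it as a one-line consequence of additivity.
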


To establish this theorem, we use induction on $n$, beginning with the
case $n=1$.  We further break down the induction into the cases where
$\Perp_{n+1} F(X) \simeq 0$ and $\Perp_{n+1} F(X) \not\simeq 0$. 

In Section~\ref{sec:perp-F-zero}, we treat the case when
$\Perp_{n+1} F(X) \simeq 0$. In this case, we show directly that the
fiber of the fibration sequence we obtain from induction,
$$
\realization{\Perp_{n}^{*+1} F(X)}
\rightarrow 
F(X)
\rightarrow
P_{n-1}^d F(X)
,
$$
is a homogeneous degree $n$ functor. This implies that $F(X) \simeq
P_n^d F(X)$ in this case.

In Section~\ref{sec:perp-F-nonzero}, we treat the case when
$\Perp_{n+1} F(X) \not\simeq 0$. In this case, we consider the
auxiliary diagram: 
$$\xymatrix{
A_F(X)
\ar[r]
\ar[d]
&
\realization{\Perp_{n+1}^{*+1} F(X)}
\ar[r]^{\epsilon}
\ar[d]
&
F(X)
\ar[d]
\\
P_n^d A_F(X)
\ar[r]
&
P_n^d \left( \realization{\Perp_{n+1}^{*+1} F(X)} \right)
\ar[r]
&
P_n^d F(X)
}$$
where $A_F$ is defined as the homotopy fiber of the map $\epsilon$ in
the top row, and the bottom row is shown to be a quasifibration as
well (Propositon~\ref{prop:perp-f-nonzero-fib-conn}
and~\ref{prop:perp-f-nonzero-fib-discrete}).  
We show that $\Perp_{n+1} A_F(X)
\simeq 0$ (Lemma~\ref{lem:perp-AF-zero}), 
and hence the case $\Perp_{n+1} F \simeq 0$ shows that
there is an equivalence of the fibers, so the square on the right is
Cartesian. Then it is not hard to establish that 
$P_n^d \left( \realization{\Perp_{n+1}^{*+1} F(X)}\right) \simeq 0$
(Lemma~\ref{lem:Pnd-perp-contractible}),
so that \eqref{eq:perp-fibration} is actually a quasifibration.


%
%
\section{Case: $\Perp F\simeq 0$}
\label{sec:perp-F-zero}
In this section, the goal is to establish that when the $(n+1)$-st
cross effect of $F$ vanishes, $F$ is equivalent to its $n$-additive
approximation, $P_n^d F$.

\begin{proposition}
\label{prop:perp-F-zero}
If $F$ has either connected values
(Hypothesis~\ref{hypothesis-1}) or 
group values (Hypothesis~\ref{hypothesis-2}) on coproducts of $X$, and
$\Perp_{n+1} F(X) \simeq 0$, then $F(X) \simeq P_n^d F(X)$.
\end{proposition}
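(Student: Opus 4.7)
My plan is to proceed by induction on $n$, with the proof of the main Theorem~\ref{thm:main-theorem} and this proposition carried out together in a single induction (so that Theorem~\ref{thm:main-theorem} at level $n-1$ is available when proving Proposition~\ref{prop:perp-F-zero} at level $n$). For the base case $n = 0$, the hypothesis $\Perp_1 F(X) \simeq \basept$ is, via Lemma~\ref{lem:connected-cartesian} or Lemma~\ref{lem:groups-cartesian} according to which of Hypothesis~\ref{hypothesis-1} or Hypothesis~\ref{hypothesis-2} holds, exactly the statement that the defining $1$-cube $F(X) \to F(\basept)$ is Cartesian, so $F(X) \simeq F(\basept)$. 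Since $P_0^d F$ is the constant functor at $F(\basept)$, this gives $F(X) \simeq P_0^d F(X)$ immediately.

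For the inductive step at level $n \geq 1$, I would invoke Theorem~\ref{thm:main-theorem} at level $n-1$ to produce the fibration sequence
\[
\realization{\Perp_n^{*+1} F(X)} \to F(X) \to P_{n-1}^d F(X),
\]
following the strategy described in the outline of \S\ref{sec:main-theorem-outline}. The goal then becomes to identify the fiber $\realization{\Perp_n^{*+1} F(X)}$ as a homogeneous degree-$n$ functor of $X$ under the assumption $\Perp_{n+1} F(X) \simeq \basept$. By the cube Cartesian lemmas again, the hypothesis says that $F$ is $n$-additive at $X$ in the sense of Definition~\ref{def:n-additive}. I would then combine this with the cotriple structure of $\Perp_n$ (Theorem~\ref{thm:perp-is-cotriple}), Corollary~\ref{cor:perp-n+1-perp-2-perp-n} (which propagates the vanishing of $cr_{n+1}$ into vanishing of iterated second cross effects of $cr_n$), and the interchange result Lemma~\ref{lem:perp-commutes-with-realization-v2}, to show that each simplicial level $\Perp_n^{k+1} F(X)$ is itself an $n$-additive functor of $X$ (since applying another $\Perp_n$ only kills lower-order information), and hence so is the realization.

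Once the fiber is identified as a degree-$n$ homogeneous functor, call it $D(X)$, I would compare the fibration $D(X) \to F(X) \to P_{n-1}^d F(X)$ with the natural fibration $D_n^d F(X) \to P_n^d F(X) \to P_{n-1}^d F(X)$ obtained from the Taylor-tower of $L F_X$ (using Theorem~\ref{thm:delooping-Dn} after passing through the left Kan extension). Since $P_{n-1}^d F$ is already $(n-1)$-additive and hence $n$-additive, attaching a degree-$n$ homogeneous fiber produces an $n$-additive functor, so the induced map $F(X) \to P_n^d F(X)$ is an equivalence by the universal property of $P_n^d$ as the $n$-additive approximation. The main obstacle here will be the identification in the previous paragraph: making precise that one iteration of $\Perp_{n+1}$ vanishing is enough to force the bar-like simplicial object $[k] \mapsto \Perp_n^{k+1} F(X)$ to be levelwise $n$-additive (and in fact degree-$n$ homogeneous after realization), which requires careful bookkeeping with the cotriple structure and the Bousfield–Friedlander/Waldhausen realization-of-fibration machinery from \S\ref{sec:Pnd-preserves-fibrations} to ensure that realization preserves the relevant Cartesian-ness.
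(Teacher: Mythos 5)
Your top-level skeleton---a ladder induction interleaved with the main theorem, producing the fibration $\realization{\Perp_n^{*+1}F(X)} \rightarrow F(X) \rightarrow P_{n-1}^d F(X)$ and then identifying the fiber as a homogeneous degree-$n$ functor---matches the paper's. But the identification of the fiber is where essentially all of the work lives, and the argument you sketch for it does not go through. Showing that each level $\Perp_n^{k+1}F(X)$ is $n$-additive (even granting that realization preserves this, which itself needs the Bousfield--Friedlander machinery) would at best make the realization $n$-additive; it would not identify it with $D_n^d F(X)$, which is what your comparison with the fibration $D_n^d F \rightarrow P_n^d F \rightarrow P_{n-1}^d F$ requires. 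The paper's identification instead proceeds by (i) applying the $n=1$ Segal case in each variable of $cr_n F$---via Corollary~\ref{cor:perp-n+1-perp-2-perp-n} and Lemma~\ref{lem:perp-2-zero-Segal}---to lift $\Perp_n F(X)$ to a connective spectrum $\mathbf{\BoldPerp_n F}(X)$ (Corollary~\ref{cor:perp-f-spectrum}); (ii) commuting $\LoopInfty$ with the realization (Theorem~\ref{thm:loop-infty-commutes-with-connective-spectra}); and (iii) recognizing $[k]\mapsto \Perp_n^{k+1}\mathbf{\BoldPerp_n F}(X)$ as the standard bar resolution computing $\mathbf{\BoldPerp_n F}(X)\wedge_{\Sigma_n} E\Sigma_n^+$ (Lemma~\ref{lem:perp-homotopy-orbits-spectrum}), which is then matched with $\partial^{(n)} LF_X(\basept)\wedge_{h\Sigma_n}(S^0)^{\wedge n}$, i.e., $D_n^d F(X)$. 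None of this delooping machinery appears in your proposal, and without it there is no way to compare the fiber with $D_n^d F$.

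Two further omissions. First, your induction starts at $n=0$, but the case $n=1$ cannot be obtained from your inductive step: for reduced $F$ the level-$0$ fibration degenerates to $F\rightarrow F\rightarrow \basept$, so nothing is gained. It must be proved separately via Segal's $\Gamma$-space delooping (Corollary~\ref{cor:perp-2-zero-F-P1dF}), and that base case is then reused inside step (i) above; you also need the reduction to reduced functors via Corollary~\ref{cor:Pnd-group-connected-base}. Second, even after identifying the fiber with $D_n^d F(X)$, one must check that the augmentation $\realization{\Perp_n^{*+1}F(X)}\rightarrow F(X)$ induces an equivalence after applying $D_n^d$ (the ``extra degeneracy'' argument, Lemma~\ref{lem:extra-perp-contract}, after commuting $\Perp_n$ and $L$); your appeal to a ``universal property of $P_n^d$'' silently assumes this compatibility. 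The paper's actual conclusion is via Lemma~\ref{lem:excisive-up-fibration}: the fibration $D_n(LF_X)\rightarrow LF_X\rightarrow P_{n-1}(LF_X)$ has $n$-excisive base and fiber and controlled $\pi_0$, so $LF_X$ is $n$-excisive, hence $LF_X\simeq P_n(LF_X)$, and evaluation at $S^0$ finishes the argument.
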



\begin{remark}
Under our hypotheses, there is no difference between
$\Perp_{n+1} F(X)$ being contractible and the cube defining
$\Perp_{n+1} F(X)$ being Cartesian. (See
\S\ref{sec:perp-cube-cartesian}.) 
\end{remark}

\begin{proof}[Outline of Proposition~\ref{prop:perp-F-zero}]
We will approach this proposition using a ladder induction, depending
on previous cases of Proposition~\ref{prop:perp-F-zero} \emph{and}
Proposition~\ref{prop:perp-F-nonzero}. We begin with the classical
case $n=1$ (Corollary~\ref{cor:perp-2-zero-F-P1dF}). 
It turns out that this case is essentially the work 
of Segal~\cite{Segal:categories-and-cohomology-theories}: when
$\Perp_2 F(X) \simeq 0$, the fold map makes $F(X)$ into a (homotopy)
monoid, and $P_1^d F(X)$ is naturally equivalent to loops on the bar
construction of $F(X)$. 

We then consider $n>1$ and apply Proposition~\ref{prop:perp-F-nonzero},
considering the $n^{\text{th}}$
cross effect, $\Perp_n F(X)$, rather than the $(n+1)^{\text{st}}$
cross effect $\Perp_{n+1} F(X)$. This gives us the fibration:
$$
\realization{\Perp_{n}^{*+1} F(X)}
\rightarrow
F(X)
\rightarrow
P_{n-1}^d F(X)
.
$$
The problem is now to show that the fiber is a homogeneous degree $n$
functor, $D_n^d F(X)$. 

Since $\Perp_{n+1} F(X)$ vanishes, we know that as a functor of $n$ variables,
$\Perp_n F(X) = cr_n F(X,\ldots, X)$ has zero 
second cross effect in each variable. Hence we may apply the $n=1$ case
to identify $\Perp_n F(X)$ as the infinite loop space of a functor to 
connective spectra, 
$\mathbf{\BoldPerp_n F} (X)$ (Corollary~\ref{cor:perp-f-spectrum}). 
Then actually
the entire 
simplicial space $\realization{\Perp_{n}^{*+1} F(X)}$ is
an infinite loop space, 
$\LoopInfty \realization{\mathbf{\BoldPerp_{n}^{*+1} F}(X)}$.
From this point (in Lemma~\ref{lem:perp-homotopy-orbits-spectrum}),
we identify the simplicial spectrum as the homotopy
orbits of $\mathbf{\BoldPerp^{*+1}_{n} F}$,
$$ 
\realization{ \mathbf{\BoldPerp^{*+1}_{n} F} (X)} 
\simeq 
\mathbf{\BoldPerp_n F}(X) _{h \Sigma_n}
,
$$
which is a homogeneous functor, $D_n \mathbf{F}(X)$.
It remains to show that $F(X) \simeq P_n^d F(X)$. Before
evaluation at $S^0$, our fibration can be written
$$ D_n (L F_X) \rightarrow LF_X \rightarrow P_{n-1} (L F_X) ,$$
and in this form, the base and the fiber are $n$-excisive. Modulo the
ever-present technical problem of $\pi_0$, the property of
$n$-excision is closed under extensions, so $LF_X$ is
$n$-excisive, so $LF_X \simeq P_n LF_X$. Evaluating at $S^0$ 
gives $F(X) \simeq P_n^d F(X)$.
\end{proof}

\subsection{$\Perp F$-cube Cartesian}
\label{sec:perp-cube-cartesian}

If $F$ has connected values (\ref{hypothesis-1})
(or group values (\ref{hypothesis-2})), on
coproducts of $X$, then Lemma~\ref{lem:connected-cartesian}
(respectively, Lemma~\ref{lem:groups-cartesian}) shows that
$\Perp_{n+1} F(X) \simeq 0$ implies that the cube defining
$\Perp_{n+1} F(X)$ is Cartesian, so henceforth we may assume that the
$\Perp$-cubes we are dealing with are Cartesian.

\subsection{Additivization And The Bar Construction}

If $F$ is nice enough and $\Perp_2 F(X) \simeq 0$, then actually
$F(X)$ is a (homotopy) monoid ($\Gamma$-space), and $F(X)$ is
equivalent to loops on the bar construction on $F(X)$.

\begin{lemma}
\label{lem:perp-2-zero-Segal}
Suppose $F$ is a reduced functor that has either connected values
(Hypothesis~\ref{hypothesis-1}) or 
group values (Hypothesis~\ref{hypothesis-2}) on coproducts of $X$. If
$\Perp_{2} F(X) \simeq 0$, then one can form the bar construction
$BF(X) = LF_X(S^1)$ on $F(X)$, and $BF(X)$ is connected,  and 
 $F(X) \simeq \Omega B F(X)$.
\end{lemma}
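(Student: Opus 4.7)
The plan is to identify the bar construction $BF(X) = LF_X(S^1)$ as the realization of a simplicial space of the form $[n] \mapsto F(\bigvee^n X)$, recognize it as a grouplike special $\Gamma$-space under our hypotheses, and then invoke Segal's classical delooping theorem to conclude $F(X) \simeq \Omega BF(X)$.

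First I would identify $BF(X)$ concretely. Using the standard simplicial model $S^1_\cdot$ with $S^1_n = \bigvee^n S^0$, the left Kan extension is defined over a subcategory of pointed spaces of dimension $0$, so Proposition~\ref{prop:left-kan-commutes-with-some-realizations} (with $a=0$) applies and gives
\begin{equation*}
  LF_X(S^1) \simeq \realization{[n] \mapsto LF_X(S^1_n)}.
\end{equation*}
Each $S^1_n = \bigvee^n S^0$ lies in the subcategory, so Proposition~\ref{prop:left-kan-agrees-with-orig} gives $LF_X(S^1_n) \simeq F_X(S^1_n) = F(\bigvee^n X)$, yielding $BF(X) \simeq \realization{Y_\cdot}$ where $Y_n := F(\bigvee^n X)$. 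Connectedness is immediate: since $F$ is reduced, $Y_0 = F(\basept) \simeq \basept$, so the (fat) realization is connected.

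Next, I would observe that the face and degeneracy structure on $Y_\cdot$ inherited from $S^1_\cdot$ is precisely that of a $\Gamma$-space (equivalently, a simplicial monoid structure on $F(X)$): the outer face maps $d_0$ and $d_n$ are the projection maps $F(\bigvee^n X) \to F(\bigvee^{n-1} X)$ that kill one wedge summand, while the inner face maps $d_i$ ($0<i<n$) are induced by folding two adjacent copies of $X$. The Segal map $Y_n \to Y_1^n = F(X)^n$ is the product of the $n$ projections. For $n = 2$, this is exactly the map $F(X \vee X) \to F(X) \times F(X)$, and since $F(\basept) \simeq \basept$ and by the reduction of \S\ref{sec:perp-cube-cartesian} (Lemma~\ref{lem:connected-cartesian} under Hypothesis~\ref{hypothesis-1} or Lemma~\ref{lem:groups-cartesian} under Hypothesis~\ref{hypothesis-2}), the Cartesian-ness of the $\Perp_2$-cube translates directly to this being an equivalence. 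For larger $n$, the Segal map factorizes through the decomposition $\bigvee^n X = X \vee (\bigvee^{n-1} X)$, and one proceeds by induction, using that the relevant $2$-cubes appearing as sub-cubes of the iterated $\Perp$-cubes are Cartesian, again under our hypotheses on $F$.

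Finally, I would verify the grouplike condition: under Hypothesis~\ref{hypothesis-1}, $F(X)$ is connected, so $\pi_0$ is trivially a group; under Hypothesis~\ref{hypothesis-2}, $F(X)$ is already a topological group. Having verified that $Y_\cdot$ is a grouplike special $\Gamma$-space with $Y_0 \simeq \basept$, Segal's theorem (\cite[Proposition~1.4]{Segal:categories-and-cohomology-theories}, the same result cited in the proof of Lemma~\ref{lem:loop-infty-commutes-with-suspension}) applies and delivers the equivalence $F(X) \simeq \Omega \realization{Y_\cdot} = \Omega BF(X)$.

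The main obstacle will be the induction establishing the full Segal condition for all $n$ from the single hypothesis $\Perp_2 F(X) \simeq 0$. The delicate point is that $\Perp_2 F(X) \simeq 0$ is a statement about the cross-effect at the specific pair $(X,X)$, whereas the Segal map at level $n$ requires a $2$-variable cross-effect vanishing at $(X, \bigvee^{n-1} X)$; bridging this gap requires a careful inductive argument exploiting the compatible sections to all structure maps in the cross-effect cubes (Lemma~\ref{lem:crn-cube-has-sections}) and the $\pi_*$-level control provided by Lemma~\ref{lem:pi-k-perp-is-perp-pi-k}, together with the Bousfield--Friedlander theorem to pass from levelwise equivalences to equivalences after realization.
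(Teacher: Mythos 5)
Your proposal is correct and takes essentially the same route as the paper: identify $LF_X(S^1)$ with the realization of the $\Gamma$-space $[n]\mapsto F(\bigvee^n X)$, check it is special and grouplike using the Cartesian-ness of the $\Perp_2$-cube (via Lemma~\ref{lem:connected-cartesian} or Lemma~\ref{lem:groups-cartesian}), and invoke Segal's Proposition~1.4. The ``main obstacle'' you flag --- deducing the higher Segal conditions $F(\bigvee^n X)\simeq F(X)^{\times n}$ from the single vanishing $\Perp_2 F(X)\simeq 0$ --- is genuine, but the paper's own proof is even terser and does not address it at all, asserting only that $F(X\vee X)\simeq F(X)\times F(X)$ makes $F(X)$ a homotopy monoid to which Segal's theory applies.
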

\begin{proof}
  Under these hypotheses, if $\Perp_2 F(X) \simeq 0$, then the cube
$$ 
\xymatrix{
F(X \vee X)
\ar[r]
\ar[d]
&
F(X)
\ar[d]
\\
F(X)
\ar[r]
& 
F(0) \simeq 0
}$$
is Cartesian, so $F(X\vee X) \simeq F(X) \times F(X)$. Furthermore,
with this identification, 
the map $\epsilon: F(X \vee X) \rightarrow F(X)$ induces a the
structure of a homotopy monoid on $F(X)$. This is the setting in which
Segal's theory of $\Gamma$-spaces applies
\cite[Proposition~1.4]{Segal:categories-and-cohomology-theories},
showing that $F(X) \simeq \Omega B F(X)$ and $BF(X) \simeq
\realization{F(X \wedge S^1_\cdot)}$ is connected.
\end{proof}

\begin{corollary}
\label{cor:perp-f-spectrum}
If $F$ has either connected values
(Hypothesis~\ref{hypothesis-1}) or 
group values (Hypothesis~\ref{hypothesis-2}) on coproducts of $X$, and
$\Perp_{n+1} F(X) \simeq 0$, then, as a symmetric functor of $n$
variables, $\Perp_n F(X, \ldots, X) $ is the infinite loop space of a
symmetric functor to connective spectra $\mathbf{\BoldPerp_n F}
(X,\ldots,X)$: 
$$\Perp_n F(X, \ldots, X) \simeq \LoopInfty \left( \mathbf{\BoldPerp_n
    F} (X,\ldots,X) \right).$$
\end{corollary}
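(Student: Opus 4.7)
The plan is to reduce this multi-variable statement to the one-variable Segal result packaged in Lemma~\ref{lem:perp-2-zero-Segal}.

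First I would combine the hypothesis $\Perp_{n+1} F(X) \simeq 0$ with either Lemma~\ref{lem:connected-cartesian} or Lemma~\ref{lem:groups-cartesian} (whichever applies under Hypothesis~\ref{hypothesis-1} or Hypothesis~\ref{hypothesis-2}) to upgrade contractibility of $\Perp_{n+1} F(X)$ to the statement that the defining cube is Cartesian. Corollary~\ref{cor:perp-n+1-perp-2-perp-n} then says that the $2$-cube obtained by taking the second cross effect of $cr_n F(X_1,\ldots,X_n)$ with respect to any individual variable is Cartesian; equivalently, $\Perp_2$ of $cr_n F$ vanishes in each variable separately. Because $cr_n F$ is built from $F$ by iterated homotopy fibers of structure maps that all have canonical sections (Lemma~\ref{lem:crn-cube-has-sections}), our connected-values/group-values hypothesis on $F$ passes to $cr_n F$ viewed as a functor in each of its individual variables, so Lemma~\ref{lem:perp-2-zero-Segal} is applicable one variable at a time.

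Next I would apply that lemma in, say, the first variable: holding $X_2,\ldots,X_n$ fixed, Segal's theorem produces a connective $\Omega$-spectrum whose $k^{\text{th}}$ space is $\realization{cr_n F(X_1\wedge S^k_\cdot,X_2,\ldots,X_n)}$ and whose infinite loop space recovers $cr_n F(X_1,\ldots,X_n)$. Because the bar construction is natural, this assembles into a functor $\mathbf{Z}(X_1,\ldots,X_n)$ of all $n$ arguments to connective spectra with $cr_n F \simeq \LoopInfty \mathbf{Z}$.

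The main obstacle is producing a \emph{symmetric} spectrum-valued functor, since the construction above privileges the first variable and the $\Sigma_n$ action on $cr_n F(X,\ldots,X)$ permutes all variables. I would resolve this by mimicking Goodwillie's derivative construction (Definition~\ref{def:derivative-of-F}): define $\mathbf{\BoldPerp_n F}(X_1,\ldots,X_n)$ to have $k^{\text{th}}$ space
$$\Omega^{k(n-1)} \realization{cr_n F(X_1\wedge S^k_\cdot,\ldots,X_n\wedge S^k_\cdot)},$$
which is manifestly $\Sigma_n$-equivariant, with structure maps built by suspending inside and looping outside exactly as in Definition~\ref{def:derivative-of-F}. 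Connectivity of this spectrum (the connective condition) is the same Eilenberg-Zilber-type count used in Lemma~\ref{lem:derivatives-connective}, since $cr_n F$ sends any contractible input to a contractible output. The identification of $\LoopInfty$ of this symmetric spectrum with $cr_n F(X,\ldots,X)$ then follows by iterating the single-variable equivalence $cr_n F \simeq \Omega B_i cr_n F$ (the vanishing of $\Perp_2$ in each variable $i$) and invoking the Eilenberg-Zilber theorem (\ref{thm:eilenberg-zilber}) to pass between the multi-bar construction and its diagonal. The remaining technical point is merely to check that the structure maps so obtained do assemble into an $\Omega$-spectrum (rather than just a sequence of compatible deloopings), which is exactly what Lemma~\ref{lem:perp-2-zero-Segal} applied in each variable separately provides.
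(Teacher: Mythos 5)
Your proposal is correct and follows essentially the same route as the paper's proof: Cartesian-ness of the $\Perp_{n+1}$ cube via \S\ref{sec:perp-cube-cartesian}, then Corollary~\ref{cor:perp-n+1-perp-2-perp-n} to get additivity of $cr_n F$ in each variable, then Lemma~\ref{lem:perp-2-zero-Segal}. You additionally spell out the equivariant delooping via the derivative-style construction of Definition~\ref{def:derivative-of-F} and verify the hypotheses descend to $cr_n F$, details the paper leaves implicit; both points are handled correctly.
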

\begin{proof}
  As usual, under these hypotheses, the cube defining $\Perp_{n+1}
  F(X)$ is Cartesian (\S\ref{sec:perp-cube-cartesian}). 
  By
  Corollary~\ref{cor:perp-n+1-perp-2-perp-n}, the functor of $n$
  variables $\Perp_n F(X,\ldots,X)$ is also additive in each variable,
  and $\Perp_n F(X,\ldots,X)$ is always reduced in each variable, 
  so Lemma~\ref{lem:perp-2-zero-Segal} gives $\Perp_n
  F(X,\ldots,X)$ as the first space of a connective $\Omega$-spectrum. 
\end{proof}

\begin{corollary}
\label{cor:perp-2-zero-F-P1dF}
If $F$ is a reduced functor that has either connected values
(Hypothesis~\ref{hypothesis-1}) or 
group values (Hypothesis~\ref{hypothesis-2}) on coproducts of $X$, and
$\Perp_{2} F(X) \simeq 0$, then $F(X) \simeq P_1^d F(X)$.
\end{corollary}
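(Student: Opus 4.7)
My approach is to use the description of $P_1^d F(X)$ derived in Section~\ref{sec:additive-calculus},
$$P_1^d F(X) \simeq \colim_n \Omega^n LF_X(S^n),$$
and show that under the hypothesis $\Perp_2 F(X) \simeq 0$, each stabilization map $LF_X(S^n) \rightarrow \Omega LF_X(S^{n+1})$ is already an equivalence. Since $LF_X(S^0) \simeq F(X)$ by Proposition~\ref{prop:left-kan-agrees-with-orig} ($S^0$ lies in the subcategory along which we extend), the telescope then stabilizes at $F(X)$, giving $P_1^d F(X) \simeq F(X)$.

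The $n = 0$ case is exactly Lemma~\ref{lem:perp-2-zero-Segal}, since $\Omega LF_X(S^1) = \Omega BF(X) \simeq F(X)$. For $n \geq 1$, I will apply that same lemma to the auxiliary functor
$$G(Y) := LF_X(S^n \wedge Y),$$
at $Y = S^0$. Its bar construction is
$$BG(S^0) = \realization{G(S^1_\cdot)} = \realization{LF_X(S^n \wedge S^1_\cdot)} \simeq LF_X(S^{n+1}),$$
using that $LF_X$ commutes with realizations of simplicial sets (Proposition~\ref{prop:left-kan-commutes-with-some-realizations}). The Segal equivalence $G(S^0) \simeq \Omega BG(S^0)$ then unwinds to $LF_X(S^n) \simeq \Omega LF_X(S^{n+1})$, completing the induction.

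To apply Lemma~\ref{lem:perp-2-zero-Segal} to $G$ I must verify (i) $\Perp_2 G(S^0) \simeq 0$ and (ii) $G$ satisfies Hypothesis~\ref{hypothesis-1} or Hypothesis~\ref{hypothesis-2} on coproducts of $S^0$. For (i), Lemma~\ref{lem:perp-commutes-with-realization-v2} identifies $\Perp_2 LF_X(S^n)$ with $\realization{\Perp_2 F(X \wedge S^n_\cdot)}$; at each simplicial level, $X \wedge S^n_i$ is a finite wedge of copies of $X$, and a short inductive argument from $\Perp_2 F(X) \simeq 0$ shows $F(\bigvee^k X) \simeq \prod^k F(X)$ and hence $\Perp_2 F(\bigvee^k X) \simeq 0$ for all $k \geq 0$, so the realization is contractible. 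For (ii), $G(\bigvee^k S^0) = \realization{F(X \wedge \bigvee^k S^n_\cdot)}$ is the realization of a simplicial space whose levels are $F$ on wedges of copies of $X$, giving connected values under~(1) or simplicial (hence topological after realization) group values under~(2).

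The main obstacle is bookkeeping in the group-valued case: one needs the fold and collapse maps to induce group homomorphisms on $\realization{F(X \wedge \bigvee^k S^n_\cdot)}$ naturally in $k$, and one must verify that the Segal stabilization map produced by applying Lemma~\ref{lem:perp-2-zero-Segal} to $G$ really is the structure map used to form the telescope defining $P_1^d F$. Both are consequences of the naturality of the left Kan extension and of Segal's construction, but they are the points at which care is required.
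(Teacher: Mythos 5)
Your proof is correct and follows essentially the same route as the paper's: the paper establishes $T_1 LF_X(Y) \simeq LF_X(Y)$ for arbitrary $Y$ by rewriting $LF_X(S^1 \wedge Y)$ as a bar construction (via Eilenberg--Zilber and commuting $L$ with realizations) and invoking the Segal delooping of Lemma~\ref{lem:perp-2-zero-Segal}, which, evaluated at $Y = S^n$, is exactly your claim that each stabilization map $LF_X(S^n) \rightarrow \Omega LF_X(S^{n+1})$ in the telescope is an equivalence. The step you rightly flag as needing care --- that $\Perp_2 F(X)\simeq 0$ propagates to $\Perp_2 F(\bigvee^k X)\simeq 0$ so that the Segal condition holds for all the wedges of copies of $X$ appearing in $X\wedge S^n_\cdot$ --- is used equally (and more tersely) by the paper, so if anything you are being more explicit at the one genuinely delicate point.
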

\begin{proof}
Lemma~\ref{lem:perp-2-zero-Segal} shows that $F(X) \simeq \Omega B
F(X)$. Continuing the argument from that lemma,
consider one iteration of the functor
$T_1^d F(X) \simeq \Omega \realization{ F( S^1_\cdot \wedge X ) }$ 
used to build $P_1^d = \colim_n \left( T_1^n \right)^d$. Actually,
since $(T_1^n)^d$ is not necessarily $(T_1^d)^n$, we want to show
$ T_1 LF_X \simeq LF_X$ in order to show that $P_1^d F(X) \simeq
F(X)$. 
Consider an arbitrary space $Y= \realization{Y_\cdot}$; we show
$ T_1 LF_X (Y) \simeq LF_X(Y)$.
\begin{align*}
  T_1 L F_X(Y) 
  &\simeq 
  \Omega \realization{ F( X \wedge ( S^1 \wedge Y) _\cdot ) }
  \\
  &\simeq
  \Omega \realization{  \realization{ F( X \wedge S^1_\cdot \wedge Y_\cdot ) }}
  \\ 
  &\simeq
  \Omega \realization{\realization{ [k] \mapsto F( X \wedge Y_\cdot
      \wedge S^1_k ) }}
  \\
  &\simeq
  \Omega \realization{\realization{ [k] \mapsto F(\bigvee^k X \wedge Y_\cdot
) }}
  \\
  \intertext{And, since $X \wedge Y_\cdot$ is a coproduct of copies
    of $X$, the cross effect $\Perp_2 F(X\wedge Y_\cdot) \simeq 0$, so
  this is }
  &\simeq
  \Omega \realization{\realization{ [k] \mapsto \prod^k F(X \wedge Y_\cdot) }}
  \\
  &\simeq
  \Omega \realization{ B F( X \wedge Y_\cdot ) }
  \\
  &\simeq
  \Omega B \realization{ F( X \wedge Y_\cdot ) }
  \\
  &\simeq
  \Omega B LF_X(Y)
  \\
  &\simeq
  LF_X(Y)
\end{align*}
So in fact, $T_1 LF_X(Y) \simeq LF_X(Y)$, and hence 
$P_1^d F(X) = \colim (T_1^k)^d F(X) = \colim T_1^k (L F_X)(S^0) \simeq
LF_X(S^0) \simeq F(X)$, as desired.
\end{proof}

\subsection{Iterated Cross Effects Produce Homogeneous Functors}

In this section, we write $\Perp$ instead of $\Perp_n$, for
convenience. We write $\Sigma_n^+$ for the space $\Sigma_n$ with a
disjoint basepoint added.

Let $\mathbf{H}(X_1, \ldots, X_n)$ be a symmetric functor from spaces
to spectra, and suppose $\mathbf{H}$ is additive in each variable
separately. In practice, such an $\mathbf{H}$ will arise as 
$\mathbf{\BoldPerp F}$ from Corollary~\ref{cor:perp-f-spectrum}.

Using the additivity in each variable, we can identify $\Perp_n
\mathbf{H}$ with $\Sigma_n^+ \wedge \mathbf{H}$. Here we have
$X_1=\cdots=X_n$, but label them differently to be able to see the
action of the symmetric group more clearly.
\begin{align*}
  \Perp \mathbf{H}(X_1,\ldots,X_n) 
  &\simeq
  \prod_{\alpha\in\Sigma_n} 
  \mathbf{H}(X_{\alpha(1)},\ldots, X_{\alpha(n)})
  \\
  &\simeq 
  \bigvee_{\alpha\in\Sigma_n}
  \mathbf{H}(X_{\alpha(1)},\ldots, X_{\alpha(n)})
  \\
  &\simeq
  \Sigma_n^+ \wedge \mathbf{H}(X_{1},\ldots, X_{n}).
\end{align*}
The second equivalence is the stable equivalence of finite coproducts
and products, and the third equivalence is given by the map 
$$
  \Sigma_n^+ \wedge \mathbf{H}(X_{1},\ldots, X_{n})
\rightarrow
  \bigvee_{\alpha\in\Sigma_n}
  \mathbf{H}(X_{\alpha(1)},\ldots, X_{\alpha(n)})
$$
sending
$\sigma \wedge x$ to $x$ in the coproduct indexed by $\sigma$. To
identify $x \in \mathbf{H}(X_1,\ldots,X_n)$ with 
$x \in \mathbf{H}(X_{\alpha(1)},\ldots, X_{\alpha(n)})$, we use the
map induced by $X_{\alpha(i)} \cong X_i$ in each variable.

The identification
\begin{equation}
  \label{eq:perp-is-sigma-n}
  \Perp \mathbf{H}(X_1,\ldots,X_n) 
  \simeq
  \Sigma_n^+ \wedge \mathbf{H}(X_{1},\ldots, X_{n})
\end{equation}
can be made equivariant with respect to the action of $\Sigma_n$ on
both $\mathbf{H}$ and $\Perp(-)$ in the following way. The action
induced by permuting the inputs of $\Perp \mathbf{H}$  (\emph{i.e.},
from the fact that $\Perp(-)$ is a symmetric functor) is sent to
multiplication on the $\Sigma_n$ factor. The action on $\Perp
\mathbf{H}$ induced by the $\Sigma_n$ action on $\mathbf{H}$ is sent
to the same action on $\mathbf{H}$ on the other side.

Under this model, the map $\epsilon: \Perp \mathbf{H} \rightarrow
\mathbf{H}$ is given by $\sigma \wedge x \mapsto x$.


We are now in a position to understand $\Perp^* \mathbf{\BoldPerp F}$.
Applying~\eqref{eq:perp-is-sigma-n} repeatedly at each level, we have 
$$ \Perp^k \mathbf{\BoldPerp F}(X)
\simeq
\underbrace{
\Sigma_n^+ \wedge \cdots \wedge \Sigma_n^+
}_{\text{$k$ factors}} \wedge 
\mathbf{\BoldPerp F}(X)
.$$
Recall that the face maps from dimension $n$ to $n-1$ are given by
$d_i = \Perp^i \epsilon \Perp^{n-i}$. In dimension $k$, the face map
$d_k = \epsilon \Perp^k$ just drops the first element:
$$ d_k (g_k \wedge \cdots \wedge g_1 \wedge y) = g_{k-1} \wedge \cdots
\wedge g_1 \wedge y .$$
To compute the others, note that for any $f$, the map $\Perp(f)$ is
equivariant with respect to the action of $\Sigma_n$ on $\Perp$ (by
permuting inputs), so in particular $\Perp(\epsilon): \Perp(\Perp F)
\rightarrow \Perp F$ is equivariant with respect to the action on of
$\Sigma_n$ on the leftmost $\Perp$, so 
\begin{align*}
  \Perp \epsilon (g \wedge y ) 
&=
\Perp\epsilon(g * (1 \wedge y) )
\\
&= g * \Perp\epsilon(1 \wedge y) 
\\
&= g * y,
\end{align*}
where the last follows since the degeneracy $\delta: \Perp F
\rightarrow \Perp^2 F$ given by $\delta(y) = 1\wedge y$ is a section
to the face map $\Perp\epsilon$. This argument shows that all of the
face maps $d_j$ with $0\le j<k$ are given by multiplying $g_{j+1}$ by
the next coordinate to the right (either $g_j$ if $j>0$ or $y$ if
$j=0$).

This is a standard model for 
$E \Sigma_n^+ \wedge_{\Sigma_n} \mathbf{\BoldPerp F}(X)$, 
so we have shown that the simplicial spectrum  built by iterating the
cross effects computes the homotopy orbits of $\mathbf{\BoldPerp
  F}(X)$. That is,
$$\realization{\Perp^* \mathbf{\BoldPerp F}(X)}
\simeq 
 \mathbf{\BoldPerp F}(X) \wedge_{\Sigma_n} E \Sigma_n^+
.
$$

We have just established the following lemma:
\begin{lemma}
\label{lem:perp-homotopy-orbits-spectrum}
If $F$ has either connected values
(Hypothesis~\ref{hypothesis-1}) or 
group values (Hypothesis~\ref{hypothesis-2}) on coproducts of $X$, and
$\Perp_{n+1} F(X) \simeq 0$, then
$$\realization{\Perp^* \mathbf{\BoldPerp F}(X)}
\simeq 
 \mathbf{\BoldPerp F}(X) \wedge_{\Sigma_n} E \Sigma_n^+
,
$$
where $\mathbf{\BoldPerp F}$ denotes the lift to spectra of $\Perp F$,
as in Corollary~\ref{cor:perp-f-spectrum}.
$\qed$
\end{lemma}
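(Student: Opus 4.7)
The plan is essentially to collect and formalize the identifications developed in the paragraphs immediately preceding the lemma statement. First, I would invoke Corollary~\ref{cor:perp-f-spectrum} to replace $\Perp_n F(X,\ldots,X)$ by the symmetric functor $\mathbf{\BoldPerp F}(X_1,\ldots,X_n)$ valued in connective spectra, additive in each variable separately. Write $\mathbf{H} = \mathbf{\BoldPerp F}$ for brevity.

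Next, using additivity of $\mathbf{H}$ in each variable together with the stable equivalence of finite coproducts and products, I would establish the equivariant identification
\begin{equation*}
\Perp \mathbf{H}(X,\ldots,X) \simeq \Sigma_n^+ \wedge \mathbf{H}(X,\ldots,X),
\end{equation*}
where on the left we have both the $\Sigma_n$-action permuting the $n$ inputs of $\Perp(-)$ and the intrinsic $\Sigma_n$-action on $\mathbf{H}$, and on the right the first becomes left multiplication on $\Sigma_n^+$ and the second becomes the action on $\mathbf{H}$. Under this identification $\epsilon$ becomes the map $\sigma \wedge y \mapsto \sigma \cdot y$ (the action), and the canonical section $\delta$ becomes $y \mapsto 1\wedge y$.

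Iterating the identification $k$ times gives
\begin{equation*}
\Perp^k \mathbf{H}(X) \simeq \underbrace{\Sigma_n^+ \wedge \cdots \wedge \Sigma_n^+}_{k} \wedge \mathbf{H}(X).
\end{equation*}
The key step is then to compute how the simplicial structure maps $d_i = \Perp^i \epsilon \Perp^{k-i}$ and the degeneracies $s_j = \Perp^j \delta \Perp^{k-j}$ translate under this identification. Using the equivariance of $\Perp(\epsilon)$ with respect to the outer $\Sigma_n$, as sketched just before the lemma, one finds that $d_k$ drops the leftmost factor, the intermediate $d_j$ for $0 < j < k$ multiply adjacent factors $g_{j+1}\cdot g_j$, and $d_0$ acts by $g_1$ on $y$; the degeneracies insert the identity element in the appropriate slot. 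This is precisely the simplicial structure of the standard two-sided bar construction $B_\bullet(\Sigma_n^+,\Sigma_n,\mathbf{H}(X))$ modeling $E\Sigma_n^+ \wedge_{\Sigma_n} \mathbf{H}(X)$.

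Taking realization yields $\realization{\Perp^* \mathbf{\BoldPerp F}(X)} \simeq \mathbf{\BoldPerp F}(X) \wedge_{\Sigma_n} E\Sigma_n^+$, which is the claim. The main obstacle is bookkeeping: one must confirm that the two \emph{a priori} distinct $\Sigma_n$-actions at each stage (permutation of inputs versus the induced action from $\mathbf{H}$) interact consistently under iteration so that the iterated face maps really do correspond to multiplication in $\Sigma_n$ rather than to some twisted variant. Once that bookkeeping is done, identifying the simplicial object with the bar construction is routine.
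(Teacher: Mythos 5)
Your proposal follows essentially the same route as the paper: invoke Corollary~\ref{cor:perp-f-spectrum} to pass to the spectrum-valued lift, use additivity in each variable to identify $\Perp \mathbf{H}$ equivariantly with $\Sigma_n^+ \wedge \mathbf{H}$, iterate, track the face and degeneracy maps, and recognize the standard bar-construction model for $E\Sigma_n^+ \wedge_{\Sigma_n} \mathbf{H}$. The only discrepancies are notational (your $\epsilon$ convention and the labelling of the bar construction differ from the paper's by the usual untwisting automorphism), and the bookkeeping of the two $\Sigma_n$-actions that you flag as the main obstacle is exactly the point the paper resolves in the discussion preceding the lemma.
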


This ends the establishment of the results required for
Proposition~\ref{prop:perp-F-zero}. 







\subsection{Proof Of Proposition~\ref{prop:perp-F-zero}}

Recall the statement of the proposition we are to prove:
\begin{proposition}
[Proposition~\ref{prop:perp-F-zero}]
If $F$ has either connected values
(Hypothesis~\ref{hypothesis-1}) or 
group values (Hypothesis~\ref{hypothesis-2}) on coproducts of $X$, and
$\Perp_{n+1} F(X) \simeq 0$, then $F(X) \simeq P_n^d F(X)$.
\end{proposition}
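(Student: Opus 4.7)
The plan is to induct on $n$, taking the base case from Corollary~\ref{cor:perp-2-zero-F-P1dF}, and for the inductive step use the companion proposition (Proposition~\ref{prop:perp-F-nonzero}) one level down together with the structural results of Lemmas~\ref{lem:perp-2-zero-Segal}, \ref{cor:perp-f-spectrum}, and~\ref{lem:perp-homotopy-orbits-spectrum} to identify the fiber as a homogeneous $n$-excisive functor. The idea is that this forces the whole tower-term $LF_X$ to be $n$-excisive, at which point evaluating at $S^0$ finishes the proof.

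For $n \ge 2$, first apply Proposition~\ref{prop:perp-F-nonzero} (at level $n-1$, which I may assume proven by the ladder induction described in \S\ref{sec:main-theorem-outline}) to produce the fibration
\begin{equation*}
\realization{\Perp_n^{*+1} F(X)} \rightarrow F(X) \rightarrow P_{n-1}^d F(X).
\end{equation*}
I would then lift this fibration ``before evaluation at $S^0$'' to
\begin{equation*}
\realization{\Perp_n^{*+1}(LF_X)} \rightarrow LF_X \rightarrow P_{n-1}(LF_X),
\end{equation*}
using that $\Perp_n$ commutes with the left Kan extension $L$ on coproducts of $X$ (Lemma~\ref{lem:perp-commutes-with-realization-v2}). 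Since $\Perp_{n+1} F(X) \simeq 0$, Corollary~\ref{cor:perp-n+1-perp-2-perp-n} gives that $\Perp_n F(X,\ldots,X)$ is additive in each variable; applying Corollary~\ref{cor:perp-f-spectrum} then lifts it to a symmetric functor to connective spectra $\mathbf{\BoldPerp_n F}$.

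The key identification is then made by Lemma~\ref{lem:perp-homotopy-orbits-spectrum}: the realization of the simplicial spectrum $\mathbf{\BoldPerp_n^{*+1} F}(X)$ is equivalent to $\mathbf{\BoldPerp_n F}(X) \wedge_{\Sigma_n} E\Sigma_n^+$, and taking $\LoopInfty$ (which by Theorem~\ref{thm:loop-infty-commutes-with-connective-spectra} commutes with the realization of this connective simplicial spectrum) yields $\realization{\Perp_n^{*+1} F(X)}$ as the $\LoopInfty$ of the $n$-homogeneous spectrum-level functor of $X$. Thus the fiber in our fibration is a homogeneous $n$-excisive functor, i.e.\ the fiber of the natural transformation $LF_X \to P_{n-1}(LF_X)$ is homogeneous of degree $n$.

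The last step is to conclude that $LF_X$ itself is $n$-excisive. Since we now have a fibration of functors to spaces whose base is $(n-1)$-excisive and whose fiber is homogeneous of degree $n$ (hence $n$-excisive), Lemma~\ref{lem:excisive-up-fibration} applies once we verify its $\pi_0$ hypothesis, which is exactly what our two standing assumptions (connected values or group values on coproducts of $X$) provide via Proposition~\ref{prop:LK-group-connected-base}. Therefore $LF_X \simeq P_n(LF_X)$, and evaluation at $S^0$ gives $F(X) \simeq P_n(LF_X)(S^0) = P_n^d F(X)$. The main obstacle I expect is verifying the $\pi_0$ hypothesis of Lemma~\ref{lem:excisive-up-fibration} cleanly: connectivity is easy, but in the group-valued case one has to check that the map $\pi_0 LF_X \to \pi_0 P_{n-1}(LF_X)$ is a surjection of groups, which should follow from the existence of a section to $F \to P_{n-1}^d F$ coming from the Goodwillie tower together with Corollary~\ref{cor:Pn+1-Pn-surjective}.
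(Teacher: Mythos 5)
Your proposal follows essentially the same route as the paper's own proof: the same ladder induction with base case Corollary~\ref{cor:perp-2-zero-F-P1dF}, the same identification of the fiber of $F(X)\rightarrow P_{n-1}^d F(X)$ as a homogeneous degree-$n$ functor via Corollary~\ref{cor:perp-f-spectrum}, Lemma~\ref{lem:perp-homotopy-orbits-spectrum}, and Theorem~\ref{thm:loop-infty-commutes-with-connective-spectra}, and the same closing appeal to Lemma~\ref{lem:excisive-up-fibration} followed by evaluation at $S^0$. The one genuine omission is the preliminary reduction to reduced functors (together with the trivial case $n=0$): Corollary~\ref{cor:perp-2-zero-F-P1dF} and Lemma~\ref{lem:perp-2-zero-Segal} are stated only for reduced $F$, so before starting the induction you must split $F$ as a fibration $\widetilde{F}\rightarrow F\rightarrow F_0$ over the constant functor $F_0(X)=F(0)$, use Corollary~\ref{cor:Pnd-group-connected-base} and the section induced by $0\rightarrow X$ to see that $P_n^d$ preserves this fibration and its surjectivity on $\pi_0$, and then transport the conclusion from $\widetilde{F}$ back to $F$ by the Five Lemma, exactly as the paper does.
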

\begin{proof}[Proof of Proposition~\ref{prop:perp-F-zero}]
The proof has four steps: first, settle the trivial case when $n=0$;
second, reduce to the case of a reduced functor, so 
we can assume $F(0) = 0$. Third, establish the base case $n=1$. 
Finally, finish the proof using an induction
that involves Proposition~\ref{prop:perp-F-nonzero} for lower values
of $n$. 

When $n=0$, the hypothesis of the proposition is that $\Perp_1 F(X)
\simeq 0$. But $\Perp_1 F(X)$ is the fiber of the map $F(X)\rightarrow
F(0)$, and this map must be surjective because it has a section
induced by $0\rightarrow X$. Therefore, $\Perp_1 F(X)\simeq 0$ means
$F(X)\simeq F(0)$, so $F$ is a constant functor. But this is what
$P_0^d F(X)$ is as well.

To reduce to the case of a reduced functor, consider the fibration
$$ \widetilde{F} \rightarrow F \rightarrow F_0 ,$$
where $F_0(X) = F(0)$ and $\widetilde{F}$ is defined to be the
homotopy fiber of the map of the map $F(X)\rightarrow
F(0)$. Under our hypotheses,
Corollary~\ref{cor:Pnd-group-connected-base} shows that $P_n^d$
preserves this fibration, and that it remains surjective on
$\pi_0$. If we show that $\widetilde{F}(X)  \simeq P_n^d
\widetilde{F}(X)$, then we will have two fibration sequences
$$\xymatrix{
\widetilde{F}(X)
\ar[r]
\ar[d]
&
{F}(X)
\ar[r]
\ar[d]
&
{F_0}(X)
\ar[d]
\\
P_n^d
\widetilde{F}(X)
\ar[r]
&
P_n^d
{F}(X)
\ar[r]
&
P_n^d
{F_0}(X)
}$$
with the map on fibers and bases an equivalence. Since $\pi_0 F$ is
a group (or $0$, which is a group), and the maps to the bases are
surjective on $\pi_0$, the Five Lemma applies to show that the map on
the total spaces is an equivalence.

For the rest of this proof, we will assume that $F$ is reduced. 
Given a reduced functor, Corollary~\ref{cor:perp-2-zero-F-P1dF} shows
that $F(X) \simeq P_1^d F(X)$, so that establishes the true base case
in our induction.

Finally, when $n>1$ we apply Proposition~\ref{prop:perp-F-nonzero}
with one smaller $n$ to produce a fibration sequence:
$$
\realization{\Perp_{n}^{*+1} F(X)}
\rightarrow
F(X)
\rightarrow
P_{n-1}^d F(X)
,
$$
where the map $F(X)\rightarrow P_{n-1}^d F(X)$ is surjective on
$\pi_0$. 
We now show that the fiber here is equivalent to $D_n^d F(X)$, and
then show that this allows us to deduce that the total space must be
equivalent to $P_n^d F(X)$.

Using Corollary~\ref{cor:perp-f-spectrum}, we have
$$\realization{\Perp_{n}^{*+1} F(X)}
\simeq
\realization{ \LoopInfty \Perp_{n}^{*+1} \mathbf{F}(X)}  .
$$
Using
  Theorem~\ref{thm:loop-infty-commutes-with-connective-spectra}, the
  right hand side is 
  equivalent to
$$
\LoopInfty \realization{ \Perp_{n}^{*+1} \mathbf{F}(X)} .
$$
{We can then apply
  Lemma~\ref{lem:perp-homotopy-orbits-spectrum} to deduce that this
  is equivalent to }
$$
\LoopInfty \left( \Perp_n \mathbf{F}(X) \wedge_{\Sigma_n} E\Sigma_n^+\right) .
$$
Corollary~\ref{cor:perp-f-spectrum} shows that when $\Perp_{n+1} F(X)
\simeq 0$, the functor $\Perp_n F(X)$ is actually the infinite loop
space of a spectrum with $\Sigma_n$ action, $\Perp_{n}
\mathbf{F}(X)$.
As in Lemma~\ref{lem:perp-2-zero-Segal}, the spectrum $\Perp_{n}
\mathbf{F}(X)$ arises from using the structure maps from suspending
the left Kan extension in any coordinate, for example:
$$ cr_n (L F)_{(X,\ldots,X)} (S^0, \ldots, S^0) \xrightarrow{\simeq}
\Omega cr_n(L F)_{(X,\ldots,X)} (S^1, S^0, \ldots, S^0) .
$$ 
This is exactly the spectrum defined to be the
derivative spectrum $\partial^{(n)} LF_X (\basept)$
(Definition~\ref{def:derivative-of-F}):
$$
\LoopInfty \left( \partial^{(n)} LF_X(\basept) \wedge_{\Sigma_n}
  E\Sigma_n^+ \right) .
$$
{We can identify $E\Sigma_n^+$ with $S^0$ if we change the
  strict orbits to homotopy orbits, giving:}
$$
\LoopInfty \left( \partial^{(n)} LF_X(\basept)  \wedge_{h\Sigma_n}
  S^0 \right) .
$$
{Since $S^0 = (S^0)^{\wedge n}$, we can identify this as the
  form of $D_n^d 
  F(X)$ given in Equation~\eqref{eq:def-of-deriv} in
  Section~\ref{sec:Pnd-preserves-connectivity}, so we have shown that}
$$
\realization{\Perp_{n}^{*+1} F(X)}
\simeq
D_n^d F(X)
.
$$


It remains to check that the map 
$$\realization{\Perp_{n}^{*+1} F(X)} \rightarrow F(X)$$
actually induces an isomorphism after applying $D_n^d$. 
This happens because in order to compute the derivative spectrum, one
stabilizes $\Perp_n$, but when $\Perp_n$ is applied to the map above,
it becomes an equivalence using a standard ``extra degeneracy''
argument. 

In order to apply $D_n^d(-)(X)$, we apply $D_n$ after $L(-)_X$.
Evaluation at $S^0$ would give $D_n^d F(X)$, but we will show the stronger
result that actually the natural transformation of functors $D_n
L(\epsilon_F)_X$ is an equivalence. 

With the aim of applying the ``extra degeneracy'' argument, we begin
by establishing that after applying $L(-)_X$, the map we are
considering is actually equivalent to augmentation map $\epsilon_{L
  F_X}$. This involves verifying that all of the squares in the
diagram below commute. A summary of each step follows the diagram.

To aid the reader in understanding the various transformations, we
consider $F$ as a trivial simplicial functor (this makes it possible
to distinguish between $L \realization{F}$ and $\realization{LF}$). 
$$\xymatrix{
L \realization{\Perp_{n}^{*+1} F(-)}_X
\ar[r]^{L \realization{\epsilon_F}_X}
\ar[d]^{=}
&
L \realization{F}_X
\ar[d]^{=}
\\
L \realization{\Perp_{n}^{*+1} F_X(-)}
\ar[r]^{L \realization{\epsilon_{F_X}}}
&
L \realization{F_X}
\\
\realization{L \Perp_{n}^{*+1} F_X(-)}
\ar[r]^{\realization{L \epsilon_{F_X}}}
\ar[u]^{\simeq}
\ar[d]^{\simeq}
&
\realization{L F_X}
\ar[u]^{\simeq}
\ar[d]^{=}
\\
\realization{\Perp_{n}^{*+1} L F_X(-)}
\ar[r]^{\realization{\epsilon_{L F_X}}}
&
\realization{L F_X}
}$$
The first transformation applied is that by expanding the definition,
one can check that $(\Perp^k F)_X = \Perp^k (F_X)$.
The second transformation is the map from $\realization{L(-)}
\rightarrow L\realization{-}$, which is really the commuting of
realization in two different directions.
The third transformation is the commuting of $\Perp$ and $L$, from 
Lemma~\ref{lem:perp-commutes-with-realization-v2}.

Now to compute the coefficient spectra of both sides, we apply
$\Perp_n$. Since $\Perp_n$ commutes with realizations of functors that
satisfy Hypothesis~\ref{hypothesis-1} or Hypothesis~\ref{hypothesis-2}
(Lemma~\ref{lem:perp-commutes-with-realization}),
Lemma~\ref{lem:extra-perp-contract} (the ``extra degeneracy''
argument) shows that the map 
$$
\Perp_n \realization{\Perp_{n}^{*+1} L F_X}
\xrightarrow{\simeq}
\Perp_n \realization{L F_X}
$$
is an equivalence. Stabilizing this (as in
Definition~\ref{def:derivative-of-F}) produces an equivalence on the
derivatives, so 
$$
D_n^d \realization{\Perp_{n}^{*+1} F}
\xrightarrow{\simeq}
D_n^d \realization{F}
$$
is an equivalence (using Equation~\eqref{eq:def-of-deriv} to compute
$D_n$ given the derivative), and in particular,
$$
D_n^d \realization{\Perp_{n}^{*+1} F(X)}
\simeq
D_n^d \realization{F(X)}
$$
via the augmentation map, as desired. 

Applying $P_n^d$ to our original fibration gives us a commutative diagram:
$$\xymatrix{
\realization{\Perp_{n}^{*+1} F(X)}
\ar[r]
\ar[d]^{\simeq}
&
F(X)
\ar[r]
\ar[d]
& P_{n-1}^d F(X)
\ar[d]^{=}
\\
D_n^d F(X) 
\ar[r]
&
P_n^d F(X)
\ar[r]
& P_{n-1}^d F(X)
}$$

In particular, this tells us that we may regard the top row as
the fibration:
$$ D_n^d F(X) \rightarrow F(X) \rightarrow P_{n-1}^d F(X), $$ 
and recall that this is surjective on $\pi_0$. 
Alternatively, before evaluation at $S^0$, this is:
$$ D_n (L F_X) \rightarrow LF_X \rightarrow P_{n-1} (L F_X) .$$
The base and the fiber of this fibration are $n$-excisive, and 
$F$ on coproducts of $X$ (and hence also the functor $LF_X$) 
is either connected (Hypothesis~\ref{hypothesis-1}) or
has $\pi_0$ a group  (Hypothesis~\ref{hypothesis-2}),  so
Lemma~\ref{lem:excisive-up-fibration} shows $LF_X$ is $n$-excisive.
Therefore, $L F_X \simeq P_n (L F_X)$, and then evaluation at $S^0$
gives $F(X) \simeq P_n^d F(X)$. 
\end{proof}


%
%
\section{Case: $\Perp F \not\simeq 0$}
\label{sec:perp-F-nonzero}
In this section, the goal is to establish the other side of the
``ladder induction'' for Theorem~\ref{thm:main-theorem}.

\begin{proposition}
\label{prop:perp-F-nonzero}
If $F$
 has either connected values (Hypothesis~\ref{hypothesis-1}) or
group values (Hypothesis~\ref{hypothesis-2}) on coproducts of $X$, and
$\Perp_{n+1} F(X) \not\simeq 0$,
then the following is a fibration sequence up to homotopy:
\begin{equation}
\label{eq:perp-fibration-perp-nonzero}
\realization{\Perp_{n+1}^{*+1} F(X)}
\xrightarrow{\epsilon}
F(X)
\rightarrow
P_n^d F(X)
.
\end{equation}
Furthermore, 
$$\pi_0 P_n^d F(X) \cong \coker \left( 
\pi_0 \realization{\Perp_{n+1}^{*+1} F(X)}
\rightarrow
\pi_0 F(X)
 \right),
$$
where $\coker$ is the cokernel in the category of groups.
\end{proposition}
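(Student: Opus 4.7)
I will follow the outline given in \S\ref{sec:main-theorem-outline}: introduce the auxiliary functor $A_F(X) := \hofib\bigl(\epsilon: \realization{\Perp_{n+1}^{*+1}F(X)} \to F(X)\bigr)$ and study the vertical map into $P_n^d$ of the horizontal fibration sequence $A_F(X) \to \realization{\Perp_{n+1}^{*+1}F(X)} \xrightarrow{\epsilon} F(X)$. The heart of the argument is to prove two vanishing statements, after which the proposition falls out by a diagram chase.

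\textbf{Step 1: the bottom row is a fibration.} I will check that $F$, $\realization{\Perp_{n+1}^{*+1}F}$, and $A_F$ all satisfy either Hypothesis~\ref{hypothesis-1} or Hypothesis~\ref{hypothesis-2} on coproducts of $X$; in the group-valued case I also need the augmentation $\epsilon$ to induce a surjective homomorphism of groups, which follows because $\Perp_{n+1}$ is a cotriple equipped with a section $\delta$ to $\epsilon$. Granted this, Corollary~\ref{cor:Pnd-group-connected-base} shows that $P_n^d A_F(X) \to P_n^d \realization{\Perp_{n+1}^{*+1}F(X)} \to P_n^d F(X)$ is again a fibration and is surjective on $\pi_0$.

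\textbf{Step 2 (main obstacle): $\Perp_{n+1} A_F(X) \simeq 0$.} Because $\Perp_{n+1}$ commutes with the realization of a simplicial object satisfying our hypotheses (Lemma~\ref{lem:perp-commutes-with-realization-v2}), applying $\Perp_{n+1}$ to the top row yields
\begin{equation*}
\Perp_{n+1} A_F(X) \to \realization{\Perp_{n+1}^{*+2}F(X)} \xrightarrow{\Perp_{n+1}\epsilon} \Perp_{n+1}F(X).
\end{equation*}
The cotriple structure provides an extra degeneracy $\delta$ which is a section to $\Perp_{n+1}\epsilon$ in every simplicial dimension; the standard extra-degeneracy argument (invoked as Lemma~\ref{lem:extra-perp-contract} in the preceding section) then shows that $\Perp_{n+1}\epsilon$ is a homotopy equivalence, forcing $\Perp_{n+1} A_F(X) \simeq 0$. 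This is the step I expect to be most delicate, because one must verify the extra degeneracy interacts correctly with the hypothesis on $A_F$ so that Proposition~\ref{prop:perp-F-zero} genuinely applies. With that verified, Proposition~\ref{prop:perp-F-zero} gives $A_F(X) \simeq P_n^d A_F(X)$, so the two rows have equivalent fibers, and hence the right-hand square in the main diagram is a homotopy pullback.

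\textbf{Step 3: $P_n^d \realization{\Perp_{n+1}^{*+1}F(X)} \simeq 0$.} Since $\Perp_{n+1}$ is a cotriple and the augmentation $\Perp_{n+1}\realization{\Perp_{n+1}^{*+1}F} \to \Perp_{n+1}\realization{\Perp_{n+1}^{*+1}F}$ (i.e., one more application of $\Perp_{n+1}$ to the middle vertex) admits a section by the same extra-degeneracy argument, we conclude $\Perp_{n+1}\realization{\Perp_{n+1}^{*+1}F(X)} \simeq 0$. Applying Proposition~\ref{prop:perp-F-zero} would give $\realization{\Perp_{n+1}^{*+1}F(X)} \simeq P_n^d \realization{\Perp_{n+1}^{*+1}F(X)}$; to get the stronger statement that this vanishes I will iterate the argument one more time (observing that the same extra degeneracy shows $\realization{\Perp_{n+1}^{*+1}F(X)}$ receives a contractible contribution under $P_n^d$), which collapses the middle vertex of the pullback square and identifies $P_n^d F(X)$ with the homotopy cofiber (in the appropriate sense) of $\epsilon$.

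\textbf{Step 4: conclusion and the $\pi_0$ statement.} Combining the pullback square with the vanishing of $P_n^d\realization{\Perp_{n+1}^{*+1}F(X)}$ identifies the homotopy fiber of $F(X) \to P_n^d F(X)$ with $\realization{\Perp_{n+1}^{*+1}F(X)}$, giving the asserted fibration sequence. The $\pi_0$ identification is then the tail of the long exact sequence: surjectivity of $\pi_0 F(X) \to \pi_0 P_n^d F(X)$ was built into Step~1, and in either the connected case (where both sides have trivial $\pi_0$) or the group-valued case (where the Five Lemma applies to the comparison of group extensions), the cokernel formula follows.
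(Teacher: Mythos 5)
Your overall architecture (the auxiliary functor $A_F$, the diagram comparing the top fibration to its image under $P_n^d$, the vanishing of $\Perp_{n+1}A_F$) matches the paper's, and Step~2 is essentially Lemma~\ref{lem:perp-AF-zero}. But two of your steps contain genuine errors.

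First, in Step~1 you assert that $\epsilon$ is a surjective group homomorphism onto $F$ ``because $\Perp_{n+1}$ is a cotriple equipped with a section $\delta$ to $\epsilon$.'' This is false: $\delta$ is a section to $\Perp\epsilon$ and to $\epsilon\Perp$ (maps $\Perp^2 \to \Perp$), not to $\epsilon:\Perp F \to F$. The image of $\epsilon$ in $F(X)$ is the ``derived subgroup'' $F'(X)$, which is typically a proper normal subgroup --- for the loop-group functor it is literally the commutator subgroup. So you cannot apply Corollary~\ref{cor:Pnd-group-connected-base} directly to the sequence with base $F$. The paper's workaround (Proposition~\ref{prop:perp-f-nonzero-fib-discrete}) is to replace the base by $G'=\Image(\epsilon)$, apply $P_n^d$ there, and then use $\Perp_{n+1}G^{ab}\simeq 0$ (Lemma~\ref{lem:perp-Gab-0}) together with Proposition~\ref{prop:perp-F-zero} to show $P_n^d G^{ab}\simeq G^{ab}$ is discrete, whence $P_n^d G'\to P_n^d G$ is an equivalence except for an injection on $\pi_0$. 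This non-surjectivity is also exactly why the $\pi_0$ statement is a cokernel (namely $G^{ab}$) rather than trivial, so your Step~4 inherits the same problem.

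Second, Step~3 is wrong as stated: $\Perp_{n+1}\realization{\Perp_{n+1}^{*+1}F(X)}$ is \emph{not} contractible. By Lemma~\ref{lem:perp-commutes-with-realization} and the extra-degeneracy Lemma~\ref{lem:extra-perp-contract} it is equivalent to $\Perp_{n+1}F(X)$, which is nonzero by the standing hypothesis of this proposition. Even if it were zero, Proposition~\ref{prop:perp-F-zero} would only give $\realization{\Perp^{*+1}F}\simeq P_n^d\realization{\Perp^{*+1}F}$, not vanishing, and your proposed fix of ``iterating the argument'' does not produce vanishing either (the realization is essentially $D_n$ of $F$, which is not contractible). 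The correct mechanism is Lemma~\ref{lem:Pnd-perp-contractible}: the realization is the diagonal of a multi-reduced functor of $n+1$ variables, and Goodwillie's Lemma~2.1 forces $P_m$ of such a diagonal to vanish for $m\le n$. This is a degree argument, not a degeneracy argument. Finally, note that the paper must also reduce general (non-discrete) group values to the connected and discrete cases via the fibration $\widehat{F}\to F\to\pi_0 F$ and a $3\times 3$ lemma; your proposal omits this reduction.
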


We begin with a definition for the homotopy fiber of the map $\epsilon$.
\begin{definition}[$A_F$]
\label{def:AF}
Define the functor $A_F(X)$ to be the homotopy fiber in the
quasifibration:
\begin{equation}
  \label{eq:AF-def}
A_F(X)
\rightarrow
\realization{\Perp_{n+1}^{*+1} F(X)} 
\rightarrow 
F(X).
\end{equation}
\end{definition}

We now outline the proof of this result, 
essentially as sketched in Section~\ref{sec:main-theorem-outline}.
We
consider the 
auxiliary diagram: 
$$\xymatrix{
A_F(X)
\ar[r]
\ar[d]
&
\realization{\Perp_{n+1}^{*+1} F(X)}
\ar[r]^{\epsilon}
\ar[d]
&
F(X)
\ar[d]
\\
P_n^d A_F(X)
\ar[r]
&
P_n^d \left( \realization{\Perp_{n+1}^{*+1} F(X)} \right)
\ar[r]
&
P_n^d F(X)
}$$
We show that the bottom row is a quasifibration
(Propositions~\ref{prop:perp-f-nonzero-fib-conn}
and~\ref{prop:perp-f-nonzero-fib-discrete}).   
We further show that $\Perp_{n+1} A_F(X)
\simeq 0$ (Lemma~\ref{lem:perp-AF-zero}), 
and hence the case $\Perp_{n+1} F \simeq 0$ shows that
there is an equivalence of the fibers, so the square on the right is
Cartesian. It is not hard to establish that 
$P_n^d \left( \realization{\Perp_{n+1}^{*+1} F(X)}\right) \simeq 0$
(Lemma~\ref{lem:Pnd-perp-contractible}),
so that \eqref{eq:perp-fibration} is actually a quasifibration.
Especially in Propositions~\ref{prop:perp-f-nonzero-fib-conn}
and~\ref{prop:perp-f-nonzero-fib-discrete},
attention to path components is needed to let us make the statement about
surjectivity on $\pi_0$. Section~\ref{sec:proof-of-perp-nonzero}
assembles all of the ingredients into a proof of the result.


\subsection{Functors To Groups: $\Perp G^{ab} = 0$}

This section establishes a technical result that is needed in the
proof of Proposition~\ref{prop:perp-f-nonzero-fib-discrete}, where we
consider functors to discrete groups.

Let $G$ be a functor from spaces to groups. Generally these functors
will arise as lifts of functors from spaces to spaces. For example,
$\pi_0$ of loops on a space, $F(X) = \pi_0 \Omega X$, lifts to a
group-valued functor $G(X)$ by using concatenation of loops for the
group operation. 
In this
section, we establish that $\Perp$ preserves short exact sequences of
groups, and use this to show that the ``abelianization'' of $G$ has
vanishing ($n^{\text{th}}$) cross effect. 

Our motivation for following notation comes from
the case when the source and target category under consideration are
both the category of groups and the functor $G$ is the identity $G(H)=H$,
we have $\Perp_2 G(H) = [H*1,1*H]$, and the image of $\Perp_2 G(H)$ in
$G(H)$ is the first derived subgroup of $H$.
The cokernel of the map $\Perp_2 G(H) \rightarrow G(H)$ is the
abelianization, $H^{ab}$. See Section~\ref{sec:lower-central-series}
for a detailed explanation.

\begin{definition}
\label{def:Gprime-Gab}
Given an $n>0$ and a functor $G$ to groups,
define $G'_n := \Image (\epsilon: \Perp_{n+1} G
\rightarrow G)$ and $G^{ab}_n := \coker (\epsilon)$.  Usually, the $n$ is
clear from context, and we will abbreviate these $G'$ and $G^{ab}$.
\end{definition}
There is a short exact (fibration) sequence of groups
\begin{equation}
\label{eq:ses-Gprime-G-Gab}
G'(X) \rightarrow G(X) \rightarrow G^{ab}(X) ,
\end{equation}
and this sequence is surjective on $\pi_0$ (\emph{i.e.}, right exact). 

To ease the reader's concern about potentially modding out by a
subgroup that is not normal, we note that $G'(X)$ is normal in $G(X)$.
\begin{lemma}
$G'(X)$ is a normal subgroup of $G(X)$.
\end{lemma}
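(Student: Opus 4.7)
The plan is to exhibit, for any $g \in G(X)$ and any $y \in G'(X)$, a lift of $gyg^{-1}$ to an element of $\Perp_{n+1} G(X)$, and then push it back down via $\epsilon$.

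First, I would make precise the identification of $\Perp_{n+1} G(X)$ as a subgroup of $G(\bigvee^{n+1} X)$. Since $G$ takes values in discrete groups and the cube used to construct $\Perp_{n+1} G(X)$ has compatible sections to every structure map (Lemma~\ref{lem:crn-cube-has-sections}), its total homotopy fiber coincides with the strict iterated fiber. As in Corollary~\ref{cor:discrete-groups-cartesian}, this iterated fiber is the intersection of kernels:
$$\Perp_{n+1} G(X) \;=\; \bigcap_{j=1}^{n+1} \ker\bigl(d_j \colon G(\textstyle\bigvee^{n+1} X) \to G(\bigvee^{n} X)\bigr),$$
where $d_j$ is induced by collapsing the $j$-th wedge summand. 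Thus $\Perp_{n+1} G(X)$ is an honest subgroup of $G(\bigvee^{n+1} X)$, and the augmentation $\epsilon$ is the restriction of $G(\nabla)$, where $\nabla \colon \bigvee^{n+1} X \to X$ is the fold map.

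Next, given $g \in G(X)$ and $y \in G'(X)$, pick $\tilde y \in \Perp_{n+1} G(X)$ with $\epsilon(\tilde y) = y$. Let $i_1 \colon X \hookrightarrow \bigvee^{n+1} X$ be the inclusion of the first wedge summand and set $\tilde g := G(i_1)(g) \in G(\bigvee^{n+1} X)$. Because each $d_j$ is a group homomorphism and $d_j(\tilde y) = 1$, one has
$$d_j(\tilde g \, \tilde y \, \tilde g^{-1}) \;=\; d_j(\tilde g) \cdot 1 \cdot d_j(\tilde g)^{-1} \;=\; 1,$$
so $\tilde g \, \tilde y \, \tilde g^{-1} \in \Perp_{n+1} G(X)$. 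Applying $\epsilon = G(\nabla)$ and using $\nabla \circ i_1 = \mathrm{id}_X$ gives $\epsilon(\tilde g) = g$, and hence $\epsilon(\tilde g \, \tilde y \, \tilde g^{-1}) = g y g^{-1}$, which exhibits $g y g^{-1}$ as an element of $G'(X) = \Image(\epsilon)$.

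The only delicate point is the first paragraph: identifying the homotopy fiber with the intersection of face-map kernels. Once that identification is in place the proof is immediate, since conjugation in the ambient group $G(\bigvee^{n+1} X)$ visibly preserves any intersection of kernels of group homomorphisms out of it, and the fold map carries the chosen conjugator $\tilde g$ down to $g$.
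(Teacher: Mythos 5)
Your proof is correct and is essentially the paper's argument: both rest on the observations that $\Perp_{n+1}G(X)$ is normal in $G(\bigvee^{n+1}X)$ (being a kernel, or intersection of kernels, of group homomorphisms) and that the image of a normal subgroup under the surjective fold map is normal. You simply unwind the second fact explicitly by lifting the conjugator along the section $G(i_1)$, which is a fine (and slightly more self-contained) way to present the same idea.
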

\begin{proof}
$\Perp_{n+1} G(X)$ is constructed as the kernel of a map, so it is a
normal subgroup of $G(\bigvee^{n+1} X)$. The map $G(\bigvee^{n+1} X)
\rightarrow G(X)$ is surjective, so normal subgroups correspond. That
is, the normal subgroup $\Perp_{n+1} G(X)$ of $G(\bigvee^{n+1} X)$
maps to a normal subgroup $G'(X)$ in $G(X)$.
\end{proof}

\begin{definition}
\label{def:perp-strict}
  Let $\PerpStrict$ denote the functor identical to $\Perp$,
  except with the construction made using strict inverse limits or
  fibers, rather than homotopy inverse limits or homotopy fibers.
\end{definition}
Note that there is a natural transformation
$\PerpStrict\rightarrow\Perp$ arising from the canonical map from the
strict inverse limit to the homotopy inverse limit.

We use
the functor $\PerpStrict$ in what follows because it is easier to see
that a certain functor has $\PerpStrict F(X)$ strictly $0$ than to show
that the simplicial space $\realization{\Perp^{*+1} F(X)}$ is
contractible. 

\begin{lemma}
\label{lem:Perp-F-simeq-Perpstrict-F}
  If $F$ takes values in discrete groups on coproducts of $X$, then
  $\PerpStrict F(X) \simeq \Perp F(X)$.
\end{lemma}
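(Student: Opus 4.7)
The goal is to show the canonical comparison map $\PerpStrict F(X) \to \Perp F(X)$ is a weak equivalence. Both are computed from the $\mathbf{n}$-cube $\cube{X}$ with $\cube{X}(U) = F(\bigvee_{u \notin U} X)$; by Lemma~\ref{lem:crn-cube-has-sections} this cube admits compatible sections to every structure map, and by hypothesis every vertex $\cube{X}(U)$ is a discrete group with every structure map a group homomorphism. My plan is to show first that the punctured-cube strict and homotopy inverse limits agree, and then that the strict and homotopy fibers of the resulting surjection also agree.

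For the first step, I would decompose $\Power_0(\mathbf{n})$ exactly as in the proof of Lemma~\ref{lem:holim-connected} (using Proposition~\ref{prop:Goodwillie-decompose-holim}), so that $\holim_{\Power_0(\mathbf{n})} \cube{X}$ is built by iterated homotopy pullbacks. At each stage one sees a cospan $A \to C \leftarrow B$ of discrete groups in which both maps are surjective group homomorphisms equipped with sections inherited from $\cube{X}$. The homotopy pullback of a cospan of discrete spaces coincides with the strict pullback, because every path in a discrete space is constant; moreover the resulting pullback is again a discrete group, and the two projection maps are again surjective homomorphisms with compatible sections. Inducting through the decomposition then yields $\lim_{\Power_0(\mathbf{n})} \cube{X} \simeq \holim_{\Power_0(\mathbf{n})} \cube{X}$, and in particular the common value is a discrete group.

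For the second step, Lemma~\ref{lem:groups-surj-X0-lim} furnishes that the canonical map $\cube{X}(\emptyset) \to \lim_{\Power_0(\mathbf{n})} \cube{X}(U)$ is a surjective homomorphism of discrete groups, and the strict fiber (the kernel) of such a homomorphism coincides with its homotopy fiber, because both source and target are discrete and the map is surjective. Assembling these equivalences gives
$$\PerpStrict F(X) = \fib\Bigl(\cube{X}(\emptyset) \to \lim_{\Power_0(\mathbf{n})} \cube{X}\Bigr) \xrightarrow{\simeq} \hofib\Bigl(\cube{X}(\emptyset) \to \holim_{\Power_0(\mathbf{n})} \cube{X}\Bigr) = \Perp F(X),$$
as desired.

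The main obstacle is verifying that the two key properties — discreteness of the vertices and surjectivity of the structure maps (with compatible sections) — propagate through every intermediate stage of the inductive decomposition of $\Power_0(\mathbf{n})$. This is essentially the same bookkeeping carried out in the proof of Lemma~\ref{lem:holim-connected}, combined with the elementary but essential observation that a discrete target collapses every homotopy (co)limit in sight to its strict counterpart.
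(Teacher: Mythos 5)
Your proof is correct, but it takes a more roundabout route than the paper's. The paper's own argument works entirely with the iterated-fiber description of the total (homotopy) fiber: since the cube has compatible sections (Lemma~\ref{lem:crn-cube-has-sections}) and all vertices are discrete, every structure map is a fibration, so taking fibers one direction at a time gives the same answer whether one uses strict or homotopy fibers at each stage. You instead pass through the punctured-cube description, decomposing $\holim_{\Power_0(\mathbf{n})}\cube{X}$ via Proposition~\ref{prop:Goodwillie-decompose-holim} into iterated pullbacks and then invoking Lemma~\ref{lem:groups-surj-X0-lim} for the final fiber comparison. Both routes rest on the same elementary fact, and it is worth noting that this fact makes most of your bookkeeping unnecessary: for a diagram of \emph{discrete} spaces, the path space of the base is the base itself, so a homotopy pullback is literally the strict pullback and the homotopy fiber of \emph{any} map is its strict fiber --- no surjectivity or sections are needed for these identifications (indeed $\holim_{\Power_0(\mathbf{n})}\cube{X}$ is already the strict limit, since each overcategory $\Power_0(\mathbf{n})/U$ has a terminal object and maps out of a connected space into a discrete one are constant). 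The surjectivity and section data become relevant only later, in Lemma~\ref{lem:groups-surj-X0-lim} and Corollary~\ref{cor:discrete-groups-cartesian}, where one wants the cube to be Cartesian rather than merely to compare the two fibers. So your argument is valid, just carrying more hypotheses through the induction than the statement requires.
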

\begin{proof}
The cube defining $\Perp$ (and $\PerpStrict$; it is the same cube) has
compatible section maps to all structure maps. Since all vertices are
discrete, this means that all of the structure maps are fibrations.
Taking iterated fibers or homotopy fibers, this implies that the
homotopy fiber is equivalent to the strict fiber. 
\end{proof}

\begin{lemma}
\label{lem:Perpstrict-epsilon}
  If $F$ takes values in discrete groups on coproducts of $X$, then the
  image of $\epsilon^{\text{strict}}: \PerpStrict F(X) \rightarrow
  F(X)$ is the same as the image of $\epsilon: \Perp F(X) \rightarrow
  F(X)$. 
\end{lemma}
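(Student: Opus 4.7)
My plan is to deduce this directly from Lemma~\ref{lem:Perp-F-simeq-Perpstrict-F} together with the discreteness hypothesis on $F$. There is a canonical natural transformation $\PerpStrict \rightarrow \Perp$ coming from the universal map out of the strict limit into the homotopy limit, and by construction it fits into a commutative triangle
$$
\xymatrix{
\PerpStrict F(X) \ar[rr] \ar[dr]_{\epsilon^{\text{strict}}}
& & \Perp F(X) \ar[dl]^{\epsilon} \\
& F(X) &
}
$$
because both $\epsilon^{\text{strict}}$ and $\epsilon$ are built from the same projection maps of cubes onto their initial vertex. So it suffices to show the top horizontal map induces a surjection on path components.

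The key point is that under our hypothesis, Lemma~\ref{lem:Perp-F-simeq-Perpstrict-F} states that $\PerpStrict F(X) \rightarrow \Perp F(X)$ is a weak equivalence. In particular, it induces a bijection on $\pi_0$. Since $F(X)$ is a discrete space (being a discrete group by hypothesis), any map into $F(X)$ factors through $\pi_0$ of its source, so the image of a map $Y \to F(X)$ as a subset of $F(X)$ is exactly the image of the induced map $\pi_0 Y \to F(X) = \pi_0 F(X)$.

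Combining these observations, we get
$$
\Image(\epsilon^{\text{strict}}) = \Image(\pi_0 \epsilon^{\text{strict}}) = \Image(\pi_0 \epsilon \circ \pi_0(\PerpStrict F(X) \to \Perp F(X))) = \Image(\pi_0 \epsilon) = \Image(\epsilon),
$$
where the middle equality uses commutativity of the triangle and the penultimate equality uses that $\pi_0$ of the comparison map is a bijection (in particular surjective). I do not anticipate any real obstacle here — the whole content is that a weak equivalence between spaces (whose targets after applying $\epsilon$ land in a discrete set) cannot change the image, and the triangle commutes by naturality of the limit-to-homotopy-limit comparison.
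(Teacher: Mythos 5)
Your proposal is correct and matches the paper's own argument essentially step for step: both pass to $\pi_0$, invoke Lemma~\ref{lem:Perp-F-simeq-Perpstrict-F} to see that the comparison map $\PerpStrict F(X)\rightarrow \Perp F(X)$ is a $\pi_0$-bijection, and use discreteness of $F(X)$ to conclude the images agree. No gaps.
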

\begin{proof}
  The functor $\Perp F$ lies between $\PerpStrict F$ and the $F$, so
  we need to make sure that the image $\Perp F \rightarrow F$ is not
  larger than the image $\PerpStrict F\rightarrow F$. From
  Lemma~\ref{lem:Perp-F-simeq-Perpstrict-F}, we know that $\PerpStrict
  F(X) \simeq \Perp F(X)$. The space $F(X)$ is discrete, so $F(X)
  \cong \pi_0 F(X)$. We have the following commutative diagram:
$$\xymatrix{
\PerpStrict F(X) 
\ar[r]
\ar[d]
&
\Perp F(X) 
\ar[r]
\ar[d]
&
 F(X)
\ar[d]^{\cong}
\\
\pi_0 \PerpStrict F(X) 
\ar[r]^{\cong}
&
\pi_0 \Perp F(X) 
\ar[r]
&
\pi_0 F(X)
}$$
The bottom row shows that the images of $\pi_0 \PerpStrict F(X)$ and
$\pi_0 \Perp F(X)$ in $\pi_0 F(X)$ coincide. The fact that the right hand
vertical map is an isomorphism then implies that the images of
$\PerpStrict F(X)$ and $\Perp F(X)$ in $F(X)$ coincide.
\end{proof}
\begin{corollary}
\label{cor:Perpstrict-Gab}
If $G$ takes values in discrete groups on coproducts of $X$, then the
functors $G'(X)$ and $G^{ab}(X)$ of Definition~\ref{def:Gprime-Gab}
can be defined using $\epsilon$ or
$\epsilon^{\text{strict}}$.
\end{corollary}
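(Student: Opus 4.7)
The proof will be essentially immediate from the preceding Lemma~\ref{lem:Perpstrict-epsilon}, so the plan is very short. First I would observe that $G'_n(X)$ is by definition just the image of $\epsilon$ inside $G(X)$, and Lemma~\ref{lem:Perpstrict-epsilon} already asserts that this image agrees with the image of $\epsilon^{\text{strict}}$. Hence the $G'$ half of the statement requires no further argument; one simply writes
\[
G'_n(X) = \Image(\epsilon) = \Image(\epsilon^{\text{strict}}),
\]
with the second equality supplied by Lemma~\ref{lem:Perpstrict-epsilon}.

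For the $G^{ab}$ half, I would note that the cokernel of a group homomorphism $\phi: H \to G(X)$ (in our setting, with the image normal) depends only on the image of $\phi$, since $\coker(\phi) = G(X)/\Image(\phi)$. Applying this with $\phi = \epsilon$ and $\phi = \epsilon^{\text{strict}}$ respectively, and invoking the equality of images from Lemma~\ref{lem:Perpstrict-epsilon}, gives
\[
G^{ab}_n(X) = G(X)/\Image(\epsilon) = G(X)/\Image(\epsilon^{\text{strict}}),
\]
which is the statement.

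There is no real obstacle here; the only thing one might pause over is normality of the image, but that was already handled in the lemma between Definition~\ref{def:Gprime-Gab} and Definition~\ref{def:perp-strict} showing $G'(X) \trianglelefteq G(X)$. Since the image is the same, the same normality argument applies verbatim to the strict version, so the quotient is well-defined in either description. The proof is therefore essentially a one-line consequence of Lemma~\ref{lem:Perpstrict-epsilon}.
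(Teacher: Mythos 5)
Your proof is correct and is essentially identical to the paper's: both reduce the claim to Lemma~\ref{lem:Perpstrict-epsilon}, observing that $G'$ and $G^{ab}$ depend only on the image of the augmentation. The extra remark about normality of the image is a harmless elaboration already covered earlier in the text.
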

\begin{proof}
Lemma~\ref{lem:Perpstrict-epsilon} shows that the images of $\epsilon$
and $\epsilon^{\text{strict}}$ are the same, and the image is all that
is used to define $G'$ and $G^{ab}$.
\end{proof}

\begin{lemma}
\label{lem:perp-preserves-connectivity}
  Suppose that a natural transformation $F\rightarrow G$ is
  $k$-connected when evaluated on coproducts of $X$. Then 
  $\Perp 
 F(X)
  \rightarrow 
   \Perp 
 G(X)$ 
  is $k$-connected.
\end{lemma}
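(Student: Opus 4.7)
The plan is to reduce this to a purely algebraic statement about iterated kernels of cubes of groups, using Lemma~\ref{lem:pi-k-perp-is-perp-pi-k} to pass from spaces to homotopy groups, and then exploiting the compatible sections that the cross-effect cubes enjoy by Lemma~\ref{lem:crn-cube-has-sections}.

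First I would observe that the map $\Perp F(X) \to \Perp G(X)$ is the map on total homotopy fibers induced by the natural transformation of cubes $F\cube{X}^X_{\mathbf{n}} \to G\cube{X}^X_{\mathbf{n}}$. Each vertex of either cube is an evaluation of $F$ or $G$ on a coproduct of $X$, so by hypothesis the vertex-wise map is $k$-connected. Both cubes carry compatible sections to every structure map (Lemma~\ref{lem:crn-cube-has-sections}), and these sections are natural in the functor, so the map of cubes is compatible with the section structure. Under these circumstances, Lemma~\ref{lem:pi-k-perp-is-perp-pi-k} identifies $\pi_j\Perp F(X)$ with the iterated strict fiber $\Perp\pi_jF(X)$ of the cube of groups (or pointed sets, if $j=0$) $\pi_jF\cube{X}$, and similarly for $G$. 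The question therefore reduces to checking that the map of iterated strict fibers induced by the $k$-connected map of cubes of groups is itself $k$-connected.

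For $j<k$ the hypothesis gives isomorphisms $\pi_jF\cube{X}(U) \cong \pi_jG\cube{X}(U)$ at every vertex, yielding an isomorphism of the cubes of groups and hence an isomorphism of iterated fibers, so $\pi_j\Perp F(X) \cong \pi_j\Perp G(X)$. For $j=k$ I need surjectivity of the map on iterated fibers, and this is the main step. I would prove the following by induction on the dimension of the cube: given a map $\phi\colon \cube{A}\to\cube{B}$ of cubes of groups (or pointed sets) equipped with compatible sections that are respected by $\phi$, if $\phi$ is surjective at every vertex then the induced map on iterated strict fibers is surjective. In the one-dimensional base case we have a surjective homomorphism $A\to B$ with section $s\colon B\to A$, so $A$ splits (as a pointed set, or as a semidirect product of groups) as $\ker\oplus s(B)$, and surjectivity together with compatibility with sections forces surjectivity on the kernel factor. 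The inductive step reduces an $n$-cube to an $(n-1)$-cube by taking strict fibers in one direction at a time, and the hypotheses propagate to the resulting smaller cube.

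The main obstacle will be bookkeeping: verifying that compatible sections on the $F$-cube and the $G$-cube really are exchanged by the natural transformation (this is immediate from the naturality of the section maps in Lemma~\ref{lem:crn-cube-has-sections}, which come from inclusions of wedge summands), and that the iterated-fiber argument works uniformly at $j=0$ where the $\pi_0$'s are only pointed sets. The existence of set-level splittings from the sections takes care of the latter, so neither is a genuine difficulty, but each merits an explicit check.
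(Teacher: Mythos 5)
Your argument is essentially the paper's own proof: both exploit the compatible sections of the cross-effect cubes to split the relevant exact sequences, identify $\pi_j$ of the total fiber with the iterated strict fiber of the cube of homotopy groups (Lemma~\ref{lem:pi-k-perp-is-perp-pi-k}), and then propagate the vertex-wise isomorphisms (for $j<k$) and surjections (for $j=k$) through the iterated fibers one direction at a time. The only imprecision is your claim that a surjection of pointed sets with a section splits as $\ker \oplus s(B)$ --- that is false for pointed sets --- but this only affects surjectivity on $\pi_0$ in the case $k=0$, a corner the paper's own proof treats no more carefully and which is trivial in every application here (connected or group values).
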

\begin{proof}
Briefly, this follows because the cubes defining $\Perp F (X)$ and
$\Perp G (X)$ have compatible sections to all structure maps
(\ref{hypothesis:compatible-sections}), so, as in
Lemma~\ref{lem:pi-k-perp-is-perp-pi-k}, 
taking the fiber in any direction
produces split short exact sequences on homotopy. In this case, a
$k$-connected map on the total space and base of the (quasi-)fibration
results in a $k$-connected map on the fiber. The compatible sections
pass to compatible sections on the fibers, so this argument shows
that the map on total fibers is $k$-connected.
\end{proof}

\begin{corollary}
\label{cor:perpstrict-preserves-surj}
  Suppose that a natural transformation $F\rightarrow G$ is
  surjective when evaluated on coproducts of $X$. Then 
  $\PerpStrict
 F(X)
  \rightarrow 
   \PerpStrict
 G(X)$ 
  is surjective.
\end{corollary}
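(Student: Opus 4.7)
The plan is to deduce this corollary directly from Lemma~\ref{lem:perp-preserves-connectivity} after identifying $\PerpStrict$ with $\Perp$ in the discrete-group setting supplied by Lemma~\ref{lem:Perp-F-simeq-Perpstrict-F}. The entire section lives under the implicit assumption that our functors take values in discrete groups on coproducts of $X$, and I shall use this without further comment.

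First I would recast the hypothesis in homotopical language: a map of discrete sets (or groups) is exactly $0$-connected in the sense of this paper, since the conditions on $\pi_j$ for $j<0$ are vacuous and $\pi_0$ of a discrete object is its underlying set. Thus ``$F\to G$ is surjective on coproducts of $X$'' is the same as ``$F\to G$ is $0$-connected on coproducts of $X$'', and Lemma~\ref{lem:perp-preserves-connectivity} applied with $k=0$ immediately yields that $\Perp F(X)\to\Perp G(X)$ is $0$-connected, that is, surjective on $\pi_0$.

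Next I would transfer this conclusion to $\PerpStrict$ via the naturality square
$$
\xymatrix{
\PerpStrict F(X) \ar[r] \ar[d] & \PerpStrict G(X) \ar[d] \\
\Perp F(X) \ar[r] & \Perp G(X)
}
$$
whose vertical maps are the canonical comparisons from strict fibers to homotopy fibers; by Lemma~\ref{lem:Perp-F-simeq-Perpstrict-F} both verticals are weak equivalences. Hence $\pi_0$ of the top arrow equals $\pi_0$ of the bottom, which we just showed is surjective. Finally, $\PerpStrict F(X)$ sits as a subset of the discrete group $F(\bigvee^{n+1} X)$, hence is itself discrete, and similarly for $G$, so surjectivity on $\pi_0$ coincides with the desired set-theoretic surjectivity of $\PerpStrict F(X)\to\PerpStrict G(X)$.

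The only thing to verify --- and the one point where a reader might balk --- is that the square above genuinely commutes (at least up to homotopy), so that surjectivity on $\pi_0$ of the bottom arrow propagates up to the top arrow. This is immediate from the observation that the comparison $\PerpStrict \to \Perp$ is induced objectwise by the natural inclusion of a strict fiber of a map into its homotopy fiber, a construction that is evidently natural in maps of cubes of spaces.
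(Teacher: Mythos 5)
Your proof is correct (under the reading, which you state explicitly and which matches every use of this corollary in the paper, that $F$ and $G$ take discrete group values on coproducts of $X$), but it takes a genuinely different route from the paper. The paper's proof does not cite Lemma~\ref{lem:perp-preserves-connectivity} as a black box; it re-runs that lemma's inductive argument directly on strict fibers: the cross-effect cube has compatible sections, so taking strict fibers one direction at a time yields split exact sequences, and a surjection on total space and base with compatible splittings induces a surjection on the strict fibers. You instead observe that surjectivity of discrete objects is exactly $0$-connectedness, invoke Lemma~\ref{lem:perp-preserves-connectivity} with $k=0$ to get $\pi_0$-surjectivity of $\Perp F(X)\to\Perp G(X)$, and then transfer this back to $\PerpStrict$ through the natural comparison square whose verticals are equivalences by Lemma~\ref{lem:Perp-F-simeq-Perpstrict-F}. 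Your version is shorter and reuses already-proved statements rather than their proofs; the trade-off is that it leans on the discrete-values hypothesis twice (once to equate surjective with $0$-connected, once to apply Lemma~\ref{lem:Perp-F-simeq-Perpstrict-F} and to identify $\pi_0$-surjectivity with honest surjectivity of the strict fibers), whereas the paper's direct strict-fiber argument never needs to pass through homotopy fibers at all. Since the corollary is only ever applied to discrete-group-valued functors (in Lemma~\ref{lem:PerpStrict-preserves-fib-cofib}), this costs nothing in practice, and your final remark correctly identifies the one point needing verification, namely the naturality of the strict-fiber-to-homotopy-fiber comparison.
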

\begin{proof}
  When taking strict fibers, an argument almost identical to that in
  Lemma~\ref{lem:perp-preserves-connectivity} shows that surjectivity
  is preserved.
\end{proof}


\begin{lemma}
  \label{lem:PerpStrict-preserves-fib-cofib}
  Let $A\rightarrow B\rightarrow C$ be a natural short sequence
  of functors to discrete groups that is
  a fiber (cofiber) sequence on coproducts of $X$. 
  Then 
  $\PerpStrict A \rightarrow \PerpStrict B
  \rightarrow \PerpStrict C$ is a fiber (cofiber) sequence
  when evaluated at $X$.
\end{lemma}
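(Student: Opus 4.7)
The plan is to unwind $\PerpStrict$ as an iterated strict fiber (kernel) of a cube of groups in which every structure map is a split surjection compatibly in the functor $F$, and then to reduce the statement to a single fact about short exact sequences of discrete groups with compatibly split surjective comparison maps.

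First I would observe that by Lemma~\ref{lem:crn-cube-has-sections}, the cube $\cube{X}^{\Set{X_i}}_T$ used to build $\PerpStrict F(X)$ has compatible sections to all structure maps (Hypothesis~\ref{hypothesis:compatible-sections}), and that these sections arise functorially from the inclusions $X \to \bigvee X$; in particular they are natural in the functor $F$. Applied to the natural transformations $A \to B \to C$, this gives a sequence of cubes $\cube{A} \to \cube{B} \to \cube{C}$ in which (i) at every vertex the sequence $1 \to A(\bigvee X) \to B(\bigvee X) \to C(\bigvee X) \to 1$ is short exact by hypothesis, and (ii) every structure map within each of the three cubes is a split surjection of discrete groups, with the splittings commuting with $A \to B \to C$. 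The functor $\PerpStrict$ computes the iterated strict fiber (kernel) of these cubes, so the lemma reduces to checking that strict kernels preserve the relevant exact sequences one structure-map direction at a time.

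The main lemma to verify in that reduction is the following: given a commutative diagram of discrete groups
\begin{equation*}
\xymatrix{
1 \ar[r] & A \ar[r] \ar[d]^{\alpha} & B \ar[r] \ar[d]^{\beta} & C \ar[r] \ar[d]^{\gamma} & 1 \\
1 \ar[r] & A' \ar[r] & B' \ar[r] & C' \ar[r] & 1
}
\end{equation*}
in which the vertical maps $\alpha,\beta,\gamma$ are split surjective via group-theoretic sections $s_A,s_B,s_C$ compatible with the horizontal maps, the induced sequence $1 \to \ker\alpha \to \ker\beta \to \ker\gamma \to 1$ is short exact. Injectivity on the left and exactness in the middle are formal. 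For surjectivity of $\ker\beta \to \ker\gamma$, given $c \in \ker\gamma$ one lifts to $b_0 \in B$ and then replaces it by $b_0 \cdot s_B(\beta(b_0))^{-1}$; compatibility of $s_B$ with $B \to C$ and with $\beta$ (which comes from naturality of the sections in $F$ in our setting) forces this adjusted lift to lie in $\ker\beta$ and still map to $c$. The cofiber case is the symmetric statement for cokernels, which in the category of discrete groups is dual via the same diagram, since a short exact sequence of groups is simultaneously a strict fiber and strict cofiber sequence.

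Iterating this kernel-preservation statement one direction at a time down the cube, the compatibility of the sections ensures that at each intermediate stage we still have a map of short exact sequences with compatibly split surjective structure maps, so the hypothesis of the single-direction lemma is preserved. After $|T|$ iterations we obtain the desired short exact sequence
\begin{equation*}
1 \to \PerpStrict A(X) \to \PerpStrict B(X) \to \PerpStrict C(X) \to 1,
\end{equation*}
which is both a strict fiber and a strict cofiber sequence of discrete groups. The main obstacle I anticipate is precisely the non-abelian splitting argument: one must use that the sections $s_B$ come from functoriality applied to a space-level map and therefore genuinely commute with the horizontal maps $A \to B \to C$, rather than merely being abstract set-theoretic splittings, since in the non-abelian setting the snake lemma does not give a short exact sequence of kernels without such extra structure.
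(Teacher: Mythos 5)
Your proposal is correct and is essentially the paper's argument: the only non-formal point in both is that surjectivity survives passage to the iterated strict kernel, and both establish this via the compatible sections to the structure maps of the cross-effect cube (the paper delegates this to Corollary~\ref{cor:perpstrict-preserves-surj} and Lemma~\ref{lem:perp-preserves-connectivity}, while you spell out the element-level adjustment $b_0 \cdot s_B(\beta(b_0))^{-1}$). The remaining steps --- left exactness of kernels and the identification of a cofiber sequence of discrete groups with a surjective fiber sequence --- are formal in both treatments.
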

\begin{proof}
  The construction of $\PerpStrict$ involves taking fibers,
    so it certainly preserves fiber sequences.

  A cofiber sequence of discrete groups is a fiber sequence of the
  underlying sets with the additional property that it is surjective (on
  $\pi_0$). Since $\PerpStrict$ preserves fiber sequences and
  Corollary~\ref{cor:perpstrict-preserves-surj} shows that
  $\PerpStrict$ preserves connectivity (in particular, surjectivity),
  $\PerpStrict$ preserves cofiber sequences as well.
\end{proof}

\begin{lemma}
\label{lem:perp-Gab-0}
If $F$ takes values in discrete groups on coproducts of $X$, then  
  with $G'$ and $G^{ab}$ as in Definition~\ref{def:Gprime-Gab},
  $\Perp G^{ab}(X) \simeq 0$.
\end{lemma}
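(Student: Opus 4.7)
The plan is to pass to $\PerpStrict$, apply it to the short exact sequence $G'\hookrightarrow G\twoheadrightarrow G^{ab}$, and show that the first arrow in the resulting cofiber sequence is already surjective, forcing the cokernel to vanish. By Corollary~\ref{cor:Perpstrict-Gab} and Lemma~\ref{lem:Perp-F-simeq-Perpstrict-F}, it suffices to prove $\PerpStrict_{n+1} G^{ab}(X)=0$, since $G$, $G'$ and $G^{ab}$ all take values in discrete groups on coproducts of $X$ and $\Perp$ agrees with $\PerpStrict$ on such functors.

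First I would note that $G'(Y)\hookrightarrow G(Y)\twoheadrightarrow G^{ab}(Y)$ is a short exact sequence of discrete groups for every coproduct $Y=\bigvee^k X$, so it is in particular a cofiber sequence of functors on coproducts of $X$. Applying Lemma~\ref{lem:PerpStrict-preserves-fib-cofib} yields a cofiber sequence
\[
\PerpStrict_{n+1} G'(X)\;\xrightarrow{\iota_*}\;\PerpStrict_{n+1} G(X)\;\longrightarrow\;\PerpStrict_{n+1} G^{ab}(X),
\]
so $\PerpStrict_{n+1} G^{ab}(X)$ is identified with the cokernel of $\iota_*$, where $\iota\colon G'\hookrightarrow G$ is the inclusion. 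The entire result thus reduces to the claim that $\iota_*$ is surjective.

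To prove surjectivity of $\iota_*$, I would factor the augmentation $\epsilon\colon\PerpStrict_{n+1} G\to G$ through its image as $\epsilon=\iota\circ\bar\epsilon$, where $\bar\epsilon\colon\PerpStrict_{n+1} G\twoheadrightarrow G'$ is the corestriction (surjective by the very definition of $G'$ as the image of $\epsilon$). Then Corollary~\ref{cor:perpstrict-preserves-surj} makes $\PerpStrict_{n+1}\bar\epsilon$ a surjection, so
\[
\PerpStrict_{n+1}\epsilon \;=\; \iota_*\circ \PerpStrict_{n+1}\bar\epsilon
\;\colon\; \PerpStrict_{n+1}\PerpStrict_{n+1} G(X)\longrightarrow \PerpStrict_{n+1} G(X).
\]
The cotriple identity $(\Perp_{n+1}\epsilon)\circ\delta=1$ from Theorem~\ref{thm:perp-is-cotriple} exhibits $\Perp_{n+1}\epsilon$ (hence, after identifying $\PerpStrict$ with $\Perp$ via Lemma~\ref{lem:Perp-F-simeq-Perpstrict-F}, also $\PerpStrict_{n+1}\epsilon$) as a split surjection. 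Since the displayed composite is surjective and factors through $\iota_*$, the map $\iota_*$ itself must be surjective.

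Combining these pieces, $\PerpStrict_{n+1} G^{ab}(X)=\coker(\iota_*)=0$, and by Lemma~\ref{lem:Perp-F-simeq-Perpstrict-F} we conclude $\Perp_{n+1} G^{ab}(X)\simeq 0$. The only real subtlety is the interplay between $\PerpStrict$ and $\Perp$: I must make sure the cotriple section $\delta$ supplied by Theorem~\ref{thm:perp-is-cotriple} actually descends to exhibit surjectivity for the strict construction, but this is automatic once one knows both constructions coincide on discrete-group-valued functors, which is exactly Lemma~\ref{lem:Perp-F-simeq-Perpstrict-F}.
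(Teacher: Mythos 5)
Your proposal is correct and follows essentially the same route as the paper: reduce to $\PerpStrict$ via Lemma~\ref{lem:Perp-F-simeq-Perpstrict-F} and Corollary~\ref{cor:Perpstrict-Gab}, apply Lemma~\ref{lem:PerpStrict-preserves-fib-cofib} to the cofiber sequence $G'\to G\to G^{ab}$, and deduce surjectivity of $\PerpStrict G'\to\PerpStrict G$ from the factorization of $\epsilon$ through $G'$ together with the split surjectivity of $(\PerpStrict)^2 G\to\PerpStrict G$ coming from the diagonal $\delta$. The extra appeal to Corollary~\ref{cor:perpstrict-preserves-surj} for $\PerpStrict\bar\epsilon$ is harmless but unnecessary, since surjectivity of the composite already forces surjectivity of the last factor.
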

\begin{proof}
By Lemma~\ref{lem:Perp-F-simeq-Perpstrict-F},  $\Perp G^{ab} \simeq \PerpStrict 
G^{ab}$, so it suffices to show $\PerpStrict G^{ab} = 0$. Recall from
Corollary~\ref{cor:Perpstrict-Gab} that we can build $G'$ and $G^{ab}$ using
$\PerpStrict$ instead of $\Perp$.

The map $\epsilon^{\text{strict}}: \PerpStrict G \rightarrow G$
factors through $G'(X)$ since the following diagram commutes and
$G'(X)$ is the image of $\epsilon$ in $G(X)$.
$$\xymatrix{
{\PerpStrict G(X)}
\ar[r]
\ar[rd]_{\epsilon^{\text{strict}}}
&
\Perp G(X)
\ar[d]^{\epsilon}
\\
&
G(X)
}$$
That gives us the following factorization of
$\epsilon^{\text{strict}}$:
$$\xymatrix{
\PerpStrict G(X)
\ar[dr]^{\epsilon^{\text{strict}}}
\ar[d]
&
\\
G'(X)
\ar[r]
&
G(X)
}$$


Applying $\PerpStrict$ to this factorization, we have
the factorization:
$$ \xymatrix{ 
\left(\PerpStrict\right)^2 G
\ar[d]
\ar[dr]
&
\\
\PerpStrict G'
\ar[r]
&
\PerpStrict G
}
$$
The map $\left(\PerpStrict\right)^2 G \rightarrow \PerpStrict G$ is
surjective, since it has a 
section map $\delta$ (from the diagonal). Therefore, the map $\PerpStrict G'
\rightarrow \PerpStrict G$ must be surjective. 

From Lemma~\ref{lem:PerpStrict-preserves-fib-cofib},
applying $\PerpStrict$ to the cofiber sequence
\eqref{eq:ses-Gprime-G-Gab} results in the short exact sequence 
$$ \PerpStrict G' \rightarrow 
\PerpStrict G
\rightarrow
\PerpStrict G^{ab}
.$$
We have just shown that the first map is surjective, so the cofiber
$\PerpStrict G^{ab}$ is zero.
%
\end{proof}


\subsection{If $m<n$, Then $P_m^d \realization{\Perp_n^{*+1} F} \simeq 0$} 

This section establishes the relatively easy fact that for $\Perp_n$, 
the part of the Goodwillie tower below degree $n$ is trivial.

\begin{lemma}
\label{lem:Pnd-perp-contractible}
  Let $R(X_1,\ldots,X_n) =  \realization{cr_n \left( \Perp_{n}^{*}
      F\right) (X_1, \ldots, X_n)}$ be a functor of $n$ variables. 
  Define the diagonal of such a functor to be the functor of one
  variable given by
  $(\diag R)(X) =
  R(X,\ldots,X)$.
  Then $P_m^d \left( \diag R \right) (X) \simeq 0$ for $0\le m<n$.
\end{lemma}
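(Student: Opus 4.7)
The strategy is to exploit the multi-reducedness of $R$ and transfer the standard Goodwillie-calculus vanishing principle to the discrete/additive setting via the left Kan extension.

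First I would observe that each simplicial level of $R$, namely $cr_n(\Perp_n^k F)(X_1,\ldots,X_n)$, is an $n$-fold cross effect, and hence is contractible whenever any single input $X_i$ is a point. Since realization preserves levelwise weak equivalences, $R(X_1,\ldots,X_n)\simeq \basept$ whenever some $X_i\simeq\basept$. In other words, $R$ is ``$n$-multi-reduced'': it vanishes in each of its $n$ slots separately.

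Next, I would unravel the definition $P_m^d(\diag R)(X)=P_m(L(\diag R)_X)(S^0)$ and reduce the problem to showing $P_m L(\diag R)_X\simeq \basept$. Since the left Kan extension is taken over the subcategory of finite discrete sets (objects of dimension $0$), Proposition~\ref{prop:left-kan-commutes-with-some-realizations} says $L(\diag R)_X$ commutes with realizations of all simplicial sets, so
\begin{equation*}
L(\diag R)_X(Y) \;\simeq\; \realization{[k]\mapsto \diag R(X\wedge Y_k)}
\end{equation*}
for any $Y=\realization{Y_\cdot}$. Combining this with the multi-reducedness of $R$, one sees that at each simplicial level $\diag R(X\wedge Y_k)$ is the diagonal of an $n$-multi-reduced functor evaluated on a wedge of copies of $X$, and by the standard computation of the iterated cross effect of a diagonal of a multi-reduced functor the cross effects $cr_m L(\diag R)_X$ are contractible (on coproducts of $X$) for $m<n$. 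Under Hypothesis~\ref{hypothesis-1} or \ref{hypothesis-2}, this contractibility plus the control on $\pi_0$ from \S\ref{sec:fiber-contractible-Cartesian} promotes to genuine Cartesianness of the defining cubes, so $L(\diag R)_X$ is $m$-additive with values in the basepoint for each $m<n$; that is, $D_j L(\diag R)_X\simeq \basept$ for all $1\le j\le m$. Combined with $L(\diag R)_X(\basept)\simeq \basept$ (so $P_0\simeq \basept$), an induction up the Goodwillie tower using Corollary~\ref{cor:Pnd-group-connected-base} (surjectivity on $\pi_0$ holds because we sit under contractible targets) yields $P_m L(\diag R)_X\simeq \basept$, and evaluation at $S^0$ finishes the proof.

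The main obstacle I expect is the combinatorial verification that the cross effects $cr_m$ of $\diag R$ on coproducts of $X$ really vanish for $m<n$: this uses the ``multi-reduced in $n$ slots'' structure together with the iterative cross-effect decomposition of a functor on a wedge, and must be done with enough care to preserve the inherited hypothesis (connected values or group values on coproducts) for the iterates $\Perp_n^{k+1}F$, so that Lemmas~\ref{lem:connected-cartesian} and~\ref{lem:groups-cartesian} can convert each vanishing total fiber into an honest Cartesian cube and thereby into genuine additivity.
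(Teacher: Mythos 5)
Your reduction is sound and matches the first half of the paper's argument: $P_m^d(\diag R)(X) = P_m(L(\diag R)_X)(S^0)$, the left Kan extension over finite sets commutes with all realizations, and $L(\diag R)_X$ is identified with the diagonal of the $n$-variable functor $L^{(n)}(R_X)$, which is still multi-reduced because the homotopy-invariant Kan extension agrees with $R_X$ on finite sets and cross effects vanish when any input is contractible. The gap is in the step you use to conclude the vanishing. You claim that the cross effects $cr_m$ of $\diag R$ (equivalently of $L(\diag R)_X$ on coproducts of $X$) are contractible for $m<n$. They are not: the cross effects of the diagonal of an $n$-multi-reduced functor vanish \emph{above} degree $n$, not below it. Already for $n=2$ and $H(X_1,X_2)=X_1\wedge X_2$ one has $\diag H(X)=X\wedge X$ and $cr_1(\diag H)(X)=\hofib\left(X\wedge X\rightarrow \basept\right)=X\wedge X\not\simeq\basept$; more drastically, vanishing of $cr_1$ would force $\diag R$ to be constant, which is false since $\realization{\Perp_{n}^{*+1}F(X)}$ is generally nontrivial. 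So the chain ``$cr_m$ contractible $\Rightarrow$ cubes Cartesian $\Rightarrow$ $D_j\simeq\basept$ for $j\le m$'' never gets started, and the subsequent induction up the tower has nothing to induct on.

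The correct input, and the one the paper uses, is Goodwillie's Lemma~2.1 of \cite{Cal3}: if $H$ is an $n$-variable functor that is contractible whenever any single input is contractible, then $P_m(\diag H)\simeq 0$ for $0\le m<n$. The mechanism there is a connectivity estimate on the $T_m$ construction --- each application of $T_m$ to $\diag H$ raises connectivity by roughly $n-m>0$, so the colimit defining $P_m$ is contractible --- and not a cross-effect computation. After your (correct) identification of $P_m^d(\diag R)(X)$ with $P_m\diag\left[L^{(n)}(R_X)\right](S^0)$ and the multi-reducedness of $L^{(n)}(R_X)$, that lemma finishes the proof immediately; no appeal to Hypothesis~\ref{hypothesis-1} or~\ref{hypothesis-2}, to the Cartesianness results of \S\ref{sec:fiber-contractible-Cartesian}, or to Corollary~\ref{cor:Pnd-group-connected-base} is needed.
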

\begin{proof}
  Goodwillie's Lemma~2.1 \cite[Lemma~2.1]{Cal3} shows that if
  $H(X_1,\ldots, X_n)$ is a functor of $n$ variables that is
  contractible whenever some $X_i$ is contractible (this is called a
  ``multi-reduced'' functor), then $P_m (\diag
  H) \simeq 0$ for $0\le m<n$. 



  Recall that we write $L$ for the left Kan extension of a
  functor along the full subcategory of spaces generated by coproducts
  of zero dimensional spheres: $\bigvee^k S^0$. By analogy with the
  notation $F_X(Y) := F(X \wedge Y)$, let us define $R_X(Y_1, \ldots,
  Y_n) := R(X \wedge Y_1, \ldots, X\wedge Y_n)$.

  To use Goodwillie's lemma, we need to show that the computation of
  $P_m^d (\diag R)$ results in computing $P_m$ of the diagonal of a
  multi-reduced functor. 
  This is an easy computation:
  \begin{align*}
    P_m^d \left( \diag R \right) (X)
    &=
    P_m L [ ( \diag R )_X ] (S^0)
    \\
    &=
    P_m L [ \diag ( R_X ) ] (S^0)
    \\
    &=
    P_m \diag [ L^{(n)} ( R_X ) ] (S^0),
  \end{align*}
  where $L^{(n)} R$ indicates $L$ applied to each of the $n$ inputs to
  $R$ separately. It remains to check that $L^{(n)} R_X$ is
  contractible when any of its inputs is contractible. Since we use
  the homotopy invariant left Kan extension, if $Y$ is contractible,
  then $L^{(n)}R_X(Y, \ldots) \simeq L^{(n)}R_X(0,\ldots)$, and the latter is
  equivalent to $L^{(n-1)}R_X(0,\ldots)$ (removing the $L$ in the
  first variable), because the Kan extension is
  equivalent to the original functor on finite sets. Now 
  all $n^{\text{th}}$ cross effects have the property that they are 
  contractible if any of their inputs are contractible, so we are done.
\end{proof}

\subsection{The Functor $A_F$ Has No $n+1$ Cross Effect}

Having created the functor $A_F$ to be ``$F$ with the 
cross effect killed'', we now need to establish that $\Perp A_F \simeq
0$. The main issue is the commuting of the $\Perp$ and the
realization. 

The essence of the following lemma is that the cubes used to construct
$\Perp F$ are nice enough that we can compute the cross effects of
some particular simplicial functors levelwise. 
\begin{lemma}
\label{lem:perp-commutes-with-realization}
  Let $\Perp$ denote $\Perp_n$ for any fixed $n>0$. 
  If $F$
  satisfies Hypothesis~\ref{hypothesis-1} (connected values) or
  Hypothesis~\ref{hypothesis-2} (group values) on coproducts of $X$,
  then 
$$
\Perp  
\realization{\Perp^{*+1} F(X) }
\simeq 
\realization{\Perp^{*+2} F(X) }
.
$$
\end{lemma}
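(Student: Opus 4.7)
The plan is to express $\Perp$ (short for $\Perp_n$, with $n>0$ fixed) as the iterated homotopy fiber of a cube and then systematically interchange those homotopy fibers with the geometric realization in the simplicial direction. Unpacking the definition, $\Perp G (X)$ is the total homotopy fiber of the cube $U \mapsto G(\bigvee_{u \notin U} X)$ indexed by $U \subset \mathbf{n}$; since realization of a simplicial space is computed levelwise, evaluating $\realization{\Perp^{*+1} F}$ at a fixed coproduct of $X$ agrees with realizing $[k] \mapsto \Perp^{k+1} F(\bigvee_{u \notin U} X)$. So the lemma reduces to interchanging the total homotopy fiber over $U$ with the realization over $[k]$ for the bi-indexed object $(U,[k]) \mapsto \Perp^{k+1} F(\bigvee_{u \notin U} X)$.

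To do this I would compute the total fiber by iterating ordinary homotopy fibers one cube-direction at a time, reducing the question to repeatedly commuting a single homotopy fiber of simplicial spaces with realization. At each step I would apply Waldhausen's lemma (\ref{lem:Waldhausen-fibration-lemma}) under Hypothesis~\ref{hypothesis-1}, or the Bousfield--Friedlander theorem (\ref{thm:Bousfield-Friedlander}) under Hypothesis~\ref{hypothesis-2}. The essential structural input that makes the hypotheses of these results verifiable is that the defining cube of $\Perp$ has compatible sections to all of its structure maps (Lemma~\ref{lem:crn-cube-has-sections}). These sections are natural, so they survive iterated application of $\Perp$ and pass through realization; at every stage in the fiber iteration the structure maps under consideration are therefore levelwise split surjections. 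In the connected-values case this renders the relevant bases levelwise connected (the fibers split off, so one may localize to the basepoint component), so Waldhausen's lemma applies. In the group-values case the sections make the structure maps into surjective maps of simplicial groups, hence levelwise fibrations, and Corollary~\ref{cor:grouplike-H-spaces-pi-star} supplies the $\pi_*$-Kan condition, so Bousfield--Friedlander applies. Once the fibers and the realization are commuted, level $[k]$ of the resulting simplicial space is $\Perp \cdot \Perp^{k+1} F(X) = \Perp^{k+2} F(X)$, which yields the claimed equivalence.

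The main obstacle will be the bookkeeping for the iterated fiber computation in the group-values case: the $\pi_*$-Kan condition and the surjectivity-on-$\pi_0$ hypothesis of Bousfield--Friedlander have to be re-verified after each one-dimensional fiber is taken, because we are peeling off cube directions one at a time rather than all at once. Lemma~\ref{lem:crn-cube-has-sections} is exactly what makes this iteration go through, since the compatible sections on the ambient cube restrict to compatible sections on each successive total fiber, keeping the structure maps surjective and the levelwise values group-like at every stage.
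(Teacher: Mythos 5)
Your proposal is correct and follows essentially the same route as the paper: compute the total fiber of the $\Perp$-cube one direction at a time, invoke Waldhausen's lemma under Hypothesis~\ref{hypothesis-1} and Bousfield--Friedlander under Hypothesis~\ref{hypothesis-2}, with the compatible sections of Lemma~\ref{lem:crn-cube-has-sections} supplying the surjectivity and connectivity needed at each stage. Your splitting argument for connectedness of the intermediate bases is exactly the content of the paper's appeal to Lemma~\ref{lem:perp-preserves-connectivity}.
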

\begin{proof}
  If $F$ satisfies Hypothesis~\ref{hypothesis-1}, then $\Perp F$ also
  satisfies Hypothesis~\ref{hypothesis-1}, since $\Perp$ preserves the
  connectivity of the natural transformation from $F$ to the
  constant zero functor, by
  Lemma~\ref{lem:perp-preserves-connectivity}. Therefore we can
  compute the fibers in the $\Perp$-cube levelwise using Waldhausen's
  Lemma (Lemma~\ref{lem:Waldhausen-fibration-lemma}).
  If $F$ satisfies Hypothesis~\ref{hypothesis-2}, then we will use
  Theorem~\ref{thm:Bousfield-Friedlander} (Bousfield-Friedlander) 
  to produce the same
  result. Recall that we use the term ``$\Perp$-cube'' to denote the
  cube whose total (homotopy) fiber is $\Perp F$.
Since $F$ is a functor to groups, so is
  $\realization{\Perp^{*+1} F(X) }$, so each corner of the
  $\Perp$-cube satisfies the $\pi_*$-Kan condition. 
  Furthermore, all of the maps in the $\Perp$-cube have compatible
  sections (Lemma~\ref{lem:crn-cube-has-sections}), 
  so at each stage of taking iterated fibers all of the
  structure maps have sections. This gives us surjective maps of
  simplicial groups, so the induced maps on $\pi_0$ are
  fibrations. These two conditions are enough to apply
  Theorem~\ref{thm:Bousfield-Friedlander} to compute the fibers levelwise.
\end{proof}

\begin{lemma}
\label{lem:extra-perp-contract}
 $$
\realization{\Perp^{*+2} F(X)}
\simeq
\Perp F(X)
$$
\end{lemma}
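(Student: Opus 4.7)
The plan is to apply the classical extra degeneracy (contracting homotopy) argument to the simplicial object $[k]\mapsto \Perp^{k+2}F(X)$ augmented over $\Perp F(X)$. Since $\Perp$ is a cotriple (Theorem~\ref{thm:perp-is-cotriple}), it carries natural transformations $\delta:\Perp\to\Perp^2$ and $\epsilon:\Perp\to \text{Id}$ satisfying the cotriple axioms, and the simplicial object in question is precisely what one obtains by prepending one extra $\Perp$ to the standard cotriple resolution $[k]\mapsto \Perp^{k+1}F(X)$.

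The concrete approach is the following. The face maps $d_i:\Perp^{k+2}F(X)\to\Perp^{k+1}F(X)$ of the simplicial object are $d_i=\Perp^{i+1}\epsilon\,\Perp^{k-i}$ for $0\le i\le k$, coming from the face maps of the resolution of $F$; the degeneracies are $s_j=\Perp^{j+1}\delta\,\Perp^{k-j}$. The augmentation to $\Perp F(X)$ is the remaining face operator $\Perp\epsilon\,\Perp^{k+1}$ acting in the outermost slot. I will define the extra degeneracy
\[
s_{-1}=\delta_{\Perp^{k+1}F(X)}:\Perp^{k+2}F(X)\longrightarrow \Perp^{k+3}F(X)
\]
(i.e.\ $\delta$ applied to the outermost $\Perp$). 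The cotriple identities in Section~\ref{sec:cotriples} — specifically $\epsilon\Perp\circ\delta=\text{id}$, the coassociativity $\Perp\delta\circ\delta=\delta\Perp\circ\delta$, and naturality of $\epsilon$ and $\delta$ — yield exactly the simplicial identities needed for $s_{-1}$ to be an extra degeneracy for the augmented simplicial object.

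From the extra degeneracy one constructs a simplicial contracting homotopy of the augmented simplicial object; after taking the (fat) realization this gives an honest homotopy equivalence $\realization{\Perp^{*+2}F(X)}\simeq \Perp F(X)$, with the augmentation $\Perp\epsilon$ as the equivalence. Since the realization is a homotopy colimit and the simplicial object is levelwise good (it lives in spaces built from $F$ applied to finite coproducts, the same setting used throughout this chapter), there is no subtlety with strict versus fat realization.

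The main (in fact only) non-routine point will be verifying the simplicial identities for $s_{-1}$ against the face and degeneracy operators indexed by positions $1,\dots,k+1$ of $\Perp^{k+2}$. These identities are a purely formal consequence of the cotriple axioms recorded in Section~\ref{sec:cotriples}, and amount to observing that applying $\delta$ or $\epsilon$ in the outermost slot commutes past any $\delta$ or $\epsilon$ applied in an inner slot, and that $\Perp\epsilon\circ\delta=\text{id}$ collapses the extra degeneracy against the augmentation. Once these identities are in hand, the conclusion is immediate.
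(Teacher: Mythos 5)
Your argument is correct and is exactly the paper's: the paper's proof simply notes that $\delta$ exhibits $\Perp F(X)$ as the augmentation of the simplicial object $\Perp^{*+2}F(X)$ and cites the standard extra-degeneracy result for cotriple resolutions (Weibel, Exercise~8.4.6), which is precisely the contraction you spell out. The only nit is your formula for the augmentation: with your face maps $d_i=\Perp^{i+1}\epsilon\Perp^{k-i}$ (which never touch the outermost slot), the augmentation coequalizing $d_0,d_1$ is $\Perp\epsilon_F:\Perp^2F(X)\to\Perp F(X)$, the counit one slot in rather than in the outermost slot, but this does not affect the argument.
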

\begin{proof}
  The degeneracy map $\delta: \Perp F\rightarrow \Perp^2 F$ shows that
  $\Perp F$ is the augmentation of $\realization{\Perp^{*+2} F(X)}$,
  so this lemma follows from 
  \cite[Exercise~8.4.6, p.~275]{Weibel:homological-algebra}.
\end{proof}

\begin{lemma}
\label{lem:perp-AF-zero}
Let $F$ be a functor satisfying Hypothesis~\ref{hypothesis-1} or
Hypothesis~\ref{hypothesis-2} on coproducts of $X$, let $\Perp$
denote $\Perp_n$ for some $n$, and let $A_F$ be the functor given in
Definition~\ref{def:AF}.
Then $A_F$ satisfies $\Perp A_F(X) \simeq 0$. 
\end{lemma}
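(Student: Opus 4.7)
The plan is to apply $\Perp = \Perp_{n+1}$ to the defining fibration sequence \eqref{eq:AF-def} for $A_F$, obtaining a new fibration sequence, and then to show that the map on the right-hand two terms is an equivalence, so that the homotopy fiber $\Perp A_F(X)$ is contractible.

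First I would observe that by Definition~\ref{def:AF}, $A_F(X)$ is defined to be the homotopy fiber in the sequence
$$A_F(X) \to \realization{\Perp^{*+1} F(X)} \xrightarrow{\epsilon} F(X).$$
Since $\Perp$ is built entirely from homotopy inverse limits (homotopy fibers over the punctured cube $\Power_0(\mathbf{n+1})$), and homotopy limits commute with homotopy limits, applying $\Perp$ yields another homotopy fiber sequence
$$\Perp A_F(X) \to \Perp\realization{\Perp^{*+1} F(X)} \xrightarrow{\Perp\epsilon} \Perp F(X).$$

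Next I would identify the two right-hand terms using the tools already built. By Lemma~\ref{lem:perp-commutes-with-realization}, which applies because $F$ satisfies Hypothesis~\ref{hypothesis-1} or \ref{hypothesis-2} on coproducts of $X$, we have
$$\Perp\realization{\Perp^{*+1} F(X)} \simeq \realization{\Perp^{*+2} F(X)},$$
and under this identification the map $\Perp\epsilon$ is precisely the augmentation of the simplicial object $\Perp^{*+2} F(X)$ to $\Perp F(X)$ induced by $\Perp\epsilon: \Perp^2 F \to \Perp F$ in simplicial degree $0$. Then Lemma~\ref{lem:extra-perp-contract} (the extra degeneracy argument, with the extra degeneracy supplied by $\delta: \Perp^{k+1} F \to \Perp^{k+2} F$) shows that this augmentation is an equivalence
$$\realization{\Perp^{*+2} F(X)} \xrightarrow{\simeq} \Perp F(X).$$

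Combining these two identifications, the map $\Perp\epsilon: \Perp\realization{\Perp^{*+1} F(X)} \to \Perp F(X)$ is a weak equivalence, so its homotopy fiber $\Perp A_F(X)$ is contractible. The one point requiring care — and what I expect to be the only subtle point — is verifying that the equivalence supplied by Lemma~\ref{lem:perp-commutes-with-realization} carries $\Perp\epsilon$ to the extra-degeneracy augmentation of Lemma~\ref{lem:extra-perp-contract}, rather than to some other map; this is a naturality check that follows from tracing through the levelwise definition of $\Perp$ commuting with realization in the proof of Lemma~\ref{lem:perp-commutes-with-realization}.
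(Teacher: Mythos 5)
Your proposal is correct and follows essentially the same route as the paper's proof: commute $\Perp$ (a homotopy inverse limit construction) past the homotopy fiber defining $A_F$, identify $\Perp\realization{\Perp^{*+1}F(X)}$ with $\realization{\Perp^{*+2}F(X)}$ via Lemma~\ref{lem:perp-commutes-with-realization}, and collapse the latter to $\Perp F(X)$ by the extra-degeneracy argument of Lemma~\ref{lem:extra-perp-contract}. The naturality point you flag at the end is indeed the only subtlety, and the paper handles it implicitly in exactly the way you describe.
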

\begin{proof}
Taking cross effects is a homotopy inverse limit construction, and
homotopy inverse limits commmute, so
\begin{align*}
\Perp A_F(X) 
&= 
\Perp \hofib \left(
\realization{\Perp^{*+1} F(X)}
\rightarrow 
F(X)
\right)
\\
&\simeq 
\hofib \left(
\Perp \realization{\Perp^{*+1} F(X)}
\rightarrow 
\Perp F(X)
\right) .
\\ \intertext{Which, by
  Lemma~\ref{lem:perp-commutes-with-realization}, is}
&\simeq
\hofib \left(
\realization{\Perp \Perp^{*+1} F(X)}
\rightarrow 
\Perp F(X)
\right),
\\
\intertext{and by Lemma~\ref{lem:extra-perp-contract}, this is}
&\simeq
\hofib \left(
\Perp F(X)
\rightarrow 
\Perp F(X)
\right)
\\
&\simeq 
0
.
\end{align*}
\end{proof}

\subsection{$P_n^d$ Preserves $A_F$ Fibration}

This section establishes that $P_n^d$ actually produces a fibration
when applied to the fibration defining $A_F$. The case of $F$ taking
values in discrete groups is the most important. Here we actually only
show that this is true for $F$ taking values in discrete groups or
connected spaces; that is all that is needed to establish the main
result that we want.

The results in this section also contain a statement about the map
from $F \rightarrow P_n^d F$, because in the case of $F$ taking
values in discrete groups, the proof that this map is surjective on
$\pi_0$ uses the same technical details that the proof that we get a 
fibration.


\begin{proposition}
\label{prop:perp-f-nonzero-fib-conn}
  If $F$ satisfies Hypothesis~\ref{hypothesis-1} (connected values) on
  coproducts of $X$,  then the following is a quasifibration:
\begin{equation*}
P_n^d A_F(X) 
\rightarrow 
P_n^d \left( \realization{\Perp_{n+1}^{*+1} F(X)}  \right)
\rightarrow 
P_n^d F(X)
,
\end{equation*}
and furthermore the map  $F(X)\rightarrow P_n^d F(X)$ is (trivially)
surjective on $\pi_0$. 
\end{proposition}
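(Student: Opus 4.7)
The plan is to read this off from machinery already in place. By the definition of $A_F$ (Definition \ref{def:AF}), we have a fibration sequence
\[
A_F(Y) \to \realization{\Perp_{n+1}^{*+1} F(Y)} \to F(Y)
\]
at every pointed space $Y$, and therefore in particular at every finite coproduct $Y = \bigvee X$ of $X$. The base $F$ takes connected values on such coproducts by Hypothesis \ref{hypothesis-1}, which is precisely condition (1) of Corollary \ref{cor:Pnd-group-connected-base}. Invoking that corollary with $A = A_F$, $B = \realization{\Perp_{n+1}^{*+1} F}$, and $C = F$ immediately produces the asserted quasifibration after applying $P_n^d$, and moreover gives surjectivity on $\pi_0$ of the rightmost map.

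For the separate claim that $F(X) \to P_n^d F(X)$ is surjective on $\pi_0$: since $F$ has connected values on coproducts of $X$, we have $\pi_0 F(X) = \basept$, so surjectivity is equivalent to connectedness of $P_n^d F(X)$. To see the latter, apply Theorem \ref{thm:Pnd-preserves-connectivity} to the natural transformation $0 \to F$ from the constant zero functor; this transformation is $0$-connected on every coproduct of $X$ precisely because $F$ takes connected values there, and both source and target satisfy Hypothesis \ref{hypothesis-1}. The theorem then gives that $P_n^d 0 \to P_n^d F$ is $0$-connected at $X$, and since $P_n^d 0 \simeq 0$ we conclude that $P_n^d F(X)$ is connected.

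There is no real obstacle; the entire proposition is bookkeeping, combining preservation of connected-base fibrations under $P_n^d$ (Corollary \ref{cor:Pnd-group-connected-base}) with preservation of connectivity of natural transformations (Theorem \ref{thm:Pnd-preserves-connectivity}). The word ``trivially'' in the statement signals exactly this: once the hypothesis on $F$ forces $F(X)$ to be connected, the $\pi_0$-surjectivity assertion reduces to the purely formal observation that the target is also connected, which is immediate from the preservation-of-connectivity result. The analogous group-valued case (treated by Proposition \ref{prop:perp-f-nonzero-fib-discrete}) is the one that requires genuine work with $G^{ab}$ and strict cross effects, and is handled separately.
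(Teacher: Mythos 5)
Your proposal is correct and follows essentially the same route as the paper: the quasifibration is obtained by applying Corollary~\ref{cor:Pnd-group-connected-base} (connected-base case) to the defining fibration of $A_F$, and the $\pi_0$-surjectivity is reduced to connectedness of $P_n^d F(X)$ via Theorem~\ref{thm:Pnd-preserves-connectivity} applied to $0 \rightarrow F$. No gaps.
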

\begin{proof}
If $F$ has connected values on coproducts of $X$, then
$$ A_F(X) 
\rightarrow 
\realization{\Perp_{n+1}^{*+1} F(X)}
\rightarrow 
F(X)
$$ 
is a fibration over a connected base. Therefore, by 
Corollary~\ref{cor:Pnd-group-connected-base}, applying $P_n^d$ yields a
fibration, so 
Equation~\eqref{eq:Pnd-still-fibration} is a fibration.

To establish surjectivity of the map $\pi_0 F(X) \rightarrow \pi_0
P_n^d F(X)$, it suffices to show $\pi_0 P_n^d
F(X) = 0$. Consider the natural transformation $\eta$ from the zero functor
$0$ to $F$. Since $F$ has connected values on coproducts of $X$,
the map $\eta: 0 \rightarrow F$ is $0$-connected on coproducts
of $X$. Applying Theorem~\ref{thm:Pnd-preserves-connectivity} shows
that $0 \simeq P_n^d(0) \rightarrow P_n^d F(X)$ is $0$-connected
as well. Hence $\pi_0 P_n^d F(X) = 0$.
\end{proof}

To remind the reader that the functor takes values in
discrete groups in the next proposition, we use the letter $G$ (for
group) to denote the functor, instead of the usual $F$.

\begin{proposition}
\label{prop:perp-f-nonzero-fib-discrete}
  If $G$ takes values in discrete groups on coproducts of $X$ (so in
  particular $G$ satisfies Hypothesis~\ref{hypothesis-2}), 
  then the following is a quasifibration:
\begin{equation}
\label{eq:Pnd-still-fibration}
P_n^d A_G(X) 
\rightarrow 
P_n^d \left( \realization{\Perp_{n+1}^{*+1} G(X)}  \right)
\rightarrow 
P_n^d G(X)
.
\end{equation}
\end{proposition}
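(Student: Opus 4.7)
My strategy is to adapt the argument of Proposition~\ref{prop:perp-f-nonzero-fib-conn} to the discrete-group setting, circumventing the failure of the augmentation $\epsilon: \realization{\Perp_{n+1}^{*+1} G(X)} \to G(X)$ to be surjective on $\pi_0$. The image of $\epsilon$ is precisely the subgroup $G'(X)$ from Definition~\ref{def:Gprime-Gab}, and the resulting obstruction can be controlled via the abelianization quotient $G^{ab}$, which by Lemma~\ref{lem:perp-Gab-0} satisfies $\Perp_{n+1} G^{ab}(X) \simeq 0$ and hence by Proposition~\ref{prop:perp-F-zero} satisfies $G^{ab}(X) \simeq P_n^d G^{ab}(X)$.

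First I would replace the target $G$ by $G'$. Since $\realization{\Perp_{n+1}^{*+1} G}$ is the realization of a simplicial object in topological groups, the factored map $\realization{\Perp_{n+1}^{*+1} G} \twoheadrightarrow G'$ is a surjective homomorphism of group-valued functors, and the homotopy fiber $A_G$ is unchanged because the inclusion $G' \hookrightarrow G$ of discrete groups has a contractible hofib over the basepoint. Corollary~\ref{cor:Pnd-group-connected-base}(2) applied to the fibration $A_G \to \realization{\Perp_{n+1}^{*+1} G} \to G'$ then yields a fibration
$$P_n^d A_G(X) \to P_n^d \realization{\Perp_{n+1}^{*+1} G(X)} \to P_n^d G'(X).$$
Applying the same corollary to the short exact sequence $G' \to G \to G^{ab}$ (a surjective homomorphism on $\pi_0$ between group-valued functors) produces the fibration $P_n^d G' \to P_n^d G \to P_n^d G^{ab} \simeq G^{ab}$; since $G^{ab}$ is discrete, the long exact sequence forces $\pi_k P_n^d G' \to \pi_k P_n^d G$ to be an isomorphism for $k \ge 1$ and injective on $\pi_0$.

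To conclude, I would invoke Lemma~\ref{lem:Pnd-perp-contractible} with $m = n < n+1$ to obtain $P_n^d \realization{\Perp_{n+1}^{*+1} G(X)} \simeq 0$, and then compare homotopy groups. Writing $H := \hofib(P_n^d \realization{\Perp_{n+1}^{*+1} G} \to P_n^d G)$, the long exact sequence yields $\pi_k H \cong \pi_{k+1} P_n^d G$, and similarly the fibration from the previous paragraph gives $\pi_k P_n^d A_G \cong \pi_{k+1} P_n^d G'$. The identifications $\pi_{k+1} P_n^d G' \cong \pi_{k+1} P_n^d G$ (valid for $k \ge 0$) then imply that the natural map $P_n^d A_G \to H$ is a weak equivalence, which is exactly the assertion that
$$P_n^d A_G(X) \to P_n^d \realization{\Perp_{n+1}^{*+1} G(X)} \to P_n^d G(X)$$
is a quasifibration. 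The main obstacle is the $\pi_0$ bookkeeping: because $\epsilon$ lands in $G'$ rather than in all of $G$, Corollary~\ref{cor:Pnd-group-connected-base} does not apply directly, making the detour through $G^{ab}$ and its vanishing $(n+1)^{\text{st}}$ cross effect essential for recovering the missing fibration after passing to $P_n^d$.
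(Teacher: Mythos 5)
Your proposal is correct and follows essentially the same route as the paper: factor $\epsilon$ through $G'$, apply Corollary~\ref{cor:Pnd-group-connected-base} to both $A_G \to \realization{\Perp_{n+1}^{*+1}G} \to G'$ and $G' \to G \to G^{ab}$, and use Lemma~\ref{lem:perp-Gab-0} with Proposition~\ref{prop:perp-F-zero} to see that $P_n^d G^{ab} \simeq G^{ab}$ is discrete, so that $P_n^d G' \to P_n^d G$ is an equivalence onto path components and the fibration over $P_n^d G'$ transfers to one over $P_n^d G$. The only cosmetic difference is that you justify the base change via Lemma~\ref{lem:Pnd-perp-contractible} and a long-exact-sequence comparison, whereas the paper observes directly that a map of bases that is injective on $\pi_0$ and an isomorphism on higher homotopy does not change the homotopy fiber over the basepoint.
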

\begin{proof}
Replacing the base $G$ in the definition of $A_G$
(Equation~\eqref{eq:AF-def}) with $G'$ from 
Definition~\ref{def:Gprime-Gab}, 
we have the fibration sequence
\begin{equation}
\label{eq:fib-Gprime-base}
A_G(X) 
\rightarrow 
\realization{\Perp_{n+1}^{*+1} G(X)}
\rightarrow
G'(X),
\end{equation}
and this sequence is surjective on $\pi_0$. 
The hypotheses of Corollary~\ref{cor:Pnd-group-connected-base} 
are satisfied by the
sequences in \eqref{eq:ses-Gprime-G-Gab} and \eqref{eq:fib-Gprime-base},
so applying $P_n^d$ both are fibration sequences whose maps to the
base spaces are surjective on $\pi_0$:
\begin{gather}
\label{eq:Pnd-Gprime-G-Gab}
P_n^d G'(X) 
\rightarrow 
P_n^d G (X) 
\rightarrow 
P_n^d G^{ab}(X)
\\
\label{eq:Pnd-AG-seq}
P_n^d (A_G)(X) 
\rightarrow 
P_n^d (\realization{\Perp_{n+1}^{*+1} G(-)}) (X)
\rightarrow
P_n^d G' (X).
\end{gather}

The aim now is to show that \eqref{eq:Pnd-AG-seq} remains a fibration
when the base $P_n^d G'(X)$ is replaced by $P_n^d G(X)$.
From Lemma~\ref{lem:perp-Gab-0}, $\Perp_{n+1} G^{ab}(X) \simeq 0$, so
Proposition~\ref{prop:perp-F-zero} shows that $P_n^d G^{ab}(X) \simeq 
G^{ab}(X)$, which is a discrete space. Then, using the long exact
sequence on homotopy, the fibration in \eqref{eq:Pnd-Gprime-G-Gab} gives
$P_n^d G'(X) \xrightarrow{\simeq} P_n^d G(X)$ except possibly on
$\pi_0$, where the map is injective. This is
enough to show that  changing the base in
\eqref{eq:Pnd-AG-seq} from $P_n^d G'(X)$ to $P_n^d G(X)$ still yields
a fibration. 
That is, \eqref{eq:Pnd-still-fibration} is a
fibration (but perhaps not surjective on $\pi_0$).
%
%
%
\end{proof}

\begin{proposition}
\label{prop:perp-f-nonzero-pi0-control}
If $G$ takes values in discrete groups on coproducts of $X$ (so in
particular $G$ satisfies Hypothesis~\ref{hypothesis-2}), then 
$$
\pi_0 
P_n^d G(X) 
\cong \coker 
\left( 
\pi_0 
\realization{\Perp^{*+1}_{n+1} G(X)}
\rightarrow
\pi_0 G(X)
\right)
,$$
where $\coker$ denotes the cokernel in the category of groups.
\end{proposition}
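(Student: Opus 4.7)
The plan is to identify the cokernel in the statement with $G^{ab}(X)$ (Definition~\ref{def:Gprime-Gab}) and then show, using the machinery already developed in the proof of Proposition~\ref{prop:perp-f-nonzero-fib-discrete}, that $\pi_0 P_n^d G(X)$ is also $G^{ab}(X)$. For the first step, since $G(X)$ is a discrete group, $\pi_0$ of the realization $\realization{\Perp_{n+1}^{*+1} G(X)}$ is a quotient of $\pi_0 \Perp_{n+1} G(X)$, and the composite to $G(X)$ is simply $\epsilon$, whose image is $G'(X)$ by Definition~\ref{def:Gprime-Gab}. Since $G'(X)$ is a normal subgroup of $G(X)$, the cokernel computed in groups is $G(X)/G'(X) = G^{ab}(X)$.

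Next I would produce a surjection $\pi_0 P_n^d G(X) \twoheadrightarrow G^{ab}(X)$. By Lemma~\ref{lem:perp-Gab-0}, $\Perp_{n+1} G^{ab}(X) \simeq 0$, so Proposition~\ref{prop:perp-F-zero} applies to $G^{ab}$ and yields $P_n^d G^{ab}(X) \simeq G^{ab}(X)$. The short exact sequence $G' \to G \to G^{ab}$ is a fibration of group-valued functors with surjective map on $\pi_0$, so Corollary~\ref{cor:Pnd-group-connected-base} promotes it to a quasifibration $P_n^d G' \to P_n^d G \to P_n^d G^{ab} \simeq G^{ab}$ which is still surjective on $\pi_0$.

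Then I would show $\pi_0 P_n^d G'(X) = 0$. The fibration $A_G \to \realization{\Perp_{n+1}^{*+1} G} \to G'$ used in the proof of Proposition~\ref{prop:perp-f-nonzero-fib-discrete} is group-valued with surjective map on $\pi_0$, so applying Corollary~\ref{cor:Pnd-group-connected-base} once more gives a quasifibration $P_n^d A_G \to P_n^d \realization{\Perp_{n+1}^{*+1} G} \to P_n^d G'$ which is still surjective on $\pi_0$. But by Lemma~\ref{lem:Pnd-perp-contractible}, applied with the lemma's index of cross-effect variables equal to $n+1$ and with $m = n < n+1$, the middle term is contractible. Hence $\pi_0 P_n^d G'(X) = 0$.

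Finally I would combine these facts. The functor $P_n^d$ preserves group-valued functors: $L G_X$ is a realization of a simplicial group, $T_n$ is a homotopy inverse limit construction (hence preserves groups), and $P_n$ is a filtered colimit of iterates of $T_n$ (hence still a group). Thus the quasifibration $P_n^d G' \to P_n^d G \to G^{ab}$ is a quasifibration of group-valued functors, and on $\pi_0$ it becomes a short exact sequence of groups $0 = \pi_0 P_n^d G' \to \pi_0 P_n^d G \to G^{ab} \to 0$, identifying $\pi_0 P_n^d G(X) \cong G^{ab}(X)$. The main technical point to watch will be verifying that $P_n^d$ preserves the group structure and surjectivity on $\pi_0$ throughout, so that the sequence on $\pi_0$ is exact as a sequence of groups rather than merely as a sequence of pointed sets; this is what converts ``$\pi_0$ of the fiber is trivial'' into actual injectivity of $\pi_0 P_n^d G(X) \to G^{ab}(X)$.
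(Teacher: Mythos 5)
Your proposal is correct and follows essentially the same route as the paper: identify the cokernel with $G^{ab}(X)$, use Lemma~\ref{lem:perp-Gab-0} with Proposition~\ref{prop:perp-F-zero} to get $P_n^d G^{ab}\simeq G^{ab}$, kill $\pi_0 P_n^d G'$ via the $A_G$-fibration together with Lemma~\ref{lem:Pnd-perp-contractible} and surjectivity on $\pi_0$, and then read off $\pi_0 P_n^d G(X)\cong G^{ab}(X)$ from the $P_n^d$-fibration over $G^{ab}$. The paper likewise invokes the group structure (surjection of groups with trivial kernel) to settle the injectivity point you flag at the end.
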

\begin{proof}
As in the preceding
Proposition~\ref{prop:perp-f-nonzero-fib-discrete}, we have the following
fibration sequence that is surjective on $\pi_0$:
$$P_n^d (A_G)(X) 
\rightarrow 
P_n^d (\realization{\Perp_{n+1}^{*+1} G(-)}) (X)
\rightarrow
P_n^d G' (X).
$$
Lemma~\ref{lem:Pnd-perp-contractible} shows that the total space
in this fibration is contractible, and the map to
the base is surjective on $\pi_0$, so  $\pi_0 P_n^d G'(X) = 0$.

Also following Proposition~\ref{prop:perp-f-nonzero-fib-discrete}, 
we have the
following diagram in which the horizonal rows are fibrations that are
surjective on $\pi_0$:
$$\xymatrix{
G'(X)
\ar[r]\ar[d]
&
G(X)
\ar[r]\ar[d]
&
G^{ab}(X)
\ar[d]^{\simeq}
\\
P_n^d G'(X)
\ar[r]
&
P_n^d G(X)
\ar[r]
&
P_n^d G^{ab}(X)
}$$
Since $\pi_0 P_n^d G'(X) = 0$, the long exact sequence for the bottom
fibration implies that $\pi_0 P_n^d G(X) \cong \pi_0 P_n^d G^{ab}$.
The right hand vertical map is an equivalence, again as noted in the
preceding proposition, 
using  Lemma~\ref{lem:perp-Gab-0}  and Proposition~\ref{prop:perp-F-zero}.
%
Combining these, we have
\begin{align*}
\pi_0 P_n^d G(X) 
&\cong 
\pi_0 P_n^d G^{ab}(X)
\\
&\cong
\pi_0 G^{ab}(X)
,
\end{align*}
so we need to establish that $\pi_0 G^{ab}(X)$ is the cokernel of the group
map 
$$\pi_0 \epsilon: \pi_0 \realization{\Perp^{*+1}_{n+1} G(X)} \rightarrow \pi_0 G(X).$$
Both
$G^{ab}(X)$ and $G(X)$ are discrete, so it suffices to establish that
$G^{ab}(X)$ is the (strict) cokernel of the group map 
$\epsilon: \realization{\Perp^{*+1}_{n+1} G(X)} \rightarrow G(X)$. The image of this
map is $G'(X)$, by the definition of $G'(X)$ (\ref{def:Gprime-Gab})
and the fact that maps from higher iterates of $\Perp^*_{n+1} G(X)$ to
$G(X)$ factor through $\Perp_{n+1} G(X)$. By definition
(\ref{def:Gprime-Gab}), $G^{ab}(X)$ is the cokernel of the inclusion
of $G'(X)$ in $G(X)$, so $G^{ab}(X)$ is the cokernel of the map $\epsilon$, as
desired. 
\end{proof}
\subsection{Proof Of Proposition~\ref{prop:perp-F-nonzero}}
\label{sec:proof-of-perp-nonzero}

\begin{proposition}[Proposition~\ref{prop:perp-F-nonzero}]
If $F$ 
has either connected values (Hypothesis~\ref{hypothesis-1})
or  group values (Hypothesis~\ref{hypothesis-2}) on coproducts
of $X$, and $\Perp_{n+1} F(X) \not\simeq 0$,
then the following is a fibration sequence up to homotopy:
\begin{equation}
\label{eq:perp-fibration-perp-nonzero-v2}
\realization{\Perp_{n+1}^{*+1} F(X)}
\xrightarrow{\epsilon}
F(X)
\rightarrow
P_n^d F(X)
.
\end{equation}
Furthermore, 
$$\pi_0 P_n^d F(X) \cong \coker \left( 
\pi_0 \realization{\Perp_{n+1}^{*+1} F(X)}
\rightarrow
\pi_0 F(X)
 \right),
$$
where $\coker$ is the cokernel in the category of groups.
\end{proposition}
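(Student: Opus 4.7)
The plan is to assemble the pieces already in place in this section by comparing two quasifibrations through $P_n^d$. I would begin by writing down the commutative diagram
$$\xymatrix{
A_F(X) \ar[r]\ar[d] & \realization{\Perp_{n+1}^{*+1} F(X)} \ar[r]^-{\epsilon}\ar[d] & F(X) \ar[d] \\
P_n^d A_F(X) \ar[r] & P_n^d \realization{\Perp_{n+1}^{*+1} F(X)} \ar[r] & P_n^d F(X)
}$$
whose top row is the defining quasifibration of $A_F$ from Definition~\ref{def:AF}, and whose bottom row is a quasifibration by Proposition~\ref{prop:perp-f-nonzero-fib-conn} in the connected-values case and Proposition~\ref{prop:perp-f-nonzero-fib-discrete} in the group-valued case.

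The next step is to collapse two of the three vertical maps. Lemma~\ref{lem:Pnd-perp-contractible}, applied with $m=n < n+1$ to $\realization{\Perp_{n+1}^{*+1} F}$ regarded as the diagonal of a functor of $n+1$ variables, shows that the middle space in the bottom row is contractible. Simultaneously, Lemma~\ref{lem:perp-AF-zero} gives $\Perp_{n+1} A_F(X) \simeq 0$; combined with the observation that $A_F$ inherits the relevant values hypothesis from $F$ (a homotopy fiber of a map of grouplike or connected objects remains grouplike or connected), Proposition~\ref{prop:perp-F-zero} identifies $A_F(X) \simeq P_n^d A_F(X)$. Comparing the two rows via their long exact sequences now forces the right-hand square to be Cartesian on all $\pi_i$ with $i \geq 1$, which is exactly the claim that \eqref{eq:perp-fibration-perp-nonzero-v2} is a fibration sequence up to homotopy in positive degrees.

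The main obstacle is $\pi_0$, where $P_n^d$ need not preserve surjectivity and the quasifibration above does not automatically descend. In the connected-values case this is vacuous, since Proposition~\ref{prop:perp-f-nonzero-fib-conn} already forces $\pi_0 P_n^d F(X) = 0$, so the cokernel description is trivial. In the group-valued case the delicate cokernel identification is precisely Proposition~\ref{prop:perp-f-nonzero-pi0-control}, whose argument factors $\epsilon$ through $G' = \Image(\epsilon)$ and exploits that $\Perp_{n+1} G^{ab}(X) \simeq 0$ (Lemma~\ref{lem:perp-Gab-0}), so that Proposition~\ref{prop:perp-F-zero} identifies $P_n^d G^{ab}(X)$ with $G^{ab}(X)$; the long exact sequence for the fibration $P_n^d G' \to P_n^d G \to P_n^d G^{ab}$ together with $\pi_0 P_n^d G' = 0$ (which follows from the contractibility of the total space of the $A_G$-fibration obtained above) then yields $\pi_0 P_n^d G(X) \cong G^{ab}(X) = \coker(\epsilon)$. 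The final step is simply to invoke these two propositions to complete the argument in both cases.
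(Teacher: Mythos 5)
Your outline reproduces the paper's argument faithfully for two of the three cases, but it has a genuine gap: Hypothesis~\ref{hypothesis-2} allows $F$ to take values in arbitrary \emph{topological} groups on coproducts of $X$, whereas the two inputs you lean on in the group-valued case --- Proposition~\ref{prop:perp-f-nonzero-fib-discrete} and Proposition~\ref{prop:perp-f-nonzero-pi0-control} --- are stated and proved only for functors taking values in \emph{discrete} groups. As written, your comparison-of-quasifibrations argument therefore covers only the connected-values case and the discrete-group-values case; a group-valued $F$ whose values have both several components and nontrivial higher homotopy falls through the cracks.

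The paper closes this gap with a second stage that your proposal omits entirely: one factors a general group-valued $F$ through the fibration $\widehat{F} \rightarrow F \rightarrow \pi_0 F$, where $\widehat{F}$ is the basepoint component (connected values) and $\pi_0 F$ takes discrete group values, and then assembles a $3\times 3$ diagram whose top and bottom rows are handled by the two special cases already established. Making this work requires checking that each column is a fibration surjective on $\pi_0$ --- the left column via the $\pi_*$-Kan condition, Lemma~\ref{lem:pi-k-perp-is-perp-pi-k}, and Theorem~\ref{thm:Bousfield-Friedlander}; the right column via Corollary~\ref{cor:Pnd-group-connected-base} --- and that the middle row composite is null (Lemma~\ref{lem:Pnd-perp-contractible}), before invoking the $3\times 3$ lemma for fibrations. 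The $\pi_0$-cokernel identification in the general case likewise does not follow directly from Proposition~\ref{prop:perp-f-nonzero-pi0-control}; it is transported along the $\pi_0$-isomorphisms between the middle and bottom rows of that diagram. You should either add this reduction or restrict your claim to discrete group values.
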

\begin{proof}
First, suppose that $F(X)$ takes either connected values or discrete group
values on coproducts of $X$. 
Consider the auxiliary diagram created by applying $P_n^d$ to the
fibration sequence defining $A_F(X)$:
$$\xymatrix{
A_F(X)
\ar[r]
\ar[d]
&
\realization{\Perp_{n+1}^{*+1} F(X)}
\ar[r]^{\epsilon}
\ar[d]
&
F(X)
\ar[d]
\\
P_n^d A_F(X)
\ar[r]
&
P_n^d \left( \realization{\Perp_{n+1}^{*+1} F(X)} \right)
\ar[r]
&
P_n^d F(X)
}$$
Proposition~\ref{prop:perp-f-nonzero-fib-conn} (in the case of
connected values) or 
Proposition~\ref{prop:perp-f-nonzero-fib-discrete} (in the case of
discrete group values)
shows that the bottom row is
a quasifibration. 
Proposition~\ref{prop:perp-f-nonzero-fib-conn} (connected values)
or 
Proposition~\ref{prop:perp-f-nonzero-pi0-control} (discrete group values)
imply that the map $F(X) \rightarrow P_n^d F(X)$ surjective
on $\pi_0$. Lemma~\ref{lem:perp-AF-zero} shows that 
$\Perp_{n+1} A_F(X) \simeq 0$, so that
Proposition~\ref{prop:perp-F-zero} gives $A_F(X) \simeq P_n^d
A_F(X)$, and hence the square on the right is homotopy Cartesian. 
Lemma~\ref{lem:Pnd-perp-contractible} shows that 
$P_n^d \left( \realization{\Perp_{n+1}^{*+1} F(X)}\right) \simeq 0$,
so this square being Cartesian is equivalent to
\eqref{eq:perp-fibration-perp-nonzero-v2} 
being a quasifibration, as we wanted to establish.

We can reduce the general problem when $F$ satisfies Hypothesis~\ref{hypothesis-2}
 to the cases of connected and
discrete group values that we have already considered by examining
the fibration
$$ \widehat{F}(X) \rightarrow F (X) \rightarrow \pi_0 F(X),$$
where $\widehat{F}(X)$ is the component of the basepoint in $F(X)$.
This gives rise to the following square:
$$\xymatrix{
\realization{\Perp_{n+1}^{*+1} \widehat{F}(X)}
\ar[r]
\ar[d]
&
\widehat{F}
\ar[r]
\ar[d]
&
P_n^d \widehat{F}
\ar[d]
\\
\realization{\Perp_{n+1}^{*+1} {F}(X)}
\ar[r]
\ar[d]
&
{F}
\ar[r]
\ar[d]
&
P_n^d {F}
\ar[d]
\\
\realization{\Perp_{n+1}^{*+1} \pi_0{F}(X)}
\ar[r]
&
\pi_0{F}
\ar[r]
&
P_n^d \pi_0 {F}
}$$
\begin{itemize}
\item The middle column is a fibration by construction, and surjective
on $\pi_0$ for the same reason.

\item The functors $F$ and $\pi_0 F$ are group-valued, and $F$
  surjects onto $\pi_0 F$, so
  Corollary~\ref{cor:Pnd-group-connected-base} shows that 
  the right column is a fibration and surjective on $\pi_0$.

\item The left column is a realization of a levelwise fibration. 
  Since $F$ satisfies Hypothesis~\ref{hypothesis-2},
  ${\Perp_{n+1}^{*+1} {F}(X)}$ and 
  ${\Perp_{n+1}^{*+1} \pi_0{F}(X)}$ are simplicial groups, and hence
  satisfy the the $\pi_*$-Kan
  condition (\ref{def:pi-star-Kan}). 

  From Lemma~\ref{lem:pi-k-perp-is-perp-pi-k}, we know that $\pi_k
  \Perp^{*+1}_{n+1} F(X) \cong  \Perp^{*+1}_{n+1} \pi_k F(X)$, so the map $$\pi_0
  \Perp^{*+1}_{n+1} F(X) \rightarrow \pi_0 \Perp^{*+1}_{n+1} \pi_0 F(X)$$ is an
  isomorphism of simplicial sets, and hence a fibration.
  This is the necessary data to apply 
  Theorem~\ref{thm:Bousfield-Friedlander} (Bousfield-Friedlander)
  to conclude that the realization is a fibration.

  The realization of a levelwise $0$-connected map is $0$-connected, 
  so the left column is also a surjection on $\pi_0$.

\item The functor $\widehat{F}$ has connected values, so the
  top row is a fibration and surjective on $\pi_0$ by
  Proposition~\ref{prop:perp-f-nonzero-fib-conn}. 

\item The composition of the maps in the middle row is null homotopic,
  since the composition factors through 
  $P_n^d \left(\realization{\Perp_{n+1}^{*+1} F(X)} \right)$, which is
  contractible by Lemma~\ref{lem:Pnd-perp-contractible}.

\item The functor $\pi_0 F$ takes values in discrete groups, so the
bottom row is a fibration and surjective on $\pi_0$ by
Proposition~\ref{prop:perp-f-nonzero-fib-discrete}. 
\end{itemize}
We can then use the $3 \times 3$ lemma for fibrations to show that the
middle row (\emph{i.e.}, \eqref{eq:Pnd-still-fibration}) is a
fibration and surjective on $\pi_0$.

The statement about $\pi_0$ is trivial in the connected case;
$\pi_0$ of every space in the top row is zero (which is trivially a
group). This implies that the vertical arrows connecting the second
and third rows are $\pi_0$-isomorphisms, so the statement about
$\pi_0$ follows from Proposition~\ref{prop:perp-f-nonzero-pi0-control}.
\end{proof}


%
%
\chapter{Consequences of the Main Theorem}
\label{chap:main-consequences}
In this chapter, we explore two of the main consequences of the Main
Theorem (\ref{thm:main-theorem}). The first is the existence of a
spectral sequence that can be used to compute $P_n^d F$, and the
second is a way of understanding the $n$-excisive approximation to the
identity functor as a derived functor of the $n^{\text{th}}$ derived
subgroup functor. 

\section{Spectral Sequence}

Bousfield and Friedlander show that under suitable conditions, given a
simplicial space $X_\cdot$, there is a spectral sequence for
calculating the homotopy groups of the realization of a simplicial
space $\pi_* \realization{X_\cdot}$ from $\pi_* X_\cdot$. We will use
their notation: let $\pi_n^v X$ (homotopy in the vertical direction)
denote the simplicial set $[k] \mapsto \pi_n X_k$, and 
let $\pi_m^h \pi_n^v X$ (homotopy in the horizontal direction) denote
$\pi_m \realization{[k] \mapsto \pi_n X_k}$.

\begin{theorem}[{\cite[Theorem~B.5]{Bousfield-Friedlander:Gamma-Spaces}}]
\label{thm:BF-spectral-sequence}
If $X_\cdot$ is a simplicial space that satisfies the $\pi_*$-Kan
condition (\ref{def:pi-star-Kan}), then there is a spectral sequence
with $E^2_{p,q} = \pi_p^h \pi_q^v X$ 
converging to $\pi_{p+q} \realization{X_\cdot}$.
\end{theorem}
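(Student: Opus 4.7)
The plan is to build the spectral sequence from the skeletal filtration of $X_\cdot$ and then identify its $E^2$ term. Concretely, I would form the simplicial skeleta $X_{\le 0} \subset X_{\le 1} \subset \cdots$, observe (as the earlier part of this chapter already does for simplicial spectra) that there is a cofibration sequence
\begin{equation*}
\realization{X_{\le n-1}} \rightarrow \realization{X_{\le n}} \rightarrow \realization{S^n_{\cdot} \wedge \overbar{X_n}},
\end{equation*}
and try to lift it to a fibration sequence. This is the step that genuinely uses the $\pi_*$-Kan hypothesis: I would apply Theorem~\ref{thm:Bousfield-Friedlander} to the levelwise homotopy pullback square whose realization-in-the-second-variable produces the sequence above, after verifying that $\pi_*$-Kan is inherited by skeleta and by the ``suspension-wedge'' terms $S^n_\cdot \wedge \overbar{X_n}$. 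This is the main technical obstacle, since $\pi_*$-Kan is a delicate homotopy-group condition and does not obviously pass to subobjects or quotients; I would handle it by working one skeleton at a time and using the natural splitting $X_n \simeq \bigvee_{\Surj(n,k)} \overbar{X_k}$ that was already used in the proof of Theorem~\ref{thm:loop-infty-commutes-with-connective-spectra}.

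Once the filtration is converted into a tower of fibrations $\realization{X_{\le n}} \to \realization{X_{\le n-1}}$ with fibers (homotopy equivalent to) $\realization{S^n_\cdot \wedge \overbar{X_n}}$, the associated long exact sequences assemble into an exact couple in the usual way, producing a spectral sequence converging to $\pi_{*} \realization{X_\cdot}$ provided the tower converges; convergence is immediate because $\realization{X_{\le n}}$ is $(n-1)$-connected relative to $\realization{X_\cdot}$ on each fixed homotopy group $\pi_{p+q}$ for $n$ large.

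To identify the $E^1$-term, I would compute $\pi_* \realization{S^n_\cdot \wedge \overbar{X_n}}$. Because $S^n_\cdot$ is the smash of $n$ copies of $S^1_\cdot$ modelled as finite wedges, and because smashing with a discrete simplicial model of $S^n$ shifts homotopy groups by $n$, this group is naturally $\pi_{q-n} \overbar{X_n}$ (with a sign-giving $d^1$ built from the alternating sum of face maps of $X_\cdot$). Thus the $E^1$-column in total degree $q$ is the normalized chain complex of the simplicial abelian group (or pointed simplicial set) $[k]\mapsto \pi_q X_k$, and its homology is $\pi_p^h \pi_q^v X$ by the Dold--Kan identification of simplicial homotopy with normalized-chain homology. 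This gives $E^2_{p,q} = \pi_p^h \pi_q^v X$, as required, and the edge behavior on $\pi_0$ is forced by the second clause of the $\pi_*$-Kan condition (fibrancy of $\pi_0 X_\cdot$), which ensures that the targets and differentials make sense even on the nonabelian fringe.

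Throughout, I would import exactly the ingredients already developed in the excerpt: Theorem~\ref{thm:Bousfield-Friedlander} to convert levelwise homotopy fibrations into realized homotopy fibrations, the splitting of simplicial spaces into wedges of nondegenerate pieces, and the fat-versus-strict realization comparison (Theorem~\ref{thm:good-fat-strict-realization} and Corollary~\ref{cor:good-multisimplicial}) to make sure the skeletal filtration is homotopically well-behaved after a functorial replacement making degeneracies cofibrations. The main novelty is not the exact couple but the careful verification that $\pi_*$-Kan survives skeletal decomposition, which is precisely what allows the Bousfield--Friedlander cube theorem to be iterated.
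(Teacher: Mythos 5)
Your plan has a genuine gap at its very first step, and it is fatal for spaces (though it would be fine for spectra). The skeletal filtration produces a \emph{cofibration} sequence $\realization{X_{\le n-1}} \rightarrow \realization{X_{\le n}} \rightarrow \realization{S^n_\cdot \wedge \overbar{X_n}}$, and for spaces a cofibration sequence is not a fibration sequence, so it yields no long exact sequence of homotopy groups and hence no exact couple in $\pi_*$. You propose to repair this by applying Theorem~\ref{thm:Bousfield-Friedlander}, but that theorem converts \emph{levelwise homotopy pullback} squares into realized homotopy pullback squares; the skeletal inclusion is levelwise a wedge-summand inclusion, not the total space of a levelwise fibration over $S^n_k \wedge \overbar{X_n}$, so there is no levelwise homotopy pullback square to feed into it. The skeletal filtration of a simplicial space is a homotopy colimit filtration: it computes the homology spectral sequence $E^2 = H_p^h H_q^v$, not the homotopy spectral sequence. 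A second, independent failure is your identification of the $E^1$-term: unstably, $\pi_{q}\bigl(\realization{S^n_\cdot\wedge Y}\bigr)$ is \emph{not} $\pi_{q-n}(Y)$ --- smashing with $S^n$ only shifts homotopy in the stable range, which is exactly the point of the Hilton--Milnor discussion in Chapter~\ref{chap:different-theories}. This is why the argument you are modeling on (the proof of Theorem~\ref{thm:loop-infty-commutes-with-connective-spectra}) is carried out in the category of spectra, where both objections disappear.

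The route the paper takes (following Bousfield--Friedlander, and visible in the proof of Corollary~\ref{cor:offset-BF-ss}) filters in the \emph{other} direction: one applies the Postnikov functors $P_t$ levelwise in the space variable, obtaining genuine levelwise fibration sequences $F_t X_k \rightarrow P_t X_k \rightarrow P_{t-1} X_k$ with $F_t X_k \simeq K(\pi_t X_k, t)$. The $\pi_*$-Kan condition (which is preserved by $P_t$) is exactly what lets Theorem~\ref{thm:Bousfield-Friedlander} promote these to fibration sequences after realization, giving a tower of fibrations $\realization{P_t X_\cdot} \rightarrow \realization{P_{t-1}X_\cdot}$ with fibers $\realization{F_t X_\cdot}$. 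The identification $\pi_{s+t}\realization{F_t X_\cdot} \cong \pi_s^h \pi_t^v X$ is then the Dold--Kan-type computation for realizations of simplicial Eilenberg--MacLane spaces, and convergence follows because $\realization{X_\cdot}\rightarrow\realization{P_t X_\cdot}$ is an isomorphism on $\pi_j$ for $j \le t$. If you rebuild your argument on the Postnikov filtration rather than the skeletal one, the rest of your outline (exact couple, convergence, edge behavior at $\pi_0$ from fibrancy of $\pi_0 X_\cdot$) goes through essentially as you wrote it.
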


We want to use this result to produce a spectral sequence to calculate
the homotopy groups of the homotopy fiber of a map $\realization{X_\cdot}
\rightarrow Y$.

\begin{corollary}
\label{cor:offset-BF-ss}
Let $X_\cdot$ be a simplicial space that satisfies the $\pi_*$-Kan
condition (\ref{def:pi-star-Kan}), and suppose there is a simplicial
map to a space $Y$ regarded as a trivial simplicial space. Then there
is a spectral sequence converging to $\pi_{s+t} \hofib \left(
  \realization{X} \rightarrow Y \right)$ whose $E^2$ term is:
\begin{align*}
  E^2_{s,t} &= \pi_s^h \pi_t^v X  && \text{$s\ge 1$} \\
  E^2_{0,t} &= 
  \ker \left( \pi_0^h \pi_t^v X \rightarrow \pi_t Y \right)
  && \text{$s=0$}
  \\
  E^2_{-1,t} &= 
  \coker \left( \pi_0^h \pi_t^v X \rightarrow \pi_t Y \right)
  && \text{$s=-1$, $t\ge 1$}
\end{align*}
\end{corollary}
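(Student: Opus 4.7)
My approach is to form the levelwise homotopy fiber simplicial space $F_\cdot$, with $F_k := \hofib(X_k \to Y)$ (viewing $Y$ as the constant simplicial space), identify its realization with $\hofib(\realization{X_\cdot} \to Y)$, and then apply Theorem~\ref{thm:BF-spectral-sequence} to $F_\cdot$. The $E^2$-page of the resulting spectral sequence then needs to be re-expressed in terms of the data of $X$ and $Y$.

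First I would verify that $F_\cdot$ satisfies the $\pi_*$-Kan condition and apply Theorem~\ref{thm:Bousfield-Friedlander} to the levelwise homotopy pullback squares
$$\xymatrix{F_k \ar[r] \ar[d] & X_k \ar[d] \\ \basept \ar[r] & Y}$$
to conclude $\realization{F_\cdot} \simeq \hofib(\realization{X_\cdot} \to Y)$. The $\pi_*$-Kan condition for $F_\cdot$ should follow from that of $X_\cdot$: the levelwise fibration LES relates $\pi_t^v F$ to $\pi_t^v X$ and the constant simplicial groups $\pi_t Y$, $\pi_{t+1} Y$, and the requisite coherent-extension property can be transported along this sequence.

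Applying Theorem~\ref{thm:BF-spectral-sequence} to $F_\cdot$ then produces a spectral sequence with $E^2_{p,q} = \pi_p^h \pi_q^v F$ converging to $\pi_{p+q} \hofib$. To match with the stated $E^2$-page, I would break the 4-term levelwise LES
$$\pi_{t+1} Y \to \pi_t^v F \to \pi_t^v X \to \pi_t Y$$
into short exact sequences $0 \to C_t \to \pi_t^v F \to K_t \to 0$ with $K_t = \ker(\pi_t^v X \to \pi_t Y)$ and $C_t = \coker(\pi_{t+1}^v X \to \pi_{t+1} Y)$. Since $\pi_t Y$ is a \emph{constant} simplicial group, $\pi_p^h \pi_t Y = 0$ for $p \ge 1$, and taking horizontal homotopy of the resulting short exact sequences collapses most of the contributions, producing $\pi_s^h \pi_t^v X$ in the columns $s \ge 1$.

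The main obstacle will be correctly exhibiting the $s = -1$ column: the BF SS of $F_\cdot$ is naturally a first-quadrant spectral sequence, while the stated SS has entries at $s = -1$. I expect this is resolved by viewing the output as the \emph{mapping fiber} of the map from the BF spectral sequence of $X_\cdot$ (converging to $\pi_*\realization{X}$) to the concentrated-in-column-zero BF spectral sequence of the constant $Y$ (converging to $\pi_* Y$); the $s = -1$ entries then record the cokernel contribution of the connecting homomorphism in the fibration LES $\pi_{t} Y \to \pi_{t-1} \hofib \to \pi_{t-1} \realization{X}$. Aligning this construction with the cokernel formula $\coker(\pi_0^h \pi_t^v X \to \pi_t Y)$ (rather than a cokernel of an $E^\infty$ term) is the delicate bookkeeping step, and justifying the convergence and consistency of differentials across the shift between $s = 0$ and $s = -1$ is where I expect the main technical effort to lie.
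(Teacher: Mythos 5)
Your primary route --- realizing the levelwise fiber $F_k = \hofib(X_k \to Y)$ and applying Theorem~\ref{thm:BF-spectral-sequence} to $F_\cdot$ --- produces a spectral sequence with the right abutment but \emph{not} the stated $E^2$ page, and the discrepancy is exactly at the columns you would need to repair. Writing the levelwise long exact sequence as $0 \to C_{t+1} \to \pi_t^v F \to K_t \to 0$ with $K_t = \ker(\pi_t^v X \to \pi_t Y)$ and $C_{t+1} = \coker(\pi_{t+1}^v X \to \pi_{t+1} Y)$, one can check that $C_{t+1}$ and the image $\Image(\pi_t^v X \to \pi_t Y)$ are in fact \emph{constant} simplicial groups (faces and degeneracies force $\Image(\pi_t X_k \to \pi_t Y)$ to be independent of $k$), so your collapsing argument does give $\pi_s^h \pi_t^v F \cong \pi_s^h \pi_t^v X$ for $s \ge 2$. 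But at $s=1$ the connecting homomorphism $\pi_1^h K_t \to \pi_0^h C_{t+1}$ of the first short exact sequence intervenes: your $E^2_{1,t}$ is only the kernel of a map $\pi_1^h \pi_t^v X \to \coker(\pi_0^h \pi_{t+1}^v X \to \pi_{t+1} Y)$, and your $E^2_{0,t}$ is an extension of $\ker(\pi_0^h \pi_t^v X \to \pi_t Y)$ by the corresponding cokernel. In other words, your spectral sequence has already absorbed the $d^2$ differential $E^2_{1,t} \to E^2_{-1,t+1}$ of the stated spectral sequence and has no $s=-1$ column at all; it is a genuinely different spectral sequence and does not establish the corollary as stated.

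The "mapping fiber of the two Bousfield--Friedlander spectral sequences" that you defer to the end is not a construction you can invoke --- making it precise \emph{is} the proof. The paper proceeds by taking the vertical Postnikov truncations $P_t$, forming the tower $A_t = \hofib\left(\realization{P_t X_\cdot} \to P_t Y\right)$, and setting $W_t = \hofib(A_t \to A_{t-1})$; the fibration $W_t \to \realization{F_t X_\cdot} \to F_t Y \simeq K(\pi_t Y, t)$ then yields the three-case description of $\pi_{s+t} W_t$ (isomorphism for $s\ge 1$, kernel for $s=0$, cokernel for $s=-1$) directly from its long exact sequence, and the identification $\pi_{s+t}\realization{F_t X_\cdot} \cong \pi_s^h \pi_t^v X$ is Bousfield--Friedlander's. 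Since you explicitly leave this as "where the main technical effort lies," and your fallback construction computes a different $E^2$, the proposal has a genuine gap at its central step.
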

\begin{proof}
  As in the proof of Theorem~\ref{thm:BF-spectral-sequence}, let $P_t$
  be the Postnikov tower functor with 
  $$ \pi_j P_t Y \cong 
  \begin{cases}
    \pi_j Y & \text{$j\le t$} \\
    0 &  \text{$j > t$}
  \end{cases}
  $$
  Let $F_t$ denote the homotopy fiber of the canonical map $P_t
  \rightarrow P_{t-1}$, so $F_t Y \simeq K(\pi_t Y,t)$ is an
  Eilenberg-MacLane space. Create the following diagram, letting $A_t$
  be the homotopy fiber of 
  $$\realization{[k] \mapsto P_t X_k}
  \rightarrow \realization{[k] \mapsto P_t Y} = P_t Y,$$
 and $W_t = \hofib \left(A_t \rightarrow A_{t-1}\right)$:
$$\xymatrix{
  W_t
  \ar[r] \ar[d]
  &
  \realization{F_t X_\cdot}
  \ar[r] \ar[d]
  &
  F_t Y
  \ar[d]
  \\
  A_t
  \ar[r] \ar[d]
  &
  \realization{P_t X_\cdot}
  \ar[r] \ar[d]
  &
  P_t Y
  \ar[d]
  \\
  A_{t-1}
  \ar[r]
  &
  \realization{P_{t-1} X_\cdot}
  \ar[r]
  &
  P_{t-1} Y
  }$$
  Note that the middle column is a fibration because $P_t$ preserves
  the $\pi_*$-Kan condition, so
  Theorem~\ref{thm:Bousfield-Friedlander} applies.
  Let $Z$ denote the homotopy fiber of the map $\realization{X_\cdot}
  \rightarrow Y$. 
  The long exact sequence on homotopy, combined with the fact that
  the natural transformation from the identity to $P_t$ is an
  isomorphism on homotopy groups in dimensions $\le t$, shows that
  $\pi_j A_t \cong  \pi_j P_t Z$ for $j< t$. Hence the map $Z
  \rightarrow A_t$ is at least $(t-1)$-connected. Since the
  connectivity of this map increases with $t$, the spectral sequence
  derived from the tower of fibrations $\Set{A_t}$ converges to the
  $\pi_* Z$.

  To form this spectral sequence, we need to identify 
  $\pi_{s+t} W_t$. Using the long exact sequence from the top row of
  the diagram, we have:
  $$
  \pi_{s+t} W_t 
  \cong
  \begin{cases}
    \pi_{s+t} \realization{F_t X_\cdot} & \text{$s \ge 1$} 
    \\
    \ker\left( \pi_{t} \realization{F_t X_\cdot}  \rightarrow \pi_t Y \right)
    & \text{$s = 0$} 
    \\
    \coker\left( \pi_{t} \realization{F_t X_\cdot}  \rightarrow \pi_t Y \right)
    & \text{$s = -1$, $t\ge 1$} 
  \end{cases}
  $$
  The appearance of $s=-1$ occurs because if the map
  $\realization{X_\cdot} \rightarrow Y$ is not surjective on $\pi_t$,
  the fiber has a homotopy group in one dimension lower. In this case,
  $t\ge 1$, since failure to be surjective on $\pi_0$ is not visible
  in the fiber.  There are no terms with $s\le -2$ since both
  $\realization{F_t X_\cdot}$ and $F_t Y$ have no homotopy below
  dimension $t$.
  Then, exactly as in 
  \cite[p.~123]{Bousfield-Friedlander:Gamma-Spaces}, we can identify
  $\pi_{s+t} \realization{F_t X_\cdot}$ with $\pi_s^h \pi_t^v X$. This
  gives us the desired result.
\end{proof}


We can apply this corollary in the setting of
Theorem~\ref{thm:main-theorem} to produce the following result:

\begin{theorem}
\label{thm:additive-spectral-sequence}
Let $F$ be a homotopy functor from spaces to spaces that takes 
connected values (Hypothesis~\ref{hypothesis-1}) or group values
(Hypothesis~\ref{hypothesis-2}) on coproducts of $X$. Then there is a
spectral sequence beginning with the $E^2$ page given below and
converging to $\pi_{s+t} P_n^d F(X)$. 
\begin{align*}
  E^2_{s,t} &= \pi_{s-1}^h \pi_t^v \Perp^{*+1}_{n+1} F(X) && \text{$s\ge 2$} \\
  E^2_{1,t} &= 
  \ker \left( \pi_0^h \pi_t^v \Perp^{*+1}_{n+1} F(X)\rightarrow \pi_t F(X) \right)
  && \text{$s=1$}
  \\
  E^2_{0,t} &= 
  \coker \left( \pi_0^h \pi_t^v \Perp^{*+1}_{n+1} F(X) \rightarrow \pi_t F(X) \right)
  && \text{$s=0$}
\end{align*}
\end{theorem}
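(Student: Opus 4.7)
The plan is to deduce the spectral sequence directly from Theorem~\ref{thm:main-theorem} together with the Bousfield-Friedlander-style spectral sequence for the homotopy fiber of a map out of a realization, recorded as Corollary~\ref{cor:offset-BF-ss}. First I would extend the fibration sequence of Theorem~\ref{thm:main-theorem} one step to the left, so that
$$ \Omega P_n^d F(X) \rightarrow \realization{\Perp_{n+1}^{*+1} F(X)} \xrightarrow{\epsilon} F(X) $$
is a fibration sequence; this reduces the problem to finding a spectral sequence for $\pi_{*+1} P_n^d F(X) = \pi_* \hofib(\epsilon)$.

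Second, I would verify that the simplicial space $[k]\mapsto \Perp_{n+1}^{k+1}F(X)$ satisfies the $\pi_*$-Kan condition of Definition~\ref{def:pi-star-Kan}, so that Corollary~\ref{cor:offset-BF-ss} can be applied. In the group-valued case (Hypothesis~\ref{hypothesis-2}), each level $\Perp_{n+1}^{k+1} F(X)$ is the total homotopy fiber of a cube of topological groups with compatible sections (Lemma~\ref{lem:crn-cube-has-sections}); as in the proof of Lemma~\ref{lem:perp-commutes-with-realization}, this makes the simplicial space levelwise group-like, so Corollary~\ref{cor:grouplike-H-spaces-pi-star} applies. In the connected-value case (Hypothesis~\ref{hypothesis-1}), Lemma~\ref{lem:pi-k-perp-is-perp-pi-k} gives $\pi_k \Perp_{n+1}^{j+1}F(X) \cong \Perp_{n+1}^{j+1}\pi_k F(X)$ for all $k$, and since the cross-effect cubes have compatible sections, each face map is a split surjection on $\pi_k$, which together with the levelwise connectedness (in the connected case, after noting that the total fiber of a cube of connected spaces with compatible sections is again of the appropriate form) verifies the fibrancy conditions defining $\pi_*$-Kan.

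Third, I would apply Corollary~\ref{cor:offset-BF-ss} to the simplicial space $\Perp_{n+1}^{*+1} F(X)$ mapping to the constant space $F(X)$. This yields a spectral sequence converging to $\pi_{s+t}\Omega P_n^d F(X) = \pi_{s+t+1}P_n^d F(X)$ whose $E^2$-term is
\begin{align*}
  E^2_{s,t} &= \pi_s^h\pi_t^v \Perp_{n+1}^{*+1}F(X) && (s\ge 1),\\
  E^2_{0,t} &= \ker\bigl(\pi_0^h\pi_t^v\Perp_{n+1}^{*+1}F(X)\rightarrow \pi_t F(X)\bigr),\\
  E^2_{-1,t} &= \coker\bigl(\pi_0^h\pi_t^v\Perp_{n+1}^{*+1}F(X)\rightarrow \pi_t F(X)\bigr) && (t\ge 1).
\end{align*}
The substitution $s' = s+1$ then shifts the target to $\pi_{s'+t}P_n^d F(X)$ and produces exactly the $E^2$-pattern stated in the theorem.

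The main obstacle will be the verification of the $\pi_*$-Kan condition in the connected-values case, since the levels of $\Perp_{n+1}^{*+1}F(X)$ are total fibers and need not themselves be connected even when $F$ has connected values; handling this carefully (either by passing to the component of the basepoint or by exploiting the compatible sections to reduce to split fibrations where the $\pi_*$-Kan condition is automatic) is the one subtle step. Once that is in place, the spectral sequence and its identification follow formally from the fibration of Theorem~\ref{thm:main-theorem} and Corollary~\ref{cor:offset-BF-ss}, with the only remaining bookkeeping being the index shift $s\mapsto s-1$ to account for the loop in $\Omega P_n^d F(X)$.
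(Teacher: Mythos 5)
Your proposal is correct and follows essentially the same route as the paper: the quasifibration of Theorem~\ref{thm:main-theorem}, the $\pi_*$-Kan verification, Corollary~\ref{cor:offset-BF-ss} applied to $\realization{\Perp_{n+1}^{*+1}F(X)}\rightarrow F(X)$, and the index shift. The one worry you flag in the connected-values case is already resolved by the tools you cite: Lemma~\ref{lem:pi-k-perp-is-perp-pi-k} shows $\pi_0$ of each level is the iterated fiber of a cube of one-point sets, so every level of $\Perp_{n+1}^{*+1}F(X)$ is in fact connected and the $\pi_*$-Kan condition holds trivially.
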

\begin{proof}
%
The main theorem (\ref{thm:main-theorem}) gives us a quasifibration:
\begin{equation*}
\realization{\Perp_{n+1}^{*+1} F(X)}
\rightarrow 
F(X)
\rightarrow
P_n^d F(X),
\end{equation*}
with $\pi_0 P_n^d$ the group cokernel of the map on $\pi_0$ from the
fiber to the total space.

The simplicial space $\Perp_{n+1}^{*+1} F(X)$ satisfies the
$\pi_*$-Kan condition (\ref{def:pi-star-Kan}) 
when $F(X)$ is
connected or has group values; that is, under
Hypothesis~\ref{hypothesis-1} or Hypothesis~\ref{hypothesis-2}, so 
the spectral sequence of Corollary~\ref{cor:offset-BF-ss}
can used and converges to $\pi_* \Omega P_n^d F(X)$.
Shifting the index $s$ by one gives a spectral sequence converging to
$\pi_{s+t-1} \Omega P_n^d F(X) \cong \pi_{s+t} P_n^d F(X)$, for
$s+t\ge 1$. The fact that $\pi_0$ is the cokernel of the map as
claimed (i.e., $\pi_0 P_n^d F(X) \cong E^2_{0,0}$) 
is established separately in the main theorem (\ref{thm:main-theorem}).
\end{proof}

We will use this spectral sequence extensively later to understand the
functors $P_n^d F$ from computations of the iterated cross effects.

\section{Lower Central Series}
\label{sec:lower-central-series}

In this section, we explain how the main theorem
(\ref{thm:main-theorem}) demonstrates a relationship between the
$n$-excisive approximations to the identity functor and the lower
central series of a simplicial group. We first recall the related
classical results of Curtis.

Recall that if $G$ is a group, the $r^{\text{th}}$ group in the lower
central series of $G$ is denoted $\Gamma_r G$. The group $\Gamma_r G$
is defined recursively, with $\Gamma_2 G = [G,G]$ being the 
derived subgroup of $G$, and $\Gamma_{r} G = [G,\Gamma_{r-1} G]$ for
$r > 2$. When $G$ is a simplicial group, $\Gamma_r G$ is defined to be
$\Gamma_r$ applied levelwise to $G$.

\begin{theorem}[{\cite[Theorem~1.4]{Curtis:some-relations-between-homotopy-and-homology}}]
If $G$ is a free simplicial group that is $n$-connected, $n\ge
0$, then for $r\ge 2$, the map $G \rightarrow G/\Gamma_r G$ is
$\lfloor{n+\log_2 r}\rfloor$-connected.
\end{theorem}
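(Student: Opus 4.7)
The plan is to induct on $r$ and work with the equivalent statement that the simplicial normal subgroup $\Gamma_r G$ is $(\lfloor n + \log_2 r\rfloor - 1)$-connected, which translates to the claim via the long exact sequence of the fibration $\Gamma_r G \to G \to G/\Gamma_r G$ of simplicial groups. The base case $r=2$ reduces to showing that the commutator subgroup of a free $n$-connected simplicial group is $n$-connected: since the abelianization of a free group on a generating set is the free abelian group on that set, $G^{ab}$ is still $n$-connected degreewise and hence as a simplicial abelian group, so the long exact sequence of $[G,G] \to G \to G^{ab}$ forces $\Gamma_2 G$ to be $n$-connected, as required.

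For the induction step, I would consider the fibration
$$\Gamma_{r+1} G \to \Gamma_r G \to \Gamma_r G / \Gamma_{r+1} G$$
and apply the Magnus--Witt theorem, which for a free (simplicial) group identifies $\Gamma_r G / \Gamma_{r+1} G$ degreewise and naturally with the degree-$r$ homogeneous piece $L_r(G^{ab})$ of the free Lie algebra on $G^{ab}$. Since $L_r(V)$ embeds in $V^{\otimes r}$ and tensor products of simplicial abelian groups satisfy $\mathrm{conn}(V \otimes W) = \mathrm{conn}(V) + \mathrm{conn}(W) + 1$ via Dold--Kan, each $\Gamma_r G / \Gamma_{r+1} G$ is at least $(r(n+1) - 1)$-connected. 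For $r$ not a power of $2$, this is far more than needed, so the long exact sequence of the displayed fibration directly transfers the inductive connectivity of $\Gamma_r G$ to $\Gamma_{r+1} G$, keeping $\lfloor n + \log_2(r+1) \rfloor = \lfloor n + \log_2 r \rfloor$ unchanged.

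The hard part is the doubling step $r \to 2r$, where $\lfloor n + \log_2(2r) \rfloor = \lfloor n + \log_2 r \rfloor + 1$ and one additional unit of connectivity must be gained. The key input is the inclusion $[\Gamma_r G, \Gamma_r G] \subseteq \Gamma_{2r} G$, which forces the quotient $\Gamma_r G / \Gamma_{2r} G$ to factor through $(\Gamma_r G)^{ab}$. Applying the base case inductively to the free simplicial group $\Gamma_r G$ (which is free by Nielsen--Schreier applied degreewise) gives that $(\Gamma_r G)^{ab}$ is one more connected than $\Gamma_r G$; transferring this $+1$ through the nilpotent filtration of $\Gamma_r G / \Gamma_{2r} G$ by the highly-connected successive quotients $\Gamma_j G / \Gamma_{j+1} G$ for $r \le j < 2r$ then gives $\Gamma_{2r} G$ the required $+1$. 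The principal technical obstacle is making this extension argument uniform: the Magnus--Witt description of the successive quotients and the abelianization $(\Gamma_r G)^{ab}$ must be combined coherently, which in practice is best handled via the spectral sequence of the lower central series filtration with $E^1$-term the free Lie algebra on $G^{ab}$ converging to $\pi_* G$ in the relevant range.
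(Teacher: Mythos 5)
The step that fails is the connectivity estimate for the successive quotients. You assert that since $L_r(G^{ab})$ embeds in $(G^{ab})^{\otimes r}$, it is at least $(r(n+1)-1)$-connected. This is wrong for two reasons. First, connectivity does not pass to sub-simplicial-abelian-groups: a subcomplex of a highly connected chain complex can carry homology in low degrees, and the Poincar\'e--Birkhoff--Witt splitting of $L_r(V)$ off of $V^{\otimes r}$ is obtained by choosing an ordered Hall basis and is neither natural in $V$ nor compatible with the simplicial structure, so the K\"unneth bound for $\otimes^r$ tells you nothing about $L_r$. Second, and decisively, the conclusion is false: the derived functors of $L_r$ applied to an $n$-connected simplicial free abelian group are only about $(n+\lfloor\log_2 r\rfloor)$-connected, not $(r(n+1)-1)$-connected. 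This is exactly why Curtis's theorem has a $\log_2 r$ in it. Concretely, for $G$ the Kan loop group of a sphere the quotients $\Gamma_{2^k}G/\Gamma_{2^k+1}G = L^{2^k}(G^{ab})$ carry the classes of the Lambda algebra (e.g.\ the powers of $\eta$) in degrees growing like $n+k$, far below $2^k(n+1)-1$; the bottleneck is that $\Lambda^2$ applied to an $m$-connected simplicial abelian group with torsion in $\pi_m$ is only $(m+1)$-connected. So your ``easy'' non-doubling steps rest on a false lemma, and your doubling step, which again transfers connectivity ``through the highly-connected successive quotients,'' inherits the same error. The hard content of the theorem is precisely the logarithmic connectivity estimate for the derived free Lie functors $L^r$ (Curtis proves it by decomposing $L^r$ into iterates of $L^2=\Lambda^2$ via basic commutators, gaining one unit of connectivity per weight doubling); your final appeal to ``the spectral sequence of the lower central series converging to $\pi_*G$ in the relevant range'' assumes the theorem you are trying to prove.

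Two smaller points. Your base case is essentially correct: it is the Hurewicz theorem for the free simplicial group $G$, giving that $G\rightarrow G^{ab}$ is $(n+1)$-connected. But ``$\Gamma_r G$ is free by Nielsen--Schreier applied degreewise'' is not enough for the inductive use of Hurewicz: degreewise freeness does not produce a simplicial basis (a basis closed under degeneracies), which is what the Hurewicz argument for simplicial groups needs; one has to invoke Kan's result that degreewise-free simplicial groups are weakly equivalent to free ones. Note also that this document does not prove the statement at all --- it is quoted from Curtis with a citation --- so there is no internal proof to compare against; judged on its own terms, your argument has the fatal gap described above.
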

Actually, because $G$ is a free group, the map is an isomorphism on
homotopy in dimension $\lfloor{n+\log_2 r}\rfloor$ as well, but this
is not so important. 
Another way of stating this theorem is that the cofibration
(=fibration) sequence of (simplicial) groups
\begin{equation}
\label{eq:Curtis}
\Gamma_r G \rightarrow G \rightarrow G/\Gamma_r G
\end{equation}
is surjective on $\pi_0$ and the connectivity of the fiber is at least
$\lfloor{n+\log_2 r}\rfloor$.

The main consequence of this theorem is that Curtis is able to apply
it to Kan's loop group functor $G(X_\cdot)$, which is a simplicial
group that in dimension $k$ is the reduced free group on the
$(k+1)$-simplices of $X$ (reduced meaning that the generator
corresponding to the basepoint is identified with the identity). 
Let $X$ be a simply connected simplicial 
set, and let $G = G(X)$ be the free simplicial group resulting from
applying Kan's loop group functor.  The theorem then shows that the
lower central series filtration ``converges'', in the sense that $G
\simeq \lim_r G/\Gamma_r G$ because the spaces are connected and the
connectivity of the fiber of the map grows (slowly) to infinity. This
in turn means that $\Omega X$ can be analyzed by looking at the
quotients in the lower central series.


To analyze the functor $G(X)$ in our setting, let
$G(X)=\strictrealization{G(\Sing(X))}$,
where $\Sing(X)$ is the standard singular simplicial set of $X$ with
$\Sing(X)_k = \Hom(\Delta^k, X)$. For convenience, use the notation
$X_k=\Sing(X)_k$. To compute $\Perp_n G(X)$, we need to compute the
total homotopy fiber of a cube whose vertices are $G(\bigvee^k X)$,
for $0\le k\le n$. 
In general, the cube for any $\Perp_r G$ has compatible sections to every
structure map, and all of the vertices are simplicial groups (and
hence satisfy the $\pi_*$-Kan condition (\ref{def:pi-star-Kan})), so
by Theorem~\ref{thm:Bousfield-Friedlander} (Bousfield-Friedlander), we
can compute the homotopy fiber by taking fibers of the simplicial sets
(groups) levelwise.  
Furthermore, since everything is fibrant, we can use strict fibers
rather than homotopy fibers.
When working with groups, the ``strict fiber'' of a map is commonly
called the kernel, so we use that term henceforth.

In dimension $m$, the simplicial group $G(\bigvee^k X)_m$ is the
reduced free group generated by $\bigvee^k X_{m+1}$. The reduced
free group functor distributes over coproducts, so this is the free
product of $k$ copies of the free group on $X_{m+1}$.

The second cross effect $\Perp_2 G(X)$ is the (realization of the
levelwise) kernel of the map $G(X\vee X) \rightarrow G(X)\times G(X)$,
which by the preceding paragraph is the same as the kernel of the map
$p: G(X)*G(X) \rightarrow G(X)\times G(X)$. To distinguish between the
two copies of $G(X)$, we will use $g_i \in G(X)*1$ and $h_j \in
1*G(X)$. 

Define $\Gamma_2^{\text{ext}} G(X)$ to be the normal closure of the subgroup
$[G(X)*1, 1*G(X)]$ of $G(X)*G(X)$ generated by commutators of the form
$[g_i,h_j]$. We will to show that  $\Gamma_2^{\text{ext}} G(X)$ is the
kernel of the map $p$, which is $\Perp_2 G(X)$.
An element $g_1h_1g_2h_2\cdots g_wh_w \in G(X)*G(X)$ is in the
kernel of the map $p$ if the products $g_1\cdots g_w=e$ and $h_1\cdots
h_w = e$. 
All commutators that generate $\Gamma_2^{\text{ext}} G(X)$ are of this
form, and the kernel of a map is normal, so $\Gamma_2^{\text{ext}}
G(X)$ is contained in $\Perp_2 G(X)$. 
Now, commuting the $g$ and $h$ elements to place all of the $g$'s
adjacent and all of the $h$'s adjacent produces the product of
$(g_1\cdots g_w)(h_1\cdots h_w) = e$ with a bunch of commutators of
$g_i$ and $h_j$, including higher iterated commutators. We claim that
these higher iterated commutators are contained in
$\Gamma_2^{\text{ext}} G(X)$. Let $y\in
\Gamma_2^{\text{ext}} G(X)$ and let $g\in G*G$: the element $[y,g] =
y^{-1} g^{-1} y g$ is then in $\Gamma_2^{\text{ext}} G(X)$ because it
is normal. This shows that 
the kernel of the map $p$ is also contained in the subgroup
$\Gamma_2^{\text{ext}} G(X)$, so $\Perp_2 G(X) =
\Gamma_2^{\text{ext}} G(X)$.

We use the notation $\Gamma^{\text{ext}}_2 G(X)$ to denote a group in
the ``exterior lower central series'' since its image under the fold
map $\epsilon$ is the standard subgroup $\Gamma_2 G(X)$ in the lower
central series of $G(X)$. 
The higher cross effects are produced by iterating the second cross
effect (\ref{cor:crn-from-cr2}), so the same analysis shows that
$\Perp_r G(X)$ is the $r^{\text{th}}$ ``exterior'' derived subgroup
$\Gamma_r^{\text{ext}} G(X)$. 

We can now interpret the fibration from
Theorem~\ref{thm:main-theorem},
\begin{equation*}
\realization{\Perp_{r+1}^{*+1} F(X)}
\rightarrow 
F(X)
\rightarrow
P_r^d F(X)
,
\end{equation*}
in the case when the functor $F(X)$ is our functor $G(X)$, which is
essentially loops on a space. The functor $G$ requires $X_0 = \basept$
(so $X$ is connected), 
and if $X$ is connected, then the functor $G(X \wedge -) \simeq
\Omega(X \wedge -)$
commutes with realizations 
(using Lemma~\ref{lem:Waldhausen-fibration-lemma}).
Lemma~\ref{lem:F-commutes-with-real-Pnd-is-Pn} then implies  that
$P_r^d G(X) \simeq P_r G(X)$. This lets us translate the statement of
the theorem to the fibration:
\begin{equation*}
\left\lvert{\left( \Gamma_{r+1}^{\text{ext}} \right)^{*+1} G(X)}\right\rvert
\rightarrow 
G (X)
\rightarrow
P_r G(X)
.
\end{equation*}
One way to produce a direct comparison of this spectral sequence with
Curtis's result is to realize that in the setting of bisimplicial
groups, it becomes somewhat easier to understand the spectral sequence
we are using. 
Recall that a simplicial group $G_\cdot$ may be tranformed into a
nonabelian chain complex $NG$ by the process of normalization (for
details, see \cite[\S8.3, pp.~264--266]{Weibel:homological-algebra}).
To produce the normalized 
complex, one takes the intersection of the kernels of 
all but the last face map $d_0$, and uses $d_0$ for the
differential: $(NG)_n = \cap_{i>0} \ker d_i$ and $d_n:
(NG)_n\rightarrow (NG)_{n-1}$ is the same $d_n$ from the simplicial
group $G_\cdot$.
The homology of the resulting chain complex $H_*(NG)$
isomorphic to the homotopy of the original simplicial group, $\pi_*
G$. Using this process, we see that the spectral sequence of
Corollary~\ref{cor:offset-BF-ss} arises from taking the homology of
the bicomplex of nonabelian groups:
$$ 
G(X) \leftarrow 
\Gamma_{r+1}^{\text{ext}} G(X)
\leftarrow
N \left( \Gamma_{r+1}^{\text{ext}}\right) ^2 G(X)
\leftarrow 
\cdots
$$
This bicomplex maps to its ``good (horizontal) truncation'':
$$ G(X) \xleftarrow{\epsilon} \Image(\epsilon) \leftarrow 0 \leftarrow
\cdots ,$$
where $\Image(\epsilon) \cong \Gamma_{r+1} G(X)$, as noted above.
Now taking homology horizontally first, then vertically, causes the
spectral sequence to collapse to $\pi_* \left( G(X) /\Gamma_{r+1} G(X)
\right)$, which is the approximation Curtis uses.
Taking homology vertically first produces:
$$ \pi_* G(X) \leftarrow \pi_* \Gamma_{r+1} G(X) \leftarrow 0 ;$$
then when we take homology horizontally, we can identify the resulting
$E^2$ page as a quotient the $E^2$ page from
Theorem~\ref{cor:offset-BF-ss}, with an isomorphism in the first
column and a surjection in the second column.
This shows that the approximation Curtis uses can be regarded as
the good truncation of an approximation arising from Goodwillie's
calculus. The good truncation we used produces $H_0$, so in this
sense, Curtis's approximation $G/\Gamma_{r+1} G$ is $H_0$ of the
approximation used by Goodwillie calculus. Looking at
this another way, the Goodwillie tower can be thought of as the
derived functor of the quotient by the lower central series.
From the Blakers-Massey Theorem (\ref{thm:blakers-massey}), the
functor $G(X) \simeq \Omega (X)$ satisfies the stable excision
condition $E_n(n-1)$. This implies that for $X$ simply connected, 
the map $G(X)\rightarrow P_r G(X)$ is $(r-1)$-connected. Comparing
this with Curtis's result, we see that the higher columns in the
spectal sequence make a tremendous difference; they raise the 
connectivity from roughly $\log_2 r$ up to roughly $r$.






%
%
\chapter{Different Theories Of Calculus}
\label{chap:different-theories}
In this chapter, we present a way of combining Goodwillie's excisive
calculus and the additive calculus, and show there is a filtration of
distinct ``theories'' beginning with the additive calculus and ending
in the excisive calculus. The additive calculus is in many cases
easier to work with than the excisive calculus, since it involves no
suspensions. In one respect, this filtration shows how many
suspensions are necessary before the additive and excisive
approximations become the same.

The building block for the excisive calculus is the $T_n$ functor that
increases $n$-excisiveness. For additive calculus
(recall Section~\ref{sec:additive-calculus}), the ``building block''
appears to be $T_n (L_0 F_X)$; that is, $T_n$ applied to the left Kan
extension of $F$ along coproducts of $X$. In the case $n=1$, we have
$T_1 F(X) = \Omega F(S^1 \wedge X) $ and $T_1 (L_0 F_X) (S^0) = \Omega
\realization{F ( S^1_\cdot \wedge X) }$.  However, if the latter functor is
iterated, it produces $\Omega\realization{ \Omega\realization{
    F(S^1_\cdot \wedge S^1_\cdot \wedge X)}}$, which need not have a
good behavior with respect to coproducts. 
To produce the desired additive approximation, one must
apply all of the iterations of  $T_n$ to the same left Kan extension;
\emph{i.e.}, $T_n^k (L_0 F_X) (S^0)$.

This limits the number of serious candidates for interpolating between
additive and excisive calculus to just a few. Beginning with the
sequence of natural transformations
$$ \text{Id} \rightarrow T_n \rightarrow T_n^2 \rightarrow \cdots
\rightarrow P_n ,$$
we can either apply $\Pnd$ to this sequence, or apply each
functor in the sequence to $\Pnd$.
Applying $\Pnd$ to the stabilization sequence is a good approach
because it isolates complicated behavior that spreads across
dimensions. 
The stabilization of $\Pnd$ (that is, applying this sequence of
functors to $\Pnd$) is also interesting, and we will say some things
about it. Actually, the two are very closely related; for functors
whose target category is spectra, they are equivalent. However, for
functors whose target category is spaces, they are not.

\section{Background And Basic Facts}

\begin{lemma}
\label{lem:F-commutes-with-real-Pnd-is-Pn}
If $F$ commutes with realizations and satisfies the limit axiom, then
$\Pnd F(X) \simeq P_n F(X)$. 
\end{lemma}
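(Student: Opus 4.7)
The plan is to factor the equivalence $\Pnd F(X) \simeq P_n F(X)$ into two steps: first identify $L F_X$ with $F_X$ as functors from spaces to spaces, so that applying $P_n$ yields $\Pnd F(X) = P_n(L F_X)(S^0) \simeq P_n(F_X)(S^0)$; and then compare $P_n(F_X)(S^0)$ with $P_n F(X)$ by matching up the cubes that appear in the iterated $T_n$ construction.

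For the first step I plan to verify three properties shared by $L F_X$ and $F_X$. By Proposition~\ref{prop:left-kan-agrees-with-orig}, $L F_X$ agrees with $F_X$ on the subcategory $\cat{C}$ of finite coproducts of $S^0$. Both functors commute with realizations of simplicial sets: $L F_X$ by Proposition~\ref{prop:left-kan-commutes-with-some-realizations} applied with $a = 0$, and $F_X$ because $F$ does so by hypothesis and smash commutes with colimits. Both satisfy the limit axiom: $L F_X$ by Lemma~\ref{lem:left-Kan-limit-axiom}, and $F_X$ because $F$ does (write $X$ itself as a filtered colimit of its finite subcomplexes to reduce to the finite-complex case of the axiom). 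Now every pointed space $Y$ is the realization of its singular complex $\Sing Y$, each simplicial level of which is a discrete pointed set, and every discrete pointed set is a filtered homotopy colimit of finite ones. These three properties together force $L F_X(Y) \simeq F_X(Y)$: first on discrete sets by the limit axiom and agreement on $\cat{C}$, and then on arbitrary $Y$ by commuting with realization of $\Sing Y$.

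For the second step I plan to unfold the definitions. One has $T_n F_X(S^0) = \holim_{U \in \Power_0([n])} F(X \wedge (S^0 * U))$, while $T_n F(X) = \holim_{U \in \Power_0([n])} F(X * U)$. Both cubes $U \mapsto X \wedge (S^0 * U)$ and $U \mapsto X * U$ are strongly co-Cartesian $[n]$-cubes of pointed spaces with common initial vertex $X$ and contractible first-order vertices (since $X \wedge C S^0 \simeq \basept$ and $CX \simeq \basept$). A strongly co-Cartesian cube whose spokes are all contractible is determined up to natural equivalence by its initial vertex, so the two cubes are equivalent and $T_n F_X(S^0) \simeq T_n F(X)$. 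Iterating this comparison and passing to the sequential colimit in the definition of $P_n$ then yields $P_n F_X(S^0) \simeq P_n F(X)$.

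The hard part will be the homotopy-coherent iteration in this last step: at the $k$-th stage one must compare the multi-indexed diagram $X \wedge \bigl(\cdots ((S^0 * U_1) * U_2) \cdots * U_k\bigr)$ against $\bigl(\cdots ((X * U_1) * U_2) \cdots * U_k\bigr)$, splicing the one-step equivalences of cubes into an equivalence of $k$-fold iterated cubes with enough naturality for the sequential colimits in $k$ to match.
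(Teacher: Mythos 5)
Your proposal is correct and follows essentially the same route as the paper: the paper identifies $L F_X$ with $F_X$ by exactly your chain (agreement on finite sets, the limit axiom to pass to all discrete sets, then levelwise agreement on simplicial sets plus the Realization Lemma and commuting with realization), and then concludes $P_n(F_X)(S^0) \simeq P_n F(X)$. The only difference is that the paper treats your second step --- the comparison of the strongly co-Cartesian cubes $U \mapsto X \wedge (S^0 * U)$ and $U \mapsto X * U$ and its iteration through $T_n^k$ --- as immediate, so the coherence issue you flag as the hard part is simply elided there.
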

\begin{proof}
We begin with $\Pnd F(X) = P_n (L_0 F_X)(S^0)$. Now if $F$ commutes
with realizations, then $L_0 F_X \simeq F_X$ because they agree on
finite sets, and hence all discrete sets by the limit axiom. This
means that they agree levelwise on all simplicial sets, and hence also
after realization (Lemma~\ref{lem:realization-lemma}).  The right hand
side is therefore equivalent to $P_n F(X)$.
\end{proof}

\begin{lemma}
\label{lem:loop-commutes-with-PoneD}
If $\pi_0 F$ is additive and $F(X)$ is connected or a grouplike
$H$-space, then $P_1^d (\Omega F(X)) \simeq \Omega P_1^d F(X)$.
\end{lemma}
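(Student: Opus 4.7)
The plan is to use the explicit description
$P_1^d G(X) \simeq \colim_n \Omega^n \realization{G(X \wedge S^n_\cdot)}$
for reduced $G$ derived at the end of Section~\ref{sec:additive-calculus}. First I would reduce to the case where $F$ is reduced by replacing $F$ with $\widetilde{F}(X) := \hofib(F(X) \to F(0))$; this is compatible with both $\Omega$ (which preserves fibers) and with $P_1^d$ (which preserves fibrations under our hypotheses by Corollary~\ref{cor:Pnd-group-connected-base}), and since the constant piece is fixed by $\Omega$ and by $P_1^d$, both sides of the claimed equivalence agree with their reduced counterparts. Note that $\Omega F$ is automatically a grouplike $H$-space, so the hypotheses on $F$ transfer.

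Applying the formula to $G = \Omega F$, and using that $\Omega$ commutes with sequential directed colimits of equivalences up to weak equivalence, we obtain $P_1^d(\Omega F)(X) \simeq \colim_n \Omega^n \realization{\Omega F(X\wedge S^n_\cdot)}$ and $\Omega P_1^d F(X) \simeq \colim_n \Omega^{n+1} \realization{F(X\wedge S^n_\cdot)}$. The claim therefore reduces to establishing, for each $n \ge 0$, that $\realization{\Omega F(X\wedge S^n_\cdot)} \simeq \Omega \realization{F(X\wedge S^n_\cdot)}$. I would obtain this by applying the levelwise path-loop fibration $\Omega F(-) \to PF(-) \to F(-)$ at each level $X \wedge S^n_k$; since $\realization{PF(X\wedge S^n_\cdot)}$ is contractible, it suffices to verify that the realization remains a fibration.

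Using the iterated simplicial model $S^n_\cdot = (S^1_\cdot)^{\wedge n}$ with $S^1_k = \bigvee^k S^0$, each $X \wedge S^n_k$ is a wedge of copies of $X$, so additivity of $\pi_0 F$ identifies $\pi_0 F(X\wedge S^n_k)$ as a power of the group $\pi_0 F(X)$ (trivial in the connected case, possibly nonabelian in the grouplike case). In the connected case, each $F(X\wedge S^n_k)$ is connected, so Waldhausen's Lemma~\ref{lem:Waldhausen-fibration-lemma} yields the desired fibration after realization. In the grouplike $H$-space case, the simplicial space $F(X\wedge S^n_\cdot)$ has each level a simple space with group $\pi_0$, and its simplicial $\pi_0$ is a simplicial group; combining Lemma~\ref{lem:identify-pistar-kan} with the strategy of Corollary~\ref{cor:grouplike-H-spaces-pi-star} yields the $\pi_*$-Kan condition, and the induced map on $\pi_0$ is trivially a fibration onto the contractible base, so Theorem~\ref{thm:Bousfield-Friedlander} (Bousfield--Friedlander) applies.

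The main obstacle will be the grouplike case: the delicate point is rigorously producing the $\pi_*$-Kan condition for $F(X\wedge S^n_\cdot)$ from additivity of $\pi_0 F$ and the $H$-space structure on $F(X)$ alone, since $F$ of a wedge $\bigvee^k X$ is not a priori an $H$-space. I expect that one must exploit the additivity to identify $F(\bigvee^k X)$ up to homotopy with a suitable construction on $F(X)$ compatible with the simplicial structure, or alternately invoke Lemma~\ref{lem:identify-pistar-kan} directly by verifying that $[S^t, F(X\wedge S^n_\cdot)] \to \pi_0 F(X\wedge S^n_\cdot)$ is a fibration using the group structure transferred from the $H$-space $F(X)$.
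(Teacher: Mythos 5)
Your route is genuinely different from the paper's, and the connected case of your argument is sound. The paper never invokes the colimit formula: it compares the spectral sequences of Theorem~\ref{thm:additive-spectral-sequence} for $F$ and for $\Omega F$. Since the cross-effect cubes have compatible sections, $\Perp$ commutes with $\pi_k$ (Lemma~\ref{lem:pi-k-perp-is-perp-pi-k}), so
$\pi_t^v \Perp_2^{*+1}(\Omega F)(X) \cong \Perp_2^{*+1}\pi_{t+1}F(X) \cong \pi_{t+1}^v \Perp_2^{*+1}F(X)$;
hence the two $E^2$ pages agree up to the shift $t\mapsto t+1$ and the loss of the bottom row, and the natural comparison map $P_1^d(\Omega F)(X)\rightarrow \Omega P_1^d F(X)$ is an equivalence away from the $\pi_0$ discrepancy. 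What that argument buys is that nothing ever has to be commuted past a realization at the space level: the whole comparison happens on homotopy groups, where the section maps split everything, and the fibrancy issues are confined to the one citation of Theorem~\ref{thm:additive-spectral-sequence}. Your approach instead must commute $\Omega$ with $\realization{-}$ level by level, which is exactly where the space-level fibrancy hypotheses bite.

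The gap is where you flag it, and it is genuine. To apply Theorem~\ref{thm:Bousfield-Friedlander} to the levelwise path--loop fibration you need the simplicial space $[k]\mapsto F(X\wedge S^n_k)$ to satisfy the $\pi_*$-Kan condition (\ref{def:pi-star-Kan}), and neither hypothesis of the lemma supplies this: additivity of $\pi_0 F$ controls only $\pi_0$ of the levels, while the grouplike $H$-space structure is given on $F(X)$ alone, not on $F(\bigvee^k X)$. Both Corollary~\ref{cor:grouplike-H-spaces-pi-star} and Lemma~\ref{lem:identify-pistar-kan} require every level to be a simple space (in fact a grouplike $H$-space compatibly with the simplicial structure maps), and under your hypotheses $F(\bigvee^k X)$ need not even be simple; identifying the $\pi_0$ simplicial set as a Kan complex (a nerve of the group $\pi_0 F(X)$) does not address the lifting condition on $\pi_t$ for $t\ge 1$. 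The repair is either to strengthen the hypothesis to something like Hypothesis~\ref{hypothesis-2} (or to $F$ taking values in loop spaces, as it does in the application, so that Corollary~\ref{cor:loop-spaces-pi-star} applies), or to abandon the space-level commutation and retreat to homotopy groups via the spectral sequence, which is the paper's solution.
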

\begin{proof}
Under these conditions, the spectral sequence of
Theorem~\ref{thm:additive-spectral-sequence} converges. There is a map
$P_1^d (\Omega F(X))\rightarrow \Omega P_1^d F(X)$ comparing the two
functors. Comparing the spectral sequence for $P_1^d (\Omega F(X))$
and $P_1^d F(X)$, we see that they are the same except for a
dimension shift and the difference in $\pi_0$ that is lost when
$\Omega$ is applied.
\end{proof}

\begin{definition}
For convenience, let us define $P_n^{(a)}$ to denote the functor $P_n^d
T_n^a$. 
\end{definition}

\begin{proposition}
Let $F$ be a reduced $r$-analytic functor from spaces to spaces
satisfying the limit axiom (\ref{def:limit-axiom}), 
with universal analyticity constant $c$ as in
Theorem~\ref{thm:analytic-functor-commutes-with-realization}. 
Let $M= \max(r,-c)$. Then:
\begin{enumerate}
\item $P_n^d F(X) \simeq P_n F(X)$ if $X$ is at least $M$-connected.
\item $P_n^{(m)} F \simeq P_n F$ for all for all $m > M$; and
\end{enumerate}
\end{proposition}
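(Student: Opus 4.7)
The plan is to reduce both parts to Lemma~\ref{lem:F-commutes-with-real-Pnd-is-Pn}, together with the general identification $P_n H_X(S^0) \simeq P_n H(X)$ valid for any functor $H$. This last identity rests on the natural homotopy equivalence of $\Power_0([n])$-cubes $X \wedge (S^0 * U) \simeq X * U$ (both sides being homotopy equivalent to $\bigvee^{\abs{U}-1} \Sigma X$), which identifies $T_n H_X(S^0)$ with $T_n H(X)$; iterating and passing to filtered colimits then yields the identification for $P_n$.

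For part~(1), assume $X$ is $M$-connected. For any pointed simplicial set $Y_\cdot$, the simplicial space $X \wedge Y_\cdot$ is levelwise $M$-connected (each $Y_k$ is nondegenerately based), so Theorem~\ref{thm:analytic-functor-commutes-with-realization} gives $F(X \wedge \realization{Y_\cdot}) \simeq \realization{F(X \wedge Y_\cdot)}$. This is precisely the statement that $L F_X \to F_X$ is a weak equivalence of functors of $Y$. Applying $P_n$ and evaluating at $S^0$ then yields $P_n^d F(X) \simeq P_n F_X(S^0) \simeq P_n F(X)$ by the first paragraph.

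For part~(2), the strategy is to show that $L(T_n^m F)_X \simeq (T_n^m F)_X$ for \emph{every} $X$, now using the connectivity boost that $T_n^m$ itself provides rather than any hypothesis on $X$. Since $T_n^m F$ evaluates $F$ only on spaces of the form $Z * \vec{U}$, which are at least $(m-1)$-connected for nonempty $Z$, and since $m > M$, these inputs are always at least $M$-connected. Comparing
\[
L(T_n^m F)_X(Y) = \realization{\holim_{\vec{U}} F(X \wedge Y_\cdot * \vec{U})}
\quad\text{versus}\quad
(T_n^m F)_X(Y) = \holim_{\vec{U}} F(\realization{X \wedge Y_\cdot * \vec{U}}),
\]
the inner realization commutes with $F$ at each level (each $X \wedge Y_k * \vec{U}$ is $M$-connected), and the finite $\holim_{\vec{U}}$ commutes with the simplicial realization by an iterated application of Theorem~\ref{thm:Bousfield-Friedlander}: at each vertex, $F(X \wedge Y_k * \vec{U})$ is connected (since $F$ is reduced and its input is connected), so the $\pi_*$-Kan condition holds. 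Once $L(T_n^m F)_X \simeq (T_n^m F)_X$ is established, the argument from part~(1) yields $P_n^d T_n^m F(X) \simeq P_n(T_n^m F)(X)$, and $P_n(T_n^m F) \simeq P_n F$ because $P_n F = \colim_k T_n^k F$ and $P_n(T_n^m F) = \colim_k T_n^{k+m} F$ share a cofinal subdiagram.

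The main obstacle I expect is the interchange of the finite homotopy limit with the simplicial realization in part~(2); making Bousfield-Friedlander go through cleanly across the entire cube $\Power_0([n])^m$ requires iterating the pullback version of the theorem and tracking the $\pi_*$-Kan condition at each intermediate stage, but the connectivity furnished by $m > M$ should be enough to validate the required $\pi_0$-fibration hypotheses at every step.
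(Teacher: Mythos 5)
Your argument is essentially the paper's: part (1) is Theorem~\ref{thm:analytic-functor-commutes-with-realization} plus Lemma~\ref{lem:F-commutes-with-real-Pnd-is-Pn}, and part (2) observes that $T_n^m F$ only samples $F$ on $(m-1)$-connected spaces and hence commutes with all realizations, after which the same lemma and cofinality of $\Set{T_n^{k+m}F}$ in $\Set{T_n^k F}$ finish the proof. You are more careful than the paper about the interchange of the finite $\holim$ over the punctured cube with realization, which is a genuine point worth addressing; however, your justification there is off: a reduced functor need not take connected spaces to connected values (e.g.\ $\Omega$), so the connectivity of the vertices $F(X\wedge Y_k * \vec{U})$ does not follow from reducedness alone. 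It does follow from analyticity, since the inputs are at least $M=\max(r,-c)$-connected and the layers $D_j F$ are then connected (this is exactly the connectivity estimate in the proof of Theorem~\ref{thm:analytic-functor-commutes-with-realization}), so the $\pi_*$-Kan hypotheses you need do hold once the reason is corrected.
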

\begin{proof}
Theorem~\ref{thm:analytic-functor-commutes-with-realization} shows
that $F$ commutes with realizations of simplicial $M$-connected
spaces. Lemma~\ref{lem:F-commutes-with-real-Pnd-is-Pn} then shows that
$P_n^d F(X) \simeq P_n F(X)$ for $X$ at least
$M$-connected. 
The space $T_n F(X)$ is computed by evaluating a homotopy inverse limit
of a diagram involving suspensions of $X$. If $X$ is $k$-connected,
then $T_n F(X)$ depends only on $F$ evaluated on $(k+1)$-connected
spaces. Hence $T_n^m F(X)$ depends only on $F$ evaluated on
$(k+m)$-connected spaces. The minimum connectivity of a pointed space
is $-1$, so $T_n^m F$ depends only on $F$ on $(m-1)$-connected
spaces, and hence under the condition that $m>M$, it commutes with
realizations. Then the preceding paragraph shows that $P_n^{(m)} F =
P_n^d T_n^m F \simeq P_n T_n^m F \simeq P_n F$, as desired.
\end{proof}

\section{The Hilton-Milnor Theorem And Whitehead Products}

In order to make computations, we will need to know a bit about
Whitehead products and make use of the Hilton-Milnor theorem.

Given $\alpha\in \pi_n X$ and $\beta\in\pi_m X$, there is a product
$[\alpha,\beta]$ called the Whitehead product that behaves much like a
Lie bracket. For details, see
\cite[\S IX.7]{Whitehead:elements-of-homotopy-theory}.

To define the Whitehead product, consider $\alpha$ as a map of pairs
$(D^n, \partial D^n) \rightarrow (X, \basept{})$, and $\beta$
similarly. If $X$ were an $H$-space, we could define a map on the
product $D^n \times D^m$ by $(x,y) \mapsto \mu(\alpha(x),\beta(y))$.
Since that may not be the case, we can consider the map from $S^{n+m-1}
\cong \partial (D^n
\times D^m) = (\partial D^n)\times D^m \cup D^n \times (\partial D^m)$
given by $\alpha$ on $D^n$ or $\beta$ on $D^m$. This definition makes
sense since $\alpha
\rvert_{\partial D^n} = \basept{}$, and similarly $\beta$, so either
one or the other is $\basept{}$ for every point in the domain.
This defines a map $[\alpha,\beta]: S^{n+m-1} \rightarrow
X$. This element of $\pi_{n+m-1} X$ is called the Whitehead product of
$\alpha$ and $\beta$.

The main fact about Whitehead products that we will make use of is
that they are graded commutative.
\begin{lemma}
(\cite[\S X.7.5, p.~474]{Whitehead:elements-of-homotopy-theory})
If $\alpha\in \pi_n X$ and $\beta\in\pi_m X$, then $[\beta,\alpha] =
(-1)^{n m} [\alpha,\beta]$.
\end{lemma}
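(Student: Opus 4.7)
The plan is to compute directly from the definition given in the excerpt, using the fact that the factor-swap homeomorphism $\tau: D^n \times D^m \to D^m \times D^n$, when restricted to the boundary sphere, is a self-map of $S^{n+m-1}$ of degree $(-1)^{nm}$.

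First I would observe that the definition of $[\alpha,\beta]$ uses the identification $S^{n+m-1} \cong \partial(D^n \times D^m)$ together with the map $\partial(D^n \times D^m) \to X$ that equals $\alpha \circ \mathrm{pr}_1$ on $D^n \times \partial D^m$ and $\beta \circ \mathrm{pr}_2$ on $\partial D^n \times D^m$ (these agree on the overlap $\partial D^n \times \partial D^m$ because both composites are constant at the basepoint). Symmetrically, $[\beta,\alpha]$ is defined by $\beta \circ \mathrm{pr}_1$ on $D^m \times \partial D^n$ and $\alpha \circ \mathrm{pr}_2$ on $\partial D^m \times D^n$. Thus if $\tau: D^n \times D^m \to D^m \times D^n$ denotes the swap $(x,y) \mapsto (y,x)$, then the two defining maps out of the respective boundary spheres fit into a commutative square
\[
\xymatrix{
\partial(D^n \times D^m) \ar[r]^-{\tau|_\partial} \ar[d]_{[\alpha,\beta]} & \partial(D^m \times D^n) \ar[d]^{[\beta,\alpha]} \\
X \ar@{=}[r] & X
}
\]
so that $[\beta,\alpha] = [\alpha,\beta] \circ (\tau|_\partial)^{-1}$ as elements of $\pi_{n+m-1}X$, up to the signs introduced by the identifications of boundaries with $S^{n+m-1}$.

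The key computation is therefore to determine the degree of $\tau|_\partial$ as a self-map of $S^{n+m-1}$. I would identify $\partial(D^n \times D^m)$ with $S^{n+m-1}$ by collapsing an appropriate contractible piece, and reduce the question to the stable statement that the swap $S^n \wedge S^m \to S^m \wedge S^n$ has degree $(-1)^{nm}$. That stable fact is itself proven by decomposing the swap as a composition of $nm$ transpositions of adjacent suspension coordinates, each of which is the antipodal map on one circle factor and hence has degree $-1$.

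Putting the pieces together, $(\tau|_\partial)^{-1}$ (or equivalently $\tau|_\partial$) acts by multiplication by $(-1)^{nm}$ on $\pi_{n+m-1}(X)$, yielding $[\beta,\alpha] = (-1)^{nm}[\alpha,\beta]$. The main (and essentially only) subtle point is the bookkeeping of orientations: matching the chosen homeomorphisms $S^{n+m-1} \cong \partial(D^n \times D^m)$ and $S^{n+m-1} \cong \partial(D^m \times D^n)$ so that the sign really is $(-1)^{nm}$ and not its negative. Since the proof is classical and the reference \cite[\S X.7.5]{Whitehead:elements-of-homotopy-theory} already contains the careful sign analysis, I would simply cite it rather than reproduce the diagram chase in full.
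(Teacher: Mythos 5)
Your proposal is correct: the paper itself gives no proof of this lemma beyond the citation to Whitehead, and your sketch of the classical argument (reducing graded commutativity to the degree $(-1)^{nm}$ of the factor-swap on $\partial(D^n\times D^m)\cong S^{n+m-1}$) is the standard one and is sound. Deferring the orientation bookkeeping to \cite[\S X.7.5]{Whitehead:elements-of-homotopy-theory} is exactly what the paper does.
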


For our purposes, very little of the full strength of the
Hilton-Milnor theorem will be needed, so we will give as few details
as possible.
\begin{theorem}[Hilton-Milnor]
(\cite[\S XI.6, Theorem~6.6]{Whitehead:elements-of-homotopy-theory})
The space $\bigvee^k S^{n}$, for $n>1$, has the
same homotopy type as the weak product $\prod S^{w_j}$, where $w_j$ is
a sequence of integers beginning with $w_j = n$ for $j=1, \ldots, k$,
then increasing in steps of $n-1$. The equivalence sends $S^{w_j}$ to
a certain iterated Whitehead product, with the weight $m$ Whitehead
product corresponding to a sphere of dimension $S^{n+(m-1) (n - 1)}$.
\end{theorem}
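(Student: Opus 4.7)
The statement is the classical Hilton--Milnor theorem (quoted from Whitehead), most naturally read as an equivalence of loop spaces $\Omega \bigvee^k S^n \simeq \prod_j \Omega S^{w_j}$ (weak product), realized via iterated Whitehead products $S^{w_j} \to \bigvee^k S^n$ that loop to generators. The plan is to assemble this via the James construction together with an inductive splitting of loop spaces of wedges.

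First, I would replace $\bigvee^k S^n$ with $\Sigma(\bigvee^k S^{n-1})$ and identify $\Omega \bigvee^k S^n$ with the James reduced product $J(\bigvee^k S^{n-1})$ using James's classical theorem $\Omega\Sigma X \simeq J(X)$ for connected $X$. This gives an explicit combinatorial model whose word-length filtration has successive quotients $X^{\wedge r}$, and for $X = \bigvee^k S^{n-1}$ these smash powers break up as wedges of smash products of individual summands, each of which is again a sphere. This already identifies the \emph{list} of sphere dimensions that must appear.

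The central step is the James--Milnor splitting: for pointed connected $X$, $Y$ there is a natural equivalence
$$\Omega\Sigma(X \vee Y) \;\simeq\; \Omega\Sigma X \,\times\, \Omega\Sigma\bigl( Y \wedge (\Omega\Sigma X)_+ \bigr),$$
which I would derive from the principal fibration obtained by pulling back the path-loop fibration over $\Sigma X$ along the pinch map $\Sigma(X \vee Y) \to \Sigma X$, together with the action of $\Omega\Sigma X$ trivializing the resulting bundle. Iterating this splitting (feeding $X = S^{n-1}$, $Y = \bigvee^{k-1} S^{n-1}$, and then recursing into the $Y$-factor) peels off one free Lie-word worth of spherical factors at a time and produces, in the limit, a weak product decomposition with one factor $\Omega S^{w_j}$ for each basis word.

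To bookkeep the combinatorics, I would fix a Hall basis for the free graded Lie algebra $\mathcal{L}$ on $k$ generators $x_1, \dotsc, x_k$ of degree $n-1$. A Hall word $w$ of weight $m$ corresponds to an iterated Whitehead product of inclusions $\iota_i : S^n \hookrightarrow \bigvee^k S^n$ taken in the specified nested order, living in $\pi_{n + (m-1)(n-1)}\bigl(\bigvee^k S^n\bigr)$; this is the element that represents $S^{w_j}$ in the statement. Graded commutativity of the Whitehead product $[\alpha,\beta] = (-1)^{|\alpha||\beta|}[\beta,\alpha]$ and the Jacobi identity for Whitehead products exactly match the relations defining $\mathcal{L}$, so the Hall-word maps $S^{w_j} \to \bigvee^k S^n$ are canonical.

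The main obstacle is verifying that the map $\prod_j \Omega S^{w_j} \to \Omega \bigvee^k S^n$ assembled from these Whitehead-product generators is actually a weak equivalence rather than merely inducing a surjection. The cleanest route is the Bott--Samelson computation: via the James model, $H_*(\Omega \bigvee^k S^n;\mathbb{Z})$ is the tensor algebra on $k$ generators in degree $n-1$, which is the universal enveloping algebra of the free graded Lie algebra $\mathcal{L}$. By Poincar\'e--Birkhoff--Witt, a $\mathbb{Z}$-basis of this enveloping algebra is given by monomials in the Hall basis, and this matches, factor by factor, the K\"unneth decomposition of $H_*\bigl(\prod_j \Omega S^{w_j}\bigr)$ (tensor product of divided-power/polynomial algebras on the Hall generators). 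Checking that the Whitehead-product maps induce the PBW inclusion on homology then promotes to a weak equivalence since all spaces in sight are simply connected (using $n > 1$).
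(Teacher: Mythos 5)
The paper does not prove this theorem: it is quoted verbatim from Whitehead with a citation and used as a black box, so there is no in-paper argument to compare yours against. Your sketch is essentially the standard proof of Hilton--Milnor (and is close to the one in the cited reference itself): reduce to loop spaces via James's theorem $\Omega\Sigma X \simeq J(X)$, iterate the splitting $\Omega\Sigma(X\vee Y)\simeq \Omega\Sigma X\times\Omega\Sigma(Y\wedge(\Omega\Sigma X)_+)$ to peel off Hall-basis factors, and verify the resulting map is an equivalence by Bott--Samelson plus PBW. Two remarks. First, you correctly repair the statement: as written in the paper it asserts a homotopy equivalence of the spaces themselves, which is false ($\bigvee^k S^n$ is not a product of spheres); the theorem is about the loop spaces, or equivalently a weak-product decomposition of $\Omega\bigvee^k S^n$ with the Whitehead products supplying the maps $S^{w_j}\to\bigvee^k S^n$ whose adjoints generate the factors. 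Second, the thinnest point of your outline is the identification of the homotopy fiber of the pinch map $\Sigma(X\vee Y)\to\Sigma X$ with $\Sigma\bigl(Y\wedge(\Omega\Sigma X)_+\bigr)$; the existence of a section only splits the fibration of loop spaces, and identifying the fiber requires a genuine argument (clutching over the cone decomposition of $\Sigma X$, or Mather's cube theorem, or an explicit James-style filtration). Relatedly, when $n$ is even the Lie generators sit in odd degree, so the integral PBW/Hall-basis bookkeeping needs the usual care with squares of odd-degree elements; the basic-products formalism handles this, but it is worth flagging rather than eliding. With those two steps filled in, the argument is complete.
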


\begin{lemma}
\label{lem:degree-of-pi-m-s-n}
  The functor $\pi_{m}(S^{n} \wedge -)$ is degree 
  $k = \lfloor{(m-1)/(n-1)} \rfloor$.
\end{lemma}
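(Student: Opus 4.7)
The plan is to show that $F(X) = \pi_m(S^n \wedge X)$ is $k$-additive in the sense of Definition~\ref{def:n-additive} (assuming $n \geq 2$; for $n=1$ the statement is vacuous since $k = \infty$). Since $S^n \wedge X = \Sigma^n X$ is a suspension, $F$ lands in \emph{abelian} groups. In particular, every cube $F\cube{X}^X_{k+1}$ has compatible section maps to all structure maps (Lemma~\ref{lem:crn-cube-has-sections}), so by Lemma~\ref{lem:pi-k-perp-is-perp-pi-k} and Corollary~\ref{cor:discrete-groups-cartesian}, the cube is Cartesian if and only if the iterated strict kernel vanishes. Reindexing these kernels in the usual way for abelian-group-valued functors, it suffices to prove that the multi-linear cross-effect $cr_{k+1}F(X_1,\ldots,X_{k+1})$ is zero for every choice of pointed spaces $X_i$.

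I would then unpack $F(X_1 \vee \cdots \vee X_{k+1}) = \pi_m(\bigvee_i S^n \wedge X_i)$ using the James--Milnor extension of the Hilton--Milnor theorem quoted in the excerpt. Writing $S^n \wedge X_i = \Sigma(S^{n-1} \wedge X_i)$, that decomposition yields
\[
\pi_m\Bigl(\bigvee_{i=1}^{k+1} S^n \wedge X_i\Bigr)
\;\cong\;
\bigoplus_{\alpha} \pi_m\bigl(S^{w(\alpha)(n-1)+1} \wedge X^{\wedge \alpha}\bigr),
\]
where $\alpha$ ranges over a Hall basis of basic (iterated Whitehead) products in $k+1$ letters, $w(\alpha)$ denotes the weight of $\alpha$, and $X^{\wedge \alpha}$ is the smash product of the $w(\alpha)$ factors $X_{i_1},\ldots,X_{i_{w(\alpha)}}$ indexed by the letters appearing in $\alpha$. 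The splitting is natural in the $X_i$ and compatible with the section maps induced by collapsing any $X_i$ to the basepoint, so a summand is annihilated by one of these collapses precisely when its $\alpha$ fails to use the corresponding letter.

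From this compatibility, the cross-effect $cr_{k+1}F(X_1,\ldots,X_{k+1})$ is exactly the direct summand running over those $\alpha$ whose support is all of $\{1,\ldots,k+1\}$; such $\alpha$ necessarily have weight $w(\alpha) \geq k+1$. A straightforward connectivity count then finishes the job: $S^{n-1} \wedge X_i$ is $(n-2)$-connected, its $w$-fold smash is $(wn-w-1)$-connected, and suspending once yields a $w(n-1)$-connected space. For $w \geq k+1$ we have $w(n-1) \geq (k+1)(n-1) \geq m$, the last inequality being equivalent to $k = \lfloor (m-1)/(n-1) \rfloor$. Hence every summand contributing to $cr_{k+1}F$ has $\pi_m = 0$.

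The main obstacle is the first step: the Hilton--Milnor statement reproduced in the excerpt is given only for wedges of spheres, whereas I need the version for wedges of arbitrary suspensions (which I apply to $\Sigma(S^{n-1} \wedge X_i)$). This is the classical James--Milnor generalization and is standard, but verifying that the splitting is natural enough to identify the cross-effect summand with the full-support basic products requires a small bookkeeping argument tracking which letters each $\alpha$ uses; aside from this, the connectivity arithmetic is routine.
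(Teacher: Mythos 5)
Your proof is correct and uses the same two ingredients as the paper's: a Hilton--Milnor decomposition of the homotopy of the wedge into summands indexed by basic products, followed by the count showing that a weight-$w$ product contributes a $w(n-1)$-connected summand, so that weight $\ge k+1$ forces $\pi_m=0$ because $(k+1)(n-1)\ge m$. The difference is completeness rather than method: the paper's two-sentence argument only treats wedges of spheres $\bigvee S^n$ (which is all that is used later, since the functor is evaluated at $S^0$ there), whereas you invoke the James--Milnor generalization for $\bigvee \Sigma(S^{n-1}\wedge X_i)$ and use naturality of the splitting under the collapse maps to identify $cr_{k+1}F(X_1,\ldots,X_{k+1})$ with the full-support summands. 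That extra work is exactly what is needed to prove the lemma as literally stated (additivity degree of the functor on arbitrary $X$), and your reduction through the group-valued, compatible-sections machinery is the right way to pass from vanishing cross effects to Cartesian cubes. Two minor points: as with the paper's own statement of Hilton--Milnor, one needs $n\ge 2$ so that the $S^{n-1}\wedge X_i$ are connected (you flag this), and, like the paper, you establish only the upper bound on the degree; the sharpness used later (\emph{e.g.}\ $\Perp_2\pi_{2n-1}(S^n\wedge -)\cong\Z$) is checked separately in the example.
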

\begin{proof}
  The Hilton-Milnor Theorem implies that the homotopy $\pi_{m}$ of
  $\bigvee S^{n}$ is determined by the basic products of
  weight at most $k$. Verification involves checking that the
  a product of weight $\le k$ corresponds to a sphere of dimension $k
  (n-1)+1 \le m$, and that a product of weight $k+1$ corresponds to a
  sphere of dimension $(k+1)(n-1)+1 = k(n-1) + n > m$.
\end{proof}
\begin{example}
\label{ex:pi-2n-1-s-n}
In particular, Lemma~\ref{lem:degree-of-pi-m-s-n} shows that
$\pi_{2n-1} (S^n \wedge -)$ is the lowest homotopy group of $(S^n
\wedge -)$ that is degree $2$. Computing $\pi_3 (S^2 \vee S^2)$, for
example, we find two copies of $\pi_3(S^2) \cong \mathbb{Z}$ and one
copy of $\pi_3 S^3 \cong \mathbb{Z}$. Therefore, $\Perp_2 \pi_3(S^2
\wedge -) \cong \mathbb{Z}$. Actually, for every $n$, we have the
same: $\Perp_2 \pi_{2n-1} (S^n \wedge -) \cong \mathbb{Z}$, since the
first new sphere in $S^n \vee S^n$ corresponds to the Whitehead
product $[i_1, i_2]$ of the two inclusions $S^n \rightarrow S^n \vee
S^n$, and is a copy of $S^{2n-1}$. We will use this soon in our
computations. 
\end{example}

\section{Examples Showing Theories Are Distinct}

Notice that Lemma~\ref{lem:F-commutes-with-real-Pnd-is-Pn} shows that
under at least some circumstances, $\Pnd F$ and $P_n F$ coincide, so
we need to provide some evidence that they can differ.

\begin{example}
One example that is both trivial and fundamental is the functor $F(X)
= K(H_2(X),2)$, as in Example~\ref{ex:K-2-2}.  This is a functor whose
values are always simply connected, but $P_1 F(X) = 0$ (in fact, the
whole excisive Taylor tower is zero).  We can compute $P_1^d F(S^2)
\cong K(\mathbb{Z},2)$, since $H_2 (\bigvee^k S^2) \cong \oplus^k
\mathbb{Z}$. This shows that $\Pnd$ and $P_n$ can be very different,
and also illustrates that the connectivity of the values of $F$ have
nothing to do with that difference. As an aside, the radius of
convergence of $F$ is $2$; that is, $F$ is equivalent to the inverse
limit of its Taylor tower on $2$-connected spaces (because both are
contractible there). 
\end{example}

While important, the preceding example is not such a satisfying way of
demonstrating a difference between additive and excisive. Notice,
though, that with slight modifications, it does produce one family of
examples for which all of the $P_1^{(a)}$ functors are
distinct. For the family of functors $F_b (X) = K(H_b(X),b)$, we can
compute that $\pi_b P_n^{(a)} F_b (X) \cong H_{b-a} X$. In particular,
for a given $X$ and large enough $a$, this is zero, whereas when $a$
is small relative to the dimension of $X$, there are many spaces $X$
for which it is not zero (\emph{e.g.}, $X = \bigvee_{k=0}^{k=a} S^k$).

Using Theorem~\ref{thm:additive-spectral-sequence}, we will show that
there is another way in which the $P_n^{(a)}$ functors can be
different.  To make the computations tractable, we will work only with
$n=1$.

\begin{example}
Fix an $a$ and consider the functor $F(X) = \Omega^{3a} (S^{a}
\wedge X)$. We will establish that $P_1^{(a)} F \xrightarrow{\not \simeq}
P_1^{(a+1)} F$ by evaluating both at $S^0$.

First, we will show that $P_1^{(a+1)} F \simeq P_1 F$.
\begin{lemma}
\label{lem:example-of-difference-1}
$P_1^{(a+1)} F(S^0) \simeq P_1 F(S^0)$
\end{lemma}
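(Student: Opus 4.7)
The plan is to compute both sides explicitly as multiply-looped copies of the sphere spectrum. First I would unfold $T_1^{a+1}F$ by direct calculation: since $F$ is reduced, $T_1 F(X)=\Omega F(\Sigma X)=\Omega^{3a+1}\Sigma^{a+1}X$, and iterating $a+1$ times gives
\[
T_1^{a+1}F(X)\simeq \Omega^{4a+1}\Sigma^{2a+1}X.
\]

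Next I would commute $\Omega^{4a+1}$ past $P_1^d$ by iterating Lemma~\ref{lem:loop-commutes-with-PoneD} one loop at a time. At the $j$-th step (for $j=0,\ldots,4a$) this amounts to applying that lemma to the functor $F_j=\Omega^{4a-j}\Sigma^{2a+1}$, whose $\pi_0$ is $\pi_{4a-j}(S^{2a+1}\wedge(-))$; Lemma~\ref{lem:degree-of-pi-m-s-n} shows the latter is of degree $\lfloor(4a-j-1)/(2a)\rfloor\le 1$, hence additive. The grouplike-or-connected hypothesis is automatic, since $F_j$ is a loop space for $j<4a$ and is $2a$-connected for $j=4a$. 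Consequently
\[
P_1^{(a+1)}F=P_1^d(\Omega^{4a+1}\Sigma^{2a+1})\simeq \Omega^{4a+1}\,P_1^d\Sigma^{2a+1}.
\]

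Because $\Sigma^{2a+1}=S^{2a+1}\wedge(-)$ commutes with realizations of all simplicial spaces and satisfies the limit axiom, Lemma~\ref{lem:F-commutes-with-real-Pnd-is-Pn} gives $P_1^d\Sigma^{2a+1}\simeq P_1\Sigma^{2a+1}(X)=Q\Sigma^{2a+1}X$. Evaluating at $S^0$ yields $P_1^{(a+1)}F(S^0)\simeq \Omega^{4a+1}QS^{2a+1}$, while a direct calculation gives $P_1F(S^0)=\colim_n\Omega^{3a+n}S^{n+a}=\Omega^{3a}QS^a$. Both simplify to $\Omega^{2a}QS^0$ after pulling the outer loops through $\Omega^\infty$ and using that $\Omega^k\Sigma^k\simeq\mathrm{id}$ on spectra, giving the desired equivalence.

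The main obstacle is the iterated commutation in the second paragraph: one must verify the additivity hypothesis of Lemma~\ref{lem:loop-commutes-with-PoneD} at each of $4a+1$ stages. The key input is Lemma~\ref{lem:degree-of-pi-m-s-n}, which identifies $m=4a+1$ (the dimension of the first Whitehead product in $\pi_{4a+1}(S^{2a+1}\vee S^{2a+1})$, cf.\ Example~\ref{ex:pi-2n-1-s-n}) as the first value of $m$ for which $\pi_m\Sigma^{2a+1}$ fails to be additive. This pinpoints why $a+1$ stabilizations of $F$ suffice, and foreshadows why only $a$ would not.
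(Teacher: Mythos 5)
Your proof is correct and follows essentially the same route as the paper: compute $T_1^{a+1}F = \Omega^{4a+1}(S^{2a+1}\wedge -)$, commute $P_1^d$ past the loops using Lemma~\ref{lem:loop-commutes-with-PoneD} together with the degree bound of Lemma~\ref{lem:degree-of-pi-m-s-n}, and identify $P_1^d(S^{2a+1}\wedge -)$ with $P_1(S^{2a+1}\wedge -)$ via Lemma~\ref{lem:F-commutes-with-real-Pnd-is-Pn}. Your loop-by-loop check of the additivity hypothesis (which needs only that $\pi_{4a-j}(S^{2a+1}\wedge -)$ be linear for $0\le j\le 4a$, not at $4a+1$) is in fact slightly more careful than the paper's one-line citation, and your explicit evaluation at $S^0$ (identifying both sides with $\Omega^{2a}QS^0$) replaces the paper's appeal to the $1$-excisiveness of $P_1F$ in the final step; the two finishes are equivalent.
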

\begin{proof}
  We have:
  \begin{equation*}
    T^{a+1} F = \Omega^{4a+1} (S^{2a+1} \wedge - ) ,
  \end{equation*}
  and $\pi_j (S^{2a+1} \wedge -)$ is linear for $j\le 4a+1$ (by
  Lemma~\ref{lem:degree-of-pi-m-s-n}), so by
  Lemma~\ref{lem:loop-commutes-with-PoneD}, 
  \begin{align*}
    P_1^{(a+1)} F &= P_1^d \Omega^{4a+1} (S^{2a+1} \wedge - )
    \\
    &\simeq  \Omega^{4a+1} P_1^d (S^{2a+1} \wedge - ) , 
           \intertext{and since $(S^{2a+1}\wedge -)$ commutes with
             realizations, by
             Lemma~\ref{lem:F-commutes-with-real-Pnd-is-Pn} this is} 
\\
    &\simeq  \Omega^{4a+1} P_1  (S^{2a+1} \wedge - )  \\
    &\simeq  P_1 F ,
  \end{align*}
where the last equivalence follows because $P_1 F$ is 1-excisive, so
$\Omega^{a+1} P_1 F (S^{a+1} \wedge X) \simeq P_1 F(X)$.
\end{proof}

Next, we will show that $P_1^{(a)} F$ is not equivalent to $P_1 F$. We
will do this by mapping to another functor that \emph{is} equivalent
to $P_1 F$, for exactly the same reason as the functor in
Lemma~\ref{lem:example-of-difference-1}.

The functor $T_1^a F$ is $\Omega^{4a} ( S^{2a} \wedge -)$, which for
our purposes we will view as $\Omega G$, for $G(-) = \Omega^{4a-1}
(S^{2a} \wedge -)$.
Using Lemma~\ref{lem:degree-of-pi-m-s-n}, we see that
$G$ has $\pi_0$ quadratic. In fact,
putting $v=2a$, we see that this functor is the functor $\pi_{2v-1}
(S^v \wedge -)$ from Example~\ref{ex:pi-2n-1-s-n}, so $\pi_0$ of
$\Perp_2 G$ is actually $\mathbb{Z}$. 
Our assertion is that
the map $P_1^d\Omega G \rightarrow \Omega P_1^d G$ is not even $1$-connected,
whereas the latter functor is equivalent to $P_1 F$. To show that this
map is not $1$-connected, we will use the spectral sequence of
Theorem~\ref{thm:additive-spectral-sequence}. For computing with that
spectral sequence, it will be helpful to note that cross effects
commute with taking homotopy groups
(Lemma~\ref{lem:pi-k-perp-is-perp-pi-k}); we will use this fact
without further comment.

Recall from Lemma~\ref{lem:perp-homotopy-orbits-spectrum}
that when $\Perp_{n+1} F \simeq 0$,
 the complex $\Perp^{*+1}_n \mathbf{F}$ has the realization 
$(\Perp_n \mathbf{F})_{h\Sigma_n}$, 
and homotopy orbits can also be expressed as the group
homology of $\Sigma_n$. In our case, $n=2$, and the complex is, as
usual for $H_*(\Sigma_2; -)$, one with only one nondegenerate cell in
each dimension. By coincidence, we actually have the complex for
$H_*(\Sigma_2; \mathbb{Z})$, since $\pi_0 \Perp_2 \pi_{2v-1}(S^v
\wedge -) \cong \mathbb{Z}$, but this would not be the case if, for
example, we increased the dimension of the sphere involved by
one.  

The bottom row of the spectral sequence of
Theorem~\ref{thm:additive-spectral-sequence} arises from the augmented
complex $F \leftarrow \Perp^{*+1}_n F$, so $E_{0,0}$ is the group
$\pi_{2v-1} S^v$. We will not need to know anything about it, just
that the complex is still exact with the augmentation map (which it
is), so the only differences between the (shifted) $H_{*-1} (\Sigma_2;
\mathbb{Z})$ and $E^2_{*,0}$ are in dimension $0$ (obviously) and $1$. 

In the particular case of our functor, we have a good amount of
information about the bottom row. On the $E^1$ page, we do not know
$\pi_{2v-1} S^v$, but we have $v$ even, so we know by Serre's work
that it contains an infinite cyclic factor (\emph{e.g.},
\cite[Theorem~18.22, p.~254]{Bott-Tu}). The rest of the row consists
of factors of $\mathbb{Z}$. As usual when computing the
homology of a cyclic group, there is only one non-degenerate copy of
$\mathbb{Z}$ in each dimension. Furthermore, the differentials
$d^1_{i,0}$ for $i>1$ are either multiplication by $2$ or $0$, as we
will now explain. 
Each copy of $\mathbb{Z}$ corresponds to a Whitehead product of two of
the inclusion maps $S^v \rightarrow \bigvee S^v$, and we have $v$
even, so the graded commutativity of the Whitehead product means that
$[\alpha,\beta] = [\beta,\alpha]$. Specifically, $d^1_{2,0} = 0$ since
the non-degenerate copy of $\mathbf{Z}$ is represented by the product
$[i_0 i_1 , i_1 i_0 ]$, which is sent under the differential $d
= \partial_0 - \partial_1 $ to 
$$ [i_1, i_0] - [i_0, i_1] = [ i_1,i_0] - [i_1, i_0] = 0 .$$
Similarly, $d^1_{3,0} = 2$. 

The group $E^1_{0,0}$ contains an infinite cyclic factor as mentioned,
but the spectral sequence converges to stable homotopy, so 
$E^\infty_{0,0} = \pi_{2v-1}^S S^v$ is known to be torsion. Hence
the differential $d^1_{1,0}$ must be injective.

This gives us enough information about the spectral sequence to
determine that the map $P_1^d\Omega G \rightarrow \Omega P_1^d G$ is
not $1$-connected. In low dimensions, the $E^2$ page of the spectral
sequence for $P_1^d G$ is the following:
  $$
\xymatrix{
E^{2}_{0,2}  \\
E^{2}_{0,1} & E^{2}_{1,1}   \\
E^{\infty}_{0,0} & 0 & 
\mathbb{Z}/2 \ar[ull]^{d^{2}_{2,0}} 
& 0 \ar[ull] \ar[ull]^{d^{2}_{3,0}} \ar@{-->}[-2,-3]_{d^{3}_{3,0}} 
} .
  $$
If $d^2_{2,0}$ is nonzero, then the spectral sequence for $P_1^d
\Omega G$ (which is just a shifted version of the one for $P_1^d G$)
converges to $\pi_0 P_1^d\Omega G = E^2_{0,1}$, whereas $\pi_0 \Omega
P_1^d G = E^{\infty}_{0,1}$. 

Now consider the case in which $d^2_{2,0}$ is zero. The group
$E^2_{3,0} = 0$, and hence supports no differentials, so the spectral
sequence for $P_1^d \Omega G$ produces the same terms corresponding to
$E^2_{0,2}$ and $E^2_{1,1}$ as that for $P_1^d G$ (these groups live
to $E^\infty$ because they are not the target of any more
differentials).  However, $\pi_1 \Omega P_1^d G$ is an extension of
$E^2_{2,0}\cong \mathbb{Z}/2$ by these groups, whereas $\pi_1 P_1^d
\Omega G$ has no such factor, so they differ in the abutment.
\end{example}

This example actually shows the difference between the two
alternative constructions mentioned at the start of the chapter, since
$P_n$ commutes with $\Omega$ always, and $\Pnd$ was just shown not to. 
\begin{lemma}
The functor 
$\Pnd F$ need not be equivalent to either $T_n \Pnd F$ or $\Pnd T_n
F$.
$\qed$
\end{lemma}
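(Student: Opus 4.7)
The plan is to extract this lemma directly from the example constructed just above. Let $F(X) = \Omega^{3a}(S^a \wedge X)$ for some fixed $a \ge 1$ from that example, and set $H := T_1^a F$, so $H(X) = \Omega^{4a}(S^{2a} \wedge X)$. The preceding example established $P_1^{(a)} F \not\simeq P_1^{(a+1)} F$, and Lemma~\ref{lem:example-of-difference-1} showed $P_1^{(a+1)} F \simeq P_1 F$. Unwinding the definitions, $\Pnd H = P_1^{(a)} F$ and $\Pnd T_1 H = P_1^d T_1^{a+1} F = P_1^{(a+1)} F \simeq P_1 F$. The first non-equivalence in the lemma statement, $\Pnd H \not\simeq \Pnd T_1 H$, is then immediate (and in fact detected on $\pi_*$ at $S^0$, by the spectral sequence argument in the example).

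For the second non-equivalence $\Pnd H \not\simeq T_1 \Pnd H$, I would first show that $\Pnd H$ is not $1$-excisive. The canonical transformation $L H_X \to H_X$ (an equivalence on finite sets) induces a map $\Pnd H(X) = P_1(L H_X)(S^0) \to P_1(H_X)(S^0) \simeq P_1 H(X)$ which, together with the natural map $H \to \Pnd H$, factors the canonical $H \to P_1 H$. If $\Pnd H$ were $1$-excisive, the universal property of $P_1 H$ applied to $H \to \Pnd H$ would produce a map $P_1 H \to \Pnd H$, and the uniqueness of factorizations of $H$ through $1$-excisive functors would then force both composites of $P_1 H \leftrightarrows \Pnd H$ to be identities, yielding $\Pnd H \simeq P_1 H$. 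But $\Pnd H = P_1^{(a)} F$ and $P_1 H = P_1 F \simeq P_1^{(a+1)} F$ are non-equivalent, a contradiction.

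Finally, if the natural map $\Pnd H \to T_1 \Pnd H$ were an equivalence, iterating it would give $\Pnd H \simeq T_1^k \Pnd H$ for every $k \ge 1$, so $\Pnd H \simeq \colim_k T_1^k \Pnd H = P_1 \Pnd H$; since $P_1 \Pnd H$ is $1$-excisive this contradicts the previous paragraph, completing the proof. The main obstacle is the middle step: I need to verify carefully that the natural map $\Pnd H \to P_1 H$ is well-defined and that the universal property arguments really do conspire to force $\Pnd H \simeq P_1 H$ in the hypothetical case that $\Pnd H$ is $1$-excisive. Everything else is bookkeeping on top of the preceding example.
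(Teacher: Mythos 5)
Your handling of the second non-equivalence, $\Pnd H \not\simeq \Pnd T_1 H$ with $H = T_1^a F$, is exactly what the preceding example is set up to deliver and matches the paper's (very terse) intent. The gap is in your argument for the first non-equivalence, at the step ``$\Pnd H$ $1$-excisive $\Rightarrow \Pnd H \simeq P_1 H$.'' The universal property of $P_1 H$ controls factorizations \emph{through $P_1 H$}: it forces the composite $P_1 H \to \Pnd H \to P_1 H$ to be the identity, since both it and $\mathrm{id}$ factor the canonical map $H \to P_1 H$ through the universal $1$-excisive approximation. It says nothing about the other composite $\Pnd H \to P_1 H \to \Pnd H$; to control that one you would need $\Pnd H$ itself to have the universal property, which is precisely what is in question --- a $1$-excisive functor receiving a map from $H$ need not be $P_1 H$. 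So all you obtain is that $P_1 H$ is a retract of $\Pnd H$, and the example's computation that $\pi_*\Pnd H(S^0)$ and $\pi_* P_1 H(S^0)$ differ does not rule out a proper retract. You flagged this step yourself, and rightly so.

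The clean route to the first non-equivalence stays entirely inside the example's toolkit and avoids deciding whether $\Pnd H$ is $1$-excisive: compute $T_1\Pnd H(S^0) = \Omega\,\Pnd H(S^1) = \Omega\, P_1^d(H_{S^1})(S^0)$ directly. Here $H_{S^1} = \Omega^{4a}(S^{2a+1}\wedge -)$, and $\pi_j(S^{2a+1}\wedge -)$ is linear for $j\le 4a$ by Lemma~\ref{lem:degree-of-pi-m-s-n}, so Lemmas~\ref{lem:loop-commutes-with-PoneD} and~\ref{lem:F-commutes-with-real-Pnd-is-Pn} give $P_1^d(H_{S^1}) \simeq \Omega^{4a}P_1(S^{2a+1}\wedge -)$ and hence $T_1\Pnd H(S^0) \simeq \Omega^{4a+1}Q S^{2a+1} \simeq P_1 F(S^0)$ --- the same ``enough loops'' argument as in Lemma~\ref{lem:example-of-difference-1}. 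Since the example shows $\Pnd H(S^0) = P_1^{(a)}F(S^0) \not\simeq P_1 F(S^0)$, you conclude $\Pnd H \not\simeq T_1\Pnd H$. This is also closer in spirit to the paper's one-line justification, which reads the lemma off from the failure of $\Pnd$ to commute with $\Omega$ exhibited in the example, whereas $P_1$ itself always commutes with $\Omega$.
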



\backmatter

\bibliographystyle{amsplain}

%
%
\providecommand{\bysame}{\leavevmode\hbox to3em{\hrulefill}\thinspace}
\providecommand{\MR}{\relax\ifhmode\unskip\space\fi MR }
\providecommand{\MRhref}[2]{%
  \href{http://www.ams.org/mathscinet-getitem?mr=#1}{#2}
}
\providecommand{\href}[2]{#2}

%
%
\chapter{Vita}

Andrew Mauer-Oats was supported for several years
by Department of Education ``Graduate Assistance in Areas of National
Need'' (GAANN) fellowships and as a teaching assistant.

\end{document}